\definecolor{lgray}{gray}{0.90}
\def\xdtl{\dottedline{0.15}}
\def\grid{\thinlines \xdtl(1,1)(1,9) \xdtl(2,1)(2,9)
\xdtl(3,1)(3,9) \xdtl(4,1)(4,9) \xdtl(5,1)(5,9) \xdtl(6,1)(6,9)
\xdtl(7,1)(7,9) \xdtl(8,1)(8,9) \xdtl(9,1)(9,9)
\xdtl(1,1)(9,1) \xdtl(1,2)(9,2) \xdtl(1,3)(9,3)
\xdtl(1,4)(9,4) \xdtl(1,5)(9,5) \xdtl(1,6)(9,6)
\xdtl(1,7)(9,7)  \xdtl(1,8)(9,8)  \xdtl(1,9)(9,9) }
\def\hSkip{\hskip -10pt}
\newcommand{\ifdef}[3]{\ifthenelse{\equal{#1}{true}}{#2}{#3}}
\newcommand{\withFigures}{true}
\newcommand{\longleft}[1]{\;{\leftarrow%
\count255=0 \loop \mathrel{\mkern-6mu}%
    \relbar\advance\count255 by1\ifnum\count255<#1\repeat}\;}
\newcommand{\longright}[1]{\;{\count255=0 \loop \relbar\mathrel{\mkern-6mu}%
    \advance\count255 by1\ifnum\count255<#1\repeat\rightarrow}\;}
\newcommand{\Right}[2]{\overset{#2}{\longright{#1}}}
\newcommand{\RIGHT}[3]{\mathrel{\mathop{\kern0pt\longright{#1}}
    \limits^{#2}_{#3}}}
\newcommand{\LEFT}[3]{\mathrel{\mathop{\kern0pt\longleft{#1}}\limits^{#2}_{#3}}
}
\newcommand{\longleftright}[1]{\;{\leftarrow\mathrel{\mkern-6mu}%
    \count255=0\loop\relbar\mathrel{\mkern-6mu}%
    \advance\count255 by1\ifnum\count255<#1\repeat\rightarrow}\;}
\newcommand{\onto}[1]{\;{\count255=0 \loop \relbar\joinrel
    \advance\count255 by1
    \ifnum\count255<#1 \repeat \twoheadrightarrow}\;}
\newcommand{\RLEFT}[3]{\mathrel{%
   \mathop{\vcenter{\baselineskip=0pt\hbox{$\kern0pt\longright{#1}$}%
   \hbox{$\kern0pt\longleft{#1}$}}}\limits^{#2}_{#3}}}
\newcommand{\To}{\longrightarrow }
\newcommand{\TO}{\longright{3} }
\newcommand{\ONTO}{\onto{3} }
\newcommand{\ISOTO}{\Right{3}{\,\smash{\raisebox{-0.45ex}{\ensuremath{\scriptstyle\sim}}}\,}}
\newcommand{\Isoto}{\Right{1}{\,\smash{\raisebox{-0.45ex}{\ensuremath{\scriptstyle\sim}}}\,}}
\newcommand{\xto}[1]{\xrightarrow{#1}}
\newcommand{\hooklongrightarrow}{\lhook\joinrel\longrightarrow}
\newcommand{\Hooklongrightarrow}{\lhook\joinrel\joinrel\joinrel\joinrel\TO}
\newcommand{\Into}{\hooklongrightarrow}
\newcommand{\Onto}{\onto{1}}
\newcommand{\INTO}{\Hooklongrightarrow}
\newcommand{\Mto}{\longmapsto}
\newcommand{\mTo}{\longmapsto }
\newcommand{\plim}[1]{\lim_{\substack{\longleftarrow\\#1}}}
\newcommand{\ilim}[1]{\lim_{\substack{\longrightarrow\\#1}}}
\def\dispace{\setlength{\itemsep}{2pt}}
\newcommand{\etype}[1]{\renewcommand{\labelenumi}{(#1{enumi})}}
\def\eroman{\etype{\roman} \dispace}
\def\ealph{\etype{\alph} \dispace}
\newcommand\semph[1]{\emph{\textbf{#1}}}
\newcommand{\ds}[1]{\ {#1} \ }
\newtheorem{theorem}{Theorem}[section]
\newtheorem{proposition}[theorem]{Proposition}
\newtheorem{definition}[theorem]{Definition}
\newtheorem{lemma}[theorem]{Lemma}
\newtheorem{notation}[theorem]{Notation}
\newtheorem{corollary}[theorem]{Corollary}
\newtheorem{example}[theorem]{Example}
\newtheorem{remark}[theorem]{Remark}
\newtheorem{lem}[theorem]{Lemma}
\newtheorem{rem}[theorem]{Remark}
\newtheorem{comm}[theorem]{Comment}
\newtheorem*{comm*}{Comment}
\newtheorem{prop*}{Proposition}
\newtheorem{proper}[theorem]{Properties}
\newtheorem{prop}[theorem]{Proposition}
\newtheorem{defn}[theorem]{Definition}
\newtheorem*{examp*}{Example}
\newtheorem*{examples*}{Examples}
\newtheorem*{remark*}{Remark}
\newtheorem*{defn*}{Definition}
\newtheorem{construction}[theorem]{Construction}
\newcommand{\Trop}{\mathbb T}
\newcommand{\Comp}{\mathbb C}
\newcommand{\Real}{\mathbb R}
\newcommand{\Rati}{\mathbb Q}
\newcommand{\Int}{\mathbb Z}
\newcommand{\Net}{\mathbb N}
\newcommand{\Proj}{\mathbb P}
\newcommand{\Aff}{\mathbb A}
\newcommand{\Fld}{\mathbb K}
\newcommand{\bbF}{\mathbb F}
\newcommand{\NetZ}{\Net_0}
\def\sbool{{\mathbb{ B}}}
\def\bool{{\mathbb{ B}}}
\def\zReal{\Real_{-\infty}}
\def\zRealn{\zReal^{(n)}}
\newcommand{\Dir}{\dss{ \Rightarrow }}
\def\sset{\subset}
\def\ssetq{\subseteq}
\def\lsset{\prec}
\newcommand{\one}{\mathbb{1}}
\newcommand{\zero}{\mathbb{0}}
\newcommand{\trop}[1]{\mathcal{#1}}
\newcommand{\tA}{\trop{A}}
\newcommand{\tB}{\trop{B}}
\newcommand{\tC}{\trop{C}}
\newcommand{\tD}{\trop{D}}
\newcommand{\tE}{\trop{E}}
\newcommand{\tF}{\trop{F}}
\newcommand{\tG}{\trop{G}}
\newcommand{\tH}{\trop{H}}
\newcommand{\tJ}{\trop{J}}
\newcommand{\tK}{\trop{K}}
\newcommand{\tM}{\trop{M}}
\newcommand{\tN}{\trop{N}}
\newcommand{\tP}{\trop{P}}
\newcommand{\tS}{\trop{S}}
\newcommand{\tT}{\trop{T}}
\newcommand{\tU}{\trop{U}}
\newcommand{\tV}{\trop{V}}
\newcommand{\tZ}{\trop{Z}}
\newcommand{\Om}{\Omega}
\newcommand{\io}{\iota}
\newcommand{\eps}{\epsilon}
\newcommand{\al}{\alpha}
\newcommand{\bt}{\beta}
\newcommand{\Gm}{\Gamma}
\newcommand{\lm}{\lambda}
\newcommand{\Lm}{\Lambda}
\def\tlOm{\widetilde{\Om}}
\def\NDa{MD1}
\def\NDb{MD2}
\def\NMa{NM1}
\def\NMb{NM2}
\def\NMc{NM3}
\def\PSRa{NS1}
\def\PSRb{NS2}
\def\cnd{\ds \vert}
\def\hh{\psi}
\def\ihh{\hh^\inv}
\def\ff{\phi}
\def\tlff{\widetilde{\ff}}
\def\iff{{\ff^\inv}}
\def\sff{\ff^\#}
\def\dff{{\ff_*}}
\def\pff{(\ff,\sff)}
\def\ffx{{\ff(x)}}
\def\sffx{{\sff_x}}
\def\Iff{\ds{\Leftrightarrow}}
\def\VV{\tV}
\def\VVg{\VV(g)}
\def\EE{\tE}
\def\CC{\tC}
\def\CCf{\CC(f)}
\def\MD{\mfD}
\def\MDx{\MD_x}
\def\Ux{U_x}
\newcommand\fcl[1]{\widecheck{#1}}
\def\fcU{\fcl{U}}
\def\fcV{\fcl{V}}
\def\DD{\tD}
\def\fcDD{\fcl{\DD}}
\def\fcDDf{\fcDD(f)}
\def\fcDDg{\fcDD(g)}
\def\fcDDh{\fcDD(h)}
\def\DDf{\DD(f)}
\def\DDg{\DD(g)}
\def\DDfg{\DD(fg)}
\def\DDh{\DD(h)}
\def\EEf{\EE(f)}
\def\EEg{\EE(g)}
\def\EEh{\EE(h)}
\def\II{\tK}
\def\GG{\operatorname{G}}
\def\muo{{\mu_\otimes}}
\def\ST{\tS}
\def\SST{\tS}
\def\ST{S}
\def\STs{\ST^*}
\def\STf{{\ST(f)}}
\def\STsf{{\STs(f)}}
\def\STg{{\ST(g)}}
\def\STh{{\ST(h)}}
\def\STfg{{\ST(fg)}}
\def\hgt{\operatorname{ht}}
\def\val{\operatorname{val}}
\def\Cor{\operatorname{Cor}}
\def\sval{\operatorname{sval}}
\def\STR{\operatorname{STR}}
\def\reF{f_{_\Real}}
\def\trivial{isolated}
\def\res{\rho}
\def\scn{\sig}
\def\impl{\ds{ \Rightarrow}}
\def\Impl{\dss{ \Rightarrow}}
\def\Spec{\operatorname{Spec}}
\def\MSpec{\operatorname{Spm}_\ell}
\def\NSpec{\operatorname{Spn}_\ell}
\def\tNSpec{\NSpec}
\def\RSpec{\operatorname{Spr}}
\def\TSpec{\operatorname{Spt}}
\def\DSpec{\operatorname{Spd}}
\def\SpecR{\Spec(R)}
\def\SpecA{\Spec(\sA)}
\def\SpecB{\Spec(\sB)}
\def\OSpecA{\scO_{\SpecA}}
\def\OSpecB{\scO_{\SpecB}}
\def\scG{\mathscr G}
\def\scF{\mathscr F}
\def\scO{\mathscr O}
\def\rsmf{K}
\def\O{\scO}
\def\OX{\scO_X}
\def\OXX{\scO(X)}
\def\cOX{\overline{\scO}_X}
\def\OXf{\scO_{X_f}}
\def\OXx{\scO_{X,x}}
\def\OY{\scO_Y}
\def\OYY{\scO(Y)}
\def\OYfx{\scO_{Y,\ffx}}
\def\XOX{(X,\OX)}
\def\YOY{(Y,\OY)}
\def\OXU{\scO_{X|U}}
\def\XxSY{X \times_S Y}
\def\XxTY{X \times_T Y}
\def\Pr{\bullet}
\def\PrS{\circ}
\def\reduced{essential}
\def\tTX{\tT^\times}
\def\tTP{\tT^\Pr}
\def\tTPS{\tT^\PrS}
\def\mone{\one_\tM}
\def\sone{\one_\tS}
\def\mzero{\zero_\tM}
\def\udscr{\,\underline{\phantom{w}}\,}
\def\TT{T}
\def\MS{C}
\def\MSf{{\MS(f)}}
\def\RMS{R_\MS}
\def\iMS{\MS^{-1}}
\def\chU{\widecheck{U}}
\def\RxR{R \times R}
\def\AxA{A \times A}
\def\RX{R^\times}
\def\AX{A^\times}
\def\tTX{\tT^\times}
\def\MxN{{M\times N}}
\def\RMxN{{R(\MxN)}}
\def\MoN{{M\otimes_R N}}
\def\MoRN{{M\otimes_R N}}
\def\AoRB{{A\otimes_R B}}
\def\qq{/\hskip -0.65ex/ \hskip -1pt}
\def\sm{\setminus}
\def\sA{A}
\def\sAx{\sA_x}
\def\sAptng{\sA|_\tng^\Pr}
\def\sAptngs{\sA|_\tng^\PrS}
\def\sAxptngs{\sA_x|_\tng^\PrS}
\def\sAptngsD
\def\sAtng{\sA|_\tng}
\def\sAual{\sA|_\ual}
\def\sAptng{\sA|_\tng^\Pr}
\def\sAptngM{\sA|_\tng^\ast}
\def\sAghs{\sA|_\ghs}
\def\sB{B}
\def\sAX{\sA^\times}
\def\sC{C}
\def\httM{{\widehat{\tM}}}
\def\httG{{\widehat{\tG}}}
\def\htnu{{\widehat{\nu}}}
\def\htU{{\widehat{U}}}
\def\AAn{ A^{(n)}}
\def\Rn{ R^{(n)}}
\def\tGn{ \tG^{(n)}}
\def\tTzn{ \tTz^{(n)}}
\newcommand{\gen}[1]{\langle #1 \rangle}
\def\cmp{{\operatorname{c}}}
\def\int{{\operatorname{int}}}
\def\tlR{\widetilde R}
\def\tlF{\widetilde F}
\def\olvrp{\overline \varphi}
\def\tlvrp{\widetilde \varphi}
\def\tlmu{{\widetilde \mu}}
\def\tltau{\widetilde \tau}
\def\nufib{\operatorname{fib}_{\nu}}
\def\nuequivalent{$\nu$-equivalent}
\def\nuequivalence{$\nu$-equivalence}
\def\nufiber{$\nu$-fiber}
\def\congradical{\ccong\ radical}
\def\cradical{$\mfc$-radical}
\def\sradical{$\mfs$-radical}
\def\psradical{principal $\mfs$-radical}
\def\sradicalc{\sradical\ congruence}
\def\gprad{\tN}
\def\vrp{\varphi}
\def\ivrp{{\vrp^\inv}}
\def\iphi{{\phi^\inv}}
\def\vvrp{{\vrp^\vee}}
\def\htvrp{\widehat{\vrp}}
\def\avrp{{^a\hskip -.35ex\vrp}}
\def\atlvrp{{^a\hskip -.35ex\tlvrp}}
\def\iavrp{{(\avrp)^\inv}}
\def\apsi{{^a\hskip -.25ex\psi}}
\def\api{{^a \hskip -.25ex \pi}}
\def\atau{{^a \hskip -.25ex \tau}}
\def\btau{{_a \hskip -.25ex \tau}}
\def\iatau{(\atau)^\inv}
\def\ipi{\pi^\inv}
\def\vpi{\pi^\vee}
\def\gdiv{$\mfg$-divisor}
\def\gsdiv{ghost divisor}
\def\tngres{tangible part}
\def\tngpres{tangibly \prs\ part}
\def\tng{{\operatorname{tng}}}
\def\ual{{\operatorname{ual}}}
\def\ghs{{\operatorname{gh}}}
\def\gc{\mathsf{G}_\cls}
\def\tc{\mathsf{T}_\cls}
\def\tp{\mathsf{P}_\Pr}
\newcommand{\iPcl}{\tp^\inv}
\newcommand{\Tcl}[1]{\tc(#1)}
\newcommand{\cTcl}[1]{\tc^\cmp(#1)}
\newcommand{\iTcl}{\tc^\inv}
\newcommand{\ciTcl}{\tc^\cinv}
\newcommand{\Gcl}[1]{\gc(#1)}
\newcommand{\cGcl}[1]{\gc^\cmp(#1)}
\newcommand{\iGcl}{\gc^\inv}
\newcommand{\ciGcl}{\gc^\cinv}
\def\bfnu{\mbox{\boldmath$\nu$}}
\def\bfDD{\mbox{\boldmath$\DD$}}
\def\NSMR{\mbox{\boldmath$\operatorname{\nu Smr}$}}
\def\NMOD{\mbox{\boldmath$\operatorname{\nu Mod}$}_R}
\def\NMON{\mbox{\boldmath$\operatorname{\nu Mon}$}}
\def\Sch{\mbox{\boldmath$\operatorname{\nu Sch}$}}
\def\ASch{\mbox{\boldmath$\operatorname{\nu ASch}$}}
\def\mfA{\mathfrak A}
\def\mfC{\mathfrak C}
\def\mfD{\mathfrak D}
\def\mfF{\mathfrak F}
\def\mfG{\mathfrak G}
\def\mfI{\mathfrak I}
\def\mfK{\mathfrak K}
\def\mfM{\mathfrak M}
\def\mfN{\mathfrak N}
\def\mfP{\mathfrak P}
\def\mfO{\mathfrak O}
\def\mfR{\mathfrak R}
\def\mfT{\mathfrak T}
\def\mfa{\mathfrak a}
\def\mfc{\mathfrak c}
\def\mfd{\mathfrak d}
\def\mfg{\mathfrak g}
\def\mfi{\mathfrak i}
\def\mfm{\mathfrak m}
\def\mfp{\mathfrak p}
\def\mfo{\mathfrak o}
\def\mfq{\mathfrak q}
\def\mfr{\mathfrak r}
\def\mfs{\mathfrak s}
\def\mft{\mathfrak t}
\def\({\left(}
\def\){\right)}
\def\Z{{\mathbb Z}}
\def\Q{{\mathbb Q}}
\def\im{{\operatorname{im}\,}}
\def\nufib{{\operatorname{fib}_\nu\,}}
\def\gker{{\operatorname{gker}\,}}
\def\gcoker{{\operatorname{gcoker}\,}}
\def\tcor{{\operatorname{tcor}\,}}
\def\stcor{{\operatorname{tcor}^\PrS}}
\def\cker{{\operatorname{corn-ker}\,}}
\def\pipe{{\underset{{\ \, }}{\mid}}}
\def\nmod{\models}
\def\pipe1{{\underset{{1}}{\mid}}}
\def\lmod1{\mathrel  \pipe1  \joinrel \joinrel =}
\def\CFunFF1{\operatorname{CFun} (F,F)}
\def\semiring0{semiring$^{\dagger}$}
\def\semidomain0{semidomain$^{\dagger}$}
\def\semifield0{semifield$^{\dagger}$}
\def\semifields0{semifields$^{\dagger}$}
\def\domain0{domain}
\def\domains0{domains$^{\dagger}$}
\def\numodh{\numod\ homomorphism}
\def\nusemiring0{$\nu$-\semiring0}
\def\nusemirings0{$\nu$-\semirings0}
\def\nusemifield0{$\nu$-\semifield0}
\def\nuvar{$\nu$-variety}
\def\nuvars{$\nu$-varieties}
\def\expr{reduced}
\def\uexpr{meaningless}
\def\uexpr{redundant}
\def\tsset{tangible set}
\def\tmon{tangible monoid}
\def\tame{tame} %{hierarchial} %tame
\def\tamenusmr{\tame\ \nusmr} %{hierarchial} %tame
\def\tamesmr{\tame\ \nusmr} %{hierarchial} %tame
\def\tcls{tangibly closed}
\def\Tcls{$\tT$-closed}
\def\tclsnusmr{\tcls\ \nusmr}
\def\prs{persistent}
\def\prsfl{\prs\ full}
\def\prscls{\prs\ closed}
\def\tcore{tangible core}
\def\ptcore{\prs\ \tcore}
\def\tprsmon{\tprs\ monoid}
\def\tprsset{\tprs\ set}
\def\tprsset{\tprs\ set}
\def\tprs{$\mft$-\prs}
\def\tanprs{tangibly \prs}
\def\ualt{unalterable}
\def\tualt{$\mft$-\ualt}
\def\smr{semiring}
\def\smf{semifield}
\def\rsset{restricted subset}
\def\strict{strict}
\def\nualg{$\nu$-algebra}
\def\numon{$\nu$-monoid}
\def\nusmon{$\nu$-submonoid}
\def\nusmr{$\nu$-semiring}
\def\nussmr{$\nu$-subsemiring}
\def\lnusmr{local \nusmr}
\def\nusmrhom{\nusmr\ \qhom}
\def\nudom{$\nu$-domain}
\def\inudom{integral \nudom}
\def\nuddom{definite \nudom}
\def\nusmf{$\nu$-semifield}
\def\numod{$\nu$-module}
\def\nusmod{$\nu$-submodule}
\def\nutop{$\nu$-topology}
\def\ssmf{supertropical semifield}
\def\fzone{focal zone}
\def\Fzone{Focal zone}
\def\nustalk{$\nu$-stalk}
\def\nusch{$\nu$-scheme}
\def\nushf{$\nu$-sheaf}
\def\nushv{$\nu$-sheaves}
\def\nussch{$\nu$-subscheme}
\def\nushf{$\nu$-sheaf}
\def\nusmrsp{\nusmr ed space}
\def\lnusmrsp{locally \nusmrsp}
\def\ccong{congruence}
\def\gstcong{ghost congruence}
\def\lcong{$\ell$-congruence}
\def\gcong{$\mfg$-congruence}
\def\qcong{$\mfq$-congruence}
\def\cqcong{congruence}
\def\scong{$\mfi$-congruence}
\def\gprime{$\mfg$-prime}
\def\gprimec{$\mfg$-prime congruence}
\def\cprime{$\mfc$-prime}
\def\gradical{$\mfg$-radical}
\def\gradicalc{\gradical\ congruence}
\def\ggradical{ghostpotent}
\def\ggradicalc{\ggradical \ radical \ccong}
\def\gpradical{$\mfg \mfp$-radical}
\def\gpradicalc{\gpradical}
\def\maximalc{maximal \lcong}
\def\tminimal{$\mft$-minimal}
\def\tminimalc{\tminimal\ \lcong}
\def\ctminimalc{central \tminimalc}
\def\lmaximalc{maximal\ \lcong}
\def\qnoetherian{$\mfq$-noetherian}
\def\qartinian{$\mfq$-artinian}
\def\nuhom{$\nu$-homomorphism}
\def\qhom{$\mfq$-homomorphism}
\def\lqhom{local \qhom}
\def\hom{homomorphism}
\def\intr{interweaving}
\def\intrc{interweaving congruence}
\def\Intr{Interweaving}
\def\dtrm{deterministic}
\def\Dtrm{Deterministic}
\def\dtrmc{\dtrm\ \lcong}
\def\dtrmlc{\dtrm\ \lcong}
\def\dtrmqc{\dtrm\ \qcong}
\def\Dtrmc{\Dtrm\ \qcong}
\def\dfnt{definite}
\def\domains0{domains$^{\dagger}$}
\def\domains0{domains$^\dagger$}
\def\nucong{\cong_\nu}
\def\nug{>_\nu}
\def\nul{<_\nu}
\def\mug{>_\mu}
\def\ker{\operatorname{ker}}
\def\cker{\ker_{\operatorname{c}}}
\def\nuge{\ge_\nu}
\def\nule{\le_\nu}
\def\nule{\le_\nu}
\def\pipe{{\underset{{\tG}}{\mid}}}
\def\lmod{\mathrel  \pipe \joinrel \joinrel =}
\def\pipe{{\underset{{\tG}}{\mid}}}
\def\ghost{\operatorname{ghost}}
\def\sig{\sigma}
\def\gpt{q}
\def\pSkip{\vskip 1.5mm \noindent}
\def\sSkip{$ $ \vskip 1.5mm}
\def\inv{{\operatorname{-1}}}
\def\cinv{{\operatorname{-c}}}
\def\Ann{{\operatorname{Ann}}}
\def\Eql{{\operatorname{Eq}}}
\def\srad{\operatorname{rad_s}}
\def\crad{\operatorname{rad_c}}
\def\grad{\operatorname{rad_g}}
\def\rcl{\operatorname{rcl}}
\def\RSet{\operatorname{RSet}}
\def\jac{\operatorname{jac}}
\def\vrp{\varphi}
\def\diag{\Delta}
\def\idiag{\diag^{\inv}}
\newtheorem{thm}[theorem]{Theorem}
\newtheorem*{thm*}{Theorem}
\newtheorem{cor}[theorem]{Corollary}
\def\Ann{\operatorname{Ann}}
\def\Hom{{\operatorname{Hom}}}
\def\Ten{\operatorname{Ten}}
\def\ext{\operatorname{Ext}}
\def\gDiv{\operatorname{gdiv}}
\def\gDivA{\gDiv(\sA)}
\def\R{\Real}
\def\La{\Lambda}
\def\tT{\mathcal T}
\def\Fun{\operatorname{Fun}
}
\def\tTz{\tT_\zero}
\numberwithin{equation}{section}
\def\M0{M_{\zero}}
\def\id{\operatorname{id}}
\def\SR{R}
\def\tGz{\mathcal G}
\def\PS{P}
\def\iSet{S}
\def\rzero{\zero_\SR}
\def\sone{\one_\iSet}
\def\rone{{\one_\SR}}
\def\rzero{\zero_\SR}
\def\semirings0{semirings}
\newcommand{\nPS}[1]{\PS_{(!#1)}}
\newcommand{\nPSo}[1]{\nPS{\one}}
\newcommand{\dss}[1]{\quad {#1} \quad }
\def\srHom{\varphi}
\def\tlf{\widetilde f}
\def\tlg{\widetilde g}
\def\olf{\overline{f}}
\def\bfa{ \textbf{a}}
\def\bfi{ \textbf{i}}
\def\Pol{\operatorname{Pol}}
\def\stak(f){\got{P}}
\def\stak{\got{S}}
\def\cls{{\operatorname{cls}}}
\def\Rees{{\operatorname{Rees}}}
\def\bfa{ \textbf{a}}
\def\bfb{ \textbf{b}}
\def\bfi{ \textbf{i}}
\def\bfx{ \textbf{x}}
\def\brf{ {\bar{f}}}
\def\brY{ {\overline{Y}}}
\def\cng{\equiv}
\def\ccng{\equiv_\mfc}
\def\pcng{\equiv_\mfp}
\def\gcng{\equiv_\mfg}
\def\scng{\equiv_\mfi}
\def\tsim{\sim_\mft}
\def\rcng{\equiv_\mfr}
\def\dcng{\equiv_\mfd}
\def\zcng{\equiv_\mfo}
\def\Cong{\mfA}
\def\olCong{\overline{\Cong}}
\def\bCong{\Cong_b}
\def\pCong{{\mfP}}
\def\gCong{{\mfG}}
\def\tCong{{\mfT}}
\def\sCong{{\mfI}}
\def\xpCong{\pCong_x}
\def\ypCong{\pCong_y}
\def\pypCong{\pCong_{y'}}
\def\cCong{{\mfC}}
\def\mCong{{\mfM}}
\def\nCong{{\mfN}}
\def\rCong{{\mfR}}
\def\dCong{{\mfD}}
\def\zCong{{\mfO}}
\def\xnCong{\nCong_x}
\def\Cng{\operatorname{Cong}}
\def\CngA{\Cng(A)}
\def\GCng{\Cng_\mfg}
\def\LCng{\Cng_\ell}
\def\SCng{\Cng_\mfi}
\def\QCng{\Cng_\mfq}
\def\olCong{\overline{\Cong}}
\def\cmp{{\operatorname{c}}}
\def\int{{\operatorname{int}}}
\def\fcong{\cong}
\def\fCong{\mfF}
\def\pcspec{\Spec}
\begin{document}

%******************************* title ***********************************

\title[Supertropical Algebraic Geometry]
{  Commutative $\bfnu$-Algebra
\\ and \\
Supertropical Algebraic Geometry
 }
%******************************* authors *********************************

\author[Z. Izhakian]{Zur Izhakian $^\dag$}
\address{Institute  of Mathematics,
 University of Aberdeen, AB24 3UE,
Aberdeen,  UK. }
\email{zzur@abdn.ac.uk}

%******************************* AMS classification ***********************
\subjclass[2010]{Primary: 16Y60, 06B10, 08A30, 14A05, 14A15, 14A10, 18F20, 14T05; Secondary: 06F05, 06F25, 46M05, 51M20}

%******************************* date *************************************
\date{October 17, 2018}
%******************************* keywords *********************************

\keywords{Supertropical commutative algebra, semiring, localization,   prime congruence, maximal congruence, local semiring, radical congruence, dimension, (semi)module, tensor product, prime spectrum, Zariski topology, variety, irreducible variety, immersion,  structure sheaf, stalk,  affine scheme,  semiringed space, locally semiringed space, tangent space, fiber product.}

%%%%%%%%%%%%%%%%%%%%%%%%%%%%%% Acknowledgment %%%%%%%%%%%%%%%%%%%%%%%%%%%%%%%55
%
%\thanks{\noindent \underline{\hskip 3cm }}
%
\thanks{\pSkip $^\dag$ Institute  of Mathematics,
 University of Aberdeen, AB24 3UE,
Aberdeen,  UK. \\ $ $ \ Email: zzur@abdn.ac.uk.}

%%%%%%%%%%%%%%%%%%%%%%%%%%%%%%%%% section %%%%%%%%%%%%%%%%%%%%%%%%%%%%%

\begin{abstract}
This paper lays out a foundation for a theory of supertropical algebraic geometry, relying on  commutative \nualg.
To this end, the paper introduces \qcong s, carried over \nusmr s, whose distinguished ghost and tangible  clusters  allow both quotienting and localization. Utilizing  these clusters,  \gprime, \gradical, and maximal  \qcong s are naturally  defined, satisfying   the classical   relations among analogous ideals. Thus, a foundation of systematic  theory of  commutative \nualg\ is laid.
In this framework, the underlying spaces  for a theoretic construction of schemes  are spectra of \gprimec s, over which the correspondences between  \qcong s and varieties emerge directly. Thereby,
scheme theory within supertropical algebraic geometry follows the Grothendieck approach,  and is applicable
to  polyhedral geometry.
\end{abstract}

\maketitle

%%%%%%%%%%%%%%%%%%%%%%%%%%%%%%%%% section %%%%%%%%%%%%%%%%%%%%%%%%%%%%%
\setcounter{tocdepth}{2}
{\small \tableofcontents}

\section{Introduction}
\numberwithin{equation}{section}

The evolution of supertropical mathematics having  been initiated in \cite{zur05TropicalAlgebra} by introducing a semiring structure whose arithmetic intrinsically formulates combinatorial properties that address  the lack of subtraction in semirings.
This mathematics is carried over  \nusmr s whose  structure permits a systematic development of algebraic theory, analogous  to the  theory over rings \cite{IzhakianRowen2007SuperTropical},  in which  fundamental notions can be  interpreted  combinatorially \cite{IzhakianRowen2008Matrices,IzhakianRowen2009Equations,IzhakianRowen2010MatricesIII,IzhakianRowen2008Resultants}.

   The introduction of supertropical theory was  motivated by the aim of capturing  tropical varieties in a purely algebraic sense by extending  the max-plus semiring (cf. Example \ref{exmp:extended}).   The ultimate goal has been to establish a profound theory of polyhedral algebraic geometry in the spirit of  A. Grothendieck, whose foundations are built upon  commutative algebra. The present paper
  introduces an algebraic framework for such a theory,
   evolving further supertropical mathematics.

The aspiration of this theory has been  to provide an intuitive algebraic language, clean and closer as passible to classical theory, but at the same time  abstract enough to frame objects having a discrete nature. In this theory, the mathematical formalism involves no complicated combinatorial formulas  and  enables a direct implementation of familiar algebro-geometric approaches.
The conceptual ideas and main principles of the theory are summarized  below.

\subsection{Supertropical structures}
\sSkip \noindent
The underlaying additive structure of a \nualg\  is a \textbf{\numon} $(\tM, \tG,  \nu )$ -- a monoid~ $\tM$ with a distinguished (partially ordered) \textbf{ghost  submonoid} $\tG$ and a projection $\nu: \tM \To \tG$ -- which satisfies the key property
$$ a + b = \nu(a) \qquad \text{whenever } \quad \nu(a) = \nu(b).$$
In particular, $a + a = \nu(a)$ for any $a \in \tM$. Thus,    an   element $\nu(a)$ of $\tG$ can be thought of as carrying an ``additive multiplicity'' higher than 1, i.e., a sum $a+ \cdots + a$ with $a $ repeated at least twice.  When the monoid operation $+$ is  maximum, this arithmetic intrinsically formulates the common tropical phrase  ``the maximum is attained at least twice'', replacing  the ``vanishing condition'' in classical theory.

   Equipping an additive  \numon\ with a multiplicative operation that respects the relevant axioms, a \textbf{\nusmr} $(R, \tT, \tG, \nu)$ is obtained.  In \nusmr s the ghost submonoid $\tG$ becomes an ideal -- a ``ghost absorbing'' submonoid -- which plays the customary role of  zero element in commutative algebra.  In this view,   a traditional equation $\lozenge \cdots \lozenge = 0$  is   reformulated
as $\lozenge \cdots \lozenge \in \tG$, replacing the familiar   ``vanishing condition'' (which is often  meaningless for  semiring structures) by possessing  ghost.
 On the other edge,  to obtain a suitable multiplicative substructure, the    subset ~$\tT$ of \textbf{tangible elements} in the complement of ~$\tG$ is distinguished, satisfying  the conditions:
 %if  $ab \in \tT$  then $a, b \in \tT$ and
 if $a + b \in \tT$ then   $a,b \notin \tG$. As well,   $\tT$ contains the well behaved subset~ $\tTPS$ of \textbf{\tprs\ elements}, closed for taking powers.

Together, the ghost ideal $\tG$ and the \tsset\ $\tT$ of a \nusmr\
 enable a reformulation of basic algebraic notions. For instance, a \textbf{\qhom }  is a  homomorphism of \nusmr s that preserves the components $\tT$ and $\tG$.
The \textbf{tangible core} and the \textbf{ghost kernel} are,  respectively,  the preimages of ~$\tT$ and $\tG$  which characterize   \qhom s.
 With these objects in place, the category of \nusmr s whose morphisms are \qhom s  is
 established.

\nusmr s generalize the max-plus semiring $(\Real, \max, +)$ and the  boolean algebra $(\bool, \vee, \wedge)$, enriching them with extra algebraic properties, as detailed in Examples %\ref{exmp:grp.fld},
\ref{exmp:extended} and \ref{exmp:superboolean}. Furthermore, any  ordered monoid $(\tM, \cdot \,)$ gives rise to a semiring structure by setting its addition to be maximum, and therefore, supertropical theory is carried over transparently  to ordered monoids.
Former works have frequently assumed   multiplicative
 cancellativity  (i.e., $ca = cb \Rightarrow a =b$ for any~ $a,b,c$), to compensate  the luck of inverses in semirings. This condition  is too restrictive   for   \nusmr s  (see Example ~ \ref{examp:cancellative.gdiv}). Therefore,  we avoid  any cancellativity conditions. As a consequent, pathological elements, as ghost divisors and ghostpotents,  emerge in this setting and are treated by the use of  congruences.

\subsection{Congruences versus ideals}
\sSkip \noindent
Quotienting and localization are central notions in algebra.
In commutative ring  theory and in classical algebraic geometry these notions are delivered by  ideals.
A ring ideal defines an equivalence relation, and thus a quotient, while the complement of a prime ideal is a multiplicative system, used for localizing.
%which is  used for localization.
Since substraction is absent in semirings, ideals do not determine equivalence relations and, therefore, are not applicable for quotienting. Consequently,  one has to work directly with congruences, i.e., equivalence relations that respect the semiring operations. However,  by itself, this approach does not address  localization.  But, with extra properties, quotienting and localization are accessible via congruences on \nusmr s.

To introduce \textbf{clustering} on congruences -- a coarser decomposition of classes -- we enhance  the classification of  elements as tangibles or ghosts to equivalence classes.  In particular, an equivalence class ~$[a]$ is  tangible if it consists only of tangibles, $[a]$ is  ghost if it contains some ghost, and $[a]$ is neither tangible  nor ghost otherwise.  Thus,  a congruence $\Cong$ on a \nusmr\ ~ $R$ is endowed with two disjoint clusters, consisting  of  equivalence  classes:
\begin{description}\dispace
  \item[$\maltese$] \textbf{tangible cluster} whose classes  are preserved as tangibles in $R/\Cong$,
  \item[$\maltese$] \textbf{ghost cluster}   whose classes are identified as ghosts in $R/\Cong$.
\end{description}
The former serves for localization and the latter serves for quotienting.
These clusters are not necessarily the complement of each other; so one has to cope with an extra degree of freedom. This divergency is addressed  by the tangible and ghost projections of $\Cong$ on~ $R$, which are respectively determined by the diagonals of classes within the  clusters of $\Cong$.

A \textbf{\qcong} $\Cong$ on a \nusmr \ $R$ is a congruence whose tangible projection contains the group of units of $R$, and thus a submonoid of \tprs\ (tangible) elements.  In the special case, when the  tangible projection is a monoid by itself,   $\Cong$ is an \textbf{\lcong}. In our theory,
  \qcong s are elementary entity, providing the building stones for commutative algebra. Quotienting by a \qcong\ is done in the standard way, while the monoid structure of tangible projections of \lcong s allows  for  executing {localization}.   The tangible projection and the ghost cluster enable the utilization of  familiar methods in commutative algebra.
  Moreover,  \qcong s preserve the \nusmr\ structure in the transition to  quotient structures and, at the same time, perfectly coincide with \qhom s. Therefore, concerning  \nusmr s,  \qcong s play the traditional role of ideals.

 \qcong s support executing ``\textbf{ghostification}'' of elements -- a redeclaration of elements as ghosts in quotient structures. This redeclaration is the supertropical analogy to quotienting by an ideal, whose elements are ``identified'' with zero. Formally, an element $a \in R $ is ghostified by a congruence $\Cong$, if~ $\Cong$ includes the equivalence  $a \cng \nu(a)$. This means that, despite the absence of additive inverses, we are capable  to  quotient out  a \nusmr\ by its substructure (even by a subset) in a meaningful sense. Hence,   cokernels of \qhom s can be defined naturally.

\subsection{Congruences and spectra}
\sSkip
\noindent
Classical theory employs spectra of prime ideals, endowed with Zariski topology,  as  underlying topological spaces.
In  supertropical theory,  ideals are replaced by  \qcong s, whose special structure permits the key definitions:
\begin{itemize}\dispace
  \item[$\maltese$] a \qcong\ is \textbf{\gradical}  if  $a^k \rcng \nu(a^k)$ implies   $a \rcng \nu(a)$,
  \item[$\maltese$] an \lcong\ is \textbf{\gprime}  if $ab \pcng \nu(ab)$   implies $a \pcng \nu(a)$ or $b \pcng \nu(b)$.
\end{itemize}
Note that the conditions in these definitions are determined solely by equivalence to ghosts, while  localization by \gprimec s is performed through  their tangible projections.
This setup enables to formulate many tropical relations, analogous to well known relations among radical, prime, and maximal ideals. With these relations,  manifest  over \qcong s,  the notions of \textbf{noetherian \nusmr s} and   \textbf{Krull dimension} are obtained.

To cope simultaneously  with congruences  and ``ghost absorbing'' subsets of \nusmr s, we study several types of radicals, which are shown to coincide. Moreover, they provide a version of abstract Nullstellensatz (Theorem \ref{thm:Null}). In comparison to ring ideals, the hierarchy of \qcong s contains unique types of congruences, including  \dtrmc\  and \intrc s.
However, maximal congruences are not applicable for defining locality, since they do not necessarily coincide with maximality of clusters. Instead,  \textbf{\tminimalc s},   determined  by  maximality of non-tangible projections, are used to define locality.  A \nusmr\ $R$ is   \textbf{local}, if all \tminimalc s on ~$R$ share the same tangible projection.

The  well behaved interplay between localization and  various types of \qcong s allows for introducing  the  \textbf{spectrum of \gprimec s}, endowed  with a Zariski type  topology. With this setting, our study   follows the standard methodology  of exploring the correspondences among  closed sets,  open sets, \qcong s, and  \nusmr \ structures, resulting eventually in a construction of sheaves.

\subsection{Sheaves and schemes}
\sSkip
\noindent
Upon the  spectrum $X= \SpecA$ of all \gprimec s on a \nusmr\ $\sA$, realized as a topological space,
a closed set $\VV(f)$ in $X$ is the set of all \gprimec s $\pCong$ that ghostify a given $f \in \sA$.  That is, ~$f$ belongs to the ghost projection of $\pCong$, i.e., $f \pcng \nu(f)$ in ~$\pCong$. Therefore, since $f \pcng \nu(f)$ for any ghost~ $f$,   we focus on non-ghost elements $f \in \sA$ to avoid triviality.  Yet,  this focusing does not imply that a non-ghost element $f$ belongs to the tangible projection of all \gprimec s composing the complement $\DD(f)$ of $\VV(f)$ in $X$. Thus, $f$ does not necessity possess tangible values over the entire open set $\DD(f)$, in particular when $f$ is not tangible.  This bearing requires a special care in the construction of sheaves, as explained below.

A \textbf{variety} in $X$ is a closed set which can be determined either as the set of \gprimec s containing a  \qcong\ $\Cong$ on $\sA$, or as the set of \gprimec s that ghostify  a subset $E$ of~ $\sA$. These equivalent definitions establish the correspondences between properties of \qcong s  on $\sA$ and subsets of ~$X$. For example, the one-to-one correspondence between \gprimec s and \textbf{irreducible varieties} (Theorem \ref{thm:16}). In this setting,  closed immersions $\Spec(\sA/\Cong) \Isoto \VV(\Cong)$ of spectra appear naturally (Corollary~ \ref{cor:irreducible}). Open immersions are more subtle and require a further adaptation, that is, a restriction $\DD(\MS,f)$ of $\DDf$ to  \gprimec s whose tangible projection contains a given tangible monoid $\MS$ of  $\sA$.  This restriction yields the bijection $\Spec(\sA_\MS) \Isoto \bigcap_{f \in \MS} \DD(\MS,f)$.

To preserve the \nusmr\ structure,
localization  is feasible only by \tprs\ elements. Nevertheless, to construct a sheaf, each open set $\DD(f)$ has to be assigned with a \nusmr, allocated by the means of localization, even when $f$ is not tangible. To overcome this drawback, we consider the submonoid  $\STf$ of particular \tprs\ elements $h$ such that $\DD(f)\subseteq \DD(h)$, which delivers  the well defined map $\DD(f) \mTo \sA_\STf$ for every  $f \in \sA$.
To ensure that this map coincides with the map $\pCong \mTo \sA_\pCong$, sending a point $\pCong$ in $\DDf$ to the localization of $\sA$ by $\pCong$, sections over $\DDf$ are customized to \gprimec s in $\DDf$ whose tangible projection contains the submonoid  $\STf$. The subset of these \gprimec s  assembles   the \textbf{\fzone} $\fcDDf$ of the  open set~ $\DDf$.
Accordingly, \nustalk s are determined as inductive limits taken with respect to \fzone s determined  by \tprs \ elements,
whereas the naive definition of morphism applies and respects
\fzone s (Lemma ~ \ref{lem:fc.map}).
The building of structure \nushv\ and \lnusmrsp s is then  standard, admitting  functoriality as well. This construction provides a scheme structure $\XOX$ of  \nusmr s, called \textbf{\nusch}.

While most of our  theory follows classical scheme theory,
a \nusmr\ $\OX(\DDf)$ of sections in  an affine
\nusch\ $\XOX$, with $X = \SpecA$,  might not be isomorphic to the localized  \nusmr\ $\sA_\STf$, since sections are specialized to
\fzone s. However, this excludes \textbf{\strict\ elements} $f \in \sA$  (and in particular units) for which the equality $\fcDDf = \DDf$ holds. Therefore, in the theory of \nusch s, global sections $\Gm(X,\OX)$ are isomorphic to the underlying  \nusmr\ $\sA$ (Theorem \ref{prop:stalks}). Moreover,  there is  a one-to-one correspondence between \qhom s of \nusmr s and
morphisms of \nusch s (Theorem~ \ref{prop:6.6.9}).
Fiber products of \nusch s also exist (Proposition ~\ref{lem:g.5.4.7}).

The advantage of this framework appears in the analysis of the local \nusmr \ at a point, which includes the  notions of \textbf{tangent space}, \textbf{local dimension}, and \textbf{singularity}.  These notions are not so evident in standard tropical geometry,  since ideals are not well applicable in semirings.

\subsection{Varieties towards polyhedral geometry }
\sSkip
\noindent
Traditional tropical geometry is a  geometry over the max-plus semiring $(\Real, \max, + )$.  Features of this geometry  are balanced  polyhedral complexes of pure dimension \cite{IMS,MIK1,MS}, called tropical varieties.  These varieties are determined as the so-called ``corner loci'' of tropical polynomials and are obtained as valuation images of toric varieties, linking tropical geometry to classical theory.
Although $(\Real, \max, + )$ suffices for describing tropical varieties, from the perspective of polyhedral geometry this family is rather restrictive; for example, it does not include  ploytopes.

Supertropical algebraic sets are defined directly as ghost loci of polynomial equations \cite{IzhakianRowen2007SuperTropical,IzhakianRowen2008Resultants,IzhakianRowenIdeals};  that is,  $\tZ(f) = \{ \bfx \in \sA^{(n)} \cnd f(\bfx) \in \tG \}$. Thereby,  tropical varieties are captured as algebraic sets of tangible polynomials over  the supertropical extension of $(\Real, \max, +)$, cf. Example \ref{exmp:extended}.  This approach does not rely on the so-called ``tropicalzation'' of toric  varieties and dismisses the use of the balancing condition ~ \cite{Coordinates}. Furthermore, it yields formulation of additional polyhedral features, previously inaccessible by tropical
geometry, such as polytopes, and more generally subvarieties of the same dimension as their ambient variety (Example~ \ref{examp:varieties}).

Supertropical structures provide a sufficiently general framework to deal with finite and infinite underlying semirings, as well as with bounded semirings. Our theory allows for approaching  convex geometry and discrete geometry, utilizing similar principles as in this paper. We leave the study of these geometries for future work.

\subsection{A brief overview of   related  theories } \sSkip
\noindent
Tropical semirings $(R, \max, +)$, where  $R = \bool, \Net, \Int$,  are linked to number theory \cite{CC1,CC2,CC3} and arithmetic geometry via the  Banach
semifield theory  of characteristic one \cite{Lic}. Features of traditional tropical geometry  are received as the Euclidean closures of  ``tropicalization'' of subvarieties of a torus $(\Fld^\times)^n$, where $\Fld$ is
a non-archimedean algebraically closed valued field, complete with respect to the valuation \cite{Gat,IMS,MS}.  A generalization to subvarieties of a toric variety is given by Payne \cite{Pay09}, using stratification by torus orbits. With this geometry,
   a translation of algebro-geometric questions into combinatorial problems is  obtained,  where  varieties are replaced by polyhedral complexes,  helping to solve problems in enumerative geometry~ \cite{MIK1}.
   The  translations of classical problems into combinatorial framework  have been motivating the development  of profound  algebraic foundations for geometry over semirings.

Over the past decade, many  works have dealt with scheme theory  over semirings and monoids, aiming mainly to develop   a characteristic one arithmetic geometry over the field $\bbF_1$ with one element, e.g., Connes-Consani \cite{CC1}, Deitmar \cite{Dei08}, Durov \cite{Dur07}, To\"{e}n-Vaqui\'{e} \cite{TV}.
See  \cite{LL} for survey.
 Berkovich  uses abstract skeletons  \cite{Ber}, while   Lorscheid \cite{Lor1,Lor} uses   blueprints.
 Giansiracusa-Giansiracusa \cite{GG} and Maclagan-Rin\'{c}on \cite{MR} specialize  $\bbF_1$   to propose tropical schemes, subject to ``bend relations''.
 The  point of departure of these works is an underlying spectrum whose atoms are prime ideals.
Alternatively, other works employ congruences as atoms of spectra. However, the use of congruences raises  the issue of defining primeness that exhibits the desired attributes.
This approach is taken by Jo\'{o}-Mincheva  \cite{JoM} and Rowen \cite{RowAN}, who use twisted products,  by Bertram-Easton ~ \cite{BE13}, and by Lescot \cite{Les1,Les2,Les3}. Primeness in \cite{Les1,Les2,Les3}  depends only on equivalence to zero, ignoring other relations that are determined by a congruence.
 In~ \cite{CHWW}, primes are defined in terms of cancellative quotient monoids.

Although the above approaches address the needs of   other theories, for
polyhedral algebraic geometry they seem to  have their own deficiencies.  Some approaches are too restrictive to capture the wide range of polyhedral objects, while others are  much too abstract and difficult to be implemented.
The concept in the current paper is different in the sense that it uses congruences,  enriched by
 supertropical attributes,  as atoms of spectra  that  fulfill the required  correspondences between topological spaces and algebras. The algebraic theory then follows the classical commutative algebra, allowing for the development of scheme theory within algebraic geometry along the track of  Grothendieck.
This concept provides a clear-cut  algebraic framework for a straightforward
study of polyhedral varieties and schemes, in particular tropical varieties,   without the need to constantly referring back to their classical valuation  preimages.

\subsection{Paper outline}
\sSkip
\noindent
 For the reader's convenience, the paper is designed as stand-alone.
  We include all the relevant definitions, detailed proofs, and basic examples. We follow standard structural theory of scheme within algebraic geometry, involving  a categorical viewpoint, e.g., \cite{Bosch,Eisenbud,Gortz,HAR77}. The paper consists of two parts.  The first part (Sections \ref{sec:1}--\ref{sec:4}) is devoted to   commutative \nualg\ and  the second part (Sections \ref{sec:5}--\ref{sec:7}) develops  scheme theory over \nusmr s.

  Section~ \ref{sec:1} opens by recalling the setup of known algebraic structures, to be used in the paper.
Section~\ref{sec:2} provides the definitions of various $\nu$-structures,
%and their localization,
as well as their connections to familiar semiring structures.
Section~\ref{sec:3}  intensively   studies  congruences on  \nusmr s,  including the introduction of
\qcong s and \lcong s.
Section~\ref{sec:4} introduces  \numod s and tensor products.
Section~\ref{sec:5} discusses varieties  through the correspondences of their characteristic properties to \qcong s. Section~\ref{sec:6} constructs \nushv, combining categorical and algebraic perspectives. Finally, Section~\ref{sec:7} develops  the notions of  \nusch s and locally \nusmrsp s.

\bigskip
\part*{Part I: Commutative $\bfnu$-Algebra}
\bigskip

\section{Algebraic structures}\label{sec:1}
In this section we recall  definitions of traditional  algebraic structures and their morphisms, to be used  in this paper.
As customary, $\Net$ denotes the positive natural numbers, while $\NetZ$ stands for  $\Net \cup \{ 0 \}$;  $\Q$ and~ $\R$ denote respectively   the rational and real numbers. We write $\AAn$ for the cartesian product $A \times \cdots \times A$ of a structure $A$, with $A$ repeated~ $n$ times.
\pSkip
\textbf{\emph{In this paper addition and multiplication are always assumed to be associative operations.}}

\subsection{Congruences in universal algebra}\label{ssec:cong.univ.alg}  \sSkip

 A~\textbf{congruence} \index{congruence} $\Cong $ on
an   algebra $A$ -- a carrier algebra --  is an equivalence relation $\equiv$ that preserves
 all the relevant operations and relations of $A$. That is,   if
$a _i \equiv b_i$, $i = 1,2$,  then
\begin{enumerate}\eroman
\item
 $a_1+a_2 \equiv b_1+b_2$,
\item $a_1a_2 \equiv
b_1b_2$.\end{enumerate}
Note that to prove (ii) it is enough to show that $a_1a \equiv b_1a$ and $a a_1 \equiv a b_1$ for all $a \in A$, since then
$$a_1b_1  \equiv a_2b_1  \equiv a_2b_2  .$$
We call  $\equiv$ the \textbf{underlying  equivalence} of $\Cong  $ and write $[a]$ for the  equivalence class of an element $a \in A$. (These are called the \textbf{homomorphic
equivalences} in~\cite{IKR2}.)  We write $A/\Cong$ for the factor algebra, whose elements are equivalence classes $[a]$  determined by $\Cong$, with operations
\begin{equation}\label{eq:cong.oper}
[a] \cdot [b] = [ab], \qquad  [a]  + [b ]  = [a+ b],
\end{equation}
for $a,b \in A$.

A congruence $\Cong$ on algebra $A$ may be
viewed either as a subalgebra of $A \times A$ containing the
diagonal and satisfying two additional conditions corresponding to
symmetry and transitivity (in which case ~$\Cong$ is  described as
the appropriate set of ordered pairs), or otherwise $\Cong$ may be
viewed as an equivalence relation $\equiv$ satisfying certain
algebraic conditions. To make the exposition clearer we utilize both views, relying on the
context.

We denote the set of all congruences on an algebra $A$ by
$$\Cng(A) := \{ \ \Cong \text{ is a congruence on } A \ \}, $$
which is closed for intersection.
%We denote by $[a]_\Cong$ the equivalence class of an element $a \in A$ in $A/\Cong$, written $[a]$ when $\Cong$ is clear from the context.
We write $\Cong_1 \subseteq \Cong_2$ if $a \equiv_1 b$ implies  $a \equiv_2 b$, in other terms
$(a,b) \in \Cong_1$ implies $(a,b) \in \Cong_2$. Thus,  $\Cng(A)$ is endowed with a partial order $\leq$ induced from  $\subseteq$, i.e.,
$\Cong_1 \leq \Cong_2$ if and only if  $\Cong_1 \subseteq \Cong_2$.

The \textbf{diagonal} $\diag(A)$ of $A \times A$ is a congruence by itself, contained  in any  congruence~ $\Cong$ on $A$, and is minimal with respect to inclusion.  It  provides the bijection   \begin{equation}\label{eq:con.diag.1}
\iota: A \ISOTO \diag(A) \ds\subseteq \Cong,\qquad a \mTo (a,a),\end{equation}
with the   inverse map  $\idiag((a,a)) = a$ for any $(a,a) \in \diag(A).$ On the other hand, $\diag(A)$ is also obtained as
\begin{equation}\label{eq:con.diag.2}
\diag(A) =  \bigcap_{\Cong \in \Cng(A)} \Cong ,
\end{equation}
since an intersection of congruences is again a congruence.

\begin{rem}\label{rem:cong.gen} For  any collection $\{ (a_i, b_i) \}_{i\in I}$ of pairs $(a_i, b_i) \in A \times A$, there is a unique minimal congruence in which  $a_i \cng b_i$ for every $i \in I$. It  is termed the congruence \textbf{determined}
by the pairs $(a_i , b_i )$.  This congruence  can be obtained in two equivalent ways, either by taking the transitive closure of all the equivalences $a_i \cng b_i$ with respect to the operations and relations of $A$, or by considering  the intersection of all congruences on $A$ that include   the equivalences  $a_i \cng b_i$ for all  $i \in I$.
\end{rem}
In this paper we follow the latter approach, as it better fits our purpose, allowing a direct restriction to congruences subject to particular properties.

\begin{defn}\label{def:cong.types} A congruence $\Cong$ on an algebra $A$ is said to be
\begin{enumerate} \eroman
\item \textbf{proper}, if it has more than one equivalence class;
\item  \textbf{trivial}, if each of its equivalence classes is a
singleton (i.e., $\Cong = \diag(A)$);
  \item  \textbf{maximal},  if there is no other congruence properly containing $\Cong$;
 % \item \textbf{prime}  if $a_1 b  \equiv  a_2 b$ for some nonzero  $b\in A$ implies $a_1 \equiv  a_2$;

  \item  \textbf{irreducible}, if it cannot be written as the intersection of two congruences properly containing~$\Cong$;

  \item   \textbf{cancellative}, if $ca \equiv cb$ implies $ a
  \equiv b$.

%  \item   \textbf{power cancellative} if $a^k \equiv b^k$ for some $k > 1$ implies $ a
 % \equiv b$;

\end{enumerate}
An element  $a \in A $ is called \textbf{\trivial} with respect to
$\Cong$ if it is congruent only to itself.
 \end{defn}
\noindent In this context, an algebra $A$ is called \textbf{simple}, if the trivial congruence is  the only proper congruence on~$A$.
One may  also consider a specialization of the above congruences to a restricted family of congruences on~ $A$, determined by particular attributes. For example, maximality with respect to a given property.

\begin{rem}\label{rem:cong1} We recall some key
results from ~\cite[\S 2]{Jac2}.

 \begin{enumerate} \eroman  \item Given a congruence  $\Cong$ on an algebra
  $A$, one can endow the set $$A/ \Cong := \big \{ [a] \cnd  a \in A \big \}$$ of equivalence
 classes  with the same (well-defined) algebraic structure via \eqref{eq:cong.oper}.  The  \textbf{canonical surjective  homomorphism}
 \begin{equation}\label{eq:canonical.qu}
 \pi_\Cong: A\ONTO A/\Cong, \qquad  a \mTo [a],
\end{equation}
 is defined trivially.

\item  In the opposite direction, for any homomorphism
$\vrp:A_1 \To A_2$  one can obtain a congruence $\Cong_\vrp $ on $A_1$,
defined by  $$a \equiv_\vrp b \dss{\text{ iff }} \vrp(a) = \vrp
(b).$$
 $\Cong_\vrp$ is termed the \textbf{congruence-kernel} (written \textbf{c-kernel}) of $\vrp$, and is also denoted  by $\cker(\vrp)$.
Then, $\vrp$ induces a one-to-one homomorphism $\tlvrp
:A_1/\Cong_\vrp   \To A_2,$ via $\tlvrp ([a]) = \vrp(a)$, where
$\vrp$ factors through
$$A_1 \ONTO A_1/\Cong _\vrp \ISOTO A_2,$$ as indicated in
\cite[p.~62]{Jac2}. Therefore, the homomorphic images of $A_1$ correspond to
the congruences defined on~$A_1$. \pSkip

\item A  homomorphism $\vrp:A_1 \To A_2$ induces a map of
congruences $\vvrp: \Cong_2 \mTo \Cong_1$, sending a congruence
$\Cong_2$ on $A_2$ to the congruence $\Cong_1 = \Cong_{\pi_{2} \circ \vrp}$ on $A_1$, where
$ \pi_{2} : A_2 \Onto A_2/\Cong_2$.
That is $$a \cng_1 b \dss{\text{ iff }}  \vrp(a) \cng_2 \vrp(b).$$
When $\vrp = \pi_1 : A_1 \To  A_2 = A_1/\Cong_1$ is the canonical surjection \eqref{eq:canonical.qu}, we denote the map $\ipi_2$ also  by~ $\vpi_2$.

\end{enumerate}
 \end{rem}

Given  a homomorphism  $\vrp : A_1 \To A_2$,  there is the induced  map
\begin{equation*}
  \htvrp: A_1\times A_1 \TO A_2 \times A_2, \qquad (a,b) \Mto (\vrp(a), \vrp(b))
\end{equation*} that restricts to a map  of  congruences $\Cong_1 \subset A_1 \times A_1$.
Note that  $\htvrp(\Cong_1)$ need not be a congruence on $A_2$, as transitivity and reflexivity may fail, but it induces a congruence-inclusion relation:
\begin{equation}\label{eq:cong.inclus.map}
  \htvrp(\Cong_1) \subset \Cong_2 \dss{\text{ iff }}  (\vrp(a), \vrp(b)) \in \Cong_2 \ \text{ for all } \ (a,b) \in \Cong_1.
\end{equation}

%We denote by $\vrp_*(A)$ the congruence on $A'$ generated by $\vrp(\Cong)$.
To approach restrictive relations on subsets  of $A$ we  use the following terminology.
\begin{defn}\label{def:partial.cong}  The \textbf{restriction} of a congruence $\Cong$ on $A$ to a subset $B$ of  $A$ is
$$ \Cong|_B := \{ (a,b) \in \Cong \ds | a,b \in B\}.$$
A \textbf{partial congruence} on $A$ is a congruence on a subalgebra  $B$  of  $A$.
\end{defn}
Set theoretically, a congruence $\Cong'$ on a subalgebra  $B \subset A$ is a subset of $B \times B \subset \AxA$ containing the diagonal of $B$, but it is not a subalgebra of $\AxA$ that contains the
diagonal of $A$. For this reason we call it ``partial''. However, any partial congruence extends naturally  to a congruence on the whole  $A$ by
joining  the diagonal of $A \sm B$, subject to the transitive closure of $\Cong'$ over  $A$. A restriction $\Cong|_B$ of $\Cong$ is a congruence on $B$,  if $B$ is a subalgebra of $A$.

\begin{defn}  Let  $\Cong_1$ and $\Cong_2$ be  congruences on  algebras $A_1$ and $A_2$, respectively.
A \textbf{homomorphism  of congruences} $\Psi: \Cong_1 \To \Cong_2$
  is a \hom\ $\Psi = (\psi, \psi)$, where $\psi: A_1 \To A_2$ is a \hom\ for which
  $$ a \equiv_1 b \Dir  \psi(a) \equiv_2 \psi(b),$$
  i.e., $\Psi((a,b)) = (\psi(a), \psi(b)) \in \Cong_2$ for all $(a,b) \in \Cong_1$.
\end{defn}

We recall some standard  relations on congruences.
The product of two  congruences $\Cong_1, \Cong_2 \in \Cng(A)$ is defined  as
$$ \Cong_1 \Cong_2 := \{ (ac,bd) \cnd (a,b) \in \Cong_1, (c,d) \in \Cong_2 \},$$
for which the relation $\Cong_1  \Cong_2 \subseteq  \Cong_1 \cap \Cong_2$ holds. Then, for a given congruence $\Cong$ on $A$,  we have the chain
\begin{equation}\label{eq:cong.chain}
  \Cong \ds\supseteq \Cong^2 \ds \supseteq \cdots  \ds \supseteq \Cong^n \ds \supseteq \cdots \quad,
\end{equation}
which induces quotients on $\Cong$.

\begin{defn}\label{def:cong.fractor} The quotient $\Cong_1 / \Cong_2$ of congruences $\Cong_1 \subset \Cong_2$ on $A$ is defined
as
$$ \Cong_1 / \Cong_2 : = \{ ([a]_{2}, [b]_{2}) \cnd (a,b) \in \Cong_1 \},$$
for which there exists the  homomorphism
$$ \Psi: \Cong_1 \TO \Cong_1/\Cong_2   $$
of congruences.
  \end{defn}
Composing this definition with \eqref{eq:cong.chain}, one obtains a sequence of congruence homomorphisms
\begin{equation}\label{eq:cong.chain.2}
  \Cong \TO  \Cong / \Cong^2 \TO  \cdots  \TO \Cong^{n-1}/ \Cong^n  \TO \cdots .
\end{equation}
Later, we mainly refer to the initial step of this sequence.
%, where $\Cong / \Cong^2$ is  realized as a module over $\Cong$.
\begin{rem}\label{rem:2cong} If $\Cong_1 \subset \Cong_2$ are congruences on $A$, then $\Cong_1 / \Cong_2$ is a congruence on $A/\Cong_1$, for which  $(A/\Cong_1)/(\Cong_1/ \Cong_2) \cong A/\Cong_2$. This also gives a factorization of the \hom\ $A \To  A/\Cong_2$ as $A \To  A/\Cong_1 \To  A/\Cong_2.$

\end{rem}

We defined the \textbf{congruence closure} of $\Cong_1 \cup \Cong_2 $ and $\Cong_1 + \Cong_2 $, for  $\Cong_1, \Cong_2 \in \Cng(A)$,  to be respectively the intersections, when are  nonempty   \footnote{The intersection of all congruences containing $ \Cong_1 \ast \Cong_2$ for a given operation $\ast$ , possibly subject to certain properties, implicitly produces the minimal transitive closure of $ \Cong_1 \ast \Cong_2$.},
\begin{equation}\label{eq:cong.closure}
  \overline{\Cong_1 \cup  \Cong_2} := \hskip -2mm   \bigcap_{\scriptsize \begin{array}{c}
 \Cong \in \Cng(A)\\
\Cong_1 \cup \Cong_2 \subset \Cong
\end{array}} \hskip -2mm  \Cong \ ,
\qquad
\overline{\Cong_1 +  \Cong_2} := \hskip -2mm  \bigcap_{\scriptsize \begin{array}{c}
 \Cong \in \Cng(A)\\
\Cong_1 + \Cong_2 \subset \Cong
\end{array}} \hskip -2mm  \Cong \ ,
\end{equation}
where $\Cong_1 \cup \Cong_2$ is the set theoretic union of $\Cong_1$ and $\Cong_2$ and $\Cong_1 + \Cong_2$ is induced  by the addition of $\AxA$.

\subsection{Semigroups, monoids, and semirings}  \sSkip

 A (multiplicative) \textbf{semigroup} $\tS:=(\tS, \cdot \, )$ is a set of elements  $\tS$, closed with respect to an associative binary operation ~$(\, \cdot \, )$.
A \textbf{monoid} $\tM:=(\tM, \cdot \, )$ is a semigroup with an identity element $\mone$. Formally,  any semigroup $\tS $  can be  attached with
identity element $\sone$ by declaring that $\sone \cdot a = a \cdot \sone = a$ for
all $a\in \tS$. So, when dealing with multiplication, we work with
monoids. As usual, when $(\, \cdot \, )$ is clear from the context, $a \cdot b$ is written as $ab$.
%The \textbf{free} monoid $\tM$ is the monoid in formal symbols -- letters --  with multiplication
%given by concatenation.
An \textbf{abelian} monoid is a commutative monoid, i.e., $ab = ba$ for all $a,b \in \tM$.
Analogously, we use additive notation for monoids, written $\tM := (\tM, +)$, whose identity is denoted by ~ $\zero_\tM$.

\begin{rem}\label{rem:mon.intersection} The intersection $\tA \cap \tB$ of two submonoids $\tA, \tB \subset \tM$ is again a submonoid. Indeed, if $a,b \in \tA \cap \tB$, then $ab \in \tA$ and $ab \in \tB$, and hence $ab \in \tA \cap \tB$.

\end{rem}

 An abelian monoid
$\tM := (\tM, \cdot \, )$ is \textbf{cancellative} with respect to
a subset $\TT \subseteq \tM$, if $ac = bc$ implies $a=b$ whenever
$a,b \in \tM$ and $c \in \TT.$ In this case, we  say that
$\TT$ is a \textbf{cancellative subset} of $\tM$. Clearly, when $T$ is cancellative,  the monoid generated by $\TT$ in $\tM$ also is cancellative,
so one can assume that ~$\TT$ is a submonoid. A monoid $\tM$ is
\textbf{strictly cancellative},  if $\tM$ is cancellative with
respect to itself.

An
element $a$ of $\tM$ is  \textbf{absorbing}, if $ab = ba = a$ for
all $b\in \tM$. Usually, it is identified as
$\zero_\tM$. A monoid $\tM$ is called \textbf{pointed monoid} if it has an
absorbing  element $\zero_\tM$.
An element $a\in \tM$ is a \textbf{unit} (or \textbf{invertible}), if there exists $b \in \tM$ such that $ab = ba = \one_\tM$.  The subgroup of all units in  $\tM$ is denoted by $\tM^\times$.

\begin{defn}\label{def:orderedMonoid}
 A \textbf{partially ordered monoid}
is a monoid~$\tM $ with a partial order $\leq$ that respects the monoid operation:
\begin{equation}\label{ogr1} a \le b \quad \text{implies}\quad ca
\le cb, \ ac
\le bc,  \end{equation} for all  $a,b,c \in \tM$. A monoid
$\tM $ is   \textbf{ordered} if the ordering $\leq$ is a total order.
\end{defn}

When working with pointed ordered monoid we usually assume that $\zero_\tM$ is a minimal (or maximal) element in $\tM$.

\begin{definition}\label{def:mon.hom}
A \textbf{homomorphism} of
monoids is a map
$\srHom : \tM \To \tN $  that respects the monoid operation:
\begin{enumerate} \eroman
    \item $\srHom(a \cdot b) = \srHom(a) \cdot \srHom(b)$;
    \item  $\srHom(\one_\tM) = \one_{\tN}$.
\end{enumerate}
$\srHom$ is called \textbf{local monoid homomorphism},
if $\srHom^\inv(\tN^\times) = \tM^\times$ (every $\srHom$ satisfies `` $\supset$'').
\end{definition}

 A standard general reference for structural theory of
semirings is~\cite{golan92}.
\begin{defn}\label{def:semiring} \eroman
A (unital \footnote{The given definition is for a unital semiring. But, as in this paper deals only with unital semirings, we call it semiring, for short.}) \textbf{semiring} $R := (R,+ \, ,\cdot \,)$ is a set $R$ equipped with two (associative)
binary operations $(+)$ and~$(\, \cdot \,)$,  addition and
multiplication respectively, such that:
\begin{enumerate} \eroman
    \item $(\SR, +)$ is an abelian monoid with identity $\rzero$;
    \item $(\SR, \; \cdot \; )$ is a monoid with  element
    $\rone$;
    \item $\rzero$ is an absorbing element, i.e., $a \cdot \rzero = \rzero \cdot a = \rzero$  for every $a \in R$;
    \item multiplication distributes over addition.

\end{enumerate}
$R$ is a  \textbf{commutative semiring}, if $a b= ba $ for all $a,b \in R$.
\end{defn}

A semiring $R$ is said to be \textbf{idempotent semiring}, if $a + a = a$ for every $a \in R$.
It is called \textbf{bipotent} (sometimes called \textbf{selective}) if $a + b \in \{ a,b\} $ for any $a,b$.
For example, the max-plus semiring $\zReal := (\Real \cup \{ -\infty\}, \max, +)$ is a bipotent semiring with $\one_{\zReal} = 0$ and  $\zero_{\zReal} = - \infty$, see e.g. \cite{pin98}.

\begin{rem}\label{makesemi} Any  (totally) ordered monoid $(\tM, \cdot\,)$
  gives rise to
a bipotent semiring. We formally add the zero element $\zero$ as the smallest element,   employed  also as absorbing element, and define  the idempotent addition $a+b$  to be $\max\{ a, b\}$.
  Indeed, this is a semiring.
Associativity is clear, and distributivity  follows from~
\eqref{ogr1}.
\end{rem}

To designate meaningful sums in semirings, we exclude inessential terms in  the following sense.
\begin{defn}\label{def:reduced.sum}
 A sum $\sum_i {a_i}$ of elements is said to be \textbf{\uexpr} if $\sum_i {a_i} = \sum_{j \neq i} {a_j}$ for some $i$; in this case $a_i$ is said to be \textbf{inessential}.   Otherwise,  $\sum_i {a_i}$ is called \textbf{\expr}, in which each $a_i$ is \textbf{essential}.
\end{defn}
For example, in an idempotent semiring every sum of the form  $ a + a$ is a \uexpr\ sum, while $\zero$ is always inessential.

\begin{defn}\label{def:ideal.smr}
  An \textbf{ideal} $\mfa$ of a semiring
$R := (R, +, \cdot \,)$, written $ \mfa \lhd R$, is an additive submonoid of
$(R,+)$ such that $ab \in \mfa $ and $ ba \in \mfa$ for all $a \in
\mfa$ and $b \in R$.
\end{defn}

 An ideal $\mfa$ of $R$ determines a ``Rees type'' congruence
$\Cong_\Rees(\mfa)$   on $R$ by letting $(a,b) \in
\Cong_\Rees(\mfa)$ iff $a,b\in \mfa$. Then, every element
of $R \setminus \mfa$  is \trivial\ with respect to $
\Cong_\Rees(\mfa)$, so $\Cong_\Rees(\mfa) $  includes no nontrivial relations on $R \setminus \mfa$, which makes it very limited.

Semiring ideals do not have the extensive role as ideals have in ring theory, since,  due to the lack of negation, they do not determine  congruences naturally.
Furthermore,  their correspondence to kernels of semiring homomorphisms is not so obvious.
Yet, they are useful for classifying  special substructures of semirings, and we employ them only for this purpose.
The passage to factor semirings is done by semiring congruences, which are a particular case of congruences on algebras,  cf.~ \S\ref{ssec:cong.univ.alg}.
\begin{defn}\label{def:equaliser}
The \textbf{equaliser} of two elements $a,b$ in a semiring $R$ is the set
$$  \Eql(a,b)=\{s \in R\mid sa = sb, \ as = bs  \}, $$
which might  be empty.
\end{defn} When $R$ is a commutative semiring, the equaliser $\Eql(a,b)$ is a semiring ideal of $R$, often  playing the role of annihilators  in ring theory.

\begin{definition}\label{def:smr.hom}
A \textbf{homomorphism} of \semirings0 is  a map
$\srHom : R \To S $  that preserves addition and multiplication.
To wit, $\srHom  $   satisfies the following properties for all
$a, b  \in R$:
\begin{enumerate} \eroman
    \item $\srHom(a + b) = \srHom(a) + \srHom(b)$;
    \item $\srHom(a \cdot b) = \srHom(a) \cdot \srHom(b)$;
    \item $\srHom(\rzero) =
\zero_{S}$;
    % \item  $\srHom(\rone) = \one_{R'}$.
\end{enumerate}
A \textbf{unital} semiring homomorphism is a semiring homomorphism that
preserves  the multiplicative identity, i.e., $\srHom(\rone) = \one_{S}$.
\end{definition}
In the sequel, unless otherwise is specified, semiring homomorphisms are all assumed to be unital.

\subsection{Monoid localization}\label{sec:monLocal} \sSkip

 Recall the well-known construction of the
\textbf{localization} $\iMS\tM$ of an
abelian monoid $\tM := (\tM, \cdot \,)$ by a
cancellative submonoid $\MS$, cf.  Bourbaki \cite{B}.   The elements of $\iMS\tM$  are the fractions
$\frac ac$ with  $a\in \tM$ and  $c\in \MS,$ where $$\frac ac = \frac
{a'}{c'} \dss{\text{ iff }} ac' = a'c,$$ and multiplication given
by
$$\frac {a_1}{c_1}   \frac{a_2}{c_2} = \frac{a_1
  a_2}{c_1c_2}.$$
(Although many texts treat localization for algebras, precisely
the same constructions and proofs work for monoids.) This
construction is easily generalized to non-cancellative submonoids,
where now
\begin{equation}\label{eq:nonc.loc}
\frac ac = \frac {a'}{c'} \dss{\text{ iff} } ac'c'' =
a'cc'' \ \text{ for some } c'' \in \MS.
\end{equation}
(In the case of a pointed monoid $\tM$, we assume that $\zero_\tM \notin C$.)
If the monoid $\tM$ is ordered, then $\iMS \tM$ is also ordered, by letting
$\frac a c  \leq \frac {a'}{c'}$ iff $a {c'} c'' \leq  {a'} c c''$ for some $c'' \in C$.

Any monoid homomorphism $\varphi: \tM \To \tN$, for which
$\varphi(c)$ is invertible for every  $c\in \MS$, extends naturally
to a unique monoid homomorphism $\htvrp: \iMS \tM \To \tN$,
given by
$$\htvrp \bigg(\frac ac \bigg) =  \varphi(a){\varphi(c)}^{-1}.$$
If $\tM$ is endowed also with addition $(+)$, then $(+)$ extends to
$\iMS \tM$ via the rule: $$\frac {a_1}{c_1} + \frac{a_2}{c_2} =
\frac{c_2a_1 + c_1 a_2}{c_1c_2}.$$ Each of the basic properties
(associativity of $(+)$ and distributivity) can be extended straightforwardly  from
$\tM$ to $\iMS\tM$.

For a strictly cancellative monoid $\tM$ (and thus without
$\mzero$), or when $C = \tM$,  we write $Q(\tM)$ for the localization $\iMS \tM$. Therefore, $Q(\tM)$ is an abelian group, since
$(\frac ac)^{-1} = \frac ca .$

\subsection{Modules over semirings} \sSkip

Modules (called also
{semi-module} in the literature) over semirings are defined similarly to modules over rings \cite{Katsov}.

 \begin{defn}\label{def:modules}
 A (left) $R$-\textbf{module}  over a semiring $R$ is an abelian
monoid $M := (M,+)$ with an operation $R\times M \To M$ -- a left action of $R$ -- which is
associative and distributive over addition, and which satisfies
$a\zero_M = \zero_M = \zero_R v$, $\rone v = v$ for all $a$ in $R$, $v \in M$.

An \textbf{$R$-module homomorphism} is a map $\vrp:M \To N$ that satisfies the conditions:
\begin{enumerate}\eroman
  \item $\vrp(v+w) = \vrp(v) + \vrp(w)$ for all $v,w \in M$;
  \item $\vrp(av) = a \vrp(v)$ for all $a \in R$ and $v \in M$.
\end{enumerate}
$\Hom_R(M,N)$ denotes the set of $R$-module homomorphisms
from $M$ to $N$.
\end{defn}

 Right modules are defined dually.
%We assume that our modules contain a zero element, in order to
%be able to utilize linear algebra, but may assume that $R$ has no absorbing element.
If $R$ and $S$ are semirings and $M$ is a left $R$-module
and a right $S$-module, then $M$ is called \textbf{$(R,S)$-bimodule}, if $(av)b = a(vb)$ for
all $a \in R$, $v \in   M$, $b  \in S$.

\begin{example} Let $R$ be a semiring.

\begin{enumerate}\eroman
\item
 A semiring ideal  $ \mfa \lhd R$ is an
$R$-module, cf. Definition \ref{def:ideal.smr}.

\item The direct sum of $R$-modules is clearly an $R$-module. In
particular, we define $R^{(n)}$ to be the direct sum of $n$ copies
of $R$.

\end{enumerate}
\end{example}

Congruences of monoids, cf. \S\ref{ssec:cong.univ.alg}, extend directory to modules.

 \begin{defn} An $R$-\textbf{module
congruence} is a monoid congruence on $M$ that  satisfies the additional
property that, if $v \equiv w$, then $av \equiv a w$ for all
$ v,w \in M$ and  $a \in R.$

\end{defn}

\section{Supertropical structures }\label{sec:2}

To develop  a solid  algebraic theory, we introduce a new monoid structure which leads to existing definitions of supertropical structures \cite{zur05TropicalAlgebra,IKR1,IzhakianRowen2007SuperTropical,Coordinates},  these are later generalized by applying additional  modifications.

\subsection{Additive \numon s}\label{ssec:numon} \sSkip

We start with our basic underlying additive  structure.

\begin{defn}\label{def:numonoid} An  \textbf{additive \numon} is a triplet
$\tM := (\tM, \tG, \nu)$, where  $\tM = (\tM, + )$ is an  additive abelian monoid \footnote{Supertropical monoids in \cite{IKR3} regard with a multiplicative monoid structure, which does not involve the monoid ordering, and are of a different nature.}, $\tG$ is a distinguished partially ordered  submonoid of $\tM$ with $\zero_\tM < a$ for all $a \in \tG$,  and $\nu: \tM \To \tG$ is an idempotent monoid homomorphism  (i.e., $\nu^2 = \nu$)---a projection on $\tG$---satisfying for every $a, b \in \tM$ the conditions:
\begin{description} \dispace
  \item[\NMa:]  $a+b =  a$   whenever  $\nu(a) > \nu(b)$,
  \item[\NMb:] $a+b =  \nu(a)$   whenever  $\nu(a) = \nu(b)$,
  \item[\NMc:] If $a+b \notin \tG$ and  $a +  \nu(b) \in \tG$, then    $a+b = \nu(a) + b$.
\end{description}
The element $\zero_\tM$ is  the monoid identity: $\zero_\tM + a =a + \zero_\tM = a$ for every $a \in \tM$.
\end{defn}
\noindent  By definition  $\nu(a) =a$ for every $a \in \tG$, and  $\zero_\tM \in \tG$, since $\tG$ is an (additive) submonoid of ~$\tM$. As $\nu$ is a monoid homomorphism, we have
$\nu(a+b) = \nu(a) + \nu(b)$ for any $a,b \in \tM$.  We often term ~$\tM$ a \textbf{\numon}, for short.

In the extreme case that the partial ordering of $\tG$ is degenerate, i.e., only trivial relations $a = a$ occur,    Axiom \NMa\ is dismissed.
Replacing Axiom \NMb\ by the (weaker) axiom
\begin{description} \dispace
  \item[\NMb':] $a+a =  \nu(a)+a = \nu(a)$ for all $a \in M,$
\end{description}
Axiom \NMc\ implies \NMb.  Indeed, suppose $\nu(a) = \nu(b) $ but $a+ b \notin \tG$, then $a + \nu(b) = a + \nu(a) \in \tG$, and hence $a+b = \nu(a) + b = \nu(b) + b \in \tG$ -- a contradiction. Literally,
Axiom \NMc\ gives a condition whether the contribution of $a$ to a sum $a+b$ is ``idempotent'', namely $a +b  = a+ a + b = \nu(a) + b$.
This includes that case that $a+ b $ is \uexpr\ (Definition \ref{def:reduced.sum}), i.e., $a+b = \nu(a) + b =  b$.

The
elements of $\tG$ are termed \textbf{ghost elements} and  $\tG$ is called the \textbf{ghost submonoid} of $\tM$. An element $a \notin \tGz$ is said to be \textbf{non-ghost}.
Although, $\mzero \in \tG$, as an exceptional case,  it is sometimes considered as a non-ghost element.
The projection map $\nu: \tM
\To \tG$ is called  the \textbf{ghost map} of $\tM$.
 We write~$a^\nu$ for the image $\nu(a)$ of $a$ in $\tG$. ($\tM$ might have elements $a \in \tG$ without  $\nu$-preimage in $\tM \sm \tG$.)
 A ~\numon\ $\tM$ is said to be \textbf{ghost monoid}, if $\tM = \tG$, i.e., if all its elements are ghost. In this case, the ghost $\nu = \id_\tM$ is the identity map.

  We say that elements $a$ and $b$ in $\tM$ are  \textbf{\nuequivalent}, written  $ a \nucong b$, if
$a^\nu = b^\nu$. That is
\begin{equation}\label{eq:nu.eqv}
 a \nucong b \dss \Iff \nu(a) = \nu(b).
\end{equation}
% By this notation we have $\zero_\tM = (\zero_\tM)^\nu $, where $\zero_\tM \in \tG$ by definition.
The \textbf{\nufiber} of an element $a \in \tM$ is the set
\begin{equation}\label{eq:nu.fib}
\nufib(a) := \{ b \in \tM \cnd b \nucong a \},
\end{equation}
 usually considered for $a \in \tG$. Accordingly, for any $b,b' \in  \nufib(a)$ we have
$ b + b' = a^\nu$,  while  $\nufib(a) \cap \tG = \{ a^\nu \}$ for every $a \in \tM$.
\begin{rem}\label{rem:nmon.zero}
One can easily verify the following properties, obtained from Axioms \emph{\NMa}, \emph{\NMb} and the ordering of $\tG$.

\begin{enumerate}\ealph
  \item $a + b = \zero_\tM \impl a = b = \zero_\tM .$\footnote{This property is called ``lack zero sums'' in  \cite{decomposition}.}
  \item $\nu(a) = \zero_\tM \impl a = \zero_\tM .$
  \item $a+a=a+a^\nu =a^\nu$ for every $a \in \tM$.
\end{enumerate}
\end{rem}

We define the \textbf{ghost surpassing} relation:
\begin{equation}\label{eq:gs.relation}
a \nmod b \quad \text{ if } \ a = b + c \ \text{ for some } \ c \in \tG.
\end{equation}
Accordingly, since $\tG$ is a submonoid,  if $b \in \tG$ and $a \nmod b$, then $a \in \tG$.
Also, we always have $a^\nu \nmod a$, while $a \nmod a^\nu$ only when  $a \in \tG.$

%\begin{rem}\label{rem:nu.order}
Given a \numon\ $\tM := (\tM, \tG, \nu)$, one can recover the  partial ordering of $\tG$ from the additive structure of $\tM$. That is,  $a > b$ iff $a+b = a$, for $a,b \in \tG$.
%\end{rem}
The ordering of $\tG \subseteq \tM$  induces a (partial) $\nu$-ordering $\nuge$ on the whole  $\tM$ via the ghost map,   defined as
\begin{equation}\label{eq:nu.order}
\begin{array}{rcl}
  a \nuge b  & \ds {\Leftrightarrow}&  a^\nu \ge  b^\nu,  \\
   a \nug b & \ds {\Leftrightarrow} & a^\nu > b^\nu,
   \end{array}
\end{equation}
   in which $a \nuge \zero_\tM$ for every $a \in \tM$.

\begin{example}\label{exmp:nu.mon} In the degenerate case that $\tM$ is a \numon\ with $\tG = \{\zero_\tM, a \}$ and ghost map $\nu: b \mTo a$ for all $b \neq \zero_\tM$ in $\tM$, by Axiom \NMb,  $b+c = a$  for all $b,c \neq \zero_\tM $. When  $\tG = \{ \zero_\tM \}$, we have  $\nu: b \mTo \zero_\tM$ for all $b$, implying that $b = b + \zero_\tM = \nu(b) = \zero_\tM$, and thus $\tM = \{ \zero_\tM \} $ is trivial.
\end{example}
   For \expr\ sums %(Definition \ref{def:reduced.sum})
   $x = \sum_{i \in I} a_i $ and $y = \sum_{j \in J} a_j $  with nonempty finite sets of indices $I,J$,  we define the relation  $\lsset$ as
   \begin{equation}\label{eq:lorder}
     x \lsset y  \dss {\Leftrightarrow} I  \subset J ,
   \end{equation}
   which introduces an additional  partial ordering  on $\tM$.
   % Clearly $x < y $ implies $x \lsset y$.
\begin{defn}\label{def:numon.hom} A \textbf{homomorphism} of additive \numon s
is a monoid homomorphism
\begin{equation}\label{eq:nu.mon.hom} \vrp: (\tM,  \tG, \nu)
\TO (\tM', \tG', \nu')\end{equation} of  \numon s, i.e., $\vrp(a+b) = \vrp(a)+ \vrp(b)$ for all $a,b \in \tM$, where
 $\srHom(\zero_\tM) = \zero_{\tM'}$.
 A \textbf{gs-morphism} (a short for ghost surpassing morphism) is a map $\phi: \tM \To \tM'$ that satisfies $\phi(a)+ \phi(b) \nmod \phi(a+b)$ for all $a,b \in \tM$.

%The \textbf{image} of $\vrp$, denoted $\im(\vrp)$, has  $\vrp(\tM) \subset  \tM'$  as its underlying set whose \numon\ structure is  induced from  the operations $+$ and $\nu$ in $\tM'$.
 The \textbf{image} of $\vrp$, denoted by $\im(\vrp)$, is the set  $\vrp(\tM) \subset  \tM'$.
 The \textbf{ghost-kernel}  of $\vrp$, abbreviated \textbf{g-kernel},    is defined as
$$\gker(\vrp) : = \{ a \in \tM \cnd \vrp(a) \in \tG'\} \subset \tM. $$
We say that $\vrp$ is \textbf{ghost injective}, if $\gker(\vrp) = \tG.$\footnote{Note that ghost injectivity does not imply that the restriction $\vrp|_\tG: \tG \To \tG' $ is an injective map. } A homomorphism $\vrp$ is a \textbf{ghost homomorphism}, if $\gker(\vrp) = \tM.$
\end{defn}

Clearly any gs-morphism is a \hom.
It easy to verify that $\gker(\vrp)$ is a \nusmon\  of $\tM$ containing the ghost submonoid $\tG$.
A \numon\ homomorphism \eqref{eq:nu.mon.hom} respects the ghost map $\nu: \tM
\To \tG$,  as well as the induced $\nu$-ordering ~\eqref{eq:nu.order}:

\begin{lem}\label{lem:nu.hom} Any \hom\ $\vrp: \tM \To \tM'$ of additive \numon s  satisfies the following
properties for all $a,b \in \tM$:
\begin{enumerate}\eroman
    \item   $\vrp (a^{\nu})=\vrp (a)^{\nu'};$
 \item   $\vrp (\tG) \subseteq \tG'$, and thus $\tG \subseteq \gker(\vrp)$;

   \item    If  $a >_\nu b$,
 then  $\vrp(a)  > _ {\nu'} \vrp(b); $

   \item    If  $a \cong_ {\nu}b$,
 then  $\vrp(a)  \cong _ {\nu'} \vrp(b). $

\end{enumerate}
\end{lem}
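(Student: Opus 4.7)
The plan is to establish (i) first, then derive (ii)--(iv) from it together with the defining axioms of a $\nu$-monoid. The key identity powering everything is Remark \ref{rem:nmon.zero}(c), which says $a+a = a^\nu$ in any $\nu$-monoid. Applied inside $\tM$ it rewrites $a^\nu$ as a pure sum, and applied inside $\tM'$ it converts sums into $\nu'$-images. Since $\vrp$ is a monoid homomorphism, these two applications can be bridged, which is exactly (i): $\vrp(a^\nu) = \vrp(a+a) = \vrp(a) + \vrp(a) = \vrp(a)^{\nu'}$.

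With (i) in hand, (ii) and (iv) are immediate. For (ii), every $a \in \tG$ satisfies $a = \nu(a) = a^\nu$ because $\nu$ restricts to the identity on $\tG$; applying (i) gives $\vrp(a) = \vrp(a)^{\nu'}$, which lies in $\tG'$, so $\vrp(\tG) \subseteq \tG'$ and hence $\tG \subseteq \gker(\vrp)$ by the definition of the g-kernel. For (iv), the hypothesis $a^\nu = b^\nu$ is transported by $\vrp$ and rephrased via (i) as $\vrp(a)^{\nu'} = \vrp(b)^{\nu'}$, which is exactly $\vrp(a) \cong_{\nu'} \vrp(b)$.

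Part (iii) is the most delicate. The plan is to convert the strict ghost inequality $a^\nu > b^\nu$ into a purely additive identity via Axiom NM1, namely $a^\nu + b^\nu = a^\nu$ inside $\tG$. Pushing this equality through the monoid homomorphism $\vrp$ and invoking (i) inside $\tM'$ yields $\vrp(a)^{\nu'} + \vrp(b)^{\nu'} = \vrp(a)^{\nu'}$ in $\tG'$, which by the compatibility of the partial order on $\tG'$ with its additive structure encoded in the axioms of Definition \ref{def:numonoid} promotes back to $\vrp(a)^{\nu'} > \vrp(b)^{\nu'}$, i.e., $\vrp(a) >_{\nu'} \vrp(b)$. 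The main obstacle, insofar as any step of this lemma is nontrivial, is precisely this last promotion: one must use the axioms governing $\tG'$ to rule out a collapse $\vrp(a)^{\nu'} = \vrp(b)^{\nu'}$, since an additive identity in an idempotent submonoid only records the non-strict order. This is where the partial-order structure built into the notion of a $\nu$-monoid plays its essential role.
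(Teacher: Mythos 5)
Your argument for (i), (ii), and (iv) matches the paper's essentially verbatim, with (iv) slightly streamlined: you apply $\vrp$ directly to $a^\nu = b^\nu$, while the paper routes through the identity $a + b = a^\nu = b^\nu$. For (iii), the paper uses Axiom NM1 to obtain $a + b = a$ in $\tM$ and pushes that through $\vrp$, whereas you work inside $\tG$ with $a^\nu + b^\nu = a^\nu$; after applying $\nu'$ these reach the same additive identity $\vrp(a)^{\nu'} + \vrp(b)^{\nu'} = \vrp(a)^{\nu'}$ in $\tG'$, so the two routes are equivalent.

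Your closing remark about a possible collapse points at something real, but the resolution you offer misidentifies the mechanism. The axioms of Definition \ref{def:numonoid} do \emph{not} rule out $\vrp(a)^{\nu'} = \vrp(b)^{\nu'}$ when $a^\nu > b^\nu$: for instance, every $\nu$-monoid admits a homomorphism onto the two-element $\nu$-monoid $\{\zero, e\}$ (with $e$ ghost) sending $\zero \mapsto \zero$ and every nonzero element to $e$, and this collapses the entire nonzero part of the ghost order. What actually closes the step is the paper's convention, stated just before \eqref{eq:nu.order}, that the ghost order is recovered from addition -- ``$a > b$ iff $a + b = a$, for $a, b \in \tG$'' -- under which the additive identity $\vrp(a)^{\nu'} + \vrp(b)^{\nu'} = \vrp(a)^{\nu'}$ already \emph{is} the statement $\vrp(a)^{\nu'} > \vrp(b)^{\nu'}$, leaving nothing further to argue. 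The paper's own proof of (iii) cites only \eqref{eq:nu.order} and makes this identification silently, so your proof is no weaker than the original; but the load-bearing step rests on that recovery convention, not on an axiom that excludes equality.
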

\begin{proof} (i): Write $\vrp (a^{\nu})= \vrp (a+a) =  \vrp (a)+\vrp(a) =\vrp (a)^{\nu'}$.
  \pSkip
  (ii):  Immediate from (i), since any $a \in \tG$ satisfies
   $\vrp (a)=\vrp (a^{\nu}) = \vrp (a)^{\nu'}\in \tG'$.
\pSkip
   (iii): If  $a >_ {\nu}b$, then $a+b = a$,
 implying  $ \vrp(a) = \vrp( a +  b) =  \vrp(a) +  \vrp(b) , $
i.e.,  $  \vrp(a)  > _ {\nu'} \vrp( b) $ by \eqref{eq:nu.order}.
\pSkip
   (iv): If  $a \cong
_ {\nu}b$, then $a+b = a^\nu = b^\nu$,
 implying  $ \vrp(a) ^{\nu'} = \vrp( a +  b) =  \vrp(b)^{\nu'}, $
i.e.,  $  \vrp(a)  \cong _ {\nu'} \vrp( b). $
\end{proof}

\begin{lem}\label{lem:injct.hom}
  Let $\vrp: \tM \To \tM'$ be a \hom\ of additive \numon s.
  \begin{enumerate}\eroman
    \item If $\vrp$ is  injective,
  then $\gker(\vrp) = \tG.$

  \item If $\gker(\vrp) = \tG$, then $a+b \in \tG$ for any $a,b$ with $\vrp(a) = \vrp(b)$.

  \end{enumerate}

\end{lem}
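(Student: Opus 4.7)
The plan is to leverage Lemma \ref{lem:nu.hom}, which already supplies the nontrivial inclusion $\tG \subseteq \gker(\vrp)$ and the compatibility $\vrp(a^\nu) = \vrp(a)^{\nu'}$. Both parts of the lemma then reduce to a short manipulation that forces sums of $\nu$-equivalent images into $\tG'$.

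For part (i), the only thing to check is the reverse inclusion $\gker(\vrp) \subseteq \tG$. Take $a \in \gker(\vrp)$, so that $\vrp(a) \in \tG'$. Since $\nu'$ restricts to the identity on $\tG'$, this gives $\vrp(a)^{\nu'} = \vrp(a)$. Combining with Lemma \ref{lem:nu.hom}(i), I obtain
\[
\vrp(a^\nu) = \vrp(a)^{\nu'} = \vrp(a).
\]
Injectivity of $\vrp$ then yields $a = a^\nu$, hence $a \in \tG$, as required.

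For part (ii), suppose $\vrp(a) = \vrp(b)$. Using that $\vrp$ is an additive monoid homomorphism together with Remark \ref{rem:nmon.zero}(c) applied in $\tM'$,
\[
\vrp(a+b) = \vrp(a) + \vrp(b) = \vrp(a) + \vrp(a) = \vrp(a)^{\nu'} \in \tG'.
\]
Thus $a+b \in \gker(\vrp) = \tG$, where the last equality is the standing hypothesis of (ii).

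There is no real obstacle here: both assertions are essentially direct consequences of the ghost projection identity $\vrp(a^\nu) = \vrp(a)^{\nu'}$, one tapped via injectivity and the other via the identity $x+x = x^{\nu'}$ in the target \numon. The only point to handle carefully is to make sure we distinguish $\nu$ from $\nu'$ and invoke that $\nu'$ acts as the identity on $\tG'$ when promoting $\vrp(a) \in \tG'$ to $\vrp(a) = \vrp(a)^{\nu'}$ in the proof of (i).
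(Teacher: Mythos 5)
Your proof is correct and follows essentially the same route as the paper: part (i) is the paper's argument in direct rather than contrapositive form (using $\vrp(a^\nu)=\vrp(a)^{\nu'}=\vrp(a)$ and injectivity to get $a=a^\nu$), and part (ii) is the paper's chain $\vrp(a+b)=\vrp(a)+\vrp(a)=\vrp(a)^{\nu'}\in\tG'$ made slightly more explicit. No gaps.
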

\begin{proof} (i): $\tG \subseteq \gker(\vrp)$ by Lemma \ref{lem:nu.hom}.(ii). If $a\in \gker(\vrp) \sm \tG$, then $a \neq a^\nu$, but $\vrp (a)^{\nu'}  = \vrp(a^\nu) = \vrp(a)$ by Lemma \ref{lem:nu.hom}.(ii), since $a \in \gker(\vrp)$, which contradicts the injectivity of $\vrp$.
  \pSkip
  (ii): $\vrp(a) = \vrp(b) \impl \vrp(a) + \vrp(b) \in \tG' \impl \vrp(a+b) \in \tG' \impl a+b \in  \gker(\vrp) = \tG$.
  \end{proof}

 Given additive \numon s $\tM'$ and $\tM$, as customarily,   $\Hom(\tM, \tM')$ denotes the set of all \hom s $\vrp: \tM \To~ \tM' $.  $\Hom(\tM, \tM')$ is equipped  with  a partial ordering, determined  for $\vrp, \psi \in \Hom(\tM, \tM')$  by
 $$  \vrp >_{\nu_{\Hom}} \psi  \dss \Iff \vrp(a) >_{\nu'} \psi(a) \text{ for all } a \in \tM,  $$
and further with the ghost map $\nu_{\Hom}: \Hom(\tM, \tM') \To \Hom(\tM, \tG')$ defined by $\vrp \Mto \nu' \circ \vrp$.

\begin{prop}\label{prop:Hom.is.numon}
  $\Hom(\tM, \tM')$ is a \numon\ (Definition~\ref{def:numonoid}).
\end{prop}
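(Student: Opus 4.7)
The plan is to equip $\Hom(\tM,\tM')$ with pointwise addition $(\vrp+\psi)(a) := \vrp(a)+\psi(a)$ and show that, with the already-defined ghost map $\nu_{\Hom}$ and partial order $>_{\nu_{\Hom}}$, the three axioms of Definition~\ref{def:numonoid} follow from their pointwise instances in $\tM'$.

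First I would check that pointwise addition is well-defined on $\Hom(\tM,\tM')$: for $\vrp,\psi \in \Hom(\tM,\tM')$,
\[
(\vrp+\psi)(a+b) = \vrp(a)+\vrp(b)+\psi(a)+\psi(b) = [\vrp(a)+\psi(a)] + [\vrp(b)+\psi(b)],
\]
where the second equality uses commutativity of $(\tM',+)$. Associativity, commutativity, and the neutrality of the zero \hom\ $\zero_{\Hom}: a \mapsto \zero_{\tM'}$ all descend pointwise from $\tM'$, giving an abelian monoid structure on $\Hom(\tM,\tM')$.

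Next, I would designate as the ghost submonoid the set $\tG_{\Hom} := \{\psi \in \Hom(\tM,\tM') \mid \im(\psi) \subseteq \tG'\} = \Hom(\tM,\tG')$ of ghost \hom s (cf.\ Definition~\ref{def:numon.hom}); this is closed under $(+)$ because $\tG' \subseteq \tM'$ is, and it contains $\zero_{\Hom}$. The partial order $>_{\nu_{\Hom}}$ restricts to a partial order on $\tG_{\Hom}$ with $\zero_{\Hom} <_{\nu_{\Hom}} \psi$ for every nonzero $\psi \in \tG_{\Hom}$, inherited pointwise from the requirement $\zero_{\tM'} < a$ for $a \in \tG'$. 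The map $\nu_{\Hom}(\vrp) = \nu' \circ \vrp$ lands in $\tG_{\Hom}$ by Lemma~\ref{lem:nu.hom}(ii), is idempotent since $(\nu')^{2}=\nu'$, and is additive by $\nu'(\vrp(a)+\psi(a)) = \nu'\vrp(a)+\nu'\psi(a)$, so it is a projection \hom\ onto $\tG_{\Hom}$.

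It remains to verify \NMa, \NMb, \NMc. If $\nu_{\Hom}(\vrp) >_{\nu_{\Hom}} \nu_{\Hom}(\psi)$, then $\nu'(\vrp(a)) > \nu'(\psi(a))$ for every $a$, and \NMa\ in $\tM'$ yields $\vrp(a)+\psi(a) = \vrp(a)$, hence $\vrp+\psi = \vrp$; likewise \NMb\ follows by applying \NMb\ in $\tM'$ at every $a$. The main obstacle is \NMc, whose hypothesis ``$\vrp+\psi \notin \tG_{\Hom}$'' is a global existence assertion while the conclusion $\vrp+\psi = \nu_{\Hom}(\vrp)+\psi$ must hold at every $a$. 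I would resolve this by a case split at each coordinate: if $\vrp(a)+\psi(a) \notin \tG'$, then the pointwise restriction of the global hypothesis $\vrp+\nu_{\Hom}(\psi) \in \tG_{\Hom}$ gives $\vrp(a)+\nu'(\psi(a)) \in \tG'$, so \NMc\ in $\tM'$ yields the equality; if $\vrp(a)+\psi(a) \in \tG'$, then since $\nu_{\Hom}$ is additive both $\vrp(a)+\psi(a)$ and $\nu'(\vrp(a))+\psi(a)$ are forced to coincide with $\nu'(\vrp(a))+\nu'(\psi(a))$ after applying $\nu'$ (treating separately the subcase where $\nu'(\vrp(a))+\psi(a) \notin \tG'$, handled by \NMc\ in $\tM'$ with hypothesis $\nu'(\vrp(a))+\nu'(\psi(a)) \in \tG'$, which is automatic). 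This bookkeeping is the one place where the pointwise-only strategy needs extra care; everything else is a direct transfer.
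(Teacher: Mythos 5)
Your overall approach — verify everything pointwise, with $\nu_{\Hom}=\nu'\circ(\,\cdot\,)$ and $\tG_{\Hom}=\Hom(\tM,\tG')$ — is the same as the paper's, which also just asserts that NMa–NMc hold ``by point-wise verification, provided they hold in $\tM'$.'' You have gone further by trying to spell out the one non-obvious case, NMc, and it is worth saying clearly why your handling of it has a small gap.

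In your Case~2 (where $\vrp(a)+\psi(a)\in\tG'$) you deal with the subcase $\nu'(\vrp(a))+\psi(a)\notin\tG'$ by invoking NMc in $\tM'$ with $a'=\nu'(\vrp(a))$, $b'=\psi(a)$. But the conclusion NMc yields there is $a'+b'=\nu'(a')+b'$, and since $\nu'(a')=\nu'(\nu'(\vrp(a)))=\nu'(\vrp(a))=a'$, this is the tautology $a'+b'=a'+b'$; it gives no information. The subcase must instead be ruled out, and for that you actually need the \emph{pointwise} form of global hypothesis~2, namely $\vrp(a)+\nu'(\psi(a))\in\tG'$, which you already used in Case~1 but did not bring into play here. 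The repair is short: set $x=\vrp(a)$, $y=\psi(a)$. Both $x+y$ and $x+\nu'(y)$ lie in $\tG'$, and applying $\nu'$ to each gives $\nu'(x)+\nu'(y)$; hence $x+y=x+\nu'(y)=\nu'(x)+\nu'(y)$. Then
\[
\nu'(x)+y \;=\; (x+x)+y \;=\; x+(x+y) \;=\; x+\bigl(x+\nu'(y)\bigr) \;=\; (x+x)+\nu'(y) \;=\; \nu'(x)+\nu'(y) \;=\; x+y,
\]
so the subcase $\nu'(x)+y\notin\tG'$ simply never arises, and NMc for $\Hom$ holds at $a$. Once this is fixed your argument is complete and is, if anything, more careful than the paper's one-line disposal of the axioms.
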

\begin{proof}
  The operation $\vrp + \psi $ is well defined, as $\vrp $ and $\psi$ are \hom.  It is associative   and commutative, since $\tM'$ is an abelian monoid. The function  $0_\Hom : a \mTo 0_{\tM'}$ is the neutral element for $(+)$. So $\Hom(\tM, \tM')$ is an abelian monoid.
    The ghost map $\nu'$ of $\tM'$ is idempotent,  therefore  $\nu' \circ \nu' \circ \vrp =   \nu' \circ \vrp$, and hence $\nu_\Hom = \nu_\Hom\circ \nu_\Hom $ is also idempotent. The ghost submonoid of $\Hom(\tM,\tM')$  is the image of $\nu_\Hom$.
  Axioms \NMa--\NMc\  are obtained by point-wise verification, provided that they hold in $\tM'$.
\end{proof}

\subsection{Colimts, pullbacks, and pushouts }\label{ssec:pullback} \sSkip

We denote the category of additive \numon s  by \NMON, whose objects are \numon s\ and its morphisms are \numon\ homomorphisms.

\begin{prop}\label{prop:numon.colimt}
 Colimits exist for additive \numon s.
\end{prop}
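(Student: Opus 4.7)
The plan is to exhibit all small colimits in $\NMON$ via a ``free-then-quotient'' construction, proceeding in three stages. Given a small diagram $D: J \to \NMON$ with $D(j) = (\tM_j, \tG_j, \nu_j)$ and transition morphisms $\phi_{ij}: \tM_i \to \tM_j$, I would first form the colimit $\widetilde{\tM}$ of the underlying abelian monoids $(\tM_j,+)$ in the cocomplete category of abelian monoids, with coprojections $\iota_j: \tM_j \to \widetilde{\tM}$. The maps $\iota_j \circ \nu_j : \tM_j \to \widetilde{\tM}$ form a cocone: for any transition morphism $\phi_{ij}$, \lemref{lem:nu.hom}(i) gives $\nu_j \circ \phi_{ij} = \phi_{ij} \circ \nu_i$, hence $\iota_j \circ \nu_j \circ \phi_{ij} = \iota_j \circ \phi_{ij} \circ \nu_i = \iota_i \circ \nu_i$. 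Universality yields a unique monoid endomorphism $\widetilde{\nu} : \widetilde{\tM} \to \widetilde{\tM}$, which is idempotent since each $\nu_j^2 = \nu_j$. Set $\widetilde{\tG} := \widetilde{\nu}(\widetilde{\tM})$, a submonoid of $\widetilde{\tM}$ containing every image $\iota_j(\tG_j)$.

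Second, the triple $(\widetilde{\tM}, \widetilde{\tG}, \widetilde{\nu})$ need not satisfy \NMa--\NMc, because the identifications forced by the diagram can create new sums on which these axioms have not been imposed. Let $\Cong$ be the intersection of all monoid congruences on $\widetilde{\tM}$ that are compatible with $\widetilde{\nu}$ and for which \NMa--\NMc\ hold on the quotient; this family is nonempty as the total congruence trivially qualifies, so $\Cong$ exists. Define $\tM := \widetilde{\tM}/\Cong$, with $\tG$ the image of $\widetilde{\tG}$ and $\nu$ the induced idempotent projection. By construction $(\tM, \tG, \nu)$ is an additive \numon.

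Finally, I would verify the universal property. Given any cocone $(\psi_j : \tM_j \to \tN)$ in $\NMON$, universality of the abelian-monoid colimit produces a unique monoid \hom\ $\widetilde{\psi}: \widetilde{\tM} \to \tN$ with $\widetilde{\psi} \circ \iota_j = \psi_j$. Since each $\psi_j$ commutes with $\nu$ by \lemref{lem:nu.hom}(i), so does $\widetilde{\psi}$; since $\tN$ itself satisfies \NMa--\NMc, the congruence $\Cong$ is contained in $\cker(\widetilde{\psi})$. Hence $\widetilde{\psi}$ descends uniquely to a \numon-morphism $\psi: \tM \to \tN$, which is the required colimit comparison map.

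The main obstacle lies in the middle step: one must check that forcing \NMa--\NMc\ via $\Cong$ preserves the relevant structure --- in particular that $\nu$ descends as an idempotent projection onto the image of $\tG$, that $\tG$ remains a (partially ordered) submonoid, and that $\zero_\tM$ retains its distinguished status (\mbox{Remark \ref{rem:nmon.zero}}). Conceptually, this amounts to observing that \NMa--\NMc\ are Horn implications in the signature of an abelian monoid enriched with a unary operator $\nu$, so $\NMON$ is a quasi-variety of algebras and hence cocomplete by standard universal algebra.
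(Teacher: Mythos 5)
Your proposal aims at general small colimits, whereas the paper's proof only treats directed colimits: it takes a direct system $(\tM_i,\vrp_{ij})$ over a directed index set, forms $\big(\coprod_i \tM_i\big)/_\sim$ in monoids together with the corresponding limit of the ghost submonoids, and observes that the \numon\ structure passes to the limit because any pair of elements of the colimit lifts to a single stage $\tM_k$, where Axioms \NMa--\NMc\ already hold. In that filtered setting your ``free-then-quotient'' machinery is unnecessary (your congruence $\Cong$ would be trivial), and for non-filtered diagrams (coproducts, coequalizers, pushouts) you are proving something strictly stronger than what the paper establishes.

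The genuine gap is in your middle step and in the closing ``quasi-variety'' justification. Axioms \NMa\ and \NMc\ are \emph{not} Horn implications in the signature $(+,\nu)$: the premise of \NMa\ is $\nu(a)>\nu(b)$, i.e.\ $\nu(a)+\nu(b)=\nu(a)$ \emph{and} $\nu(a)\neq\nu(b)$, and the premise of \NMc\ contains $a+b\notin\tG$, i.e.\ $\nu(a+b)\neq a+b$. Sentences with negated atoms in the premises are not preserved under subdirect products, so the class $\NMON$ is not a quasi-variety, and the intersection of all congruences on $\widetilde{\tM}$ whose quotients satisfy \NMa--\NMc\ need not itself be such a congruence; the ``smallest'' $\Cong$ you invoke is therefore not known to exist. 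Nor can you repair this by transfinitely closing under instances of the axioms: the process is non-monotone, since enlarging the congruence can falsify a premise such as $a+b\notin\tG$ or $\nu(a)\neq\nu(b)$ and thereby retract identifications that were previously forced. The third step (descending $\widetilde{\psi}$ along $\Cong\subseteq\cker(\widetilde{\psi})$) is fine \emph{conditionally}, but without a valid construction of $\Cong$ the argument does not close. To match the paper you should either restrict the claim to directed systems and argue as above, or, for general colimits, give an explicit generators-and-relations construction of the target \numon\ together with a direct verification of \NMa--\NMc\ and of the universal property.
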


\begin{proof}
Let $I$ be a directed index set, and let
$(\tM _i, \vrp_{ij})_{i,j \in I}$ be a direct system of \numon s, where  $\vrp_{ij}: \tM_i \To \tM_j$ are homomorphisms of \numon s.
Take $\sim$ to be the equivalence determined by
  $x \sim  y$ if $\vrp_{ik}(x) = \vrp_{jk}(y)$ for some  $k \geq i, j$ in $I$, where   $x \in  \tM_i$ and $y \in \tM_j$.
The direct limit $\httM:= \underrightarrow{\lim}_{i \in I } \tM_i = \big(\coprod_{i\in I}  \tM_i \big)/_\sim$ exists in the category of monoids,  together
with the associated maps $\vrp_i : \tM_i \To \httM$. Similarly, $\httG := \underrightarrow{\lim}_{i \in I } \tG_i = \big(\coprod_{i\in I}  \tG_i \big)/_\sim$ is the direct limit of the system $(\tG _i, \psi_{ij})_{i,j \in I}$ of ghost submonoids $\tG_i \subset \tM_i$ such that  $\psi_{ij} = \vrp_{ij}|_{\tG_i}$.  The ghost maps $\nu_i: \tM_i  \To \tG_i  $ satisfy $\psi_{ij} = \nu_j \circ \vrp_{ij}$, and are preserved under
$\vrp_{ij}$, by Lemma \ref{lem:nu.hom}. Therefore,
$(\httM, \httG, \htnu)$ is a \numon\ with ghost map $\htnu : \httM \To \httG$.
\end{proof}

\begin{defn}\label{def:pullback}
Let $\tA, \tB, \tC \in \NMON$ be \numon s  with morphisms $\phi_1 : \tA \To \tC$ and $\phi_2 : \tB \To ~\tC$. A \numon\ $\tP \in \NMON$, with morphisms $\pi_1: \tP \To \tA$ and $\pi_2:\tP \To \tB$,   is a \textbf{pullback} along  $\phi_1$ and ~$ \phi_2$ if it renders the following diagram commute and  universal,
$$ \xymatrix{ \tU  \ar@{..>}[dr]^{\xi}  \ar@/^1pc/@{->}[rrd]^{\psi_2}  \ar@/_1pc/@{->}[rdd]_{\psi_1} & & \\
& \tP \ar@{->}[d]^{\pi_1}  \ar@{->}[r]_{\pi_2} & \tB \ar@{->}[d]^{\phi_2} \\
& \tA \ar@{->}[r]_{\phi_1} & \tC } $$
i.e., for every other \numon\ $\tU$ with morphisms $\psi_1 : \tU \To \tA$ and $\psi_2 : \tU \To \tB$ there is a unique morphism $\xi : \tU \To \tP$ which makes the diagram commute.
\end{defn}

In the standard way we define the  \numon\ $\tP $ as the subset of the direct product $\tA \times \tB$:
\[
\tA \times_\tC \tB := \{(a,b) \in \tA \times \tB \cnd  \phi_1(a)=\phi_2(b)\},
\]
 endowed with component-wise addition  $(+)$ and identity element  $(\zero_\tA, \zero_\tB)$; its ghost map is  induced from~ $\tA \times \tB$.

\begin{prop}\label{prop:pullback}
  $\tA \times_\tC \tB$ is a pullback and it is universal.
\end{prop}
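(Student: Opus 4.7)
The plan is a standard verification that $\tP := \tA \times_\tC \tB$, equipped with the obvious structure, meets every requirement of a pullback in $\NMON$. I would break the argument into four short steps.

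First, I would verify that $\tP$ is a \numon. Closure under addition is immediate from the fact that $\phi_1,\phi_2$ are \hom s: if $\phi_1(a_i)=\phi_2(b_i)$ for $i=1,2$, then
\[
\phi_1(a_1+a_2)=\phi_1(a_1)+\phi_1(a_2)=\phi_2(b_1)+\phi_2(b_2)=\phi_2(b_1+b_2),
\]
so $(a_1,b_1)+(a_2,b_2)=(a_1+a_2,b_1+b_2)\in\tP$. The identity $(\zero_\tA,\zero_\tB)$ lies in $\tP$ since $\phi_i$ preserve $\zero$. I take the ghost submonoid to be $\tG_\tP := \tP\cap(\tG_\tA\times\tG_\tB)$ with ghost map $\nu_\tP(a,b):=(\nu_\tA(a),\nu_\tB(b))$; by Lemma~\ref{lem:nu.hom}(i),
\[
\phi_1(\nu_\tA(a))=\nu_\tC(\phi_1(a))=\nu_\tC(\phi_2(b))=\phi_2(\nu_\tB(b)),
\]
so $\nu_\tP$ really lands in $\tG_\tP$. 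Axioms \NMa--\NMc\ and the partial ordering on $\tG_\tP$ are inherited coordinatewise from $\tA$ and $\tB$.

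Second, I would check that the projections $\pi_1,\pi_2$ are \numon\ \hom s (trivial, coordinatewise) and that the outer square $\phi_1\circ\pi_1=\phi_2\circ\pi_2$ commutes by the very definition of $\tP$.

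Third, for the universal property, given any \numon\ $\tU$ with \hom s $\psi_1:\tU\To\tA$, $\psi_2:\tU\To\tB$ satisfying $\phi_1\circ\psi_1=\phi_2\circ\psi_2$, I would define
\[
\xi:\tU\TO\tP,\qquad \xi(u):=(\psi_1(u),\psi_2(u)).
\]
The hypothesis $\phi_1(\psi_1(u))=\phi_2(\psi_2(u))$ forces $\xi(u)\in\tP$; additivity, preservation of $\zero$, and compatibility with the ghost map follow coordinatewise from the corresponding properties of $\psi_1,\psi_2$. By construction $\pi_i\circ\xi=\psi_i$. Uniqueness is immediate: any $\xi':\tU\To\tP$ with $\pi_i\circ\xi'=\psi_i$ must have first coordinate $\psi_1$ and second coordinate $\psi_2$, hence $\xi'=\xi$.

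I do not expect any deep obstacle; the only point requiring care is Step~1, specifically checking that $\nu_\tP$ takes values in $\tP$ (this is precisely where Lemma~\ref{lem:nu.hom}(i) enters) and that the ghost partial order and Axioms \NMa--\NMc\ transfer to the subset $\tP\subset\tA\times\tB$. Everything else is a routine coordinatewise verification.
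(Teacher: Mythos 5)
Your proposal is correct and follows essentially the same route as the paper: verify that $\tA \times_\tC \tB$ is closed under addition, contains $(\zero_\tA,\zero_\tB)$, and is closed under the coordinatewise ghost map (the computation $\phi_1(a^\nu)=\phi_1(a)^{\nu}=\phi_2(b)^{\nu}=\phi_2(b^\nu)$ is exactly the paper's), then establish the universal property via $\xi(u)=(\psi_1(u),\psi_2(u))$ with the evident uniqueness. You are in fact somewhat more explicit than the paper, which leaves the bijection between pairs $(\psi_1,\psi_2)$ and maps $\xi$ as a routine check.
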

\begin{proof}
%We need to show that $\tA \times_\tC \tB$ is  an additive \numon\ and that the square above has the (categorical) property of pullback.

First, to see that $\tA \times_\tC \tB$ is  a \numon, notice that $\phi_1(\zero_\tA)=\zero_\tC=\phi_2(\zero_\tB)$, since $\phi_1$ and $\phi_2$ are \numon\ homomorphisms, thus   $(\zero_\tA,\zero_\tB) \in \tA \times_\tC \tB $. Clearly, $\tA \times_\tC \tB$  is closed for addition in  $\tA \times \tB$, and is also closed under $\nu$. Indeed, $\phi_1(a)=\phi_2(b) \ds\Rightarrow  \phi_1 (a^\nu)=\phi_1(a)^{\nu}= \phi_2(b)^{\nu}=\phi_2(b^\nu)$.

Observe that $\tA \times_\tC \tB$ is  set-theoreic pullback.
Given $\tU \in \NMON$ with $\psi_1: \tU \To \tA$, $\psi_2: \tU \To \tB$ such that
$\phi_1 \circ \psi_1 = \phi_2 \circ \psi_2$,  we have
$$ \xi: \tU \TO \tA \times \tB, \qquad u \Mto (\psi_1(u), \psi_2(u)), $$ hence
$\phi_1 \circ \pi_1 \circ \xi = \phi_2 \circ \pi_2 \circ \xi$.
Then,  a routine check shows that there is a one-to-one correspondence between pairs of homomorphisms
$(\psi_1, \psi_2): \tU  \To \tA \times_\tC \tB$  such that  $\phi_1 \circ \psi_1 = \phi_2 \circ \psi_2$
and homomorphisms $\xi \colon \tU \To\tA \times_\tC \tB.$
\end{proof}

Pushout of \numon s as usual is the dual notion of a pullback, determined by the diagram
$$ \xymatrix{ \tV      & & \\
& \tP \ar@{..>}[ul]_{\xi}  &  \ar@{->}[l]_{\pi_2}  \ar@/_1pc/@{->}[llu]_{\psi_1}  \tB  \\
& \ar@/^1pc/@{->}[luu]^{\psi_2}  \ar@{->}[u]^{\pi_1}    \tA  & \ar@{->}[u]^{\phi_2} \ar@{->}[l]_{\phi_1} \tC } $$
It is universal, where the proof follows similar arguments as in the proof of Proposition \ref{prop:pullback}.

\subsection{\nusmr s}\label{ssec:nusmr}
\sSkip

Equipping an additive  \numon\ (Definition \ref{def:numonoid}) by a multiplicative operation with additional traits,  we obtain the following semiring structure, which is the central  structure of the  study in this paper.

\begin{defn}\label{def:nusemiring} A \textbf{\nusmr} is a quadruple
$R := (R, \tT, \tG, \nu)$, where  $R$ is a \smr,  $(R,\tG, \nu)$ is an additive  \numon, and    $\tT \subseteq  R \sm \tG
$ is a distinguished subset, containing a designated subset  ~$\tTPS$ with $\RX \subseteq   \tTPS$  which satisfies    the conditions:
\begin{description} \dispace
  \item[\PSRa:] \ $a \in \tTPS$ implies $a^n \in \tTPS$ for any $n \in \Net$,
  \item[\PSRb:] \ $a+b \in \tT$ implies $a+b^\nu \notin \tG$, unless $a+b$ is \uexpr.

\end{description} The  additive identity $\rzero$ is an absorbing element, i.e., $\rzero a = a \rzero = \rzero$ for every  $a \in R$.
\pSkip

 A \nusmr\ $R$ is said to be
 \begin{itemize} \dispace
   \item \textbf{faithful} if $\nu|_\tT: \tT \To \tG$ is injective;

  \item \textbf{commutative} if $ab = ba$ for all $a,b \in R$;
   \item \textbf{definite} if $\tT= R \sm \tG;  $

  \item \textbf{\tame}  if any $a \in R \sm (\tT \cup \tG)$ can be written as $a = c + d^\nu$, where $c,d \in \tT$;

   \item \textbf{\prsfl} if $\tT = \tTPS$;

   \item \textbf{\prscls} if $\tTPS $ is a (multiplicative) monoid;

\item \textbf{\Tcls} if $\tT$ is a monoid.
 \end{itemize}
\end{defn}

We set $e_R := \rone+\rone = (\rone)^\nu$,   hence     $e = e^2 = e + e = e^\nu$ for  $e = e_R$. In particular $e \in \tG$, and $a^\nu = a +a = a (\rone+\rone) =(\rone+\rone) a$;  thus $a^\nu = e a = ae$ for every $a \in R$. Therefore,  in \nusmr s the ghost submonoid $\tG$ becomes a semiring ideal (Definition \ref{def:ideal.smr}),  called the \textbf{ghost ideal} (which  can be thought of as a ``ghost absorbing'' subset).  Indeed,
$$ab^\nu = a e b =  a(\rone + \rone) b = a(b+b)= ab + ab = (ab)^\nu \in \tGz,$$
furthermore $a^\nu b^\nu = eaeb = e(ab) = (ab)^\nu$. Note also that, for any $a_1, \dots, a_n$ in $R$,
\begin{equation}\label{eq:ghost.sum}
  \big(\sum a_i\big)^\nu  = e \big(\sum a_i\big) = \sum e a_i = \sum a_i^\nu,
\end{equation}
which implies that  if $x$ is a ghost, then all \expr \ sums $y \lsset x$ are also ghosts, cf. \eqref{eq:lorder}.
The \textbf{\nuequivalence} $\nucong$ on $R$ is induced from the \nuequivalence\ \eqref{eq:nu.eqv} of its additive \numon\ structure (cf.~ \S\ref{ssec:numon}), respecting  multiplication as well, i.e.,   $a \nucong b$ implies $ca \nucong c b$ for every $c \in R$. On \nusmr s  $\nucong$ can be stated as $a \nucong b$ iff $ea = eb$.

The elements of the distinguished  subset  $\tT$ are  called   \textbf{tangible elements} and $\tT$ is termed  the \textbf{\tsset} of $R$. \footnote{Intuitively, the tangible
elements in a \nusmr\ correspond to the original max-plus algebra in Example \ref{exmp:extended} below, although now
$a+a = a^\nu$ instead of $a+a = a.$ Therefore, as  $a + \cdots + a = a^\nu$, this formulation encodes an additive multiplicity $> 1$, or equivalently the phrase  ``the sum attends by at  least two different terms''. Taking the sum to be maximum, this is the tropical analogy of the vanishing condition in ring theory.} When $R$ is \Tcls,  $\tT$ is called the  \textbf{\tmon} of $R$ and we say that $R$ is \textbf{\tcls}, for emphasis.
% Clearly, by definition, every \tclsnusmr\ is \prs.
We write $\tTz$ for $\tT \cup \{ \rzero\} $, which is a pointed monoid when $R$ is a  \tclsnusmr. The element $\rzero$ can be considered either as ghost or tangible.
An element $a \notin \tT$ is termed \textbf{non-tangible}.

A tangible element $a \in \tTPS$  is  said to be  \textbf{\tanprs}, written \textbf{\tprs},  i.e., if  $a^n \in  \tTPS \subseteq \tT$ for any  $n \in \Net$.
In particular, every tangible idempotent is \tprs.
The designated subset
$\tTPS \subseteq \tT$ is called the \textbf{\tprsset} of $R$, and is never empty, since $\rone \in \RX \subseteq \tTPS$. $\tTPS$ may contain a larger monoid than $\{\rone \}$, which we denote by $\tTP$ and call it a \textbf{\tprsmon}, or a \textbf{tangible monoid},   in which any product of \tprs\ elements is \tprs.
%When $\tTP = \tTPS \subset \tT$, i.e., $\tTPS$ is by itself a monoid, we say that $R$ is \textbf{\prscls}.
Accordingly, for a \prscls\ \nusmr\ we have  $\tTP = \tTPS$, while  $\tTP = \tTPS = \tT$ in a \tclsnusmr.

\begin{example} A tangible element $a = b+c $ that can  be written as a \expr\ sum of tangibles $b,c \in \tT$ in a commutative \nusmr\ is not \tprs. Indeed, $a^2 = (b+c)^2 = b^2 + ebc + b^2 $, which is not tangible, unless the term $ebc$ is inessential.
\end{example}

Literally, Axiom \PSRa\ means that any power of a \tprs\ element is \tprs, but a product of \tprs\ elements need not be \tprs.
On the other hand, $ab \in \tT$ (reps. $ab \in \tTPS$) does not imply that $a, b \in \tT$ (resp. $a, b \in \tTPS$). % , unless ~$R$ is \tame\ (resp. \tame\ and \prsfl).

  Axiom \PSRb\ strengthens Axiom \NMc\ of \numon s (Definition \ref{def:numonoid}) to tangible sums  in \nusmr s. Practically,  it follows from  Axiom \PSRb\ that  a  \expr\ tangible sum cannot  involve ghosts:
\begin{equation}\label{eq:tan.sum}
 a + b \in \tT \Dir a,b \notin \tG.
\end{equation}
Indeed, otherwise,  if $a + b \in \tT$, say with $b \in \tG$, then both $a^\nu$ and $b$ are ghost and hence  $a^\nu + b \in \tG$, since~ $\tG$ is an ideal, which contradicts  Axiom \PSRb. This implies that if $a \nmod b$ where $a \in \tT$, cf. \eqref{eq:gs.relation}, then $a = b$.

\begin{remark} In a \tamesmr\  $a b \notin \tT$ for any $b \notin \tT$, since $b = c + d^\nu$ and  $ab = ac + (cd)^\nu \notin \tT$ by~ \eqref{eq:tan.sum}.
\end{remark}

Recall that $\RX$ denotes the subgroup of  units in $R$, where an element $a \in R$ is a \textbf{unit}, and said to be  \textbf{invertible},  if there exits $ b \in R $ such that $ab= ba = \rone$.   In this case, $b$ is  called the  \textbf{inverse of $a$} and is denoted by $a^\inv$.
 A unit in \nusmr s    must be \tprs, to stress this we sometimes write  $\tTX$ for $\RX $.
\begin{rem}\label{rem:prud.unit}
In a commutative \nusmr\ $R$,  $ab \in \RX $ implies $a,b \in \RX$, since $c(ab) = \rone = (ca)b$ for some $c \in \tT$, and hence $b \in \RX$.  Commutativity implies  that also  $a \in \RX$.
\end{rem}

To summarize our setting,  for a \nusmr\ $R$ we have the (multiplicative) components
$$
%\underset{\text{group }}{\RX} \subseteq
\rone \in \tTX = \underset{\text{group}}{\RX} \dss\subseteq
\underset{\text{set}}{\tTPS} \dss \subseteq
\underset{\text{set}}{\tT} \dss \subseteq \underset{\text{set}}{R \sm \tG}, \qquad e_R \in \underset{\text{ideal}}{\tG} \lhd R,$$
each of which is nonempty.

We say that two non-invertible elements $a,b \notin \RX$ are \textbf{associates} if $a = bc$ for some $c \in \RX$.
An element $a \notin \RX$  is \textbf{irreducible}, if $a = bc$ implies $b \in \RX$  or $c \in \RX$. Accordingly, a ghost element $a\in \tG$ is never irreducible, as $a = a^\nu = e a$, where $e$ is not a unit.

Since $R$ is a semiring, where multiplication distributes over addition, and $(R,\tG,\nu)$ is a partially $\nu$-ordered monoid \eqref{eq:nu.order}, then
\begin{equation}\label{eq:smr.order}
  a \nug b \Dir ca \nug cb, \ ac \nug bc \qquad \text{for any }    a,b,c \in R, \ c \neq \rzero. %  \tag{PO}
\end{equation}
So, the multiplication of $R$ respects the  partial $\nu$-ordering of $R$, and  in this sense all nonzero elements may be realized as  ``positive'' elements.

When $R$ is \tcls, we have the strong property
\begin{equation*}\label{AX:SR}
    a \in \tT \text{ and } b \in \tT \dss \Rightarrow ab \in \tT,    %\tag{PSR'}
\end{equation*}
which provides $\tT$ as a monoid.

\begin{rem}\label{rem:strc.relations}
  The following structural relations hold:
\begin{enumerate} \ealph
  \item definite $\Rightarrow$ \tame;
  \item \tcls\ $\Rightarrow$   \prsfl\ and \prscls.

\end{enumerate}
\end{rem}

One can  construct a \nusmr\  (more precisely a supertropical semiring, as defined below in \S\ref{ssec:nudomain}) from any ordered monoid.

\begin{construction}
  \label{exmp:mon2smr} Given an ordered monoid $\tM:= (\tM, \cdot \, )$, cf. Definition \ref{def:orderedMonoid}, we duplicate  $\tM$ to have a second copy $\tM^\nu$ of $\tM$ and  create the set $\STR(\tM): = \tM  \cup \{ \zero \} \cup \tM^\nu $, where $\zero$ is formally added as the smallest element.
 $\STR(\tM)$ becomes an ordered  set by  declaring that  $$a < a^\nu < b<  b^\nu$$  for any $a < b$ in ~ $\tM$.  We define the addition of  $x,y \in \STR(\tM)$ as
$$x+y := \max\{ x, y\},$$  and take the multiplication induced by the  monoid operation, i.e.,
$ab = a \cdot b$ for $a,b \in \tM$ and  $ab^\nu = a^\nu b  = a^\nu b^\nu = (a \cdot b)^\nu$, where $a^\nu, b^\nu \in \tM^\nu$. Accordingly, $\tM$ is assigned as the \tmon\ $\tT$, while~ $\tM^\nu \cup \{ \zero \}  $ is allocated as the ghost ideal  $\tG$.
Then, $\STR(\tM)$ is endowed with the structure  of  \tclsnusmr, which is also faithful and \dfnt,  with ghost map $\nu|_\tT = \id$, in which  $\zero a = \zero a = \zero$ is an absorbing element.

\end{construction}

   The elements of the Cartesian product $\Rn = R \times \cdots \times R$ are $n$-tuples $\bfa =  (a_1, \dots, a_n)$ with $a_i \in R$. A point $\bfa \in R^{(n)}$ is \textbf{tangible} if $a_i \in \tTz$, for all $i$, and is ghost if $a_i \in \tG$ for every  $i$. We write $(\rzero)$ for the point $(\rzero, \dots, \rzero)$, which belongs to both $\tTz^{(n)}$ and $\tG^{(n)}$.
Given a subset $E \subset \Rn$,  we write $E|_\tng^\PrS$ for   $E \cap
(\tTPS)^{(n)}$ --   called the \textbf{\tngpres} of $E$. We write $E|_\tng$ for   $E \cap
\tTzn$ --  called the \textbf{\tngres } of $E$,   $E|_\ghs := E
\cap \tGn$ is called the \textbf{ghost part} of $E$. The latter parts, $E|_\tng$ and $E|_\ghs$, need not be the complement of each other.

\begin{example}\label{examp:pre.smr} Suppose $R$ is a commutative, \tame,  \tclsnusmr.
\begin{enumerate}\eroman
  \item The semiring  $R[\lm_1, \dots, \lm_n]$
  of polynomials over $R$ is a commutative \nusmr;  its \tsset\ consists of the polynomials with tangible coefficients.
        This \nusmr\ is \tame\ but not \prsfl,  as the tangible polynomials do not form a monoid. As explained below in~ \S\ref{ssec:ploynomials}, $R[\lm_1, \dots, \lm_n]$  is canonically associated to  the \prsfl \ \nusmr \ of polynomial functions on $\Rn$. 

  \item The semiring of $n \times n $ matrices over $R$  is a noncommutative \nusmr,
  which is \tame\ but not \tcls.
   It contains the subgroup of invertible matrices (i.e., the group of generalized permutation matrices) and the subgroup of diagonal tangible  matrices, which give rise to (\tame) \tcls\ \nussmr\ structures.
    %by Construction \ref{exmp:mon2smr}.

  \item The set $\Rn$ of all $n$-tuples over $R$, with entry-wise addition and multiplication,  is a commutative  \tcls\  \nusmr. The identity element $\one_{\Rn}$ of this \nusmr\ can be generated additively and multiplicatively by other elements of $\Rn$.
\end{enumerate}
Parts (ii) and (iii) provide examples of \nusmr\ which are \tame, but this property can be dependent on the way that their tangible elements are defined, e.g., if zero entries are allowed in tangible elements.
\end{example}

The present paper deals with commutative structures arising from commutative \nusmr s, for example, from  polynomial functions that establish a \prsfl\ \nusmr, as described below in \S\ref{ssec:ploynomials}. Therefore, %unless otherwise is specified,
in the sequel, \textbf{\emph{our underlying \nusmr s are always assumed to be commutative}}. (A similar but more involved theory can be developed for noncommutative\ \nusmr s, to cope also with noncommutative structures, e.g., with matrices.)
\pSkip

Although semiring ideals (Definition \ref{def:ideal.smr}) in the present paper are  mostly employed to classify special subsets of \nusmr s, rather than for factoring out substructures, for a matter of completeness and for future use, we introduce their special types.
\begin{defn}\label{def:nuprime.ideal}
An ideal $\mfa \lhd R$ of a \nusmr\ $R$  is called:
 \begin{enumerate} \eroman
   \item \textbf{ghost radical}, if for any $a^n \in \mfa$, with $n \in \Net$, the following condition holds
 $$a^n \in \mfa|_\ghs  \Dir a \in \mfa|_\ghs,  $$

   \item \textbf{ghost prime}, if for any $ab \in \mfa$ the following condition holds  $$ab \in \mfa|_\ghs \Dir a \in \mfa|_\ghs \ds{ \text{ or }} b \in \mfa|_\ghs;$$

   \item \textbf{ghost primary}, if for any $ab \in \mfa$ the following condition holds  $$ab \in \mfa|_\ghs \Dir a \in \mfa|_\ghs \ds {\text{ or }} b^n \in \mfa|_\ghs \text{ for some } n \in \Net;$$

\item \textbf{maximal}, if $\mfa$ is proper and maximal with respect to inclusion.
 \end{enumerate}
 \end{defn}
\noindent As one sees, the structural condition on these ideals applies only to ghost products; this is the curtail merit of these ideals.\footnote{These types of ideals here are different from the ideals studied in \cite{IzhakianRowen2008Resultants,IzhakianRowenIdeals}.} As will be seen later, this is a conceptual idea in our theory.

\subsection{\nudom s and \nusmf s}\label{ssec:nudomain}

\sSkip

To avoid   pathological cases, e.g., as in Example \ref{examp:pre.smr},  one can turn to more rigid $\nu$-structures which  manifest a better behavior. Henceforth, when it is clear from the context, we write $\one$, $e$,  $\zero$, for $\rone$, $e_R$, $\rzero$, respectively.

\begin{defn}\label{def:nudomain}
An element $a \notin \tG$ in a \nusmr\ $R  := (R, \tT, \tG, \nu)$ is said to be  a  \textbf{ghost divisor} if there exists an element $b \notin \tG$ such that $ab \in \tG$ or $ba  \in \tG$.
It is a \textbf{zero divisor} if $ab = ba = \zero$.
We denote the set of all ghost divisors in $R$ by $\gDiv(R)$.

A \tclsnusmr\ $R$ is called \textbf{\nudom}, if it contains no ghost divisors.\footnote{This definition changes the definition provided in \cite{Coordinates}, which is based on cancellativity of multiplication. The latter definition does not suite here, as seen in Examples \ref{examp:cancellative.gdiv} below.}
A commutative \nudom\ is called \textbf{\inudom}.
If furthermore $\tT$ is an abelian group, then $R$ is
called \textbf{\nusmf}.
%\footnote{This is a strengthen of the definition of \nusmf\ given in \cite{Coordinates}, which dismisses the condition $\tT = R \sm \tG$.}

\end{defn}
\noindent In the sequel, we restrict to commutative structures, and write \nudom\ for \inudom, for short.
\pSkip

Clearly any zero divisor is a ghost divisor.
Suppose that $a$  is a ghost divisor with $ab \in \tG$, where $b \notin \tG$,  then any product $ca$ with $ca \notin \tG$ is also a ghostdivisor.  This shows that $\gDiv(R) \cup \tG$ is a monoid (with $R$ commutative).
Moreover, $a$ cannot be a unit, since otherwise $a^\inv a b =b \in \tG$ -- a contradiction.
If $a$ is  \tprs\ in a \tprsmon\ $\tTP$, then $b \notin \tTP$, as~ $\tTP$ is a tangible  monoid.

\begin{rem}\label{rem:ghost.div}
%$ $
%\begin{enumerate} \eroman
 % \item
     Suppose that $a,b \notin \gDiv(R)$, then $ab \notin \gDiv(R)$. Indeed, otherwise $(ab)c \in \tG$ with $c \notin \tG$ implies $a(bc)\in \tG$, where $bc \notin \tG$ since $b \notin \gDiv(R)$,
   and thus $a \in \gDiv(R)$ -- a contradiction.  % Hence $R \sm \gDiv(R)$ form a monoid.
%
%  \item A unit $u \in \RX$ cannot be a ghost divisor, i.e., $\RX \cap \gDiv(R) = \emptyset$.  Indeed, otherwise $ub \in \tG$ for some $b \notin \tG$, but  $b= u^{\inv} (u  b) \in \tG$, as $\tG$ is an ideal -- a contradiction.
%\end{enumerate}
\end{rem}

We have the following trivial example.
\begin{example}\label{exmp:nudom}
  A definite \tclsnusmr\ has no ghost divisors, cf. Remark \ref{rem:strc.relations}, and therefore it is a \nudom.
\end{example}

In comparison to zero divisors in rings, ghost divisors in \nusmr s appear much  often, both in commutative and noncommutative \nusmr s.

\begin{example}\label{examp:cancellative.gdiv} Let $a = b + e c$ be a \expr\ sum, where $ b,c \in \tT$ are tangibles and $R$ is commutative. Then
$$(b + e c)(eb + c) = eb^2 + bc + ebc + ec^2=eb^2 + ebc + ec^2.$$
and thus $a$ is ghost divisor. Furthermore, writing
$$(b + e c)(eb + c) = (eb + e c)(eb + c) =(b + e c)(eb + ec)$$
shows that $R$ is not  cancellative with respect to multiplication, and also that unique factorization fails.

Similarly, even when  $ a= b+c$ is tangible we have
$ a^2 = (b+c)^2 = b^2 + ebc +c^2$, where
$$ (b^2 + ebc +c^2) (eb^2+bc+ec^2) = e(b^4 + b^3c + b^2 c^2 + bc^3 +   c^4).$$
Thus $a^2$ is ghost divisor (unless $ebc$ is inessential),  implying that $a$ is a ghost divisor.
\end{example}

\begin{lem}\label{lem:gdiv.in.tame}
Let $R$ be a commutative \tamesmr.
\begin{enumerate}\eroman
  \item Every $a \in R \sm (\tTPS \cup \tG)$ is a \gsdiv.
  \item $\tTPS \sm \gDiv(R)$ is a multiplicative monoid.
  \item If a product $ab$ is \tprs, then $a$ and $b$ are  both \tprs.
%  \item $\tTX = \RX$, i.e.,  all units of $R$ are \tprs.
\end{enumerate}

\end{lem}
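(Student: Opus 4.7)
\medskip
\noindent\emph{Proof plan.}
The plan is to handle part (i) by hand using the tameness decomposition, and then bootstrap (ii) and (iii) from (i) together with Remark~\ref{rem:ghost.div} and the axioms on $\tTPS$.

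For part (i), I would split the complement $R \setminus (\tTPS \cup \tG)$ into two regions: the \emph{mixed zone} $R \setminus (\tT \cup \tG)$ and the \emph{non-persistent tangibles} $\tT \setminus \tTPS$. In the mixed case, tameness provides a decomposition $a = c + d^\nu$ with $c, d \in \tT$; since $a \notin \tT \cup \tG$, axioms \NMa--\NMb\ together with \eqref{eq:tan.sum} force $\nu(c)$ and $\nu(d)$ to be incomparable in $\tG$. I would then search for a tangible $b$ whose multiplicative action collapses this incomparability so that $ab = cb + (db)^\nu$ falls into $\tG$. The natural candidates are $b = d$ (yielding $ab = cd + (d^2)^\nu$) or $b = c$, with the choice dictated by the relative position of $cb, db$ in the partial order on $\tG$; an application of \NMa--\NMc\ then collapses the two summands into a ghost. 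In the non-persistent tangible case, let $n$ be minimal with $a^n \notin \tT$: if $a^n \in \tG$, then $a^{n-1} \in \tT \setminus \tG$ is an immediate witness, since $a \cdot a^{n-1} = a^n \in \tG$; otherwise $a^n$ is mixed, and applying the previous paragraph to $a^n$ together with the factorization $a^n = a \cdot a^{n-1}$ extracts a witness for $a$ itself.

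For part (ii), take $a, b \in \tTPS \setminus \gDiv(R)$. Remark~\ref{rem:ghost.div} yields $ab \notin \gDiv(R)$, and since $a \notin \gDiv(R)$ while $b \in \tTPS \subseteq \tT$ implies $b \notin \tG$, we also have $ab \notin \tG$ (otherwise $a$ would be a ghost divisor with witness $b$). Thus $ab \notin \gDiv(R) \cup \tG$, and invoking (i) forces $ab \in \tTPS$. Hence $\tTPS \setminus \gDiv(R)$ is closed under multiplication, and $\rone \in \RX \subseteq \tTPS$ is trivially not a ghost divisor, so it serves as the identity. For part (iii), assume $ab \in \tTPS$; by \PSRa\ we have $(ab)^n = a^n b^n \in \tTPS \subseteq \tT$ for every $n \in \Net$. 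The possibility $a^n \in \tG$ is ruled out since $\tG$ is an ideal and would force $a^n b^n \in \tG$, contradicting tangibility of $(ab)^n$. The possibility $a^n \in R \setminus (\tT \cup \tG)$ is ruled out by tameness: writing $a^n = c + d^\nu$ we obtain $(ab)^n = cb^n + (db^n)^\nu$, which being tangible forces $(db^n)^\nu$ to be inessential by \eqref{eq:tan.sum}, hence $\nu(c) > \nu(d)$, and then \NMa\ gives $a^n = c \in \tT$, contradicting the mixed assumption. Therefore $a^n \in \tT$ for every $n$, placing $a \in \tTPS$ under the (implicit) maximal choice of the tangibly persistent subset; the argument for $b$ is identical.

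The main obstacle is the explicit construction of the ghost witness in the mixed case of part (i): when $\nu(c)$ and $\nu(d)$ are incomparable in the partial order on $\tG$, it is delicate to choose a tangible $b$ whose action forces $cb + (db)^\nu$ into $\tG$ using only axioms \NMa--\NMc, and in some situations one may need to iterate the tame decomposition (now applied to the mixed element $ab$) or chain several witnesses together. Once (i) is secured, parts (ii) and (iii) follow cleanly by the bootstrapping outlined above.
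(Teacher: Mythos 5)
Your plan correctly identifies the crux of part (i) --- producing a ghost-divisor witness for a mixed element $a = c + d^\nu$ --- but you stall precisely because you restrict the search to \emph{tangible} witnesses. The paper's witness (Example~\ref{examp:cancellative.gdiv}) is itself a mixed element: take $w = c^\nu + d$. Then
\[
aw = (c + d^\nu)(c^\nu + d) = (c^2)^\nu + cd + (cd)^\nu + (d^2)^\nu = (c^2)^\nu + (cd)^\nu + (d^2)^\nu \in \tG,
\]
since the lone tangible cross term $cd$ is absorbed by $(cd)^\nu$ via Axiom \NMb. This works uniformly, with no case split on the relative position of $\nu(c)$ and $\nu(d)$; incomparability of $\nu(c), \nu(d)$ is what keeps $w$ out of $\tG$. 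Neither of your candidates $b = c$, $b = d$ produces a ghost --- $ad = cd + (d^2)^\nu$ stays mixed whenever $\nu(cd)$ and $\nu(d^2)$ remain incomparable --- and iterating the tame decomposition does not repair that; the reflected element $c^\nu + d$ is the missing idea. For the non-persistent tangible subcase you should also align the minimality with the paper: choose $m$ minimal so that $a^m$ is a ghost divisor (rather than minimal with $a^m \notin \tT$), which makes $a^{m-1}q \notin \tG$ immediate from minimality instead of leaving it unaddressed.

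Given (i), your part (ii) is a genuinely cleaner route than the paper's. Invoking Remark~\ref{rem:ghost.div} to get $ab \notin \gDiv(R)$, noting $ab \notin \tG$ (else $a \in \gDiv(R)$), and then applying (i) replaces the paper's iterated argument around $(ab)^n$, $bq$, $a(bq), \dots$ with a short direct deduction; this is correct and preferable. In part (iii) your first moves match the paper's ($a^n \notin \tG$, and if $a^n$ is mixed then $(ab)^n = cb^n + (db^n)^\nu$ contradicts~\eqref{eq:tan.sum}), but the final step --- passing from $\nu(cb^n) > \nu(db^n)$ to $\nu(c) > \nu(d)$ --- silently cancels $\nu(b^n)$ in the partially ordered monoid $\tG$, and the paper pointedly avoids any cancellativity hypothesis. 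The paper stops at the contradiction with~\eqref{eq:tan.sum}; the extra inference you append is not supported by the axioms and should be dropped.
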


\begin{proof} (i): Assume first that $a \in R \sm (\tT\cup \tG)$.
  Since $R$ is \tame, we can write  $a = c + ed$, where $c,d  \in \tT$, which is a \gsdiv, by Example ~ \ref{examp:cancellative.gdiv}.
  If $a \in R \sm (\tTPS \cup \tG)$, then for some $m$ either $a^m \in \tG$ or $a^m \in R \sm (\tT \cup \tG)$, and  hence $a$ is a \gsdiv. Indeed, for the latter take the minimal $m$ so that $a^m$ is a \gsdiv\ and there is $b \notin \tG $ such that $a^m b \in \tG$. Write $a^m b = a(a^{m-1} b)$, where $a^{m-1} b \notin \tG$, since otherwise we would contradict to the minimality of $m$.
  \pSkip
  (ii): Let $a,b \in \tTPS \sm \gDiv(R)$, and assume that $ab \notin \tTPS$. The product $ab$ is not ghost since
  $a,b \in \tTPS \sm \gDiv(R)$. Then, also $bab$ is not ghost, since $b \notin \gDiv(R)$ and $ab \notin \tG$, implying that $(ab)^2 \notin \tG$, and iteratively that $(ab)^n \notin \tG$.
    So, $(ab)^n \in R \sm (\tT \cup \tG)$ for some $n$,  and thus $(ab)^n = c +ed$, with  $c,d \in \tT$. But then $(ab)^n$ is a ghost divisor by part (i), and there is $q \notin \tG$ such that $(ab)^n q \in \tG$. Now $bq \notin \tG$ since $b \notin  \gDiv(R)$, so if $a(bq) \in \tG$, then $a \in  \gDiv(R)$ -- a contradiction. Iteratively, we get that $(ab)^n q \in \tG$, where $b(ab)^{n+1} \notin \tG$, contradictively implying  $a \in \gDiv(R)$.
    \pSkip
    (iii): Suppose $a \notin \tTPS$, then there exists $m$ such that $a^m \notin \tT$, where $a^m \notin \tG$, since otherwise $(ab)^m \in \tG$.
    Hence $a^m = c + ed $, with $c, d \in \tT$. But, then $(ab)^m=a^mb^m=(c+ed)b^m=cb^m +edb^m$ -- contradicting~ \eqref{eq:tan.sum}.
    %is a ghost divisor by part (i), and  $q a^m \in \tG$ for some $q \notin \tG$. But then $q(a^m b^m) = q(ab)^m$, contradictively implying that $(ab)^m  \in \gDiv(R)$.
%
 %   \pSkip
  %  (iv): Follows from (i) and Remark \ref{rem:ghost.div}.(ii).
  \end{proof}

\subsection{Supertropical semirings}\label{ssec:supertrpical}

\sSkip

So far all our $\nu$-structures have been considered to have an (additive) ghost submonoid which is partially  ordered (Definition \ref{def:nusemiring}). The next supertropical structures strengthen this property to totally ordered submonoids~ \cite{IzhakianRowen2007SuperTropical}.\footnote{The present paper develops the theory in the more abstract setting of \nusmr s and supertropical structures are brought as particular examples. Supertropical semirings were employed for an extensive development of linear and  matrix algebra \cite{IKR-LinAlg1,IKR-LinAlg2,IzhakianRowen2008Matrices, IzhakianRowen2009Equations, IzhakianRowen2010MatricesIII}.}

\begin{defn}\label{def:supertropical}
A \textbf{supertropical semiring} $R := (R, \tT, \tG, \nu)$ is a \nusmr\ whose ghost ideal is totally ordered.
 A \textbf{supertropical (integral)
domain} is a (commutative) \dfnt \ \tclsnusmr \ $R$. i.e.,   $\tTPS = \tT = R \sm \tG$ is a  abelian monoid,  in which the restriction
$\nu|_\tT: \tT \To \tG$ is onto. A \textbf{supertropical semifield} is a supertropical integral domain whose tangible monoid is  an abelian group, i.e., $\RX = \tT.$
 \end{defn}

In a supertropical domain $R$ the \tsset\ $\tTPS = \tT = R \sm \tG$ is a monoid, which  directly implies that~ $R$ has no ghost divisors (Example \ref{exmp:nudom}), and thus it is a \nudom.  Furthermore, $R$ is \tame, as it is \dfnt\ (cf. Remark \ref{rem:strc.relations}).
In addition, the \nufiber\ \eqref{eq:nu.fib} of each ghost $b\in \tG$ contains a  tangible element $a \in \tT$, not necessarily unique.

%Hence, a supertropical domain is a definite, faithful, \prs\  \nudom\ (Definition \ref{def:nusemiring}).

\begin{example}\label{exmp:grp.fld} Letting the ordered monoid $\tM$ in Construction
\ref{exmp:mon2smr} be an ordered abelian group, the \nusmr\ $\STR(\tM)$ is then a supertropical semifield.

Particular examples for $\STR(\tM)$ are obtained by taking $\tM$ to be $(\NetZ,+)$, $(\Z,+)$, and  $(\Q, +)$, where $+$ stands for the standard summation. The \nusmr s $\STR(\Z)$ and $\STR(\Q)$ are supertropical  semifields, while $\STR(\NetZ)$ is not a supertropical semifield, since  $\STR(\NetZ)|_\tng = \NetZ$ where $\NetZ^\times = 0$. But, it is a supertropical domain (Definition~ \ref{def:supertropical}).
%These \nusmr s\ generalize standard structures of tropical semirings.
\end{example}

The next  example establishes  the natural extension of the familiar (tropical) max-plus semiring $(\Real \cup \{ -\infty \} , \max, +)$  and its connection to standard tropical geometry \cite{IMS}.

\begin{example}\label{exmp:extended}
Our main supertropical example is the \textbf{extended tropical
semifield} \cite{zur05TropicalAlgebra}, that is $\Trop := \STR(\tM)$ with $\tM = (\Real, +)$ ordered traditionally, cf.  Construction \ref{exmp:mon2smr}.
Explicitly, $$\Trop := {\Real}
\cup \{- \infty \} \cup {\Real}^\nu,$$ with $\tT = {\Real}$, $\tG =
{\Real}^\nu \cup \{ -\infty  \}$, where the restriction of the ghost map $\nu|_\tT: \R \To
\R^\nu $ is the identity map  and addition and multiplication are
induced respectively by the maximum and standard summation of the
real numbers~\cite{zur05TropicalAlgebra}. The supertropical
semifield $\Trop$ extends the familiar max-plus semifield,  which is the underlying structure of  tropical geometry. It serves as a main numerical examples, which we traditionally  call \textbf{logarithmic notation} (in particular $\one = 0$ and $\zero = - \infty$).

\end{example}

%\subsection{Ideals of \nusemirings0}
The following  example presents the smallest finite supertropical semifield, extending the well-known  boolean algebra.
\begin{example}\label{exmp:superboolean}
The \textbf{superboolean semifield}  $\sbool : = (\{ \one, \zero, \one^\nu
\},  \{ \one\}, \{ \zero, \one^\nu\}, \nu
 \;)$  is a three element supertropical semifield, extending the boolean semiring  $(\{ \zero,\one\},
\vee , \wedge \;)$, endowed with the following addition and multiplication:
$$ \begin{array}{l|lll}
   + & \zero & \one & \one^\nu \\ \hline
     \zero & \zero & \one & \one^\nu \\
     \one & \one & \one^\nu & \one^\nu \\
     \one^\nu &\one^\nu & \one^\nu & \one^\nu \\
   \end{array} \qquad
\begin{array}{l|lll}
   \cdot & \zero & \one & \one^\nu \\ \hline
     \zero & \zero & \zero & \zero \\
     \one &  \zero & \one & \one^\nu \\
     \one^\nu & \zero & \one^\nu & \one^\nu \\
   \end{array}
   $$
 $\sbool$ is totally  ordered as  $ \one^\nu
>  \one
>  \zero .$
The tangible element of $\sbool$ is $\one$, while $e = \one  + \one =  \one^\nu$ is its
 {ghost} element;  $\tG := \{\zero, \one^\nu\}$
is the {ghost ideal} of $\sbool$ with the obvious  ghost map $\nu:\one \To e $.
\end{example}
In other words, by Construction \ref{exmp:mon2smr},  the superboolean semifield $\sbool$ is $\STR(\tM)$ with $\tM$ being the trivial group. This semifield suffices  for realizations of matroids, and more generally of  finite abstract simplicial complexes, as semimodules \cite{MatI,MatII}.

\begin{prop}[Frobenius Property {\cite[Proposition~3.9]{IzhakianRowen2007SuperTropical}}]\label{prop:Frobenius}
If $R$ is a supertropical semiring, then
$$ (a + b)^n = a^n + b^n, \qquad n \in \Net,$$
for any $a,b \in R$.
\end{prop}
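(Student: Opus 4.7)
The plan is to proceed by induction on $n$. The case $n=1$ is trivial. For the inductive step, assume $(a+b)^{n-1} = a^{n-1} + b^{n-1}$ and expand, using commutativity and distributivity,
$$(a+b)^n \;=\; (a+b)(a+b)^{n-1} \;=\; (a+b)(a^{n-1} + b^{n-1}) \;=\; a^n + a^{n-1}b + ab^{n-1} + b^n.$$
It remains to show that the two mixed terms $a^{n-1}b$ and $ab^{n-1}$ are absorbed by $a^n + b^n$.

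Since the ghost ideal of a supertropical semiring is totally ordered (Definition \ref{def:supertropical}), the induced $\nu$-order \eqref{eq:nu.order} on $R$ is total, so by symmetry I may assume $\nu(a) \ge \nu(b)$. Applying that multiplication respects the $\nu$-order \eqref{eq:smr.order}, together with the homomorphism property of $\nu$, I obtain the chain
$$\nu(a)^n \;\ge\; \nu(a)^{n-1}\nu(b) \;\ge\; \nu(a)\nu(b)^{n-1} \;\ge\; \nu(b)^n.$$

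I would then split on whether $\nu(a) = \nu(b)$ or $\nu(a) > \nu(b)$. In the equality case every entry in the chain equals $\nu(a)^n = \nu(b)^n$, so all four summands $a^n,\,a^{n-1}b,\,ab^{n-1},\,b^n$ are pairwise $\nu$-equivalent and, by iterated use of Axiom \NMb\ (together with $x + x^\nu = x^\nu$, which follows from $\nu(x^\nu)=\nu(x)$), the whole sum collapses to $(a^n)^\nu$; on the other hand $a^n + b^n = (a^n)^\nu$ by \NMb\ as well, giving the identity. In the strict case $\nu(a) > \nu(b)$, each entry in the chain is strict, so by repeated application of Axiom \NMa\ every middle term and $b^n$ are absorbed into $a^n$, yielding $(a+b)^n = a^n$; simultaneously $a^n + b^n = a^n$ by \NMa, again closing the identity.

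The only delicate point is the strict propagation of inequality from $\nu(a) > \nu(b)$ to $\nu(a)^{n-k}\nu(b)^{k} > \nu(a)^{n-k-1}\nu(b)^{k+1}$. The axiom \eqref{ogr1} of a partially ordered monoid delivers only non-strict monotonicity under multiplication; strictness requires a cancellative-type property of the (totally ordered) ghost multiplication, which is built into the supertropical framework via the totality of the order on $\tG$. Concretely, an equality $\nu(a)\nu(c) = \nu(b)\nu(c)$ with $\nu(c)\neq \zero$ combined with $\nu(a) > \nu(b)$ contradicts totality through \eqref{eq:smr.order}; in the main special case of supertropical semifields (where the tangible group makes multiplication on ghosts cancellative) this is immediate. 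I expect this order-propagation step to be the sole genuine obstacle; once it is settled, the remainder is a routine case analysis with \NMa\ and \NMb.
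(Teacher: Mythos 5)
The paper does not prove this proposition---it simply cites \cite[Proposition~3.9]{IzhakianRowen2007SuperTropical}---so there is no in-paper argument to compare against, and your attempt must be judged on its own.

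Your reduction (induction, expansion to $a^n + a^{n-1}b + ab^{n-1} + b^n$, WLOG $\nu(a)\ge\nu(b)$ via totality of the ghost order) is fine, and the equality case $\nu(a)=\nu(b)$ works as you describe. The gap is exactly where you flag it, but your proposed repair does not close it. You want $\nu(a)>\nu(b)$ to force strict inequalities all along the chain $\nu(a)^n\ge\nu(a)^{n-1}\nu(b)\ge\cdots\ge\nu(b)^n$, but \eqref{ogr1} gives only non-strict monotonicity, and ``totality through \eqref{eq:smr.order}'' is circular: \eqref{eq:smr.order} \emph{is} the strict-monotonicity claim you are trying to justify, not a tool for proving it, and totality of the order on $\tG$ only says that any two ghosts are comparable---it grants no cancellativity. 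Indeed, strict monotonicity genuinely can fail for a supertropical semiring as defined here: take $R=\{\zero,\one,e,g\}$ with ghosts $\{\zero,e,g\}$ ordered $\zero<e<g$, $\nu(\one)=e$, and ghost multiplication $e^2=e$, $eg=g^2=g$; this satisfies all the axioms of Definitions \ref{def:numonoid}, \ref{def:nusemiring}, \ref{def:supertropical} (including distributivity), yet $g>_\nu e$ while $g\cdot g = g\cdot e$.

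Fortunately the gap is avoidable, because you do not need to control the $\nu$-values of the mixed terms at all. In the strict case $\nu(a)>\nu(b)$, Axiom \NMa\ gives $a+b=a$, hence $(a+b)^n=a^n$ and, by the inductive hypothesis, $a^{n-1}=(a+b)^{n-1}=a^{n-1}+b^{n-1}$. Multiplying $a+b=a$ by $a^{n-1}$ gives $a^n+a^{n-1}b=a^n$; multiplying $a^{n-1}+b^{n-1}=a^{n-1}$ by $b$ gives $a^{n-1}b+b^n=a^{n-1}b$. Therefore
$$a^n+b^n \;=\; (a^n+a^{n-1}b)+b^n \;=\; a^n+(a^{n-1}b+b^n) \;=\; a^n+a^{n-1}b \;=\; a^n \;=\; (a+b)^n,$$
which closes the induction using only \NMa, commutativity, and distributivity, independently of whether ghost multiplication is strictly monotone. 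Combined with your treatment of the equality case via \NMb, this gives a complete proof.
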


\subsection{\nutop} \sSkip

Let $R:= (R, \tT, \tG, \nu)$ be a \nusmr. Given a nonempty subset $U \subseteq R$, using the \nufiber s \eqref{eq:nu.fib} of its members we define the subset
\begin{equation}\label{eq:nu.Fib}
 \nufib(U) := \{ b \in \nufib(a) \ds | a \in U\}.
 \end{equation}
  In particular, if $U \subseteq \tG$, then $U \subseteq \nufib(U)$. Employing these subsets, a given topology $\Om$ on the ghost ideal $\tG$ induces a topology~ $\tlOm$ on~ $R$ whose open sets are
$$ V \subseteq  \nufib(U) \text{ such that } \nu(V) = U,  \qquad  U \text{ is open  in }  \Om.$$
%(Often we simply take $V  =  \nufib(U)$.)
 Clearly,
$\tlOm$ admits continuity of semiring operations on $V$, whenever $\Om$ preserves continuity of operations on $U$.  We  call this topology the \textbf{\nutop} induced by $\Om$. A \nutop\ $\tlOm$ on $R $ is extended to  $R^{(n)}$ by taking the product \nutop\ of $\tlOm$.

\begin{example}
  In the case of the extended tropical
semifield $\Trop$ in Example \ref{exmp:extended}, the Euclidian tropology on  $\Real^\nu$ induces a  \nutop\ on $\Trop$ which admits continuity of addition. However, this induced \nutop\ is not Hausdorff, since $a$ and $a^\nu $  cannot be separated by neighbourhoods.
\end{example}

In general,  when $\tG$ is totally ordered, we can define a \nutop\ directly.
\begin{example}
   A supertropical semiring $R$, i.e., $\tG$ is totally ordered, is endowed with a \nutop\ on $R$ having the intervals
$$V_{a,b} = \{ x \in R \ds | a \nul x  \nul b \},% \quad U_{a,b} = \{ x \in \tT \ds | a \nul x  \nul b \},
\qquad  a,b \in \tG,$$
as a base of its open sets.
\end{example}

\subsection{Homomorphisms of \nusemirings0}\label{ssec:smr.hom}  \sSkip

Recall from Lemma \ref{lem:nu.hom} that $\vrp(\tG) \subseteq \tG'$ for any homomorphism of \numon s $\vrp: \tM \To \tM'$ (Definition ~\ref{def:numon.hom}).

\begin{defn}\label{def:nusmr.hom} A \textbf{homomorphism} of \nusmr s
is a  semiring homomorphism
 (Definition \ref{def:smr.hom})
\begin{equation}\label{eq:nu.smr.hom} \vrp: (R, \tT, \tG, \nu)
\TO (R', \tT', \tG', \nu')\end{equation} which is also a \numon\ homomorphism. $\vrp$ is \textbf{unital}, if $\srHom(\rone) = \one_{R'}$.

A \hom\ $\vrp$ is a  \textbf{\qhom}, abbreviation for quotient  homomorphism, if $\vrp(a) \in ~\tT'$ implies $a \in \tT$ for all $\vrp(a) \in \tT'$, that is  $\ivrp(\tT') \subseteq \tT$.

 %When $\vrp$ is unital (i.e., $\srHom(\rone) = \one_{R'}$) and $\vrp(a) \in \tT'$ implies $a \in \tT$ for all $\vrp(a) \in \tT'$, that is  $\ivrp(\tT') \subseteq \tT$, we call $\vrp$ a \textbf{\qhom}, abbreviation for quotient  homomorphism.

The \textbf{\tcore}  and the \textbf{\ptcore} of a \qhom\ $\vrp$ are respectively  the subsets
$$\tcor(\vrp) : = \{ a \in \tT \cnd  \vrp(a) \in \tT'\}, \qquad \stcor(\vrp) : = \{ a \in \tT \cnd  \vrp(a) \in (\tT')^\PrS \}.  $$
We say that $\vrp$ is \textbf{tangibly injective}, if $\vrp(\tT) \subseteq  \tT'.$  We call $\vrp$ a
\textbf{tangibly local homomorphism}, if $\ivrp(\tT') = \tT$, namely if  $\tcor(\vrp) = \tT$.

\end{defn}

In other words, a \qhom\  $\vrp: R \To R'$ maps only tangible elements of $R$ to the \tsset\ ~ $\tT'$ of~$R'$.\footnote{This condition prevents a mapping of non-tangibles of $R$ which are not ghosts to tangibles in $R'$. } When  $\vrp$ is unital, it cannot be a ghost homomorphism (Definition \ref{def:numon.hom}) and
$\vrp(e_R) = \vrp(\one_R) + \vrp(\one_R)  = \one_{R'}+ \one_{R'} = e_{R'}$, which shows again that $\vrp(\tG) \subseteq \tG'$, since
$\vrp(a^\nu) = \vrp(e_R a ) = \vrp(e_R) \vrp(a) = e_{R'} \vrp(a)$.
 In addition,
$\vrp(\RX) \subseteq (R')^\times$, since $\vrp(\rone) = \vrp(a a^\inv) = \vrp(a) \vrp(a^\inv) = \one_{R'}$.

 By definition, we  see that $\stcor(\vrp) \subseteq \tcor(\vrp)$, where $\stcor(\vrp)$ of any \qhom\ $\vrp$ is nonempty, since $\vrp$ is unital and thus $\rone \in \stcor(\vrp)$.
\begin{lem}\label{lem:stcore}
  $\stcor(\vrp) \subset \tTPS$ for any \qhom\ $\vrp: R \To R'$.
\end{lem}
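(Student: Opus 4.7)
The plan is to chase the definitions directly. Fix $a \in \stcor(\vrp)$, so by definition $a \in \tT$ and $\vrp(a) \in (\tT')^\PrS$. I want to argue that every power of $a$ lies in $\tT$, and then invoke the characterization of $\tTPS$ as the \tprsset\ to place $a$ inside it.

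First, since $\vrp(a) \in (\tT')^\PrS$, Axiom \PSRa\ applied in $R'$ gives $\vrp(a)^n \in (\tT')^\PrS \subseteq \tT'$ for every $n \in \Net$. Because $\vrp$ is a semiring homomorphism, $\vrp(a)^n = \vrp(a^n)$, so $\vrp(a^n) \in \tT'$ for every $n \in \Net$. Next, the \qhom\ property $\ivrp(\tT') \subseteq \tT$ immediately yields $a^n \in \tT$ for every $n \in \Net$. In particular, the entire multiplicative semigroup generated by $a$ lies inside~$\tT$, and its image under $\vrp$ lies inside $(\tT')^\PrS$.

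To finish, I would conclude $a \in \tTPS$ from the fact that the \tprsset\ of $R$ is designated precisely so as to collect the tangibles whose powers remain in $\tT$ and satisfy Axiom \PSRa; since $\{a^n : n \in \Net\}$ together with $\tTPS$ is a subset of $\tT$ closed under taking powers (\PSRa\ passes through $\vrp$ by the preceding paragraph), $a$ must be absorbed into the designated \tprsset.

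The only genuine obstacle is this last step: justifying that $a$ itself (and not merely its powers) belongs to the \emph{designated} subset $\tTPS$ rather than to some ad hoc subset closed under \PSRa. I expect this to be handled by appealing to the implicit convention that $\tTPS$ is maximal among subsets of $\tT$ satisfying \PSRa\ and containing $\RX$ -- equivalently, $\tTPS = \{a \in \tT : a^n \in \tT \text{ for all } n \in \Net\}$ -- which is the natural reading of the setup in Definition~\ref{def:nusemiring}. Under that reading, the three bullets above yield the inclusion immediately.
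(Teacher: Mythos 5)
Your proof is correct and is essentially the contrapositive of the paper's own argument: the paper assumes $a \in \stcor(\vrp)$ is not \tprs, deduces $a^n \notin \tT$ for some $n$, and derives a contradiction from $\vrp(a^n) = \vrp(a)^n \in (\tT')^\PrS$ together with the \qhom\ property $\ivrp(\tT') \subseteq \tT$. The step you flag as the only obstacle---reading $\tTPS$ as $\{a \in \tT \mid a^n \in \tT \text{ for all } n\}$---is exactly the reading the paper itself invokes when passing from ``$a$ is not \tprs'' to ``$a^n \notin \tT$ for some $n$'', so your argument is no less rigorous than the original.
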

\begin{proof}
  Assume  $ a \in \stcor(\vrp) $ is not \tprs, then $a^n \notin \tT$ for some $n$, and thus $a^n \notin \ivrp(\tT')$, as $\vrp$ is a \qhom. On the other hand, $\vrp(a^n) = \vrp(a)^n \in (\tT')^\PrS$, and therefore $a^n \in  \ivrp(\tT')$ -- a contradiction.
\end{proof}

 \begin{prop}\label{prop:rpim} Let $\vrp: R \To R'$ be a \qhom.   Then,
  $\ivrp((\tT')^\Pr) \subseteq \tTPS$ is a monoid for any \tmon\ $(\tT')^\Pr \subset (\tT')^\PrS$.
\end{prop}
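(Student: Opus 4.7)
The plan is to verify the two assertions separately: the set-theoretic containment $\ivrp((\tT')^\Pr)\subseteq \tTPS$, and the fact that $\ivrp((\tT')^\Pr)$ is closed under multiplication and contains the multiplicative identity.

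For the containment, I would take any $a\in\ivrp((\tT')^\Pr)$, so that $\vrp(a)\in(\tT')^\Pr\subseteq(\tT')^\PrS$. By the definition of the \ptcore\ (Definition~\ref{def:nusmr.hom}), this means $a$ lies in $\stcor(\vrp)$, provided $a\in\tT$; but since $\vrp$ is a \qhom\ and $\vrp(a)\in\tT'$, the defining property of a \qhom\ forces $a\in\tT$. Hence $a\in\stcor(\vrp)$, and Lemma~\ref{lem:stcore} then yields $a\in\tTPS$.

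For the monoid structure, I would check the two required properties. First, $\vrp$ is unital, so $\vrp(\rone)=\one_{R'}\in(\tT')^\Pr$ (any multiplicative monoid contains the identity), giving $\rone\in\ivrp((\tT')^\Pr)$. Second, if $a,b\in\ivrp((\tT')^\Pr)$, then $\vrp(a),\vrp(b)\in(\tT')^\Pr$, and since $(\tT')^\Pr$ is a multiplicative monoid,
\[
\vrp(ab)=\vrp(a)\,\vrp(b)\in (\tT')^\Pr,
\]
so $ab\in\ivrp((\tT')^\Pr)$. Associativity is inherited from $R$.

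I do not expect a real obstacle here — the proof is a direct unwinding of the definition of \qhom, the \ptcore, and Lemma~\ref{lem:stcore}. The only subtle point to be careful about is the use of the \qhom\ property to move from $\vrp(a)\in\tT'$ to $a\in\tT$, which is exactly what ensures that the preimage lands inside the tangible part (rather than accidentally picking up non-tangible elements whose image happens to be tangible), and hence lands inside the \tprsset\ $\tTPS$ by Lemma~\ref{lem:stcore}.
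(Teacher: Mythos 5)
Your proposal is correct and follows essentially the same route as the paper: reduce the containment to Lemma~\ref{lem:stcore} via the \qhom\ property (which is exactly what identifies $\ivrp((\tT')^\PrS)$ with $\stcor(\vrp)$), then verify closure under multiplication and the presence of $\rone$ directly from the fact that $(\tT')^\Pr$ is a multiplicative monoid and $\vrp$ is multiplicative and unital. If anything, you spell out the step from $\vrp(a)\in\tT'$ to $a\in\tT$ more explicitly than the paper does.
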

\begin{proof}
  $\ivrp((\tT')^\PrS) \subseteq \tTPS$ by the Lemma \ref{lem:stcore}.  In particular, $\rone \in \stcor(\vrp)$,
 since $\vrp$ is a \qhom.  Assume that  $a,b \in \ivrp((\tT')^\Pr)$, i.e., $\vrp(a), \vrp(b)  \in (\tT')^\Pr$.  Then,
$  \vrp(ab) = \vrp(a)\vrp(b)  \in (\tT')^\Pr$, hence  $ab \in \ivrp((\tT')^\Pr)$, implying that $\ivrp((\tT')^\Pr)$ is a monoid.
\end{proof}

\begin{cor}\label{cor:rpim}
If $\vrp: R \To R' $ is a \qhom, where $R'$ is a \tclsnusmr, then  $\tcor(\vrp) = \stcor(\vrp)$ is a monoid.
\end{cor}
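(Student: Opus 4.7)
The plan is to reduce the corollary directly to Proposition \ref{prop:rpim} by exploiting the structural consequences of $R'$ being \tcls. First I would unpack the definition: since $R'$ is \tcls, the \tsset\ $\tT'$ is a monoid; and by Remark \ref{rem:strc.relations}.(b), \tcls\ implies \prsfl, so $\tT' = (\tT')^\PrS$. This already settles half of the statement, since
\[
\stcor(\vrp) = \{a \in \tT \mid \vrp(a) \in (\tT')^\PrS\} = \{a \in \tT \mid \vrp(a) \in \tT'\} = \tcor(\vrp),
\]
simply by comparing the two definitions after substituting $(\tT')^\PrS = \tT'$.

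Next I would establish that this common set is a monoid. Take the \tmon\ $(\tT')^\Pr := \tT'$ inside $R'$, which is legitimate because $R'$ is \tcls. Applying Proposition \ref{prop:rpim} yields that $\ivrp(\tT') \subseteq \tTPS$ is a (multiplicative) monoid. It remains to identify $\ivrp(\tT')$ with $\tcor(\vrp)$. Since $\vrp$ is a \qhom, the defining condition $\ivrp(\tT') \subseteq \tT$ gives
\[
\ivrp(\tT') = \{a \in R \mid \vrp(a) \in \tT'\} = \{a \in \tT \mid \vrp(a) \in \tT'\} = \tcor(\vrp).
\]
Combining with the previous paragraph, $\tcor(\vrp) = \stcor(\vrp)$ is a monoid, and moreover it sits inside $\tTPS$.

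There is essentially no obstacle: the \tcls\ hypothesis collapses the distinction between $\tT'$ and $(\tT')^\PrS$, at which point the two potentially different cores coincide, and Proposition \ref{prop:rpim} supplies the monoid structure for free. The only point requiring a moment of care is noticing that one must invoke the \qhom\ condition $\ivrp(\tT') \subseteq \tT$ to replace the preimage $\ivrp(\tT')$ by $\tcor(\vrp)$; without this, Proposition \ref{prop:rpim} would only give a monoid in $R$, not inside $\tT$.
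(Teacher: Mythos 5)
Your proof is correct and follows essentially the route the paper intends: the corollary is stated immediately after Proposition \ref{prop:rpim} with no separate proof, precisely because the \tcls\ hypothesis collapses $\tT'$, $(\tT')^\PrS$, and $(\tT')^\Pr$ to a single monoid, so $\tcor(\vrp)=\stcor(\vrp)=\ivrp(\tT')$ and Proposition \ref{prop:rpim} applies verbatim. One small remark: the inclusion $\ivrp(\tT')\subseteq\tT$ is already built into the conclusion of Proposition \ref{prop:rpim} (which gives $\ivrp((\tT')^\Pr)\subseteq\tTPS\subseteq\tT$), so you need not re-invoke the \qhom\ condition separately — though doing so is harmless and arguably clearer.
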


\begin{example}
The superboolean semifield $\sbool$ (Example \ref{exmp:superboolean}) embeds naturally in any \nusmr \ $R$ via
$$ \iota: \sbool \TO R, \qquad \one \longmapsto  \rone, \ \zero \longmapsto \zero_R, \  \one^\nu \longmapsto e_R. $$
Note that the surjective map $\psi:  R \To  \sbool,  $
given by $a \Mto  \one $ for any $a\in \tT$,  $\rzero  \Mto  \zero $, and $a \Mto  \one^\nu $ for every $a \in R \sm \tTz$, is not a homomorphism of \nusmr s.
\end{example}

\begin{example}
If $\vrp: R \To R'$ is a ghost injective \hom\ (Definition \ref{def:numon.hom}) of supertropical domains,
then $\vrp|_\tT: \tT \To \tT'$ is injective.
 Indeed, if $a \neq b$, say with tangibles $a \nug b$,   such that $\vrp(a) = \vrp(b)$, then
$$ \vrp(a^\nu) = \vrp(a)^{\nu'} = \vrp(a) + \vrp(b) = \vrp (a+b) = \vrp(a),$$
so $\vrp(a) \in \tG'$-- a contradiction. 
\end{example}

It is easy to verify that the ghost kernel $\gker(\vrp)$ (Definition \ref{def:numon.hom})
of a \nusmr\ \hom\  $\vrp: R \To R'$   is a semiring ideal containing the ghost ideal $\tG$ of $R$.
Clearly,  $\gker(\vrp) \cap \tcor(\vrp) =  \emptyset$ for any \qhom\ $\vrp$ of \nusmr s.

\begin{definition}\label{def:nusmr.cag}
The category $\NSMR$ of \nusmr s, is defined to be the category whose objects are \nusmr s (Definition \ref{def:nusemiring}) and whose morphisms are \qhom s (Definition \ref{def:nusmr.hom}).
\end{definition}

In what follows,  all our objects are taken from the category $\NSMR$.
% \end{defn}

\subsection{Localization of \nusmr s}\label{ssec:tngLocal} \sSkip

Let $R := (R,\tT,\tG,\nu)$ be a commutative \nusmr, and let $\MS \subset R$ be a  multiplicative
  submonoid  with $\rone \in \MS$. % thereby $\RX \subseteq \MS$.
     We always assume that $\MS$ is not pointed, i.e., that $\rzero \notin \MS$.
  When it is clear from the context, we write~ $\one$  and $\zero$ for $\rone$ and~ $\rzero$, for short.

    We define the \textbf{localization} $\iMS R$ of $R$ by $\MS$ as the monoid localization by a non-cancellative submonoid, as described in  \S\ref{sec:monLocal}. To wit,  this localization is
determined by %the congruence $\Cong_{\loc(\MS)}$ whose underlying
the equivalence  $\sim_\MS$ on $R \times \MS $ given as
\begin{equation}\label{eq:ms:1}
   (a,c) \sim_\MS  (a',c') \dss{\text{ iff} } ac'c'' =
a'cc'' \quad  \text{for some } c'' \in \MS,
\end{equation}
 written $\frac{a}{c} = \frac{a'}{c'}$.  The addition and multiplication of $\iMS R$ are defined  respectively via
$$\frac {a_1}{c_1} + \frac{a_2}{c_2} = \frac{c_2a_1
+ c_1 a_2}{c_1c_2}, \qquad \frac {a_1}{c_1}  \frac{a_2}{c_2} = \frac{a_1 a_2}{c_1c_2},$$  for $a_1,a_2 \in R,$ $c_1,c_2 \in \MS$.
Then,
$\iMS R$ could become a \nusmr\ by defining  $\frac {a_i}
{c_j} $ to be tangible if and only if both $a_i$  and ~ $c_j$ are tangibles in~$R$, and letting $\frac {a_i}
{c_j} $ be ghost if  $a_i$  or $c_j$ is ghost in~$R$. But then, the elements $\frac a c $ are not necessarily  invertible, as $c\in \MS$ could be non-tangible.
Moreover, taking an arbitrary monoid $\MS$ does not suit here, as seen by the next remark.
%However,  the \nuequivalence\ $(\frac ac)( \frac c a) \nucong \one$ holds.

\begin{remark} The idea of localization by non-tangible elements has a major defect.
%, leading to a setup  which is not well defined.
For example, suppose that $a = p +q e $ is a non-tangible element in  $\MS$, where $p, q \in \tT$. In this case we would have
$$ \one = \frac {a}{a} = \frac{p +eq } {p +eq } =  \frac{p +eq +eq} {p +eq } =  \frac{p +eq } {p +eq } +  \frac{eq } {p +eq } = \one +  e \bigg( \frac{q } {p +eq } \bigg),$$
     which for a large $q$ contradicts \emph{\PSRb}, cf. \eqref{eq:tan.sum}. (In addition, $\one$ is then a \gdiv\ by Example \ref{examp:cancellative.gdiv}, implying contradictively  that $b = \one b  = \ghost$ for some non-ghost $b \in R \sm \tG$.)
     Similarly, for $a = eq$ we would have  $\one = \frac aa =  \frac{eq} {eq} = \frac{eeq} {eq} = e \frac{eq} {eq} = e$, introducing a contradiction again.
\end{remark}

With this nature, to ensure that a localized \nusmr\ is well defined, where the elements of $\MS$ become units in the  localization $R_\MS$, initially,  all the members of $\MS$ must be tangibles. More precisely, they must be \tprs s, i.e., $\MS \subseteq \tTPS$,   since $\MS$ should be a tangible monoid.
\begin{defn}\label{def:tangible.localization}
When  $\MS \subseteq \tT$ is a tangible submonoid, we say that $\iMS R$ is a \textbf{tangible localization} of ~$R$.
If $\MS = \tT$ (and hence $\tT = \tTP$), then
$\iMS R$ is called the \textbf{\nusmr\ of fractions} of $R$ and is
denoted by $Q(R).$ When $R$ is a \nudom, $\iMS R$ is called the \textbf{\nusmf\ of fractions} of $R$.
\end{defn}
\noindent
  % When $\MS = \{ a \} $ with a \tprs\ element $a \in \tTPS$, we write $R_a$ for the tangible localization~ $R_\MS$.
 Note that $\MS$ may contain \gdiv s, as far as it is a tangible monoid. \emph{Henceforth, we assume that $\MS \subseteq \tTPS$ is a tangible monoid.}

 %Note that, as $\MS \subseteq \tT$ is a tangible submonoid, we must have $\MS \subseteq \tTPS$ and furthermore
%$\MS$ does not contain ghost-divisors.

The canonical
\qhom\ (Definition \ref{def:nusmr.hom}) is given by
\begin{equation}\label{eq:localbyMS}
\tau_\MS : R \TO \iMS R,  \qquad a \longmapsto \frac a
\one,
\end{equation}
and is an injection. % when $\MS$ does not contain ghost-divisors.
We identify $\iMS R$ with the \nusmr\
$(\iMS R,\; \iMS \tT,\;  \iMS \tG, \;  \nu')$, whose ghost map is given by $\nu' : \frac
{a} {c} \Mto \frac {a^\nu} {c}$, and write
$$R_\MS:= \iMS R$$
for the localization of $R$ by $\MS$.
When the multiplicative submonoid $\MS$ is generated by a single  element $a\in R$, i.e., $ \MS = \{\one, a, a^2, a^3, \dots  \}$, we sometimes write $R_a$ for $R_{\MS}$.

\begin{rem}\label{rem:R2RC} Let $\tau_\MS: R \To R_\MS$ be the (canonical) injective \qhom\  \eqref{eq:localbyMS}.
\begin{enumerate} \eroman
  \item The ghost kernel of $\tau_\MS$ is  determined as $$ \gker(\tau_\MS) = \{ a \in R \ds | ac \in \tGz \text{ for some } c \in \MS \}. $$ Indeed, $\frac{a}{\one} = \frac{b^\nu}{\one}$ is ghost in $R_\MS$ iff  $ac = b^\nu c $ for some $c \in C$,
but $b^\nu c = (bc)^\nu \in \tGz$ is ghost, implying that  $ac \in \tGz$.

  \item For all $c \in \MS$, $\tau_\MS(c)$ is a tangible unit in $R_\MS$, i.e., $\tau_\MS(c) \in (R_\MS)^\times$.

\item $\tau_\MS$ is bijective, if $\MS$ consists of tangible units in $R$.
\end{enumerate}
\end{rem}

For a \qhom\ $\vrp:R \To R'$ of \nusmr s, where $R'$ is a \tclsnusmr,  we define the tangible localization $R_\vrp$ of $R$ by $\vrp$ to be
\begin{equation}\label{eq:homLocal}
R_\vrp := R_{\stcor(\vrp)}= (\stcor(\vrp))^\inv R.
\end{equation}
It is well defined, since $\stcor(\vrp) \subset R$ is a multiplicative tangible submonoid by
%Proposition \ref{prop:rpim} and
Corollary \ref{prop:rpim}.

\begin{prop}[Universal property of tangible localization]\label{prop:local.univ}
  Let $R$ be a \nusmr, and let $R_\MS$ be its tangible localization by a (multiplicative) submonoid $\MS \subseteq \tT$.
The canonical \qhom\ $\tau_\MS : R \To~ R_\MS$ satisfies $\tau_\MS (\MS) \subseteq  (R_\MS)^\times$  and it is
universal: For any \nusmr\ \qhom\ $\vrp: R \To S$ with $\vrp(\MS) \subseteq S^\times$  there is a unique \nusmr\ \qhom\ $\htvrp : R_\MS \To  S$
such that the diagram
$$\xymatrix{
R \ar@{->}[rrd]_{\vrp } \ar@{->}[rr]^{\tau_\MS} && R_\MS \ar@{..>}[d]^{\htvrp} \\
 && S \\
}
$$
 commutates. Furthermore, if $\vrp : R \To S$ satisfies  the same universal property as $\tau_\MS$
does, then the \qhom\ $\htvrp  : R_\MS \To S$ is an isomorphism.
\end{prop}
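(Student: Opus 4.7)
The strategy is to define $\htvrp$ by the obvious formula and verify each structural property, following the classical localization template but with extra care at the steps that touch the $\nu$-structure. I would set
\[
\htvrp: R_\MS \TO S, \qquad \htvrp\bigg(\frac{a}{c}\bigg) := \vrp(a)\, \vrp(c)^\inv,
\]
which is well defined because $\vrp(\MS)\subseteq S^\times$: if $\frac{a}{c}=\frac{a'}{c'}$ in $R_\MS$, then by \eqref{eq:ms:1} there is some $c''\in\MS$ with $ac'c''=a'cc''$, and applying $\vrp$ and canceling the units $\vrp(c)$, $\vrp(c')$, $\vrp(c'')$ yields $\vrp(a)\vrp(c)^\inv=\vrp(a')\vrp(c')^\inv$. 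The additive, multiplicative, and unit axioms then reduce to direct calculations from the operations on $R_\MS$ and the fact that $\vrp$ is itself a semiring homomorphism.

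For the $\nu$-structure I would invoke the identity $(xy)^\nu=x^\nu y$ established earlier in the paper, together with the ghost map formula $(a/c)^{\nu'}=a^\nu/c$ on $R_\MS$, to compute
\[
\htvrp((a/c)^{\nu'}) \;=\; \vrp(a^\nu)\vrp(c)^\inv \;=\; (\vrp(a)\vrp(c)^\inv)^\nu \;=\; \htvrp(a/c)^\nu,
\]
which shows that ghosts map to ghosts and that $\htvrp$ commutes with the ghost maps of $R_\MS$ and $S$. The \qhom\ property then amounts to showing that if $\htvrp(a/c)\in\tT_S$, then $a/c\in\tT_{R_\MS}$; equivalently, since $c\in\MS\subseteq\tT$, that $a\in\tT$. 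Writing $\vrp(a)=\htvrp(a/c)\cdot\vrp(c)$ with $\vrp(c)\in S^\times$, the fact that $\tG_S$ is an ideal forces $\vrp(a)\notin\tG_S$; combining this with the \qhom\ hypothesis on $\vrp$ (which reflects tangibility) then yields $a\in\tT$, possibly after passing to an equivalent representative of $a/c$ via \eqref{eq:ms:1}.

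Commutativity of the triangle is immediate, $\htvrp(\tau_\MS(a))=\vrp(a)\vrp(\one)^\inv=\vrp(a)$, and uniqueness is forced: any \qhom\ $\psi:R_\MS\To S$ with $\psi\circ\tau_\MS=\vrp$ must send $c/\one\mapsto\vrp(c)\in S^\times$, hence $\psi(\one/c)=\vrp(c)^\inv$, and so $\psi(a/c)=\psi(a/\one)\psi(\one/c)=\vrp(a)\vrp(c)^\inv=\htvrp(a/c)$. The final assertion is then a standard formality: if $\vrp:R\To S$ also satisfies the same universal property as $\tau_\MS$, applying the universal property of $\tau_\MS$ to $\vrp$ yields $\htvrp:R_\MS\To S$, while applying the universal property of $\vrp$ to $\tau_\MS$ yields a \qhom\ $S\To R_\MS$; invoking uniqueness of these factorizations against the identities $\id_{R_\MS}$ and $\id_S$ shows the two induced maps are mutually inverse.

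The main obstacle I anticipate is the \qhom\ verification above. In a general \nusmr\ the product of two tangibles need not remain tangible, so one cannot directly assert $\htvrp(a/c)\cdot\vrp(c)\in\tT_S$ just from tangibility of the factors. The argument must crucially use that $\vrp(c)$ is a \emph{unit} in $S$ (so it lies in $\tTPS$ with a tangible inverse) in order to rule out the ghost alternative via the ideal property of $\tG_S$, and then exploit the \qhom\ hypothesis on $\vrp$ to rule out the intermediate alternative $\vrp(a)\in S\sm(\tT_S\cup\tG_S)$.
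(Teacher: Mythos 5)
Your approach matches the paper's: define $\htvrp$ by the formula $\htvrp(a/c) = \vrp(a)\vrp(c)^\inv$, verify well-definedness via \eqref{eq:ms:1} by cancelling the unit $\vrp(c'')$, force uniqueness from $\vrp(a) = \htvrp(a/c)\vrp(c)$, and derive the last claim by the standard formal argument with mutually inverse factorizations. The paper's proof does exactly this and then says only that it is ``easily checked'' that $\htvrp$ commutes with $\tau_\MS$; it does not verify the \qhom\ property at all.

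The one place you go beyond the paper is your attempt to verify that $\htvrp$ is a \qhom, and you are right that this is where the subtlety sits. But your sketch does not actually close the gap you yourself flag. Having ruled out $\vrp(a)\in\tG_S$ via the ideal property, you say one should ``exploit the \qhom\ hypothesis on $\vrp$ to rule out the intermediate alternative $\vrp(a)\in S\sm(\tT_S\cup\tG_S)$'' — but the \qhom\ hypothesis on $\vrp$, namely $\ivrp(\tT_S)\subseteq\tT$, gives no information whatsoever about elements $a$ with $\vrp(a)$ neither tangible nor ghost; it only constrains preimages of $\tT_S$. What you actually need is the structural fact that a product of a tangible by a unit is again tangible in $S$, so that $\vrp(a)=\htvrp(a/c)\vrp(c)\in\tT_S$ and the \qhom\ hypothesis then applies. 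The paper's axioms \PSRa--\PSRb\ do not obviously supply this: $\tT$ is not assumed closed under multiplication, and a unit being \tprs\ does not on its own force tangible-times-unit to stay in $\tT$ unless, say, $S$ is \tcls. Also the tentative escape route ``possibly after passing to an equivalent representative of $a/c$'' does not help, since changing representative only multiplies $a$ by elements of $\MS$, which does not change whether $\vrp(a)$ lands in the intermediate region. So either this piece is genuinely open in your write-up, or one needs to import an extra hypothesis on $S$; the paper's own proof sidesteps the issue by omission.
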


\begin{proof} \emph{\underline{Uniqueness}}:
For $a \in R$ and $c \in \MS$, we have
$\vrp(a)   = \htvrp\big( \frac{a}{\one}\big) = \htvrp\big( \frac{a}{c} \frac{c}{\one}\big) = \htvrp\big( \frac{a}{c}\big) \vrp(c), $
hence $ \htvrp\big( \frac{a}{c}\big) = \vrp(a)\vrp^\inv(c)$ since $ \vrp(\MS) \subseteq S^\times$,  implying that $\htvrp$ is uniquely determined by $\vrp$. \pSkip
\emph{\underline{Existence of a \qhom}} $\htvrp : R_\MS \To S$: Set
$ \htvrp \big( \frac{a}{c}\big) =\vrp(a)\vrp(c)^\inv$, with $a \in R$ and $c \in \MS$. % Then  $\htvrp( \frac a c )$ is well-defined,
Suppose $ \frac a c = \frac{a'}{c'}$, that is  $ac'c'' =  a'c c''$ for some $c'' \in  \MS$. Then
$ \vrp (a)\vrp(c') \vrp(c'') =  \vrp(a')\vrp(c)  \vrp (c'') $, which implies   $ \vrp (a)\vrp(c')  =  \vrp(a')\vrp(c) $, since $\vrp(c'')$ is a unit in $S$, and this is equivalent to
$\vrp(a)\vrp(c)^\inv  =  \vrp(a')\vrp(c')^\inv .$
Hence, $\htvrp  : R_\MS \To S$ is  well-defined, and it is easily checked that $\htvrp$ satisfies  $\vrp  = \htvrp \circ  \tau_\MS $.

Assume that both $\tau_\MS$ and $\vrp$ are universal in the above  sense.
Then, $\htvrp : R_\MS \To S $ satisfies  $\vrp = \htvrp \circ \tau_\MS$, and there is a \qhom\
$\phi  : S  \To R_\MS$ such that $\tau_\MS = \phi \circ \vrp$. Applying the uniqueness part to
$$\id_{S} \circ \vrp  = \htvrp \circ \tau_\MS = (\htvrp \circ \phi) \circ \vrp, \qquad  \id_{R_\MS}
\circ \tau_\MS = \phi \circ \vrp = (\phi \circ \htvrp) \circ \tau_\MS,$$
we conclude that  $\htvrp  \circ \phi = \id_{S}$ and  $\phi \circ \htvrp  = \id_{R_\MS}$. Consequently, $\htvrp  : R_\MS \To S$
is an isomorphism.
\end{proof}

\begin{cor}
  Suppose $\MS_2 \subset \MS_1 $ are multiplicative submonoids of $R$, then $R_{\MS_1}$ is isomorphic to $(R_{\MS_2})_{\tau_{\MS_2}(\MS_1)}$.
\end{cor}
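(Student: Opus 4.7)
The plan is to apply the universal property of Proposition~\ref{prop:local.univ} twice and show that the iterated localization satisfies the same universal property as $R_{\MS_1}$.

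First, I would verify that $(R_{\MS_2})_{\tau_{\MS_2}(\MS_1)}$ is well-defined as a tangible localization. Since $\MS_1 \subseteq \tTPS$ is a tangible monoid and $\tau_{\MS_2}: R \To R_{\MS_2}$ is the canonical injective \qhom, the image $\tau_{\MS_2}(\MS_1)$ lies inside the tangible part $\iSS_2 \tT$ of $R_{\MS_2}$ and is a multiplicative submonoid containing $\tau_{\MS_2}(\one)= \one_{R_{\MS_2}}$. Denoting by $\tau' := \tau_{\tau_{\MS_2}(\MS_1)} : R_{\MS_2} \To (R_{\MS_2})_{\tau_{\MS_2}(\MS_1)}$ the second canonical \qhom, I form the composition
$$\sigma \ds{:=} \tau' \circ \tau_{\MS_2} : R \TO (R_{\MS_2})_{\tau_{\MS_2}(\MS_1)},$$
which is a \qhom\ of \nusmr s. For every $c \in \MS_1$ the element $\tau_{\MS_2}(c) \in \tau_{\MS_2}(\MS_1)$ becomes a unit in $(R_{\MS_2})_{\tau_{\MS_2}(\MS_1)}$ after applying $\tau'$; hence $\sigma(\MS_1) \subseteq \big((R_{\MS_2})_{\tau_{\MS_2}(\MS_1)}\big)^\times$.

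Next, I would check that $\sigma$ satisfies the universal property characterizing $\tau_{\MS_1}$. Given any \qhom\ $\vrp : R \To S$ with $\vrp(\MS_1) \subseteq S^\times$, note that $\MS_2 \subset \MS_1$ forces $\vrp(\MS_2) \subseteq S^\times$ as well. By Proposition~\ref{prop:local.univ} applied to $\MS_2$, there is a unique \qhom\ $\vrp_2 : R_{\MS_2} \To S$ with $\vrp = \vrp_2 \circ \tau_{\MS_2}$. Then $\vrp_2(\tau_{\MS_2}(\MS_1)) = \vrp(\MS_1) \subseteq S^\times$, so applying the universal property a second time yields a unique \qhom\ $\vrp_{12} : (R_{\MS_2})_{\tau_{\MS_2}(\MS_1)} \To S$ with $\vrp_2 = \vrp_{12} \circ \tau'$. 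Combining, $\vrp = \vrp_{12} \circ \sigma$, and uniqueness of $\vrp_{12}$ follows from the successive uniqueness assertions.

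Having established that $\sigma$ enjoys the very same universal property as $\tau_{\MS_1}$, the final clause of Proposition~\ref{prop:local.univ} delivers a canonical isomorphism $R_{\MS_1} \isoto (R_{\MS_2})_{\tau_{\MS_2}(\MS_1)}$. The only slightly delicate point along the way is checking that $\tau_{\MS_2}(\MS_1)$ is a tangible monoid of $R_{\MS_2}$ (so that the second-stage tangible localization is legitimate); this is handled by the observation that the canonical map of Remark~\ref{rem:R2RC} carries tangible submonoids to tangible submonoids, combined with the fact that $\tau_{\MS_2}$ takes units of $\MS_2$ to units of $R_{\MS_2}$, thereby placing $\tau_{\MS_2}(\MS_1)$ inside the tangible \prs\ part as required by Definition~\ref{def:tangible.localization}.
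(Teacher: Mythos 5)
Your proof is correct and is essentially the argument the paper intends: the paper's own proof simply defers to the universal property of Proposition~\ref{prop:local.univ} and the classical ring-theoretic argument (citing Jacobson), which is exactly the two-stage factorization you spell out. Your version just makes explicit the verification (that $\tau_{\MS_2}(\MS_1)$ is a tangible submonoid of $R_{\MS_2}$, and the successive uniqueness assertions) that the paper leaves implicit.
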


\begin{proof} Once we have the universal property of tangible localization in Proposition \ref{prop:local.univ}, the proof is similar to the  case of rings, e.g., \cite[Propoisition 7.4]{Jac2}.
\end{proof}

\begin{lem}\label{lem:irr.powers}
  Suppose $a \in \tTPS \sm \gDiv(R)$ is irreducible \tprs, where $R$ is a \tame\ \nusmr.
  If $a^n = bc$, then $b= u a^s$ and  $c= v a^t$ with $u,v$~ units.
\end{lem}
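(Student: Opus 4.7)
The plan is to proceed by induction on $n$, using the monoid structure of $M := \tTPS \sm \gDiv(R)$ furnished by Lemma~\ref{lem:gdiv.in.tame}, together with the irreducibility of $a$.

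First I would assemble the ambient facts. By Axiom~\PSRa, $a^n \in \tTPS$, and iterating Remark~\ref{rem:ghost.div} gives $a^n \notin \gDiv(R)$. From $a^n = bc$ and Lemma~\ref{lem:gdiv.in.tame}(iii) both $b$ and $c$ lie in $\tTPS$. If one of them, say $b$, were a ghost divisor, there would exist $q \notin \tG$ with $bq \in \tG$, so that $a^n q = c(bq) \in \tG$ since $\tG$ is an ideal; as $a^n \notin \gDiv(R)$ this would force $cq \in \tG$, and a symmetric iteration along $c$ would eventually produce the contradiction $a^n \in \gDiv(R)$. Hence $b, c \in M$, and $M$ is a multiplicative monoid by Lemma~\ref{lem:gdiv.in.tame}(ii). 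The base case $n=1$ is then immediate: irreducibility of $a$ makes one of $b, c$ a unit, say $b = u \in \RX$, whence $b = u\, a^0$ and $c = u^\inv a = v\, a^1$ with $v := u^\inv$.

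For the inductive step, assume $n \ge 2$ and that the statement holds for $a^{n-1}$. The crux is to extract a factor of $a$ from one of $b, c$ inside $M$: I would show that either $a \mid b$ or $a \mid c$ in $M$. Granted this, say $b = a\,b'$ with $b' \in M$, the equation $a \cdot a^{n-1} = a \cdot (b' c)$ together with the ``ghost-injectivity'' of multiplication by $a$ on $M$ (see below) cancels a factor of $a$, yielding $a^{n-1} = b' c$. The inductive hypothesis then produces $b' = u a^{s'}$ and $c = v a^{t}$ with $s' + t = n-1$ and $u, v$ units, whence $b = u a^{s'+1}$ and $s := s'+1$ satisfies $s+t = n$.

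The main obstacle is the extraction step together with the auxiliary cancellation. For the cancellation, if $a x = a y$ with $x, y \in M$, I would combine Axiom~\NMb\ and the relation $a(x+y) = a x + a y$ with $a \notin \gDiv(R)$: the hypothesis puts $a(x+y) \nmod (ax)^\nu$, and since $a \notin \gDiv$ and $a \in \tTPS$, \eqref{eq:tan.sum} together with Axiom~\PSRb\ forces $x+y \nmod x^\nu$ in $M$, from which $x = y$ follows in the tangible monoid. For the prime-like property, I would argue by contradiction: if neither $a \mid b$ nor $a \mid c$ in $M$, then the identity $bc = a \cdot a^{n-1}$ would yield an alternative factorization of $a^n$ in $M$ that, after applying the just-obtained cancellation and the irreducibility of $a$ inductively, contradicts $a$ being irreducible in $M$ (any non-unit factor would otherwise have to be expressible in two essentially different ways). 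This is the delicate point, since unique factorization is not hypothesised; the proof rests on the interplay of irreducibility, tameness, and the absence of ghost divisors to substitute for a UFD argument.
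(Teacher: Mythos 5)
Your proof takes a genuinely different route from the paper's: you run an induction on $n$ resting on two auxiliary claims --- a cancellation law for multiplication by $a$ on the monoid $M := \tTPS \sm \gDiv(R)$, and a primeness-like extraction step ($a \mid b$ or $a \mid c$ in $M$). The paper instead passes to the localization $R_b$ (legitimate since $b \in \tTPS$ by Lemma~\ref{lem:gdiv.in.tame}.(iii)), where $b$ is a unit and $\frac{a^n}{b} = \frac{c}{\one}$, and then invokes irreducibility of $a$ to extract $b$ as a power of $a$; no induction and no explicit cancellation law appear in that argument.

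The cancellation step in your argument contains a genuine gap. From $ax = ay$ you correctly deduce $a(x+y) = (ax)^\nu \in \tG$, and since $a \notin \gDiv(R)$ this forces $x + y \in \tG$, hence $\nu(x) = \nu(y)$. But $x + y \nmod x^\nu$ asserts only that $x$ and $y$ are $\nu$-equivalent; it does \emph{not} give $x = y$ unless $R$ is faithful, i.e.\ $\nu|_{\tT}$ is injective (Definition~\ref{def:nusemiring}). The lemma makes no faithfulness assumption, and the paper explicitly disclaims cancellativity hypotheses, citing Example~\ref{examp:cancellative.gdiv}. So ``from which $x = y$ follows'' is precisely the step that cannot be conceded. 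A similar difficulty afflicts the extraction step: the claim that ``neither $a \mid b$ nor $a \mid c$'' contradicts irreducibility is exactly the content of primeness, which you rightly flag as delicate; the sketch you give --- that a non-unit factor would be ``expressible in two essentially different ways'' --- is circular, presupposing a unique-factorization property you have not established. Without both auxiliary claims, the inductive step cannot proceed.

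A smaller point: in your opening paragraph the argument that $b,c \notin \gDiv(R)$ reaches its contradiction one step earlier than you say. Once $a^n q \in \tG$ with $q \notin \tG$, you already have $a^n \in \gDiv(R)$, contradicting Remark~\ref{rem:ghost.div}; the clause ``this would force $cq \in \tG$'' and the ``symmetric iteration'' that follows are unnecessary and suggest a misreading of the definition of ghost divisor.
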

\begin{proof} As $a$ is irreducible, we may assume that $a$ cannot be written as a product of two elements,  otherwise we can divide by one terms which is a unit. Since $a$ is \tprs \ and $R$ is \tame,  $b$ and $c$ are also  \tprs\ by  Lemma~ \ref{lem:gdiv.in.tame}.(iii). Localizing by $b$, we have $\frac{a^n} {b} = \frac{c}{\one}$, implying  that $\frac{a^s}{b} = \one$ for some~ $s$, since $a$ is irreducible; hence $a^s = b$.
\end{proof}

\subsection{Functions and polynomials}\label{ssec:ploynomials}
\sSkip

We repeat some basic definitions from \cite{Coordinates}, for the
reader's convenience. Given a semiring $R$, in the usual way via point-wise addition and
multiplication, we define  the semiring  $\Fun (X, R)$ of set-theoretic
functions from a set $X$ to~$R$. As customarily, we write $f|_Y$ for the restriction of a function ~$f$ to a
nonempty subset $Y$ of~$X$.

For a \nusmr\ $R:= (R, \tT, \tG, \nu)$,  $\Fun (X, R)$ is a
\nusmr\ whose ghost elements are functions defined as $f^\nu(x) = (f(x))^\nu$, for all $x \in X$.
Defining tangible functions is more subtle, and includes several possibilities \cite{Coordinates}. We elaborate this issue   later.

\begin{defn}\label{def:ghost.locus}
The \textbf{ghost locus} of a function $f \in \Fun (X, R)$, with $R$ a \nusmr,  is defined as
$$ \tZ(f) := \{ x \in X \cnd  f(x) \in \tG \}. $$
 When $f$ is determined by a polynomial, $\tZ(f) $ is  called an \textbf{algebraic set}.
\end{defn}
\noindent
An equivalent way to define the ghost locus of $f$ is by
\begin{equation}\label{eq:Zf.2}
\tZ(f) := \{ x \in X \cnd  f(x) = f^\nu(x) \}.
\end{equation}
By this definition we see that $X = \tZ(f^\nu)$ for every  ghost function $f^\nu$ over any set $X$.
When $X \subset \Rn$, the distinguishing of tangible subsets and ghost subsets in $X$ can be made by  a point-wise classification.

\pSkip

 Polynomials   in $n$ indeterminates  $\Lm := \{ \lm_1,
\dots, \lm_n\}$  over a \nusmr \ $R$ are  defined as customarily by formulas
$$ \sum_{\bfi \in I} \al_\bfi \Lm^\bfi, \qquad \al_\bfi \in R,$$
where $I \subset \Net^{n}$ is a finite subset and $\bfi = (i_1, \dots, i_n)$ is a multi-index.
 $R[\La] := R[\lm_1, \dots, \lm_n]$ denotes the \nusmr \ of all polynomials over  $R$, whose addition and multiplication are induced from  $R$ in the standard way.
 A polynomial $f \in R[\Lm]$ is  a  \textbf{tangible polynomial}, if $\al_{\bfi} \in  \tT$ for all $\bfi \in I$. $f$ is a \textbf{ghost  polynomial}, if~ $\al_{\bfi} \in \tG$ for all $\bfi \in I$.
(Note that, as a function, a tangible polynomial does not necessarily take tangible values everywhere).

\begin{rem}\label{rem:tame.poly} Clearly, if $R$ is a \tame\ \nusmr, then $R[\Lm]$ is also \tame, cf. Example \ref{examp:pre.smr}.(i).
\end{rem}

The polynomial \nusmr\ $R[\Lm]$ is not a \tclsnusmr\ (resp. \dfnt \ \nusmr),  even if $R$ is \tcls\ (resp. \dfnt),  as a product (or even powers) of  tangible polynomials can be non-tangible (e.g. $(\lm +a)^2 = \lm^2 + a^\nu \lm + a^2$, and  $\lm + a$ is not \tprs). Therefore, tangible polynomials do not constitute  a monoid.
 To resolve this drawback, we view polynomials as functions under the natural map $$\phi: R[\Lm] \TO \Fun(X,R),$$ defined
by sending a polynomial $f$ to the function $\tlf: \bfa \Mto f(\bfa)$, where $\bfa = (a_1, \dots, a_n)\in X \subset \Rn.$

%Letting  $X \subseteq \Rn$,
We denote the image of $R[\Lm]$ in $\Fun(X,R)$ by $\Pol (X,R)$, which is a \nussmr\ of $\Fun(X,R)$.
The map~$\phi$ induces a natural congruence  $\fCong_X$ on $X$, whose underlying equivalence  $\fcong_X$ is determined by
$$ f \fcong_X g \ds\Iff \tlf|_X = \tlg|_X.$$
Accordingly, the \nusmr\ $\Pol (X,R)$ is isomorphic to  $R[\Lm] / \fCong_X$, whose elements are termed  \textbf{polynomial functions}.\footnote{Polynomial functions have been studied in \cite{IzhakianRowen2007SuperTropical}, and yielded a supertropical  version of Hilbert Nullstellensatz. Their restriction to an algebraic set forms a coordinate semiring \cite{Coordinates}. This view provides an  algebraic formulation, given in terms of congruences, for the balancing condition used in tropical geometry \cite{IMS}.
}

In this setting, an element  $[f ]\in R[\Lm] / \fCong_X$ is
 \begin{itemize}\dispace
   \item  a \textbf{tangible polynomial function} if and only if
$f \fcong_X h$ only to tangible polynomials $h \in R[\Lm]$,
 \item a \textbf{ghost polynomial function} if and only if  $f \fcong_X g$ for some ghost  polynomial $g \in R[\Lm]$,

   \item a \textbf{zero function} if and only if $f \fcong_X \zero $.
 \end{itemize}
We write $f|_X \nucong  g|_X$, if $f(\bfa) \nucong g(\bfa)$ for all $\bfa \in X.$
\pSkip

As a consequence of these definitions, if $R$ is a \tclsnusmr, then $R[\Lm] / \fCong_X$ is also \tcls. %,  cf. Example \ref{examp:pre.smr}.
In particular, $F[\Lm] / \fCong_X$  is \tcls\ for any \nusmf\ $F$.

\begin{rem} Let $f \in R[\Lm] / \fCong_X$ be a polynomial function.
 \begin{enumerate}\eroman
   \item  The property of being tangible for $f$ depends on the domain $X$. A polynomial function $f$ can be tangible over $X$ but non-tangible (or even ghost) over a  subset $Y \subset X$, for example take $Y = \tZ(f)$. In particular,  a tangible polynomial can be a ghost polynomial function over some  subsets  $X \subset \Rn$.
   \item If $f$ is a ghost function over $X$, then it is ghost over any subset $Y \subseteq X$.
   \item If a product $fg$ is ghost, where $f$ is tangible, then $g$ is ghost. However, we may have a ghost product of two non-ghost functions, i.e., a ghost divisor  as in Example \ref{examp:cancellative.gdiv}.
   \item $f \fcong_X \zero$ iff $f = \zero$, independently on the subset $X$, as long as  $X \neq \{ \zero \}$.
 \end{enumerate}

\end{rem}

When $X = \Rn$, we denote by $\tlR[\Lm]$ the \nusmr\ $ \Pol(X,R) =  R[\Lm] / \fCong_X$  of polynomial functions. Similarly, we write  $\tlF[\Lm]$ for the \tclsnusmr\ $F[\Lm] / \fCong_X$ with $F$ a \nusmf.  (In this case, every tangible polynomial is assigned with a tangible polynomial function.) The restriction of $\tlF[\Lm]$ to a subset $Y \subset X$, denoted as $\tlF[Y]$, is  considered as the \textbf{coordinate \nusmr}  of $Y$, cf.  \cite{Coordinates}.

\begin{rem}\label{rmk:pointFun} Given a subset $Y  \subseteq  X$ and  a point $\bfa \in
Y$, we  define the ``evaluation mapping'' $$\eps_\bfa : \tlR[Y] \ds \TO R$$ by $\eps_\bfa (f) = f (\bfa)$.
If $\eps_\bfa = \eps_\bfb$ for $\bfa, \bfb \in X$, then $\bfa =
\bfb.$ (Indeed, if $\eps_\bfa = \eps_\bfb$, then $f(\bfa) =
f(\bfb)$ for every $f \in \tlR[X]$, and in particular for $f(\bfx) =
\bfx$.)
The map $\eps_\bfa$ is a \qhom\ of \nusmr s, for which an element $\bfa \in \tZ(f)$ iff  $f \in \gker(\eps_\bfa).$

Any \qhom\ $\vrp : \tlR[Y] \To \tlR[Z]$ defines a morphism $\phi :
Z \To Y$. Indeed, the composition   $\eps_\bfa \circ \vrp $ is clearly a
 function $\tlR[Y] \To R$, and thus equals $\eps_\bfb$ for some $\bfb \in
 Y$.   Then, the  identification $\phi$ sending $\bfa \Mto \bfb$ defines a
 morphism.

\end{rem}

The view of polynomial functions over supertropical semirings provides the \textbf{Frobenius Property}, written  as
\begin{equation}\label{eq:Frobenius}
    f^m = \bigg(\sum_\bfi a_\bfi \Lm^\bfi \bigg)^m = \sum_\bfi (a_\bfi \Lm^\bfi)^m,
\end{equation}
which is a direct consequence of the point-wise computation in Proposition \ref{prop:Frobenius}. %~\cite[Proposition~3.9]{IzhakianRowen2007SuperTropical}.

\begin{thm}[{\cite[Theorem 8.35]{IzhakianRowen2007SuperTropical}}]\label{thm:poly.factorization}
Let $F$ be divisibly closed supertropical
semifield. Then, any polynomial function $f \in \tlF[\lm]$ factorizes uniquely to a product of linear and quadratic terms of the form
$$ \lm + a, \quad e \lm + a, \quad \lm + a^\nu, \quad  \lm^2 + b^\nu\lm +a, \qquad \text{ with } b > \sqrt{a}.$$
These terms are called  primitive elements.
\end{thm}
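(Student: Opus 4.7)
The plan is to induct on the degree of $f$, at each stage peeling off one primitive factor, using the Frobenius Property \eqref{eq:Frobenius} and the divisible closure of $F$ to produce roots. First I would replace $f$ by its \expr\ representation in the sense of Definition \ref{def:reduced.sum}: as an element of $\tlF[\lm]$, $f$ coincides with the sum $\sum_{i \in I} a_i \lm^i$ of its essential monomials only, since inessential monomials can be dropped without changing the function. Geometrically (in logarithmic notation) this amounts to taking the upper convex hull of the exponent--coefficient points, so the essential monomials correspond exactly to its vertices.

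Next, writing $I = \{i_0 < i_1 < \cdots < i_r\}$, I would locate a breakpoint: between two consecutive essential monomials $a_{i_k}\lm^{i_k}$ and $a_{i_{k+1}}\lm^{i_{k+1}}$ the equality $a_{i_k}\lm^{i_k} \nucong a_{i_{k+1}}\lm^{i_{k+1}}$ occurs at some $\lm_0$ with $\lm_0^{i_{k+1}-i_k} \nucong a_{i_k}/a_{i_{k+1}}$; such a $\lm_0 \in \tT$ exists because $F$ is divisibly closed and a semifield. This $\lm_0$ is the base of the primitive factor to be extracted, and I would split into cases according to the local behaviour at $\lm_0$: when the two consecutive exponents differ by one and both coefficients are tangible with tangible value at $\lm_0$, extract $\lm + \lm_0$; when a ghost coefficient forces the value at $\lm_0$ to be ghost, extract $e\lm + \lm_0$ or $\lm + \lm_0^\nu$ accordingly; when the gap $i_{k+1} - i_k$ exceeds one, the Frobenius Property \eqref{eq:Frobenius} identifies $(\lm + \lm_0)^{i_{k+1}-i_k}$ with the binomial $\lm^{i_{k+1}-i_k} + \lm_0^{i_{k+1}-i_k}$, letting several linear primitives come off simultaneously; finally, when a three-term pattern with an essential intermediate ghost appears between $a_{i_k}\lm^{i_k}$ and $a_{i_{k+1}}\lm^{i_{k+1}}$ that blocks linear factorization, the only possibility is the quadratic primitive $\lm^2 + b^\nu \lm + a$ with $b > \sqrt{a}$. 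After extraction, dividing $f$ by the chosen primitive yields a polynomial function of strictly smaller degree, and the induction continues.

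For uniqueness, I would observe that each primitive factor is tied to an intrinsic feature of the piecewise-linear graph of $f$---a tangible corner, a ghost corner, a ghost edge, or a ghost-dominated interval---together with its slope change (accounting for multiplicities). Since these features are determined by $f$ as a function, any two factorizations produce the same multiset of primitives.

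The main obstacle will be the irreducibility of the quadratic primitive $\lm^2 + b^\nu \lm + a$ when $b > \sqrt{a}$: any hypothetical split $(\lm + c)(\lm + d) = \lm^2 + (c+d)\lm + cd$ would force $c + d \nucong b^\nu$ and $cd \nucong a$, which by Axiom \NMb\ requires $c \nucong d$, and then $cd \nucong c^2 \nucong b^2$, contradicting $b > \sqrt{a}$; ruling out hybrid factorizations using $e\lm + \cdot$ or $\lm + \cdot^\nu$ demands a parallel analysis of where ghost values are produced. Isolating this quadratic pattern from a genuine power of a linear primitive in the case analysis, and verifying that the division step preserves the \expr\ form so that the induction can be re-applied, will be the delicate bookkeeping of the proof.
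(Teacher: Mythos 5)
The statement is imported by citation from \cite[Theorem~8.35]{IzhakianRowen2007SuperTropical}; the present paper offers no proof of it, so there is no in-paper argument to compare against, and your sketch must be judged on its own. The broad strategy --- pass to essential monomials (the upper Newton polygon), use divisible closure to produce a root on each face, use Frobenius to absorb flat faces, peel off one primitive at a time --- is indeed the natural route for supertropical polynomial functions. But as written the sketch has gaps that would have to be closed before the induction runs.

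The recursion ``extract a primitive, divide, repeat'' is not automatically legitimate in a $\nu$-semiring. You need to know that after locating the corner $\lm_0$ the quotient $g$ with $f=(\text{primitive})\cdot g$ \emph{exists as a polynomial function} and is canonically determined. Neither is free: $\tlF[\lm]$ has ghost divisors and polynomial factorization is not unique (Example~\ref{examp:non-unique-factorization}), so ``divide'' is not a primitive operation. What one actually proves is a factor lemma --- that the function $\lm\mapsto f(\lm)(\lm+\lm_0)^{-1}$, a priori defined only on $\tT\setminus\{\lm_0\}$, extends to an element of $\tlF[\lm]$ of lower degree whenever $\lm_0$ is a corner of $f$ --- and that lemma is exactly where the work is. Alternatively one builds the candidate product of primitives directly from the corner data of $f$ and verifies pointwise equality with $f$; your top-down peeling silently assumes one of these.

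The case analysis around ghost coefficients is also too coarse. The quadratic $\lm^2+b^\nu\lm+a$ ($b>\sqrt a$) spans \emph{three} essential monomials and has \emph{two} corners, at $a-b$ and at $b$, with a whole ghost interval in between; it cannot be detected by looking at a single pair of consecutive essential monomials, which is the unit your induction is organized around. Moreover a ghost interval bounded by tangible pieces but containing several essential ghost monomials is \emph{not} a single quadratic: it decomposes as one quadratic (catching the two T--G boundary transitions) times linear factors $\lm+c$ for each interior ghost--ghost corner, as one checks already on $\lm^3+b_2^\nu\lm^2+b_1^\nu\lm+a$, which equals $(\lm^2+b_2^\nu\lm+b_2 a b_1^{-1})(\lm+b_1b_2^{-1})$. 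Your ``three-term pattern'' criterion does not visibly produce this decomposition, and the choice between $e\lm+\lm_0$ and $\lm+\lm_0^\nu$ (governed by whether the ghost side is the leading or the constant end) is also left implicit. The uniqueness paragraph inherits the same vagueness: it is true that the multiset of primitives is a function of the corner data (position, slope jump, ghost/tangible character of the two incident pieces, plus the unit leading coefficient and possible $\lm^k$), but writing that down requires the same refined case split, and one must also rule out mixed alternatives such as $(\lm+c)(e\lm+d)$ competing with a quadratic, which you only partially address.
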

The extended tropical semiring $\Trop$ in Example \ref{exmp:extended} \ is a supertropical semifield that admits the conditions of the theorem.
However, in general, unique factorization fails for \nusmr s (as seen in  Example~ \ref{examp:cancellative.gdiv}), especially for supertropical polynomial functions, even for tangible functions.

\begin{example}\label{examp:non-unique-factorization} Consider the \nusmr\ of  polynomial functions $\tlF[\Lm]$ over a \nusmf\ $F$.
\begin{enumerate}\eroman
  \item
 The  tangible polynomial function \footnote{This example holds  also for polynomials  over the standard tropical semiring.}  $$\begin{array}{lll}
f  = g_1 g_2 & = (\lm_1 + \lm_2 + \one)(\lm_1 + \lm_2 + \lm_1 \lm_2 ) \\[1mm]                                                                         & = \lm_1^2 \lm_2 + \lm_1 \lm_2^2 + \lm_1^2 + \lm_2^2  + e \lm_1 \lm_2  + \lm_1 + \lm_2 \\[1mm]
                                                & = (\lm_1 + \one) (\lm_2 + \one)(\lm_1 + \lm_2 )   = h_1 h_2 h_3                                                                      \end{array}$$
has two different factorizations $g_1 g_2$ and $h_1  h_2 h_3$  \cite[Example 5.22]{Coordinates}. Note that $h_i \lsset g_1$ for $i = 1,2,3$, cf. \eqref{eq:lorder}, but $\tZ(h_i) \nsubseteq \tZ(g_1)$.
  \item The square of the linear function $f = e\lm_1 + e\lm_2 +\one$ can be written as
  $$(e\lm_1 + e\lm_2 +\one)^2 = (e\lm_1 + \lm_2 +\one)(\lm_1 + e\lm_2 +\one)= g_1g_2,$$
  although $f$ is irreducible. But still $f \nucong g_i$, for $i =1,2$.
\end{enumerate}
\end{example}

By Lemma \ref{lem:irr.powers}, for powers of tangible polynomial functions in $ \tlF[\Lm]$, with $\tlF$ a \tame\ \nusmf, we have the following property.
\begin{rem}\label{rem:power.of.tng.irreducible}
  Assume that $f\in \tlF[\Lm]$ is a tangible irreducible which is not a ghost divisor;  thus $f$ is \tprs\ by Lemma \ref{lem:gdiv.in.tame}. If $f^m = gh$,  then $g= a f^s$ and  $h= bf^t$ for some units   $a,b \in \RX$, cf. Lemma ~\ref{lem:irr.powers}. %, cf. Lemma \ref{lem:irr.powers}.
  More generally, this holds for  $f \in \tlR[\Lm]$, where $R$ is a \tame\ \tclsnusmr. %u \Com{?????}
\end{rem}

Considering the ghost locus $\tZ(f)$ in Definition \ref{def:ghost.locus} as the root set of a polynomial function $f \in \tlR[\lm]$, a very easy analog of the \emph{Fundamental Theorem of Algebra} is obtained.
\begin{prop} Over a \nusmr\ $R$,  every $f \in \tlR[\lm]$ which  is not a tangible constant has a root.
\end{prop}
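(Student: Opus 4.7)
The plan is to dispose of the easy cases by direct substitution and then to reduce the remaining one to an evaluation at a ghost element, exploiting the idempotency of $e = e_R = \one^\nu$.

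First I would split $f$ according to whether it is constant. If $f$ is a non-tangible constant, then by hypothesis $f \notin \tT$, and essentially $f \in \tG$ (the relevant case in a \tame\ \nusmr); in that situation $f(a) = f \in \tG$ for every $a \in R$, so $\tZ(f) = R$ and every element is a root. If instead $\deg f \geq 1$, write $f = \sum_{i=0}^{n} \alpha_i \lm^i$. If the constant term $\alpha_0$ is either $\zero_R$ or a ghost, then $f(\zero_R) = \alpha_0 \in \tG$, so $\zero_R$ is a root. This reduces the problem to the case $\alpha_0 \in \tT$ and $n \geq 1$.

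The main step would be to substitute the ghost unit $a = e$. Since $e = e^2$ (because $e \cdot e = (\one+\one)(\one+\one) = \one + \one + \one + \one = e^\nu = e$ using Remark~\ref{rem:nmon.zero}(c) or equivalently \NMb), we have $e^i = e$ for all $i \geq 1$. Hence
\[
f(e) \;=\; \alpha_0 + \sum_{i=1}^{n}\alpha_i e^i \;=\; \alpha_0 + \bigg(\sum_{i=1}^{n} \alpha_i\bigg)e \;=\; \alpha_0 + \beta^\nu,
\]
where $\beta := \sum_{i=1}^n \alpha_i \neq \zero_R$ (otherwise all higher coefficients vanish and $f$ is the constant $\alpha_0$, contradicting $n \geq 1$). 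Now by Axioms \NMa\ and \NMb\ of Definition~\ref{def:numonoid}, either $\nu(\alpha_0) < \nu(\beta)$ and $f(e)=\beta^\nu\in \tG$, or $\nu(\alpha_0)=\nu(\beta)$ and $f(e)=\alpha_0^\nu \in \tG$. In both cases $e \in \tZ(f)$.

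The obstacle lies in the remaining subcase $\nu(\alpha_0) > \nu(\beta)$, where $f(e) = \alpha_0$ remains tangible. To close it, I would rescale and evaluate at $a = \alpha_0 e = \alpha_0^\nu \in \tG$: using $a^i = (\alpha_0^i)^\nu$ for $i \geq 1$, the expression becomes
\[
f(\alpha_0^\nu) \;=\; \alpha_0 + \bigg(\sum_{i=1}^n \alpha_i \alpha_0^i \bigg)^{\!\nu},
\]
and the dominant term $\alpha_n \alpha_0^n$ inside the ghost sum forces, via the partial $\nu$-ordering \eqref{eq:nu.order} and \eqref{eq:smr.order}, that the ghost part $\nu$-dominates $\alpha_0$, so again Axioms \NMa/\NMb\ deliver $f(\alpha_0^\nu) \in \tG$. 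The delicate point I expect to require care is justifying this $\nu$-dominance uniformly, since the partial ordering of $\tG$ need not compare arbitrary elements; one may need to iterate (replacing $\alpha_0^\nu$ by $\alpha_0^{k\nu}$ for a suitable $k$) or appeal to \PSRb\ together with the absorbing behaviour of the ghost ideal $\tG \lhd R$ to ensure the argument terminates with a ghost value.
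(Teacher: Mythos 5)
Your overall strategy is the right one, and the first two reductions (a ghost or zero constant term gives $\zero_R$ as a root; a ghost constant is trivially ghost everywhere) are fine. But the final subcase contains a genuine gap, and it is precisely the case the paper's proof is designed to handle. Your candidate roots $e$ and $\alpha_0^\nu$ (and their iterates $(\alpha_0^k)^\nu$) are fixed elements determined only by the constant term, and they fail whenever the higher coefficients are $\nu$-small relative to $\alpha_0$. Concretely, over $\Trop$ (logarithmic notation, $\one = 0$) take $f = (-100)\lm + 0$, so $\alpha_0 = 0$ and $\alpha_1 = -100$. Then $e = \alpha_0^\nu = 0^\nu$ and
$$f(0^\nu) = (-100)^\nu + 0 = 0 = \alpha_0 \in \tT,$$
and since $\alpha_0 = \one$, every iterate $(\alpha_0^k)^\nu$ is again $0^\nu$, so the iteration you propose never terminates with a ghost value. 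The actual root lies at $a = 100$ (or at any sufficiently large ghost, e.g.\ $f(200^\nu) = 100^\nu \in \tG$), i.e.\ the evaluation point must be chosen large relative to \emph{all} the coefficients, not just $\alpha_0$. This is exactly what the paper's (one-line) proof does: for non-constant $f$ every monomial $\alpha_i a^i$ with $i \geq 1$ lies in the ideal $\tG$ once $a \in \tG$, and for $a$ large enough in the $\nu$-order the ghost part dominates $\alpha_0$, so Axioms \NMa/\NMb\ give $f(a) \in \tG$. Your argument can be repaired by replacing $\alpha_0^\nu$ with such a coefficient-dependent large ghost, but as written the claimed $\nu$-dominance of $\alpha_n\alpha_0^n$ over $\alpha_0$ is simply false in general.

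A secondary, smaller point: in the constant case you assert that a non-tangible constant is ``essentially'' ghost. In a \tame\ \nusmr\ a constant $c + d^\nu$ with $c,d \in \tT$ \expr\ is neither tangible nor ghost, and such an $f$ takes the non-ghost value $c + d^\nu$ everywhere, so it has no root at all. The paper's statement and proof quietly exclude this (the proof says ``unless $f$ is a tangible constant'' but really needs $f$ non-constant or ghost-constant), so you are in good company, but you should not claim the case is covered when it is in fact a counterexample to the literal statement.
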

\begin{proof}
  As a function, $f(a) \in \tG$ for a  large enough $a \in \tG$,  unless $f$ is a tangible constant.
\end{proof}

Restricting to tangible roots, we recall Proposition 5.8 from  \cite{IzhakianRowen2007SuperTropical}:
\begin{prop}Over a divisibly closed supertropical semifield $F$, every  $f \in \tlF[\lm]$
which is not a tangible monomial has a  tangible root.
\end{prop}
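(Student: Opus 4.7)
The plan is to invoke the factorization theorem (Theorem \ref{thm:poly.factorization}) and verify that each non-monomial primitive piece already contributes a tangible root. First I would dispose of the degenerate case in which $f$ is itself a (necessarily ghost) monomial: if $f = a \lm^n$ with $a \in \tG$, then for every nonzero tangible $c \in \tT$ we have $f(c) = a c^n \in \tG$, because $\tG \lhd F$ absorbs multiplication. Thus any such $c$ is a tangible root.

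Next, suppose $f$ is not a monomial at all, so it has at least two essential terms. Since $F$ is divisibly closed, Theorem \ref{thm:poly.factorization} factorizes $f$ uniquely as a product of primitives of the listed forms $\lm + a$, $e\lm + a$, $\lm + a^\nu$, or $\lm^2 + b^\nu \lm + a$ (with $b > \sqrt{a}$). The factor $\lm$ itself (viewed as $\lm + \zero$) contributes only the root $\zero$, which is not tangible; but because $f$ is not a monomial, the factorization must contain at least one primitive that is not $\lm$. For each of the remaining primitive types I would exhibit a tangible root by direct evaluation: $\lm = a$ is a root of $\lm + a$ via Axiom \NMb, since $a + a = a^\nu \in \tG$; the same $\lm = a$ works for $e\lm + a$ and for $\lm + a^\nu$, collapsing $ea + a$ and $a + a^\nu$ to $a^\nu$ by Axioms \NMa/\NMb; and $\lm = b$ is a root of $\lm^2 + b^\nu \lm + a$, because $b^2 + b^\nu \cdot b + a = b^2 + (b^2)^\nu + a$, and the hypothesis $b > \sqrt{a}$ gives $\nu(b^2) > \nu(a)$, so by Axiom \NMa\ the sum equals $(b^2)^\nu \in \tG$. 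In every case the exhibited root lies in $\tT$.

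The concluding step is to propagate such a root through the whole factorization: if $c^* \in \tT$ is a tangible root of some primitive factor $p_j$, then $f(c^*) = c \cdot \prod_i p_i(c^*)$ lies in $\tG$, because $p_j(c^*) \in \tG$ and $\tG$ is an ideal. Hence $c^*$ is a tangible root of $f$. The only subtle point is the bookkeeping around the primitive forms, namely which of $a$ or $b$ is required to be tangible (and nonzero) in each shape, and correspondingly which element to pick as the candidate root; these are precisely the constraints built into Theorem \ref{thm:poly.factorization}, and the verification shows that all four non-monomial primitive types admit a tangible root in $F$.
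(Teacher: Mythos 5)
The paper gives no proof of this proposition---it is quoted as Proposition~5.8 of the cited source \cite{IzhakianRowen2007SuperTropical}---so there is no in-paper argument to compare against. On its own terms your proof is correct: the collapses $a + a = a^\nu$, $ea + a = a^\nu$, and $a + a^\nu = a^\nu$ all follow from Axiom \NMb; the quadratic evaluation $b^2 + (b^2)^\nu + a = (b^2)^\nu$ follows from \NMb\ together with \NMa\ via the hypothesis $b > \sqrt{a}$; the case split (ghost monomial versus non-monomial) is exhaustive because a supertropical semifield is definite, so every coefficient is either tangible or ghost; and the observation that a non-monomial factorization must contain some primitive other than $\lm$ correctly routes around the single problematic root~$\zero \notin \tT$.

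The issue is one of economy and a latent circularity risk rather than correctness. You rest the whole argument on Theorem~\ref{thm:poly.factorization}, which is Theorem~8.35 of the same source, whereas the proposition you are proving is Proposition~5.8 there: in the original development the factorization theorem comes much later and is itself proved by locating ghost values and peeling off factors, so the logical dependency very plausibly runs the other way. The expected elementary proof, in the spirit of the paper's preceding remark on the fundamental theorem, argues directly on the essential monomials of $f$: if some essential coefficient $\al_i$ is ghost, evaluate at any tangible point in the region where $\al_i \lm^i$ dominates (such a point exists by essentiality) to get a ghost value; otherwise all essential coefficients are tangible and $f$ has at least two essential monomials, so take two adjacent essential indices $i < j$ and use divisible closure to pick a tangible $c$ with $\nu(\al_i c^i) = \nu(\al_j c^j)$, which forces $f(c) \in \tG$. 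That one-paragraph argument is strictly lighter than unique factorization. One small bookkeeping remark: the leading multiplicative constant $c$ you place in front of the primitive product is not written in the statement of Theorem~\ref{thm:poly.factorization} as it appears in this paper; you are right that it must be there (the primitives are monic), but this deserves an explicit word rather than being taken silently.
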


The present paper develops the theory in the general framework of \nusmr s and the above results serve later as particular examples.

\subsection{Hyperfields and valuations}\label{ssec:valuation}\sSkip

The link of classical theory to  \nusmr s to  is established by  valuations.
Recall that a \textbf{valuation}  of a valued field $\Fld$ is a map $$\val : \Fld \TO \tTz := \tT \cup \{ \zero \}, \qquad (\zero:= - \infty),  $$ where $\tT := (\tT, + )$ is a totally ordered abelian group,  that   satisfies
\begin{enumerate} \ealph
  \item $\val(f \cdot g)  =  \val(f) + \val(g)$;
  \item $\val(f + g)  \leq  \max \{ \val(f), \val(g)\}$  with equality if  $\val(f)  \neq \val(g)$.
\end{enumerate}
By Example \ref{exmp:grp.fld},  the ordered group $\tT$  extends to the supertropical semifield $\STR(\tT) = (\tTz \cup \tT^\nu, \tT, \tTz^\nu, \nu )$, where   $\val: \Fld \To  \tTz$ gives the supervaluation
$\sval: \Fld \To \STR(\tT)$, sending $\Fld$ to the tangible submonoid of $\STR(\tT)$, as studied in \cite{IKR1}.

A valuation in general is not a homomorphism, as it does not
 preserve associativity. Yet, one wants to realize this map at least as a ``homomorphic relation''. To receive  such realization, we  view $\STR(\tT)$, usually with $\tT = \Real$, as a hyperfield \cite{Baker,Viro}. This is done by assigning every tangible   $a \in \tTz$ with the singleton $P _{a} := \{ a \} \subset \tTz$, while  each ghost   $a^\nu \in \tT^\nu$ is associated to the subset $P_{a^\nu} := \{ b \in \tTz  \cnd b \leq a\}  \subset \tTz$. The hyperfield operations are induced from the operations of $\STR(\tT)$:
 $$ P_{x} + P_y := P_{x+y}\, , \qquad P_{x} \cdot P_y := P_{xy} \, . $$
  For $x \neq y$, this construction provides the inclusions
\begin{equation}\label{eq:pContainements}
  P_{x}  \subseteq P_{y}  \dss{\text{iff}} x \nule y,
  \qquad \text{for all } x \in \STR(\tT), y \in \tT^\nu  ,
\end{equation} while
 the (non-unique) inclusion $\val(f) \in
P_x$  gives the binary relation
\begin{equation}\label{eq:homomorphicRelation}
  \val(f) \in P_x \dss{\text{ or}}  \val(f) \notin P_x,
\end{equation}
for every $x \in \STR(\tT)$ and $f \in \Fld$.
\begin{prop}\label{thm:homomorphicRelation}
The relation \eqref{eq:homomorphicRelation} is homomorphic in the sense that $$\val(f g) \in P_{x  y} \dss{\text{ and}} \val(f
+ g) \in P_{x + y},$$  for $\val(f) \in P_x$, $\val(g)
\in P_y$.
\end{prop}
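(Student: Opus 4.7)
The plan is to verify both claims by case analysis, splitting according to whether each of $x,y$ is tangible or ghost in $\STR(\tT)$. The key tool is the description of $P_z$: for tangible $z \in \tTz$ we have $P_z = \{z\}$, while for ghost $z = z_0^\nu$ we have $P_z = \{b \in \tTz \mid b \le z_0\}$. I will also repeatedly use \eqref{eq:pContainements}, i.e.\ $P_a \subseteq P_{b^\nu}$ iff $a \le b$.

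For the multiplicative claim, axiom (a) of the valuation gives $\val(fg) = \val(f)\val(g)$ (where the product on the right is the group operation of $\tT$, which is the multiplication of $\STR(\tT)$). If $x,y$ are both tangible, then $\val(f)=x$ and $\val(g)=y$ force $\val(fg)=xy \in P_{xy}=\{xy\}$. If exactly one of $x,y$ is ghost, say $y = y_0^\nu$, then $\val(g) \le y_0$ while $\val(f)=x$, so $\val(fg) = x\val(g) \le xy_0$ by monotonicity of multiplication in the ordered group $\tT$; since $xy = (xy_0)^\nu$, this places $\val(fg) \in P_{xy}$. The case of both ghosts is analogous.

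For the additive claim, axiom (b) gives $\val(f+g) \le \max\{\val(f),\val(g)\}$, with equality whenever $\val(f) \ne \val(g)$. I split into three cases. If $x,y$ are both tangible, then either $x \ne y$, in which case $\val(f) \ne \val(g)$ forces $\val(f+g) = \max\{x,y\} = x+y \in P_{x+y}$; or $x=y$, in which case $x+y = x^\nu$ and $P_{x^\nu}=\{b \mid b \le x\}$ already contains $\val(f+g) \le x$. If $x$ is tangible and $y = y_0^\nu$ is ghost, then either $x \le y_0$, giving $x+y = y_0^\nu$ and $\val(f+g) \le \max\{x,y_0\} = y_0$, so $\val(f+g) \in P_{y_0^\nu}$; or $x > y_0 \ge \val(g)$, whence $\val(f) > \val(g)$ forces equality $\val(f+g) = x$, and $x+y = x$ in $\STR(\tT)$ by the ordering $y_0^\nu < x$, so $\val(f+g) \in P_x$. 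Finally, if both $x = x_0^\nu$ and $y = y_0^\nu$ are ghost, assume WLOG $x_0 \le y_0$; then $x+y = y_0^\nu$ and $\val(f+g) \le \max\{\val(f),\val(g)\} \le y_0$, placing $\val(f+g)$ in $P_{y_0^\nu} = P_{x+y}$.

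The only subtle point—what I expect to be the main sticking point—is the equal-valuation situation in (b), where axiom (b) degrades to an inequality. This is precisely the case in which $x+y$ becomes a ghost class $P_{x^\nu}$ (or where both $x,y$ are ghost with $x_0 = y_0$), and the enlarged set $\{b \mid b \le x_0\}$ is exactly large enough to absorb this loss. The hyperfield formalism is designed so that the two-valued ``homomorphic relation'' $\val(h) \in P_z$ turns the standard ultrametric inequality into a sharp membership statement, and the verification above shows that the construction is tight.
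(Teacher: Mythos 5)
Your proof is correct, and it in fact proves slightly more than the paper's argument does. The paper sets $a = \val(f)$, $b = \val(g)$ and verifies the conclusion only for the canonical choice $x = a$, $y = b$: axiom~(a) gives $\val(fg) = ab \in P_{ab}$ at once, and for the sum it applies axiom~(b) alongside a three-way comparison of $a$ with $b$ (the cases $a > b$, $a = b$, $a < b$) to place $\val(f+g)$ in $P_{a+b}$. You, by contrast, treat the proposition as stated, for \emph{arbitrary} $x, y \in \STR(\tT)$ with $\val(f) \in P_x$ and $\val(g) \in P_y$, splitting on the tangible/ghost status of $x$ and $y$. This forces you to handle the additional cases where $x$ or $y$ is a ghost that strictly over-covers the valuation (e.g.\ $\val(f) < x_0$ for $x = x_0^\nu$), but the argument still rests on the same two valuation axioms together with monotonicity of the group operation and the total order. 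The point you highlight — that the weakening of axiom~(b) to an inequality at equal valuations is exactly matched by $x + y$ becoming a ghost class $P_{x^\nu} = \{b \le x\}$ — is the same observation underlying the paper's $a = b$ branch. Both approaches are valid; yours is the one that literally covers the universal quantifier in the statement, while the paper's suffices if one only needs the sharp memberships $\val(f) \in P_{\val(f)}$, $\val(g) \in P_{\val(g)}$.
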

\begin{proof}
Let  $ a = \val(f) $, $b = \val(g) $. Then,
$
  \val(f  g)  =  \val(f) + \val(g) =
   a  b  \in  P_{a  b} ,$ and
$$
\begin{array}{lrrl}
  \val(f + g) \leq   \max \{ a, b\}  =
 \left\{
\begin{array}{lllllll}
  a \in P_{a}  = P_{a + b},  &  a >  b,  \\
  a \in  P_{a}  \subset  P_{a +  a} = P_{a^\nu},&  a  = b,\\
  b \in P_{b}  = P_{a + b},  &  b > a.
\end{array}
\right.
\end{array}
$$ \vskip -5mm
\end{proof}

\begin{example}
Consider the supertropical semifield  $\Trop = \STR((\Real,+))$ in  Example \ref{exmp:extended}. To realize $\Trop$ as a hyperfield over $\Real$, each $a \in \Real \cup \{ - \infty \} $ is assigned with the one-element set $\{ a \} \subset \Real \cup \{ - \infty \} $, while $b \in \Real^\nu$ is assigned with the closed  ray $[-\infty, a] \subset \Real \cup \{ - \infty \}  $.
\end{example}

\subsection{A view to polyhedral geometry}\label{ssec:polyhedral.geomtry}
\sSkip

Traditional tropical varieties may be obtained  by  taking different viewpoints, as outlined below, see e.g. \cite{Gat}. We write
$\zReal$ for the max-plus semiring  $\zReal := \{\Real \cup \{ -\infty \}, \max, + )$, where $\zRealn$ stands for the cartesian product of $n$ copies of  $\zReal$.

 The \textbf{amoeba} of a complex affine variety $Y = \{ ( z_1, \dots, z_n) \ds| z_i \in
\Comp\}$  is  defined as
$$\tA_t(Y) = \{ (\log_t| z_1|, \dots, \log_t|  z_n|) \ds |
( z_1, \dots, z_n) \in Y \} \subset \zReal^{(n)},$$ which  by taking limit $t \to 0$
degenerates to a \textbf{non-Archimedean
amoeba} $\tA_0$  in the $n$-space over the max-plus semiring  $\zReal$, cf. \cite{Gat}.
$\tA_0$ is a finite polyhedral complex of pure dimension, i.e., all its maximal faces (termed facets) have the same dimension.
This symplectic  viewpoint leads to the topological definition \cite{IMS}: a \textbf{tropical
variety} $X \subset \Real^{(n)}$ is a finite rational polyhedral
complex of pure dimension whose weighted facets  $\delta$  carry positive integral values  $m(\delta)$
such that for each face $\sigma$  of codimension $1$ in $X$
the following \textbf{balancing
condition} holds
\begin{equation}\label{eq:balanceCond} \sum_{\sigma\subset\delta}m(\delta)n_\sigma(\delta)=0 ,\end{equation}
where $\delta$ runs over all  facets of $X$
containing $\sigma$, and $n_\sigma(\delta)$ is the primitive unit
 vector normal to $\sigma$ lying in the cone centered at $\sigma$
and directed by $\delta$. Thereby, a tropical hypersurface must have (topological) dimension $n-1$.

The ``combinatorial--algebraic'' approach to tropical varieties starts with a polynomial $\reF$
over the max-plus semiring $\zReal$, which determines a piecewise linear convex function
$\reF : \Real^{(n)} \To \Real$. Its domain of
non-differentiability $\Cor({\reF})$, called \textbf{corner locus}, defines a tropical hypersurface.
In combinatorial sense, $\Cor({\reF})$ is the set of
points in $\Real^{(n)}$ on which the evaluation of $\reF$\ is attained by at least
two of its monomials. Yet, this formalism is not purely algebraic.

 Valuations as described in \S\ref{ssec:valuation} give a direct passage
from classical algebraic varieties  to tropical varieties  \cite{MS}.
For example, take  $\tT = \Real$  to be the valued group of  the
field $\Fld$ of Puiseux series  $ p(t) =
\sum_{q \in Q} c_{q} t ^{q},$ with $c_q \in \Comp$
and  $Q \subseteq \Rati$  bounded from below, where $\val: \Fld \To \zReal$ is given by
\begin{equation*}\label{eq:valPowerSeries}
  \val(p(t))\  := \
\left\{%
\begin{array}{ll}
    - \min \{q \in Q \cnd c_{q} \neq 0 \}, &  p(t) \in
    \Fld^\times,
    \\[1mm]
    -\infty, & p(t) = 0. \\
\end{array}%
\right.
\end{equation*}
A tropical variety is now defined as the closure $\overline{\val(Y)}$ of  a subvariety $Y$  of the torus $(\Fld^\times)^{(n)}$, where  $\val$ is applied  coordinate-wise to $Y$. A parallel way  is to tropicalize the generating elements of the ideal that determines $Y$ and then to consider the intersection of their corner loci.

Supertropical theory provides a purely algebraic  way to capture tropical varieties as ghost loci of systems of polynomials (Definition \ref{def:ghost.locus}). In this setting, standard tropical varieties are a subclass of ghost loci obtained as the tangible domains of tangible polynomial functions \cite{IzhakianRowen2007SuperTropical}. Furthermore,  ghost loci  allow to frame a larger family  of polyhedral objects, including objects whose dimension  equals to that of their  ambient space.

\ifdef{\withFigures}{
\begin{figure}[h]
\setlength{\unitlength}{0.5cm}
\begin{picture}(10,9)(0,0)
%% Grid
\grid
%graph
\thicklines
\put(6,6){\line(1,1){2.5}}
\put(4,6){\line(-2,1){2.5}}
\put(6,4){\line(1,-2){1.2}}
% \dottedline{0.15}(5,5)(8.5,6.7)
%\pspolygon[fillstyle=solid,fillcolor=lgray](1.5,2.5)(2.5,1.5)(3.5,2.5)(2.5,3.5)
%\pspolygon[fillstyle=solid,fillcolor=lgray](2,2.5)(2.5,2)(3,2.5)(2.5,3)
\pspolygon[fillstyle=none,fillcolor=lgray](2,3)(3,2)(3,3)
\put(4,0){(a)}
\end{picture}
\begin{picture}(10,7)(0,0)
%% Grid
\grid
\thicklines
\put(6,6){\line(1,1){2.5}}
\put(4,6){\line(-2,1){2.5}}
\put(6,4){\line(1,-2){1.2}}
% \dottedline{0.15}(5,5)(8.5,6.7)
%\pspolygon[fillstyle=solid,fillcolor=lgray](1.5,2.5)(2.5,1.5)(3.5,2.5)(2.5,3.5)
\pspolygon[fillstyle=solid,fillcolor=lgray](2,3)(3,2)(3,3)
\put(4,0){(b)}
\end{picture}
\begin{picture}(10,9)(0,0)
%% Grid
\grid
%graph
\thicklines
%\put(1.5,5){\line(1,0){7}}
%\put(5,1.5){\line(0,1){7}}
% \dottedline{0.15}(5,5)(8.5,6.7)
%\pspolygon[fillstyle=none,fillcolor=lgray](1.75,2.5)(2.5,1.75)(3.25,2.5)(2.5,3.25)
%\pspolygon[fillstyle=solid,fillcolor=lgray](2,2.5)(2.5,2)(3,2.5)(2.5,3)
%\pspolygon[fillstyle=none,fillcolor=lgray](2,2.5)(2.5,2)(3,2.5)(2.5,3)
\pspolygon[fillstyle=solid,fillcolor=lgray](2,3)(3,2)(3,3)

\put(4,0){(c)}
\end{picture}

\caption{Tangible parts of supertropical algebraic sets.}\label{fig:2}
\end{figure}
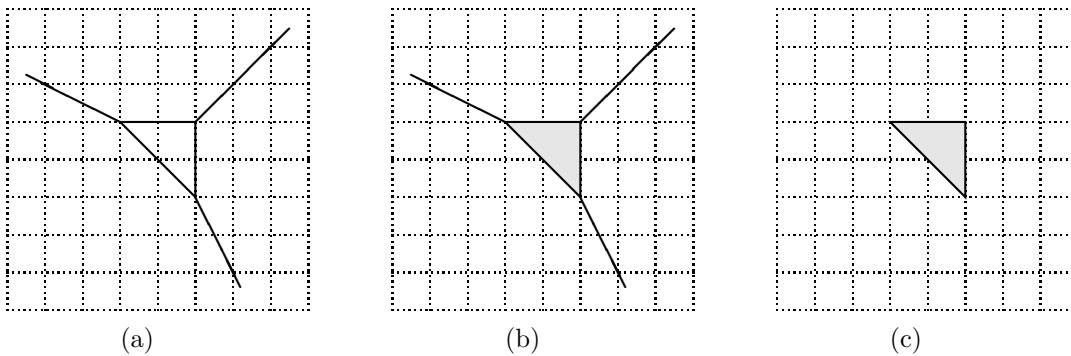
}

\begin{example}\label{examp:varieties}
Let $f= \lm_1^2\lm_2 + \lm_1 \lm_2^2+ \al \lm_1 \lm_2 + 0$ be a polynomial in $\Trop[\lm_1, \lm_2]$, where  $\Trop$ is the extended tropical semiring of Example \ref{exmp:extended}. Assume $\al \nul 0$,  and let $X = \tZ(f)$ be the  ghost locus of $f$.
\begin{enumerate} \eroman
  \item For a tangible $\al \in \tT = \Real$, the restriction $X|_\tng$ is a standard tropical elliptic curve, as described in Figure \ref{fig:2}.(a).

  \item When $\al \in \tG = \Real^\nu$ is a ghost, we obtain the supertropical curve $X|_\tng$ illustrated in Figure \ref{fig:2}.(b). This type of objects is not accessible by traditional tropical geometry.

\item Consider  the set of polynomials $E = \{ \al^\nu + \lm_1, \al^\nu + \lm_2 , 0 + (-\al)^\nu   \lm_1 \lm_2, \} \subset \Trop[\lm_1, \lm_2]$ with $\al > 0$, and let $Y  = \tZ(E) = \bigcap_{f \in E} \tZ(f)$  be  ghost locus of $E$. Then, restricting to tangibles,  $Y|_\tng$ is the filled triangle given in Figure \ref{fig:2}.(c).

\end{enumerate}

\end{example}

In fact, by the same way as in (iii),  any convex polytope with $m$ facets having rational slopes can be described as the tangible restriction of the ghost locus of a set of $m$ binomials over $\Trop$.
More specifically, to capture the geometric features   with tropical geometry one can simply use the extension $\Trop$ of the max-plus semiring $\zReal$, as demonstrated Example \ref{examp:varieties}.(i).

%\subsection{Free \nualg s}
\section{Congruences on supertropical structures}\label{sec:3}

In this section we employ congruences on \nusmr s, starting with their   underlying additive \numon\  structure, and later concern their multiplicative structure as well.
 To enable a meaningful passage to quotient structures, congruences in our theory  play the customary role of ideals in ring theory. In this view,  we study main types of congruences, analogous  to types of  ideals, and explore their meaning in supertropical theory. Types of these congruences are of different nature, due to the special structure of \nusmr s.

\subsection{Congruences on additive \numon s}\label{ssec:cong.mon} \sSkip

 We denote by $\Cng(\tM)$ the set of all congruences on a given additive \numon\ $\tM := (\tM, \tG, \nu)$, cf.~ Definition \ref{def:numonoid}. A congruence $\Cong \in \Cng(\tM)$ is an equivalenc relation that respects the \numon\ operation and all relevant  relations (especially associativity), cf. \S\ref{ssec:cong.univ.alg}. Its underlying  equivalence is denoted by $\cng$, unless otherwise is specified.
 Recall from \eqref{eq:canonical.qu} that for any congruence~ $\Cong$ on $\tM$ there exists the canonical surjection
 $ \pi_\Cong: \tM \Onto \tM/ \Cong $, see Remark \ref{rem:cong1}.

 \pSkip

 Given a congruence  $ \Cong \in \Cng(\tM)$ with underlying  equivalence $\cng$,
we write $$a \cng \ghost \dss{\text{if}} a \cng b  \text{ for some } b \in \tGz. $$
(This notation includes the case that $a \cng \zero$, in which also $a^\nu \cng \zero$.)
\begin{lem}\label{lem:ghs.eqv}  Suppose $a  \cng \ghost$ in a congruence $\Cong\in \Cng(\tM)$,   then $a \cng a^\nu$.
\end{lem}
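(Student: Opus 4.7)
The plan is to exploit the fact that a congruence respects addition, together with two basic identities: for any element $x \in \tM$ we have $x+x = x^\nu$ (by Axiom \NMb\ with $\nu(x)=\nu(x)$, or equivalently by Remark \ref{rem:nmon.zero}(c)), and for any ghost element $g \in \tGz$ we have $g+g = g$ (since $g^\nu = g$ when $g \in \tG$, and $\zero + \zero = \zero$).

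First I would unpack the hypothesis $a \cong \ghost$: by definition this gives some $b \in \tGz$ with $a \cng b$. Then, since $\cng$ is a congruence with respect to the monoid operation, I would add $a$ to both sides to obtain $a + a \cng a + b$, and add $b$ to both sides of the original equivalence to obtain $a + b \cng b + b$. Combining these by transitivity yields $a+a \cng b+b$.

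Now I would apply the two identities recalled above: the left-hand side equals $a^\nu$, while the right-hand side equals $b$ (whether $b \in \tG$ or $b = \zero$). Hence $a^\nu \cng b$, and chaining this with the original $a \cng b$ by transitivity and symmetry gives $a \cng a^\nu$, as required.

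There is no serious obstacle: the argument is a three-line application of the congruence axioms plus the basic fact that ghosts are idempotent under addition while doubling an arbitrary element produces its $\nu$-image. The only thing to mind is to handle the case $b = \zero$ uniformly with the case $b \in \tG$, which is automatic since $\zero + \zero = \zero = \zero^\nu$.
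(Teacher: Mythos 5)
Your proof is correct and follows essentially the same route as the paper: both derive $a+a \cng b+b$ from $a \cng b$ using the congruence property, then identify $a+a = a^\nu$ and $b+b = b$ for the ghost $b$, and conclude by transitivity. The only cosmetic difference is that you obtain $a+a \cng b+b$ via two one-sided additions and transitivity rather than applying the two-coordinate congruence property directly, which changes nothing of substance.
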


\begin{proof}  By assumption $a \cng b$ for some $b \in \tG$.  As $\Cong$ respects the \numon\ operation $+$, we have
$ a + a \cng b + b = b$, since $b \in \tGz$.   Hence  $a^\nu \cng b $, and   the transitivity of $\Cong$ implies  $a \cng a^\nu$.
\end{proof}

We have the obvious characterization of ghosts in terms of congruences:
\begin{cor}
 An element  $a \in  \tM$ is ghost if and only if $a \cng a^\nu$ in  all congruences $\Cong$ on $\tM$.
\end{cor}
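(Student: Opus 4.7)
The plan is to observe that this corollary is essentially a restatement of the definitional identity $a = a^\nu \iff a \in \tG$, reformulated through the lens of the diagonal congruence. I do not need to invoke Lemma~\ref{lem:ghs.eqv} directly; rather, I exploit the fact that the diagonal $\diag(\tM)$ of \eqref{eq:con.diag.1}--\eqref{eq:con.diag.2} is itself a (minimal) congruence on $\tM$.

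First I would handle the forward implication. If $a \in \tG$, then because $\nu \colon \tM \To \tG$ is an idempotent projection fixing $\tG$ pointwise (cf.\ the remarks following Definition~\ref{def:numonoid}), we have $a = \nu(a) = a^\nu$. Hence $a \cng a^\nu$ holds by reflexivity in every equivalence relation on $\tM$, in particular in every congruence $\Cong \in \Cng(\tM)$.

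Next I would handle the converse by specializing to the finest possible congruence. Since $\diag(\tM) = \bigcap_{\Cong \in \Cng(\tM)} \Cong$ is itself a congruence on $\tM$, the assumption that $a \cng a^\nu$ holds in every congruence forces, in particular, $(a, a^\nu) \in \diag(\tM)$, i.e.\ $a = a^\nu$. Because $\im(\nu) = \tG$, this equality puts $a \in \tG$, i.e.\ $a$ is ghost, completing the proof.

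The only point requiring minor care is the appeal to $\diag(\tM)$ as a congruence on the \numon\ $\tM$: reflexivity, symmetry, and transitivity are immediate, and the operation-preservation conditions are vacuous since $(x,x) + (y,y) = (x+y, x+y) \in \diag(\tM)$. Thus the statement reduces to the tautology that the only element congruent to $a^\nu$ in the trivial congruence is $a^\nu$ itself, and there is no genuine obstacle to overcome.
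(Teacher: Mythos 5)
Your proof is correct and is essentially the argument the paper intends (the corollary is stated without proof as an immediate consequence of the surrounding material): the forward direction is reflexivity together with $\nu|_{\tG} = \id$, and the converse specializes to the trivial congruence $\diag(\tM)$, which the paper has already verified is a congruence contained in all others. Bypassing Lemma~\ref{lem:ghs.eqv} is harmless here, since for a genuine ghost $a$ one has $a = a^\nu$ outright, so the lemma's content is not needed in either direction.
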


Viewing a  congruence $\Cong $ on $\tM$ as subalgebra of $ \tM \times \tM$,
 we define its   \textbf{ghost cluster} of $\Cong $ to be
\begin{equation}\label{eq:ghost.cls}
\begin{array}{lcl}
  \Gcl {\Cong}  :=   \{ (a,b) \in \Cong \cnd a \cng a^\nu \}  & \subseteq \tM \times \tM.  \end{array}
\end{equation}
This ghost  cluster provides a coarse classification of the ghost equivalence classes of $\Cong$.
%, i.e., a class  $[a]$ is a ghost class if $(a,b) \in \Gcl {\Cong}$ for some $b \in R$.
In particular, for every $\Cong$ we always have $\tGz \times \tGz \subseteq \Gcl {\Cong}$. Moreover, by Lemma \ref{lem:ghs.eqv}, an inclusion $(a,b) \in \Gcl{\Cong}$ implies  that $(a,a) \in \Gcl{\Cong} $ and $(b,b) \in \Gcl{\Cong}$, whereas  $a \cng a^\nu$ and $b \cng b^\nu$.

The  set-theoretic  complement of $\Gcl {\Cong}$ in $\Cong$ is denoted by
$$\cGcl{\Cong} := \Cong \setminus \Gcl{\Cong}. $$
A congruence $\Cong$ on $\tM$  is said to be a \textbf{ghost congruence}, if $\Cong = \Gcl{\Cong}$, namely   $\cGcl{\Cong} = \emptyset$.

\begin{rem}\label{rem:zero.cls}
  If the ghost cluster $\Gcl{\Cong}$  of $\Cong \in \Cng(\tM)$  consists of elements from a single (ghost) equivalence class, then $\Cong$ is not a proper congruence (Definition \ref{def:cong.types}). Indeed, in this case $a^\nu \cng \zero$ for every $a^\nu \in \tGz$, and then
  $$a = a + \zero \cng a + a^\nu  =  a + (a + a + a) = (a + a) + (a + a) = a^\nu + a^\nu \cng \zero + \zero =  \zero.$$
  Therefore, each  $a \in \tM$ is congruent to $\zero$. %(We call this congruence the \textbf{zero congruence}.)
\end{rem}
Formally, in special cases,  we use the \textbf{zero congruence} $\zCong$, whose underlying equivalence $\zcng$ is given~ by
\begin{equation}\label{eq:zero.cong}
  a \zcng \zero \qquad \text{ for all } a \in \tM.
\end{equation}
$\zCong$ consists of the single equivalence class $[\zero]$.
%However, unless otherwise is specified,  $\zCong $ is not considered as a member of $\QCng(R)$.

 We define the \textbf{ghost projection}  of the ghost cluster $ \Gcl{\Cong}$  of a congruence $\Cong$ on $\tM$ to be the subset
  \begin{equation}\label{eq:invghost}
 \iGcl(\Cong) :=  \{ a \in \tM \cnd a \cng a^\nu \} \ds \subseteq \tM .
 \end{equation}
 In other words, $\iGcl(\Cong)$ is the preimage
 of the  diagonal of $ \Gcl{\Cong}$ under the map  $\iota: \tM \To \diag(\Cong)$, cf.~ \eqref{eq:con.diag.1}.
 In this setup, $\Gcl{\Cong}$ is the restriction of $\Cong$ to $\iGcl(\Cong) \subseteq \tM$, and by itself is a congruence on $\iGcl(\Cong)$.
 Note that an element $a \in \iGcl(\Cong)$  need not  be a ghost belonging to $\tGz$, and that $a \in \iGcl(\Cong)$  if and only if   $(a,a) \in \Gcl{\Cong}$. Clearly, the inclusion $\tGz \subseteq \iGcl(\Cong)$ holds for any congruence $\Cong$ on $\tM$.
  For short, we write
  \begin{equation}\label{eq:cinvghost}
 \ciGcl(\Cong) :=  (\iGcl(\Cong))^\cmp
 \end{equation}
 for the set-theoretic complement of $\iGcl(\Cong)$ in $\tM$.

The quotient of a \numon \ $\tM:= (\tM, \tG, \nu)$ by a congruence $\Cong$  is   defined  as
\begin{equation}\label{eq:qnumon}
\tM/\Cong := (\tM/\Cong,  \
 \iGcl(\Cong)/\Gcl{\Cong}, [\nu]),
\end{equation}
 where the  ghost map  $\nu:\tM \To \tGz$ of $\tM$ induces the  ghost map $[\nu]$ of $ \tM/\Cong$ via $[a]^{[\nu]} = [a^\nu].$
  A class ~$[a]$ of   $\tM / \Cong$ is a ghost class, if it contains a  ghost element of $\tM$, where  Lemma ~\ref{lem:ghs.eqv}  implies that $a \cng a^\nu$, and hence also $a^\nu \in [a]$.
 The partial ordering of the ghost submonoid of $\tM/\Cong$  is induced by addition, i.e., $[a]> [b]$ if $[a]+ [b] = [a]$, and thus $[a] + [b] = [a]^{[\nu]}$ whenever $[a]^{[\nu]} = [b]^{[\nu]}$. Hence, $\tM/\Cong$ is an additive  \numon\ (Definition \ref{def:numonoid}).

 %Therefore, $\iGcl(\Cong)$ is in fact the preimage of ghosts in $\tM / \Cong$ under the canonical srjection
% $\pi_\Cong: \tM \Onto \tM/\Cong,$  $a \mTo [a]$, cf. \eqref{eq:canonical.qu}.

 We see that in fact the ghost projection $\iGcl(\Cong)$ is the preimage of the ghost submonoid of $\tM/\Cong$ under the  canonical surjection
$ \pi_\Cong: \tM \Onto \tM/ \Cong $, cf. \eqref{eq:canonical.qu}.  Namely, it is the g-kernel of $\pi_\Cong$ (Definition \ref{eq:nu.mon.hom}).

\begin{rem}\label{rem:cong.order} For any congruence $\Cong$ on a \numon\ $\tM$ we have the following properties.
\begin{enumerate}\eroman
\item If $a^\nu \cng b^\nu$, then
$$[a + b] = [a] + [b] = [a]^\nu \Dir a +b \cng a^\nu.$$
This equivalence is compatible with the canonical surjection
$ \pi_\Cong: \tM \Onto \tM/ \Cong $, cf. Remark ~\ref{rem:cong1}.(i).

\item Suppose $a + b = c$. If $a \cng b$,  then $a^\nu = a + a \cng a+ b$, and thus
$$ a^\nu \cng a+b = c, \qquad a^\nu = a^\nu + a^\nu \cng (a+b) + (a + b) = c + c = c^\nu,  $$
implying that $ c \cng c^\nu$. In the case that $a^\nu + b \in \tG$, the same holds under the equivalence $a \cng a ^\nu$, since $a^\nu + b \cng a + b = c$.

  \item If $a + b = a$ and $a \cng b$,  then
  $ a \cng  a^\nu$ by (i), and  hence $b \cng b^\nu$, since $$b \cng a \cng a^\nu = a + a \cng b + b =  b^\nu.$$ In particular, this implies that if $a  \cng b$ where $b \nule a $, then  $a \cng a^\nu$ and $b \cng b^\nu$.

  \item If $a + b =a$ and $a \cng b$, then $a \cng c$ for every $c $ such that $a + c = a$ and $c + b= c$ (especially when  $a \nug c \nug b$). Indeed
  $$ a = a + c \cng  b +c = c.$$
Furthermore,  $c \cng c ^\nu$ for each such $c$, since  $a \cng a^\nu$ by (iii).

\end{enumerate}
\end{rem}

Clearly, an inclusion of \ccong s implies the  inclusion of their ghost clusters:
\begin{equation}\label{eq:inclusion.gcls}
  \Cong_1 \subset \Cong_2 \Dir \Gcl{\Cong_1} \subset \Gcl{\Cong_2}.
\end{equation}
Also, by transitivity of congruences,  the intersection $\Cong_1 \cap \Cong_2$ respects intersection of ghost clusters:    $\Gcl{\Cong_1 \cap \Cong_2} = \Gcl{\Cong_1} \cap \Gcl{\Cong_2}$.

Having the above insights, we next observe congruences that arise from suitable ``ghost relations'', taking place on  subsets of \numon s.
For a nonempty subset  $E $ of $\tM$ we define the  set of congruences
\begin{equation}\label{eq:numon.var}
  \GG(E) := \{ \Cong \in \Cng(\tM) \cnd E \subseteq \iGcl(\Cong) \}  \ds\subseteq \Cng(\tM),
\end{equation}
and consider the congruence determined as the intersection of all its members:
  \begin{equation}\label{eq:G.E}
  \mfG_E := \bigcap_{\Cong \in \GG(E)} \Cong.
\end{equation}
 This construction provides $\gCong_E$ as the minimal congruence in which the entire subset $E$ is declared  as ghost, that is $a \cng a ^\nu$ for every $a \in E$, cf. Remark \ref{rem:cong.gen} and Lemma \ref{lem:ghs.eqv}.

 The congruence $\mfG_E$  respects the monoid operation of $\tM$, as it is the intersection of congruences, and hence it is transitively closed. We call
 $\gCong_E$ the \textbf{ghostifiying congruence} of $E$, while $\tM / \mfG_E $ is said to be   the  \textbf{ghostification} of $E$, for short. We also say that $K$ is \textbf{ghostfied} by $\mfG_E$, when $K \subseteq E$.
\begin{lem}\label{lem:ghostification}
  The underlying   equivalence $\cng$ of $\mfG_E$ can be formulated as
  \begin{equation}\label{eq:ghost.addt}
  a + b \cng a + b^\nu \qquad \text{for all   } b \in E.
\end{equation}
In particular, $b \cng b^\nu$ for all $ b \in E$.
\end{lem}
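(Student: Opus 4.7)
The plan is to show that $\mfG_E$ coincides with the congruence $\mfG'$ generated by the collection of pairs $\{(a+b,\, a+b^\nu) : a \in \tM,\ b \in E\}$, where by ``generated'' I mean the intersection of all congruences on $\tM$ containing this set of pairs (cf. Remark~\ref{rem:cong.gen}). Both $\mfG_E$ and $\mfG'$ are defined by minimality conditions, so the natural strategy is to verify the two inclusions $\mfG_E \subseteq \mfG'$ and $\mfG' \subseteq \mfG_E$ directly from these definitions.

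First I will check that $\mfG' \in \GG(E)$. Specializing the generating relations by taking $a = \zero_\tM$ yields $b = \zero_\tM + b \cng \zero_\tM + b^\nu = b^\nu$ for every $b \in E$; hence $E \subseteq \iGcl(\mfG')$, placing $\mfG'$ in $\GG(E)$. Since $\mfG_E$ is, by definition \eqref{eq:G.E}, the minimum of $\GG(E)$ under inclusion, this gives $\mfG_E \subseteq \mfG'$.

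For the opposite inclusion, I will use that $\mfG_E$ itself belongs to $\GG(E)$, so $b \cng b^\nu$ in $\mfG_E$ for every $b \in E$. Because $\mfG_E$ respects the monoid operation (being a congruence), adding any $a \in \tM$ to both sides of $b \cng b^\nu$ gives $a + b \cng a + b^\nu$ for every $a \in \tM$ and $b \in E$. Thus $\mfG_E$ contains every generating pair of $\mfG'$, and the minimality of $\mfG'$ forces $\mfG' \subseteq \mfG_E$. Combining the two inclusions yields $\mfG_E = \mfG'$, which is exactly the asserted description. The ``in particular'' clause is then obtained by specializing once more to $a = \zero_\tM$.

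I do not anticipate any serious obstacle: both arguments are formal consequences of the two minimality definitions, and the only substantive ingredient is the elementary observation that an equivalence $b \cng b^\nu$ automatically propagates, under compatibility with $+$, to $a+b \cng a+b^\nu$ for all $a \in \tM$. No use of Axioms \NMa--\NMc\ beyond those already implicit in the definition of a congruence is required.
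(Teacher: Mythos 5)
Your proof is correct and follows essentially the same line as the paper's: the paper likewise observes that every $\Cong \in \GG(E)$ satisfies \eqref{eq:ghost.addt} because $\Cong$ respects addition, hence so does the intersection $\mfG_E$, and then appeals to minimality of $\mfG_E$ for the converse containment. You have simply made this two-way inclusion explicit by naming the auxiliary congruence $\mfG'$, which is a somewhat cleaner exposition of the same argument.
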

\begin{proof} By Lemma \ref{lem:ghs.eqv}, the inclusion $E \subseteq \iGcl(\Cong)$ is equivalent to having the relation $b \cng b^\nu$ satisfied for all  $b \in E$. Every  $\Cong \in \GG(E)$ satisfies the equivalence  \eqref{eq:ghost.addt}, since $\Cong$  respects addition, and therefore their intersection  $\mfG_E$ also admits condition \eqref{eq:ghost.addt}. Then, the minimality of $\mfG_E$ completes the proof.
\end{proof}

Accordingly, the ghsotification of a subset $E \subseteq \tM$ is provided by the minimal congruence whose underlying  equivalence $\cng$ admits the relation
$$ a \cng a^\nu \quad \text{ for all } a \in E \cup \tG. $$
 In the degenerated case that  $E \subseteq \tGz$, we simply have $\mfG_E = \diag(\tM)$. %, while  $\mfG_E = \diag(\tGz)$ for $E = \tM$. ???
 On the other hand, if  $\one \in E$, then $\mfG_E$ is a ghost congruence.

\begin{rem}\label{rem:eqv.cls} Let $[a], [b]$ be classes of $R/ \mfG_E$.  From \eqref{eq:ghost.addt} it follows that $[a ] = [b]$ only if $a \nucong b$, cf. ~\eqref{eq:nu.eqv}.
(The converse does not hold in general, take for example $a,b \notin \tG$ such that $a \neq  b$ and $a \nucong b$.)
\end{rem}

We say that the congruence $\gCong_E$ is \textbf{determined} by $E \subseteq \tM$, and define the set
\begin{equation}\label{eq:g.Cong}
\GCng(\tM) := \{ \gCong_E \cnd E \subseteq \tM\}
\end{equation}
of all ghostifying congruences on $\tM$. We call these congruences  \textbf{\gcong s}, for short.
$\GCng(\tM)$ is a nonempty set as it contains $\diag(\tM) $.

\begin{rem}\label{rem:ghostification.intersection} For any subsets $E, E' \subseteq \tM$ we have the following properties:
\begin{enumerate}\eroman
  \item $\mfG_{E \cap E '} = \mfG_E  \cap \mfG_{E'} $;
  \item $\mfG_{E \cup E '} =  \overline{\mfG_E  \cup \mfG_{E'}} $ (cf. \eqref{eq:cong.closure});
  \item $E \subset E' \Rightarrow \mfG_E \subset  \mfG_{E'} $.
\end{enumerate}
Namely, $\GCng(\tM)$ is closed for intersection (and respects inclusion),  in which $\mfG_\tM$ is  maximal  congruence. Therefore,  $\GCng(\tM)$ has the structure of a semilattice.   \end{rem}
\noindent (By Remark \ref{rem:nmon.zero}, $a \gcng \zero$ in $\mfG_E$ implies that $a = \zero$, and thus formally  the zero congruence $\mfO$ does not belong to $\GCng(\tM)$.)
\pSkip

\begin{lem}\label{lem:gker-smon.0} Let $\vrp:\tM \To \tM'$ be a  \numon\ \hom\ (Definition \ref{def:numon.hom}), then
  $ \gker(\vrp) = \iGcl(\gCong_{\gker(\vrp)})$.
\end{lem}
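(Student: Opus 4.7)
The plan is to establish the equality by proving two inclusions, one immediate from the definition of the ghostifying congruence, and the other via an auxiliary congruence, namely the c-kernel $\Cong_\vrp$ of $\vrp$ introduced in Remark~\ref{rem:cong1}.(ii).

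\textbf{Easy inclusion} $\gker(\vrp) \subseteq \iGcl(\gCong_{\gker(\vrp)})$. By construction, $\gCong_{\gker(\vrp)}$ is the intersection of all congruences belonging to $\GG(\gker(\vrp))$, so by Lemma~\ref{lem:ghostification} its underlying equivalence satisfies $a \gcng a^\nu$ for every $a \in \gker(\vrp)$. This is precisely the membership condition for $\iGcl(\gCong_{\gker(\vrp)})$ (see \eqref{eq:invghost}).

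\textbf{Reverse inclusion via $\Cong_\vrp$}. Consider the c-kernel
\[ a \cng_\vrp b \dss{\text{ iff }} \vrp(a) = \vrp(b), \]
which is a congruence on $\tM$ by Remark~\ref{rem:cong1}.(ii) (the same construction works for \numon s since $\vrp$ respects the \numon\ operations). I claim that
\[ \iGcl(\Cong_\vrp) = \gker(\vrp). \]
Indeed, $a \in \iGcl(\Cong_\vrp)$ iff $a \cng_\vrp a^\nu$, i.e.,\ iff $\vrp(a) = \vrp(a^\nu) = \vrp(a)^{\nu'}$ (using Lemma~\ref{lem:nu.hom}.(i)); and an element of $\tM'$ equals its own $\nu'$-image precisely when it lies in $\tG'$, so this is equivalent to $\vrp(a) \in \tG'$, that is, $a \in \gker(\vrp)$.

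\textbf{Conclusion}. In particular, $\gker(\vrp) \subseteq \iGcl(\Cong_\vrp)$, which means $\Cong_\vrp \in \GG(\gker(\vrp))$. By the minimality of $\gCong_{\gker(\vrp)}$ as the intersection \eqref{eq:G.E} of all members of $\GG(\gker(\vrp))$, we obtain $\gCong_{\gker(\vrp)} \subseteq \Cong_\vrp$. Applying \eqref{eq:inclusion.gcls} (together with the observation that $a \in \iGcl(\Cong)$ is the same as $(a, a^\nu) \in \Cong$, so that congruence-inclusion implies ghost-projection-inclusion), we conclude
\[ \iGcl(\gCong_{\gker(\vrp)}) \subseteq \iGcl(\Cong_\vrp) = \gker(\vrp), \]
completing the reverse inclusion and hence the lemma. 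No step presents real difficulty; the main conceptual point is simply recognizing $\Cong_\vrp$ as the canonical witness that lies above $\gCong_{\gker(\vrp)}$ and whose ghost projection coincides exactly with $\gker(\vrp)$.
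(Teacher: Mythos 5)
Your proof is correct, and it takes a genuinely different route from the paper's. The paper argues element-wise: it takes $a \in \iGcl(\gCong_{E}) \sm E$ (with $E = \gker(\vrp)$), writes $a$ in the form $b + d + \sum e_i$ with $b \in \tG$, $e_i \in E$, and uses Lemma \ref{lem:ghostification} together with Axiom \NMc\ of Definition \ref{def:numonoid} to force $a \in E$; this hinges on a structural description of which elements get ghostified by $\gCong_E$ (the \numon\ analogue of \eqref{eq:a.in.igcl}), which is the delicate part of that argument. You instead exhibit an explicit witness: the c-kernel $\Cong_\vrp$ is a congruence, its ghost projection is computed exactly (via $\vrp(a^\nu) = \vrp(a)^{\nu'}$ and the fact that a fixed point of $\nu'$ is precisely an element of $\tG'$) to be $\gker(\vrp)$, so $\Cong_\vrp \in \GG(\gker(\vrp))$, and minimality of $\gCong_{\gker(\vrp)}$ plus monotonicity of $\iGcl(\udscr)$ squeezes $\iGcl(\gCong_{\gker(\vrp)})$ between $\gker(\vrp)$ and $\iGcl(\Cong_\vrp) = \gker(\vrp)$. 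Your argument is shorter and avoids the decomposition step entirely, at the cost of exploiting more heavily the hypothesis that $E$ arises as the g-kernel of a homomorphism (which is exactly what the lemma assumes, so nothing is lost here); the paper's computation, by contrast, is the kind of normal-form reasoning it reuses later (e.g., in Remark \ref{rem:ghostification.smr}) for subsets $E$ that are not a priori g-kernels. All steps you invoke are available: $\Cong_\vrp$ respects the \numon\ operation because $\vrp$ does, and the inclusion $\Cong_1 \subseteq \Cong_2 \Rightarrow \iGcl(\Cong_1) \subseteq \iGcl(\Cong_2)$ is immediate from $\iGcl(\Cong) = \{a \cnd (a,a^\nu) \in \Cong\}$.
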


\begin{proof}  Let $E = \gker(\vrp)$.
  The inclusion $ E \subseteq \iGcl(\gCong_{E})$ is obvious. Conversely, suppose $\iGcl(\gCong_{E}) \sm E$ is nonempty and take
  $a \in \iGcl(\gCong_{E}) \sm E$. Thus,  $a$  is not a ghost, since $E = \gker(\vrp)$,  and $a$ can be written as
\begin{equation*}\label{eq:a.in.mon.igcl}
 a = b + d + \sum e_i   \quad \text{ for some } \ b \in \tG, \ d \in \tM \sm \tG, \  e_i \in E \sm \tG,
\end{equation*}
  where $b$ and $e_i$ are  possibly all $\zero$. (In fact $b + \sum e_i \in E$ as $E$ is a monoid.)
  By Lemma \ref{lem:ghostification} we obtain
 $$a^\nu = d^\nu + b + \sum  e_i^\nu  =  d + b + \sum e_i^\nu  ,$$
   since  $a \in  \iGcl(\gCong_E)$,
 which implies by Axiom \NMc\ in Definition \ref{def:numonoid} that $a = d^\nu  + b + \sum  e_i $.  Thus
\begin{equation*}\label{eq:a.in.mon.igcl}
 a = b' + \sum e_i   \quad \text{ for some }  e_i \in E, \ b' \in \tG.
\end{equation*}
But $\tG \subset E$, and hence $a \in E$.
\end{proof}

Using  ghostifying  congruences, one can define quotients of \numon s.
\begin{defn}\label{def:ghostification}
  The \textbf{quotient \numon} of a \numon \ $\tM$ by a  subset $E \subseteq \tM$ is defined to be the \numon\ $\tM/\mfG_E$. We write $\tM\qq E$ for $\tM/\mfG_E$.
\end{defn}
In other words, all the elements of $E \subseteq \tM$ are identified as ghosts in $\tM \qq E$. In particular, for $E \subseteq \tGz$, we have $\mfG_E = \diag(\tM)$, and therefore, in this case,  $\tM \qq E = \tM$. On the other hand,  $\tM \qq E \cong \tGz$ when $E = \tM$.
The process of quotienting   by the means of ghsotification is canonically defined and allows a  factoring oyt by substructures, as well as taking closures.
\begin{defn}\label{def:coker}
  The \textbf{g-cokernel} of a \hom\ $\vrp:\tM \To \tM'$ of \numon s is defined as
  \begin{equation}\label{eq:gcoker}
   \gcoker(\vrp):= \tM' \qq \, \im(\vrp) \, .
\end{equation}
%The \textbf{g-kernel closure} of $\vrp$ is defined as
%  \begin{equation}\label{eq:cgker}
%   \cgker(\vrp):= \iGcl(\gCong_{\gker(\vrp)}) \, ,
%\end{equation}
%i.e., the ghostification of the ghost kernel $\gker(\vrp)$ of $\vrp$ (Definition \ref{def:numon.hom}).
\end{defn}

Let $E $ be a subset of $\tM$, and consider the surjective \hom\
$$ \vrp: \tM \TO \tM \qq E.$$
By Lemma,  \ref{lem:gker-smon.0} $\gker(\vrp) = \iGcl(\gCong_{\gker(\vrp)})$ is a   \nusmon\ of $\tM$ containing $E \cup \tG$.

\begin{lemma}\label{lem:gker-smon}
Let $\tN \subset \tM$ be a  \nusmon\  containing  $\tG$, and let $\phi: \tM \To \tM \qq \tN$ be the canonical \hom.
Then $\gker(\phi) = \tN$.
\end{lemma}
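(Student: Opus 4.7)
The plan is to reduce the statement to the content of the preceding Lemma~\ref{lem:gker-smon.0}, whose proof already did the real work. First I would unpack what $\gker(\phi)$ means in terms of the ghost cluster: an element $a \in \tM$ lies in $\gker(\phi)$ exactly when $\phi(a)$ is ghost in $\tM\qq\tN$, i.e.\ when the class $[a]$ is a ghost class in the quotient \numon\ $(\tM/\gCong_\tN,\,\iGcl(\gCong_\tN)/\Gcl{\gCong_\tN},[\nu])$ of \eqref{eq:qnumon}. By Lemma~\ref{lem:ghs.eqv} applied inside $\gCong_\tN$, this happens iff $a \gcng_\tN a^\nu$, that is, iff $a \in \iGcl(\gCong_\tN)$. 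So the lemma reduces to showing that
\[
  \iGcl(\gCong_\tN) \ds= \tN.
\]

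For the inclusion $\tN \subseteq \iGcl(\gCong_\tN)$, I would invoke Lemma~\ref{lem:ghostification}: by construction of the ghostifying congruence, every $b \in \tN$ satisfies $b \gcng_\tN b^\nu$, which is exactly the defining property of $\iGcl(\gCong_\tN)$ from \eqref{eq:invghost}.

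The converse inclusion $\iGcl(\gCong_\tN) \subseteq \tN$ is the heart of the matter, and here I would re-run the argument from the proof of Lemma~\ref{lem:gker-smon.0} verbatim, noting that it only uses two features of the subset in question: that it is a \nusmon\ and that it contains $\tG$; both are hypotheses on $\tN$. Explicitly, suppose for contradiction that $a \in \iGcl(\gCong_\tN) \setminus \tN$. Since $\tG \subseteq \tN$ the element $a$ is not ghost, so writing $a$ in terms of the generating relations of $\gCong_\tN$ produces a decomposition $a = b + d + \sum e_i$ with $b \in \tG$, $d \in \tM \setminus \tG$, and $e_i \in \tN \setminus \tG$. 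Applying Lemma~\ref{lem:ghostification} to $a \gcng_\tN a^\nu$ then yields
\[
  a^\nu \ds= d^\nu + b + \sum e_i^\nu \ds= d + b + \sum e_i^\nu ,
\]
and Axiom~\NMc\ of Definition~\ref{def:numonoid} forces $a = d^\nu + b + \sum e_i$, i.e.\ $a = b' + \sum e_i$ with $b' \in \tG \subseteq \tN$. Because $\tN$ is a submonoid of $\tM$, this forces $a \in \tN$, contradicting the choice of $a$.

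I do not expect a genuine obstacle here: the only delicate point is making sure that the displayed decomposition $a = b + d + \sum e_i$ is legitimate, which is exactly what was justified in the proof of Lemma~\ref{lem:gker-smon.0} using the explicit description of $\gCong_\tN$ via Lemma~\ref{lem:ghostification}. Alternatively, one could phrase the whole argument as: both $\tN$ and $\gker(\phi)$ are \nusmon s of $\tM$ containing $\tG$; by Lemma~\ref{lem:gker-smon.0} the latter equals $\iGcl(\gCong_{\gker(\phi)})$; since $\tN \subseteq \gker(\phi)$ and every element of $\gker(\phi)$ is already ghostified by $\gCong_\tN$, the minimality in \eqref{eq:G.E} gives $\gCong_\tN = \gCong_{\gker(\phi)}$, and therefore $\gker(\phi) = \iGcl(\gCong_\tN) = \tN$.
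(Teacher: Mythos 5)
Your main argument is correct and matches what the paper intends. The paper's proof is the single line ``Follows from Lemma~\ref{lem:gker-smon.0},'' and the accurate reading of that, which you carry out, is: unpack $\gker(\phi)=\iGcl(\gCong_\tN)$ from the description \eqref{eq:qnumon} of the quotient, observe $\tN\subseteq\iGcl(\gCong_\tN)$ from Lemma~\ref{lem:ghostification}, and then re-run the displayed decomposition argument from the proof of Lemma~\ref{lem:gker-smon.0} with $\tN$ in place of $\gker(\vrp)$, which works because the only properties used there are that the set is a \nusmon\ and contains $\tG$, both of which $\tN$ has by hypothesis. That is the content of ``Follows from.''

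The final ``alternative'' paragraph, however, is circular and does not prove the statement. After establishing $\gCong_\tN = \gCong_{\gker(\phi)}$ by minimality, you combine it with Lemma~\ref{lem:gker-smon.0} to get $\gker(\phi)=\iGcl(\gCong_{\gker(\phi)})=\iGcl(\gCong_\tN)$ --- but $\gker(\phi)=\iGcl(\gCong_\tN)$ was already the starting observation of your first paragraph, so this chain gives back only what you knew. The last link ``$=\tN$'' is exactly the inclusion $\iGcl(\gCong_\tN)\subseteq\tN$ that needs to be proved, and the alternative never establishes it. There is no route around the structural decomposition argument: Lemma~\ref{lem:gker-smon.0} as \emph{stated} is a fixed-point fact about ghost kernels, and applying it to $\phi$ only returns the tautology; what is genuinely reused is the \emph{proof}, not the statement.
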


\begin{proof} Follows from Lemma \ref{lem:gker-smon.0}.
\end{proof}

\begin{theorem}\label{thm:numon.first.thm}
  Let $\vrp:\tM \To \tM'$ be a  \numon\ \hom. There exists a unique \nuhom\
  $$ \olvrp: \tM \qq \gker(\vrp) \TO \tM', \qquad \olvrp:[a] \Mto \vrp(a), $$
  such that $\vrp = \olvrp \circ \pi$, where $\pi : \tM \To \tM \qq \gker(\vrp)$ is the canonical \hom.
Namely, $\vrp$ factors uniquely through $\pi$ and the diagram
$$\xymatrix{
\tM  \ar@{->}[rrd]_{\vrp = \olvrp \circ \pi} \ar@{->}[rr]^{\pi} && \tM \qq \gker(\vrp) \ar@{..>}[d]^{\olvrp} \\
 && \tM'  \\
}
$$
commutes.
\end{theorem}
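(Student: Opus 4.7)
The plan is to follow the standard first-isomorphism template, with the key bookkeeping being to verify that the ghostifying congruence $\gCong_{\gker(\vrp)}$ is no finer than the c-kernel congruence $\Cong_\vrp$ of $\vrp$. To this end, first I would recall that $\tN := \gker(\vrp)$ contains $\tG$ and is a $\nu$-submonoid of $\tM$ (by Lemma \ref{lem:nu.hom}.(ii) and the proof of Lemma \ref{lem:gker-smon.0}), so that $\pi:\tM \To \tM\qq\tN$ is well defined and surjective with $\gker(\pi)=\tN$ by Lemma \ref{lem:gker-smon}.

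The main step is well-definedness of $\olvrp([a]) := \vrp(a)$. Consider the c-kernel congruence $\Cong_\vrp$ on $\tM$ given by $a \cng_\vrp b \Iff \vrp(a)=\vrp(b)$; this is a genuine congruence since $\vrp$ respects $(+)$. For each $b \in \tN$ one has $\vrp(b) \in \tG'$, hence $\vrp(b^\nu)=\vrp(b)^{\nu'}=\vrp(b)$ by Lemma \ref{lem:nu.hom}.(i), so $b \cng_\vrp b^\nu$. Thus $\Cong_\vrp \in \GG(\tN)$ in the sense of \eqref{eq:numon.var}, and by minimality of the ghostifying congruence \eqref{eq:G.E} we obtain $\gCong_\tN \subseteq \Cong_\vrp$. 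Consequently $[a]=[b]$ in $\tM\qq\tN$ forces $\vrp(a)=\vrp(b)$, so $\olvrp$ is well defined on equivalence classes.

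Next I would verify that $\olvrp$ is a $\nu$-monoid homomorphism. Additivity is immediate from the definition of $(+)$ on $\tM\qq\tN$ (see \eqref{eq:cong.oper}) combined with $\vrp(a+b)=\vrp(a)+\vrp(b)$; preservation of $\zero$ follows from $\vrp(\zero_\tM)=\zero_{\tM'}$; and compatibility with the induced ghost map $[\nu]$ of \eqref{eq:qnumon} reduces to $\vrp(a^\nu)=\vrp(a)^{\nu'}$, again by Lemma \ref{lem:nu.hom}.(i). The identity $\vrp=\olvrp\circ\pi$ is built into the construction.

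Finally, uniqueness is forced by the surjectivity of $\pi$: any $\nu$-hom $\psi:\tM\qq\tN \To \tM'$ with $\psi\circ\pi=\vrp$ must satisfy $\psi([a])=\psi(\pi(a))=\vrp(a)=\olvrp([a])$ for every $a\in\tM$. I do not expect serious obstacles; the only genuinely supertropical subtlety is the inclusion $\gCong_\tN \subseteq \Cong_\vrp$, which is the place where one must invoke the characterization of $\gker(\vrp)$ via ghost images rather than equality to zero as in the ring-theoretic analogue.
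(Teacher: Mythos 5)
Your proof is correct and follows essentially the same route as the paper: both proceed by showing $\gCong_{\gker(\vrp)} \subseteq \Cong_\vrp$ (the paper phrases this as "$\Cong_\vrp$ includes the relations $a \cng a^\nu$ for all $a \in \gker(\vrp)$, and their transitive closure," while you invoke the minimality characterization of the ghostifying congruence via membership in $\GG(\gker(\vrp))$ — the same fact packaged slightly differently), then deduce well-definedness of $\olvrp$ via Remark \ref{rem:2cong}, and conclude with the standard surjectivity argument for uniqueness. Your additional remark verifying compatibility with the induced ghost map $[\nu]$ is a welcome detail the paper leaves implicit.
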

\begin{proof}  Clearly, $\olvrp$ is a \hom:
  $\olvrp([a+b]) = \vrp(a+b) = \vrp(a) + \vrp(b) = \olvrp([a]) + \olvrp([b])$.
  Let $\Cong_\vrp $ be the c-kernel of $\vrp $ (Remark \ref{rem:cong1}.(ii)) and  write $E = \gker(\vrp)$, which is a monoid.
  Then, $E = \iGcl(\gCong_E)$  by Lemma  \ref{lem:gker-smon.0}, and $\gCong_E \subset \Cong_\vrp$ since  $\Cong_\vrp$ includes the  relations $a \cng a^\nu$ for all $a \in E$, and their transitive closure, implying that  $\olvrp$ is well defined by   Remark \ref{rem:2cong}. Also,
$(\olvrp \circ \pi)(a)={\olvrp}(\pi(a))={\olvrp}([a])=\vrp(a),$ proving that ${\olvrp}\circ \pi = \varphi.$
 Suppose there exists another $\psi$ such that $\psi \colon \tM/\gker(\vrp) \To \tM'$ with $\psi \circ \pi = \vrp $, then
$\psi([a])=\psi(\pi(a))=(\psi\circ\pi)(a)=\varphi(a)$ so that ${\olvrp}=\psi$, and $\olvrp$ is unique.
\end{proof}

\subsection{Congruences on \nusemirings0}  \sSkip

% Let $R:= (R, \tT, \tG, \nu)$ be a \nusmr, which
 Henceforth, our \nusmr\ $R:= (R, \tT, \tG, \nu)$   is always assumed to be commutative,  cf.~ \S\ref{ssec:nusmr}.  % We  write $\one $ for $\rone$, when $R$ is clear from the context.
   Recall that ~$\equiv$ denotes the  underlying  equivalence of a congruence $\Cong$.
  We begin with the set $\Cng(R)$ of all  congruences on $R$, and later restrict to
  better behaved  congruences which include families of  \gcong s.

 \begin{rem}\label{rem:cong.unit}
Any congruence $\Cong \in \Cng(R)$  satisfies the condition
that if  $a \equiv b$ then $$a^\nu = e a \cng e b =
b^\nu.$$
Conversely, when $\Cong$ is cancellative (Definition \ref{def:cong.types}), if $a^\nu \cng b^\nu$ then
$a \cng b$, and moreover $a \equiv a^\nu$ for every $a \in R$. (Indeed,
$ e a \cng  e a^\nu$, and  $a \cng a^\nu$ by cancellativity.)
Therefore, one  sees that  cancellativity is much  too restrictive for congruences on  \nusmr s.

%If $a \cng b$ and $a,b$ are units in a  \nusmr, then $a^\inv \cng b^\inv$. Indeed,
%$ b^\inv =  b^\inv a a^\inv \cng b^\inv b a^\inv = a^\inv.$
\end{rem}

 We specialize congruences on \numon s  to \nusmr s, involving their multiplicative
structure.  While the additive structure induced by a congruence $\Cong$ on $R$ has been addressed through  ghost elements, characterized by the ghost cluster $\Gcl{\Cong}$, the multiplicative structure induced by $\Cong$ is approached  via tangible elements. \pSkip

The \textbf{tangible cluster} $\Tcl {\Cong}$ of a congruence $\Cong \in \Cng(R)$ is defined as
\begin{equation}\label{eq:tng.cls}
\begin{array}{lcl}
 \Tcl {\Cong}  := &  \{ (a,b) \in \Cong \cnd a \in \tT, \  (a , t) \in \Cong   \text{ only for } t \in \tT  \} \ds \subseteq \tT \times \tT  ,  \\
\end{array}
\end{equation}
By transitively, $b $ must also be congregant only to tangibles.
The set-theoretic  complement of $ \Tcl {\Cong}$ in~ $\Cong$ is denoted  by
$$ \cTcl{\Cong} := \Cong \setminus \Tcl{\Cong}\; .$$
In analogy to $\iGcl(\Cong)$ in \eqref{eq:invghost}, the \textbf{tangible projection} of $\Cong$ is the  projection of the diagonal of $\Tcl{\Cong}$ on ~ $R$,   defined  as
  \begin{equation}\label{eq:invghost}
 \iTcl(\Cong) :=  \{ a \in \tT \cnd  a \cng t \text{ only for  } t \in \tT \} \ds \subseteq \tT,
 \end{equation}
 that is  $\iTcl(\Cong) = \idiag(\Tcl{\Cong}), $ cf. \eqref{eq:con.diag.1}. Accordingly, the tangible projection  $\iTcl(\Cong)$ is the preimage of the tangible subset of $R/\Cong$ under the  canonical surjection
$ \pi_\Cong: R \Onto R/ \Cong $.
By  definition, we immediately see that
 \begin{equation}\label{eq:a.in.tcl}
  a \in \iTcl(\Cong) \Dir a \in \tT.
\end{equation}
 For short, we write
  \begin{equation}\label{eq:cinvghost}
 \ciTcl(\Cong) :=  (\iTcl(\Cong))^\cmp
 \end{equation}
 for the set-theoretic complement of $\iTcl(\Cong)$ in $R$.
\begin{rem}\label{rem:tng.cluster}
Let $\Cong $ be a  congruence  on $R$, let  $a,b \in \tT \sm \RX$ be tangibles,  and let $u,v \in \RX$ be units.
\begin{enumerate}\eroman
  \item If $u \cng v$ then $u^\inv \cng v^\inv$, since $v^\inv = u u^\inv v^ \inv \cng v u^\inv v^\inv = u^\inv v v^\inv = u^\inv$.

  \item The equivalence $u \cng a$ implies that $[u^\inv] = [a]^\inv$ in $R / \Cong$, since $[u^\inv] [a] = [u^\inv] [u] = [u^\inv u] = [\one]$. Thus, $[a]$ is invertible in $R/\Cong$.

  \item If  $u \in \iGcl(\Cong)$, then $\iTcl(\Cong) = \emptyset$. Indeed, $u$ is then congruent to a ghost element in $R$, and since $\Cong$ respects multiplication,  also $\one = u u^\inv \in \iGcl(\Cong)$ and  $\one \cng e$. Consequently,  every  $a \in R$ is congruent to some ghost element, and thus $\iTcl(\Cong)$ is empty.

      \item By Remark \ref{rem:cong.order}.(iv), if $a \cng b$ where $a+ u = a$ and $b+u = u$ (in particular when $a \nug u \nug b$), then $u \cng u^\nu$ and
      $\iTcl(\Cong) = \emptyset$.

      \item If $a + b = u$, where  $a \cng a'$ and $b \cng b'$ such that $a' + b'$ is ghost, then $u$ is congregate to a ghost and again $\iTcl(\Cong) = \emptyset$.
\end{enumerate}
These properties will be of much use in the analysis  of maximal congruences.
 \end{rem}

The ghost cluster $\Gcl {\Cong}$ in \eqref{eq:ghost.cls} and the tangible cluster $\Tcl{\Cong}$ in \eqref{eq:tng.cls} of a congruence $\Cong$  are  set theoretically disjoint.  Together they  induce a classification of equivalent classes  as tangible, ghost, or neither tangible nor ghost. Accordingly, we sometimes  refer to a \ccong~ $\Cong$ as the
triplet $$ {\Cong} :=  (\Cong, \Tcl{\Cong}, \Gcl{\Cong}).$$
   By  definition, $(a,b) \in \Tcl{\Cong}$ implies $(a,a) \in \Tcl{\Cong} $ and $(b,b) \in \Tcl{\Cong}$. The same holds for $\Gcl{\Cong}$, where it may contain a  pair of tangibles $(a,b) \in \tT \times \tT$, if $a \equiv a^\nu $.

Set-theoretically,  $\Tcl{\Cong}$ is not necessarily the complement of $\Gcl{\Cong}$ in $\Cong$, since  $\Cong$ may elements which are  neither in $\Tcl{\Cong}$ nor in $\Gcl {\Cong}$.
This  means that $\cTcl{\Cong}$ and $\cGcl{\Cong}$ need not be disjoint; this happens only in certain cases.  For example, when $R$ a supertropical domain (Definition ~\ref{def:supertropical}) we have ${\Cong} =  \Gcl {\Cong} \cup \Tcl {\Cong}.$

%(When $R$ has a zero element $\rzero$ we adjoin the special element $\zero(\Cong) := \{ (a,\rzero) \ds: a  \in R \} $.)
%
  \begin{rem}\label{rem:cong.ideal} As $\Cong$ respects the multiplication of the carrier  \nusmr\ $R$,
 $a \in \iGcl(\Cong)$ implies that  $ab \in \iGcl(\Cong) $ for any $b \in R$, since  the product of any element in $R$ with a ghost element is ghost. Hence,  the ghost projection $\iGcl(\Cong)$ of $\Cong$ is a semiring  ideal of ~$R$ (Definition \ref{def:ideal.smr}) --  a ghost absorbing subset containing $\tG$.

 If $\iGcl(\Cong)$ contains a unit $u \in \RX$, then $\Cong $ is a ghost congruence.
 Indeed, $\one = u^\inv u \in \iGcl(\Cong)$, and thus  $b = b \one \in  \iGcl(\Cong)$ for every  $b \in R$. In other words,    $\iGcl(\Cong) = R$, which is the case of
 Remark~ \ref{rem:tng.cluster}.(iii).
  \end{rem}

Categorically, the projections  $\iTcl(\udscr)$ and $\iGcl(\udscr)$ are viewed as maps.

\begin{rem}\label{rem:classfor}The subsets
 $\iTcl(\Cong) $ and $\iGcl(\Cong)$ of $R$ define class forgetful maps $\iTcl(\udscr): \Cng(R) \To \tT$ and $\iGcl(\udscr): \Cng(R) \To R$ that preserve only clusters' decomposition. That is, the property of being tangible or ghost under the canonical surjection $\pi_\Cong: R \Onto R/\Cong$. This data is fully recorded by  restricting  classes to subsets of the diagonal of $\Cong$, where the diagram
$$\xymatrix{
R \ar@{->>}[rrd]_{\pi_\Cong } \ar@{^{(}->}[rr]^{\diag} && \Cong \ar[d]^{\psi} \\
 && R/\Cong \\
}
$$
respects this clustering.
%Therefore, $\iTcl(\udscr)$ and $\iGcl(\udscr)$ induce maps $R \To R$.
\end{rem}

In oppose to \eqref{eq:inclusion.gcls}, an inclusion of \ccong s implies an inverse inclusion of tangible clusters:
\begin{equation}\label{eq:inclusion.tcls}
  \Cong_1 \subset \Cong_2 \Dir \Tcl{\Cong_1} \supset \Tcl{\Cong_2}.
\end{equation}

\begin{rem}\label{rem:intersection.clusters}
The clusters  $\Tcl{\Cong_i}$ and $\Gcl{\Cong_j}$ admit the following relations for intersection:
$$
  \Tcl{\Cong_1 \cap \Cong_2} \supseteq  \Tcl{\Cong_1} \cap \Tcl{\Cong_2}, \qquad
  \Gcl{\Cong_1 \cap \Cong_2} = \Gcl{\Cong_1} \cap \Gcl{\Cong_2}.$$
Intersections of clusters of different congruences  need not be empty, and include the following cases:
\begin{enumerate}\eroman
  \item $\Tcl{\Cong_1} \cap \Gcl{\Cong_2} \subseteq \Tcl{\Cong_1};$
  \item $\Tcl{\Cong_1} \cap \cTcl{\Cong_2} \subset \Tcl{\Cong_1};$
  \item $\Tcl{\Cong_1} \cap \Gcl{\Cong_2} \nsubseteq \Gcl{\Cong_2},$ unless it is empty;
  \item $\cTcl{\Cong_1} \cap \cTcl{\Cong_2}$ can be in $\Tcl{\Cong_1}$, $\Gcl{\Cong_1}$, or in the complement of their union.
\end{enumerate}
    The intersection of the projections $\iTcl(\udscr)$ and $\iGcl(\udscr)$ are induced by these cases.

\end{rem}

The ghostification of subsets of \numon s, i.e.,  identifying subsets as ghosts,  extends   naturally  to \nusmr s.

\begin{rem}\label{rem:ghostification.smr}
  The underlying   equivalence $\cng$ of the ghostifiying congruence $\mfG_E$ of a subset $E \subseteq R$  is formulated as
  \begin{equation}\label{eq:ghost.smr}
  a + b \cng a + b^\nu, \quad a  b \cng (a  b)^\nu,  \qquad \text{for all   } b \in E,
\end{equation}
cf. \eqref{eq:G.E} and \eqref{eq:ghost.addt}.
Indeed,
 Lemma \ref{lem:ghostification} gives the additive relation;  the multiplicative relation is obtained from the \nusmr\ multiplication and the role of the ghost ideal, since $a^\nu = e a$ for every $a \in R$.

 From relations \eqref{eq:ghost.smr} it follows  that, for any  $a \in R$,
\begin{equation}\label{eq:a.in.igcl}
a \in \iGcl(\gCong_E) \Dir a = q + \sum c_i  e_i  \quad \text{ for some } e_i \in E,\  c_i \in R, \ q \in \tG,
\end{equation}
 since  $a$ is ghostified as a consequence of  ghostifiying  $E$. Indeed, assume that $a = d + q + \sum c_i e_i $  for some \reduced\ term  $d \notin  \iGcl(\gCong_E)$,   then
 $$a^\nu = d^\nu + q + \sum  c_i e_i^\nu  =  d + q + \sum c_i e_i^\nu  ,$$
 implying by Axiom \emph{\NMc}\ in Definition \ref{def:numonoid} that $a = d^\nu  + q + \sum  c_i e_i $.
  %since otherwise $d$ is inessential.
  Thus, $a$ admits \eqref{eq:a.in.igcl}, since  $d^\nu + q \in \tG$. (See also Lemma \ref{lem:gker-smon}.)
\end{rem}
\noindent  In this view, since $\mfG_E$ is a congruence,  the  passage to  quotient  structures by subsets $E \subset R$ is natural. A ghostifying  congruence $\gCong_{E}$  with $E = \{ a \}$, $a \notin \tG$,  is called a \textbf{principal congruence}.
\pSkip

One sees that ghostifiying a subset $E$ of a \nusmr\ makes $E$ a ``ghost absorbing'' subset, in analogy to ideals generated by subsets in ring theory.
Note that the ghsotifcation of a \expr\ sum ~ $x$ (Definition ~\ref{def:reduced.sum}) does not necessarily ghsotifies sums ~ $y \lsset x$, cf. \eqref{eq:lorder}.
Using \eqref{eq:a.in.igcl}, the ghostification of~ $E$ by \eqref{eq:ghost.smr} determines a ``ghost dependence'',   in the sense  that any combination of elements in $E$ becomes ghost.
Although this dependence  is weaker than spanning, it often suffices to simulate the role of the latter.

\begin{lem}\label{lem:cong.alg.oper}  Given $a,b \in R$,  for $\GG(a)$ as defined in  \eqref{eq:numon.var} we have:
\begin{enumerate}\eroman
  \item $\GG(a) \cap \GG(b) \subseteq \GG(a+b) $;
  \item $ \GG(a) \cap \GG(b) \subseteq \GG(ab)$;
  \item $\GG(a) \cup \GG(b) \subseteq \GG(ab)$.
\end{enumerate}

\end{lem}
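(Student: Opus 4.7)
The plan is to verify each inclusion by unpacking the defining condition $\Cong \in \GG(x)$, namely $x \cng x^\nu$, and then applying the fact that $\Cong$ respects the semiring operations, together with the identity $a^\nu = ea$ and the compatibility of $\nu$ with addition and multiplication.

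First, for (i), I would take $\Cong \in \GG(a) \cap \GG(b)$, so that $a \cng a^\nu$ and $b \cng b^\nu$ simultaneously. Adding these equivalences (permissible since $\Cong$ is a congruence on the additive \numon\ structure) gives $a + b \cng a^\nu + b^\nu$. By \eqref{eq:ghost.sum}, we have $a^\nu + b^\nu = (a+b)^\nu$, whence $a + b \cng (a+b)^\nu$, i.e., $a + b \in \iGcl(\Cong)$, showing $\Cong \in \GG(a+b)$.

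Next, for (iii), suppose $\Cong \in \GG(a)$, so $a \cng a^\nu$. Multiplying both sides by $b$ (again legitimate because $\Cong$ respects multiplication) yields $ab \cng a^\nu b = (ab)^\nu$, so $ab \in \iGcl(\Cong)$ and $\Cong \in \GG(ab)$. Symmetrically, $\Cong \in \GG(b)$ forces $\Cong \in \GG(ab)$. Hence $\GG(a) \cup \GG(b) \subseteq \GG(ab)$.

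Finally, (ii) is an immediate consequence: since $\GG(a) \cap \GG(b) \subseteq \GG(a) \subseteq \GG(ab)$ by (iii), we also have $\GG(a) \cap \GG(b) \subseteq \GG(ab)$. There is no real obstacle here; the only subtle point worth flagging is that the identity $a^\nu + b^\nu = (a+b)^\nu$ required in (i) uses that $\tG$ is a semiring ideal and that $e(a+b) = ea + eb$, which is exactly \eqref{eq:ghost.sum}.
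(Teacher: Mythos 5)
Your proof is correct and takes essentially the same approach as the paper, which proves (i)--(ii) by citing the fact (Remark~\ref{rem:cong.ideal}) that $\iGcl(\Cong)$ is a semiring ideal closed under addition and multiplication by elements of $R$; your calculation with $a \cng a^\nu$ and $b \cng b^\nu$ simply unpacks that remark inline, and deriving (ii) from (iii) rather than proving (i)--(ii) jointly is a harmless rearrangement.
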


\begin{proof} (i)-(ii): $\Cong \in \GG(a) \cap \GG(b)$ means that both $a \in \iGcl(\Cong)$ and
$b \in \iGcl(\Cong)$, thus $a+b \in \iGcl(\Cong)$ and  $ab \in \iGcl(\Cong)$, since $\iGcl(\Cong)$ is an ideal of $R$ by Remark \ref{rem:cong.ideal}.

\pSkip
  (iii): $\Cong \in \GG(a)$ means that $a \in \iGcl(\Cong)$ and thus also $ab \in \iGcl(\Cong)$, since $\iGcl(\Cong)$ is an ideal of $R$. By symmetry, the same holds for $\Cong \in \GG(b)$.
\end{proof}

\begin{cor}\label{cor:cong.alg.oper} Let $\tA = (a_i)_{i \in I}$ and
$\tB = (b_j)_{j \in J}$ be families of elements $a_i, b_j \in R$.
Suppose  $d = \sum a_i b_j$ is  a finite sum, then $\mfG_{\{ d\}} \subseteq \mfG_\tA \cap \mfG_\tB,$
cf. \eqref{eq:G.E}.
\end{cor}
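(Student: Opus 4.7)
My plan is to reduce the claim to showing that the single element $d$ is ghostified by the congruence $\mfG_\tA \cap \mfG_\tB$, i.e., that $d \in \iGcl(\mfG_\tA \cap \mfG_\tB)$. This is exactly the statement that $\mfG_\tA \cap \mfG_\tB \in \GG(\{d\})$ (cf. \eqref{eq:numon.var}), and then minimality of $\mfG_{\{d\}}$ as the intersection of all members of $\GG(\{d\})$ (cf. \eqref{eq:G.E}) gives the desired inclusion $\mfG_{\{d\}} \subseteq \mfG_\tA \cap \mfG_\tB$ immediately.

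To verify that $d \in \iGcl(\mfG_\tA \cap \mfG_\tB)$, I would first record from Remark \ref{rem:intersection.clusters} the identity
$$\iGcl(\mfG_\tA \cap \mfG_\tB) \ds= \iGcl(\mfG_\tA) \cap \iGcl(\mfG_\tB),$$
which follows from $\Gcl{\Cong_1 \cap \Cong_2} = \Gcl{\Cong_1} \cap \Gcl{\Cong_2}$ (applied to the diagonal). Thus it suffices to show that $d$ lies in each of $\iGcl(\mfG_\tA)$ and $\iGcl(\mfG_\tB)$ separately, and then intersect.

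For $\iGcl(\mfG_\tA)$: by definition  $\mfG_\tA$ ghostifies every $a_i \in \tA$, so each $a_i \in \iGcl(\mfG_\tA)$. Since $\iGcl(\mfG_\tA)$ is a semiring ideal of $R$ (Remark \ref{rem:cong.ideal}), each product $a_i b_j$ belongs to $\iGcl(\mfG_\tA)$; this is exactly the content of Lemma \ref{lem:cong.alg.oper}(iii), which says $\GG(a_i) \subseteq \GG(a_i b_j)$. Since $\iGcl(\mfG_\tA)$ is closed under addition (again as an ideal), the finite sum $d = \sum a_i b_j$ also lies in $\iGcl(\mfG_\tA)$. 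Symmetrically, using $\GG(b_j) \subseteq \GG(a_i b_j)$ and additive closure, $d \in \iGcl(\mfG_\tB)$.

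Combining these two inclusions with the cluster identity above yields $d \in \iGcl(\mfG_\tA \cap \mfG_\tB)$, so $\mfG_\tA \cap \mfG_\tB \in \GG(\{d\})$, and therefore $\mfG_{\{d\}} \subseteq \mfG_\tA \cap \mfG_\tB$ by the defining minimality in \eqref{eq:G.E}. I do not anticipate a serious obstacle: the only subtle point is to avoid confusing $\mfG_{\{d\}}$ with a congruence generated by $d$ as a single element versus the intersection formulation, and to use Remark \ref{rem:intersection.clusters} rather than claiming $\iGcl$ preserves intersections in general (it does on the relevant diagonal piece, which is all that is needed here).
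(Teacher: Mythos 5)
Your proof is correct and follows essentially the same path as the paper's one-line proof, just phrased via ghost projections $\iGcl(\cdot)$ instead of the dual sets $\GG(\cdot)$: the paper cites Lemma \ref{lem:cong.alg.oper} to conclude $\GG(\tA) \subseteq \GG(\{d\})$ and $\GG(\tB) \subseteq \GG(\{d\})$, equivalently $d \in \iGcl(\mfG_\tA)$ and $d \in \iGcl(\mfG_\tB)$, which is exactly what you establish via Remark \ref{rem:cong.ideal}. Your explicit use of the ghost-cluster intersection identity $\iGcl(\mfG_\tA \cap \mfG_\tB) = \iGcl(\mfG_\tA) \cap \iGcl(\mfG_\tB)$ to close the argument is a welcome clarification of the paper's terser final step.
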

\begin{proof} Lemma \ref{lem:cong.alg.oper} gives
  $ \GG(\tA) \cap  \GG(\tB) \subseteq \GG(\{ d\}) $, implying that $\mfG_{\{ d\}} \subseteq \mfG_\tA \cap \mfG_\tB. $
\end{proof}

%Note that by definition  from \eqref{eq:lorder} and \eqref{eq:ghost.sum} for any $y \lsset x$ in $R$ we obtain  $\gCong_{\{ x \} } \subset \gCong_{\{ y\}}$.

\begin{lemma}\label{lem:gker-ideal}
Let $\mfa \lhd  R$ be a  \nusmr\ ideal (Definition \ref{def:ideal.smr}) containing  $\tG$, and let $\vrp: \tM \To R \qq \mfa$ be the canonical \hom.
Then $\gker(\vrp) = \mfa$.
\end{lemma}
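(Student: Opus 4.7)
The plan is to reduce this to the already-established structural description of the ghost projection of a ghostifying congruence, then exploit the fact that $\mfa$ is closed under multiplication by arbitrary elements of $R$ (being an ideal) and contains $\tG$.

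First I would unpack the definitions: by construction $R\qq\mfa = R/\mfG_\mfa$, and the ghost submonoid of the quotient is the image of $\iGcl(\mfG_\mfa)$ under $\pi_{\mfG_\mfa}$. Hence $\gker(\vrp) = \iGcl(\mfG_\mfa)$, and the lemma reduces to proving the set equality $\iGcl(\mfG_\mfa) = \mfa$.

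The inclusion $\mfa \subseteq \iGcl(\mfG_\mfa)$ is immediate from Lemma \ref{lem:ghostification}: every $a \in \mfa$ satisfies $a \gcng a^\nu$ in $\mfG_\mfa$, so $a \in \iGcl(\mfG_\mfa)$. For the reverse inclusion, I would invoke the structural description from Remark \ref{rem:ghostification.smr}, specifically equation \eqref{eq:a.in.igcl}, which gives that any $a \in \iGcl(\mfG_\mfa)$ can be written in the form
\[
  a \;=\; q + \sum_i c_i e_i, \qquad e_i \in \mfa,\ c_i \in R,\ q \in \tG.
\]
Now I would use the ideal hypotheses: since $\mfa \lhd R$ is a \nusmr\ ideal, each product $c_i e_i$ lies in $\mfa$; since $\tG \subseteq \mfa$, also $q \in \mfa$; and since $\mfa$ is (additively) closed, the finite sum $q + \sum c_i e_i$ lies in $\mfa$. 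Therefore $a \in \mfa$, giving $\iGcl(\mfG_\mfa) \subseteq \mfa$.

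There is no genuine obstacle here—this is the semiring analogue of Lemma \ref{lem:gker-smon}, and the proof is a direct verification once \eqref{eq:a.in.igcl} is in hand. The only point that requires mild care is making sure that the decomposition provided by Remark \ref{rem:ghostification.smr} is applicable verbatim (it is, because $\mfa$ is in particular a subset of $R$), and that both the ghost summand $q$ and the ``coefficient'' products $c_i e_i$ land back inside $\mfa$, which is exactly what the two assumptions $\tG \subseteq \mfa$ and ``$\mfa$ absorbs multiplication by $R$'' are used for.
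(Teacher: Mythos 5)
Your proof is correct and follows essentially the same route as the paper: the paper simply cites the additive-monoid version (Lemma \ref{lem:gker-smon.0}) after observing that an ideal containing $\tG$ is in particular an additive \nusmon\ containing $\tG$, whereas you rerun the same decomposition at the semiring level via \eqref{eq:a.in.igcl} and absorb the coefficients $c_i$ using the ideal property. Both arguments come down to the identity $\iGcl(\gCong_\mfa) = \mfa$, established by the same structural decomposition of elements of the ghost projection.
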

\begin{proof} In \numon\ view,  $\gker(\vrp) = \mfa$, by   Lemma \ref{lem:gker-smon.0}, while   $\gker(\vrp)$ is a \nusmr\ ideal by \S\ref{ssec:smr.hom}.
\end{proof}

\subsection{\qcong s and \lcong s} \sSkip
Not all congruences $\Cong$  on a \nusmr \ $R$ possess  \qhom s (Definition \ref{eq:nu.smr.hom}); furthermore, a quotient $R/\Cong$  does not necessarily preserve a tangible component of $R$. To retain the category $\NSMR$ of \nusmr s,  we restrict to congruences which endow $R/\Cong$ with \nusmr\ structure and are also applicable for localization. Moreover,
 to obtain the desired supertropical analogy of ideals in classical commutative algebra, together with their correspondences to varieties,  such congruences should coincide  with the notion of ghostification. Let us first address some guiding pathological cases.

\begin{example}\label{exmp:cong}
Let $\Cong$ be a congruence on a \nusmr\ $R$. %, with the canonical surjection $\pi_\Cong : R \To R/\Cong$.
\begin{enumerate}\eroman
                 \item   If $a + b = u $ is a unit and $a \cng b$ in $\Cong$, then $u \cng u ^\nu$ by         Remark \ref{rem:cong.order}.(iv), implying by Remark \ref{rem:cong.ideal} that  $\Cong$ is a \gstcong. Hence, $R/ \Cong$ has no tangible component.

% \item If $a \cng b$ for $a + b = a$ and $a$ is a unit, then  $a \cng a ^\nu$ by         Remark \ref{rem:cong.order}.(iii), and $\Cong$ is a ghost congruence by Remark \ref{rem:cong.ideal}.
\item
  If $a \cng b$ for any pair of tangible elements $a,b$ in $R$, and $\tG$ is ordered,  then for $a \nug b$ we have
   $$ a = a+ b \cng a + a = a^\nu, $$
   implying that $\Tcl{\Cong}$ is empty, cf. Remark \ref{rem:cong.order}.(iii).
\item
   Similarly, if $ a \cng u$ for some unit $u \in \RX$, where $a$ is non-tangible, then $R/ \Cong $ does not necessarily have tangible elements and $\Tcl{\Cong}$ could be empty. It might also not be a \nusmr, since $[a]$ must be unit, but is non-tangible.

\end{enumerate}
Consequently, in all these cases $R/\Cong$ need not be a \nusmr\ and the  canonical surjection $\pi_\Cong : R \Onto R/\Cong$ is  not necessarily a \qhom\ (Definition \ref{def:nusmr.hom}), whereas  $\tcor(\pi_\Cong) = \emptyset$.
\end{example}

To avoid the above drawbacks, we first distinguish those elements in $R$ which must be preserved as tangible in the passage to a quotient $R/ \Cong$.

\begin{defn}\label{def:t.strict} An element $a \in \tT$ is called \textbf{tangible  \ualt}, written \textbf{\tualt}, if
\begin{equation}\label{eq:t.strict}
a \cng b \text{ for some } b \notin \tT  \dss \Rightarrow  \RX \nsubseteq \iTcl(\Cong),
\end{equation}
in all $\Cong \in \Cng(R)$. We denote the set of all \tualt\ elements by $\SST$.
\end{defn}
\noindent Clearly, $\one \in \SST$ and thus $\SST$ is nonempty. Also, $\SST \subset \tTPS$, since congruences  respect the \nusmr\ multiplication. However, \tualt\ elements need not be units, while $  \RX  \subseteq \SST $.
%\pSkip

For instance, every tangible element in a \dfnt\ \nusmf\ $F$ is \tualt, as well as in the polynomial \nusmr\ $F[\Lm]$. For example, consider the polynomial \nusmr\ $\Trop[\lm_1,  \lm_2]$  from Example ~ \ref{examp:varieties};  its \tualt\ elements are the tangible elements of $\Trop$. In this case, each \tualt\ element is a unit. On the other hand, $(0, -1) \in \Trop^{(2)}$ is \tualt, since  $(0, -1) =  (-1, 0) = (1,1) = \one_{\Trop^{(2)}}$,  but is not a unit.

\pSkip
%\begin{comm*} For any unit $u \in \RX$,  $[u]$ is invertible in $R/ \Cong$ with inverse $[u^\inv] = [u]^\inv$ by Remark \ref{eq:cinvghost}.(i).  Example \ref{exmp:cong}.(iii).
% \end{comm*}

To ensure that a quotient semiring $R/\Cong$ is a proper \nusmr\ (Definition \ref{def:nusemiring}) and that the canonical surjection $\pi_\Cong : R \Onto R/\Cong$ is a (unital) \qhom\ of \nusmr s with a nonempty \tcore,  we  exclude all the \gstcong s and  restrict our intention to the following types of  congruences. As will be seen later, the characteristic of these particular congruences,  concerning localization as well,  is curial for our forthcoming results.

\begin{defn}\label{def:nucong}
A congruence $\Cong$  on a \nusmr\ $R$ is a \textbf{\qcong } (abbreviation for  quotienting congruence), if $\RX  \subseteq  \iTcl(\Cong)$. Hence,  its  tangible projection  $ \iTcl(\Cong)$  contains a nonempty tangible submonoid  $\iPcl(\Cong) \supseteq \RX$.    %$\Pcl{\Cong}$ denotes the restriction of the tangible cluster $\Tcl{\Cong}$ to $\iPcl(\Cong) \times \iPcl(\Cong)$.
 The set of  all \qcong s on $R$ is denoted by $\QCng(R)$.

A \qcong\ $\Cong$ is an \textbf{\lcong } (abbreviation for localizing congruence), if $ \iTcl(\Cong)$ by itself is a multiplicative tangible submonoid of $\tT$, written $ \iTcl(\Cong) = \iPcl(\Cong)$,  and
hence
\begin{equation}\label{eq:q.cong}
  ab \notin \iTcl(\Cong) \Dir a \notin \iTcl(\Cong) \text{ or } b \notin \iTcl(\Cong).
\end{equation}
The set of  all \lcong s  is denoted by ~$\LCng(R)$.

\end{defn}
\noindent By this definition, $\iTcl(\Cong) \subseteq \tTPS$ for every \lcong\ $\Cong$, while
$\SST \subseteq \iTcl(\Cong)$ for any \qcong\ $\Cong$, since otherwise $\RX \nsubseteq \iTcl(\Cong).$
%$\one \in \iTcl(\Cong)$ implies $\SST \subseteq \iTcl(\Cong)$.
% $\iTcl(\Cong) \cap (\tT \sm \tTPS) = \emptyset$.
\pSkip

\qcong s and \lcong s are defined solely  by the structure of their tangible projections $\iTcl(\udscr)$, cf. \eqref{eq:invghost}.  They are proper congruences (Definition
\ref{def:cong.types}) whose tangible clusters and ghost clusters are disjoint and nonempty.
For example, the trivial congruence $\diag(R)$  on a  \nusmr\ $R$  is a \qcong. $\diag(R)$ is an \lcong, if $R$ is \tcls, since then $\iTcl(\diag) = \tT$ is a multiplicative monoid.

\begin{lemma}\label{lem:qsmr} Let $\Cong$  be a \qcong\ on a \nusmr\ $R$, then  $R/\Cong$ is a \nusmr\ (Definition \ref{def:nusemiring}).
\end{lemma}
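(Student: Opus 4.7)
The plan is to transfer the \nusmr\ data from $R$ to $R/\Cong$ via the canonical surjection $\pi_\Cong:R \Onto R/\Cong$ and then check, piece by piece, every clause of Definition \ref{def:nusemiring}. The semiring structure is the standard universal-algebra quotient (Remark \ref{rem:cong1}): setting $[a]+[b]:=[a+b]$ and $[a][b]:=[ab]$ is well defined because $\Cong$ respects both operations, and the semiring axioms descend from $R$. For the additive \numon\ structure I would take $\tG_{R/\Cong}:=\iGcl(\Cong)/\Cong$ and define the ghost map $[\nu]([a]):=[a^\nu]$. This map is well defined by Remark~\ref{rem:cong.unit}, idempotent because $\nu$ is, and projects onto $\tG_{R/\Cong}$ because $\iGcl(\Cong)$ is a semiring ideal (Remark~\ref{rem:cong.ideal}) closed under $\cng$ (Lemma~\ref{lem:ghs.eqv}). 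Axioms \NMa--\NMc\ then descend pointwise from $R$ after choosing representatives.

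Next, I would specify the tangible data by $\tT_{R/\Cong}:=\iTcl(\Cong)/\Cong$ and $\tTPS_{R/\Cong}:=\iPcl(\Cong)/\Cong$. Disjointness $\tT_{R/\Cong}\cap \tG_{R/\Cong}=\emptyset$ follows at once: if some $[a]$ were in both, then a tangible $a\in \iTcl(\Cong)$ would satisfy $a\cng a'$ for a ghost $a'\in \iGcl(\Cong)$, forcing $a\cng a^\nu$ and placing $a$ in $\iGcl(\Cong)$, contradicting $\tT\subseteq R\sm \tG$. The inclusion $\tTPS_{R/\Cong}\subseteq \tT_{R/\Cong}$ is immediate from $\iPcl(\Cong)\subseteq \iTcl(\Cong)$. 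The nontrivial inclusion to establish is $(R/\Cong)^\times \subseteq \tTPS_{R/\Cong}$: for $u\in \RX$ the image $[u]$ is a unit and lies in $\tTPS_{R/\Cong}$ because $\RX\subseteq \iPcl(\Cong)$, but one must also account for possibly new units of $R/\Cong$ not lifted from $\RX$. Here the \qcong\ hypothesis is used critically: if $[a][b]=[\one]$ then $ab\cng \one\in \iPcl(\Cong)$, and since $\iTcl(\Cong)$ is closed under $\cng$ (being a union of $\Cong$-classes by construction), the class $[ab]$ sits in $\iPcl(\Cong)/\Cong$; using that $\iPcl(\Cong)$ is a tangible submonoid one then locates representatives of $[a]$ and $[b]$ in $\iPcl(\Cong)$ and concludes $[a]\in \tTPS_{R/\Cong}$.

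Axiom \PSRa\ is now immediate: if $[a]\in \tTPS_{R/\Cong}$ has representative $a\in \iPcl(\Cong)$, then $a^n\in \iPcl(\Cong)$ because $\iPcl(\Cong)$ is a monoid, hence $[a]^n=[a^n]\in \tTPS_{R/\Cong}$. For \PSRb, suppose $[a]+[b]\in \tT_{R/\Cong}$; then $a+b\cng c$ for some $c\in \iTcl(\Cong)\subseteq \tT$, so $a+b\in \iTcl(\Cong)$, and one applies \PSRb\ in $R$ to the representative $a+b\in \tT$ to conclude that $a+b^\nu$ is not ghost unless $a+b$ is \uexpr, which then transfers to $[a]+[b]^{[\nu]}$ in $R/\Cong$. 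The single delicate point in the whole argument is the unit inclusion: one must rule out that the quotient acquires invertible classes with no tangibly \prs\ representative, and this is precisely what the hypothesis $\RX \subseteq \iTcl(\Cong)$ together with the monoid structure of $\iPcl(\Cong)$ guarantees.
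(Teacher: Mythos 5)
Your overall structure follows the paper: the semiring and \numon\ structures descend to $R/\Cong$, the tangible and ghost parts are disjoint, and Axiom \PSRa\ is straightforward. (Your choice $\tTPS_{R/\Cong} := \iPcl(\Cong)/\Cong$ is slightly smaller than the paper's, which declares the maximal admissible \tprsset, namely all $[a] \in \iTcl(\Cong)/\Tcl{\Cong}$ with $[a]^n \in \iTcl(\Cong)/\Tcl{\Cong}$ for every $n$, making \PSRa\ automatic; since $\iPcl(\Cong)$ is a monoid your choice also works, so this is only stylistic.) Two steps, however, do not go through as written.

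The verification of \PSRb\ is wrong. You apply \PSRb\ in $R$ to obtain $a+b^\nu \notin \tG$ and assert this ``transfers'' to $R/\Cong$. But the ghost ideal of $R/\Cong$ is $\iGcl(\Cong)/\Gcl{\Cong}$: a class $[c]$ is ghost in $R/\Cong$ precisely when $c \in \iGcl(\Cong)$, and $\iGcl(\Cong)$ is typically strictly larger than $\tGz$ — a \qcong\ may ghostify elements that are not ghosts of $R$. Hence $a+b^\nu \notin \tG$ gives no control over whether $a+b^\nu \in \iGcl(\Cong)$, and the required conclusion that $[a]+[b]^{[\nu]}$ is not ghost in $R/\Cong$ is not obtained. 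The paper argues in the contrapositive instead: assume $[b]$ is ghost in $R/\Cong$, i.e.\ $b \cng b^\nu$ by Lemma \ref{lem:ghs.eqv}; then $a^\nu + b \cng a^\nu + b^\nu \in \tG$, so $a^\nu + b \in \iGcl(\Cong)$, whence $[a+b]$ is not tangible. You should rework \PSRb\ along those lines, detecting ghostified classes via Lemma \ref{lem:ghs.eqv} rather than trying to descend the non-membership $a+b^\nu\notin\tG$ from $R$.

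Your treatment of the unit inclusion $(R/\Cong)^\times \subseteq \tTPS_{R/\Cong}$ also contains an unjustified step. From $ab \cng \one$ you correctly deduce $[ab]=[\one]$ has a representative in $\iPcl(\Cong)$, but the assertion that ``one then locates representatives of $[a]$ and $[b]$ in $\iPcl(\Cong)$'' is not argued, and nothing in the \qcong\ hypothesis or the monoid structure of $\iPcl(\Cong)$ forces a $\Cong$-divisor of an element of $\iPcl(\Cong)$ to again lie in $\iPcl(\Cong)$ (or even in $\iTcl(\Cong)$). The paper's proof of this lemma simply verifies that $\pi_\Cong(\RX)$ lands in $\iTcl(\Cong)/\Tcl{\Cong}$ and is \tprs\ there; it does not, within this proof, rule out additional units of $R/\Cong$. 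If you want to settle the point as part of the lemma you need a genuine argument; otherwise drop the ``locates representatives'' sentence and match the paper's (more modest) check on the images of $\RX$.
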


\begin{proof} % Let $\olR = R/ \Cong$.
The quotient $R/\Cong$  is a \numon\ by \S\ref{ssec:numon}. $\iTcl(\Cong)/\Tcl {\Cong}$ is the \tsset\ of $R/\Cong$, containing ~$[u]$, since $u \in \iTcl(\Cong)$, for every $u \in \RX$, and thus each  $[u]$ is \tprs\ in $R/\Cong$. The \tprsset\ of $R/ \Cong$ is formally defined to be all $[a] \in \iTcl(\Cong)/\Tcl{\Cong}$ such that $[a]^n \in \iTcl(\Cong)/\Tcl{\Cong}$ for every $n$, so that   Axiom \PSRa \ holds.

Suppose $[a] + [b]$ is tangible,  where $[b]$ is ghost in $R/\Cong$,  that is $b \cng b^\nu$ by Lemma \ref{lem:ghs.eqv}. Then $a^\nu + b \cng a^\nu + b^\nu$ is ghost, and thus  $[a + b]$ is not tangible -- a contradiction. Hence   Axiom \PSRb\ holds.
\end{proof}

\begin{cor}\label{cor:lcong.vs.tcls}
The quotient   $R/\Cong$ of a \nusmr\ $R$ by an \lcong\ $\Cong$ is a \tclsnusmr.
\end{cor}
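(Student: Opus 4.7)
The plan is to build directly on Lemma~\ref{lem:qsmr}, which already produces the \nusmr\ structure on $R/\Cong$ for any \qcong, and then to exploit the extra hypothesis that distinguishes an \lcong\ from a general \qcong: namely that $\iTcl(\Cong)$ is itself a multiplicative submonoid of $\tT$, with $\iTcl(\Cong)=\iPcl(\Cong)$. Recall from Definition~\ref{def:nusemiring} that being \tcls\ means that the \tsset\ is closed under multiplication, and the \tsset\ of the quotient is exactly $\iTcl(\Cong)/\Tcl{\Cong}$ by the construction preceding Lemma~\ref{lem:qsmr}.

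First, I would invoke Lemma~\ref{lem:qsmr} to conclude that $R/\Cong$ is a \nusmr, so all the defining axioms (including \PSRa\ and \PSRb) are in place. It then remains to verify that the \tsset\ of $R/\Cong$ forms a multiplicative monoid. Take two tangible classes $[a],[b]$ in $R/\Cong$. By the description of the quotient \tsset, this means $a,b\in\iTcl(\Cong)$. Since $\Cong$ is an \lcong, $\iTcl(\Cong)$ is a multiplicative submonoid of $\tT$ (Definition~\ref{def:nucong}), so $ab\in\iTcl(\Cong)$, and hence $[a][b]=[ab]$ is again tangible in $R/\Cong$.

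Finally, the identity $[\one_R]$ lies in the \tsset\ of $R/\Cong$ because $\one_R\in\RX\subseteq\iTcl(\Cong)$ by the \qcong\ hypothesis, so the \tsset\ is a genuine submonoid with identity. Thus the \tsset\ of $R/\Cong$ is a tangible monoid, which is precisely the \tcls\ condition. There is no real obstacle here: the corollary is essentially a restatement of the \lcong\ hypothesis in the quotient, and the only thing to check carefully is the matching between $\iTcl(\Cong)$ (a subset of $R$) and the \tsset\ of $R/\Cong$ under the canonical surjection $\pi_\Cong$, which is immediate from the construction in Lemma~\ref{lem:qsmr} together with the implication~\eqref{eq:a.in.tcl}.
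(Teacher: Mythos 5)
Your proposal is correct and follows essentially the same route as the paper's one-line proof: invoke Lemma~\ref{lem:qsmr} for the \nusmr\ structure, then use the \lcong\ hypothesis that $\iTcl(\Cong)$ is a multiplicative monoid to see that the \tsset\ $\iTcl(\Cong)/\Tcl{\Cong}$ of the quotient is closed under multiplication. You simply spell out the monoid verification that the paper leaves implicit.
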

\begin{proof}
  Indeed, $R/\Cong$ is a \nusmr\ by Lemma \ref{lem:qsmr}, where $\iTcl(\Cong)/\Tcl{\Cong}$ is its \tprsmon.
\end{proof}

It follows from Definition \ref{def:nucong}  that in a \qcong \ ~$\Cong$  none of the units of $R$ is congruent to a non-tangible, especially not to a ghost or $\zero$. Also, we have the following properties.

\begin{rem}\label{rem:tcng.units}  For  a \qcong\ $\Cong$,   Remarks  \ref{rem:tng.cluster} and \ref{rem:cong.ideal} can be strengthened.
\begin{enumerate}
 \eroman
%  \item  $\RX \subseteq \iPcl(\Cong) \subseteq \iTcl(\Cong)$.  Indeed,
% suppose that $u$ is a unit such that $u \not \in
%\iTcl(\Cong)$, then $\one = u u^\inv \notin \iTcl(\Cong)$, and thus $b = \one b \notin \iTcl(\Cong)$ for every $b\in R$ -- contradicting the definition of \qcong s. Thus $u \in \iTcl(\Cong)$. The same argument shows that  $u$ must be also in  $\iPcl(\Cong)$.

\item If $u$ is a unit, then $[u]$ is a unit of $R/ \Cong$, by Remark \ref{rem:tng.cluster}.(ii),  and  thus
$\pi_\Cong(\RX) = (R/\Cong)^\times$, implying that $\pi_\Cong: R \Onto R/\Cong$ is a local \hom\ (Definition \ref{def:mon.hom}).

\item % Parts (i) and (ii)  show also that a
Any  \qcong \ is a proper congruence having at least three equivalence classes (Definition \ref{def:cong.types}): a tangible class, a ghost class, and the zero class.

\item If $ u = a +b$, then  $a$ and  $b$ cannot  be ghostified simultaneously  by a \qcong\ $\Cong$, since otherwise $u$ would be congruent to a ghost, implying that  $\iTcl(\Cong)  = \emptyset$, cf. Remark \ref{rem:tng.cluster}.(v).

\item From Remark \ref{rem:tng.cluster}.(iii) we learn that if a subset $E \subset R$ contains a unit, then the  ghostifying  congruence $\gCong_E$, cf.  \eqref{eq:G.E},  is not a \qcong.

\item $\Cong$ is not  a \qcong\ whenever $\ciTcl(\Cong)$ contains a \tualt\ element.
%
%\item From (iv) we learn that the ghostification congruence $\gCong_E$, cf.  \eqref{eq:G.E},  of a subset $E$ which contains a unit is not a \qcong.
 \end{enumerate}
\end{rem}

We observe that for  \qcong s the pathologies in Example \ref{exmp:cong} are dismissed, where
 the compatibility of \qcong s  with \qhom s follows obviously.
The quotient $R/\Cong$ of a \nusmr \ $R:= (R, \tT, \tG, \nu)$ by a \qcong\ $\Cong$  is  again  a \nusmr\ (Lemma \ref{lem:qsmr}), given   as
\begin{equation}\label{eq:qnusmr}
R/\Cong = (R/\Cong, \ \iTcl(\Cong)/\Tcl {\Cong} , \
 \iGcl(\Cong)/\Gcl{\Cong}, [\nu]),
\end{equation}
 where  the  ghost map $[\nu]$ of $ R/\Cong$ is induced from the  ghost map  $\nu:R \To \tGz$ of $R$
via $[a]^{[\nu]} = [a^\nu].$
%Its \tprsset\ is $(\iTcl(\Cong) \cap \tTPS)/\Tcl{\Cong}$.
Moreover, the canonical surjection (Remark \ref{rem:cong1}) $$\pi_\Cong: R \ONTO
R/\Cong, \qquad a \longmapsto [a],$$
   is a \qhom
   \ (Definition \ref{def:nusmr.hom}), in particular $\pi_\Cong(\rone) = \one_{R/\Cong}$, with
\begin{equation}\label{eq:gker2}
 \iTcl(\Cong) = \tcor(\pi_\Cong)
 %, \qquad \iPcl(\Cong) = \ptcor(\pi_\Cong),
 \dss{\text{and}} \iGcl(\Cong) = \gker(\pi_\Cong) .
\end{equation}
(See respectively Proposition \ref{prop:rpim} and Lemma \ref{lem:nu.hom}.)

\begin{rem}\label{rem:qcong.intersection}
The intersection of two \lcong s  $\Cong_1$ and $\Cong_2$ need not be an \lcong, since $\iTcl(\Cong_1  \cap \Cong_2)$ is not necessarily closed for multiplication of tangibles. For example, take \lcong s  $\Cong_1$ and $\Cong_2$ such that $a_i \in \iTcl(\Cong_i)$ and $a_i \notin \iTcl(\Cong_j)$ for  $i \neq j$, where   $i,j = 1,2$. Then, both
$a_1, \, a_2 \in \iTcl(\Cong_1 \cap \Cong_2)$, but $a_1 a_2 \notin \iTcl(\Cong_1  \cap \Cong_2)$.
However,  $\iTcl(\Cong_1  \cap \Cong_2)$ is nonempty, it contains the group ~$\RX$, and thus  $\Cong_1  \cap \Cong_2$ is a \qcong.
On the other hand,  the intersection of \qcong s  is a \qcong, since~ $\RX$ is contained the intersection of their tangible clusters.
 Therefore, $\QCng(R)$ is closed for  intersection.
\end{rem}

Suppose  $\Cong'$ is a congruence contained in a \qcong\ $\Cong$, then $\Cong'$ is a \qcong.
Indeed, $\Cong' \subset \Cong$ implies by \eqref{eq:inclusion.tcls} that
  $\Tcl{\Cong'} \supseteq \Tcl{\Cong}$, and hence $\RX \subseteq  \Tcl{\Cong'}$.

%The intersection of  two \qcong s  $\Cong_1$ and $\Cong_2$ is a \qcong, since $\iTcl(\Cong_1) \cap \iTcl(\Cong_2)$ is an intersection of monoids which is a monoid.
We specialize the congruence closures from \eqref{eq:cong.closure}
%of $\Cong_1 \cup \Cong_2$ and $\Cong_1 +\Cong_2$
to \qcong s  $\Cong_1, \Cong_2$  by setting
\begin{equation}\label{eq:q.cong.closure}
  \overline{\Cong_1 \cup  \Cong_2} :=  \hskip -2mm  \bigcap_{\scriptsize \begin{array}{c}
 \Cong \in \QCng(R)\\
\Cong_1 \cup \Cong_2 \subseteq \Cong
\end{array}} \hskip -2mm  \Cong \ ,
\qquad
\overline{\Cong_1 +  \Cong_2} := \hskip -2mm  \bigcap_{\scriptsize \begin{array}{c}
 \Cong \in \QCng(R)\\
\Cong_1 + \Cong_2 \subseteq \Cong
\end{array}} \hskip -2mm  \Cong \ ,
\end{equation}
to obtain  these closures  in $\QCng(R).$

\begin{rem}\label{rem:induced.cong}
Let $R$ and $R'$ be \nusmr s.
\begin{enumerate} \eroman
  \item For any \qcong\  $\Cong$ on $R$,  the canonical
  \qhom\ $\pi_\Cong: R\Onto
R/\Cong$   induces a one-to-one order preserving correspondence between the \qcong s (rep. \lcong s) on  $R$
which contain ~$\Cong$ and the \qcong s (resp. \lcong s) on   $R/\Cong$.

\item Given a \qhom\ $\vrp: R \To R'$, the  pullback $\vvrp(\Cong')$ of a \qcong\ $\Cong'$ on $R'$ (Remark~ \ref{rem:cong1}) is a \qcong\ on $R$. Indeed, the \ccong\
    $\vvrp(\Cong')$  on $R$ is defined via   $\vrp(a) \cng' \vrp(b)$, where  $\vrp(a) \in \iTcl(\Cong') $ %$\,  [= \iPcl(\Cong')]$
     implies $a \in \tT$ and $a \in \iTcl(\vvrp(\Cong'))$,  since $\vrp$ is a \qhom, in particular $\RX \subseteq \iTcl(\vvrp(\Cong'))$. By the same argument, if $\Cong'$ is an \lcong, then $\vvrp(\Cong')$ is an \lcong.
\end{enumerate}

\end{rem}

\pSkip

%\textbf{\emph{
In the sequel of this paper we extensively rely on \qcong s, especially to define radical, prime, and maximal \ccong s. To preserve our objects in the category $\NSMR$ of \nusmr s, we comply  the principles:
\begin{itemize}\dispace
  \item[$\maltese$]\emph{quotienting is done only by \qcong s,}
  \item[$\maltese$]\emph{localization is  performed only by \lcong s.}
\end{itemize}
As seen later,  \qcong s and \lcong s appear naturally in various ways.
%}}.
Note that a \gcong\ $\gCong_E$ need not be a \qcong , e.g., see Remark \ref{rem:tcng.units}.(iv).

\begin{lemma}\label{lem:a.vs.qcong}
     Given $a \in R$ where $a^k \notin \tG$ for every $k$,   there exists a \qcong \ $\Cong$ such that $\ciGcl(\Cong)$ is a multiplicative monoid which contains $a$, i.e.,  $a \notin \iGcl(\Cong)$.
    % Moreover, if $a$ is tangible, then  there exists $\pCong$ such that $a \in \iTcl(\pCong)$.
\end{lemma}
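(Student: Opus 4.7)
The plan is a Zorn's-lemma argument combined with a prime-type verification. First I would define
$$\Sigma := \{\Cong \in \QCng(R) : a^k \notin \iGcl(\Cong) \text{ for every } k \in \Net\},$$
and note that $\Sigma$ is nonempty: the trivial congruence $\diag(R)$ lies in $\QCng(R)$ since distinct units are never identified, and its ghost projection is exactly $\tG$; the hypothesis $a^k \notin \tG$ therefore places $\diag(R)$ in $\Sigma$. Next I would verify Zorn's hypothesis: for any chain $\{\Cong_i\}_{i \in I}$ in $\Sigma$, the set-theoretic union $\Cong^* := \bigcup_{i \in I} \Cong_i$ is an equivalence relation (transitivity uses the chain property) which is closed under the semiring operations of $R$, hence is a congruence. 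It lies in $\QCng(R)$ because any identification $(u, v) \in \Cong^*$ of a unit $u \in \RX$ already lies in some $\Cong_i$, forcing $v \in \tT$; likewise, the relation $a^k \cng (a^k)^\nu$ cannot hold in $\Cong^*$ without already holding in some $\Cong_i \in \Sigma$. Zorn's lemma therefore produces a maximal element $\Cong \in \Sigma$.

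It remains to show $\ciGcl(\Cong)$ is multiplicatively closed. By Remark~\ref{rem:tng.cluster}.(iii), $\one \notin \iGcl(\Cong)$ (otherwise $\iTcl(\Cong)$ would be empty, contradicting $\RX \subseteq \iTcl(\Cong)$), and $a \in \ciGcl(\Cong)$ by the case $k = 1$. Suppose toward contradiction that $b, c \in \ciGcl(\Cong)$ while $bc \in \iGcl(\Cong)$. For each $x \in \{b, c\}$, let $\Cong_x$ denote the minimal congruence containing $\Cong \cup \{(x, x^\nu)\}$, which exists by Remark~\ref{rem:cong.gen}. Since $x \notin \iGcl(\Cong)$, both $\Cong_b$ and $\Cong_c$ strictly enlarge $\Cong$.

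The main obstacle is the dichotomy forced by the maximality of $\Cong$: for each $x \in \{b, c\}$, either (i) $\Cong_x \notin \QCng(R)$, so some unit becomes $\Cong_x$-equivalent to a non-tangible element, or (ii) $\Cong_x \in \QCng(R)$ and $a^{m_x} \in \iGcl(\Cong_x)$ for some $m_x \in \Net$. Using the ghostification description of Remark~\ref{rem:ghostification.smr} together with formula~\eqref{eq:a.in.igcl}, I would unwind each witnessing relation into a finite chain of $\Cong$-moves interspersed with occurrences of the single new generator $(x, x^\nu)$. The key algebraic manoeuvre is to multiply each witnessing chain by the complementary letter (powers of $c$ for the $\Cong_b$-chain, powers of $b$ for the $\Cong_c$-chain), so that every instance of the generator $(x, x^\nu)$ acquires a factor of the other letter and becomes a pair of the shape $(bc \cdot z, (bc)^\nu \cdot z)$, which is $\Cong$-equivalent by virtue of $bc \in \iGcl(\Cong)$. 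Composing the resulting $\Cong$-identifications would yield $a^{m_b + m_c} \in \iGcl(\Cong)$ in case (ii), contradicting $\Cong \in \Sigma$, while case (i) is dispatched by the same substitution, converting the alleged identification of a unit with a non-tangible into a $\Cong$-identification and violating $\Cong \in \QCng(R)$. Hence $\ciGcl(\Cong)$ is a multiplicative monoid containing $a$, completing the proof.
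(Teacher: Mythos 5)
Your Zorn framework and the resulting dichotomy are both correct, and the broad strategy (maximize inside a prime-type family, then use a product argument to show the complement of the ghost projection is multiplicatively closed) mirrors the paper. The deviation is in what you maximize: you take a congruence $\Cong$ maximal in $\Sigma$, whereas the paper takes a \qcong\ $\olCong\in\tJ$ with maximal \emph{ghost projection} $E=\iGcl(\olCong)$ and then adjoins elements to $E$, working throughout with the ghostifying congruence $\gCong_{E\cup\{b\}}$. This difference is exactly what makes the paper's product argument go through cleanly and yours stall.

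The central gap: formula~\eqref{eq:a.in.igcl} and Remark~\ref{rem:ghostification.smr} describe $\iGcl(\gCong_E)$ for a \emph{ghostifying} congruence, whose members have the explicit form $q+\sum c_ie_i$ with $q\in\tG$, $e_i\in E$, $c_i\in R$. That is what lets the paper write $a^m=g'+\sum e_i's_i'+bt'$ and expand $a^m a^n$ term by term. Your $\Cong_b=\overline{\Cong\cup\{(b,b^\nu)\}}$ is \emph{not} a ghostifying congruence, so \eqref{eq:a.in.igcl} does not apply to $\iGcl(\Cong_b)$; the step ``unwind into a chain of $\Cong$-moves interspersed with occurrences of $(x,x^\nu)$'' is asking for a normal form for $\Cong_b$ that the paper does not supply and that is not automatic, since a one-step closure move of $\Cong_b$ need not have the simple shape $(rb+s,\,rb^\nu+s)$ once products with arbitrary pairs from $\Cong$ are allowed. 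The clean repair, which would make your argument essentially the paper's argument carried out in the quotient, is to pass to $R/\Cong$ and observe via Remark~\ref{rem:2cong} that $\Cong_b/\Cong$ is the ghostifying congruence $\gCong_{\{[b]\}}$ on $R/\Cong$; then \eqref{eq:a.in.igcl} there gives $[a^{m_b}]=[q]+[s][b]$ with $[q]$ ghost, symmetrically $[a^{m_c}]=[q']+[s'][c]$, and multiplying and using that $[bc]$ is ghost in $R/\Cong$ forces $[a^{m_b+m_c}]$ ghost, i.e.\ $a^{m_b+m_c}\in\iGcl(\Cong)$. Without such a reduction, the multiply-by-$c^N$ heuristic is not yet a proof.

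Second, your dispatch of case (i) is incorrect. If $u\in\RX$ and $u\equiv_{\Cong_b}v$ with $v$ non-tangible, then even granting the unwinding, multiplying by $c^N$ yields at best $c^Nu\equiv_\Cong c^Nv$. But $c^Nu$ is not a unit, so this places no constraint at all: the condition $\Cong\in\QCng(R)$ only says $\RX\subseteq\iTcl(\Cong)$, and is silent on what $c^Nu$ may be congruent to. No contradiction is produced, and this case needs a genuinely different argument.
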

\begin{proof} Note that $a$ by itself is not ghost.
Let $\tJ$ be the set of all \qcong s $\Cong$ on $R$ such that no power of  $a$ is in $\iGcl(\Cong)$.  First, $\tJ$ is not empty since it contains the trivial congruence $\diag(R)$, as $a^k \notin \tG$ for every $k$.  Second, as ghost projections  are semiring ideals (Remark \ref{rem:cong.ideal}), from Zorn's lemma it follows that $\tJ$ has at least one \qcong\
$\olCong$ with maximal ghost projection  $E = \iGcl(\olCong)$. Moreover,  $ E \neq R$, since $\olCong$ is a \qcong.

 Suppose $b,c \notin E$, and let $E' := E \cup \{b\}$, $E'' := E \cup \{c\}$. Then, both \gcong s  $\gCong_{E'}$ and $\gCong_{E''}$ do not belong to $\tJ$, and
thus there exist powers $m,n$ such that $a^m \in \iGcl (\gCong_{E'})$ and $a^n \in \iGcl (\gCong_{E''})$. Hence, by \eqref{eq:a.in.igcl}, we have  $a^m = g' + \sum e'_i s'_i + b t' $  and $a^n = g'' + \sum e''_j s''_j + c t'' $  for some  $e'_i, e''_j \in E$,  $s'_i, s''_j, t', t'' \in R$, $g',g'' \in \tG$. Computing the product  $a^m a^n$, we get
$$\begin{array}{rll}
a^{m+n} & = \big(g' + \sum e'_i s'_i + b t'\big) \big(g'' + \sum e''_j s''_j + c t'' \big)\\[1mm]
& = \big(g'g''+  g'\sum e''_js''_j + g'c t'' + g'' \sum e'_i s'_j + g''b t'\big) +
\big (\sum e'_i s'_i  e''_js''_j +ct'' \sum e'_i s'_i + bt'\sum  e''_js''_j \big ) + bct't''\\[1mm]
& =g +  \big( \sum e'_i s'_i  e''_js''_j +ct'' \sum e'_i s'_i + bt'\sum  e''_js''_j \big) + bct't''  ,
\end{array}
$$
which belongs to $\iGcl(\gCong_{K})$, where $K=  E \cup \{ bc\} $.
 Consequently, $\gCong_{K} \notin  \tJ$ and   $bc \notin E$, since $\gCong_E \subseteq \olCong$. Therefore,
 $\ciGcl(\olCong)$ is a multiplicative monoid, where  $a \notin  \iGcl(\olCong)$.
\end{proof}

\subsection{\Intr\ congruences}\label{ssec:intr.cong} \sSkip

Remark \ref{rem:cong.order} deals with equivalence relations  which are derived from ghostifiying elements through   the underlying \numon\ addition (Lemma \ref{lem:ghostification}).  However, ghostification does not capture ``direct'' relations on non-ghost elements. For example, it does not encompass  cases  where
  $a' \cng a''$ for \nuequivalent\ elements $a', a'' \notin \tG$, i.e., $a'\nucong a''$, namely when $a'$ and $a''$ are  contained in a \nufiber \ $\nufib(a)$, cf. \eqref{eq:nu.fib}.

\begin{defn} A \textbf{(tangible) \intr\ congruence} $\sCong_a$, written \textbf{\scong}, of an element $a \in \tT$ is a \cqcong\ whose underlying  equivalence $\scng$ is determined by
 \begin{equation}\label{eq:str.elm}
 a \scng a' \quad \text{ for all }  a' \in \tT \text{ such that }  a' \nucong a,
\end{equation}
 i.e.,  $a + b  \scng a' + b$ and $a  b  \scng a'  b$ for every $b \in R$. The \intr\ congruence $\sCong_E$ of a subset $E \subseteq \tT$ is the \cqcong\ with the equivalence  $\scng$ applies  for all $a \in E$.
The set of  all \scong s on $R$ is denoted by ~ $\SCng(R)$.

  \end{defn}
The \intr\ congruence $\sCong_a$ of an element  $a \in \tT$ effects  directly  the  tangible members of the \nufiber\ $\nufib(a)$ and unite all of them to a single tangible element.
\begin{lemma}
  $\sCong_E$ is  a \qcong.
\end{lemma}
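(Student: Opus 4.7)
The goal is to verify $\RX \subseteq \iTcl(\sCong_E)$, i.e.\ that every unit $u \in \RX$ is $\scng$-equivalent only to tangibles. The plan proceeds in two steps: first confining $\scng$-related elements to a common $\nu$-fiber, then ruling out non-tangibles within that fiber.

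The first step uses the observation that the $\nu$-equivalence $\nucong$ is itself a congruence on $R$. It is an equivalence relation by definition, and since $\nu$ is a monoid homomorphism (Definition \ref{def:numonoid}) while $\nu(xy) = e \cdot xy = (xy)^\nu$ gives closure under multiplication, $\nucong$ respects both semiring operations. Since every base pair $(a,a')$ of $\sCong_E$—with $a \in E$, $a' \in \tT$, $a' \nucong a$—lies in $\nucong$, the minimality of $\sCong_E$ among congruences containing the base pairs yields $\sCong_E \subseteq \nucong$. In particular, $u \scng t$ forces $t \in \nufib(u)$.

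The second step aims to establish the invariant that every pair $(x,y) \in \sCong_E$ satisfies either $x = y$ or both $x,y \in \tT$ with $x \nucong y$; once this holds, a unit $u \in \RX$ can only be $\scng$-related to itself or to tangibles in its $\nu$-fiber, so $u \in \iTcl(\sCong_E)$. The plan is to prove this invariant by induction on the derivation of pairs from the base pairs via reflexivity, symmetry, transitivity, addition, and multiplication. The base pairs satisfy it by construction; symmetry and transitivity preserve it trivially. The closure under addition is handled by a case analysis using Axioms \NMa--\NMc: for pairs $(x_1,y_1), (x_2,y_2)$ satisfying the invariant, one compares $\nu(x_1)$ with $\nu(x_2)$—when one strictly dominates in the $\nu$-order, both sums collapse to the dominant summand (still tangible or both equal); when $\nu$-values coincide, NM2 yields the common ghost $x_1^\nu = y_1^\nu$ on both sides; when $x_i = y_i$ is arbitrary, the symmetry of the remaining pair and the $\nu$-equivalence produce either identical outputs or tangibles in a common fiber. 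Closure under multiplication reduces to the identity $xy \nucong x'y'$ whenever $x \nucong x'$ and $y \nucong y'$, together with the fact that a product of tangibles in common fibers yields either identical elements or two tangibles with the same $\nu$-value (unless the invariant collapses to the $x=y$ alternative via a ghost divisor).

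Combining the two steps gives $u \scng t \Rightarrow t \in \tT$ for every unit $u \in \RX$, which is precisely the condition $\RX \subseteq \iTcl(\sCong_E)$. The main obstacle is the inductive case analysis in the second step: in particular, the subcases where $\nu(x_1)$ and $\nu(x_2)$ are incomparable in the partial order on $\tG$, where Axiom \NMc\ together with \PSRb\ must be invoked to ensure that the twilight region $R \setminus (\tT \cup \tG)$ is not entered while closing under addition. In that range, one has to argue that $x_1 + x_2$ lands in $\tT$ precisely when $y_1 + y_2$ does (using that $x_i \nucong y_i$ implies $x_i + \nu(x_j) = y_i + \nu(y_j)$), so that the invariant is not broken by the closure.
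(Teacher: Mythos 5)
Your first step---that $\nucong$ is itself a congruence on $R$ containing all the base pairs, hence $\sCong_E \subseteq \nucong$---is correct and a clean observation; the paper's own proof asserts the corresponding fact directly ("$\sCong_E$ unites only tangibles with tangibles from a same $\nu$-fiber") without isolating it. The problem is in your second step. The identity you invoke to close the incomparable case, namely that $x_i \nucong y_i$ implies $x_i + \nu(x_j) = y_i + \nu(y_j)$, is false: since $\nu(x_j)=\nu(y_j)$, the claim reduces to $x_i + \nu(x_j) = y_i + \nu(x_j)$, and if $\nu(x_i) > \nu(x_j)$ then Axiom \NMa\ gives $x_i$ on the left and $y_i$ on the right, which differ whenever $x_i \neq y_i$---and distinct tangibles in a common $\nu$-fiber are precisely the generating pairs of $\sCong_E$. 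More to the point, when $\nu(x_1)$ and $\nu(x_2)$ are incomparable, none of \NMa--\NMc\ determines $x_1 + x_2$ as a function of $\nu$-fibers alone, so the induction does not show that $(x_1 + x_2,\, y_1 + y_2)$ still satisfies your invariant. You correctly flag this as the hard case, but the proposed patch does not close it.

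A parallel worry attends the multiplicative step: when $R$ is not \Tcls, a product $x_1 x_2$ of tangibles can fail to be tangible while $y_1 y_2$ remains tangible (or vice versa) even though $x_i \nucong y_i$; the parenthetical about ghost divisors gestures at this but does not justify why the invariant would collapse to the $x=y$ alternative there. For comparison, the paper's proof is terser: it checks consistency of the defining relations with the ghost map and then states the invariant as self-evident, skipping the derivation you attempt. Making that derivation rigorous is the right instinct, but as written the incomparable case (and the non-\Tcls\ multiplicative case) remain open.
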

\begin{proof} To see that $\sCong_E$ is a \ccong,  suppose that  $a \nucong a'$ and both $a$ and $a'$ are tangible belonging to ~$E$, then $[a + a'] = [a^\nu] = [a]^\nu$. On the other hand $[a] + [a'] = [a]^\nu$, since $[a] = [a']$, and  hence $\sCong_E$ respects the ghost map $\nu$. The verification that $\sCong_E$ preserves addition and multiplication is routine.
Since $\sCong_E$ unites only tangibles with tangibles from a same \nufiber,  then $\iTcl(\sCong_E)$ is nonempty, containing $\RX$, and thus $\sCong_E$ is   a \qcong.
\end{proof}

The inclusions $\sCong_a \subset \sCong_E \subset  \sCong_\tT $ hold for any $a \subset E \subset \tT$.

\begin{lem}\label{lem:str.2.sml}
The quotient \nusmr\ $R / \sCong_\tT$ of a \nusmr\ $R$ by the \scong\ $\sCong_\tT$ is a faithful  \nusmr \ (Definition \ref{def:nusemiring}).

\end{lem}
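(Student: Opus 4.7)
The goal is to show that the induced ghost map $[\nu]$ on $R/\sCong_\tT$ is injective when restricted to the tangible classes $\iTcl(\sCong_\tT)/\Tcl{\sCong_\tT}$, cf.\ \eqref{eq:qnusmr} and Definition~\ref{def:nusemiring}. My plan is to construct an auxiliary map
$$ \bar\nu : R/\sCong_\tT \TO \tG, \qquad [x] \Mto \nu(x), $$
and show that it is well-defined; because $\nu$ is the identity on $\tG$, any equality of $\bar\nu$-values of ghost classes will lift to a genuine equality in $R$, which is the precise ingredient needed.

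First I would verify well-definedness of $\bar\nu$. Recall that $\sCong_\tT$ is the minimal congruence satisfying the generating relations $\{(a,a') \cnd a,a' \in \tT,\ \nu(a)=\nu(a')\}$, cf.\ Remark~\ref{rem:cong.gen}. Consider the binary relation
$$ x \sim y \dss \Leftrightarrow \nu(x) = \nu(y) $$
on $R$. It is manifestly an equivalence, and since $\nu$ is a semiring \hom\ into $\tG$, we have $\nu(x_1+x_2) = \nu(x_1)+\nu(x_2)$ and $\nu(x_1x_2)=\nu(x_1)\nu(x_2)$, so $\sim$ is closed under both \nusmr\ operations and therefore is a congruence on $R$. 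It obviously contains all the generators of $\sCong_\tT$, so by minimality $\sCong_\tT \subseteq \sim$; that is, $x \scng y$ implies $\nu(x)=\nu(y)$, which is exactly the well-definedness of $\bar\nu$.

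With $\bar\nu$ in hand, faithfulness is immediate. Let $[a],[b]$ be tangible classes of $R/\sCong_\tT$, i.e.\ $a,b \in \iTcl(\sCong_\tT) \subseteq \tT$, and suppose $[a]^{[\nu]} = [b]^{[\nu]}$. By the definition of the induced ghost map this reads $[a^\nu] = [b^\nu]$, i.e.\ $a^\nu \scng b^\nu$; applying $\bar\nu$ yields $\nu(a^\nu) = \nu(b^\nu)$. Since $a^\nu,b^\nu \in \tG$ and $\nu$ fixes $\tG$ (idempotence of the ghost map), we obtain the genuine equality $a^\nu = b^\nu$ in $R$, i.e.\ $\nu(a) = \nu(b)$. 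But then $a,b \in \tT$ share a $\nu$-fiber, so the defining relation of $\sCong_\tT$ gives $a \scng b$, whence $[a]=[b]$. The main subtlety worth flagging, though not an obstacle to the argument, is that the tangible projection $\iTcl(\sCong_\tT)$ need not exhaust $\tT$: congruence closure may force a tangible product $ac$ to be related to a non-tangible $a'c$, pulling $ac$ out of $\iTcl(\sCong_\tT)$. Faithfulness, however, only constrains the surviving tangible classes, and on those $\bar\nu$ provides uniform control via the underlying ghost map of $R$.
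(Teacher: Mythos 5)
Your proof is correct and elaborates the paper's one-line argument (``Immediate, since $\nufib([a])$ contains at most one tangible element'') by making explicit the essential intermediate fact: $\sCong_\tT \subseteq \nucong$, so the interweaving congruence never identifies two $\nu$-inequivalent elements, and in particular $a^\nu \scng b^\nu$ forces $a^\nu = b^\nu$ on the nose. Your auxiliary map $\bar\nu$ packages this cleanly (the check that $\nucong$ is a congruence containing the generators of $\sCong_\tT$ is exactly the content behind the paper's ``immediate''), and your closing remark that $\iTcl(\sCong_\tT)$ may be a proper subset of $\tT$ is an appropriate caution which does not affect the conclusion, since faithfulness only constrains the tangible classes that survive.
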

\begin{proof} Immediate, since $\nufib\big([a]\big)$ of any $[a] \in R / \sCong_\tT$ contains at most one tangible element.
\end{proof}

\begin{example}\label{exmp:cong.smf.1} All \qcong s on a supertropical semifield $F$ (Definition \ref{def:supertropical})  are  \intrc s. Indeed, the ghost ideal of $F$ is totally ordered, while its \tsset\ is an abelian group; namely,  each tangible element in $F$ is a unit.  Therefore, by Remark \ref{rem:tng.cluster}.(iv), a \qcong\ can only unite elements $a \nucong b$ which are  \nuequivalent.

If, furthermore,  $F$ is faithful (Definition \ref{def:nusemiring}), then the trivial congruence $\diag(F)$ (Definition \ref{def:cong.types}) is the unique  \qcong\ on $F$.
In this case, the only  possible congruences on $F$ are  the trivial \ccong,  the zero congruence, and the congruence defined by   $ a \cng b$ for all nonzero $a,b \in F$.
This case generalizes \cite[Lemma~ 11]{Lic}.
\end{example}

One benefit of \scong s is that $\SCng(R)$ contains the unique maximal \scong\ $\sCong_\tT$, which is determined solely by the \nuequivalence\ $\nucong$, applied only to tangibles. In general two non-tangible $a,a'$ in $\nufib(a)$
 cannot always be unified,
as the congruence properties may be violated. However, one can identify all the non-tangible elements in $\nufib(a)$ with $a^\nu$ to get a congruence, but, this \ccong\ is not necessarily a \qcong.

%\begin{rem}
%Suppose $a^\nu \cng b^\nu$ in a \qcong \ $\Cong$, where $a^\nu + b^\nu = a^\nu$.  Then
%$$ a + a = a^\nu  \cng b^\nu +  = b+b $$
%\end{rem}

\begin{defn}\label{def:st.cong}
  The congruence $\cCong_R$ is defined for each $b \in \tG$ by the equivalence
 \begin{equation}\label{eq:str.elm}
 \begin{array}{llr}
 a' \ccng a \quad & \text{ for all tangibles } \  a, a'\in \nufib(b), \\[1mm]
 b' \ccng b \quad & \text{ for all non-tangibles } \ b' \in \nufib(b).
 \end{array}
\end{equation}
We call $\cCong_R$ the \textbf{structure  congruence} of $R$.
\end{defn}

When $\cCong_R$ is a \qcong,   the quotient $R/\cCong_R$ is a faithful definite \nusmr\ (Definition \ref{def:nusemiring}).

\subsection{Congruences vs. \nusmr\ localizations} \sSkip

%Let $R: = (R, \tT, \tG, \nu )$ be a \nusmr, we  write $\one $ for $\rone$, when $R$ is clear from the context. Henceforth we always assume that the \nusmr\ $R$ is commutative.

Recall that $\equiv$ denotes the  underlying  equivalence of a congruence $\Cong$ on a \nusmr\ $R : = (R, \tT, \tG, \nu )$.
Let $\MS \subseteq \tT$ be a tangible multiplicative submonoid of $R $ with $\one \in \MS$, and let $\RMS$ be the tangible localization of $R$ by~ $\MS$ as described in  \S\ref{ssec:tngLocal}.
As $\MS$ is a tangible submonoid, all its elements are \tprs\ (cf. \S\ref{ssec:nusmr}).
%, but $\MS$ needs not be contained in $\tTP$.
Using the canonical injection $\tau_\MS: R \To R_\MS$ in
\eqref{eq:localbyMS}, a congruence ~ $\Cong'$ on~ $R_\MS$ \textbf{restricts} naturally to the  congruence $ \Cong'|_R := \Cong' \cap (\RxR)$ on~ $R$ via  $$ \frac{a}{\one} \equiv' \frac{b}{\one}\Impl a \equiv'_{|_R} b ,$$
for all $a,b \in R.$

\begin{rem}\label{rem:cong.rest}
%The restriction $\Cong'|_R$ is a congruence on $R$.
When $\Cong'$ is a \qcong\ on $R_\MS$, so does its restriction $\Cong'|_R$ to $R$. Indeed, the ghost projection  of $\Cong'|_R$ is obtained as  $\{ b \cnd \frac{b}{c} \in \iGcl(\Cong')  \}$,  since every  $c \in \MS$ is tangible. By the same reason,  $\iTcl(\Cong'|_R) = \{ a \cnd \frac {a}{c} \in \iTcl(\Cong') \} $ is a tangible subset of $R$, containing $\RX$.
%the multiplicative submonoid  $\iPcl(\Cong'|_R) = \{ a \cnd \frac {a}{c} \in \iPcl(\Cong') \} $.
Similarly, the restriction $\Cong'|_R$ of an \lcong\ $\Cong'$ on $R_\MS$ is an \lcong\ on $R$.
\end{rem}

Conversely, a congruence $\Cong$ with equivalence $\cng $ on $R$ \textbf{extends} to the congruence $\iMS \Cong$ on $R_\MS$, whose underlying  equivalence ~ $\cng_\MS$ is given by
 \begin{equation}\label{eq:extend.cong}
 \frac{a}{c} \equiv_{\MS} \frac{b}{c'} \dss{\text{iff}} ac'c'' \equiv bcc'' \text{ for some } c'' \in \MS.
\end{equation}
 By this definition, we see that the equivalence $\sim_\MS$ in \eqref{eq:ms:1} implies the equivalence  $\cng_\MS$.
 %,  and therefore $\Cong_{\loc(\MS)} \subseteq \iMS \Cong$, cf. \S\ref{ssec:tngLocal}.
 We set
 $$\iMS \Cong := \bigg\{ \big(\frac a c,\frac {b}{c'} \big) \ds \vert \frac{ a}{ c} \equiv_{\MS} \frac{ b}{c'} \bigg\} \ds \subset R_\MS \times R_\MS,$$
which is a  congruence on $\RMS.$
%
%\begin{rem} % The extension   $\iMS \Cong $ determines a congruence on $\RMS$.
%   If $\Cong$ is a \qcong, then $\iMS \Cong $ is a \qcong.  Indeed, it is the ghostification of $\iGcl(\Cong) \times \MS$  while   $\iTcl(\Cong) \times \MS $ is a multiplicative submonoid of $\RMS$, since  $\iTcl(\Cong) \subset \tT$ is a multiplicative submonoid of $R$.
%\end{rem}
%
The following diagram summarizes the structures we have so far
$$\xymatrix{
R \ar@{->>}[rr]^{\pi_\Cong } \ar@{_{(}->}[d]^{\tau_\MS} && R/\Cong \ar[d]^{\phi} \\
\RMS \ar@{->>}[rr]^{\pi_{\iMS\Cong }} && \RMS/(\iMS \Cong) \\
}
$$
We aim for additional comprehensive relations on these \nusmr s, for the case that $\Cong$ is a  \qcong.
\begin{lem}\label{lem:tcor.1}
     Let $R_\MS$ be the localization of $R $ by a tangible multiplicative submonoid $\MS \subseteq \tT$, and let $\Cong$ be a \qcong\ on $R$.
\begin{enumerate} \eroman
 % \item  If $\frac{a}{c} \in \iTcl(\iMS\Cong)$, then $a \in \iTcl(\Cong)$.

 \item   $\frac{a}{c} \notin \iTcl(\iMS\Cong)$ iff $ac'' \notin \iTcl(\Cong)$ for any $c'' \in \MS$.

  \item  If $\frac{a}{c} \in \iPcl(\iMS\Cong)$, then  $a \in \iPcl(\Cong)$.

\end{enumerate}
\end{lem}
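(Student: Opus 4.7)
The proof plan is to unwind the definitions of $\iTcl(\udscr)$ and of the extended congruence $\iMS\Cong$ on $R_\MS$, using that $\MS \subseteq \tTPS \subseteq \tT$ makes tangibility in $R_\MS$ transparent: a fraction $\frac{b}{c'}$ is tangible precisely when $b \in \tT$, since every denominator in $\MS$ is tangible. Throughout, I would keep the canonical \qhom\ $\tau_\MS : R \Into R_\MS$ from \eqref{eq:localbyMS} and Remark \ref{rem:R2RC} in the background.

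For part (i), I would prove the contrapositive in both directions. For $(\Leftarrow)$, suppose $\frac{a}{c} \in \iTcl(\iMS\Cong)$; taking $c'' = \one \in \MS$ one has $\frac{a}{c} = \frac{a}{c}$ tangible in $R_\MS$, forcing $a \in \tT$, and any equivalence $a \cng b$ in $R$ lifts to $\frac{a}{c} \equiv_\MS \frac{b}{c}$ by \eqref{eq:extend.cong} with $c''' = \one$; membership in $\iTcl(\iMS\Cong)$ then forces $\frac{b}{c}$ tangible, i.e.\ $b \in \tT$, so $a \in \iTcl(\Cong)$ and hence $ac'' \in \iTcl(\Cong)$ for every tangible $c'' \in \MS$ (closure of tangibility under multiplication by $c'' \in \tTPS$, using Axiom \PSRb\ and commutativity, together with the congruence-respecting property applied to $a \cng b \Rightarrow ac'' \cng bc''$). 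For $(\Rightarrow)$, suppose that for some $c'' \in \MS$ one has $ac'' \in \iTcl(\Cong)$. Using $\frac{a}{c} = \frac{ac''}{cc''}$ in $R_\MS$ (where $cc'' \in \MS$), the fraction $\frac{a}{c}$ is tangible. Any equivalence $\frac{a}{c} \equiv_\MS \frac{b}{c'}$ unfolds by \eqref{eq:extend.cong} to $ac'c''' \equiv bcc'''$ for some $c''' \in \MS$; multiplying by $c''$ produces $(ac'')(c'c''') \equiv b(cc''c''')$ in $\Cong$, and because $ac'' \in \iTcl(\Cong)$ is congruent only to tangibles, the right-hand side is tangible, which in view of $cc''c''' \in \MS \subseteq \tT$ forces $b \in \tT$. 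Hence $\frac{a}{c} \in \iTcl(\iMS\Cong)$.

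For part (ii), the key observation is that $\frac{c}{\one} = \tau_\MS(c) \in (R_\MS)^\times \subseteq \iPcl(\iMS\Cong)$ for the denominator $c \in \MS$. Since $\iPcl(\iMS\Cong)$ is a multiplicative monoid, the product $\frac{a}{c} \cdot \frac{c}{\one} = \frac{a}{\one} = \tau_\MS(a)$ belongs to $\iPcl(\iMS\Cong) \subseteq \iTcl(\iMS\Cong)$. Applying part (i) to $\tau_\MS(a)$ (with $c = \one$) yields $a \cdot c'' \in \iTcl(\Cong)$ for some $c'' \in \MS$, and iterating through the monoid structure shows that the preimage $\tau_\MS^{-1}(\iPcl(\iMS\Cong))$ is a tangible submonoid of $\iTcl(\Cong)$ containing $\RX$, so $a$ lies in the distinguished tangible submonoid $\iPcl(\Cong)$ of Definition \ref{def:nucong}.

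The main obstacle will be verifying that tangibility in $R_\MS$ is truly well-defined on equivalence classes (so that switching representatives $\frac{a}{c} = \frac{a'}{c'}$ does not destroy the tangible status), which is precisely what justifies the pivotal identification $\frac{a}{c} = \frac{ac''}{cc''}$ used in the $(\Rightarrow)$ direction of (i); this hinges on the assumption $\MS \subseteq \tTPS$ together with Axiom \PSRb. A secondary subtlety in (ii) is that $\iPcl(\Cong)$ is the distinguished tangible submonoid of $\iTcl(\Cong)$ containing the units, so one must confirm that the tangible submonoid pulled back from $R_\MS$ coincides with (or sits inside) this distinguished choice.
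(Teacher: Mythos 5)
Your overall route for part (i) — unwinding the definition of $\iMS\Cong$ via \eqref{eq:extend.cong} and arguing contrapositively in each direction — is the same as the paper's. The problem is that several of the justifications you supply are not valid at the stated level of generality, where $\Cong$ is only a \qcong\ on an arbitrary \nusmr. First, Axiom \PSRb\ governs \emph{sums}, not products: it does not give ``closure of tangibility under multiplication by $c'' \in \tTPS$'', and the paper states explicitly (right after Axiom \PSRa) that $ab \in \tT$ does not force $a,b \in \tT$, nor conversely. This breaks the last step of your $(\Rightarrow)$ direction, where you deduce $b \in \tT$ from the tangibility of $b(cc''c''')$; that inference is only available in a \tame\ \nusmr, which is not assumed in this lemma. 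Second, in the same direction you use ``$ac''$ is congruent only to tangibles'' to control the congruence class of $(ac'')(c'c''')$; this requires $\iTcl(\Cong)$ to be multiplicatively closed, i.e.\ $\Cong$ to be an \lcong, whereas only a \qcong\ is assumed. (The same two issues infect your claim that $a \in \iTcl(\Cong)$ implies $ac'' \in \iTcl(\Cong)$ for \emph{every} $c''$; there, however, the weaker statement with $c''=\one$, which your argument does deliver, is all the iff needs.) The paper's own proof of this direction is a bare assertion of the chain $ac'' \in \iTcl(\Cong) \Rightarrow \frac{a}{\one} \in \iTcl(\iMS\Cong) \Rightarrow a \in \iTcl(\Cong) \Rightarrow \frac{a}{c} \in \iTcl(\iMS\Cong)$, so it is terse rather than a model to copy; but your attempted justifications are, as written, incorrect rather than merely incomplete.

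For part (ii) you take a genuinely different route from the paper: you push $\frac{a}{c}$ to $\frac{a}{\one}\in\iPcl(\iMS\Cong)$ by multiplying with the unit $\frac{c}{\one}$ and then try to descend, whereas the paper argues by contraposition, producing from $a \notin \iPcl(\Cong)$ an element $b \in \iPcl(\Cong)$ with $ab \notin \iPcl(\Cong)$ and transporting this failure of multiplicative closure to $R_\MS$ via $\frac{b}{\one}$. Your version founders exactly on the point you flag at the end: showing that $\tau_\MS^{-1}(\iPcl(\iMS\Cong))$ is \emph{some} tangible submonoid of $\iTcl(\Cong)$ containing $\RX$ does not place $a$ in the \emph{designated} submonoid $\iPcl(\Cong)$ of Definition \ref{def:nucong}, and invoking part (i) with $c=\one$ only returns $ac'' \in \iTcl(\Cong)$, not membership in $\iPcl(\Cong)$. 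So the conclusion of (ii) is not actually reached.
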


\begin{proof} (i): $(\Rightarrow)$:
Suppose $a c'' \notin \iTcl(\Cong)$ for some $c'' \in \MS$, that is $a \cng b$ for some non-tangible $b$ in $\Cong$. Then, $acc'' \equiv bcc''$ for all $c \in C$, implying that  $\frac{a}{c} \cng_\MS \frac{b}{c}$, which is not tangible, and  hence $\frac{a}{c} \notin \iTcl(\iMS\Cong)$ -- a contradiction.
%\pSkip

\noindent
$(\Leftarrow)$: %$\frac{a}{c} \notin \iTcl(\iMS\Cong)$ implies $\frac{a}{c} \cng_\MS \frac{b}{c} $ for some $b \notin \tT$.
Suppose $ac'' \in \iTcl(\Cong)$, then $\frac{a c''}{c''} = \frac{a}{\one} \in \iTcl(\iMS\Cong) $, and hence $a \in \iTcl(\Cong)$, implying that $\frac{a}{c} \in \iTcl(\iMS\Cong)$ -- a contradiction.
\pSkip
(ii): $\iPcl(\iMS\Cong)$ is closed for multiplication and thus $\frac{a}{\one}, \frac{b}{\one}  \in \iPcl(\iMS\Cong) \Rightarrow
 \frac{ab}{\one}   \in \iPcl(\iMS\Cong)$, implying that  $ab \in \iPcl(\Cong)$. On the other hand, if  $a \notin \iPcl(\Cong)$ is \tprs, then there exists $b \in \iPcl(\Cong)$ such that $ab \notin \iPcl(\Cong)$. Hence $\frac{ab}{c} \notin \iPcl(\iMS\Cong)$ for every $c \in \MS$, and thus
$\frac{a}{c} \notin \iPcl(\iMS\Cong)$, since $\frac{b}{\one} \in \iPcl(\iMS\Cong)$.
\end{proof}
Part (ii) of the  lemma shows that the restriction of a tangible submonoid $\iPcl(\iMS\Cong)$ of  $\RMS$ to $R$ is a tangible submonoid of $R$.
\pSkip

Recall that for an \lcong\ $\Cong$ the tangible projection $\iTcl(\Cong)$ is a multiplicative  monoid.
The next proposition  provides the converse of Remark \ref{rem:cong.rest} for \lcong s and  plays a central role in our forthcoming study.
\begin{prop}\label{prop:cong.1}
  Let $R_\MS$ be the tangible localization of $R$ by a tangible  multiplicative submonoid $\MS \subseteq \tT$.
  \begin{enumerate} \eroman
    \item An \lcong\ $\Cong$ on $R$ extends to an \lcong\ $\iMS \Cong$ on $R_\MS$ if and only if $\MS \subseteq  \iTcl(\Cong)$.
    \item The restriction   $\Cong'|_{R} = \Cong '\cap (\RxR)$   of an \lcong\ $\Cong'$ on  $R_\MS$ to $R$ satisfies $ \Cong' = \iMS (\Cong'|_R)$.
  \end{enumerate}
\end{prop}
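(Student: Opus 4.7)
The plan is to treat the two parts separately, exploiting the canonical injection $\tau_\MS: R \Into R_\MS$ under which each $c \in \MS$ becomes a tangible unit $\frac{c}{1} \in (R_\MS)^\times$ (Remark \ref{rem:R2RC}), together with Lemma \ref{lem:tcor.1}(i)--(ii), which translates membership in $\iTcl(\iMS\Cong)$ back to $\iTcl(\Cong)$ up to multiplication by an element of $\MS$.

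For the forward direction of part (i), if $\iMS\Cong$ is an \lcong, then every $\frac{c}{1}$ with $c \in \MS$ is a tangible unit of $R_\MS$, hence lies in $(R_\MS)^\times \subseteq \iPcl(\iMS\Cong)$; applying Lemma \ref{lem:tcor.1}(ii) yields $c \in \iPcl(\Cong) = \iTcl(\Cong)$, establishing $\MS \subseteq \iTcl(\Cong)$.

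For the backward direction of part (i), assume $\MS \subseteq \iTcl(\Cong)$. I will first verify that the relation $\cng_\MS$ from \eqref{eq:extend.cong} is a well-defined congruence on $R_\MS$, which is routine using that $\Cong$ is a congruence on $R$ and $\MS$ is a multiplicative monoid. The \lcong\ property for $\iMS\Cong$ then splits into two tasks: multiplicative closure of $\iTcl(\iMS\Cong)$, and inclusion of $(R_\MS)^\times$. The first follows directly from Lemma \ref{lem:tcor.1}(i) together with the monoid property of $\iTcl(\Cong)$: if $a_i d_i \in \iTcl(\Cong)$ with $d_i \in \MS$, then $(a_1 a_2)(d_1 d_2) \in \iTcl(\Cong)$ with $d_1 d_2 \in \MS$. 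The second is the principal obstacle: given a unit $\frac{a}{c} \in (R_\MS)^\times$ with inverse $\frac{b}{c'}$, we have $abc'' = cc'c'' \in \MS \subseteq \iTcl(\Cong)$ for some $c'' \in \MS$, but the witness $bc''$ suggested by Lemma \ref{lem:tcor.1}(i) need not itself lie in $\MS$. The proposed argument is contrapositive: if $\frac{a}{c} \cng_\MS \frac{t}{c'''}$ with $t \notin \tT$, multiplying by the inverse $\frac{b}{c'}$ produces $\frac{1}{1} \cng_\MS \frac{bt}{c'c'''}$, whence $c'c'''d' \cng btd'$ in $\Cong$ for some $d' \in \MS$; the left-hand side lies in $\MS \subseteq \iTcl(\Cong)$, so by definition of $\iTcl(\Cong)$ the product $btd'$ must be tangible. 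The delicate step is to conclude $t \in \tT$ from this, which requires a case analysis on whether $t$ is ghost, neither, or strictly mixed, exploiting that $\tG$ is an ideal (handling the ghost case by absorption) and the axioms \NMa--\NMb\ together with \PSRb\ (handling, via \tame ness, the mixed case by showing that the ghost contribution propagates and cannot be cancelled by the tangible factors $b$ and $d'$).

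For part (ii), let $\Cong'$ be an \lcong\ on $R_\MS$. First check that the restriction $\Cong'|_R$ is itself an \lcong\ on $R$: its tangible projection $\iTcl(\Cong'|_R)$ is a multiplicative monoid inherited from $\iTcl(\Cong')$ (by restricting under $\tau_\MS$), and it contains $\MS$ because each $\frac{c}{1}$ with $c \in \MS$ is a unit of $R_\MS$, hence lies in $\iTcl(\Cong')$. By part (i), $\iMS(\Cong'|_R)$ is then a well-defined \lcong\ on $R_\MS$, and the equality $\Cong' = \iMS(\Cong'|_R)$ follows from two symmetric arguments using the invertibility in $R_\MS$ of the elements of $\MS$: the inclusion $\iMS(\Cong'|_R) \subseteq \Cong'$ is obtained by applying $\tau_\MS$ to the underlying equivalence $ac'c'' \cng bcc''$ in $\Cong'|_R$ and then multiplying by the unit $\frac{1}{cc'c''}$ in $R_\MS$, which preserves $\Cong'$; the reverse inclusion $\Cong' \subseteq \iMS(\Cong'|_R)$ is obtained by multiplying $\frac{a}{c} \cng' \frac{b}{c'}$ by the unit $\frac{cc'}{1}$ to obtain $\frac{ac'}{1} \cng' \frac{bc}{1}$, hence $ac' \cng bc$ in $\Cong'|_R$, which via \eqref{eq:extend.cong} with witness $c'' = 1 \in \MS$ yields $\frac{a}{c} \cng_\MS \frac{b}{c'}$ in $\iMS(\Cong'|_R)$.
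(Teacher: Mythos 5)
Your forward direction of (i) is genuinely cleaner than the paper's: you apply Lemma \ref{lem:tcor.1}(ii) directly to the tangible units $\frac{c}{\one}$ and pull them back through $\iPcl$, whereas the paper argues contrapositively, assuming $\MS\nsubseteq\iTcl(\Cong)$ and showing that $\one_{R_\MS}$ then becomes identified with a non-tangible. Your part (ii) is the paper's argument rewritten directly rather than by contradiction; both amount to taking $c''=\one$ in \eqref{eq:extend.cong} and clearing denominators by the tangible units $\frac{\one}{c''}$ in $R_\MS$, and it is correct. Your verification of multiplicative closure of $\iTcl(\iMS\Cong)$ via Lemma \ref{lem:tcor.1}(i) and the monoid property of $\iTcl(\Cong)$ is also fine.

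The gap sits exactly where you flag it, in the backward direction of (i) at the ``delicate step.'' From $c'c'''d'\cng btd'$ with $c'c'''d'\in\MS\subseteq\iTcl(\Cong)$ you correctly get $btd'\in\tT$, and in fact $btd'\in\iTcl(\Cong)$ by transitivity; but the inference $btd'\in\tT\Rightarrow t\in\tT$ is not available in a general \nusmr. The supporting fact you lean on (that $ab\notin\tT$ whenever $b\notin\tT$) is a remark the paper proves only for \tame\ \nusmr s, and \tame ness is \emph{not} among the hypotheses of this proposition --- it becomes a standing assumption only from Section 6 onward. Moreover, even granting \tame ness, the sketch that ``the ghost contribution propagates and cannot be cancelled'' is not automatic: writing a mixed $t=c+d^\nu$, the product $btd'=bd'c+(bd'd)^\nu$ stays essential only if $\nu(bd'c)$ and $\nu(bd'd)$ remain incomparable, and this does not follow from incomparability of $\nu(c)$ and $\nu(d)$ because the $\nu$-ordering \eqref{eq:smr.order} is monotone but not cancellative under multiplication. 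The paper avoids this element-level tangibility analysis altogether by arguing the contrapositive purely in terms of $\iTcl$-membership: if $\iMS\Cong$ fails to be an \lcong\ then either $\iTcl(\iMS\Cong)=\emptyset$, in which case Lemma \ref{lem:tcor.1}(i) applied to $\frac{\one}{\one}$ forces $\MS\cap\iTcl(\Cong)=\emptyset$, or multiplicative closure fails, and the monoid structure of $\iTcl(\Cong)$ together with Lemma \ref{lem:tcor.1}(i) produces an element of $\MS$ outside $\iTcl(\Cong)$. Reorganizing your argument to track membership in $\iTcl(\Cong)$ (via Lemma \ref{lem:tcor.1}) rather than asking whether an individual element $t$ is tangible is how to close the gap within the proposition's stated hypotheses.
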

\begin{proof} Recall that $\one := \rone \in \MS$, since  $\MS \subseteq \tT$ is a tangible multiplicative submonoid. % that is $\MS = \tMS$.
\pSkip
(i): Let $\Cong$ be an \lcong\ on $R$ and assume that
 $\MS \nsubseteq  \iTcl(\Cong) $. Namely, there exists a tangible
 $c \in \MS \sm  \iTcl (\Cong) $ such that $c \equiv b$ for some $b \notin \tT$.  By extending $\Cong$ to $\iMS \Cong$ we have  $\frac{c}{\one} \equiv_\MS \frac{b}{\one}$, since $\one \in \MS$, which implies $\frac{\one}{c}\frac{c}{\one} \equiv_\MS \frac{\one}{c} \frac{b}{\one} = \frac{b }{c}$ --  a non-tangible element in $\RMS$.  On the other hand, $\frac{\one}{c}\frac{c}{\one} = \frac{\one}{\one} = \one_{R_\MS}$ is the identity of $R_\MS$, and thus $\one_{R_\MS}$ is non-tangible.
 % thus the elements of $\RMS/(\iMS\Cong)$ are all non-tangible.
   Hence $(R_\MS)^\times \nsubseteq\iTcl(\iMS\Cong)$, and therefore  $\iMS\Cong$ is not an  \lcong. \pSkip

Conversely, suppose that  $\iMS \Cong$ is a congruence on $\RMS$ which  is not an \lcong. We have the  following two cases: \pSkip
 (a) $\iTcl(\iMS \Cong) = \emptyset$, i.e., $\iMS \Cong$ is not a \qcong, and in particular $\one_{\RMS} \notin \iTcl(\iMS \Cong)$.
As $\Cong$ is an \lcong, we have $\iTcl(\Cong) \neq \emptyset$ with  $\one \in \RX \subseteq \iTcl( \Cong)$.   Thus,
there are
$a \in R \sm \MS$ and $c\in \MS$, where $a$ is non-tangible,    such that $\frac{a}{c} \equiv_{\MS} \frac{\one}{\one} = \one_{\RMS}$ in $R_\MS / (\iMS \Cong)$.
This means that $ac'' \equiv c c''$ for some $c'' \in \MS$, where  $ac''$  is non-tangible (otherwise $\frac{ac''}{\one} \frac{\one}{c''} = \frac{a}{\one}$ would be tangible, cf. Lemma \ref{lem:tcor.1}.(i)).   Therefore, since $ac'' \equiv c c''$, we obtain  that  $cc'' \notin \iTcl(\Cong)$. But, $cc'' \in \MS$, since ~$\MS$ is a monoid, and hence $\MS \nsubseteq  \iTcl(\Cong) $.
\pSkip
(b) $\iTcl(\iMS \Cong) \neq \emptyset$.
%If $\one \notin \iTcl(\iMS \Cong)$,
If $(R_\MS)^\times \nsubseteq \iTcl(\iMS \Cong)$, then $\one_{\RMS} \notin \iTcl(\iMS \Cong)$,
% by Remark \ref{rem:tng.cluster},
and we are done by part~ (a). Otherwise, $\iTcl(\iMS \Cong)$  is not closed for multiplication, i.e., $\iMS \Cong$ is a \qcong\ but not an \lcong. Let $\frac {a_1}{c_1}, \frac{a_2}{c_2} \in \iTcl(\iMS \Cong)$, which implies  $a_1, a_2 \in \iTcl( \Cong)$ by Lemma \ref{lem:tcor.1}.(i), and hence $a_1a_2\in \iTcl( \Cong)$, since  $\Cong$ is an \lcong.

Assume  that  $\frac {a_1}{c_1} \frac{a_2}{c_2} \notin \iTcl(\iMS \Cong)$, which gives  $\frac {c_1}{\one} \frac {a_1}{c_1} \frac{a_2}{c_2}  \frac{c_2}{\one} = \frac{a_1 a_2}{\one} \notin \iTcl(\iMS \Cong)$, since $c_1 c_2 \in \MS$.
Namely,  $\frac {a_1a_2}{\one} \cng_\MS \frac{b}{c'}$ for some non-tangible $\frac{b}{c'} \in \RMS$, where $b \notin \tT$ and $c' \in \MS$ is tangible.
This implies that $a_1 a_2 c' c'' \cng b  c'' $ over  $R$, for some $c'' \in \MS$, where $bc''$ is non-tangible in $R$ (cf. Lemma \ref{lem:tcor.1}.(ii)). Hence $a_1 a_2 c' c'' \notin \iTcl(\Cong)$,
while  $a_1 a_2 \in \iTcl(\Cong)$,  implying that $c'c'' \notin  \iTcl(\Cong)$, since $\iTcl(\Cong)$ is a submonoid. Thus,    $\MS \nsubseteq  \iTcl(\Cong)$, since $c c'' \in \MS$.

\pSkip
(ii): To show that $\Cong'\subseteq  \iMS (\Cong'{|_R})$, let $\Cong' \subset \RMS \times \RMS$, and  take  $(\frac{a}{c}, \frac{b}{c'}) \in \Cong'$. Now assume that $(\frac{a}{c}, \frac{b}{c'}) \notin \iMS (\Cong'{|_R}),$ which means that  $ac'c'' \not\equiv'_{|_R} bcc''$ for all $c'' \in \MS$, and in  particular for $c'' = \one$. But then $ac' \not\equiv'_{|_R} bc$,  implying that  $\frac{ac'}{\one} \not\equiv' \frac{bc}{\one}$, and thus $\frac{a}{c} \not\equiv' \frac{b}{c'}$ -- a contradiction.
%
% Then, % $ac'c'' = bcc''$ for some $c'' \in \MS$.
%$$ a  \frac{\one}{c}= \frac{a}{\one} \frac{\one}{c} =\frac{a}{c}  \equiv' \frac{b}{c'}= \frac{b}{\one}\frac{\one}{c'} = b  \frac{\one}{c'}. $$
%If $\frac{a}{\one} \equiv'_{|_R} \frac{b}{\one} $ we are done.
%
\pSkip
The opposite inclusion $\Cong'\supseteq  \iMS (\Cong'{|_R})$ is trivial.
\end{proof}

Let $\Cong$ be an \lcong, and let $\MS_\Cong := \tcor(\pi_\Cong)$ be the \tcore\ (Definition \ref{def:nusmr.hom}) of the canonical \qhom\ $\pi_\Cong: R \Onto R/\Cong$.  Then   $\MS_\Cong \subseteq \tT$ is a tangible multiplicative submonoid, by Corollaries  ~ \ref{cor:rpim} and \ref{cor:lcong.vs.tcls}, equals to  $\iTcl(\Cong) \subseteq \tT$ by \eqref{eq:gker2},  which localizes $R$ as in \eqref{eq:homLocal}.
 \begin{defn}\label{def:cong.localization}
 We define the \textbf{localization of a \nusmr\ $R$ by an \lcong}\ $\Cong$ to be the tangible localization (Definition \ref{def:tangible.localization})
$$R_\Cong := \iMS_\Cong R = (\tcor(\pi_\Cong))^{-1} R, \qquad \text{where } \MS_\Cong = \iTcl(\Cong),
%\ [= \iPcl(\Cong)],
$$
whose elements are fractions $\frac ac$ with tangible $c \in \MS_\Cong$.
\end{defn}
Given an \lcong\ $\Cong$ on~ $R$,  we write
\begin{equation}\label{eq:congR}
 \Cong_R:= (\iTcl(\Cong))^\inv \Cong \quad
 % [= (\iPcl(\Cong))^\inv \Cong], %\qquad \MS_\Cong = \iTcl(\Cong),
\end{equation}
for the extension of $\Cong$ to the localization $R_\Cong$ of $R$ by $\Cong$, cf. \eqref{eq:extend.cong}. Then, by Corollary \ref{cor:lcong.vs.tcls}, $R_\Cong / \Cong_R$ is a \tclsnusmr\ (but not necessarily a \nudom), in which
%$\iTcl(\Cong_R) = (R_\Cong)^\times $,  and
every tangible element is a unit.
%, i.e., $(R_\Cong)^\times = \tT_\Cong = \tTP_\Cong$.
For the localized \nusmr\  $R_\Cong := (R_\Cong, \tT_\Cong, \tG_\Cong, \nu_\Cong)$, this gives the diagram
% and  when $\nu|_\tT: \tT \to \tG$ is 1:1, the restriction $[\nu]: \tT/\Cong
%\to \tG/\Cong$ also is 1:1.
% \end{rem}
\begin{equation}\label{eq:loc.diag}
\xymatrix{
R \ \ar@{->}[rrd]_{\phi } \ar@{^{(}->}[rr]^{\tau_{\MS_\Cong}} && R_\Cong \ar[d]^{\pi_{\Cong_R}} \\
 && R_\Cong/\Cong_R \ ,  & \\
}
\end{equation}
%where $\MS_\Cong = \iTcl(\Cong)$ is  the tangible projection of $\Tcl{\Cong}$ on $R$.
which later helps us to construct local \nusmr s.

\subsection{Prime congruences and \nusmr\ localization} \sSkip
%Let $\Cong$ be a congruence with underlying  equivalence $\cng$ on a \nusmr\ $R$.
Let $\Cong$ be a \qcong\ with underlying  equivalence $\cng$ on a \nusmr\ $R := (R, \tT, \tG, \nu )$.
Recall that we write $$a \cng \ghost \dss{\text{if}} a \cng b \text{ for some } b \in \tG,   $$
%Recall form Lemma \ref{lem:ghs.eqv} that
and that by Lemma \ref{lem:ghs.eqv} this condition is equivalent to $a \cng a^\nu$. To ease the exposition, we use both notations. We write $(R/\Cong)|_\tng^\PrS$   for the \tprsset\ of the quotient \nusmr\  $R/ \Cong$,
$(R/\Cong)|_\tng$   for its \tsset, and $(R/ \Cong)|_\ghs$ for its ghost ideal.

\pSkip

The following definition is a  key definition in this paper.
\begin{defn}\label{def:prmCng}
A  \textbf{\gprimec} $\pCong$ (alluded for \textbf{ghost prime congruence})  on a \nusmr\ ~$R$ is an \lcong\ whose underlying  equivalence $\pcng$ satisfies  for any $a,b \in R $ the condition
  \begin{equation}\label{eq:prime.0}
    a b \pcng \ghost \ds \Dir a \pcng  \ghost  \text{ or } \ b \pcng \ghost.
     \end{equation}
%or equivalently
%  \begin{equation}\label{eq:prime}
 %   a b \pcng (ab)^\nu \dss \Dir a \pcng  a^\nu \text{ or } b \pcng b^\nu
 %    \end{equation}
         That is, $ab \in \iGcl(\pCong) $ implies $a \in \iGcl(\pCong)$ or $b \in \iGcl(\pCong)$, or equivalently $a,b \notin \iGcl(\pCong) $ implies $ab \notin \iGcl(\pCong)$.
     The \textbf{\gprime\ (congruence) spectrum} of $R$ is defined as
  \begin{equation}\label{eq:primeSpec}
    \Spec(R) := \{ \pCong \ds | \pCong \text{ is a \gprimec\  on }  R \  \}.
     \end{equation}

\end{defn}

In supertropical context, being  ``prime'' for a congruence  $\pCong$ essentially means that a \emph{product of two non-ghost elements can never be equivalent to a ghost}, while its tangible projection $\iTcl(\pCong)$ is a (tangible) monoid.
By Lemma \ref{lem:ghs.eqv},  Condition \eqref{eq:prime.0} is stated equivalently as
 \begin{equation}\label{eq:prime}
    a b \pcng (ab)^\nu \ds\Dir a \pcng  a^\nu \text{ or } b \pcng b^\nu  ,
     \end{equation}
     which means that the ghsotification of a product $ab$ by $\pCong$ is obtained by ghostifying at least one of its terms.
    % In the sequel,  we use both forms.
Also,   from  \eqref{eq:q.cong} we obtain
\begin{equation*}%\label{eq:q.cong}
  ab \notin \iTcl(\pCong) \Dir a \notin \iTcl(\pCong) \text{ or } b \notin \iTcl(\pCong),
\end{equation*}
since a \gprimec\ is an \lcong,  in which $\iTcl(\pCong)= \iPcl(\pCong)$. (Thereby $a
\notin \iTcl(\pCong)$ for each  $a \notin \tTPS$.)
Accordingly, localizing  by a \gprimec\ $\pCong$ is the same as localizing  by an \lcong %\ (Definition \ref{def:cong.localization})
, written specifically $R_\pCong$.

% On the other hand $\tGz \times \tGz \subset \pCong$, for any prime \qcong.

\begin{remark}\label{rem:mult1} % $\pCong$ is an ideal of $R$, cf. Remark \ref{rem:mult1}.
 The condition that  if $ab \in \iGcl(\pCong)$ then $a$ or $b$ belongs to $\iGcl(\pCong)$ shows that  the ghost projection $\iGcl(\pCong)$ of a \gprimec\ $\pCong$ is a \gprime\ ideal  of $R$ containing its ghost ideal  ~$\tGz$ (Definition ~\ref{def:nuprime.ideal}). Furthermore, it implies that the complement  $\ciGcl(\pCong) = R  \sm \iGcl(\pCong)$ of $\iGcl(\pCong)$  is a multiplicative submonoid of $R$, consisting of non-ghost elements. In particular, it  contains the \tprsmon\ $\iTcl(\pCong)= \iPcl(\pCong)$ which is employed to execute localization.
 \end{remark}

We derive the following observation, which is of much help later.

\begin{rem}\label{rem:biject.proj} The canonical surjection $\pi_\Cong: R \Onto R/\Cong$ and its inverse $[a] \longmapsto \pi_\Cong^{-1}([a])$ define a one-to-one correspondence between all \gprimec s on $R/\Cong$ and the \gprimec s  on $R$ that contain ~$\Cong$, cf. Remark \ref{rem:induced.cong}.(i).

If $\vrp: R \To R'$ is a \qhom\ and $\pCong'$ is a \gprimec \ on $R'$, then $\vvrp(\pCong')$ is  a
\gprimec\ on $R$. Indeed, first, $\vvrp(\pCong')$ is an \lcong\ on $R$ by Remark \ref{rem:induced.cong}.(ii). Second, for the ghost component, recall that  $\vvrp(\pCong')$ is determined by the equivalence $\cng$ given by   $a \cng b$ iff $\vrp(a) \pcng' \vrp(b)$.
Then, $ab \cng \ghost$ iff $\vrp(ab) = \vrp(a)\vrp(b) \pcng' \ghost $ iff $\vrp(a) \pcng' \ghost$ or $\vrp(b) \pcng' \ghost $ iff $a \cng \ghost$ or $b \cng \ghost$.
Therefore, $\vvrp(\pCong')$ is a \gprimec.

\end{rem}

We start with an easy characterization of  the simplest type of \gprimec s, arising when the clusters of an \lcong\ are the complement of each other.
\begin{lem}\label{lem:p.cong.1}
An \lcong\ $\Cong$  in which  $\Cong = \Tcl{\Cong} \cup \Gcl{\Cong}$  is a \gprimec.
\end{lem}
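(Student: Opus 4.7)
The plan is to derive the \gprime\ condition \eqref{eq:prime.0} by a direct contrapositive argument, exploiting the hypothesis that every pair in $\Cong$ lies either in the tangible cluster or the ghost cluster, together with the fact that $\iTcl(\Cong)$ is a monoid because $\Cong$ is an \lcong.

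First I would unpack the hypothesis on the diagonal: for any $a \in R$ the reflexive pair $(a,a)$ lies in $\Cong$, so by assumption it lies in $\Tcl{\Cong}$ or in $\Gcl{\Cong}$, yielding $a \in \iTcl(\Cong) \cup \iGcl(\Cong)$. Moreover these two subsets of $R$ are disjoint, since $a \in \iTcl(\Cong)$ forces $a$ to be congruent only to tangibles, whereas $a \in \iGcl(\Cong)$ means $a \cng a^\nu$ with $a^\nu$ ghost. Consequently, for every $a \in R$ exactly one of the alternatives $a \in \iTcl(\Cong)$ or $a \in \iGcl(\Cong)$ holds.

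Next I would verify condition~\eqref{eq:prime.0} by contraposition: suppose $a,b \in R$ with $a \not\pcng \ghost$ and $b \not\pcng \ghost$, i.e.\ $a, b \notin \iGcl(\Cong)$. By the dichotomy above, both $a$ and $b$ lie in $\iTcl(\Cong)$. Since $\Cong$ is an \lcong, Definition~\ref{def:nucong} gives $\iTcl(\Cong) = \iPcl(\Cong)$, which is a multiplicative tangible submonoid of $R$; hence $ab \in \iTcl(\Cong)$. By disjointness of $\iTcl(\Cong)$ and $\iGcl(\Cong)$ just noted, $ab \notin \iGcl(\Cong)$, i.e.\ $ab \not\pcng \ghost$. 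This is exactly the contrapositive of~\eqref{eq:prime.0}, so $\Cong$ is a \gprimec.

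There is essentially no obstacle here; the only point that needs a moment of care is the observation that the set-theoretic hypothesis $\Cong = \Tcl{\Cong} \cup \Gcl{\Cong}$ on all pairs implies the corresponding dichotomy on the diagonal, which then combines cleanly with the monoid property of $\iTcl(\Cong)$ that comes for free from $\Cong$ being an \lcong.
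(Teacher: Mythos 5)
Your proposal is correct and is essentially the paper's own argument, merely phrased as a contrapositive rather than a reductio ad absurdum: the paper assumes $ab \cng \ghost$ with $a \not\cng \ghost$, $b \not\cng \ghost$, uses the disjoint-union hypothesis to place $a,b$ in $\iTcl(\Cong)$, and invokes closure of $\iTcl(\Cong)$ under multiplication to reach a contradiction. Your explicit preliminary remark on the diagonal dichotomy is a clean way of making precise what the paper leaves implicit in the phrase ``as $\Cong = \Tcl{\Cong} \cup \Gcl{\Cong}$ is a disjoint union.''
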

\begin{proof}
  Assume that $ab \cng \ghost$ where both $a \not \cng
\ghost$
 and $b \not \cng \ghost$, hence  $a,b \in \iTcl(\Cong)$, as $\Cong = \Tcl{\Cong} \cup \Gcl{\Cong}$ is a disjoint union.  But  $\iTcl(\Cong)$ is a multiplicative monoid of $R$, since $\Cong$ is an \lcong,  and thus  $ab \in \iTcl(\Cong)$, implying that  $ab \not \cng
\ghost$ -- a contradiction.
\end{proof}

\begin{example}\label{examp:prime} $ $
\begin{enumerate} \eroman
\item The trivial congruence $\diag(R)$ on a \nudom\ $R$ is a \gprimec.
  \item Any \gprimec\ on a supertropical semifield (Definition \ref{def:supertropical}) is an \intrc,  cf. Example ~\ref{exmp:cong.smf.1}.
  \item  All  \gprimec s
   $\pCong$ on a definite \nusmf\ $F$ are determined by equivalences on  the tangible monoid $\tT$ of $F$, which is an  abelian group (Definition \ref{def:nudomain}). Therefore, $\iPcl(\pCong) = \iTcl(\pCong) = \tT$ for all $\pCong$ on~ $F$, where their equivalence classes are varied.
  %  (This holds even when $F$ is not definite.)

  \item  Let $R:= \tlF[\lm]$ be the (\tcls) \nusmr\ of polynomial functions over a \ssmf\ ~$F$. For any $E = \{ \lm + a \} $ the \gcong\ $\mfG_E$ (cf. \eqref{eq:ghost.smr}) is a \gprimec\ on~ $R$.  The same holds for any factor as in Theorem \ref{thm:poly.factorization}, with the respective conditions. % \Com{CHECK}

\item The polynomial function
$f  = g_1 g_2 =(\lm_1 + \lm_2 + \one)(\lm_1 + \lm_2 + \lm_1 \lm_2 )                                                                          = (\lm_1 + \one) (\lm_2 + \one)(\lm_1 + \lm_2 ) = h_1 h_2 h_3$  in Example~ \ref{examp:non-unique-factorization} demonstrates a pathological  behavior where  unique factorization fails. Although the factor $g_1$ is irreducible, its ghsotfication congruence  $\gCong_E$, $E= \{g_1\}$, is not a \gprimec, as $f \in \iGcl(\gCong_E)$, but the $h_i$'s  are  not necessarily contained in  $\iGcl(\gCong_E)$.

%The \gprimec \ ~$\mfG_E$ with $E = \{ \lm_1 + \lm_2 + \lm_1 \lm_2 \}$ must contain a term which is not a factor of $\lm_1 + \lm_2 + \lm_1 \lm_2$. In particular terms $h$ that satisfy  $ h \lsset f $. \Com{CHECK}
\end{enumerate}
\end{example}

Definition \ref{def:prmCng} lays  the first supertropical analogy to the familiar connection between prime ideals and  integral domains in ring theory.

\begin{prop}\label{prop:prime2domain} Let $\Cong$ be a congruence on  a \nusmr \ $R$.
  The quotient $R/\Cong$ is a \nudom\ (Definition ~\ref{def:nudomain}) if and only if $\Cong$ is a \gprimec.
\end{prop}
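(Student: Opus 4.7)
My plan is to prove both directions by translating the defining conditions of a \gprimec\ and of a \nudom\ through the canonical description \eqref{eq:qnusmr} of $R/\Cong$, whose tangible set is $\iTcl(\Cong)/\Tcl{\Cong}$ and whose ghost ideal is $\iGcl(\Cong)/\Gcl{\Cong}$. Throughout, I will use the bridge provided by Lemma \ref{lem:ghs.eqv}: an element $[a] \in R/\Cong$ is ghost if and only if $a \cng a^\nu$, equivalently $a \cng \ghost$, i.e., $a \in \iGcl(\Cong)$. Similarly, $[a]$ is tangible in $R/\Cong$ if and only if $a$ is congruent only to tangibles, i.e., $a \in \iTcl(\Cong)$. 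Note that for $R/\Cong$ to even be a \nusmr\ (in either direction) $\Cong$ must be a \qcong, by Lemma \ref{lem:qsmr}.

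For the forward direction $(\Leftarrow)$, suppose $\Cong$ is a \gprimec. Since by Definition \ref{def:prmCng} every \gprimec\ is an \lcong, Corollary \ref{cor:lcong.vs.tcls} immediately gives that $R/\Cong$ is a \tclsnusmr. To see it has no ghost divisors, suppose towards contradiction that $[a],[b] \in R/\Cong$ are both non-ghost while $[a][b] = [ab]$ is ghost. Translating back to $R$ yields $a \not\cng \ghost$ and $b \not\cng \ghost$ but $ab \cng \ghost$, which directly contradicts the \gprime\ condition \eqref{eq:prime.0}.

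For the reverse direction $(\Rightarrow)$, suppose $R/\Cong$ is a \nudom; in particular $\Cong$ is a \qcong. I will first show $\Cong$ is an \lcong, i.e., that $\iTcl(\Cong)$ is a multiplicative tangible submonoid. Since $R/\Cong$ is \tcls, the tangible set $\iTcl(\Cong)/\Tcl{\Cong}$ is a monoid. So if $a,b \in \iTcl(\Cong)$, then $[a],[b]$ are tangible, hence $[ab]=[a][b]$ is tangible, whence $ab$ is congruent only to tangibles and $ab \in \iTcl(\Cong)$. Together with $\RX \subseteq \iTcl(\Cong)$ (inherited from the \qcong\ condition), this promotes $\Cong$ to an \lcong. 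Second, to verify the \gprime\ condition \eqref{eq:prime.0}, assume $ab \cng \ghost$, so that $[ab] = [a][b]$ is ghost in $R/\Cong$. If both $[a]$ and $[b]$ were non-ghost, then $[a]$ would be a ghost divisor in $R/\Cong$ (or $[b]$ would), contradicting the \nudom\ hypothesis. Hence $[a]$ or $[b]$ is ghost, which translates to $a \cng \ghost$ or $b \cng \ghost$, as required.

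There is no serious obstacle beyond careful bookkeeping; the only subtlety is that in the reverse direction one must extract the multiplicative closedness of $\iTcl(\Cong)$ from the \tcls\ property of $R/\Cong$ before invoking the no-ghost-divisors property, since the \gprime\ condition presupposes $\Cong$ to be an \lcong.
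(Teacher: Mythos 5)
Your proof is correct and follows essentially the same route as the paper's: both directions translate the no-ghost-divisor property of $R/\Cong$ into condition \eqref{eq:prime.0} via the clusters, and both identify the \lcong\ condition on $\Cong$ with the \tcls\ property of $R/\Cong$ (via Corollary \ref{cor:lcong.vs.tcls} in one direction and the monoid structure of $\iTcl(\Cong)/\Tcl{\Cong}$ in the other). You merely unpack the monoid verification that the paper leaves terse.
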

\begin{proof} $(\Rightarrow):$ Let  $R/ \Cong $
  be a \nudom. As $R/ \Cong $ is a \tclsnusmr, $\iTcl(\Cong)$ must be a monoid, so $\Cong$ is an \lcong.
   For $ab \cng \ghost$ in $R$, we  have $[ab] = [a][b] \in (R/\Cong)|_\ghs$,  implying that $ [a] \in (R/\Cong)|_\ghs$ or $[b] \in (R/\Cong)|_\ghs$, since $R/\Cong$ has no ghost divisors. This is equivalent to $a \cng \ghost$  or $b \cng \ghost$, and thus $\Cong $ is \gprime. \pSkip
$(\Leftarrow):$ Let $\Cong = \pCong$ be a \gprimec. $R/\pCong$  is  \tclsnusmr, by Corollary \ref{cor:lcong.vs.tcls}, since $\pCong$ is an \lcong.
% $\iTcl(\Cong)/\Tcl{\Cong}$ is monoid, which is the \tsset\ of $R/\pCong$, and thus $R/\Cong$ is \prs.
Suppose that  $[a] [b] \in (R/\pCong)|_\ghs$,
then $[ab] \in (R/\pCong)|_\ghs$. Namely $ab \pcng \ghost$, implying  that $a \pcng \ghost$ or
$b \pcng \ghost$, since $\pCong$ is \gprime. Hence, $[a] \in (R/\pCong)|_\ghs  $ or $[b] \in (R/\pCong)|_\ghs$,
and thus  $R/\pCong$ has no ghost-divisors and is a \nudom.
\end{proof}

From Proposition \ref{prop:prime2domain} we deduce that a \nusmr\ $R$ is a \nudom\ if and only if the trivial congruence $\diag(R)$ on $R$  is a \gprimec. When $\pCong$ is \gprimec\ as in  Lemma \ref{lem:p.cong.1}, $R/\pCong$ is a \dfnt\ \nudom, cf. Example \ref{exmp:nudom} and Corollary \ref{cor:lcong.vs.tcls}.

\begin{example} Let $\STR(\NetZ)$ be the supertropical domain (Definition \ref{def:supertropical}) over $\NetZ = \Net \cup \{ 0\}$, as constructed in  Example \ref{exmp:grp.fld}, and let ~$\pCong \neq \diag(\NetZ)$ be a nontrivial  \gprimec\  on $\STR(\NetZ)$. Suppose $n \pcng n^\nu$ for some $n \in \NetZ$, then  \eqref{eq:prime} inductively implies that  $1 \pcng  1^\nu$ or $0 \pcng 0^\nu$. But $\one = 0$ is the identity of $\STR(\NetZ)$, and thus in the latter  case $\pCong$ is not a \qcong. Hence $\iTcl(\pCong) = \{ 0 \} $, which shows that all nontrivial \gprimec s on  $\STR(\NetZ)$ have the same tangible cluster.

The ghost classes of $\pCong$ can be varied, induced by the equivalences $n \pcng n^\nu$ of ~ $n \in \Net$. For a fixed $p \in \Net$,  the ghost classes are as follows:
$$ \quad [n^\nu] = \{n^\nu\} \text{ for each } 0 \leq   n < p, \qquad  [p^\nu] = \{  n^\nu \ds | n \geq p  \}. $$
Accordingly, we see that there exists  an injection of $\NetZ$ in the spectrum $\Spec(\STR(\NetZ))$.

By similar considerations, as every tangible in $\STR(\Z)$ is invertible,   the trivial congruence $\diag(\Z)$  is the only \gprimec \ on the supertropical semifield $\STR(\Z)$.
Indeed, if $n \cng n^\nu$ then  $0 = (-n)n \cng (-n)n^\nu = e$, which gives a ghost congruence.
Furthermore, by Remark \ref{rem:cong.order}.(iii), if  $n \cng m$ for $n > m$, then
$n = n + m \cng n^\nu$, which implies again $0 \cng e$.
 The same holds for
the supertropical semifields $\STR(\Q)$ and $\STR(\Real)$.
\end{example}

Recalling that  $\iTcl(\pCong)= \iPcl(\pCong)$ is a \tmon\ for every \gprimec\ $\pCong$, we specialize  Proposition \ref{prop:cong.1} to  \gprimec s.

\begin{prop}\label{prop:cong.2}
  Let $R$ be a \nusmr, and let $R_\MS$ be its  localization  by a tangible multiplicative submonoid $\MS \subset R$.
  \begin{enumerate} \eroman
    \item A \gprimec\ $\pCong$ on $R$ such that $\MS \subseteq  \iTcl(\pCong) $ extends to a \gprimec\ $\iMS \pCong$ on $\RMS$ and satisfies $ (\iMS \pCong)|_R= \pCong$.
    \item The restriction $\pCong'|_{R} := \pCong '\cap (\RxR)$ of a \gprimec\ $\pCong'$ on $\RMS$ to $R$   is a \gprimec\  satisfying $ \pCong'= \iMS(\pCong'|_R)$. In particular $\MS \subseteq \iTcl(\pCong'|_R)$.
  \end{enumerate}

\end{prop}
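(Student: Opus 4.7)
The plan is to treat the two parts separately, building on Proposition~\ref{prop:cong.1} (which handles the \lcong\ case without the prime property) and using the prime condition of Definition~\ref{def:prmCng} to upgrade the conclusions. Throughout I will exploit the key characterization that, since $\MS \subseteq \iTcl(\pCong)$ consists of elements whose classes are tangible (hence never in $\iGcl(\pCong)$), multiplication by members of $\MS$ cannot create or hide ``ghostness'' under a prime congruence.

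For Part (i), I would first invoke Proposition~\ref{prop:cong.1}(i) to obtain that $\iMS \pCong$ is an \lcong\ on $R_\MS$. The heart of the argument is the following characterization of ghost classes in the extension: for $x \in R$ and $y \in \MS$, $\frac{x}{y} \in \iGcl(\iMS \pCong)$ iff $x \in \iGcl(\pCong)$. The direction ($\Leftarrow$) is immediate from the defining equivalence $\cng_\MS$. For ($\Rightarrow$), the hypothesis unfolds to $xyc'' \cng (xyc'')^\nu$ in $\pCong$ for some $c'' \in \MS$; since $yc'' \in \MS \subseteq \iTcl(\pCong)$, primeness of $\pCong$ forces $x \in \iGcl(\pCong)$. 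Using this characterization, primeness of $\iMS \pCong$ follows readily: if $\frac{a}{c}\cdot\frac{b}{c'} = \frac{ab}{cc'} \in \iGcl(\iMS \pCong)$, then $ab \in \iGcl(\pCong)$, so by primeness of $\pCong$ either $a$ or $b$ is in $\iGcl(\pCong)$, whence $\frac{a}{c}$ or $\frac{b}{c'}$ lies in $\iGcl(\iMS \pCong)$.

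The identity $(\iMS \pCong)|_R = \pCong$ is the main obstacle. The inclusion $\pCong \subseteq (\iMS \pCong)|_R$ is trivial by taking $c'' = \one$ in \eqref{eq:extend.cong}. For the reverse inclusion, suppose $ac'' \cng bc''$ in $\pCong$ with $c'' \in \MS$. The key trick is to add $ac''$ and $bc''$:
\[
(a+b)c'' \;=\; ac'' + bc'' \;\cng\; ac'' + ac'' \;=\; (ac'')^\nu,
\]
so $(a+b)c'' \in \iGcl(\pCong)$. Since $c'' \in \iTcl(\pCong)$ and the tangible and ghost clusters are disjoint, primeness yields $a+b \in \iGcl(\pCong)$. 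Working in the \nudom\ quotient $R/\pCong$ (Proposition~\ref{prop:prime2domain}), this forces $[a]+[b] = ([a]+[b])^\nu$; combined with $[a][c''] = [b][c'']$ and the fact that $[c'']$ is a non-ghost tangible, one runs through the cases of Axioms \NMa--\NMb\ (or uses Lemma~\ref{lem:gdiv.in.tame}): if one of $[a],[b]$ is ghost, primeness applied to $[a][c''] = [b][c'']$ makes the other ghost too, and the ghost cancellation in the \nudom\ gives $[a]=[b]$; if both are non-ghost, \NMb\ forces $[a]\nucong[b]$, and the tangible cancellation in the \nudom\ again gives $[a]=[b]$. This last cancellation step is the delicate point and will need the structure of \nudom s together with primeness.

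For Part (ii), the restriction $\pCong'|_R$ is an \lcong\ by Remark~\ref{rem:cong.rest}. To verify the prime condition, if $ab \cng_{|_R}' (ab)^\nu$, then $\frac{a}{\one}\cdot\frac{b}{\one} \cng' \bigl(\frac{ab}{\one}\bigr)^\nu$ in $\pCong'$; primeness of $\pCong'$ applied in $R_\MS$ gives $\frac{a}{\one}$ or $\frac{b}{\one}$ ghost-equivalent, which pulls back to $a$ or $b$ being in $\iGcl(\pCong'|_R)$. The identity $\pCong' = \iMS(\pCong'|_R)$ is immediate from Proposition~\ref{prop:cong.1}(ii), since $\pCong'$ is in particular an \lcong. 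For the final assertion $\MS \subseteq \iTcl(\pCong'|_R)$, observe that for any $c \in \MS$, the class $\frac{c}{\one}$ is a unit in $R_\MS$; since $\pCong'$ is a \qcong, $(R_\MS)^\times \subseteq \iTcl(\pCong')$, so $\frac{c}{\one} \in \iTcl(\pCong')$. Hence any $t \in R$ with $c \cng_{|_R}' t$ lifts to $\frac{c}{\one} \cng' \frac{t}{\one}$, forcing $\frac{t}{\one}$ (and therefore $t$) to be tangible, i.e., $c \in \iTcl(\pCong'|_R)$.
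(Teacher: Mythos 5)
Your handling of the primeness of the extension $\iMS\pCong$ and of part (ii) is correct and essentially the paper's own argument: clear tangible denominators, note that the resulting product of denominators lies in $\MS\subseteq\iTcl(\pCong)$ and so cannot be ghostified, and invoke primeness of $\pCong$ (resp.\ of $\pCong'$); your preliminary characterization $\frac{x}{y}\in\iGcl(\iMS\pCong)\Leftrightarrow x\in\iGcl(\pCong)$ is just a modular packaging of the same computation. The genuine gap is in your proof of the reverse inclusion $(\iMS\pCong)|_R\subseteq\pCong$. Unfolding the hypothesis to $ac''\pcng bc''$ for some $c''\in\MS$ and deducing $a+b\in\iGcl(\pCong)$ via the sum trick is fine, but the two cancellation steps you then invoke in $R/\pCong$ --- ``ghost cancellation'' and ``tangible cancellation'' in a \nudom\ --- are not available. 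A \nudom\ is defined by the absence of ghost divisors, \emph{not} by multiplicative cancellativity: the footnote to Definition~\ref{def:nudomain}, Example~\ref{examp:cancellative.gdiv}, and Remark~\ref{rem:cong.unit} all stress that cancellativity is deliberately not assumed, and nothing in the axioms lets you cancel $[c'']$ from $[a][c'']=[b][c'']$ even when $[c'']$ is tangible and $[a]\nucong[b]$. Moreover, the intermediate claim that $[a]+[b]$ ghost with both summands non-ghost forces $[a]\nucong[b]$ reads Axiom \NMb\ backwards; it requires $[a]^\nu$ and $[b]^\nu$ to be comparable, whereas the ghost submonoid of a general \nusmr\ is only partially ordered. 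So your case analysis does not close.

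For comparison, the paper does not attempt any element-wise cancellation here: it delegates the congruence-level bookkeeping to Proposition~\ref{prop:cong.1} and then verifies directly only that the restriction has the correct ghost projection, namely $a\in\iGcl(\pCong)$ iff $\frac{a}{\one}\in\iGcl((\iMS\pCong)|_R)$, using precisely the denominator-stripping-by-primeness argument you already have. If you want a self-contained proof of the full set-theoretic identity $(\iMS\pCong)|_R=\pCong$, you must either find an argument avoiding cancellation or add the hypothesis that $\pCong$ is cancellative with respect to $\MS$; as written, the last step of your part (i) is unproved.
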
%\label{prop:pcong.1}
\begin{proof} (i): First, $\iMS \pCong$ is an  \lcong\ on $\RMS$, by Proposition \ref{prop:cong.1}.(i). Assume that $\pCong$ is a \gprimec\ on $R$. Take  $\frac{a}{c}, \frac{b}{c'} \in \RMS$ such that $\frac{a}{c} \frac{b}{c'} \equiv_\MS \frac{g}{h}$ in $\iMS\pCong$, where $\frac{g}{h}$ is a ghost of $\RMS$, which implies that $g \in \tGz$, since  $h \in \MS$ is tangible. % Let $\frac{a}{c} \frac{b}{c'}  = \frac{a''}{k}$ for some $a'' \in R$, $k\in \MS$.
  Then, $$ab(h c'') = abh c'' \pcng gcc'c'' =  g(cc'c'') $$ for some $c'' \in \MS$.
  Since $\pCong$ is \gprime\ and $g(cc'c'') \pcng \ghost$, then $ab \pcng \ghost $ or $h c'' \pcng \ghost $.
  But $h c''\in \MS \subseteq \tT$, and thus $ab\pcng \ghost $.  By the same argument, as $\pCong$ is \gprime, this implies $a \pcng \ghost$ or $b \pcng \ghost$, and therefore
  $\frac{a}{c}$ or $\frac{b}{c'}$ is ghost in $\RMS$.  Hence, $\iMS\pCong$ is a \gprimec\ on ~ $\RMS$.

  Clearly $ \pCong \subseteq (\iMS \pCong)|_R $. To verify the opposite inclusion, by the use of Proposition \ref{prop:cong.1}.(i),  we only need to show that this restriction preserves the property of being \gprime.
  Suppose $\frac{a}{\one}\in  \iGcl((\iMS \pCong)|_R)$, that is $\frac{a}{\one} \cng_\MS \ghost$ in $\iMS \pCong$. As $\frac{a}{\one} \in \RMS$, there are $a' \in R$, $c, c' \in \MS$, and $g \in \tGz$ such that
  \begin{equation}\label{eq:str}
  \frac{a}{\one} = \frac{a'}{c} \cng_\MS \frac{g}{c'} \; . \tag{$*$}
\end{equation}
From the right hand side, we have
$a'c' c'' \pcng gcc''$ for some $c'' \in \MS$, where $gcc'' \in \tGz$. This implies $a' \pcng \ghost$, since the product $c'c''$ is tangible and $\pCong$ is \gprime. From the left hand side of \eqref{eq:str}, we  have $ac = a' \pcng \ghost$, and thus $a \pcng \ghost$, as $c \in \MS$ is tangible. Therefore $a \in \iGcl(\pCong)$, and we conclude that $a \in \iGcl(\pCong)$ iff $\frac{a}{\one}\in  \iGcl((\iMS \pCong)|_R)$.

  %Suppose $\frac{a}{\one}\frac{b}{\one} \in  \iGcl((\iMS \pCong)|_R)$, that is $\frac{a}{\one}\frac{b}{\one} \cng_\MS \frac{g}{\one}$, for some $g \in \tGz$. For these term in $\RMS$ have $\frac{a}{\one} = \frac{a'}{c_1}$, $\frac{b}{\one} = \frac{b'}{c_2}$,  and $\frac{g}{\one} = \frac{g'}{c_3}$ for some $c_1,c_2, c_3 \in \MS$, for which
%  $\frac{a'}{c_1}  \frac{b'}{c_2}\cng_\MS \frac{g'}{c_3}$  and hence
%  $$ a'b'c_3 c'' \cng g' c_1 c_2 c'' $$
%  for some $c'' \in \MS$.
  \pSkip
  (ii): Suppose $\pCong'$ is a \gprimec \ on $\RMS$, then an equivalence  $\frac{a}{c} \frac{b}{c'} \equiv'_p \frac{g}{h}$ to some ghost $\frac{g}{h}$  implies that $\frac{a}{c}$ or $\frac{b}{c'}$ is ghost. In particular, this holds when $c = c' = h = \one$, and thus also  for the restriction~ $\pCong'|_R$. Hence, $\pCong'|_R$ is a \gprimec.
 The same argument as in the proof of Proposition \ref{prop:cong.1} shows that $ \pCong' = \iMS(\pCong'|_R)$.  Finally, $\MS \subseteq \iTcl(\pCong'|_R)$ by Proposition \ref{prop:cong.1}.(i).
\end{proof}

Having the correspondences between \gprimec s on $R$ and \gprimec s on $R_\MS$  settled, we have the following desired result.

\begin{theorem}\label{thm:prime.bij}
The canonical injection  $\tau_\MS: R \Into R_\MS$ of a \nusmr\ $R$ into its
localization $R_\MS$ by a tangible multiplicative submonoid $\MS \subset R$ induces a bijection
  \begin{equation}\label{eq:spec.inj}
    \Spec (R_\MS) \ISOTO \{ \pCong \in \Spec (R) \ds | \MS \subseteq  \iTcl(\pCong) \}, \qquad \pCong_\MS \longmapsto \pCong_\MS|_R,
  \end{equation}
  that, together with its inverse, respects inclusions between \gprimec s on $R$ and \gprimec s on
$R_\MS$.

\end{theorem}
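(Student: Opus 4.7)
The proof will essentially repackage the content already established in Proposition~\ref{prop:cong.2}, since the two parts of that proposition precisely furnish the two maps in question and verify that each is a two-sided inverse of the other. So the plan is to assemble these pieces and then separately check order preservation.

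First, I would define the two candidate maps explicitly. The forward map is the restriction
\[
\Psi : \Spec(R_\MS) \TO \Spec(R), \qquad \pCong_\MS \Mto \pCong_\MS|_R = \pCong_\MS \cap (R \times R).
\]
Proposition~\ref{prop:cong.2}.(ii) guarantees that $\pCong_\MS|_R$ is in fact a \gprimec\ on~$R$ and, crucially, that $\MS \subseteq \iTcl(\pCong_\MS|_R)$; therefore $\Psi$ lands in the prescribed subset of $\Spec(R)$. The backward map is the extension
\[
\Phi : \{ \pCong \in \Spec(R) \ds | \MS \subseteq \iTcl(\pCong) \} \TO \Spec(R_\MS), \qquad \pCong \Mto \iMS \pCong,
\]
where $\iMS\pCong$ is defined via \eqref{eq:extend.cong}. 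Proposition~\ref{prop:cong.2}.(i) ensures the hypothesis $\MS \subseteq \iTcl(\pCong)$ is exactly what is needed for $\iMS\pCong$ to be a \gprimec\ on~$R_\MS$.

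Next I would check that $\Psi$ and $\Phi$ are mutually inverse. The composition $\Psi \circ \Phi$ sends $\pCong$ to $(\iMS\pCong)|_R$, which equals $\pCong$ by the last assertion of Proposition~\ref{prop:cong.2}.(i). The composition $\Phi \circ \Psi$ sends $\pCong_\MS$ to $\iMS(\pCong_\MS|_R)$, which equals $\pCong_\MS$ by the equality $\pCong' = \iMS(\pCong'|_R)$ in Proposition~\ref{prop:cong.2}.(ii). This establishes the bijection in~\eqref{eq:spec.inj}.

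Finally, I would verify that both $\Psi$ and $\Phi$ are monotone with respect to set-theoretic inclusion. For $\Psi$, if $\pCong_\MS \subseteq \pCong'_\MS$ in $\Spec(R_\MS)$, then intersecting both with $R \times R$ yields $\pCong_\MS|_R \subseteq \pCong'_\MS|_R$ directly. For $\Phi$, suppose $\pCong \subseteq \pCong'$ are two \gprimec s on $R$ with $\MS \subseteq \iTcl(\pCong), \iTcl(\pCong')$. Given $\bigl(\tfrac{a}{c}, \tfrac{b}{c'}\bigr) \in \iMS\pCong$, there is $c'' \in \MS$ with $ac'c'' \cng bcc''$ in $\pCong$; hence the same relation holds in $\pCong'$, so $\bigl(\tfrac{a}{c}, \tfrac{b}{c'}\bigr) \in \iMS\pCong'$. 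Thus $\Phi(\pCong) \subseteq \Phi(\pCong')$. Since $\Psi$ and $\Phi$ are mutually inverse and both order-preserving, they are order isomorphisms with respect to inclusion.

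There is no real obstacle here: all the technical work (both the \gprime\ property of the image and the identities $(\iMS\pCong)|_R = \pCong$ and $\iMS(\pCong'|_R) = \pCong'$) is already packaged into Proposition~\ref{prop:cong.2}. The only care required is to notice that the forward map is well-defined into the \emph{specified} subset of $\Spec(R)$, i.e.\ that restrictions automatically satisfy $\MS \subseteq \iTcl(\pCong_\MS|_R)$, which is the content of the last sentence of Proposition~\ref{prop:cong.2}.(ii) and is what forces this subset to be the correct codomain.
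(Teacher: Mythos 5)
Your proposal is correct and follows essentially the same route as the paper's own proof, which consists of a single sentence pointing to Proposition~\ref{prop:cong.1} (for the maps at the level of \lcong s) and Proposition~\ref{prop:cong.2} (for the restriction to \gprimec s). You have unpacked those two references into an explicit verification that the restriction and extension maps are mutually inverse and order-preserving, which is precisely what the paper leaves implicit.
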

\begin{proof}  Proposition \ref{prop:cong.1} determines the  map of \lcong s,  while  Proposition \ref{prop:cong.2} restricts to the case of \gprimec s.
\end{proof}

\subsection{\Dtrmc s} \sSkip
In classical ring theory,  prime  and maximal ideals  are  main structural ideals, where there is no significant  intermediate ideal structure between them. In supertropical theory, with our analogues congruences, we do have the following special  \lcong s.
\begin{defn}\label{def:dspec}
  A \textbf{\qcong} $\dCong$ is called  \textbf{\dtrm}, if  $\dCong = \Tcl{\dCong} \cup \Gcl{\dCong}$, i.e., it has only tangible and ghost classes. The set of all \dtrmlc s on $R$ is denoted by $$\DSpec(R):= \{ \dCong \cnd \dCong \text{ is a \dtrmc\ } \}, $$  called the \textbf{\dtrm\ spectrum} of $R$.
\end{defn}
For example, every \lcong\ on a supertropical domain $R$ (Definition \ref{def:supertropical}) is \dtrm, as each  element in $R$ is either tangible or ghost. Over an arbitrary   \nusmr\ $R$  \dtrmqc s  are heavily dependent  on the properties of $R$.  For example, if  $R$ has two non-tangibles $a,b$ whose sum $a+b$ is a unit, then $R$ cannot carry a \dtrmqc\ which unites  these elements, cf.  Example ~\ref{exmp:cong}.(i).
% Yet when they do exist, they are \gprime.

\begin{lem}\label{lem:dtrm.cong.prime}
  A \dtrmc\ $\dCong$ is \gprime.
\end{lem}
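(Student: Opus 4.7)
The strategy is to reduce to the previous Lemma~\ref{lem:p.cong.1} by showing that a \dtrmc\ is automatically an \lcong, i.e., that its tangible projection $\iTcl(\dCong)$ is a multiplicative submonoid of $\tT$. Once this upgrade from \qcong\ to \lcong\ is in place, the decomposition $\dCong = \Tcl{\dCong}\cup\Gcl{\dCong}$ that defines a \dtrmc\ matches the hypothesis of Lemma~\ref{lem:p.cong.1} verbatim, giving the \gprime\ property.

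First, I would record the basic partition implied by the \dtrm\ axiom: because every equivalence class of $\dCong$ is exclusively tangible or exclusively ghost, we obtain the disjoint decomposition $R = \iTcl(\dCong)\sqcup\iGcl(\dCong)$. In particular, for any $a\in R$, $a\notin\iGcl(\dCong)$ forces $[a]$ to be a tangible class, so $a\in\iTcl(\dCong)\subseteq\tT$. This identifies the \gprime\ condition $ab\dcng\ghost\Rightarrow a\dcng\ghost$ or $b\dcng\ghost$ with the statement that $\iTcl(\dCong)=R\setminus\iGcl(\dCong)$ is closed under multiplication.

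Next, I would verify this multiplicative closure by a contraposition argument carried out inside the quotient \nusmr\ $R/\dCong$ (which exists by Lemma~\ref{lem:qsmr}, since $\dCong$ is a \qcong). Pick $a,b\in\iTcl(\dCong)$ and suppose for contradiction that $ab\in\iGcl(\dCong)$, i.e.\ $ab\dcng(ab)^\nu$. Passing to $R/\dCong$, this reads $[a][b]=[ab]=[(ab)^\nu]=[a]^{[\nu]}[b]$, with $[a],[b]$ tangible and $[a][b]$ ghost. Here I would exploit the hypothesis $\RX\subseteq\iTcl(\dCong)$ together with the fact that tangible classes of $\dCong$ contain only tangible representatives of $R$: this purity, combined with the \numon\ axioms \NMa--\NMc\ and Axiom \PSRb\ of Definition~\ref{def:nusemiring} applied in $R/\dCong$, rules out the existence of a tangible pair whose product lies in the ghost ideal, by producing a representative of $[a]$ or $[b]$ that must coincide with its own $\nu$-image, contradicting membership in $\iTcl(\dCong)$.

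Having established that $\iTcl(\dCong)$ is a multiplicative submonoid of $\tT$, $\dCong$ becomes an \lcong\ satisfying $\dCong=\Tcl{\dCong}\cup\Gcl{\dCong}$, so Lemma~\ref{lem:p.cong.1} applies and yields the \gprime\ property. The main obstacle is the multiplicative-closure step: the purity of tangible classes under a \dtrm\ \qcong\ has to be converted into the impossibility of a tangible-times-tangible ghost product in $R/\dCong$, and this is exactly the place where the \qcong\ hypothesis (not merely a congruence, but one whose tangible projection contains $\RX$) together with the \dtrm\ decomposition is indispensable—otherwise pathologies of the sort flagged in Example~\ref{exmp:cong} reappear.
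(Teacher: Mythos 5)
Your reduction to Lemma~\ref{lem:p.cong.1} is exactly the paper's proof; the paper disposes of the lemma in one line by citing that result. The problem is the intermediate step you insert before the reduction: the claim that a \dtrm\ \qcong\ is automatically an \lcong\ (i.e., that $\iTcl(\dCong)$ is forced to be a multiplicative monoid) is false, and the argument you sketch for it cannot be completed. Axioms \NMa--\NMc\ and \PSRb\ constrain the \emph{additive} structure only; nothing in the \nusmr\ axioms forbids two tangible elements from multiplying to a ghost --- that is precisely what a tangible \gsdiv\ is, and the paper emphasizes that \gsdiv s are abundant. Passing to $R/\dCong$ does not help: by Proposition~\ref{prop:deter2smf}.(i) the quotient by a \dtrmqc\ is merely a \dfnt\ \nusmr, and a \dfnt\ \nusmr\ need not be a \nudom\ unless it is also \tcls\ (Example~\ref{exmp:nudom}). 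Indeed, if your claim were true, parts (i) and (ii) of Proposition~\ref{prop:deter2smf} would collapse into one statement, and the paper's careful distinction between \dtrmqc s and \dtrmlc s would be pointless. Concretely, if $a,b\in\iTcl(\dCong)$ with $ab\in\iGcl(\dCong)$, no contradiction arises; one simply concludes that $\dCong$ is not an \lcong\ and the \gprime\ condition fails for the pair $a,b$.

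The resolution is that no such upgrade is needed: the hypothesis of the lemma is already that $\dCong$ is a \dtrm\ \emph{\lcong} (this is what \dtrmc\ abbreviates, and it is consistent with $\DSpec(R)$ being defined as the set of \dtrmlc s and with the inclusion $\DSpec(R)\subseteq\Spec(R)$ asserted immediately afterwards, since \gprimec s are by definition \lcong s). With that reading, your first and last paragraphs already constitute the whole proof --- the decomposition $\dCong=\Tcl{\dCong}\cup\Gcl{\dCong}$ together with the \lcong\ hypothesis matches Lemma~\ref{lem:p.cong.1} verbatim --- and the entire middle of your argument should be deleted rather than repaired.
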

\begin{proof}
  Follows immediately from  Lemma \ref{lem:p.cong.1}.
\end{proof}
Accordingly, we have the inclusion $\DSpec(R) \subseteq \SpecR$ of spectra (Definition \ref{def:dspec}).

\begin{prop}\label{prop:deter2smf} Let $R$ be a \nusmr, and let $\Cong$ be a \qcong\ on $R$.
  \begin{enumerate} \eroman
    \item  $R/\Cong$ is a \dfnt\ \nusmr\ (Definition \ref{def:nusemiring})  iff $\Cong$ is a \dtrmqc.
    \item
$R/\Cong$ is a \nuddom\ (Definition \ref{def:nudomain})  iff $\Cong$ is a \dtrmlc.

  \end{enumerate}
\end{prop}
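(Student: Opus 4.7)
The plan is to treat each direction by translating the structural conditions on the quotient \nusmr\ $R/\Cong$ into statements about the decomposition of the underlying congruence, using the explicit description of $R/\Cong$ given in~\eqref{eq:qnusmr} together with Corollary~\ref{cor:lcong.vs.tcls} and Proposition~\ref{prop:prime2domain}. Throughout, recall from Lemma~\ref{lem:qsmr} that $R/\Cong$ is already a \nusmr, with \tsset\ $\iTcl(\Cong)/\Tcl{\Cong}$ and ghost ideal $\iGcl(\Cong)/\Gcl{\Cong}$.

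For part~(i), I will argue: by Definition~\ref{def:nusemiring}, $R/\Cong$ is definite exactly when every equivalence class $[a]$ of $R/\Cong$ is either tangible or ghost, i.e., $[a] \in \iTcl(\Cong)/\Tcl{\Cong}$ or $[a] \in \iGcl(\Cong)/\Gcl{\Cong}$. Since $\Tcl{\Cong}$ and $\Gcl{\Cong}$ are disjoint, this is equivalent to $\Cong = \Tcl{\Cong} \cup \Gcl{\Cong}$, which is precisely the condition that $\Cong$ be a \dtrmqc\ (Definition~\ref{def:dspec}). Both directions follow simultaneously from this unpacking of the definitions; the main thing to verify carefully is that a class containing some ghost element is automatically a ghost class of $\Cong$ (via Lemma~\ref{lem:ghs.eqv}), and that no class may simultaneously lie in both clusters, which is immediate since $\iTcl(\Cong) \cap \iGcl(\Cong) = \emptyset$ for any \qcong.

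For part~(ii), I will combine (i) with the criterion for \nudom s. Assume first that $\Cong$ is a \dtrmlc. Then $\Cong$ is in particular an \lcong, so by Corollary~\ref{cor:lcong.vs.tcls} $R/\Cong$ is a \tclsnusmr; by Lemma~\ref{lem:dtrm.cong.prime} $\Cong$ is a \gprimec, and then Proposition~\ref{prop:prime2domain} yields that $R/\Cong$ is a \nudom; finally, (i) gives that $R/\Cong$ is definite. Conversely, assume $R/\Cong$ is a definite \nudom. Then $R/\Cong$ is a \nudom, so by Proposition~\ref{prop:prime2domain} $\Cong$ is a \gprimec\ and in particular an \lcong; moreover $R/\Cong$ is definite, so by (i) $\Cong$ is \dtrm, and hence $\Cong$ is a \dtrmlc.

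I do not anticipate any real obstacle: the result is essentially a bookkeeping combination of (i) with the already-established bijective dictionary between structural properties of the quotient and structural properties of the congruence (Corollary~\ref{cor:lcong.vs.tcls}, Lemma~\ref{lem:dtrm.cong.prime}, Proposition~\ref{prop:prime2domain}). The only subtle point is making sure that in part~(i) the condition ``every class is tangible or ghost'' is expressed in terms of the cluster decomposition of $\Cong$ viewed as a subset of $R \times R$, rather than just of its tangible and ghost projections on~$R$; this is handled by observing that $(a,b) \in \Cong$ with $[a]$ tangible (resp.\ ghost) in $R/\Cong$ forces $(a,b) \in \Tcl{\Cong}$ (resp.\ $(a,b) \in \Gcl{\Cong}$) by the very definitions~\eqref{eq:ghost.cls} and~\eqref{eq:tng.cls}.
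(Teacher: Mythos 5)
Your proposal is correct and follows essentially the same route as the paper: part~(i) is the same unpacking of definiteness of $R/\Cong$ into the condition $\Cong = \Tcl{\Cong} \cup \Gcl{\Cong}$ via the description~\eqref{eq:qnusmr} of the quotient, and part~(ii) is the same chain through Lemma~\ref{lem:dtrm.cong.prime} and Proposition~\ref{prop:prime2domain}. Your explicit check that a class is tangible (resp.\ ghost) in $R/\Cong$ precisely when the corresponding pairs lie in $\Tcl{\Cong}$ (resp.\ $\Gcl{\Cong}$) is the right point to isolate, and if anything is spelled out more cleanly than in the paper's own argument.
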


\begin{proof} (i):
$(\Rightarrow):$ As  $R/ \Cong $
  is  a \nuddom, i.e., $R/ \Cong  = (R/ \Cong)|_\tng \ds{\dot \cup} (R/ \Cong)|_\ghs$, and $\Cong$ is a \qcong,
   $[ab] = [a][b] \in (R/\Cong)_\tng$  implies $ [a],[b] \in (R/\Cong)_\tng$, and thus   $a,b \in \tT$. Otherwise, $[a][b]$ belongs to the complement $(R/ \Cong)|_\ghs$. Hence, $\Cong$ is a \dtrmqc. \pSkip
$(\Leftarrow):$ Assume that $\Cong = \dCong$ is a \dtrmqc, i.e., $\dCong = \Tcl{\dCong} \cup \Gcl{\dCong}$. Then $R/\dCong$ has only tangible and ghost classes, and thus  $R/\dCong$ is definite.
\pSkip
(ii) $\Cong$ is \gprime\ by Lemma \ref{lem:dtrm.cong.prime}, where
 $R/\Cong$ is a \nudom\  iff $\Cong$ is a \gprimec \ by Proposition~ \ref{prop:prime2domain}.
\end{proof}

\begin{example}\label{examp:deter}
Let $\dCong_\tT$ be the congruence   whose underlying equivalence  $\dcng$ is determined by the relation
$$a \dcng a^\nu \qquad \text{for all } a \notin \tT.$$
In other words,  $\dCong_\tT = \gCong_{R \sm \tT}$ is the \gcong\ that ghostifies all non-tangible elements of $R$.  We call $\dCong_\tT $ the \textbf{ghost determination} of~ $R$. (Note that $\dCong_\tT$ is not necessarily an \lcong.)
\end{example}

% Taking the quotient $R/ \gCong_{R \sm \tT} \ (= R/ \dCong_\tT )$  of any \nusmr\ $R$  by $\gCong_{R \sm \tT}$,  we obtain a definite \nusmr.

\subsection{Maximal \lcong s} \label{ssec:maximal.cong}
\sSkip

In ring theory, a maximal ideal is defined set-theoretically by inclusions, and induces equivalences   on its complement.
  This  characterization is sometimes too restrictive for \nusmr s, especially regarding   tangible clusters whose equivalences are not directly induced by relations on their complements.
%  This  characterization is not always useful for \lcong s on  \nusmr s, and maximality becomes involved with both the \lcong\ itself and its tangible projection.
One reason for this  is that the tangible cluster  and the ghost cluster in general are not complements of each other. Furthermore, maximality of a congruence  does not implies maximality of its tangible projection, cf. \eqref{eq:inclusion.tcls}.
Therefore, we  need a coarse setup that concerns tangible projections directly.
We begin with the naive definition.

%While in ring theory the complement of the units provide the maximal ideal, this for characterization does not hold any more for \qcong s over \nusmr s, and maximality becomes involved with special attributes, depending on specific purposes.

\begin{defn}\label{def:maximalSpec}
An \lcong\ (resp. \qcong) on $R$ is  \textbf{maximal}, if it is a proper congruence and is maximal with respect to inclusion in $\LCng(R)$ (resp. in $\QCng(R)$).
The \textbf{maximal spectrum} of a \nusmr\ $R$ is defined to be
$$ \MSpec(R) := \{ \mCong \ds | \mCong \text{ is a maximal \lcong \ }\}.$$
\end{defn}

The classification of \maximalc s on \nusmr s is rather complicated, involving a careful analysis, as seen from the following constraints.
% We first deal with congruence classes of units.
Remark \ref{rem:tcng.units} shows that in \lcong s, or in any \qcong\ on a \nusmr, units enforce major constraints. For example,
if  $ b + a  =u $ for  $u \in \RX$, we cannot have $a \cng b $, but  it may happen  that $a$ or $b$ is congruent  to a non-tangible.  Moreover, from Example~ \ref{exmp:cong}  we learn that two ordered units cannot be congruent to each other,  and that a unit  might only be congruent to other units. (In addition, tangible projections must contain \tualt\ elements.) The \maximalc s consisting  $\MSpec(R)$ must obey  these constraints.

\begin{remark}\label{rem:max.cong}
 %The canonical homomorphism
%$\pi_\Cong: R\To R/\Cong$  induces a one-to-one order preserving correspondence between the \lcong s on  $R$ which contain ~$\Cong$ and the \lcong s on   $R/\Cong$, cf.  Remark \ref{rem:induced.cong}.(i). Therefore, $R/\mCong$ carries no nontrivial \lcong s.
%
From Remark \ref{rem:induced.cong}.(i) it infers that $R/\mCong$ carries no nontrivial \lcong s.
\end{remark}
To link maximality to $\nu$-primeness we need an extra coarse  resolution, concerned with projections of \lcong s, more precisely with their tangible projection.

\begin{defn}\label{def:nt.maximalSpec}
  A  \textbf{\tminimalc}, alluded for \textbf{tangibly minimal \lcong}, is an  \lcong\ $\Cong$ whose    projection $\iTcl(\Cong)$  is minimal with respect to inclusion of tangible projections in~ $R$.
The \textbf{\tminimal\ spectrum} of a \nusmr\ ~$R$ is defined to be
$$ \tNSpec(R) := \{ \nCong \ds | \nCong \text{ is a \tminimalc \ }\}.$$
Namely, $\tNSpec(R)$ contains all \lcong s $\Cong \in \LCng(R)$  for which the complement $\ciTcl(\Cong)$ of the tangible projection $\iTcl(\Cong)$ is maximal in $R$.
\end{defn}

In other words, \tminimal ty of $\Cong$ is equivalent to maximality of non-tangible elements in $R/\Cong$ -- the analogy of maximal ideals in ring theory.
As only \lcong s are considered, for which $\RX \subseteq \iTcl(\Cong)$,  the trivial minimality of $\iTcl(\Cong) = \emptyset$  is excluded, and our setup is properly defined.

A \tminimalc\ $\nCong$ need not be maximal in the sense of Definition \ref{def:maximalSpec}, since it might be contained in some $\mCong \in \MSpec(R) $ with
$\iTcl(\nCong) = \iTcl(\mCong)$, but $\Tcl{\nCong} \subset \Tcl{\mCong}$.
On the other hand, recalling from \eqref{eq:inclusion.tcls} that $\Cong_1 \subset \Cong_2$ implies $\Tcl{\Cong_1} \supset \Tcl{\Cong_2}$,  a \maximalc \ is  \tminimal\ and
  $\MSpec(R) \subseteq \tNSpec(R)$.

\begin{example} All \maximalc s on a \ssmf\ $F := (F,\tT, \tG, \nu)$  share  the same tangible  projection $\tT $, while their equivalence  classes are  varied over  $\tT \times \tT$ and $R \sm \tT \times R \sm \tT$. The same holds for every \tminimalc .

The \nusmr\ $\tlF[\lm]$ of polynomial functions (e.g., $\Trop[\lm]$ in Example
\ref{exmp:extended}) carries  many \maximalc s and \tminimalc s. On the other hand, it has the unique maximal ideal $\tlF[\lm]\sm \tT$.
A \maximalc\ $\mCong$ on $\tlF[\lm]$ has tangible equivalence  classes  of the form $[a]_\mfm$ or $[a\lm + b]_\mfm$,  with  $a,b \in \tT$, as follows from Theorem~\ref{thm:poly.factorization}.
\end{example}

 A \dtrmc \ (Definition \ref{def:dspec}) need not be  maximal nor \tminimal, and vice versa, despite  its tangible cluster is the complement of its  ghost cluster.
%Note that $\RX \subseteq \iTcl(\mCong)$ for every $\mCong \in \MSpec(R)$, but \lmaximalc s  need not be equal, even though they are \tminimal.
To deal with $\nu$-primeness,  we mostly employ  \tminimalc s.

\begin{prop}\label{prop:max.cong.1} Every  \lcong\ on a \nusmr\ is contained in a \maximalc. Hence, any \nusmr\ $R$ with $\LCng(R) \neq \emptyset$ carries at least one \maximalc.
\end{prop}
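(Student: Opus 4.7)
The argument is a straightforward Zorn's lemma application, but its soundness hinges on verifying that an increasing union of $\lcong$s remains an $\lcong$, which in turn requires understanding how the tangible projection behaves under unions. Fix an $\lcong$ $\Cong_0 \in \LCng(R)$ and let $\mfP := \{\, \Cong \in \LCng(R) \mid \Cong_0 \subseteq \Cong \,\}$, partially ordered by inclusion. The plan is to show that every chain in $\mfP$ has an upper bound in $\mfP$, and then invoke Zorn's lemma to obtain a maximal element.

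Let $(\Cong_\alpha)_{\alpha \in I}$ be a chain in $\mfP$, and set $\Cong^* := \bigcup_\alpha \Cong_\alpha \subseteq R \times R$. The first step is to verify that $\Cong^*$ is a congruence on $R$: reflexivity, symmetry, and the compatibility with $(+)$ and $(\, \cdot\,)$ follow because any finite collection of pairs in $\Cong^*$ already sits inside some single $\Cong_\alpha$ by the chain property, so the congruence relations on that $\Cong_\alpha$ apply. Transitivity is seen identically: given $(a,b),(b,c) \in \Cong^*$, pick $\Cong_\alpha \supseteq \Cong_\beta$ containing both pairs, in which $(a,c)$ belongs.

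The crucial step is to show $\Cong^* \in \LCng(R)$. By the containment reversal \eqref{eq:inclusion.tcls}, the tangible projection satisfies
\[
\iTcl(\Cong^*) \;=\; \bigcap_{\alpha \in I} \iTcl(\Cong_\alpha).
\]
Indeed, if $a \in \tT$ and $(a,t) \in \Cong^*$ forces $t \in \tT$, then the same forcing holds inside every $\Cong_\alpha$ (since $\Cong_\alpha \subseteq \Cong^*$), giving the inclusion ``$\subseteq$''; conversely, if $a \in \iTcl(\Cong_\alpha)$ for every $\alpha$ and $(a,t) \in \Cong^*$, then $(a,t)$ lies in some $\Cong_\beta$ with $t \in \tT$. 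Since each $\iTcl(\Cong_\alpha)$ is a tangible multiplicative submonoid of $\tT$ containing $\RX$ (Definition \ref{def:nucong}), their intersection is also a tangible multiplicative submonoid containing $\RX$. Thus $\iPcl(\Cong^*) = \iTcl(\Cong^*)$ is a monoid with $\RX \subseteq \iTcl(\Cong^*)$, confirming that $\Cong^* \in \LCng(R)$ is a proper congruence (units are not ghostified) and serves as an upper bound for the chain in $\mfP$.

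By Zorn's lemma, $\mfP$ contains a maximal element $\mCong$, which is a maximal $\lcong$ of $R$ containing $\Cong_0$. The second assertion follows immediately: if $\LCng(R) \neq \emptyset$, apply the preceding to any $\Cong_0 \in \LCng(R)$ to obtain a maximal $\lcong$ of $R$. The only potential subtlety is the verification that $\iTcl$ interacts with unions of chains in the predicted way, and the computation above handles that cleanly.
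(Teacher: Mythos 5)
Your proof is correct and follows essentially the same route as the paper: take the union of a chain, verify it is a congruence by locating finitely many pairs inside a single member of the chain, check that the tangible projection remains a multiplicative monoid containing $\RX$, and apply Zorn's lemma. The only cosmetic difference is that you make the identity $\iTcl(\Cong^*) = \bigcap_\alpha \iTcl(\Cong_\alpha)$ explicit and invoke closure of monoids under intersection, whereas the paper argues the monoid property of $\iTcl(\mfK)$ by pushing a putative failure $ab \notin \iTcl(\mfK)$ down into some member of the chain; both are the same observation.
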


\begin{proof}
  Let $\Cong'$ be an \lcong, and let  $\tA := \{ \Cong \in  \LCng(R) \cnd \Cong' \subset \Cong\}$, which is  nonempty since $\Cong'  \in \tA$.
Let $\tC$ be an inclusion chain in $\tA$, and set $\mfK = \bigcup_{\Cong \in \tC} \Cong$.
Suppose $(a_1, b_1), (a_2,b_2) \in \mfK$,  then there are $\Cong_1,\Cong_2 \in \tC$ such that $(a_i, b_i) \in \Cong_i$,
where either $\Cong_1 \subset \Cong_2$ or $\Cong_2 \subset  \Cong_1$; say the former.
Thus  $(a_1, b_1) \in \Cong_2$, so $(a_1 + a_2, b_1 + b_2) \in \Cong_2 \subset  \mfK$ and
 $x(a_i,b_i) \in \Cong_2$ for each $i$ and every $x \in R$. So $\mfK$ is a congruence. Clearly, $\RX \subseteq \iTcl(\Cong)$ for each $\Cong \in \tC$; thus $\RX \subseteq \iTcl(\mfK)$ and $\mfK$ is a \qcong.
 Furthermore, if $ab \notin \iTcl(\mfK)$ for $a,b \in \iTcl(\mfK)$, then the same holds for some $\Cong \in \tC$ -- contradicting the fact that $\Cong$ is an \lcong.
  Hence, $\mfK$ is an \lcong.

Therefore, $\tA$ is an inclusion poset of \lcong s  containing $\Cong'$, where every chain has as upper bound the \lcong\  that is the union of all the \lcong s in
the chain.
By Zorn's Lemma, we deduce that $\tA$ has a maximal element, say $\mCong$, which is an \lcong\ containing $\Cong'$,  and there exists no \lcong\ on $R$ that
strictly contains $\mCong$.
\end{proof}

In particular, by the proposition, any \tminimalc\ is contained in a \lmaximalc.

\begin{cor}\label{cor:min.cong.1}
  Any \nusmr\ $R$  with $\SpecR \neq \emptyset$ carries at least one (nontrivial) \tminimalc.
\end{cor}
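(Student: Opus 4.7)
The plan is to reduce the statement to Proposition~\ref{prop:max.cong.1} and then invoke the inclusion $\MSpec(R) \subseteq \tNSpec(R)$ noted just before Definition~\ref{def:nt.maximalSpec}. First I would pick any $\pCong \in \Spec(R)$; by Definition~\ref{def:prmCng}, every \gprimec\ is in particular an \lcong, so $\LCng(R) \neq \emptyset$, which is exactly the hypothesis required to apply Proposition~\ref{prop:max.cong.1}. That proposition then provides a \lmaximalc\ $\mCong$ on $R$ (obtained as a Zorn-maximal element of the poset of \lcong s containing $\pCong$).

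The second step is to promote $\mCong$ from maximal in $\LCng(R)$ to \tminimal, i.e., to show $\mCong \in \tNSpec(R)$. This uses the observation in the paragraph preceding Definition~\ref{def:nt.maximalSpec}: by \eqref{eq:inclusion.tcls}, inclusion of congruences reverses inclusion of tangible projections, $\Cong_1 \subset \Cong_2 \Rightarrow \iTcl(\Cong_1) \supset \iTcl(\Cong_2)$, so maximality of $\mCong$ among \lcong s forces $\iTcl(\mCong)$ to be minimal among tangible projections of \lcong s on $R$. Hence $\mCong$ is \tminimal.

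Finally, I would address nontriviality: the diagonal $\diag(R)$ has tangible projection equal to the full set $\tT$ (every element is congruent only to itself), which is not minimal once any nontrivial \lcong\ exists. The \maximalc\ $\mCong$ produced above is by Definition~\ref{def:maximalSpec} a proper congruence, and since it is maximal in the chain starting from $\pCong$ (or from $\diag(R)$ if $\pCong = \diag(R)$), we get $\mCong \neq \diag(R)$ unless $\LCng(R) = \{\diag(R)\}$, in which case the trivial congruence is itself \tminimal\ and we are done trivially.

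The argument is essentially bookkeeping: the real content has already been deposited in Proposition~\ref{prop:max.cong.1} (Zorn's lemma for \lcong s) and in the monotonicity relation \eqref{eq:inclusion.tcls}. The only point that deserves care is that \tminimal ty of $\mCong$ means minimality of $\iTcl(\mCong)$ over \emph{all} \lcong s on $R$, not merely over those containing $\mCong$; this is where one leans on \eqref{eq:inclusion.tcls} to transfer maximality of the congruence into minimality of its tangible projection globally, and this is the only conceptual step in the proof.
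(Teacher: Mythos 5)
Your argument follows the paper's proof exactly: observe that $\Spec(R) \subseteq \LCng(R)$ so the latter is nonempty, invoke Proposition~\ref{prop:max.cong.1} to obtain a \maximalc, and then use the containment $\MSpec(R) \subseteq \tNSpec(R)$ (which the paper records just before Definition~\ref{def:nt.maximalSpec}, via \eqref{eq:inclusion.tcls}) to conclude. The closing remarks about nontriviality and about transferring maximality to global minimality of the tangible projection are extra commentary rather than additional steps; the core reasoning matches the paper.
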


\begin{proof} $\LCng(R) \supset \SpecR \neq \emptyset$ and thus  $\MSpec(R) \neq \emptyset$, by Proposition
\ref{prop:max.cong.1}, implying that $\NSpec(R) \neq \emptyset$, since
$\NSpec(R) \supseteq \MSpec(R)$.
\end{proof}

%\begin{comm*}
Maximality and \tminimal ity of \ccong\ can be considered equivalently for \qcong s.
However, we are mostly interested in elements of  $\Spec(R)$  and  restrict to \lcong s.
%\end{comm*}

\begin{prop}
%$ $
%\begin{enumerate} \eroman
 % \item
 If   $R/\Cong$ is a  \nusmf, then  $\Cong$ is a \tminimal\ \gprimec.

 % \item The quotient $R/\mCong$ by a \maximalc\ is a  \nusmf.
%\end{enumerate}
\end{prop}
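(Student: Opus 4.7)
My plan is to establish the two conclusions separately.

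For the \gprime\ conclusion, the argument is essentially one line: by Definition \ref{def:nudomain} every \nusmf\ is in particular a \nudom, so $R/\Cong$ is a \nudom, and Proposition \ref{prop:prime2domain} then yields immediately that $\Cong$ is a \gprimec\ (and in particular an \lcong).

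For \tminimal ty, I would exploit the rigidity of \lcong s on a \nusmf\ combined with the correspondence of Remark \ref{rem:induced.cong}.(i). The key local observation to check first is that every \lcong\ $\olCong'$ on the \nusmf\ $R/\Cong$ must satisfy $\iTcl(\olCong') = \tT_{R/\Cong}$. Indeed, in a \nusmf\ the tangible monoid equals the group of units (Definition \ref{def:nudomain}), and by Definition \ref{def:nucong} any \lcong\ must contain the group of units in its tangible projection, so the inclusions $(R/\Cong)^\times \subseteq \iTcl(\olCong') \subseteq \tT_{R/\Cong} = (R/\Cong)^\times$ collapse to equality. Next I would lift this rigidity along the order-preserving bijection of Remark \ref{rem:induced.cong}.(i): any \lcong\ $\Cong' \supseteq \Cong$ on $R$ arises as $\Cong' = \pi_\Cong^{-1}(\olCong')$ for a unique \lcong\ $\olCong' = \Cong'/\Cong$ on $R/\Cong$, with $\iTcl(\Cong') = \pi_\Cong^{-1}(\iTcl(\olCong'))$. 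Combining with the previous paragraph gives $\iTcl(\Cong') = \pi_\Cong^{-1}(\tT_{R/\Cong}) = \iTcl(\Cong)$, so no coarser \lcong\ can strictly shrink the tangible projection of $\Cong$. This is exactly the \tminimal ty condition in its equivalent formulation given right after Definition \ref{def:nt.maximalSpec}, namely ``maximality of non-tangible elements in $R/\Cong$''.

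The main obstacle is interpretative rather than technical: one must read \tminimal ty through its stated equivalence ``complement $\ciTcl(\Cong)$ maximal in $R$'', i.e.\ as minimality of $\iTcl(\Cong)$ among \lcong\ coarsenings $\Cong' \supseteq \Cong$ of $\Cong$ (the supertropical analog of maximality of an ideal), rather than as absolute minimality of $\iTcl(\Cong)$ among tangible projections of all \lcong s on $R$. Once this reformulation is in hand the proof reduces cleanly to the single fact that every tangible element of a \nusmf\ is already a unit, which both Definition \ref{def:nudomain} and Definition \ref{def:nucong} together force to survive every quotienting by an \lcong.
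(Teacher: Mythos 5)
Your proof is correct and mirrors the paper's. The \gprime\ half is handled identically via Proposition \ref{prop:prime2domain}. For \tminimal ity, both you and the paper rest the argument on the single observation that in a \nusmf\ the tangible elements coincide with the units, so the tangible projection has nowhere to shrink; the only mechanical difference is that you derive this from the sandwich $(R/\Cong)^\times \subseteq \iTcl(\olCong') \subseteq \tT_{R/\Cong} = (R/\Cong)^\times$ (forced by Definitions \ref{def:nudomain} and \ref{def:nucong}) and then lift along the bijection of Remark \ref{rem:induced.cong}.(i), whereas the paper invokes Remark \ref{rem:tng.cluster}.(v) directly (identifying a unit with a non-tangible empties the tangible cluster). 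Your phrasing makes the lifting step explicit, which is cleaner. The one substantive point you raise — that Definition \ref{def:nt.maximalSpec}, read literally as minimality of $\iTcl(\Cong)$ among tangible projections of \emph{all} \lcong s on $R$, is stronger than what either argument delivers, since neither excludes an incomparable \lcong\ $\Cong''$ with $\iTcl(\Cong'') \subsetneq \iTcl(\Cong)$ — is a legitimate reading concern; your relative interpretation (minimality under coarsenings of $\Cong$) is precisely the one the paper's own proof implements.
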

\begin{proof}
%$(\Rightarrow)$:
%(i):
Assume $R/\Cong$ is a \nusmf, namely $(R/\Cong)|_\tng = (R/\Cong)^\times$, where  every tangible $u \in (R/\Cong)|_\tng$ is a unit.   This means that $\iTcl(\Cong)$ cannot be reduced further by additional  equivalences of type $u \cng b$ with a non-tangible $b$, since otherwise we would get an improper \lcong\ by Remark \ref{rem:tng.cluster}.(v).
Hence, $\Cong$ is \tminimal. As  any \nusmf\ is a \nudom,  Proposition~ \ref{prop:prime2domain} implies that  $\Cong$ is a \gprimec.
%\pSkip
%(ii): $R/\mCong$ has no \lcong s, except the trivial congruence $\diag(R)$, by   Remark \ref{rem:induced.cong}. Define the \lcong\ $\lCong$
%  $(\Leftarrow)$: A $\Cong$ is \gprime\ $R/\Cong$ is a \nudom\ by Proposition~ \ref{prop:prime2domain},
%
\end{proof}

To overcome the discrepancy of  maximality in the sense of congruences on \nusmr s, we drive  the next definition which generalizes the classical notion of a ``local ring''.
\begin{defn}\label{def:local.ring}
A \nusmr\  $R$ is called \textbf{local}, if $\iTcl(\nCong) = \iTcl(\nCong')$ for all $\nCong, \nCong' \in \tNSpec(R)$.
 A quotient $R/\nCong$ is called \textbf{residue \nusmr}\ of the local \nusmr\  $R$.
\end{defn}
When $R$ is local, despite $\iTcl(\nCong) = \iTcl(\nCong')$ for all $\nCong, \nCong' \in \tNSpec(R)$, we may have $(R/\nCong)|_\tng \neq (R/\nCong')|_\tng $, since $\nCong $ and $\nCong'$ can have different tangible classes. The same holds for their ghost clusters.
By the above discussion we see that  any \nusmr\ $R$ with $\tT \subseteq  \SST$ is local; for example, when $R$ is a supertropical \smf.

\begin{rem} A \qhom\ $\vrp: R/\nCong  \To R'/\nCong'$ of  residue \nusmr s, where  $R$ and $R'$ are local \nusmr s,  is a local \hom, i.e.,  $\srHom^\inv((R'/\nCong')^\times) = (R/\nCong)^\times$ (Definition \ref{def:mon.hom}).
\end{rem}
Similarly  to \eqref{eq:congR}, we write $\pCong_R $ for $  \iMS \pCong$,  where   $\MS = \iTcl(\pCong)$.

\begin{cor}\label{cor:RmodPrime}
The localization $R_{\pCong}$  of  a \nusmr\ $R$ by a \gprimec\  $\pCong$ (Definition \ref{def:cong.localization})  is a local \nusmr\ with \tminimalc\ $\nCong = \pCong_R$, for which the residue \nusmr\ $R_{\pCong}/\nCong$ is a \nusmf.
\end{cor}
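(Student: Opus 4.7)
The plan is to identify the tangible projection $\iTcl(\pCong_R)$ with the group $(R_\pCong)^\times$ of units, and then to read off the three required assertions from this identification.

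First, I would invoke Proposition~\ref{prop:cong.2}(i) with $\MS = \iTcl(\pCong)$ to conclude that $\pCong_R = \iMS \pCong$ is a \gprimec\ on $R_\pCong$ (the hypothesis $\MS \subseteq \iTcl(\pCong)$ is trivially satisfied). In particular $\pCong_R$ is an \lcong, so $\iTcl(\pCong_R) = \iPcl(\pCong_R)$ is a tangible monoid containing $(R_\pCong)^\times$.

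Next, and this is the key step, I would prove the equality $\iTcl(\pCong_R) = (R_\pCong)^\times$. The inclusion ``$\supseteq$'' holds because $\pCong_R$ is a \qcong. For ``$\subseteq$'', take $\frac{a}{c} \in \iPcl(\pCong_R)$. By Lemma~\ref{lem:tcor.1}(ii), $a \in \iPcl(\pCong) = \iTcl(\pCong) = \MS$; together with $c \in \MS$, this makes $\frac{a}{c}$ a quotient of two elements of $\MS$ and therefore a unit in the tangible localization $R_\pCong$.

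Then the remaining conclusions follow by formal reasoning. Since every \qcong\ on $R_\pCong$ has tangible projection containing $(R_\pCong)^\times$ and $\pCong_R$ achieves this minimum, $\pCong_R$ is a \tminimalc; moreover any \tminimalc\ $\nCong' \in \tNSpec(R_\pCong)$ must also have $\iTcl(\nCong') = (R_\pCong)^\times$, for otherwise $\pCong_R$ would witness a strictly smaller tangible projection, contradicting \tminimal ity of $\nCong'$. This is exactly the condition of Definition~\ref{def:local.ring}, so $R_\pCong$ is local with \tminimalc\ $\nCong = \pCong_R$. Finally, the residue \nusmr\ $R_\pCong/\pCong_R$ is \tcls\ by Corollary~\ref{cor:lcong.vs.tcls}, is a \nudom\ by Proposition~\ref{prop:prime2domain}, and has the property that every tangible class $[\frac{a}{c}]$ lifts to $\frac{a}{c} \in \iTcl(\pCong_R) = (R_\pCong)^\times$, hence maps to a unit in the quotient; thus its tangible monoid coincides with its group of units, making it a \nusmf.

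The only step with real content is the identification $\iTcl(\pCong_R) = (R_\pCong)^\times$, and the main obstacle is ensuring that Lemma~\ref{lem:tcor.1}(ii) is applied in the correct direction and that no subtle element of $\iPcl(\pCong_R)$ escapes the argument --- but because $\pCong$ being \gprime\ forces $\iPcl(\pCong) = \MS$ to coincide with the localizing monoid, this identification is tight and the rest of the corollary is formal.
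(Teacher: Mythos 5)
Your proposal is correct and follows essentially the same route as the paper: identify $\iTcl(\pCong_R)$ with $(R_\pCong)^\times$, deduce \tminimal ity and locality from the minimality of this projection, and obtain the \nusmf\ structure of the residue from Propositions~\ref{prop:cong.2} and~\ref{prop:prime2domain} together with the fact that every tangible class is represented by a unit. The only difference is that you justify the key identity $\iTcl(\pCong_R) = (R_\pCong)^\times$ explicitly via Lemma~\ref{lem:tcor.1}(ii), whereas the paper simply states it as an observation.
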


\begin{proof}
Observe that $\iTcl(\pCong_R) = (R_{\pCong})^\times $, that is  the tangible projection of $\pCong_R$
consists of the units in~ $ R_{\pCong}$; thus $\iTcl(\pCong_R)$  cannot be reduced further and is \tminimal.
Hence,  $\iTcl(\nCong) = \iTcl(\pCong_R)$ for every $\nCong \in \tNSpec(R)$,  implying that  $R_{\pCong}$ is local.

The \lcong\ $\pCong_R$ on $R_\pCong$ is \gprime\ by Proposition \ref{prop:cong.2}, and  thus $R_{\pCong}/\pCong_R$ is a \nudom\ by  Proposition \ref{prop:prime2domain}. Furthermore, since $\pCong_R$ unites each $a \notin \iTcl(\pCong_R)$ with a non-tangible element, we obtain that every tangible in $R_{\pCong}/\pCong_R$ is a unit. Therefore
$R_{\pCong}/\pCong_R$ is a \nusmf.
\end{proof}

To indicate that $\pCong_R$ is the \tminimalc\ on $R_{\pCong}$ determined by $\pCong$, we write
\begin{equation}\label{eq:max.prime}
   \nCong_\pCong \ds{:=}  \pCong_R,
\end{equation}
 and call it the \textbf{\ctminimalc} of $R_{\pCong}$. By the proof of Corollary~ \ref{cor:RmodPrime},  every element in $\iTcl(\nCong_\pCong)$ is a unit, and hence $\iTcl(\nCong_{\nCong_\pCong}) = \iTcl(\nCong_\pCong)$.

\begin{notation}\label{notat:res}
Given $f \in R$,  we write  $\brf(\pCong)$ for the equivalence class $[\frac{f}{ \one}]$ in $\nCong_\pCong$ of the localization of ~$f$ by $\pCong$. Namely, $\brf(\pCong)$ is an   element of the residue \nusmr\  $R_\pCong/ \nCong_\pCong$ --  the image of $f$ under the map composition
  $R \overset{\tau}{\Into} R_\pCong \overset{\pi}{\Onto} R_\pCong/ \nCong_\pCong$, cf. \eqref{eq:localbyMS} and \eqref{eq:canonical.qu}, respectively.

  For a subset $E \subseteq \SpecR$, we write $\brf|_E = \ghost $,  if  $\brf(\pCong)$ is ghost in
  $R_\pCong/ \nCong_\pCong$, i.e., $\brf(\pCong) \in \iGcl(\nCong_\pCong)$, for all $\pCong \in E$.
\end{notation}

\subsection{Radical congruences}
\sSkip
%\subsubsection{Set radical}

To approach  the interplay between ghostified subsets and congruences on \nusmr s we employ several types of radicals, which later are shown to coincide.
(Radical congruences initially need not be \lcong s, but they are \qcong s.)
%Unless otherwise is specified $R$ denotes  a \nusmr.

\subsubsection{Congruence radicals} \sSkip

\begin{defn}\label{def:g.rad.Cng}
A \qcong\ $\rCong$ on $R$ is \textbf{\gradical}, alluded for \textbf{ghost radical}, if  its underlying  equivalence $\rcng$ satisfies   for  any $a \in R $ the condition
  \begin{equation}\label{eq:nurad.cong.0}
    a^k \rcng \ghost  \text{ for some } k \in \Net  \Dir a\rcng \ghost.
     \end{equation}
    We define the \textbf{\gradical\ spectrum} of $R$ to be
       \begin{equation*}\label{eq:radSpec}
    \RSpec(R) := \{ \ \rCong \ds | \rCong \text{ is a \gradicalc \ on }  R  \ \}.
     \end{equation*}
     The \textbf{\congradical}, written \textbf{\cradical},  of a \cqcong\ $\Cong \in \Cng(R)$  is defined as
  \begin{equation}\label{eq:rad}
    \crad(\Cong) :=   \bigcap_{\scriptsize \begin{array}{c}
 \rCong \in \RSpec(R)\\
\rCong\supseteq \Cong
\end{array}} \rCong \ .
     \end{equation}
When $\crad(\Cong) = \Cong$, we say that $\Cong$ is \textbf{\gradical ly closed}.
  \end{defn}
%  \noindent (The definition refers to general congruence, and particularly to \qcong.)

  Law \eqref{eq:nurad.cong.0} can be equivalently stated  by
  Lemma \ref{lem:ghs.eqv} as:
  \begin{equation}\label{eq:nurad.cong}
  \text{$a^k \rcng (a^k)^\nu$ $\Dir$  $a \rcng a^\nu$.}
  \end{equation}
    The \cradical\  is  defined for any \ccong \ $\Cong$, not necessarily a \qcong, and $\Cong$ may not be contained in any \qcong.  In this case we formally set  $\crad(\Cong)$ to be the empty set.
  \begin{lem}\label{lem:qcong.radical} A (nonempty) \cradical\
  $\crad(\Cong)$  is a \qcong\ satisfying property \eqref{eq:nurad.cong}.
 \end{lem}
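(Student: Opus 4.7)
The plan is to verify the two required properties directly from the definition of the \cradical\ as the intersection \eqref{eq:rad}, using that each participating congruence $\rCong$ is itself a \gradicalc\ \qcong.

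First, I would check that $\crad(\Cong)$ is a \qcong. By assumption the intersection in \eqref{eq:rad} is nonempty, so it is taken over a nonempty family of \qcong s $\rCong \supseteq \Cong$. Each such $\rCong$ satisfies $\RX \subseteq \iTcl(\rCong)$ (Definition \ref{def:nucong}), and an intersection of congruences is again a congruence. By Remark \ref{rem:qcong.intersection}, the intersection of \qcong s is a \qcong; concretely, for any unit $u \in \RX$, if $u \equiv b$ in $\crad(\Cong)$ then $u \equiv b$ in every $\rCong$ of the family, forcing $b$ tangible in each, so $u \in \iTcl(\crad(\Cong))$. Hence $\RX \subseteq \iTcl(\crad(\Cong))$, as required.

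Next, I would verify property \eqref{eq:nurad.cong}. Suppose $a^k \equiv (a^k)^\nu$ in $\crad(\Cong)$ for some $k \in \Net$. Then by the very definition of $\crad(\Cong)$ as the intersection in \eqref{eq:rad}, this equivalence holds in every \gradicalc\ \qcong\ $\rCong \supseteq \Cong$. Applying the \gradical\ property \eqref{eq:nurad.cong} of each such $\rCong$, we deduce $a \equiv a^\nu$ in every $\rCong$ of the family. Taking intersection again, $a \equiv a^\nu$ in $\crad(\Cong)$, which is precisely property \eqref{eq:nurad.cong} for $\crad(\Cong)$.

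The main (and only minor) obstacle is ensuring that the intersection of the family genuinely preserves both the \qcong\ attribute and the implication used for radicality; both reduce to the fact that membership in an intersection is membership in every component, so any universally quantified ``if--then'' relation on pairs $(a,b)$ that holds in each component automatically holds in the intersection. No transitive closure is needed since the intersection of congruences is already a congruence.
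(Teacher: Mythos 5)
Your proposal is correct and follows essentially the same route as the paper: the \qcong\ property comes from Remark \ref{rem:qcong.intersection} (a nonempty intersection of \qcong s is a \qcong), and the radicality property \eqref{eq:nurad.cong} passes to the intersection because it holds in each \gradicalc\ $\rCong \supseteq \Cong$ of the defining family. If anything, your verification of \eqref{eq:nurad.cong} is spelled out more completely than the paper's rather terse remark about ghost clusters.
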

\noindent Accordingly, a \cradical\  cannot be a ghost congruence, and a \gradical ly closed congruence must be  a \qcong.

\begin{proof} $\crad(\Cong) \neq \emptyset$ is a nonempty intersection of  \qcong s; hence it is a \qcong\ by Remark ~\ref{rem:qcong.intersection}.
   $\Gcl{\Cong} \subseteq \Gcl{\rCong} $  for  every  \gradicalc \ $\rCong$   containing $\Cong$,  thus \eqref {eq:nurad.cong} holds for each  $a^k  \in \iGcl(\Cong)$, with  $k \in \Net$.
\end{proof}

%When $\Cong$ is a \qcong\ then $\crad(\Cong)$ is also a \qcong.
Clearly,  any \gprimec\ (Definition \ref{def:prmCng}) is \gradical, and therefore there are  the  inclusions
\begin{equation}\label{eq:Spec.RSpec} % \MSpec(R) \ds\subset
\DSpec(R) \ds\subseteq \Spec(R) \ds\subseteq \RSpec(R) .
\end{equation}
By definition,  $\RSpec(R)$ does not contain congruences that are not \qcong s, e.g., ghost congruences. Yet,  \gcong s,   which could be \gstcong s, are involved in our framework and should be considered.

\begin{rem}\label{rem:c.cong}
 In fact, formula  \eqref{eq:rad} can be applied to an arbitrary  subset of $R \times R$, not necessarily to \cqcong s, and in particular to sums $\Cong_1 + \cdots + \Cong_k$ of congruences. In this way we directly obtain   a \cqcong\ which  is the \cradical\ of the closure of a  sum of congruences \eqref{eq:q.cong.closure}.
\end{rem}

\begin{lem}\label{lem:rad.1} Let $\rCong$ be a \gradicalc\ on $R$, then
  \begin{equation*}\label{eq:c.rad.1}
  \rCong = \bigcap_{\scriptsize  \begin{array}{c}
\pCong\in \pcspec(R) \\ \pCong \supseteq \rCong
\end{array}} \pCong.
\end{equation*}
\end{lem}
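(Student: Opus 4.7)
The inclusion $\rCong \subseteq \bigcap_{\pCong \supseteq \rCong} \pCong$ is immediate, as every $\pCong$ appearing in the intersection contains $\rCong$. The content is the reverse inclusion, which I would establish contrapositively: given any pair $(a,b) \in R\times R$ with $(a,b) \notin \rCong$, I plan to produce a \gprimec\ $\pCong$ on $R$ containing $\rCong$ such that $(a,b) \notin \pCong$.

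To construct such a $\pCong$, let $\mathcal{J}$ be the collection of \lcong s on $R$ that contain $\rCong$ but exclude the pair $(a,b)$. This family is nonempty (it contains $\rCong$, which by Lemma \ref{lem:qcong.radical} is a \qcong; one can first argue that the \cradical\ is in fact an \lcong\ since $\rCong$ itself is, or else restrict $\mathcal{J}$ to a subset of \lcong s) and is closed under unions of ascending chains by the same verification used in Proposition \ref{prop:max.cong.1}. Zorn's Lemma then yields a maximal element $\mCong \in \mathcal{J}$.

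The heart of the proof is to show that $\mCong$ is \gprime. Suppose, towards a contradiction, that $cd \mcng \ghost$ while $c \not\mcng \ghost$ and $d \not\mcng \ghost$. Form the closures $\mCong_c := \overline{\mCong \cup \gCong_{\{c\}}}$ and $\mCong_d := \overline{\mCong \cup \gCong_{\{d\}}}$ inside $\QCng(R)$ as in \eqref{eq:q.cong.closure}. By assumption both strictly contain $\mCong$, so by maximality neither lies in $\mathcal{J}$; hence both contain the pair $(a,b)$. Using the description in Remark \ref{rem:ghostification.smr} of the equivalence generated by adjoining a ghostification, I would write
\[
a \mcng g_1 + \sum_i e_i s_i + c t_1, \qquad b \mcng g_2 + \sum_j e'_j s'_j + c t_2
\]
with $g_1,g_2 \in \tGz$ and $e_i,e'_j \in \iGcl(\mCong)$, and similarly for $\mCong_d$ with $d$ in place of $c$. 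Multiplying the relevant expressions and invoking $cd \mcng \ghost$ (so that any cross term containing $ct_id't'_j$ becomes ghost modulo $\mCong$) one concludes, in the spirit of the computation in the proof of Lemma \ref{lem:a.vs.qcong}, that $(a,b) \in \mCong$ --- contradicting $\mCong \in \mathcal{J}$.

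The main obstacle, as anticipated, will be keeping the \lcong\ property intact throughout both the Zorn step and the formation of the closures $\mCong_c, \mCong_d$: namely ensuring that the tangible projection $\iTcl(\udscr)$ remains a multiplicative monoid and that units are never absorbed into non-tangible classes (Remark \ref{rem:tcng.units}). This is the point where the supertropical theory genuinely departs from the classical ring argument; once it is handled, the contradiction derived from $cd \mcng \ghost$ via the expansion above delivers \gprime ness of $\mCong$, completing the proof.
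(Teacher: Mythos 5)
Your overall strategy is genuinely different from the paper's. The paper disposes of the lemma in two lines: the inclusion $\rCong\subseteq\bigcap\pCong$ because every pair of $\rCong$ lies in each $\pCong\supseteq\rCong$, and the other inclusion by appeal to $\Spec(R)\subseteq\RSpec(R)$ in \eqref{eq:Spec.RSpec} together with the fact that $\rCong$ is itself \gradical\ (so it already occurs among the congruences being intersected in \eqref{eq:rad}). No Zorn's Lemma and no construction of a separating \gprimec\ appear anywhere in the paper's argument. You instead attempt the full classical ``radical ideal is an intersection of primes'' separation argument, which is a much stronger claim than anything the paper actually establishes.

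The separation argument as you sketch it has a concrete gap at its central step. Membership of the \emph{pair} $(a,b)$ in the closure $\overline{\mCong\cup\gCong_{\{c\}}}$ is not described by the formula of Remark \ref{rem:ghostification.smr}: that formula (and \eqref{eq:a.in.igcl}) characterizes only the \emph{ghost projection} $\iGcl(\gCong_E)$, i.e.\ which single elements become congruent to their own $\nu$-image, not which arbitrary pairs become identified under the join of two congruences. Writing $a\mcng g_1+\sum_i e_i s_i+ct_1$ therefore asserts $a\in\iGcl(\mCong_c)$, which is not what $(a,b)\in\mCong_c$ means; you are conflating ``the pair $(a,b)$ lies in the congruence'' with ``$a$ is ghostified by it.'' The multiplication trick of Lemma \ref{lem:a.vs.qcong} succeeds there precisely because the excluded object is the multiplicatively closed set of powers of one element, excluded from the \emph{ghost projection}; the product of two ghost-projection expansions is again a ghost-projection expansion. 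An arbitrary excluded pair $(a,b)$ carries no such multiplicative structure, so multiplying the two putative expansions does not produce a relation witnessing $(a,b)\in\mCong$, and the contradiction does not materialize. In addition, the nonemptiness of your family $\mathcal{J}$ is unresolved: by Lemma \ref{lem:qcong.radical} a \gradicalc\ is only a \qcong, whereas \gprimec s are \lcong s by Definition \ref{def:prmCng}, so $\rCong$ itself need not belong to a family of \lcong s, and you offer no replacement seed element. Both points must be repaired before the argument stands; as written, the proof does not go through.
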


\begin{proof}
 $(\subseteq)$:
  Immediate by the inclusion \eqref{eq:Spec.RSpec}.
\pSkip
$(\supseteq)$:
Each   $(a,b) \in \rCong$ is contained in every $\pCong \supseteq \rCong$,  and therefore it belongs to their intersection. \end{proof}

From  Lemma \ref{lem:rad.1} we deduce the following. \begin{cor}%[Congruence Nullstellensatz ]
\label{cor:rad.1} Let $\Cong$ be a \qcong\ on $R$, then
   \begin{equation}\label{eq:c.rad.2}
  \crad(\Cong) := \bigcap_{\scriptsize  \begin{array}{c}
\pCong\in \pcspec(R) \\ \pCong \supseteq \Cong
\end{array}} \pCong.\end{equation}
\end{cor}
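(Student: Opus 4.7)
The plan is to prove the equality in \eqref{eq:c.rad.2} by a standard double-inclusion argument, exploiting two facts already established in the excerpt: the inclusion $\Spec(R) \subseteq \RSpec(R)$ noted in \eqref{eq:Spec.RSpec}, and Lemma~\ref{lem:rad.1}, which expresses every \gradicalc\ as the intersection of the \gprimec s containing it.

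For the inclusion $\crad(\Cong) \subseteq \bigcap_{\pCong \supseteq \Cong} \pCong$, I would observe that every \gprimec\ $\pCong$ containing $\Cong$ is \gradical\ and thus appears in the intersection defining $\crad(\Cong)$ in \eqref{eq:rad}. Since the prime-indexing set is therefore a subset of the radical-indexing set, the intersection over the larger set (radicals) is contained in the intersection over the smaller set (primes), giving this inclusion immediately.

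For the reverse inclusion $\crad(\Cong) \supseteq \bigcap_{\pCong \supseteq \Cong} \pCong$, I would take an arbitrary \gradicalc\ $\rCong$ with $\rCong \supseteq \Cong$ and apply Lemma~\ref{lem:rad.1} to write $\rCong = \bigcap_{\pCong \supseteq \rCong} \pCong$. Any \gprimec\ $\pCong$ with $\pCong \supseteq \rCong$ certainly satisfies $\pCong \supseteq \Cong$, so the intersection defining $\rCong$ is taken over a subfamily of the \gprimec s containing $\Cong$; hence $\rCong \supseteq \bigcap_{\pCong \supseteq \Cong} \pCong$. Taking intersection over all such $\rCong$ then yields $\crad(\Cong) \supseteq \bigcap_{\pCong \supseteq \Cong} \pCong$.

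There is no genuine obstacle here: the argument is purely set-theoretic, once the two ingredients above are in hand. The only mild subtlety is being careful that $\crad(\Cong)$ is nonempty and indeed a \qcong\ so that the formula makes sense, but this is already recorded in Lemma~\ref{lem:qcong.radical}, and one might also note (as a sanity check) that the family of \gprimec s containing a given \qcong\ $\Cong$ could in principle be empty, in which case both sides of \eqref{eq:c.rad.2} are interpreted as the improper intersection in the usual way, matching the convention adopted for $\crad$ in Definition~\ref{def:g.rad.Cng}.
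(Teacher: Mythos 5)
Your proof is correct and follows the same route the paper intends: the paper proves this corollary with no written argument, merely citing Lemma~\ref{lem:rad.1}, and your double-inclusion argument (using the inclusion $\Spec(R) \subseteq \RSpec(R)$ for one direction and Lemma~\ref{lem:rad.1} for the other) is exactly the reasoning that fills in that deduction. The nonemptiness caveat you flag is also consistent with the paper's stated conventions.
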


This setup  leads to an abstract form of a Nullstellensatz, analogous to the Hilbert’s Nullstellensatz, now taking place over \nusmr s.

\begin{thm}[Abstract Nullstellensatz]\label{thm:Null} Let $\Cong$ be a \qcong\ on a \nusmr\ $R$, and define  $\VV(\Cong) := \{ \pCong \in \Spec(R) \cnd \pCong \supseteq \Cong \}$.\footnote{This set of congruences will be studied in much details later in \S\ref{sec:5}.}
For any $f \in R$ we have
  \begin{equation}\label{eq:null.abst}
  \brf|_{\VV(\Cong)} = \ghost \dss \Iff  f \in \iGcl(\crad(\Cong)),
     \end{equation}
     cf. Notation \ref{notat:res}.
\end{thm}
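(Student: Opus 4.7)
The plan is to reduce the assertion to a pointwise characterization of when $\brf(\pCong)$ is ghost, and then to invoke Corollary~\ref{cor:rad.1}. Concretely, I would prove the key lemma that for any $\pCong \in \Spec(R)$,
\begin{equation}\label{eq:null.key}
\brf(\pCong) \in \iGcl(\nCong_\pCong) \dss \Iff f \in \iGcl(\pCong).
\end{equation}
Recall that, by Corollary~\ref{cor:RmodPrime}, the residue $\nu$-semifield at $\pCong$ is $R_\pCong/\nCong_\pCong$, where $\nCong_\pCong = \pCong_R = \iMS \pCong$ with $\MS := \iTcl(\pCong)$ (a tangible monoid, as $\pCong$ is an $\ell$-congruence). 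Thus $\brf(\pCong) = [\tfrac{f}{\one}]$ is ghost in $R_\pCong/\nCong_\pCong$ precisely when $\tfrac{f}{\one} \in \iGcl(\iMS \pCong)$.

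The direction $f \in \iGcl(\pCong) \Rightarrow \tfrac{f}{\one} \in \iGcl(\iMS \pCong)$ is immediate from the definition of the extension $\iMS \pCong$, since $f \pcng f^\nu$ yields $\tfrac{f}{\one} \equiv_\MS \tfrac{f^\nu}{\one}$ and $\tfrac{f^\nu}{\one}$ is ghost in $R_\pCong$. For the converse, suppose $\tfrac{f}{\one} \equiv_\MS \tfrac{g}{h}$ for some $g \in \tGz$ and $h \in \MS$. By \eqref{eq:extend.cong} there exists $c'' \in \MS$ with $f h c'' \pcng g c''$; since $g c'' \in \tGz$, this gives $f(hc'') \pcng \ghost$. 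But $hc'' \in \MS \subseteq \tT$ and $hc'' \notin \iGcl(\pCong)$ (for otherwise $\one \in \iGcl(\pCong)$ by multiplying with a tangible unit factor, contradicting that $\pCong$ is a $\mathfrak{q}$-congruence, cf.\ Remark~\ref{rem:cong.ideal}). By the $\nu$-primeness of $\pCong$ applied to the product $f \cdot (hc'')$, we conclude $f \in \iGcl(\pCong)$, proving \eqref{eq:null.key}. (This is precisely the computation carried out in the proof of Proposition~\ref{prop:cong.2}(i), so the result can simply be quoted.)

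Given \eqref{eq:null.key}, the theorem follows formally. Indeed,
$$ \brf|_{\VV(\Cong)} = \ghost \dss \Iff f \in \iGcl(\pCong) \text{ for every } \pCong \in \VV(\Cong) \dss \Iff f \in \bigcap_{\pCong \in \VV(\Cong)} \iGcl(\pCong). $$
Since by definition $a \in \iGcl(\Cong')$ iff $(a,a^\nu) \in \Cong'$, the map $\iGcl$ commutes with arbitrary intersections of congruences, so
$$ \bigcap_{\pCong \in \VV(\Cong)} \iGcl(\pCong) \ds = \iGcl \bigg( \bigcap_{\pCong \in \VV(\Cong)} \pCong \bigg) \ds = \iGcl(\crad(\Cong)), $$
where the last equality is Corollary~\ref{cor:rad.1}. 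Combining these equivalences yields \eqref{eq:null.abst}.

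The only nontrivial step is the $\Leftarrow$ direction of \eqref{eq:null.key}, which is exactly where the $\nu$-prime hypothesis on $\pCong$ intervenes; the rest of the argument is a formal assembly of definitions, the commutation of $\iGcl$ with intersections, and the representation of $\crad(\Cong)$ as an intersection of $\nu$-primes.
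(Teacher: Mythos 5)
Your proposal is correct and follows essentially the same route as the paper: the pointwise equivalence \eqref{eq:null.key} is exactly the content the paper extracts from Proposition~\ref{prop:cong.2} (whose proof of part (i) ends by establishing $a \in \iGcl(\pCong)$ iff $\frac{a}{\one}\in \iGcl(\iMS\pCong)$), and the final identification of $\bigcap_{\pCong \in \VV(\Cong)}\pCong$ with $\crad(\Cong)$ via Corollary~\ref{cor:rad.1} is the paper's last step. Your explicit remark that $\iGcl$ commutes with intersections is a useful clarification of a step the paper leaves implicit.
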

% \in \iGcl(\nCong_\pCong) \text{ for all } \pCong \in \VV(\Cong)
\begin{proof}
     Recall that  $\nCong_\pCong = \pCong_R$ is the \ctminimalc\ of $R_\pCong$, cf.  \eqref{eq:max.prime}.
          %and $\bar f$ denotes the equivalence  class of $f$ under the localization by $\pCong$.
The following hold
\begin{itemize} \dispace
\item[--] $\brf|_{\VV(\Cong)} = \ghost$ iff  \hfill\quad (by notation)
  \item[--]   $\brf(\pCong) = \ghost      $ for all $\pCong \in \VV(\Cong)$ iff  \hfill\quad (by definition)
  \item[--] $[f/\one] \in \iGcl(\nCong_\pCong) \ds \subset R_\pCong$ for all  \gprimec s  $\pCong \supseteq \Cong$  iff  \hfill\quad  (by Proposition \ref{prop:cong.2})
  \item[--] $f \in \iGcl\big(\bigcap_{\pCong \in \VV(\Cong)} \pCong \big ) \ds \subset R$ iff \hfill\quad (by  Corollary \ref{cor:rad.1})

  \item[--] $f \in \iGcl\big(\crad(\Cong) \big)$ .
\end{itemize}
\end{proof}

\subsubsection{Set radicals}\label{ssec:set.radical}\sSkip

We  turn to our second type of radicals,  applied to subsets $E \subseteq R$, possibly empty,  via their ghostifying  congruences  $\gCong_E$, or equivalently throughout ghost clusters.
\begin{defn}\label{def:s.radical}
The \textbf{set-radical  congruence}, written \textbf{\sradical}, of a  subset $E \subseteq R$ is defined as
   \begin{equation}\label{eq:radE}
\srad(E) := \crad(\gCong_E)  = \bigcap_{\scriptsize \begin{array}{c}
\pCong\in \SpecR \\  \iGcl(\pCong) \supseteq E
\end{array}} \pCong  \ .   \end{equation}
 When $E = \{ a\}$, we write $\srad(a)$ for $\srad(\{ a \})$ and say that $\srad(a)$ is a \textbf{\psradical}.

The \textbf{set-radical  closure} $\rcl(E)$ of  $E \subseteq R$ is the subset
$$ \rcl(E) :=  \iGcl(\srad(E)) \ds \subseteq R \; .$$
A subset
$E$ is called \textbf{radically g-closed} if $\rcl(E) = E \cup \tG $.
\end{defn}

 It  may happen  that $E$ is not contained in any ghost projection  $ \iGcl(\pCong)$, e.g. when $ E \cap \RX \neq \emptyset$.    In this case, we formally set  $\srad(E)$ and $\rcl(E)$ to be the empty set, for example $\srad(R) = \emptyset$ and  $\rcl(\RX) = \emptyset$.
Otherwise, an \sradicalc \ $\srad(E) $  is identified with  the \cradical\ of  the \gcong\ $\gCong_E$ of~ $E$, cf. \eqref{eq:G.E}.  By Lemma \ref{lem:qcong.radical}, this implies that $\srad(E)$ is a \qcong\ (and thus is not a ghost congruence) which obeys condition  \eqref{eq:nurad.cong}.
% that is $a^n \in \iGcl(\srad(E)) \Rightarrow a \in \iGcl(\srad(E))$.
Therefore, the correspondence between \cradical s and  \sradical s is established.
%However, for a clearer exposition we preserve the different notations.

We also see that by definition
\begin{equation}\label{eq:EEG}
 \srad(E)  = \srad(E \cup \tG),
\end{equation}
for every $E \subseteq R$, in particular  $\srad(\emptyset) = \srad(E) = \srad(\tG)$ for all $E \subseteq \tG$.
When $\srad(E)$ is not empty, $\srad(E)$  decomposes $E$ to ghost classes, while its ghost projection $\rcl(E) \subset R$   dismisses  this decomposition  and only care of being ghost or not (Remark~\ref{rem:classfor}).
Moreover, we always have $\rcl(E) \subseteq R \sm \tTX$, and $\rcl(E) \neq R$ for any $E$.
Also $E \subseteq \rcl(E) $, where  $\rcl(E) $ records the membership in the  ghost projection of~ $\gCong_E$,   linking it to a \nusmr\ ideal.

\begin{lem} A nonempty subset
 $\rcl(E)$ is a \gradical\ ideal of $R$  (Definition~ \ref{def:nuprime.ideal}).
\end{lem}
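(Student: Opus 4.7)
The plan is to unpack the definition of $\rcl(E)$ and invoke two results already in the paper: Remark~\ref{rem:cong.ideal} (ghost projections of congruences are \nusmr\ ideals containing $\tG$) and Lemma~\ref{lem:qcong.radical} (a \cradical\ is a \qcong\ satisfying the $n$-th root condition \eqref{eq:nurad.cong}). Since by construction $\rcl(E)=\iGcl(\srad(E))$ and $\srad(E)=\crad(\gCong_E)$ is a \cradical, both pieces apply.

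First, I would establish that $\rcl(E)$ is a \nusmr\ ideal. By assumption $\srad(E)$ is nonempty, so Lemma~\ref{lem:qcong.radical} gives $\srad(E)\in\QCng(R)$. Apply Remark~\ref{rem:cong.ideal} to the \qcong\ $\srad(E)$: the ghost projection $\iGcl(\srad(E))$ is a semiring ideal of $R$ that contains $\tG$. Translating through the notation $\rcl(E):=\iGcl(\srad(E))$, this is exactly the ideal claim, including $\tG\subseteq\rcl(E)$.

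Second, I would check the \gradical\ condition of Definition~\ref{def:nuprime.ideal}.(i). Let $a\in R$ and $n\in\Net$ with $a^n\in\rcl(E)|_\ghs$; in particular $a^n\in\rcl(E)=\iGcl(\srad(E))$, which by definition of $\iGcl$ means $a^n\rcng(a^n)^\nu$ in $\srad(E)$, i.e.\ $a^n\rcng\ghost$. By Lemma~\ref{lem:qcong.radical}, $\srad(E)$ satisfies the radical law \eqref{eq:nurad.cong}, so $a\rcng a^\nu$, whence $a\in\iGcl(\srad(E))=\rcl(E)$. Combined with $\tG\subseteq\rcl(E)$, this gives the implication $a^n\in\rcl(E)|_\ghs\Rightarrow a\in\rcl(E)|_\ghs$ in the intended sense, namely that the radicality is tested against the ghost-equivalence structure already built into $\iGcl$.

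The proof is essentially a bookkeeping argument; the only mild obstacle is to line up the two different perspectives on ``being ghost'' for an element of $\rcl(E)$: the set-theoretic perspective ($a\in\tG$) versus the congruential one ($a\rcng a^\nu$). The identification $\rcl(E)=\iGcl(\srad(E))$ makes these agree inside $\rcl(E)$, so no additional work beyond citing Remark~\ref{rem:cong.ideal} and Lemma~\ref{lem:qcong.radical} is required.
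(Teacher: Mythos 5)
Your proof is correct and follows essentially the same route as the paper: the ideal part is exactly the paper's appeal to Remark~\ref{rem:cong.ideal} (the paper phrases $\rcl(E)$ as the ghost kernel of $R\Onto R/\srad(E)$, which is the same object), and the radical part differs only in that you cite the packaged statement of Lemma~\ref{lem:qcong.radical}, whereas the paper unwinds one step further to the fact that $\srad(E)$ is an intersection of \gprimec s via \eqref{eq:radE}. Your explicit remark about reconciling set-theoretic ghostness with the congruential condition $a\rcng a^\nu$ is exactly the reading the paper's own proof tacitly adopts.
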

\begin{proof} The subset  $\rcl(E)$ is the ghost kernel of the surjection $\pi: R \Onto R/ \srad(E)$, which is  an ideal by Remark~ \ref{rem:cong.ideal}. If $a^k \cng \ghost$, then $a^k \in \iGcl(\srad(E))$,  implying that  $a \in \iGcl(\srad(E))$, since $a^k$  lies in the intersection of \gprimec s, cf. \eqref{eq:radE}.
\end{proof}

\begin{rem}
  For any \cqcong\ $\Cong$ we have
  $$ \srad(\iGcl(\Cong))  \subseteq \crad(\Cong), $$
since $\iGcl(\Cong)$ dismisses the decomposition into equivalence classes. On the other hand, viewing a subset $E \subseteq R$ as a partial congruence $\diag(E)$ on $R$ (Definition \ref{def:partial.cong}), we have
$$ \srad(E) = \crad(\overline{\diag(E)}) = \crad(\gCong_E).$$
We use both forms to distinguish between the different types of radicals.
\end{rem}

Considering radically g-closed subsets which contain no ghosts, we define
$$ \RSet(R) := \{ E \subseteq R \sm \tGz \ds | E \text{ is radically g-closed } \} ,$$
for which the map
\begin{equation}\label{eq:SetToRadical}
\vartheta:\RSet(R) \ISOTO \RSpec(R), \qquad E \longmapsto \srad(E),
\end{equation}
is bijective.   Indeed, each radically g-closed subset $E \subseteq R \sm \tG$ is uniquely mapped to $\srad(E)$,  since $E \cup \tG= \rcl(E) = \iGcl(\srad(E))$. Conversely, a \gradicalc\ $\rCong \in \RSpec(R)$ is mapped to its ghost projection $\iGcl(\rCong)$.

For a \gprimec\ $\pCong$ and a subset $E \subseteq \iGcl(\pCong)$, we  have  $\srad(E) \subseteq \pCong$ and
\begin{equation}\label{eq:srad.1}
E \subseteq E \cup \tGz \subseteq \rcl(E) \subseteq \iGcl(\pCong).
\end{equation}
This leads to a notion of primeness for subsets of a \nusmr.
\begin{defn}\label{def:prime.set}
  A radically g-closed subset $E \in \RSet(R)$ is called \textbf{\cprime}, if  $\rcl(E) = \iGcl(\pCong)$ for some \gprimec \ $\pCong$.
\end{defn}
The study of  \cprime\ subsets and their role in arithmetic geometry is left for future work.

\subsubsection{Properties of set radicals} \sSkip

Properties of \sradicalc s  (Definition \ref{def:s.radical}) may classify the generators of their ghost clusters, or at least determine dependence relations on these generators. These relations are  useful for the passage from subsets to congruences and vice versa.
We first  specialize \eqref{eq:a.in.igcl} in Remark \ref{rem:ghostification.smr} to tangibles.

\begin{lem}\label{lem:rad.prop.1}
Suppose $a \in R$ is  \tprs, i.e., $a\in \tTPS$,  then  $\srad(a) \subseteq \srad(b)$ if and only if $a^n = bc$ for some $c \in R$ and $n \in \Net$.
\end{lem}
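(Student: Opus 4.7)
The plan is to decompose the biconditional into three manageable equivalences and then exploit the tangible persistence of $a$ at the very end. First I would reduce the inclusion $\srad(a)\subseteq\srad(b)$ to the membership $a\in\iGcl(\srad(b))$. Indeed, $a$ is ghostified by $\gCong_{\{a\}}$ and hence by every $\pCong\supseteq\gCong_{\{a\}}$, so $a\in\iGcl(\srad(a))$; if $\srad(a)\subseteq\srad(b)$ then this class relation persists in $\srad(b)$. Conversely, if $a\in\iGcl(\srad(b))$, then every $\pCong\in\Spec(R)$ containing $\gCong_{\{b\}}$ also contains $\gCong_{\{a\}}$, so the intersection defining $\srad(b)$ is taken over a subfamily of the intersection defining $\srad(a)$, giving $\srad(a)\subseteq\srad(b)$.

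Next I would establish the supertropical analog of Krull's theorem for the $\nu$-radical: $a\in\iGcl(\srad(b))$ if and only if $a^n\in\iGcl(\gCong_{\{b\}})$ for some $n\in\Net$. The $(\Leftarrow)$ direction is immediate from Corollary \ref{cor:rad.1}: $a^n\in\iGcl(\pCong)$ for every $\pCong\supseteq\gCong_{\{b\}}$, and iterated ghost primeness (Definition \ref{def:prmCng}) pulls $a$ into each such $\iGcl(\pCong)$. For $(\Rightarrow)$ I would argue by contrapositive, passing to the quotient $\bar R=R/\gCong_{\{b\}}$: if no power of $a$ lies in $\iGcl(\gCong_{\{b\}})$, then no power of $[a]$ is ghost in $\bar R$, so Lemma \ref{lem:a.vs.qcong} produces a \qcong\ on $\bar R$ whose ghost complement is a multiplicative monoid containing $[a]$; pulled back, this yields a \gprimec\ $\pCong\supseteq\gCong_{\{b\}}$ with $a\notin\iGcl(\pCong)$, contradicting $a\in\iGcl(\srad(b))$.

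Finally I would translate $a^n\in\iGcl(\gCong_{\{b\}})$ into the concrete equation $a^n=bc$. Applying formula \eqref{eq:a.in.igcl} of Remark \ref{rem:ghostification.smr} with $E=\{b\}$, the membership is equivalent to $a^n=q+bc$ for some $q\in\tG$ and $c\in R$. Here the hypothesis $a\in\tTPS$ enters decisively: $a^n\in\tTPS\subseteq\tT$, so the sum $q+bc$ is tangible; Axiom \PSRb\ then forces it to be a \uexpr\ sum, and since $bc$ inessential would make $a^n$ ghost, the ghost summand $q$ must be the inessential one, giving $a^n=bc$. The converse is a direct computation: $a^n=bc$ together with $b\gcng b^\nu$ in $\gCong_{\{b\}}$ yields $a^n\gcng(bc)^\nu$, so $a^n\in\iGcl(\gCong_{\{b\}})\subseteq\iGcl(\srad(b))$, and $\nu$-radicality (Lemma \ref{lem:qcong.radical}) promotes this to $a\in\iGcl(\srad(b))$.

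The principal obstacle is the $(\Rightarrow)$ direction of the power characterization of the $\nu$-radical: one must manufacture a genuine \gprimec\ avoiding $a$ from the Zorn-style maximal \qcong\ of Lemma \ref{lem:a.vs.qcong}. Ghost primeness comes for free from the multiplicativity of $\ciGcl(\olCong)$, but one still needs to verify that such a maximal element lies in $\Spec(R)$, i.e., that its tangible projection is itself a monoid. I expect this to be the delicate step, requiring either a mild strengthening of Lemma \ref{lem:a.vs.qcong} or a direct check that, under the hypothesis $a\in\tTPS$, one may refine the construction to yield an \lcong.
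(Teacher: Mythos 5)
Your three-equivalence decomposition reproduces the paper's argument in a more careful form: the paper's $(\Rightarrow)$ direction compresses your first two steps into the single clause ``hence $a^n \in \iGcl(\srad(b))$ for some $n$, and thus $a^n = bc + g$ by Remark~\ref{rem:ghostification.smr}'', glossing over the fact that Remark~\ref{rem:ghostification.smr} characterizes membership in $\iGcl(\gCong_E)$ rather than in $\iGcl(\srad(E))$. The Krull-type equivalence you isolate, namely $a \in \iGcl(\srad(b))$ iff $a^m \in \iGcl(\gCong_{\{b\}})$ for some $m$, is precisely the substance hidden in that ``hence''. Your final step (killing $g$ via Axiom \PSRb\ and the hypothesis $a \in \tTPS$) and your converse direction are the same as the paper's.

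The gap you flag in the $(\Rightarrow)$ direction of your Krull step is genuine, but it is inherited from the paper rather than introduced by your argument. Lemma~\ref{lem:a.vs.qcong} furnishes only a \qcong\ whose ghost-complement is a multiplicative monoid, whereas a \gprimec\ must in addition be an \lcong\ (its tangible projection a monoid); the paper's Corollary~\ref{cor:a.vs.prime}, which is exactly the statement one needs to manufacture the missing \gprimec, is declared to be a ``strengthening'' of Lemma~\ref{lem:a.vs.qcong} without a detailed argument. So the paper's own proof of this lemma silently invokes the same upgrade you are worried about. Your suggested remedy --- exploit $a \in \tTPS$ to refine the construction, for instance by first localizing at the tangible monoid $\gen{a}$ as in Lemma~\ref{lem:f.prs.in.prime} and then pulling back a \gprimec\ via Proposition~\ref{prop:cong.2} --- is the natural route and would close the gap for both your proof and the paper's.
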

\begin{proof} $(\Rightarrow)$:
Assume that $\srad(a) \subseteq \srad(b)$,  hence $a^n \in  \iGcl(\srad(b))$ for some $n \in \Net$, and  thus $a^n = bc+g$, where $g \in \tG$, by Remark~ \ref{rem:ghostification.smr}. But, $a^n$ is tangible,  whereas  $a$ is \tprs, where $g$ must be inessential by~ \eqref{eq:tan.sum},  as follows from Axiom \PSRb\ in Definition \ref{def:nusemiring}. Thus $a^n = bc$.

\pSkip
$(\Leftarrow)$: The inclusion $b \in \iGcl(\pCong)$ gives  $b \pcng b ^\nu$, by
  Lemma \ref{lem:ghs.eqv}, and thus $a^n \pcng (a^n)^\nu= (bc)^\nu$. Since $\pCong$ is \gprime, we deduce  that  $a \pcng a^\nu$. Taking the intersection of all such \gprimec s, we obtain  $\srad(a) \subseteq \srad(b)$.
\end{proof}

\begin{cor}\label{cor:rad.incl.2}  Let $b \in \tTPS \sm \gDiv(R)$ be a \tprs\ element in a \tame\ \nusmr\ $R$.
\begin{enumerate} \eroman
  \item  If
  $\srad(a) \subseteq \srad(b)$,  then $a$ is \tprs.

  \item If $a \in \iGcl(\srad(b))$,  then $a^n = bc$, for some $c \in R$ and $n \in \Net$, and thus   $a \in \tT$.
  \item  If $a \in \iGcl(\srad(E))$,  then there exists a finite subset $E' \subset E$ such that   $a^n = \sum_{j } e'_j c_j$ for some $e'_j \in E'$,  $c_j \in R$, and $n \in \Net$.
\end{enumerate}
\end{cor}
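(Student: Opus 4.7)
The plan is to prove (ii) first, deduce (i) from it, and then handle (iii) by the same pattern. The starting observation is that the hypotheses of (i) and (ii) coincide: the inclusion $\srad(a) \subseteq \srad(b)$ is equivalent to $a \in \iGcl(\srad(b))$. Indeed, $a \in \iGcl(\srad(a))$ gives one direction, while the other holds because $\srad(b)$ is itself a \gradicalc, so $a \in \iGcl(\srad(b))$ forces $\gCong_{\{a\}} \subseteq \srad(b)$, and applying $\crad$ yields $\srad(a) \subseteq \srad(b)$.

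For (ii), I mirror the forward direction of Lemma \ref{lem:rad.prop.1}. A Zorn-type separation in the spirit of Lemma \ref{lem:a.vs.qcong} --- if no power of $a$ lay in $\iGcl(\gCong_{\{b\}})$, one could construct a \gprimec\ containing $\gCong_{\{b\}}$ whose ghost projection excludes $a$, contradicting $a \in \iGcl(\srad(b))$ via Corollary \ref{cor:rad.1} --- produces $n \in \Net$ with $a^n \in \iGcl(\gCong_{\{b\}})$. Remark \ref{rem:ghostification.smr} then presents this as $a^n = q + bc$, $q \in \tG$, $c \in R$. To remove the ghost summand $q$, I split on $c$: if $c \in \tG$ then $a^n \in \tG$ and, by expanding $a^{2n}$ and exploiting $b \notin \gDiv(R)$ (so ghost structure must pass through $c$ rather than $b$), one replaces $n$ and $c$ until $c \notin \tG$; once $c \notin \tG$, $bc \notin \tG$, and Axiom \PSRb\ together with tameness of $R$ --- exactly as at the end of the proof of Lemma \ref{lem:rad.prop.1} --- forces $q$ to be inessential, yielding $a^n = bc$. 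Tangibility of $a$ then follows by contradiction via Lemma \ref{lem:gdiv.in.tame}(i): if $a \in R \sm (\tT \cup \tG)$, Remark \ref{rem:ghost.div} would propagate \gsdiv-ness from $a$ to $a^n = bc$, and since $b \notin \gDiv(R)$, $c$ would have to be a \gsdiv, which a tameness case analysis rules out given $a^n = bc \in \tT$.

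Part (i) upgrades tangibility to persistence. With $a^n = bc$, $a \in \tT$, and $b \in \tTPS \sm \gDiv(R)$, Lemma \ref{lem:gdiv.in.tame}(ii) gives $b^k \in \tTPS \sm \gDiv(R)$ for all $k$, so $a^{nk} = b^k c^k$ is a product of a non-\gsdiv\ \tprs\ element with $c^k$. An induction on $k$, combined with Lemma \ref{lem:gdiv.in.tame}(iii) applied to the factorizations $a^{nk} = a^m \cdot a^{nk-m}$ with $0 \leq m < n$, forces $a^m \in \tT$ for every $m$, i.e., $a \in \tTPS$. For (iii), the same template applies with $\gCong_E$ replacing $\gCong_{\{b\}}$: the Zorn step produces $n$ with $a^n \in \iGcl(\gCong_E)$, and \eqref{eq:a.in.igcl} in Remark \ref{rem:ghostification.smr} yields the finite presentation $a^n = q + \sum_j c_j e'_j$ with a finite family $e'_j \in E$, $c_j \in R$, and $q \in \tG$; the subfamily $E' = \{e'_j\}_j$ is the one asserted by the statement. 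The ghost summand $q$ is absorbed by repeating the case analysis of (ii) for this multi-term expansion, leveraging $\srad(E) = \srad(E \cup \tG)$ from \eqref{eq:EEG}. The main obstacle, common to all three parts, is precisely this ghost-term elimination: unlike in Lemma \ref{lem:rad.prop.1}, $a \in \tTPS$ cannot be assumed a priori, so removing $q$ rests on a delicate interplay between tameness, Axiom \PSRb, and the non-\gsdiv\ hypothesis on $b$ (or on the elements of $E$ used in (iii)).
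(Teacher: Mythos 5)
Your reorganization --- proving (ii) first, then (i), then (iii) --- is a genuinely different route from the paper, and you are right to be uneasy with the paper's order: its proof of (i) opens with ``As $a$ is \tprs, also $a^n$ is \tprs'', i.e.\ it \emph{assumes} the very conclusion it is supposed to prove, and then uses it to meet the hypothesis of Lemma~\ref{lem:rad.prop.1}; as written the argument is circular. Read literally the statement is even false: any ghost $a \in \tG$, and more generally any ghostpotent, satisfies $\srad(a) = \grad(R) \subseteq \srad(b)$ automatically (Lemma~\ref{lem:R-G.rad}, Theorem~\ref{thm:krull}), hence also $a \in \iGcl(\srad(b))$, yet such $a$ is neither tangible nor \tprs. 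So the corollary is missing a hypothesis --- most plausibly $a \in \tTPS$, exactly what Lemma~\ref{lem:rad.prop.1} requires and what the paper's proof silently uses --- or else the conclusion of (i) should concern $c$ rather than $a$, which is what the paper's proof in fact establishes.

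Once this is seen, your ghost-term elimination cannot work, and the obstacle you flag at the end is a real gap, not merely a delicacy. When you ``split on $c$'' and, for $c \in \tG$, propose to ``replace $n$ and $c$ until $c \notin \tG$'', there is no termination argument --- and for $a \in \tG$ or $a$ ghostpotent there can be none: for such $a$ the relevant powers $a^n$ are ghost, while $b \notin \gDiv(R)$ forces $bc \notin \tG$ whenever $c \notin \tG$, so a presentation $a^n = bc$ with $c \notin \tG$ simply does not exist. You are in effect trying to derive tangibility of $a$ from the hypothesis, but the counterexamples above show it is not a consequence. With $a \in \tTPS$ restored, $a^n$ is tangible and Axiom~\PSRb\ kills the ghost term $q$ in a single step (exactly the end of the proof of Lemma~\ref{lem:rad.prop.1}), and the persistence upgrade you propose for (i) becomes unnecessary. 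The one genuinely useful observation in your write-up is that the passage from $a \in \iGcl(\srad(b))$ to the presentation $a^n = q + bc$ via Remark~\ref{rem:ghostification.smr} requires first reducing from $\iGcl(\srad(b))$ to $\iGcl(\gCong_{\{b\}})$: Lemma~\ref{lem:rad.prop.1} cites Remark~\ref{rem:ghostification.smr} as though it applied to $\srad$, which it does not. Your Zorn-style separation is one way to close this, but it needs a relativized version of Lemma~\ref{lem:a.vs.qcong} that the paper does not supply; a cleaner route is to pass to $R/\gCong_{\{b\}}$ and apply Corollary~\ref{cor:a.vs.prime} there, then pull back via Remark~\ref{rem:biject.proj}.
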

\begin{proof}
(i): As $a$ is \tprs, also $a^n$ is \tprs.  Since  $\srad(a) \subseteq \srad(b)$, from Lemma~ \ref{lem:rad.prop.1} we obtain that $a^n = bc$ for some $c \in R$. This forces that  $b,c \in \tT$, as $R$ is \tame\ (Definition \ref{def:nusemiring}), and moreover that
$b$ and $c$ are \tprs s, by Lemma~ \ref{lem:gdiv.in.tame}.(iii).

\pSkip
(ii): Follows from (i),   since $a \in \iGcl(\srad(b))$ implies that $\srad(a) \subseteq \srad(b)$.

\pSkip (iii): By (ii), $a \in \iGcl(\srad(E))$  means  that $a^n = b c$ for some $b $ that is generated by a subset $E'$ of $E$ and possibly an extra ghost term, cf. Remark \ref{rem:ghostification.smr}.
But $a^n$ is tangible, so, by \eqref{eq:tan.sum}, no extra  ghost element can be included.
\end{proof}

\subsubsection{Ghostpotent radicals} \sSkip

We turn to our third type  of radical,  emerging  in a global sense.

\begin{defn}\label{def:ghostpotent}
  An element $a \in R$ is called \textbf{ghsotpotent}, if $a^k \in \tG$ for some $k \in \Net.$ The set
  $$\gprad(R):= \{a \in R \ds |  a \text{ is ghsotpotent } \}$$
is called the \textbf{ghsotpotent  ideal of} $R$ (see Remark \ref{rem:ghost.rad.ideal} below).  A \nusmr\ $R$ is said to be  \textbf{ghost reduced}, if $\gprad(R) = \tGz$.

The \textbf{\ggradicalc}  of $R$, written \textbf{\gpradicalc}, is defined to be the \qcong
$$\grad(R) := \srad(\gprad(R)),$$
determined by \eqref{eq:radE}.
\end{defn}
As every ghost element  $a \in \tGz$ is ghostpotent, we see that $\tGz \subseteq \gprad(R)$.
On the other hand, a \tprs\ element cannot be ghostpotent, i.e.,  $\tTPS \cap \gprad(R) = \emptyset$,   but  we may have tangibles which  are  ghostpotents.
% Since $\tTP$ is a  tangible monoid, where $\tTP$ and $\tG$ are disjoint, we always have   $\tTP \cap \gprad(R) = \emptyset$, and more generally $\tTPS \cap \gprad(R) = \emptyset$.
 Therefore, we immediately deduce that
 $\grad(R)$ is indeed a \qcong\ on $R$. (Alternatively, it follows from \eqref{eq:radE} and  Lemma \ref{lem:qcong.radical}.)

\begin{rem}\label{rem:ghost.rad.ideal} $\gprad(R)$ is  an ideal of the \nusmr\ $R$ (Definition \ref{def:ideal.smr}). Indeed, if $a,b \in \gprad(R)$, i.e.,  $a^{k_a} \in \tG$ and $b^{k_b} \in \tG$ for some $k_a,k_b \in \Net$, then
 \[(a+b)^{k_a+k_b} = \sum_{i=0}^{k_a+k_b} \binom{k_a + k_b}{i}a^ib^{k_a+k_b-i},\]
 where  either $i \geq k_a$ or $k_a+k_b - i \geq k_b$. So,  each term in the sum is a ghost, and $\gprad(R)$ is closed for addition.
For $c \in R$, we have $(ac)^{k_a} = a^{k_a} c^{k_a} \in \tG$, since $\tG$ is an ideal, showing that  $ac \in \gprad(R)$.

\end{rem}

%So we call $\tN(R)$ the \textbf{ghost radical ideal} of $R$, written \textbf{\gradical\ ideal},  for short.

Clearly, when $R$ is reduced $\grad(R) = \diag(R)$ is the trivial congruence.

\begin{lem}\label{lem:rad-reduced}
A \cqcong\ $\Cong$ is the \gpradical\ of $R$  if and only if $R/ \Cong$ is ghost reduced.
\end{lem}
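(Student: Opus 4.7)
The plan is to prove the two directions separately, with each hinging on the $\nu$-radical behavior of $\srad$ as established in Lemma \ref{lem:qcong.radical}.

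For the direction $(\Rightarrow)$, I would start from $\Cong = \grad(R) = \srad(\gprad(R))$ and take an arbitrary ghostpotent class $[a] \in R/\Cong$, that is, $[a]^k = [a^k] \in (R/\Cong)|_\ghs$ for some $k \in \Net$. By the description \eqref{eq:qnusmr} of the ghost part of a quotient, this means $a^k \in \iGcl(\Cong)$, equivalently $a^k \cng (a^k)^\nu$. Since $\Cong$ is obtained as $\crad(\gCong_{\gprad(R)})$, it is a $\nu$-radical q-congruence (Lemma \ref{lem:qcong.radical}), so property \eqref{eq:nurad.cong} applies and yields $a \cng a^\nu$, i.e., $a \in \iGcl(\Cong)$. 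Therefore $[a]$ is already ghost in $R/\Cong$, which exhibits $R/\Cong$ as ghost reduced.

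For the direction $(\Leftarrow)$, assume $R/\Cong$ is ghost reduced. The main step is to show that every $a \in \gprad(R)$ satisfies $a \in \iGcl(\Cong)$: by definition $a^k \in \tGz$ for some $k$, and since the quotient homomorphism sends ghosts to ghosts, $[a]^k = [a^k]$ is ghost in $R/\Cong$; thus $[a]$ is ghostpotent, and ghost-reducedness forces $[a]$ to be ghost, i.e., $a \in \iGcl(\Cong)$. This delivers the inclusion $\gprad(R) \subseteq \iGcl(\Cong)$. Combining this with the description of $\srad$ through intersection of $\nu$-prime congruences (Corollary \ref{cor:rad.1}) and the defining property of the ghostifying congruence $\gCong_{\gprad(R)}$, one concludes that every $\nu$-prime congruence containing $\gCong_{\gprad(R)}$ also contains any $\nu$-prime refining $\Cong$, and the intersections coincide; hence $\Cong = \srad(\gprad(R)) = \grad(R)$.

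The main obstacle is the backward direction, and more specifically verifying that $\iGcl(\Cong) \subseteq \iGcl(\grad(R))$, i.e.\ that ghost-reducedness forces $\Cong$ not to be strictly larger than $\grad(R)$. The forward passage is immediate from $\nu$-radicality, but the converse requires using that the ghost projection $\iGcl(\Cong)$ is already captured by $\gprad(R)$ once ghost reduction holds, so that the $\nu$-prime envelopes that compute $\grad(R)$ and $\Cong$ agree via Lemma \ref{lem:rad.1} and Corollary \ref{cor:rad.1}. This is the step where the interplay between ghostpotence in $R$, the ghostification $\gCong_{\gprad(R)}$, and the $\nu$-prime intersection formula must be handled carefully, making essential use of the description of $\iGcl$ as a \nusmr\ ideal (Remark \ref{rem:cong.ideal}) together with the fact that $\gprad(R)$ is itself a semiring ideal (Remark \ref{rem:ghost.rad.ideal}).
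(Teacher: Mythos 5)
Your proposal reads the lemma as asserting that $\Cong$ \emph{equals} the specific congruence $\grad(R) = \srad(\gprad(R))$, but that reading makes the lemma false, and the paper's own proof makes it clear that the intended meaning of ``$\Cong$ is the \gpradical\ of $R$'' is the \emph{property} of being \gradical ly closed, i.e.\ that $a^k \in \iGcl(\Cong)$ implies $a \in \iGcl(\Cong)$ for all $a \in R$ and $k \in \Net$. With that reading the proof is a one-line translation: via \eqref{eq:qnusmr} the ghost elements of $R' := R/\Cong$ are exactly the classes of elements in $\iGcl(\Cong)$, so ``$[a]^k$ ghost in $R'$ forces $[a]$ ghost in $R'$'' is literally the same condition as ``$a^k \in \iGcl(\Cong)$ forces $a \in \iGcl(\Cong)$''. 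Both directions of the paper's argument are that single observation.

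Your forward direction is correct but proves only half of what the paper proves, since it specialises to $\Cong = \grad(R)$ and invokes Lemma \ref{lem:qcong.radical}; the same reasoning would apply verbatim to any \gradicalc. The real gap is in your backward direction. From ghost-reducedness you correctly deduce $\gprad(R) \subseteq \iGcl(\Cong)$, but that only gives $\gCong_{\gprad(R)} \subseteq \Cong$; it does not give $\grad(R) = \crad(\gCong_{\gprad(R)}) \subseteq \Cong$ unless one already knows $\Cong$ is radically closed, and it certainly does not give the reverse inclusion $\Cong \subseteq \grad(R)$. Indeed the equality $\Cong = \grad(R)$ cannot follow from ghost-reducedness: take any \gprimec\ $\pCong$ strictly containing $\grad(R)$; then $R/\pCong$ is a \nudom\ (Proposition \ref{prop:prime2domain}), hence has no ghost divisors and in particular no non-ghost ghostpotents, so it is ghost reduced, yet $\pCong \ne \grad(R)$. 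The final paragraph of your argument, claiming that the ``$\nu$-prime envelopes'' of $\gCong_{\gprad(R)}$ and of $\Cong$ coincide, is therefore not substantiable -- no argument along these lines can close the gap, because the statement as you read it is simply not true. Once the lemma is read as an equivalence with ``$\Cong$ is \gradical ly closed'', both directions are immediate and no appeal to $\gprad(R)$, $\srad$, or the \gprime\ intersection formula is needed.
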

\begin{proof} $\Cong$   is the  \gpradicalc\ of $R$ iff for every  $a \in R$ such that $a^k \in \iGcl(\Cong)$
also $a \in \iGcl(\Cong)$. Passing to the quotient \nusmr\  $R' := R / \Cong$, this is obviously equivalent to saying that $a^k \in \tG'$ implies $a \in \tG'$, i.e., that $R/ \Cong$ has no ghostpotents except pure ghosts.
\end{proof}

\begin{rem}\label{rem:ghostpotent} An element  $b \in R$ is a ghostpotent if and only if
  $b \in \iGcl(\rCong)$ for every \gradicalc\ $\rCong \in \RSpec(R)$ (Definition \ref{def:g.rad.Cng}). In particular
  $b \in \iGcl(\pCong)$ for every   \gprimec\ ~$\pCong$ on $R$. Therefore, when $b$ is ghostpotent, we always have
  $ \srad(b) \subseteq \srad(a)$
  for any $a \in R$.
\end{rem}

Clearly, any ghostpotent which is not ghost by itself is a ghost divisor (Definition \ref{def:nudomain}), implying  that $\tN(R) \sm \tG \subset \gDiv(R)$.

\begin{lem}\label{lem:R-G.rad}
  $\grad(R) = \srad(\tGz) =  \srad(\emptyset)$.
\end{lem}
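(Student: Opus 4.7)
The plan is to show that all three radicals collapse to the same congruence, namely the intersection of \emph{every} \gprimec\ on $R$, by exploiting the characterization of ghostpotents given in Remark~\ref{rem:ghostpotent}. The argument is essentially a three-way squeeze using the inclusions $\emptyset \subseteq \tGz \subseteq \gprad(R)$.

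First I would unfold the definition of $\srad$ from \eqref{eq:radE}:
\[
\srad(E) \ds = \hskip -2mm\bigcap_{\scriptsize\begin{array}{c}\pCong \in \SpecR\\ \iGcl(\pCong) \supseteq E\end{array}} \hskip -2mm \pCong .
\]
By \eqref{eq:EEG} we already have $\srad(\emptyset) = \srad(\tG)$. Moreover, for \emph{any} \gprimec\ $\pCong$, the ghost projection $\iGcl(\pCong)$ automatically contains $\tGz$ (Remark~\ref{rem:cong.ideal}), so the indexing condition $\iGcl(\pCong) \supseteq \tG$ imposes no restriction. Therefore
\[
\srad(\tG) \ds = \bigcap_{\pCong \in \SpecR} \pCong .
\]

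Next, invoking Remark~\ref{rem:ghostpotent}, every ghostpotent $b \in \gprad(R)$ lies in $\iGcl(\pCong)$ for every \gprimec\ $\pCong \in \SpecR$; that is, $\gprad(R) \subseteq \iGcl(\pCong)$ for all $\pCong \in \SpecR$. Consequently, the indexing condition $\iGcl(\pCong) \supseteq \gprad(R)$ in the defining intersection of $\srad(\gprad(R))$ is again vacuous, yielding
\[
\grad(R) \ds = \srad(\gprad(R)) \ds = \bigcap_{\pCong \in \SpecR} \pCong \ds = \srad(\tG) \ds = \srad(\emptyset),
\]
which is exactly the claimed chain of equalities.

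There is no real obstacle: the entire argument rests on identifying the index set of \gprimec s in each of the three intersections, and Remark~\ref{rem:ghostpotent} does precisely that for $\gprad(R)$. The only point meriting a line of justification is that $\gprad(R)$ does not shrink the family of \gprimec s used in the intersection, and that $\tG \subseteq \iGcl(\pCong)$ holds universally; both follow from basic structural facts already established (Remarks~\ref{rem:ghostpotent} and~\ref{rem:cong.ideal}).
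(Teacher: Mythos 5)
Your proof is correct and uses essentially the same mechanism as the paper's: the observation that every ghostpotent lies in $\iGcl(\pCong)$ for every \gprimec\ $\pCong$ (which the paper re-derives from the prime property, while you cite Remark~\ref{rem:ghostpotent}), so that all three defining intersections in \eqref{eq:radE} run over the full spectrum $\SpecR$. The paper phrases it as two inclusions and handles $\srad(\tGz)=\srad(\emptyset)$ separately via \eqref{eq:EEG}, but the underlying argument is the same collapse of index sets that you describe.
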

\begin{proof} $(\subseteq):$
Suppose that $a \in \gprad(R)$ is non-ghost, then $a^k = \ghost $ for some $k \in \Net$. In any \gprimec, $a^k \pcng \ghost$ implies $a \pcng \ghost$, and thus
$a \in \iGcl(\srad(\tGz))$  by \eqref{eq:radE}.

\pSkip
  $(\supseteq):$ Immediate by \eqref{eq:EEG},  since $\tGz \subseteq \gprad(R)$.

  \pSkip The equality $\srad(\tGz) =  \srad(\emptyset)$ is given by \eqref{eq:EEG}.
\end{proof}

From the above exposition  we derive the following theorem.
\begin{thm}[Krull]\label{thm:krull}
  For any \nusmr\  $R$ we have
   \begin{equation}\label{eq:radRP}
\grad(R) = \bigcap_{\scriptsize \begin{array}{c}
\pCong\in \SpecR
\end{array}} \hskip -13pt \pCong  \ .   \end{equation}
\end{thm}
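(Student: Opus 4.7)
The plan is to show that this result follows almost immediately by chaining together three facts already established in the excerpt: Lemma \ref{lem:R-G.rad}, the defining formula \eqref{eq:radE} for the \sradical, and the trivial observation that $\tGz$ is contained in the ghost projection of every congruence on $R$.

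First I would invoke Lemma \ref{lem:R-G.rad}, which gives the identification $\grad(R) = \srad(\tGz) = \srad(\emptyset)$. This reduces the theorem to computing $\srad(\emptyset)$. Next, I would apply the definition \eqref{eq:radE} of the set-radical congruence, which gives
\[
\srad(\emptyset) \ds= \crad(\gCong_\emptyset) \ds= \bigcap_{\scriptsize \begin{array}{c} \pCong \in \SpecR \\ \iGcl(\pCong) \supseteq \emptyset \end{array}} \hskip -5pt \pCong .
\]
The indexing condition $\iGcl(\pCong) \supseteq \emptyset$ is vacuous, so the intersection ranges over \emph{all} \gprimec s on $R$. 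Combining the two identities then yields \eqref{eq:radRP}.

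A second (and perhaps more conceptual) route, which I would include as cross-verification, is via Remark \ref{rem:ghostpotent}: an element $b \in R$ is ghostpotent if and only if $b \in \iGcl(\pCong)$ for every \gprimec\ $\pCong \in \SpecR$. Thus $\iGcl\big(\bigcap_{\pCong \in \SpecR} \pCong\big) = \gprad(R)$, and since both sides of \eqref{eq:radRP} are \qcong s (by Lemma \ref{lem:qcong.radical}, using that $\SpecR \neq \emptyset$ in the nontrivial case), the equality of their ghost projections, together with Corollary \ref{cor:rad.1} applied to $\Cong = \gCong_{\gprad(R)}$, forces the equality of the congruences themselves.

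No real obstacle is anticipated here: the theorem is essentially a naming convention, wrapping together Lemma \ref{lem:R-G.rad} and the very definition of $\srad$ in \eqref{eq:radE}. The only mild subtlety is the vacuous-condition step, which one should state explicitly so that the index set in the intersection is clearly all of $\SpecR$; everything else is bookkeeping. If $\SpecR$ happens to be empty, the right-hand side of \eqref{eq:radRP} is by convention the full relation $R \times R$, and the left-hand side is likewise formally set to the empty intersection in \eqref{eq:radE}, so the identification still holds at the level of the stated definitions.
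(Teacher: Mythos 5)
Your first argument is precisely the paper's own proof: it invokes Lemma \ref{lem:R-G.rad} to rewrite $\grad(R)$ as $\srad(\emptyset)$ and then reads off \eqref{eq:radRP} from the defining formula \eqref{eq:radE}, noting that the index condition $\iGcl(\pCong) \supseteq \emptyset$ is vacuous. The cross-check via Remark \ref{rem:ghostpotent} and Corollary \ref{cor:rad.1} also closes, but it is strictly superfluous given the first argument.
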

\begin{proof}
A direct consequence of Lemma \ref{lem:R-G.rad}, in the view of Definition \ref{def:s.radical}.
\end{proof}

\begin{cor} Let $a \in R$, then
  $a \in \iGcl(\grad(R))$ if and only if $a $ is a ghostpotent, i.e., $a \in  \gprad(R)$.
\end{cor}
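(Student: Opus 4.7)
The plan is to derive this directly from Krull's theorem (Theorem~\ref{thm:krull}) together with the defining property of \gprimec s. The first step is to observe that the ghost projection commutes with intersections of congruences: for any family $\{\Cong_i\}$, an element $a$ satisfies $(a,a^\nu)\in \bigcap_i \Cong_i$ iff $(a,a^\nu)\in \Cong_i$ for every $i$, so
\[
\iGcl\Big(\bigcap_i \Cong_i\Big) \;=\; \bigcap_i \iGcl(\Cong_i).
\]
Combined with Theorem~\ref{thm:krull}, this yields $\iGcl(\grad(R))=\bigcap_{\pCong\in\SpecR}\iGcl(\pCong)$, so the corollary reduces to showing that $a$ is ghostpotent iff $a$ lies in the ghost projection of every \gprimec.

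For the $(\Leftarrow)$ direction, suppose $a\in\gprad(R)$, so $a^k\in\tG$ for some $k\in\Net$. Then for every \gprimec\ $\pCong$ we trivially have $a^k\in\tGz\subseteq \iGcl(\pCong)$, equivalently $a^k\pcng (a^k)^\nu$ by Lemma~\ref{lem:ghs.eqv}. Applying the \gprime\ condition \eqref{eq:prime} inductively (writing $a^k=a\cdot a^{k-1}$), we obtain $a\pcng a^\nu$, i.e., $a\in\iGcl(\pCong)$. Intersecting over all $\pCong\in\SpecR$ gives $a\in\iGcl(\grad(R))$.

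For the $(\Rightarrow)$ direction I will argue by contrapositive: assume $a\notin\gprad(R)$, so $a^k\notin\tG$ for every $k\in\Net$, and produce a \gprimec\ $\pCong$ with $a\notin\iGcl(\pCong)$. This is exactly the situation of Lemma~\ref{lem:a.vs.qcong}, which provides a \qcong\ $\olCong$ such that $a\notin\iGcl(\olCong)$ and the complement $\ciGcl(\olCong)$ is a multiplicative monoid in $R$. The latter property is precisely the ghost-prime condition \eqref{eq:prime.0}: if $bc\in\iGcl(\olCong)$ then $b\in\iGcl(\olCong)$ or $c\in\iGcl(\olCong)$. Hence $\olCong$ satisfies the defining condition of a \gprimec\ (Definition~\ref{def:prmCng}), giving a \gprimec\ whose ghost projection does not contain $a$; therefore $a\notin\bigcap_{\pCong}\iGcl(\pCong)=\iGcl(\grad(R))$.

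The main potential obstacle is a bookkeeping issue in the last step: Definition~\ref{def:prmCng} requires a \gprimec\ to also be an \lcong, whereas Lemma~\ref{lem:a.vs.qcong} only supplies a \qcong. I expect this to be resolved without real difficulty, either by carrying out Zorn's argument of Lemma~\ref{lem:a.vs.qcong} inside $\LCng(R)$ instead of $\QCng(R)$ (noting that $\diag(R)$ and the ambient constructions preserve the \lcong\ property, so the chain union is again an \lcong\ as in Proposition~\ref{prop:max.cong.1}), or by observing that once $\ciGcl(\olCong)$ is multiplicatively closed, one may refine $\olCong$ further -- without introducing new powers of $a$ into $\iGcl$ -- to an \lcong\ via Zorn's lemma, since the tangible projection $\iTcl(\olCong)$ always contains $\RX$ and can be enlarged monotonically to a maximal tangible submonoid of $\tT$. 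Either refinement yields the desired \gprimec, completing the contrapositive.
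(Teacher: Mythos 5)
Your ($\Leftarrow$) direction and the reduction via Theorem \ref{thm:krull} together with the identity $\iGcl\big(\bigcap_i\Cong_i\big)=\bigcap_i\iGcl(\Cong_i)$ agree with the paper, which disposes of ($\Leftarrow$) as ``clear by definition'' (indeed $\gprad(R)\subseteq\rcl(\gprad(R))=\iGcl(\srad(\gprad(R)))=\iGcl(\grad(R))$). For ($\Rightarrow$) the paper is content with the one-line assertion that membership in $\iGcl(\pCong)$ for every \gprimec\ $\pCong$ forces ghostpotency, deferring the substance to Remark \ref{rem:ghostpotent} and to Corollary \ref{cor:a.vs.prime} --- the latter being stated \emph{after} this corollary and as a consequence of it. You instead try to prove that substance directly from Lemma \ref{lem:a.vs.qcong}, which is the honest route and the one the paper gestures at when it calls Corollary \ref{cor:a.vs.prime} ``a strengthening of Lemma \ref{lem:a.vs.qcong}''.

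The gap you flag is, however, genuine, and neither of your sketched repairs closes it. Lemma \ref{lem:a.vs.qcong} produces a \qcong\ $\olCong$ whose non-ghost part is multiplicatively closed, but Definition \ref{def:prmCng} also requires a \gprimec\ to be an \lcong, i.e.\ $\iTcl(\pCong)$ must be a tangible monoid, and nothing in that lemma controls $\iTcl(\olCong)$. Your second repair (``enlarge $\iTcl(\olCong)$ monotonically'') runs against \eqref{eq:inclusion.tcls}: refining a congruence only \emph{shrinks} its tangible projection, and shrinking it (by equating tangibles to non-tangibles) can push new elements --- possibly powers of $a$ --- into the ghost projection. Your first repair (Zorn inside $\LCng(R)$) needs an \lcong\ to seed the chain, and $\diag(R)$ is an \lcong\ only when $R$ is \tcls; moreover the maximality step of Lemma \ref{lem:a.vs.qcong} uses that $\gCong_{E\cup\{b\}}\notin\tJ$ forces a power of $a$ into its ghost projection, which holds for \qcong s (a congruence failing to be a \qcong\ ghostifies a unit and hence everything, Remark \ref{rem:cong.ideal}) but not for \lcong s, where membership in $\tJ$ can fail merely because the tangible projection is not closed under multiplication. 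So as written the contrapositive step is incomplete; an extra hypothesis (e.g.\ $R$ \tcls) or a genuinely different construction of the required \gprimec\ is needed. To be fair, the paper's own proof leaves exactly the same point unjustified.
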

\begin{proof} $(\Rightarrow)$:
By Theorem \ref{thm:krull}, $a$ belongs to the ghost projection $\iGcl(\pCong)$ of every \gprimec\ $\pCong$  and thus  is a ghostpotent.

  \pSkip
  $(\Leftarrow)$: Clear by definition, since $a \in \gprad(R)$.
\end{proof}

\begin{cor}\label{cor:a.vs.prime}
  For any $a \notin \gprad(R)$ there exists a \gprimec\ $\pCong$ such that $a \notin \iGcl(\pCong)$
\end{cor}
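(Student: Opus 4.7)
The plan is to deduce this corollary directly from Theorem~\ref{thm:krull} together with the characterization of $\iGcl(\grad(R))$ given by the preceding corollary. The condition $a \notin \gprad(R)$ means, by Definition~\ref{def:ghostpotent}, that no power $a^k$ lies in $\tG$; equivalently, by the corollary just proved, that $a \notin \iGcl(\grad(R))$.

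Next, I would invoke Theorem~\ref{thm:krull}, which asserts the identity $\grad(R) = \bigcap_{\pCong \in \SpecR} \pCong$. The key observation to bridge congruences and their ghost projections is that the ghost projection operator $\iGcl(\udscr)$ commutes with intersections of congruences. Indeed, from the defining formula \eqref{eq:invghost}, $b \in \iGcl(\Cong)$ iff $b \cng b^\nu$, so $b \in \iGcl\bigl(\bigcap_i \pCong_i\bigr)$ iff $b \pcng_i b^\nu$ in every $\pCong_i$, i.e., iff $b \in \bigcap_i \iGcl(\pCong_i)$. Hence
\[
\iGcl(\grad(R)) \;=\; \bigcap_{\pCong \in \SpecR} \iGcl(\pCong).
\]

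Combining the two steps, since $a \notin \iGcl(\grad(R)) = \bigcap_{\pCong} \iGcl(\pCong)$, there must exist at least one \gprimec\ $\pCong \in \SpecR$ with $a \notin \iGcl(\pCong)$, which is the desired conclusion.

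Since this is a one-line deduction, there is no real obstacle; the only point requiring a brief verification is the commutation of $\iGcl(\udscr)$ with intersections, and that follows immediately from the definition of the ghost projection. As an alternative route (useful if one wishes to avoid invoking the preceding corollary), one could apply Lemma~\ref{lem:a.vs.qcong} directly: the hypothesis $a \notin \gprad(R)$ is precisely $a^k \notin \tG$ for all $k$, and the lemma produces a \qcong\ $\olCong$ with maximal $\iGcl(\olCong)$ whose complement is a multiplicative monoid containing $a$; one then checks that maximality of $\iGcl(\olCong)$ forces $\olCong$ to satisfy the \gprime\ condition~\eqref{eq:prime.0}, giving the required $\pCong$.
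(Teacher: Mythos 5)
Your primary route is correct and coincides with the paper's implicit derivation (the paper gives no separate proof for this corollary, expecting the reader to deduce it exactly as you do from Theorem~\ref{thm:krull} and the corollary just preceding it). The commutation of $\iGcl(\udscr)$ with intersections that you verify is genuine; it is also reflected in Remark~\ref{rem:intersection.clusters}, where $\Gcl{\Cong_1 \cap \Cong_2} = \Gcl{\Cong_1} \cap \Gcl{\Cong_2}$ is recorded.

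Two cautions worth noting. First, the forward direction of the preceding (unnamed) corollary is logically equivalent to the present statement, and the paper's one-line justification there (``...and thus is a ghostpotent'') is itself cursory; so while your deduction faithfully reproduces the paper's chain, the substantive content really resides in producing a \gprimec\ from scratch, as the paper's own comment (``a strengthening of Lemma~\ref{lem:a.vs.qcong}'') hints. Second, your alternative route via Lemma~\ref{lem:a.vs.qcong} has a gap: that lemma yields a \qcong\ $\olCong$ with $\ciGcl(\olCong)$ a multiplicative monoid, which does deliver condition~\eqref{eq:prime.0}, but a \gprimec\ by Definition~\ref{def:prmCng} must additionally be an \lcong\ (its tangible projection must be a multiplicative monoid). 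That property is not supplied by the Zorn argument in Lemma~\ref{lem:a.vs.qcong} and would require a separate verification. Since you offer this only as an aside, the primary route stands and is the one that matches the paper.
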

 The corollary is a  strengthening of Lemma \ref{lem:a.vs.qcong}, applied there to \qcong s, and it can be enhanced further for \tprs\ elements.
 \begin{lem}\label{lem:f.prs.in.prime} For each $a \in \tTPS$
 there exists a \gprimec\ $\pCong$ such that $a \in \iTcl(\pCong)$.
\end{lem}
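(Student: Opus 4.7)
Since $a \in \tTPS$, Axiom \PSRa\ of Definition~\ref{def:nusemiring} ensures $a^n \in \tTPS$ for every $n \in \Net$, so $\MS := \{a^n : n \geq 0\}$ is a tangible multiplicative submonoid of $R$. I would first form the tangible localization $R_\MS$ (Definition~\ref{def:tangible.localization}); in $R_\MS$ the canonical image $a/\one$ admits inverse $\one/a$ and is therefore a tangible unit, and in particular $a/\one \notin \gprad(R_\MS)$ (since powers of a unit are units and hence not ghost).

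Next, I would apply Corollary~\ref{cor:a.vs.prime} to $R_\MS$ and the non-ghostpotent element $a/\one$ to obtain a \gprimec\ $\pCong' \in \Spec(R_\MS)$. Because $\pCong'$ is in particular a \qcong\ (Definition~\ref{def:nucong}) and $a/\one \in (R_\MS)^\times$, the standing inclusion $(R_\MS)^\times \subseteq \iPcl(\pCong') \subseteq \iTcl(\pCong')$ (see Remark~\ref{rem:tcng.units}.(i)) forces $a/\one \in \iTcl(\pCong')$. Since $\iTcl(\pCong') = \iPcl(\pCong')$ is a tangible monoid ($\pCong'$ being an \lcong), the entire image $\tau_\MS(\MS) = \{a^n/\one\}$ then lies in $\iTcl(\pCong')$.

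Finally, I would pull $\pCong'$ back to $R$: by Theorem~\ref{thm:prime.bij}, the restriction $\pCong := \pCong'|_R$ is a \gprimec\ on $R$ satisfying $\MS \subseteq \iTcl(\pCong)$, and in particular $a \in \iTcl(\pCong)$, establishing the claim.

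The only real obstacle is ensuring that $R_\MS$ possesses \emph{some} \gprimec\ at all. This is precisely the content of Corollary~\ref{cor:a.vs.prime}, which ultimately rests on Krull's Theorem~\ref{thm:krull} and the Zorn's lemma construction of Lemma~\ref{lem:a.vs.qcong}. Once existence of such a $\pCong'$ is granted, the delicate passage from ``$a$ non-ghost'' to ``$a$ tangible'' becomes automatic in the localization, because in $R_\MS$ the element $a/\one$ is literally a unit, so its membership in the tangible projection is built into the definition of a \qcong; this bypasses the need to directly rule out equivalences $a \pcng b$ with non-tangible $b$ in the original \nusmr.
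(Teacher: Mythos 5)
Your proof is correct and follows essentially the same route as the paper: localize at $\MS = \langle a \rangle$, invoke Corollary~\ref{cor:a.vs.prime} to produce a \gprimec\ on $R_\MS$, observe that $a/\one$ being a unit forces it into the tangible projection by the \qcong\ axiom, and restrict back to $R$. The only cosmetic difference is that you cite Theorem~\ref{thm:prime.bij} for the restriction step where the paper cites Proposition~\ref{prop:cong.2}.(ii), but the former is a packaging of the latter, and your explicit check that $a/\one$ is non-ghostpotent in $R_\MS$ is a welcome clarification that the paper leaves implicit.
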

\begin{proof}
The multiplicative  monoid $\MS= \gen{a}$ is tangible, since $a$ is \tprs.
Take the tangible localization~ $R_\MS$  of~ $R$ by $\MS$, in which $\frac{a}{\one}$ is a tangible unit. By Corollary~ \ref{cor:a.vs.prime}  there exists a \gprimec \ $\pCong'$ on~ $R_\MS$ for which $\frac{a}{\one} \notin \iGcl(\pCong')$, and furthermore $\frac{a}{\one} \in \iTcl(\pCong')$, since $\frac{a}{\one}$ is a unit and $\pCong'$ is \gprime.
 Then,   by Proposition ~\ref{prop:cong.2}.(ii), the restriction of $\pCong'$ to $R$ gives a \gprimec\ $\pCong$ with  $\MS \subseteq \iTcl(\pCong)$, and thus  $a\in \iTcl(\pCong)$.
\end{proof}

For the \gpradicalc, Remark \ref{rem:ghostification.smr} can be strengthened as follows.
\begin{rem}\label{rem:grad.prs}  $ $
\begin{enumerate} \eroman
  \item A \tprs\ element $ a \in \tTPS$ cannot be written as a sum which involves a ghostpotent term  ~$b$.
  Indeed, otherwise, if $a= b + c$ such that $b^n \in \tG$, then  $a^n = (b + c)^n = b^n + \sum_{i=1}^{n} \binom{n}{i}b^{n-i}c^i$,  which contradicts Axiom \emph{\PSRb}\ in Definition \ref{def:nusemiring}, since $a^n \in \tT,$ cf. \eqref{eq:tan.sum}.

  \item  If $R$ is a \prsfl\ (resp. \tcls) \nusmr, then $R/\grad(R) $ is also  \prsfl\ (resp. \tcls). Indeed, each member of   $\iTcl(\grad(R))$ is \tprs, since $\tT = \tTPS$ (resp. $\tT = \tTP$), where a \tprs\ element cannot be ghostpotent, implying by ~ (i) that $\iTcl(\grad(R)) = \tT$ is a monoid. Hence,
$R/\grad(R) $ is \prsfl\ (resp. \tcls).

  \item  If $R$ is a \tamesmr, then $R/\grad(R) $ is also  \tame, since  $\grad(R)$ is a \qcong\ which respects the \nusmr\ operations.

\end{enumerate}
\end{rem}

\subsubsection{Jacobson radical} \sSkip

Finally, we reach the last type of radical, defined in terms of \maximalc s (Definition ~ \ref{def:maximalSpec}).
\begin{defn}\label{def:jacbson.rad} The \textbf{Jacobson radical} of a congruence $\Cong$ on a \nusmr\ $R$ is defined as
  $$ \jac(\Cong) := \hskip -10pt \bigcap_{\scriptsize  \begin{array}{c}
\mCong\in \MSpec(R) \\ \mCong \supseteq \Cong
\end{array}} \hskip -13pt \mCong \; .$$

\end{defn}

\begin{lem}
   Let $\pi = \pi_\Cong : R \Onto R/\Cong$ be the
canonical surjective  homomorphism. Then,
\begin{enumerate}\eroman
  \item $\jac(\Cong) = \ipi (\jac(R/\Cong))$;
\item $\crad(\Cong) = \ipi(\grad(R/\Cong))$;
% \item $\crad(\Cong) = \crad(\Cong).$ ??

\end{enumerate}
\end{lem}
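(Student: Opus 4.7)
The plan is to combine three ingredients: (a) the inclusion-preserving bijection $\Cong' \leftrightarrow \ipi(\Cong')$ between congruences on $R/\Cong$ and congruences on $R$ containing $\Cong$, supplied in the relevant specializations by Remark \ref{rem:induced.cong}(i) (for \qcong s and \lcong s) and Remark \ref{rem:biject.proj} (for \gprimec s); (b) Krull's Theorem \ref{thm:krull} together with its congruence-level form Corollary \ref{cor:rad.1}; and (c) the routine identity $\ipi\bigl(\bigcap_i \Cong'_i\bigr) = \bigcap_i \ipi(\Cong'_i)$, which is immediate from the set-theoretic description $\ipi(\Cong') = \{(a,b) \in R \times R \mid (\pi(a),\pi(b)) \in \Cong'\}$ coming from Remark \ref{rem:cong1}(iii).

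For part (i), I would first observe that the correspondence in (a), being order-preserving, restricts to a bijection between \maximalc s on $R/\Cong$ and those \lcong s on $R$ which are maximal among \lcong s containing $\Cong$. The latter are exactly the \maximalc s on $R$ (Definition \ref{def:maximalSpec}) that contain $\Cong$: any \lcong\ on $R$ properly enlarging such an $\mCong$ automatically contains $\Cong$, and hence corresponds under (a) to a proper enlargement of $\pi(\mCong)$, contradicting maximality on the quotient side. Intersecting over this bijection and pulling $\ipi$ outside via (c) gives
\[
\ipi\bigl(\jac(R/\Cong)\bigr) \;=\; \ipi\Bigl( \bigcap_{\mCong' \in \MSpec(R/\Cong)} \mCong' \Bigr) \;=\; \bigcap_{\substack{\mCong \in \MSpec(R) \\ \mCong \supseteq \Cong}} \mCong \;=\; \jac(\Cong).
\]

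For part (ii), the argument proceeds along parallel lines with \gprimec s in place of \maximalc s. Applying Theorem \ref{thm:krull} to the quotient \nusmr\ $R/\Cong$ yields $\grad(R/\Cong) = \bigcap_{\pCong' \in \Spec(R/\Cong)} \pCong'$. Pulling back via $\ipi$, using the \gprime\ specialization of (a) from Remark \ref{rem:biject.proj} together with (c), gives
\[
\ipi\bigl(\grad(R/\Cong)\bigr) \;=\; \bigcap_{\pCong' \in \Spec(R/\Cong)} \ipi(\pCong') \;=\; \bigcap_{\substack{\pCong \in \Spec(R) \\ \pCong \supseteq \Cong}} \pCong \;=\; \crad(\Cong),
\]
where the last equality is precisely Corollary \ref{cor:rad.1}.

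The only mildly subtle step is the transfer of maximality across the correspondence in part (i); this is straightforward once one invokes (a) as an isomorphism of posets between \lcong s on $R/\Cong$ and \lcong s on $R$ containing $\Cong$. I do not foresee a serious obstacle, since the heavy lifting has already been done in Theorem \ref{thm:krull}, Corollary \ref{cor:rad.1}, and the correspondence results of Remarks \ref{rem:induced.cong} and \ref{rem:biject.proj}; the present lemma is essentially a direct assembly of these facts.
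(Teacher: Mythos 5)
Your proof is correct.  For part (i), you use exactly the paper's argument: the bijection from Remark \ref{rem:induced.cong}.(i) between \maximalc s on $R/\Cong$ and \maximalc s on $R$ containing $\Cong$, together with the fact that $\ipi$ commutes with intersections; the brief note that ``maximal among \lcong s containing $\Cong$'' coincides with globally maximal (because any enlargement of such an $\mCong$ automatically contains $\Cong$) is the right way to make the restriction of the order-isomorphism precise, and it matches the paper's (unstated) use of the correspondence.

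For part (ii) you take a genuinely different, and in fact cleaner, route.  The paper's proof is a one-liner asserting that $a^n \in \iGcl(\Cong)$ iff $\pi(a)^n \in (R/\Cong)|_\ghs$; this is a pointwise characterization of the ghost projections in terms of ghostpotents, leaving the reader to reassemble the congruences from those projections.  You instead apply Theorem \ref{thm:krull} to the quotient \nusmr\ $R/\Cong$ to write $\grad(R/\Cong)$ as an intersection over $\Spec(R/\Cong)$, pull the intersection back through $\ipi$ using the \gprimec\ bijection of Remark \ref{rem:biject.proj}, and identify the result with $\crad(\Cong)$ via Corollary \ref{cor:rad.1}.  This has two advantages: it runs exactly parallel to the argument for (i), so the two parts are proved by the same mechanism, and it stays entirely at the level of congruences rather than descending to elementwise ghost membership.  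The paper's version is shorter to state but less self-contained; yours costs a little more ink and buys a uniform, fully spelled-out argument.  No gaps.
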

\begin{proof} (i): Observe that the map $\mCong \Mto \ipi (\mCong)$
defines a bijection between all \maximalc s  of $ R / \Cong$ and the
\maximalc s  on $R$ that contain $\Cong$, cf. Remark \ref{rem:induced.cong}.(i). As
inverse images with respect to $\pi$ commute with intersections, (i) follows
from Definition \ref{def:jacbson.rad} and Theorem \ref{thm:krull}.
\pSkip
(ii): Follows from  the
fact that a power $a^n$ of an element $a \in R$ belongs to $\iGcl(\Cong)$ if and only if $\pi(a)^n \in (R/\Cong)|_\ghs$.
\end{proof}

\subsection{Krull dimension}
\sSkip

The development of dimension theory is left for future work. To give the flavor of its basics,  we bring some  basic definitions.

\begin{defn}\label{def:noetherian.nusmr}
  A \nusmr\ $R$ is called \textbf{noetherian} (resp. \textbf{\qnoetherian}),
if any ascending chain
$$ \Cong_0  \subsetneq \Cong_1 \subsetneq \Cong_2 \subsetneq \cdots  $$
of \ccong s (resp. \qcong s)  on  $R$ stabilizes after finitely many steps, i.e., $\Cong_n = \Cong_{n+1} =  \cdots$ for some $n$.
%every set of increasing sequences of \ccong s  eventually becomes constant.

  $R$ is called \textbf{artinian} (resp. \textbf{\qartinian}),
if any descending chain
$$ \Cong_0  \supsetneq \Cong_1 \supsetneq \Cong_2 \supsetneq \cdots  $$
of \ccong s (resp. \qcong s)   stabilizes after finitely many steps.

\end{defn}

 The use of \gprimec s, supported by the results if this section, allows for a natural definition of dimension of \nusmr s.

\begin{defn}\label{def:krull.dim}
The \textbf{Krull
 dimension} of a \nusmr\ $R$, denoted $\dim(R)$, is defined to be
the supremum of lengths of chains
$$\iTcl(\pCong_0) \varsupsetneq \iTcl(\pCong_{1}) \varsupsetneq \cdots  \varsupsetneq \iTcl(\pCong_{n}),$$
such that  $\pCong_0 \varsubsetneq \pCong_{1} \varsubsetneq \cdots  \varsubsetneq \pCong_{n}$  are \gprimec  s on $R$.
\end{defn}
For example,  any \ssmf \ (and more generally any \nusmf)  $F$ has dimension $0$, while  the  \nusmr\   $\tlF[\lm]$ of polynomial functions over $F$ has dimension $1$, cf. Example \ref{examp:prime}.
The case of general \nusmr s is more subtle.

\begin{defn}\label{def:heigth} The
\textbf{height} $\hgt(\pCong)$  (or \textbf{codimension}) of a \gprimec\  $\pCong$, is the supremum of the lengths of all chains of \gprimec s contained in $\pCong$, meaning that $\pCong_ 0  \subsetneq \pCong_1   \subsetneq \cdots \subsetneq  \pCong_ n   = \pCong$ such that  $\iTcl(\pCong_0) \varsupsetneq \iTcl(\pCong_{1}) \varsupsetneq \cdots  \varsupsetneq \iTcl(\pCong_{n})$.
\end{defn}
   By Theorem \ref{thm:prime.bij}, we see that the height of $\pCong$ is the Krull dimension of the localization $R_\pCong$ of $R$ by ~ $\pCong$.     A \gprimec \ has height zero if and only if it is a minimal \gprimec. The Krull dimension of a \nusmr\ is the supremum of the heights of all \gprimec s that it carries.
\pSkip

   The study of \qnoetherian\ \nusmr s, \qartinian  \ \nusmr s,  and Krull dimension is left for future work.

\section{$\nu$-modules}\label{sec:4}

Modules over \nusmr s are a specialization of modules over semirings (Definition \ref{def:modules}), playing the similar role to that of  modules over rings. As before, \nusmr s in this section are all assumed to be commutative.

\begin{defn}\label{def:numodule} A \textbf{left $R$-\numod}\  $M:= (M,\tH, \mu )$ over a (commutative) \nusmr\ $R := (R, \tT, \tG, \nu)$ is a smeimodule (Definition \ref{def:modules}) having the structure of an additive
 \numon\ (Definition \ref{def:numonoid}) whose ghost map $\mu: M \To \tH$  satisfies for all $a \in R$, $u,v \in M$ the additional axioms:
\begin{description} \dispace
  \item[\NDa:] $\mu(au) = a \mu(u);$
%  \item $\mu(v) = e \mu(v);$

  \item[\NDb:] $\mu(u+v) = \mu(u) + \mu(v).$   \footnote{This  axiom is part of the \numon\ structure that  makes the ghost map a monoid homomorphism. }
\end{description}
$\tH$ is called the \textbf{ghost submodule} of $M$. \pSkip

An
$R$-\textbf{\numod\ congruence} is a congruence on a \numon\ which also respects multiplication by elements of~ $R$, i.e., if $u \equiv v$, then $au \equiv
a v $ for all $a \in R$, $u,v  \in M$. \pSkip

A \textbf{\hom}\ of $R$-\numod s is a \numon\  \hom\  (Definition \ref{def:numon.hom}) $$\vrp: (M,\tH, \mu ) \TO (M',\tH', \mu'),$$ in which
$\vrp(au) = a\vrp(u)$ for any $a \in R $, $u\in M$.  In particular,
$\srHom(\zero_M) = \zero_{M'}$.  The \textbf{g-kernel}  of $\vrp$  is defined as
$$\gker(\vrp) : = \{ u \in M \cnd \vrp(u) \in \tH'\} \subset M. $$

\end{defn}

 When $R$ is clear from the context, to simplify notations,  we write \numod\ for  $R$-\numod. As in the case of \nusmr s,  we  write $u^\mu$ for the ghost image $\mu(u)$ of an element $u \in M$ in the submodule $\tH \subseteq M$.
Since  $M$ has the structure of a \numon, the ghost submodule $\tH$ is partially ordered and it  induces a (partial) $\mu$-ordering on the whole $M$, i.e., $u \mug v $ iff $u^\mu > v^\mu$.
We say that $M$ is a \textbf{ghost \numod}, if~ $\tH =  M$.

By definition, for every $u \in M$,
\begin{equation}\label{eq:e.u}
u^\mu = u + u = (\one + \one ) u = e u
\end{equation}
i.e., $\mu(u) = eu$. Axiom \NDa\  implies that $\mu(e u) = e\mu( u)$, and thus $\mu(a u ) = a^\nu \mu (u) $ for every $a \in R$.
 One observes that the action of~ $R$ on $M$ respects the $\mu$-ordering  \eqref{eq:nu.order} of $M $, i.e.,
\begin{equation}\label{eq:mod.order}
  u \mug v \Dir a u \mug a v  \qquad \text{for all } u, v\in M, \ a \neq \zero \text{ in }  R.
\end{equation}
Indeed, for $u \mug v $ in $M$, by the  \numon\ properties,   $a(u+v) = au = (au+av)$.

For a \hom\ $\vrp:  M \To M'$ of \numod s, we have $\vrp(u^\mu) = \vrp(u)^{\mu'}$ by Lemma \ref{lem:nu.hom}, providing the ghost inclusion  $\vrp(\tH) \subseteq \tH'$.

\begin{defn}\label{def:annihilator} Let $M$ be an $R$-\numod, and let $S \subseteq M$ be a subset.
   The \textbf{ghost annihilator} $\Ann_R(S)$ of  $S $  is the set of all $a \in R$ such that  $a u$ is a ghost for every  $u \in S$, i.e.,
$$  \Ann _{R}(S)=\{a\in R\mid au \in \tH \text{ for all } u \in S \}. $$
The \textbf{Krull dimension} of $M $ is defined as
$$ \dim_R(M) := \dim(R\qq \hskip -2pt  \Ann _{R}(M)),$$
where the quotient $R\qq \hskip -2pt  \Ann _{R}(M)$ is as given in   Definition \ref{def:ghostification}.
\end{defn}
\noindent Clearly,  $\tG \subseteq \Ann _{R}(S) $ for any subset $S \subseteq M$, while  $\Ann _{R}(\tH) = R$.   When $M = R$ is a \nusmr, $\Ann_R(S)$ is a \nusmr\ ideal (Definition \ref{def:ideal.smr}).
\pSkip

%\begin{rem}
The usual verification shows that the direct sum of
\numod s is a \numod. Thus, one can construct the free
\numod\ as a direct sum of copies of $R$.
%\end{rem}

\begin{defn}\label{def:free.module} An $R$-\numod\ $M$ is  \textbf{free}, if it is isomorphic
to ${R}^{(I)}$ for some index set $I$. $M$ is
 \textbf{projective}, if there is a split epic from $R^{(I)}$ to $M$ for some
$I$ (which can be taken to have order $m$, if~ $M$ is finitely
generated by $m$ elements).
\end{defn}

Ghostifying  congruences $\gCong_N$ of \nusmod s $N$  are naturally defined in terms of \numon s, cf. ~\eqref{eq:G.E}.

\begin{defn}\label{def:factor.mod}
  The \textbf{quotient} of a \numod\ $M$ by a \nusmod\ $N \subseteq M$  is defined as $M \qq N := M / \gCong_N$, where $\gCong_N$ is the ghostifying  congruence of $N$.

\end{defn} \noindent
\noindent In fact, this process of quotienting is applicable  for a general subset $S \subseteq  M$, and not only for \nusmod s.
\pSkip

Given two  $R$-\numod s $M$ and $N$, we denote by $\Hom_R(M,N)$ the set of all \numodh s $\phi : M \To N$.
A routine check shows that $\Hom_R (M,N)$, with $R$  a commutative \nusmr,   is an $R$-\numod\  where  $a \phi$ is defined via $(a \phi)(u) := a\phi(u)$, cf. Proposition \ref{prop:Hom.is.numon}.

 The category of \numod\ over a \nusmr\  $R$ is denoted by $\NMOD$, its morphisms are  \numodh s.
As usual,  we have the following functors.

\begin{defn}\label{def:functors} The \textbf{covariant functor}
$$\Hom_R(M,\udscr\,): \NMOD \TO \NMOD $$ is given by
sending $N $ to $\Hom_R (M,N)$  and sending $\varphi: N_1 \To N_2$
to $\htvrp: \Hom_R(M  ,N_1)\To \Hom_R(M ,N_2)$ by $\htvrp (\ff) =
\varphi \ff$ for $\ff: M \To N_1.$

 The  \textbf{contravariant functor}
$$\Hom(\, \udscr\, ,N): \NMOD \TO \NMOD$$ is given by
sending $M  $ to $\Hom_R (M,N)$ and sending the \numod\ homomorphism
$\varphi: M_1 \To M_2$ to $\htvrp: \Hom_R(M_2  ,N)\To \Hom_R(M_1 ,N )$
by $\htvrp (\ff) = \ff \varphi$ for $\ff: M _2 \To N.$
\end{defn}

\subsection{Exact sequences}
\sSkip

 A sequence of $R$-\numod s  is a chain of morphisms of $R$-\numod s
$$\cdots \Right{3}{\phi_{n-2}}  M_{n-1} \Right{3}{\phi_{n-1}} M_n \Right{3}{\phi_n} M_{n+1} \Right{3}{\phi_{n+1}} \cdots$$
with indices varying over a finite or an infinite part of $\Z$. The  sequence satisfies the \textbf{complex ghost  property} at $M_n$ if  $\phi_n \circ \phi_{n-1} = \tH_{n+1}$ or, in equivalent terms, $\im(\phi_{n-1}) \subseteq  \gker (\phi_n)$, cf. Definition~ \ref{eq:nu.mon.hom}.
The sequence is said to be \textbf{ghost exact} (written \textbf{g-exact}) at $M_n$, if $\im(\phi_{n-1}) = \gker(\phi_n)$.
When the sequence satisfies the complex ghost property at every  $M_n$, it is called  \textbf{g-complex}. Likewise, the sequence is called \textbf{g-exact}, if it is exact at all places.  \textbf{Short g-exact sequences} are sequences of type $$\tH' \TO M' \overset{\phi}{\TO} M \overset{\psi}{\ONTO} M'' \TO \tH'',$$
such that
\begin{enumerate} \ealph
  \item $\im(\phi) = \gker(\psi); $
  \item $\psi$ is  surjective.
\end{enumerate}
Often we also require that $\phi$ is injective. Then, for short g-exact sequences, $M'$  can  be viewed as a \nusmod\ of
$M$  via $\phi$, where $\psi$  induces a homomorphism  $M \qq M' \To M''$. Conversely, for any  \nusmod\
$N \subseteq  M$ the sequence
$$ \tH_N \TO N \INTO M \ONTO M \qq N \TO \tH_{M \qq N} \ (= \tH_M)$$
is a short g-exact sequence.
A further study of g-exact sequences is left for future work.

\subsection{Tensor products}\label{ssec:tensor}
\sSkip

The tensor product of modules over semirings has appeared in the
literature; for example in ~\cite{W}. Since the theory closely
parallels tensor products over algebras, we briefly review it for
the reader's convenience. Patchkoria has
built  an extensive theory of derived functors in ~\cite{Pa2}, but assumes that
the modules are (additively) cancellative, in order to be able to
use   factor modules. Here, since factorization by submodule is performed by means of  ghostification, as indicated  in Definition \ref{def:factor.mod},  we can avoid  cancellativity conditions on \numod s.  % We write \numod\ for $R$-\numod.

Suppose $M$ and $N$ are $R$-\numod s, and  $\tM := (\tM, \tG, + )$ is an additive   \numon\ (Definition \ref{def:numonoid}). A map $$\psi: \MxN \TO \tM $$ is \textbf{bilinear} (also called \textbf{balanced}), if
\begin{equation}\label{eq:bilinear.map}
 \begin{array}{rcl}
 \psi  (u + u', v) & =&  \psi  (u, v) + \psi  (u', v), \\[1mm]
\psi  (u , v+v')  & =&  \psi  (u, v)+ \psi (u, v'), \\[1mm] \psi  (u a, v) &= & \psi  (u, av), \end{array}\end{equation}
 for all $a\in R$, $ u, u' \in M,$ $v, v' \in N$.

\begin{defn}\label{def:tensor.prod}  A \textbf{tensor product} of $R$-\numod s $M$ and $N$ (over $R$) is  an $R$-\numod\ $
T$, together with an bilinear map $\psi : M \times  N \To T$, such that the universal property holds:
For each bilinear map $\phi: M \times  N \To L$ to some \numod\ $L$, there is
a unique linear map $\vrp: T  \To L $ such that $\phi = \vrp  \circ \psi$, i.e., that renders the diagram
$$\xymatrix{
\MxN \ar@{->}[d]_{\phi } \ar@{->}[rr]^{\psi} && T \ar@{-->}[lld]^{\vrp} \\
  L &&  \\
}
$$
commutative.
\end{defn}

Given  $R$-\numod s $M := (M, \tH_M, \mu_M)$ and $ N := (N, \tH_N, \mu_N)$, a routine  verification shows that $\RMxN$ is a again an $R$-\numod\ with ghost submodule $\tH_{\MxN}:= R(\tH_M \times \tH_N) \cong \tG(\tH_M \times \tH_N) $.
Its   $\mu$-ordering  is induced jointly from the $\mu$-orderings of $M$ and $N$ by coordinate-wise addition.
By the \numod\ properties one can identify $\RMxN$ with $\MxN $. But, to stress the fact that  $\tH_{\MxN}$ is determined as $e \cdot (\MxN)$, we retain the notation $\RMxN$.

Let $\tCong$ be the congruence on $\RMxN$ whose underlying  equivalence $\tsim$ is given by the generating relations
\begin{equation}\label{eq:tensor.1}
 \begin{array}{lrll}
 (i) & (u+u',v) &\tsim &  (u, v) + (u', v),  \\
 (ii) & (u,v+v ')  &\tsim  & (u , v) + (u , v') ,  \\
 (iii) & a \cdot(u, v)  & \tsim &   (au ,v) \;  \ds {\tsim} \;  (u, av),
\end{array}\end{equation}
for  $ a \in R$, $u, u'  \in M $, $v,v' \in N$.
We define the \textbf{tensor product}  of $M$ and $N$ to be
\[
\MoN :=  \RMxN/\tCong,
\]
and set  $\zero_\otimes := \zero_M \otimes \zero_N$, which is the class  $[\zero_\RMxN]$. The equivalence classes $[(u , v)]$ of $\tCong$ are  denoted as customary   by $u \otimes v$.

Observe that $f \tsim f'$ and $g \tsim g'$ in $\tCong$ implies  $f+g \tsim f'+g'$; therefore $\tCong$ induces a binary operation  $[f]+[g] : =  [f+g]$ on $\MoN$. So, it suffices to check that $f+g \tsim f'+g$, which is built from the three relations \eqref{eq:tensor.1}. The operation  $+$ of $\MoN$ is commutative, associative, and  unital with respect to ~ $\zero_\otimes$, as this is the case in $\RMxN$ and $\tCong$ is a  congruence.

The ghost  submodule $\tH_{\MoN}$ of $M \otimes_R N$ is defined as
$$\tH_{\MoN} := \{ u \otimes v \cnd u \in \tH_M \text{ or } v \in \tH_N\}.$$
When no confusion arises, we write $u ^\mu$ and $v^\mu$  for $\mu_M(u)$ and $\mu_N(v)$, respectively.
Recalling from \eqref{eq:e.u} that  $u^\mu = e \cdot u$,  by the third generating relation in \eqref{eq:tensor.1} we have
$$ (u^\mu,v) = (eu,v) \tsim e \cdot (u,v) \tsim  (u,e v) = (u,v^\mu),$$
 moreover
$$ (u^\mu,v) = (eu,v) =  (e^2u,v)\tsim e \cdot (eu,v)   \tsim   (eu,e v)
 = (u^\mu,v^\mu),$$
providing the equivalence
\begin{equation}\label{eq:mu.tens} %\mu_{R(\MxN)}(u,v)  :=
(u^\mu,v^\mu) \tsim
  e \cdot (u,v).
\end{equation}
Therefore $$ \tH_{\MoN} = \{ e \cdot (u,v) \cnd  u \in M, v \in N\}= \tH_M \otimes_R \tH_N.$$

To define the ghost map $\mu_\otimes$ of $\MoN$, first observe by \eqref{eq:mu.tens} that the ghost map of $R(\MxN)$ is given by $  \mu_{R(\MxN)} =  (\mu_M, \mu_N)$, and it  respects the defining relations \eqref{eq:tensor.1} of   $\tCong$. Indeed, writing $\tlmu$ for $\mu_{R(\MxN)}$,  for the  first generating relation we have:
$$\begin{array}{rllll}
\tlmu(u+u',v)   \tsim & e \cdot(u+u',v) \ds{ \tsim}
(u+u',e v) \\[1mm]    \tsim &
  (u,e v)+ (u',e v) \tsim
e  \cdot(u,v)+ e \cdot(u',v) \\[1mm]    \tsim &
\tlmu(u,v) + \tlmu(u',v).
\end{array}$$
The second relation is checked similarly, where for the third relation we have
$$\begin{array}{ll}
\tlmu(a \cdot(u,v))  \tsim ea\cdot (u,v) \tsim
 (ea u,v)   \tsim e \cdot(au,v) \tsim
\tlmu (au,v) .
\end{array}$$
We assign $\MoN$ with the ghost map $$\muo : \MoN \TO \tH_M \otimes_R \tH_N, \qquad  \text{ defined by } \ u \otimes v \longmapsto  u^{\mu} \otimes v^{\mu},$$
for which
\begin{equation}\label{eq:mu.tensor}
 (u \otimes v)^\muo  = [(u , v)^\tlmu] = [(u^\mu, v^\mu)] = [e \cdot (u, v)]= u^{\mu} \otimes v^{\mu}
\end{equation}
 for all $u,u' \in \tH_M$, $v,v' \in \tH_N$.

The partial ordering $<_\otimes$ of $\tH_\MoN$ is induced from  the partial ordering  of $\tH_{R(\MxN)}$, determined as
\begin{equation}\label{eq:tensor.order.1}
 u \otimes v <_\otimes u' \otimes v'  \dss \Iff u < u' \text{ and } v < v', \end{equation}
for  $u,u'\in \tH_M$, $v,v' \in \tH_N$.  Equivalently,  the  partial ordering $<_\otimes$ can be  extracted directly from the additive structure of ghost submonoid $\tH_\MoN$, %  induced by the operation $+$ on $\MoN$,
 that is
\begin{equation*}\label{eq:tensor.order.2}
\begin{array}{lll}
  u \otimes v <_\otimes u' \otimes v' & \Iff & u \otimes v + u' \otimes v' =  u' \otimes v'.\\[1mm]
%  u \otimes v =_\otimes u' \otimes v' & \IFF & u \otimes v  =  u' \otimes v',
\end{array}
 \end{equation*}
%for $u,u' \in \tH_M$, $v,v' \in \tH_N$.
This formulation shows that the ordering $<_\otimes$ does not depend   on the representatives of classes $u \otimes v$ in $M \otimes N$, so we only need to verify that $<_\otimes$ respects the third generating relation in~ \eqref{eq:tensor.1}. But this  is clear by choosing  $ a \cdot(u, v)$ as representatives.

We define a map
\[
(\, \cdot \,) \colon R \times (\MoN) \xto{\qquad (a,[f]) \Mto [a \cdot f]\qquad} \MoN,
\]
which by routine check  seen to be well defined, namely $f \tsim g \Rightarrow a \cdot f \tsim a  \cdot g$, for any $a \in R$.

\begin{prop}\label{prop:tensors}
  $(\MoN , \tH_M \otimes_R \tH_N, \mu_\otimes)$   is an $R$-\numod.
\end{prop}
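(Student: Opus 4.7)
The plan is to verify the three layers of structure in turn: first that $\MoN$ is an $R$-semimodule, then that it carries the structure of an additive \numon\ with ghost submodule $\tH_M \otimes_R \tH_N$ and ghost map $\muo$, and finally that the two extra \numod\ axioms \NDa\ and \NDb\ hold. Most of the work for the first layer has already been discharged in the discussion preceding the proposition: addition on $\MoN$ is well defined, commutative, associative, and admits $\zero_\otimes$ as neutral element, while the scalar action $a \cdot (u \otimes v) = (au) \otimes v = u \otimes (av)$ is well defined by the third generating relation in \eqref{eq:tensor.1}. It then remains to check distributivity of the scalar action over addition in both arguments and associativity of the scalar action, both of which follow directly by applying the defining relations \eqref{eq:tensor.1} to representatives, and thus are routine.

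For the \numon\ layer, I would first observe that $\tH_\MoN = \tH_M \otimes_R \tH_N$ is an additive submonoid of $\MoN$ and is closed for the ghost operation, as follows from iterating \eqref{eq:mu.tens}. I have already verified in \eqref{eq:mu.tensor} that $\muo$ is idempotent and lands in $\tH_\MoN$, so I next turn to Axioms \NMa, \NMb, \NMc\ (Definition \ref{def:numonoid}). Each of these reduces to the corresponding statement in the component \numon s $M$ and $N$: if $(u \otimes v)^{\muo} > (u' \otimes v')^{\muo}$ in the ordering \eqref{eq:tensor.order.1}, then $u^\mu > u'^\mu$ and $v^\mu > v'^\mu$, so by \NMa\ in $M$ and $N$ together with the bilinear relations in $\tCong$, we get $u \otimes v + u' \otimes v' = u \otimes v$. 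Axiom \NMb\ follows analogously from its pointwise counterparts using \eqref{eq:mu.tens}, and \NMc\ is obtained by the same reduction. The only subtlety is that tensors are defined only up to $\tCong$, so one must check that the above case analyses are independent of the chosen representatives; this is precisely the content of the observation that $\tCong$ preserves the ghost data, which was recorded in the calculations leading to \eqref{eq:mu.tensor}.

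Finally, Axioms \NDa\ and \NDb\ follow by direct computation on representatives. For \NDa: $\muo(a \cdot (u \otimes v)) = \muo((au) \otimes v) = (au)^{\mu_M} \otimes v^{\mu_N} = (a\mu_M(u)) \otimes v^{\mu_N} = a \cdot (u^{\mu_M} \otimes v^{\mu_N}) = a \, \muo(u \otimes v)$, where the third equality uses \NDa\ in $M$ and the fourth uses the third relation in \eqref{eq:tensor.1}. For \NDb: $\muo((u \otimes v) + (u' \otimes v'))$ is computed by additivity of the ghost maps $\mu_M$ and $\mu_N$ combined with \NDb\ in each of $M$ and $N$, and the bilinear relations in $\tCong$ distribute the resulting sum over the tensor.

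The main obstacle I anticipate is the careful verification of \NMc, since the partial ordering on $\tH_\MoN$ induced from the product of the ghost orderings of $M$ and $N$ is only partial: two ghost tensors may be $\mu$-incomparable even when their entries are \numon-comparable. Handling this cleanly requires one to pick canonical representatives and argue componentwise, using that $\tCong$ respects the ghost data by \eqref{eq:mu.tens}, and then apply \NMc\ in both constituent \numon s simultaneously.
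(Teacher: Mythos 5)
Your overall architecture (semimodule layer, then \numon\ layer, then Axioms \NDa\ and \NDb) matches the paper's, and your treatment of the scalar action and of the ghost map is essentially the same. But there is a genuine gap in your verification of the \numon\ axioms: you check \NMa--\NMc\ only on simple tensors $u \otimes v$, whereas a general element of $\MoN$ is the class of a finite sum $\sum_i a_i \cdot (u_i, v_i)$ in $\RMxN$ and is not of that form. Axioms such as \NMa\ are conditional statements universally quantified over arbitrary pairs of elements (``$x + y = x$ whenever $\muo(x) >_\otimes \muo(y)$''); knowing them for simple tensors gives no information about a pair $x = \sum_i u_i\otimes v_i$, $y=\sum_j u'_j\otimes v'_j$, because the hypothesis $\muo(x) >_\otimes \muo(y)$ does not decompose into comparisons of the individual summands. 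Your closing remark about \NMc\ and the partiality of the ordering gestures at this difficulty but does not resolve it; ``picking canonical representatives and arguing componentwise'' is not available for general sums.

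The paper closes this gap differently: it never reduces to $M$ and $N$ separately on simple tensors, but lifts an arbitrary class $[f]$ to an arbitrary representative $f$ in $L := \RMxN$, which is itself an $R$-\numod\ with coordinatewise structure and ghost map $\tlmu = (\mu_M,\mu_N)$. The identity $[f]^{\muo} = [f^{\tlmu}]$ from \eqref{eq:mu.tensor}, together with the representative-independence of the ordering $<_\otimes$, translates the hypothesis of each axiom for $[f],[g]$ into the corresponding hypothesis for $f,g$ in $L$, where the axiom holds, and the conclusion passes back through the quotient. You should restructure your \numon\ verification along these lines (or else prove separately that every class admits a representative to which your componentwise analysis applies, which amounts to the same work). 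By contrast, your verification of \NDa\ and \NDb\ on simple tensors is harmless: these are unconditional identities, both sides are additive, and $\MoN$ is generated by simple tensors, so checking them on generators does extend.
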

\begin{proof}
First,  observe that $\MoN$ is a \numon\ (Definition \ref{def:numonoid}) and its ghost map $ \mu_\otimes$ coincides  with the  \numon\ operation.
Indeed, let $ L :=  R(\MxN) $,  write $\tlmu$ for $\mu_L$, and for elements $f,g \in L $ use ~ \eqref{eq:mu.tensor} to obtain    $$[f+g]^{\mu_\otimes} = [(f+g)^\tlmu]=[f^\tlmu+g^\tlmu]=[f^\tlmu]+[g^\tlmu]=[f]^{\mu_\otimes}+[g]^{\mu_\otimes}$$
  and $(\zero_\otimes)^{\mu_\otimes} =[\zero_L^\tlmu]=[\zero_L]=\zero_\otimes$.
Since  $[f]^{\mu_\otimes}=[f^\tlmu]$ and $\tlmu$ is an idempotent map on $L$, then $\mu_\otimes$ is idempotent as well.

Having the partial ordering  $>_\otimes$  on $\tH_M \otimes \tH_N$ defined in \eqref{eq:tensor.order.1}, we verify the axioms of a \numon\ (Definition \ref{def:numonoid}).

\begin{description}\dispace
  \item[\NMa:] Assume $\mu_\otimes([f]) >_\otimes \mu_\otimes([g]) $, that is $[f^\tlmu] > [g^\tlmu]$,  implying that $f^\tlmu > g^\tlmu$. Thus
  $ [f] + [g] = [f+g] = [f] . $
  \item[\NMb:] If $\mu_\otimes(f) = \mu_\otimes(g) $, that is $[f^\tlmu] = [g^\tlmu]$, then  $f^\tlmu = g^\tlmu$ and
  $ [f] + [g] = [f+g] = [f^\tlmu] = [f]^{\mu_\otimes}.$
  \item[\NMc:]  The condition $ [f] + [g] \notin \tH_M \otimes_R \tH_N$ and
  $ [f] + [g]^{\mu_\otimes} \in \tH_M \otimes_R \tH_N$
    are equivalent to $f+g \notin \tH_L$ and $f +g^\tlmu \in \tH_L$, which implies $f+g=f^\tlmu +g$.
   But then $[f]+[g]=[f+g]=f^\tlmu +g= [f^\tlmu] +[g]=[f]^{\mu_\otimes}+[g]$,
   since $[f]^{\mu_\otimes}=[f^\tlmu]$.
\end{description}
Finally, a routine check shows that $\MoN$ is an $R$-\numod\ via (\,$\cdot$\,), since
$L$ by  itself is an $R$-\numod.
\end{proof}

\begin{rem}
  Since $R(\MxN)$ is generated by $\{(u,v) \cnd u \in M, v \in N \}$, we see that
  $\MoN$ is generated as an \numod\  by $\{u \otimes v \cnd  u \in M , v \in N \}$.
  Therefore, the ghost submodule $\tH_{\MoN}$ of $\MoN$ is generated by $\{u^{\mu} \otimes v^{\mu} \cnd u \in M, v \in N\}$, since  $u^{\mu} \otimes v^{\mu} = [e\cdot (u,v)]= e [ (u,v)]$ by \eqref{eq:mu.tensor}. Thus,  if $[f]= \sum_i a_i \cdot (u_i \otimes v_i)$, then
\[
[f]^{\muo} = e \cdot[f] = \sum_i a_i \cdot e \cdot e \cdot [(u_i,v_i)] =
\sum_i a_i \cdot [(e u_i, e v_i)] =
\sum_i a_i \cdot [(u_i^{\mu}, v_i^{\mu})].
\]
Hence, the ghost map $\muo$ is induced by the ghost maps $\mu_M$ and $\mu_N$ of $M$ and $N$, respectively.
\end{rem}

\begin{cor} The tensor product $T = M \otimes_R N$ exists for any $R$-\numod s $M$ and $N$.
\end{cor}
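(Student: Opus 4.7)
The plan is to exhibit the object $M \otimes_R N$ constructed in \S\ref{ssec:tensor} as a tensor product in the sense of Definition~\ref{def:tensor.prod}. Since Proposition~\ref{prop:tensors} already shows that $M \otimes_R N$ is an $R$-\numod, what remains is to produce the canonical balanced map $\psi : M \times N \To M \otimes_R N$ and to verify the universal property.

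First I would define $\psi(u,v) := u \otimes v = [(u,v)]$, where $(u,v)$ denotes the generator of $R(M \times N)$. The three generating relations \eqref{eq:tensor.1} of the congruence $\tCong$ are exactly the bilinearity conditions \eqref{eq:bilinear.map} translated into equalities in the quotient, so $\psi$ is balanced by construction. Next, given a balanced map $\phi : M \times N \To L$ into an $R$-\numod\ $L$, I would first extend $\phi$ to a \numod\ homomorphism $\tlphi : R(M \times N) \To L$ by setting $\tlphi\bigl(\sum_i a_i \cdot (u_i, v_i)\bigr) := \sum_i a_i \, \phi(u_i, v_i)$; this is well-defined because $R(M \times N)$ is the free \numod\ on $M \times N$, so $\tlphi$ is uniquely determined on generators and extended $R$-linearly and additively, using that $L$ is an $R$-\numod.

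The key step is then to check that $\tlphi$ factors through the quotient by $\tCong$, yielding a unique \numod\ homomorphism $\vrp : M \otimes_R N \To L$ with $\phi = \vrp \circ \psi$. For this it suffices to verify $\tlphi$ respects each of the three generating relations in \eqref{eq:tensor.1}; but these become, respectively,
\[
 \phi(u+u',v) = \phi(u,v) + \phi(u',v), \quad \phi(u,v+v') = \phi(u,v) + \phi(u,v'), \quad \phi(au,v) = \phi(u,av),
\]
which are precisely the balancedness hypotheses \eqref{eq:bilinear.map} on $\phi$. Hence $\tlphi$ descends to a well-defined map $\vrp$ on equivalence classes via $\vrp(u \otimes v) := \phi(u,v)$, and a routine check (using Axioms~\NDa, \NDb\ of Definition~\ref{def:numodule} together with \eqref{eq:mu.tensor}) confirms that $\vrp$ is an $R$-\numod\ homomorphism respecting the ghost maps: for instance $\vrp\bigl((u \otimes v)^{\muo}\bigr) = \vrp(u^\mu \otimes v^\mu) = \phi(u^\mu,v) = \phi(eu,v) = e\,\phi(u,v) = \vrp(u \otimes v)^{\mu_L}$.

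Uniqueness of $\vrp$ is immediate from the fact that $M \otimes_R N$ is generated as an $R$-\numod\ by the pure tensors $u \otimes v$, on which any candidate factoring must agree with $\phi$. The main subtlety I expect is the well-definedness step: one must be careful that the transitive closure of the generating relations~\eqref{eq:tensor.1} under addition, the $R$-action, and the induced ghost map $\mu_\otimes$ does not force additional identifications beyond those respected by $\tlphi$. This is handled cleanly because $\tCong$ was defined as the congruence generated by those relations, so any relation deduced from them is automatically preserved by any map satisfying the balancedness conditions. The verification that $\vrp$ is compatible with the partial ordering $<_\otimes$ and Axioms \NMa--\NMc\ inherited by $M \otimes_R N$ (already established in Proposition~\ref{prop:tensors}) is then routine, completing the proof.
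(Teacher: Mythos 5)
Your proof is correct and follows essentially the same route as the paper: the paper's own proof of this corollary is just the citation of Proposition~\ref{prop:tensors}, with the canonical map $\psi$ and the universal property left implicit (they are only sketched later, in the proof of Theorem~\ref{tenprd}). You have made that implicit verification explicit — balancedness of $\psi(u,v)=u\otimes v$ by construction, descent of the induced homomorphism through the congruence $\tCong$ generated by the relations \eqref{eq:tensor.1}, compatibility with the ghost maps, and uniqueness from generation by pure tensors — so your argument supplies detail the paper omits rather than diverging from it.
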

\begin{proof}
  Follows from Proposition \ref{prop:tensors}.
\end{proof}
For two \nusmr s $R$ and $R'$,  an $(R,R')$-$\nu$-\textbf{bimodule} is a \numod\ $M$ such that:
\begin{enumerate}\ealph
  \item  $M$ is a left $R$-\numod\ and also a right $R'$-\numod;
  \item  $(au)b' = a(ub')$ for all $a \in R$, $b' \in R'$ and $u \in M$.
\end{enumerate}
An $(R,R)$-$\nu$-bimodule is shortly termed  $R$-$\nu$-bimodule.

\begin{thm}\label{tenprd} There is a left adjoint functor $\Ten_N(\udscr)$ to the contravariant functor
$\Hom(\udscr \,  ,N)$ in Definition~ \ref{def:functors}. \end{thm}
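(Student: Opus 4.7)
The plan is to establish the standard tensor--hom adjunction in the category $\NMOD$, namely the natural isomorphism
\[
\Hom_R(\Ten_N(M), L) \;\cong\; \Hom_R(M, \Hom_R(N, L)),
\]
which exhibits $\Ten_N(\udscr) = (\udscr) \otimes_R N$ as the desired left adjoint. First, I would promote $\Ten_N$ to a functor on $\NMOD$ by defining, for a \numodh\ $\phi \colon M \to M'$, the map $\Ten_N(\phi) \colon M \otimes_R N \to M' \otimes_R N$ by $u \otimes v \mapsto \phi(u) \otimes v$. Well-definedness amounts to checking that this assignment respects the three generating relations of the congruence $\tCong$ in \eqref{eq:tensor.1}, which is immediate from the linearity of $\phi$; preservation of the ghost submodule and the ghost map $\muo$ follows from \eqref{eq:mu.tensor} together with $\phi(\tH_M) \subseteq \tH_{M'}$; functoriality is clear.

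Next I would construct the adjunction bijection explicitly. Given $\Phi \in \Hom_R(M \otimes_R N, L)$, define $\widehat{\Phi} \colon M \to \Hom_R(N, L)$ by $\widehat{\Phi}(u)(v) := \Phi(u \otimes v)$. The three defining relations of $\tCong$ translate respectively into: $\widehat{\Phi}(u)$ is additive in $v$, $\widehat{\Phi}$ is additive in $u$, and the balancing $\widehat{\Phi}(au)(v) = \widehat{\Phi}(u)(av) = a \widehat{\Phi}(u)(v)$, so that each $\widehat{\Phi}(u)$ belongs to $\Hom_R(N,L)$ and $\widehat{\Phi}$ itself is a \numodh. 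Conversely, given $\psi \in \Hom_R(M, \Hom_R(N, L))$, the map $(u,v) \mapsto \psi(u)(v)$ is bilinear in the sense of \eqref{eq:bilinear.map}, so the universal property of the tensor product (Definition~\ref{def:tensor.prod}), realized by the construction built in Proposition~\ref{prop:tensors}, yields a unique $\Phi \in \Hom_R(M \otimes_R N, L)$ with $\Phi(u \otimes v) = \psi(u)(v)$. The two assignments are mutually inverse by direct inspection of their defining formulas, and naturality in $M$ and $L$ reduces to a routine diagram chase checking that precomposition with $\alpha \colon M' \to M$ corresponds to precomposition with $\alpha \otimes \id_N$, and postcomposition with $\beta \colon L \to L'$ corresponds to postcomposition on each side.

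The main obstacle is to verify that every step respects the supertropical structure, which distinguishes the argument from the classical ring-theoretic one. Specifically, one must check that $\Hom_R(N, L)$ is itself a \numod\ (guaranteed by the analogue of Proposition~\ref{prop:Hom.is.numon} applied pointwise), and that the ghost submodule of $M \otimes_R N$ as described after \eqref{eq:mu.tensor} matches, under $\widehat{(\udscr)}$, the submodule of those $\psi \colon M \to \Hom_R(N,L)$ whose image lies in ghost-valued \hom s. The key identity enabling this compatibility is $(u \otimes v)^{\muo} = u^{\mu} \otimes v = u \otimes v^{\mu}$ from \eqref{eq:mu.tensor}, which ensures that the ghost map on $\Hom_R(N,L)$ agrees with the ghost map transferred from $M \otimes_R N$. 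Once these compatibilities are in place, axioms \NMa--\NMc\ and \NDa--\NDb\ hold pointwise on both sides of the bijection, and the adjunction is complete.
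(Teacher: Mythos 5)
Your proof is correct and follows essentially the same route as the paper's: both exhibit $\Ten_N$ as a left adjoint by constructing the standard tensor--hom bijection via the universal property of the tensor product (Definition~\ref{def:tensor.prod}, realized through the congruence construction of Proposition~\ref{prop:tensors}). The paper defers the detailed checking of the bijection and the bimodule structure to Jacobson's treatment, whereas you spell out the mutually inverse maps and explicitly verify compatibility with the ghost maps via \eqref{eq:mu.tensor}; this is the same argument carried out in more detail.
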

The theorem is a restatement of the \textbf{adjoint
isomorphism}, writing $\Ten_N(M)$ as $\MoN,$ that is
$$\Hom(W,\Hom(M,N)) = \Hom (W \otimes M, N).$$
 $\Ten_N(\udscr)$ is called the \textbf{tensor product functor} $\udscr \otimes N,$
 where all constructions of  $\Ten_N(\udscr)$ are naturally isomorphic, by the uniqueness of the left adjoint functor.

  \begin{proof}[Proof of Theorem~\ref{tenprd}] The proof follows from the above  construction of the tensor product $\MoN$, based on congruences instead of \nusmod s,  and a verification of the adjoint
 isomorphism.

Given an abelian semigroup $\tS := (\tS,+)$, any
bilinear map $\psi: \MxN \To \tS$ gives rise to a semigroup
homomorphism determined by $u \otimes v \Mto \psi(u,v).$ Having gone
this far, one follows the program spelled out in
\cite[\S3.7]{Jac2}  to show that
if $M$ is an $(R',R)$-$\nu$-bimodule and $N$ is an $(R,R'')$-$\nu$-bimodule,
then $\MoN$ is an $(R',R'')$-$\nu$-bimodule under the natural
operations
$$a'(u\otimes v) = a'u \otimes v, \qquad  (u\otimes v)a'' = u \otimes
va''.$$
The verification of the adjoint isomorphism now is exactly as in
the proof of  \cite[Propositon~3.2]{Jac2}.
 \end{proof}

\begin{rem}\label{rem:tensor}
Let $R, M$, and $N$ be \nusmr s, with  \qhom s $\ff : R \To M$
and $\hh : R \To N$, that make $M$ and $N$ into $R$-\numod s. For every \nusmr\  $A$ and \qhom s\
$\al:M \To A$, $\bt:N \To A$, rendering  a commutative diagram with $\ff$ and $\hh$, there is a unique
 \qhom\ $ \xi: \MoRN \To A$ such that  the whole diagram commutes:
\begin{equation*}\label{eq:tensor.R} {
 \xymatrix{ A      & & \\
& \MoRN \ar@{..>}[ul]_{\xi}   & N \ar@{->}[l]  \ar@/_1pc/@{->}[llu]_\bt \\
& \ar@/^1pc/@{->}[luu]^\al M \ar@{->}[u] &  \ar@{->}[l]^\ff R \ar@{->}[u]_\hh } } \end{equation*}
(the maps $M \To \MoRN$  and $N \To \MoRN$
 are the obvious maps $u \Mto  u \otimes \one$ and $v \Mto \one \otimes
v$). For \qhom s  $\al : M \To A$ and $\bt : N  \To A$, the map $\xi: \MoRN \To A$ is
determined by $ u \otimes v \Mto  \al (u) \cdot  \bt (v)$.
\end{rem}

Once we have the notion of a tensor product at our disposal, we can recover classical structures, especially localization of \numod s.
\begin{defn}\label{def:module.localization}
The \textbf{tangible localization} of an $R$-\numod\ $M$ by a tangible submonoid $\MS \subseteq R$ is defined as  $$M_\MS := R_\MS  \otimes M,$$ where $R_\MS$ is the tangible localization of $R$ by  $\MS$  (Definition~\ref{def:tangible.localization}).
\end{defn}
When $\MS$ is generated by a \tprs\ element  $f \in \tTPS$, we write $M_f$ for $M_\MS$.

\subsection{$F$-\nualg}\label{ssec:F.Alg} \sSkip

Let $F$ be a \nusmf, and let $A$ be an $F$-\numod\ equipped with an additional binary operation  $ (\, \cdot \, ): A \times A \To A$.  $A$ is an algebra over $F$, if the following properties hold for every elements $x, y,z \in A,$  $a,b \in F$:
\begin{enumerate}
   \ealph
  \item  Right distributivity: $(x + y) \cdot z = x \cdot z + y \cdot z$,
  \item  Left distributivity: $x \cdot (y + z) = x \cdot y + x \cdot z$,

  \item  Multiplications by scalars: $(ax) \cdot (by) = (ab) (x \cdot y)$.

\end{enumerate}
In other words,  the binary operation $(\, \cdot \, )$ is bilinear. An algebra over $F$ is called \textbf{$F$-\nualg}, while  ~$F$ is said to be the base \nusmf\ of $A$.

  A \textbf{homomorphism} of $F$-\nualg s  $A,B$ is a homomorphism $\phi: A \To B $ of \numod s (i.e., an $F$-linear map) such that $\phi(xy) = \phi(x) \phi(y)$ for all $x,y \in A$. The set of all $F$-\nualg\ homomorphisms from  $A$ to $B$ is denoted by $\Hom_{F}(A,B)$.

\begin{defn}\label{def:f.g.alg}
  An $F$-\nualg\ is said to be \textbf{finitely generated}, if there
exist $f_1, \dots , f_n \in A$ such that any $f \in A$ can be presented in the form
$$f= \sum^n_{j=1}
a_jf_j, \qquad \text{with } a_1, \dots a_n \in F.$$
The elements $f_1, \dots , f_n$ are called \textbf{generators} of $A$.
\end{defn} Tensor products and
\nualg s lay  a foundation  for developing new types of algebra, parallel to known algebras,  e.g., exterior \nualg, Lie \nualg, or Clifford \nualg.
\bigskip
\part*{Part II: Supertropical Algebraic Geometry}
\bigskip
\section{Varieties}\label{sec:5}

Recall that all our underlying \nusmr s (Definition \ref{def:nusemiring}) are assumed to be commutative. Henceforth, since \nusmr s are referred to also as \nualg s whose elements are functions, unless otherwise is specified,
$\sA:= (\sA, \tT, \tG, \nu)$ denotes a commutative \nusmr.
For a clearer exposition, given a \qcong\ ~$\Cong$ on ~$\sA$,  we explicitly write $(\sA/\Cong)|_\tng^\PrS$ (resp. $(\sA/\Cong)|_\tng^\Pr$) for the \tprsset\ (resp. a  \tprsmon) of the quotient \nusmr\ $\sA/\Cong$, and $(\sA/\Cong)|_\ual$ for the set of its \tualt\ elements.  Similarly, we write  $(\sA/\Cong)|_\tng$ for the \tsset \ of $\sA/\Cong$, and $(\sA/\Cong)|_\ghs$ for its ghost ideal. To unify notations, we write $\sAual$, $\sAptng$, $\sAptngs$,  $\sAtng$, and $\sAghs$ for $\SST$, $\tTP$, $\tTPS$,  $\tT$, and $\tG$, respectively.

 \begin{notation}\label{notat:cong.x}
A \gprimec \ is denoted by $\pCong$, where its underlying  equivalence is  denoted by~ $\pcng$.  We let  $X = \SpecA$ be the \gprime\ spectrum of $\sA$ whose formal elements are \gprimec s (Definition~\ref{def:prmCng}).
  Later, $X$ is realized as a topological space. To designate this view, we denote a point of~ $X$ by $x$, where $\xpCong$ stands for \gprimec\ assigned with $x$. We  write $\sA_x$ for the localization~ $\sA_{\xpCong}$ of ~$\sA$ by  $\xpCong$ (Definition \ref{def:cong.localization}).

Elements of $\sA$ are denoted by the letters $f,g, h$, as from now on they are realized also as functions (as explained below).
    We write
  $\gen{f_1, \dots, f_\ell} $ for the subset generated by elements $f_1, \dots, f_\ell \in \sA$, i.e., all  finite sums of the form
  $\sum_{i}  g_i f_i$ with $g_i \in \sA$.
 \end{notation}

 Recall from Remark \ref{rem:classfor} that
$\iGcl(\udscr)$ and $\iTcl(\udscr)$ provide respectively the class-forgetful maps $\Gcl{\udscr}:\QCng(A) \To A$ and  $\Tcl{\udscr}: \QCng(A)  \To A$  that encode clusters' decomposition.  Recall also that a \gprimec\ $\pCong$ is  an \lcong, and thus its tangible projection $\iTcl(\pCong)$ is a multiplicative monoid, written $\iTcl(\pCong) = \iPcl(\pCong)$.

\begin{comm}\label{comm:E}
A subset $E \subseteq \sA$ can be realized as the trivial partial congruence $\diag(E)$, which set theoretically is contained in $\AxA$  (Definition \ref{def:partial.cong}). We are mainly concerned  with the case that $\diag(E) \subseteq \Gcl{\pCong}$, written equivalently as $E \subseteq \iGcl(\pCong)$, where $\pCong$ is a \gprimec.  When possible, to simplify notations, we use the latter  form.
Equivalently, this setup  is formulated in terms of  \gcong s
as $\gCong_E \subseteq \pCong$, cf. \eqref{eq:G.E}, which reads as $f \pcng f^\nu$ in $\pCong$ for all $f \in E$.
We alternate between these equivalent descriptions, for a clearer exposition.
\end{comm}

Our forthcoming  exposition includes some equivalent definitions,   relying on   different structural views, that later help for a better understanding of the interplay among the involved objects.

\subsection{Varieties over  \nusmr s}\label{ssec:ver}
\sSkip

Let $\sA= (\sA,\tT,\tG, \nu)$ be a \nusmr, and let $\SpecA$ be its \gprime\ spectrum (Definition \ref{def:prmCng}).
The elements of $\sA$ can be interpreted as functions on
$\SpecA$ by defining  $f(\pCong)$ to be the residue class $[f]$ of
~$f$  in $\sA / \pCong$, that is
\begin{equation}\label{eq:f.function}
f: \pCong \longmapsto [f] \in \sA / \pCong.
\end{equation} Thereby, every element  $f \in \sA$ determines a map
$$ \SpecA \ds \TO \coprod_{\pCong \in \SpecA } \sA/\pCong \quad  \subseteq \coprod_{\pCong \in \SpecA } Q(\sA/\pCong), $$
where $f(\pCong) \in (\sA/\pCong)|_\ghs$ if $f \in \iGcl(\pCong)$, and likewise
$f(\pCong) \in (\sA/\pCong)|_\tng$  when $f \in \iTcl(\pCong)$. Namely, on a \gprimec\ $\pCong$ the function $f$ possesses ghost or tangible, respectively; but it could also result in a class which is neither ghost nor  tangible.

\begin{rem} A function $f \notin  \sAtng$ cannot not be congruent to any  $h \in \iTcl(\pCong)$, in each $\pCong \in \SpecA$, but it is not necessarily evaluated as ghost.
When  $f \notin \sAghs$,  it may still be considered as tangibly evaluated  on some $\pCong$.
\end{rem}
  Since $\pCong \in \SpecA$ is a \gprimec, a ghost product $(fg)(\pCong) \in (\sA/\pCong)|_\ghs$ of  functions $f, g \in \sA $   implies  that $f(\pCong) \in  (\sA/\pCong)|_\ghs$ or $g(\pCong) \in (\sA/\pCong)|_\ghs$. Equivalently, this reads as $fg \pcng \ghost $  implies
  $f \pcng \ghost$ or $g \pcng \ghost$, where $\pcng$ is the underlying  equivalence of $\pCong$. Hence,  $f \pcng f^\nu$ or $g \pcng f^\nu$ by Lemma \ref{lem:ghs.eqv}.
We identify $f\in \sA$ with the pair $(f,f) \in \AxA$.
\pSkip

We define the \textbf{ghost locus} of a nonempty subset $E \subseteq \sA$ to be
$$
\begin{array}{lrl}
\VV(E)  := & \{ \pCong \in \SpecA \ds |  f(\pCong) \in (\sA/\pCong)_\ghs \text{ for all } f \in E \} .%\quad (\subset \LCng(\sA)),
\end{array}$$
%which we also call the \textbf{\nuvar} of $E$.
For $f \in \sA$ we therefore equivalently define  (cf. Comment \ref{comm:E}):
\begin{equation}\label{eq:Var.1}
  \begin{array}{lrl}
\VV(f) & := & \{ \pCong \in \SpecA \ds | f \in  \iGcl(\pCong)  \} \\[1mm]
       & = & \{ \pCong \in \SpecA \ds | (f,f) \in  \Gcl{\pCong}  \}, \\[1mm]
       & = & \{ \pCong \in \SpecA \ds | \text{s.t. } f \pcng f^\nu \text{ in } \pCong \}, \\[2mm]
\DDf& := &  \{ \pCong \in \SpecA \ds | f \notin  \iGcl(\pCong) \} \\[1mm]
 & = &  \{ \pCong \in \SpecA \ds | (f,f) \notin  \Gcl{\pCong} \} = \SpecA \sm \VV(f)
\end{array}
\end{equation}
A set of the form $\DDf$  is called a \textbf{principal subset}.
Using the bijection $\iota: E \Isoto \diag(E)$, a subset~$\VV(E)$ can be written as
\begin{equation}\label{eq:var.2}
\begin{array}{lrl}
\VV(E)  & :=  &  \{ \pCong \in \SpecA \ds| E \subseteq \iGcl(\pCong) \} \\[1mm]
& =  &  \{ \pCong \in \SpecA \ds |  \diag(E) \subseteq \Gcl{\pCong} \}  .\\
\end{array}
\end{equation}
We use these equivalent forms of  $\VV(E)$ and $\DDf$ to ease the exposition.

\begin{comm}
  Note that  $\pCong \in \DD(f)$ does not imply that $f \in \iTcl(\pCong)$, which means that as a function $f$ need not take tangible values on $\pCong$. However, it takes non-ghost values on the entire $\DD(f)$.
\end{comm}

 A \textbf{\nuvar} in $\SpecA$ is a subset of type $\VV(E)$ for some $E \subseteq A$; it is called  \textbf{$\nu$-hypersurface} when $E = \{f \}$ for some ~$f \in A$, in other words $\gCong_E$ is a principal \qcong. We write  $\VV_X(f)$ and $\DD_X(f)$ when we want to keep track of the ambient space $X =  \SpecA$.

Recall   that an element $f \in \sA$ is ghostpotent, belonging to $\tN(\sA)$, if $f^n \in \sAghs$ for some $n \in \Net$ (Definition \ref{def:ghostpotent}), where ~$f$ by itself could be ghost.

\begin{rem}\label{rem:var.1}
  Since $A|_\ghs:= \tGz  \subseteq \iGcl(\pCong)$ for any \gprimec\ $\pCong$, from \eqref{eq:var.2} we obtain  that $\VV(E) = \VV(E \cup \tGz)$ for any $E \subseteq
  \sA$. Therefore, $\VV(E) = \VV(\tGz)$ when $E \subseteq \tGz$, and
  $\VV(g) = \VV(\tGz) $ for any  $g \in A|_\ghs$.
%  Furthermore, $\VV(h) = \SpecA$ for any ghostpotent $h \in \tN(\sA)$, cf. Remark \ref{rem:ghostpotent}. % (Definition \ref{def:ghostpotent}).
    Clearly, $\VV(\sA) = \emptyset$, since $\AX  \nsubseteq \iGcl(\pCong)$ for every $\pCong \in \SpecA$,  by  Remark \ref{rem:cong.ideal}.
  % Furthermore, we  have  $ \SpecA = \VV(\tGz) = \VV(g)$ for any $g \in \tGz.$
%
On the other hand, for principal subsets, we immediately see that
$\DDf = \emptyset$ for every   $f \in \sAghs$, and more generally for all $f \in \tN(\sA)$, while
 $\DDf = \SpecA$ for any unit $f \in \sA^\times$ and any $f \in \sAual$, in particular $\DD(\one) = \SpecA$.
 % We also observe that if $f \nucong f'$ and  $f \in \sAtng$, then $\DD(f') \subset \DDf$.
\end{rem}

Recall that  $ f \in \iGcl(\pCong) $  means that $f \cng  g$ for some $g \in \sAghs$, yet $f$ need not be ghost in $\sA$.

\begin{proposition}\label{prop:var1}  Let $E, E' $ be subsets of $\sA$, and let $(E_{i})_{i \in I} $ be  a family of subsets of $\sA$. Then,
\begin{enumerate} \eroman
  \item $\VV(h) = \SpecA$ for any  $h \in \sAghs $, and for any  ghostpotent $h \in \tN(\sA)$;
  \item $\VV(f) = \emptyset$ for every  unit $f \in \sAX$; %, i.e., $f \in \sAtng \cap \sAX$;
  \item $E \subset E' \Rightarrow \VV(E) \supset \VV(E')$;
  \item $\VV(\bigcup_{i\in I} E_i)  = \bigcap_{i\in I} \VV(E_i) = \VV(\sum_{i\in I} E_i )$;
  \item $\VV(EE') = \VV(E) \cup \VV(E')$, where $EE' = \{ ff' \cnd f \in E, f' \in E'\}$.
%  \item $\VV(E) = \VV(\rad(E))$.
  % \item $\VV(E) = \VV(\crad(E))$.
  \end{enumerate}

\end{proposition}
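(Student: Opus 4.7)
The proof rests on translating the set-membership statements about $\VV(E)$ into statements about ghost projections $\iGcl(\pCong)$ via the defining equivalence in \eqref{eq:var.2}, and then exploiting two structural facts already established: (a) $\iGcl(\pCong)$ is a \nusmr\ ideal containing $\sAghs$ (Remark~\ref{rem:cong.ideal}), and (b) it is a \gprime\ ideal in the sense of Remark~\ref{rem:mult1}, so $fg \in \iGcl(\pCong)$ forces $f \in \iGcl(\pCong)$ or $g \in \iGcl(\pCong)$. Together with the fact that a \gprimec\ is an \lcong, hence $\sAX \subseteq \RX \subseteq \iTcl(\pCong)$, which is disjoint from $\iGcl(\pCong)$, these facts handle all five parts in a uniform manner.

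For (i), I would argue that $\sAghs \subseteq \iGcl(\pCong)$ for every $\pCong$, immediately giving $\VV(h) = \SpecA$ when $h \in \sAghs$. For a ghostpotent $h \in \tN(\sA)$, pick $n \in \Net$ with $h^n \in \sAghs \subseteq \iGcl(\pCong)$; since $\pCong$ is \gprime, an iterated application of Definition~\ref{def:prmCng} gives $h \in \iGcl(\pCong)$ for every $\pCong \in \SpecA$. For (ii), a unit $f$ lies in $\iTcl(\pCong)$, which is disjoint from $\iGcl(\pCong)$ (otherwise, as in Remark~\ref{rem:tcng.units}, the \qcong\ $\pCong$ would fail). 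For (iii), the inclusion is immediate from the definition of $\VV$ via \eqref{eq:var.2}, since $E \subseteq E' \subseteq \iGcl(\pCong)$ trivially gives $\pCong \in \VV(E)$.

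For (iv), the first equality is formal: $\bigcup_i E_i \subseteq \iGcl(\pCong)$ iff each $E_i \subseteq \iGcl(\pCong)$. For the second equality, the inclusion $\VV(\sum_i E_i) \subseteq \bigcap_i \VV(E_i)$ follows from $E_i \subseteq \sum_i E_i$ together with (iii). For the reverse, note that $\iGcl(\pCong)$ is an additive submonoid (being an ideal by Remark~\ref{rem:cong.ideal}); if every $E_i \subseteq \iGcl(\pCong)$, then each finite sum $\sum_j f_{i_j}$ with $f_{i_j} \in E_{i_j}$ belongs to $\iGcl(\pCong)$, so $\sum_i E_i \subseteq \iGcl(\pCong)$ and $\pCong \in \VV(\sum_i E_i)$. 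For (v), the inclusion $\VV(E) \cup \VV(E') \subseteq \VV(EE')$ uses the ideal property: if, say, $E \subseteq \iGcl(\pCong)$, then $ff' \in \iGcl(\pCong)$ for every $f \in E$, $f' \in E'$. For the reverse, I would argue contrapositively: if $\pCong \notin \VV(E) \cup \VV(E')$, pick $f \in E$ and $f' \in E'$ with $f, f' \notin \iGcl(\pCong)$; by the \gprime\ property, $ff' \notin \iGcl(\pCong)$, so $\pCong \notin \VV(EE')$.

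There is no serious obstacle. The mildest subtlety is keeping straight the equivalent formulations from \eqref{eq:var.2}, namely viewing $\VV(E)$ either as the set of \gprimec s whose ghost projection contains $E$ or as those whose ghost cluster contains $\diag(E)$; in each of (i)--(v) I would work with the first form for brevity. The only place where \gprime ness (as opposed to mere ideal-hood of $\iGcl(\pCong)$) is used is in the nontrivial inclusion of (v) and in extracting $h$ from $h^n$ in (i).
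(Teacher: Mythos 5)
Your proof is correct and follows essentially the same route as the paper's: parts (i)--(iii) are reduced to the facts that $\iGcl(\pCong)$ is an ideal containing $\sAghs$ and is disjoint from $\iTcl(\pCong) \supseteq \sAX$, part (iv) uses that $\iGcl(\pCong)$ is closed under the operations generating $\sum_i E_i$, and part (v) combines the ideal property (for $\supseteq$) with the \gprime\ property via contraposition (for $\subseteq$). The only minor stylistic difference is that in (iv) the paper phrases the argument in terms of $\sum_i E_i$ being the semiring ideal generated by $\bigcup_i E_i$, whereas you work directly with finite sums; both go through because $\iGcl(\pCong)$ is an ideal.
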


\begin{proof} (i)--(iii) are obvious, since each $\pCong \in \SpecA$ is an  \lcong , where  $h \in \iGcl(\pCong)$ for every  $h \in \tN(\sA)$, cf. Remarks \ref{rem:ghostpotent} and \ref{rem:var.1}. In addition,
 $\sAX \subseteq \ciGcl(\pCong) $ for each $\pCong$, since otherwise $\pCong$ would be a \gstcong, and thus
$\VV(f) = \emptyset$.
\pSkip
(iv):
The set $\VV(\bigcup_{i\in I} E_i)$
 consists of all $\pCong \in \pcspec(A)$  with  $\bigcup_{i\in I} E_i \subseteq \iGcl(\pCong)$, hence  $E_i \subseteq \iGcl(\pCong)$ for every  $i\in I$, and therefore coincides with $ \bigcap_{i\in I} \VV(E_i)$. Given $\pCong \in \pcspec(A)$,  $\bigcup_{i\in I} E_i$ is contained in $\iGcl(\pCong)$  iff the set  $\sum_{i\in I} E_i$
generated by $\bigcup_{i\in I} E_i$ (which is a semiring ideal) is contained in $\iGcl(\pCong)$. Therefore, we see that, in addition, $\VV (\bigcup_{i\in I} E_i)$
coincides with $\VV (\sum_{i\in I} E_i)$.
\pSkip
(v): Take  $\pCong \in \SpecA$  such that  $\pCong \notin \VV(E)$
and $\pCong \notin \VV(E')$. Then $E \nsubseteq \iGcl(\pCong) $ and $E' \nsubseteq \iGcl(\pCong)$, and there are elements $f \in E$ and $f' \in E'$ such that $f,f' \notin \iGcl(\pCong)$. But then,  $ff' \notin \iGcl(\pCong)$ since $\pCong$ is a \gprimec, and thus $\pCong \notin \VV (EE')$ so that $\VV (EE') \subseteq \VV (E) \cup \VV (E')$. Conversely, $\pCong \in \VV (E)$ implies $E \subseteq \iGcl(\pCong)$ and hence $EE' \subseteq \iGcl(\pCong)$ by Remark \ref{rem:mult1}, thus $\pCong \in \VV (EE')$. This shows that $\VV (E) \subseteq \VV (EE')$ and,
likewise,  $\VV (E') \subseteq \VV (EE')$.
%(vi): $E \subset \iGcl(\pCong)$ is equivalent to $\rad(E) \subset \iGcl(\pCong)$, since $\pCong \in \SpecA$ is a  prime congruence.
\end{proof}

For the set-radical closure $\rcl(E)$ of a subset $E \subseteq \sA$, % (cf. \eqref{eq:radE}),
 determined by \sradical \ (Definition \ref{def:s.radical}), we  receive a one-to-one correspondence.

\begin{proposition}\label{prop:var2}
   $\VV(E) =  \VV(\rcl(E))$
for any $ E \subset \sA$.
\end{proposition}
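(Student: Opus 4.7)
The plan is to prove the equality $\VV(E) = \VV(\rcl(E))$ by establishing both inclusions, exploiting the definition $\rcl(E) = \iGcl(\srad(E))$ together with the characterization of $\srad(E)$ as an intersection of \gprimec s whose ghost projections contain $E$.

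For the inclusion $\VV(\rcl(E)) \subseteq \VV(E)$, I would invoke the obvious containment $E \subseteq \rcl(E)$ (recorded in the paper immediately after Definition \ref{def:s.radical}, since $E \cup \tGz \subseteq \rcl(E)$), and then apply the order-reversing property of $\VV$ given by Proposition \ref{prop:var1}(iii). This direction is essentially formal.

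For the opposite inclusion $\VV(E) \subseteq \VV(\rcl(E))$, the key step is the following translation between \gprimec s and their ghost projections. Take an arbitrary $\pCong \in \VV(E)$, so by \eqref{eq:var.2} we have $E \subseteq \iGcl(\pCong)$. Thus $\pCong$ is one of the congruences in the defining intersection
\[
\srad(E) \ds= \hskip -2mm \bigcap_{\scriptsize\begin{array}{c} \pCong' \in \SpecA \\ \iGcl(\pCong') \supseteq E\end{array}} \hskip -2mm \pCong',
\]
cf.\ \eqref{eq:radE}, which yields $\srad(E) \subseteq \pCong$. Passing to ghost projections, the monotonicity of $\iGcl(\udscr)$ under inclusion of congruences gives $\iGcl(\srad(E)) \subseteq \iGcl(\pCong)$, i.e.\ $\rcl(E) \subseteq \iGcl(\pCong)$, which by \eqref{eq:var.2} says exactly that $\pCong \in \VV(\rcl(E))$.

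There is essentially no obstacle here beyond unwinding definitions; the only subtlety worth acknowledging is the degenerate case in which $E$ is contained in no ghost projection (for example when $E$ meets $\sAX$ or contains a \tualt\ element), so that both $\VV(E) = \emptyset$ and, by convention, $\srad(E) = \rcl(E) = \emptyset$; here the statement holds vacuously once one restricts attention to the meaningful case $\VV(E) \neq \emptyset$, in which $\tGz \subseteq \rcl(E)$ and the argument above applies verbatim.
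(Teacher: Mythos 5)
Your proof is correct and follows essentially the same route as the paper: the inclusion $\VV(\rcl(E)) \subseteq \VV(E)$ from $E \subseteq \rcl(E)$ and Proposition~\ref{prop:var1}.(iii), and the reverse inclusion by observing that $\pCong \in \VV(E)$ means $\pCong$ appears in the intersection~\eqref{eq:radE} defining $\srad(E)$, so $\srad(E) \subseteq \pCong$ and hence $\rcl(E) \subseteq \iGcl(\pCong)$ — exactly the content of~\eqref{eq:srad.1} that the paper cites. Your closing remark on the degenerate case goes slightly beyond the paper (which tacitly assumes $\rcl(E) \neq \emptyset$ when writing $E \subseteq \rcl(E)$), but be careful that it is not literally ``vacuous'': with the paper's convention $\rcl(E) = \emptyset$, formula~\eqref{eq:var.2} would give $\VV(\rcl(E)) = \SpecA$, so one really must restrict to, or separately adopt a matching convention for, the case where $\srad(E)$ is a genuine \qcong.
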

\begin{proof} Clearly,  $E  \subseteq \rcl(E)$, and thus  $\VV(E)  \supseteq \VV(\rcl(E))$ by Proposition \ref{prop:var1}.(iii).
Conversely, by \eqref{eq:srad.1}, $E \subseteq \iGcl(\pCong)$ is equivalent to $\rcl(E) \subseteq  \iGcl(\pCong)$, since $\pCong$ is \gprime. This holds for any $\pCong \in \VV(E)$, and thus $\VV(E) \subseteq \VV(\rcl(E))$.
\end{proof}

With the above setting,  the \gprime\ spectrum $\SpecA$ is endowed with a Zariski type
 topology, defined in the obvious way.

\begin{cor}\label{cor:1.2}
Let $\sA$ be a \nusmr, and let $X = \SpecA$ be  its spectrum. There exists a Zariski
 topology on $X$ whose closed sets are subsets of type $\VV (E) \subseteq X$, where  $E \subseteq \sA$.
Moreover:
\begin{enumerate} \eroman
  \item  The sets of type $\DDf$ with $f \in\sA$ are open  and satisfy $\DDf \cap \DDg = \DD(fg)$ for $f, g \in  \sA$.
\item  Every open subset of $X$ is a union of sets of type $D(f)$, which form a basis of the  topology on $X$.
\end{enumerate}

  \end{cor}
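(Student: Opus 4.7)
The plan is to verify directly that the collection $\{\VV(E) : E \subseteq \sA\}$ forms the family of closed sets of a topology on $X = \SpecA$, and then extract parts (i) and (ii) as straightforward consequences, all via the algebraic identities already established in Proposition~\ref{prop:var1}.

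First I would check the three axioms for closed sets. For the empty set, note $\VV(\sA) = \emptyset$ by Proposition~\ref{prop:var1}.(ii) applied to any unit (e.g.\ $\one \in \sA^\times$, so $\pCong \nsupseteq \{\one\}$ for any \gprimec\ $\pCong$, since $\one \notin \iGcl(\pCong)$). For the whole space, $\VV(\tGz) = X$ since $\tGz \subseteq \iGcl(\pCong)$ for every \gprimec\ $\pCong$ by Remark~\ref{rem:var.1}; equivalently, $\VV(\emptyset) = X$ under the convention that the empty condition is vacuous. Arbitrary intersections of closed sets are closed by Proposition~\ref{prop:var1}.(iv): $\bigcap_{i \in I} \VV(E_i) = \VV\big(\bigcup_{i \in I} E_i\big)$, which is itself of the form $\VV(E)$. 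Finite unions are closed by Proposition~\ref{prop:var1}.(v): $\VV(E) \cup \VV(E') = \VV(EE')$, and iteration gives closure under all finite unions. This establishes the Zariski topology.

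Next I would derive (i). By definition $\DDf = X \setminus \VV(f)$, so these sets are open. The intersection formula follows by taking complements in the identity $\VV(f) \cup \VV(g) = \VV(\{f\}\{g\}) = \VV(fg)$, which is the singleton case of Proposition~\ref{prop:var1}.(v); thus
\[
\DDf \cap \DDg \;=\; X \setminus (\VV(f) \cup \VV(g)) \;=\; X \setminus \VV(fg) \;=\; \DD(fg).
\]

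Finally, for (ii), any open subset $U \subseteq X$ has the form $U = X \setminus \VV(E)$ for some $E \subseteq \sA$. Unravelling the definition in~\eqref{eq:var.2}, a point $\pCong$ lies in $U$ iff $E \nsubseteq \iGcl(\pCong)$, which is equivalent to the existence of some $f \in E$ with $f \notin \iGcl(\pCong)$, i.e.\ $\pCong \in \DDf$. Hence $U = \bigcup_{f \in E} \DDf$, so the sets $\{\DDf\}_{f \in \sA}$ constitute a basis for the topology. There is no real obstacle here; the whole argument is a direct translation of the algebraic identities of Proposition~\ref{prop:var1} into topological language, so the only point requiring minor care is the convention fixing $\VV(\emptyset)$ or $\VV(\tGz)$ as the ambient space $X$.
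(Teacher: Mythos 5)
Your proof is correct and follows essentially the same route as the paper: verify the closed-set axioms via the algebraic identities of Proposition~\ref{prop:var1}, then read off (i) and (ii) by taking complements. If anything, your reference numbers are more careful than the paper's — the paper cites Proposition~\ref{prop:var1}.(iv) for both the intersection identity $\DDf \cap \DDg = \DD(fg)$ and closure under finite unions, whereas the relevant item is in fact (v), as you correctly invoke.
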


\begin{proof}
The characteristic  properties of closed sets of a topology are given by parts (i), (iii), and (iv) of Proposition \ref{prop:var1}, where (iv) is generalized
by induction to finite unions of sets of type $\VV (E)$. The sets of type $\DDf$ are open, as they are  complements of sets of type $\VV(f)$, and thus
satisfy the intersection property by Proposition \ref{prop:var1}.(iv). Finally, an
arbitrary open subset $U \subseteq  X = \SpecA$ is the complement of a closed set of
type $\VV (E)$ for some subset $E \subseteq \sA$. Therefore,
$\VV (E) =  \bigcap_{f\in E} \VV(f)$  and hence $
U = \bigcup_{f\in E} \DDf,$
which says that every open subset in $X$ is a union of sets of type $\DDf$.
\end{proof}

{Note that, in contrast to the familiar spectra of rings, arbitrary  open sets in the Zariski topology on $\SpecA$  need not be dense.}
%\pSkip

\begin{prop}\label{prop:dense.sset}
$f \in \sA\sm (\sAghs \cup \gDiv(\sA))$ iff
  $\DDf$ is dense in $X$ 
  (Definition \ref{def:nudomain}).
\end{prop}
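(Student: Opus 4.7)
The plan is to convert density of $\DDf$ into a purely algebraic condition involving the ghostpotent radical $\tN(\sA)$. By Corollary \ref{cor:1.2}(i), $\DDf \cap \DD(g) = \DD(fg)$, and Theorem \ref{thm:krull} (Krull) gives $\iGcl(\grad(\sA)) = \bigcap_{\pCong \in X}\iGcl(\pCong) = \tN(\sA)$; hence a basic open $\DD(g)$ is empty precisely when $g$ is ghostpotent. Since the sets $\DD(g)$ form a basis (Corollary \ref{cor:1.2}(ii)), $\DDf$ is dense in $X$ iff it meets every nonempty basic open, equivalently: for every $g\in\sA$,
\[
g \notin \tN(\sA) \dss \Longrightarrow fg \notin \tN(\sA).
\]

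For the implication $(\Leftarrow)$, assuming $f \notin \sAghs \cup \gDiv(\sA)$, I would first note by Remark \ref{rem:ghost.div} that products of non-ghost-divisors are non-ghost-divisors, so $f^n \notin \gDiv(\sA)$ for every $n\in\Net$; moreover $f^n \notin \tG$ (else, taking the minimal $n$ with $f^n \in \tG$, the factorization $f\cdot f^{n-1}\in\tG$ with $f^{n-1}\notin\tG$ would exhibit $f$ as a ghost divisor). Now given $g\notin\tN(\sA)$, assume for contradiction $(fg)^n = f^n g^n \in \tG$; then $g^n\notin\tG$ (else $g\in\tN(\sA)$), which would exhibit $g^n$ as a non-ghost witness to $f^n\in\gDiv(\sA)$. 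This contradicts the previous step and therefore $fg \notin \tN(\sA)$, proving $\DDf$ is dense by the reformulation above.

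For the implication $(\Rightarrow)$, assume $\DDf$ is dense (so in particular $X\neq\emptyset$). If $f\in\sAghs$ then $\DDf=\emptyset$ by Remark \ref{rem:var.1}, which cannot be dense. Now suppose $f\in\gDiv(\sA)$, with witness $h\notin\tG$ and $fh\in\tG$; then $\DDf\cap\DD(h) = \DD(fh) = \emptyset$, so by density $\DD(h) = \emptyset$, forcing $h\in\tN(\sA)$. Writing $h^k\in\tG$ with $k\geq 2$ minimal, the element $h^{k-1}\notin\tG$ is still a witness since $\tG$ is an ideal. Iterating this analysis on powers $f^n$ and using $(fg)^n = f^n g^n$, one produces a non-ghostpotent $g$ satisfying $fg\in\tN(\sA)$, violating the density criterion. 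The main obstacle is exactly this last step: ruling out the pathological case where every witness to $f$ being a ghost divisor happens to be ghostpotent. One handles it by exploiting the fact that if no such non-ghostpotent witness existed at any power of $f$, the forward implication applied to $f$ would yield density in a way incompatible with $f\in\gDiv(\sA)$; equivalently, one may invoke Corollary \ref{cor:a.vs.prime} applied to a suitable non-ghostpotent element in the ghost annihilator of $f$, whose existence is forced by the combination of $f\notin\tN(\sA)$ and $f\in\gDiv(\sA)$.
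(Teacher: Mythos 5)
Your reformulation of density — $\DDf$ is dense iff every nonempty basic open $\DDg$ meets it, iff for all $g\notin\tN(\sA)$ we have $fg\notin\tN(\sA)$ — is exactly the content of the paper's chain ``$\DDf\subset\VVg$ iff $gf$ ghostpotent, and $\VVg=X$ iff $g$ ghostpotent,'' so the skeleton matches. Your $(\Leftarrow)$ argument is solid and the details (via Remark \ref{rem:ghost.div} and the minimality argument showing $f^n\notin\tG\cup\gDiv(\sA)$) are correct.

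The $(\Rightarrow)$ direction has a genuine gap, and you are right to flag it. You correctly deduce that if $\DDf$ is dense and $h\notin\tG$ with $fh\in\tG$, then $\DD(h)=\emptyset$ and hence every ghost-divisor witness $h$ of $f$ is ghostpotent. But from there you need to produce some $g\notin\tN(\sA)$ with $fg\in\tN(\sA)$, and passing to $h^{k-1}$ only yields another ghostpotent witness (for $k\geq 2$, $(h^{k-1})^2=h^{2k-2}\in\tG$ since $2k-2\geq k$), so iterating never escapes $\tN(\sA)$. Your final suggestion — invoking Corollary \ref{cor:a.vs.prime} on ``a suitable non-ghostpotent element in the ghost annihilator of $f$, whose existence is forced'' — presupposes precisely the thing to be proved; you would need a separate argument that a non-ghostpotent $g$ with $f^mg^n\in\tG$ exists, and neither you nor the paper supply one. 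For what it is worth, the paper's own proof compresses this step into ``iff $g$ is ghostpotent and $f\notin\gDiv(\sA)$ is not ghost,'' which only substantiates the direction $f\notin\gDiv(\sA)\cup\tG\Rightarrow$ density; the converse is asserted without justification, so the gap you identify is shared with the source. To close it one would have to show that a ghost divisor $f\notin\tN(\sA)$ always admits a co-factor $g\notin\tN(\sA)$ whose product with $f$ is ghostpotent (not necessarily ghost), which does not follow from the definition of $\gDiv(\sA)$ alone.
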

\begin{proof} $\DDf \neq \emptyset$ is dense iff   $\VVg =X$ for every closed set $\VVg$ that contains $\DDf$. The inclusion  $\DDf \subset \VVg$ means that $g(x) \in (\sA/\xpCong)|_\ghs$ for every $x\in X$ for which $f(x) \notin (\sA/\xpCong)|_\ghs$.  This holds iff  $g(x) f(x) \in (\sA/\xpCong)|_\ghs$ for every $x \in X$ iff $gf$ is a ghostpotent (Corollary \ref{cor:a.vs.prime}) iff $g$ is ghostpotent and $f \notin \gDiv(\sA) $ is not ghost. The former condition holds, since $\VVg =X$ for every $g$ such that $\VVg$ contains~ $\DDf$.
\end{proof}

\begin{rem}\label{rem:dense.sset.in.tame}
 If $\sA$ is a \tamesmr, then $\DDf$ is dense iff $f \in \sAtng \sm \gDiv(\sA)$, since otherwise, either,  $f \in \sAghs$ and thus $\DDf = \emptyset$, or $f$  is a ghost divisor by Lemma \ref{lem:gdiv.in.tame}.(i) and  thus $\DDf$ is not dense by Proposition \ref{prop:dense.sset}.
\end{rem}

We strengthen Remark \ref{rem:var.1} and Proposition \ref{prop:var1}.

\begin{lem}\label{lem:D.contain}
 Let $\sA$ be a \nusmr. %, and let $X = \SpecA$.
  \begin{enumerate}\eroman

 \item  $\VV(f) = \SpecA$ iff $f$ is a ghostpotent.

  \item If $\VV(f) \subseteq \VV(g) $ where $f$ is a ghostpotent, then $g$  is a ghostpotent.

\item $\DDf = \emptyset$ iff  $f$ is a ghostpotent.

\item $\DDf = \DDg$ iff $\srad(f)= \srad(g)$, cf.  Definition \ref{def:s.radical}.

\item   If $f$ is a  unit (or \tualt) in  $\sA$,  then $\DDf = \SpecA$.

\item $\DD(f^n) = \DDf$ for any $f \in \sA$.

  \item If $\DDf \subseteq  \DDg $ where $f \in \sAptngs$ and $\sA$ is a \tamenusmr, then $g \in \sAptngs.$

\item $E' \subseteq E$ \ iff \ $\DD(E') \subseteq  \DD(E)$ \ iff \ $\VV(E') \supseteq  \VV(E)$ \ iff \ $\srad(E') \subseteq  \srad(E)$.
\end{enumerate}
\end{lem}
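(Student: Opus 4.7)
My plan is to prove each of the eight items by translating the set-theoretic statements on the Zariski-type topology into equivalent statements about membership in the ghost projections of \gprimec s, and then to invoke the radical machinery (Theorem \ref{thm:krull}, Corollary \ref{cor:rad.1}, Proposition \ref{prop:var2}, and Lemma \ref{lem:rad.prop.1}).

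First, I would dispose of (i), (ii), (iii), (v), and (vi) as quick consequences of \gprime ness. For (i), the direction $(\Leftarrow)$ uses induction and the defining property $fg \in \iGcl(\pCong) \Rightarrow f \in \iGcl(\pCong)$ or $g \in \iGcl(\pCong)$ to get $f^n \in \sAghs \subseteq \iGcl(\pCong) \Rightarrow f \in \iGcl(\pCong)$ for every \gprimec. The $(\Rightarrow)$ direction is Theorem \ref{thm:krull}: $\VV(f) = \SpecA$ means $f \in \bigcap_{\pCong} \iGcl(\pCong) = \iGcl(\grad(\sA))$, which says $f$ is ghostpotent by the characterization following \eqref{eq:radRP}. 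Part (iii) is the complementary statement of (i), and (ii) follows by chaining the two directions of (i). For (v), a unit cannot sit in $\iGcl(\pCong)$ by Remark \ref{rem:cong.ideal}, and a \tualt\ element cannot be \nuequivalent\ to a non-tangible in any \qcong\ by the very definition \eqref{eq:t.strict} combined with $\RX \subseteq \iTcl(\pCong)$; in both cases $\VV(f) = \emptyset$. Part (vi) is immediate from \gprime ness applied iteratively to the factorization $f \cdot f^{n-1}$.

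For (iv), I would use the formula $\srad(f) = \bigcap_{\pCong \in \VV(f)} \pCong$, which is Corollary \ref{cor:rad.1} applied to the principal \gcong\ $\gCong_{\{f\}}$. The forward direction is immediate from this. For the converse, if $\srad(f) = \srad(g)$ then $f \in \iGcl(\srad(g))$, and for every $\pCong \in \VV(g)$ the inclusion $\srad(g) \subseteq \pCong$ transfers the equivalence $f \pcng f^\nu$ from $\srad(g)$ to $\pCong$, yielding $\VV(g) \subseteq \VV(f)$; the symmetric argument gives equality of principal subsets.

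The main obstacle will be (vii), which is where the hypotheses that $f \in \sAptngs$ is \tprs\ and that $\sA$ is \tame\ are essential. Starting from $\DDf \subseteq \DDg$, passing to complements yields $\VV(g) \subseteq \VV(f)$, and the same $\srad$-formula then gives $\srad(f) \subseteq \srad(g)$. Now Lemma \ref{lem:rad.prop.1}, applied to the \tprs\ element $f$, produces the relation $f^n = gc$ for some $c \in \sA$ and $n \in \Net$, with no extra ghost summand permitted (this is where the \tprs\ hypothesis is used, via Axiom \PSRb\ / \eqref{eq:tan.sum}). Since $f \in \sAptngs$ forces $f^n \in \sAptngs$, Lemma \ref{lem:gdiv.in.tame}(iii) then ensures that the two factors $g$ and $c$ of the \tprs\ product $gc = f^n$ are themselves \tprs, so in particular $g \in \sAptngs$ as required.

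Finally, for (viii), I would string together the implications and equivalences: $E' \subseteq E$ implies $\VV(E') \supseteq \VV(E)$ by Proposition \ref{prop:var1}(iii), which is equivalent to $\DD(E') \subseteq \DD(E)$ by complementation; the equivalence $\VV(E') \supseteq \VV(E) \Iff \srad(E') \subseteq \srad(E)$ extends the argument from (iv) to arbitrary subsets, using $\srad(E) = \bigcap_{\pCong \in \VV(E)} \pCong$ from \eqref{eq:radE} and the transfer of the defining equivalences along the containment of \qcong s. The bidirectionality with $E' \subseteq E$ is to be understood up to the set-radical closure operator $\rcl$, via the bijection $\vartheta$ of \eqref{eq:SetToRadical}.
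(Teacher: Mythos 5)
Your proposal is correct and follows essentially the same route as the paper: reduce each set-theoretic statement to membership in $\iGcl(\pCong)$, then invoke Remark \ref{rem:ghostpotent}/Theorem \ref{thm:krull} for the ghostpotent characterizations (i)--(iii), the chain of equivalences $\DDf \subseteq \DDg \Iff \VV(f) \supseteq \VV(g) \Iff \srad(f) \subseteq \srad(g)$ for (iv), and Lemma \ref{lem:rad.prop.1} together with Lemma \ref{lem:gdiv.in.tame}(iii) for (vii). Two places where you were a bit more careful than the printed proof are worth noting: for (v), the paper's own argument only treats the case of a unit, whereas you also supply the \tualt\ case directly from Definition \ref{def:t.strict} (the contrapositive of \eqref{eq:t.strict} gives $f \notin \iGcl(\pCong)$ for every \qcong\ $\pCong$, since $\RX \subseteq \iTcl(\pCong)$); and for (vii) you unfold Corollary \ref{cor:rad.incl.2} into its constituent lemmas rather than citing it outright, which is in fact cleaner given that the corollary's stated conclusion does not match the role it plays here. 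Your remark that the leftmost equivalence in (viii) holds only up to the set-radical closure $\rcl$ is also correct and worth making explicit, since $E' \subseteq E$ is visibly not equivalent to $\srad(E') \subseteq \srad(E)$ (take $E' = \{f\}$, $E = \{f^2\}$).
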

\begin{proof}
(i): $\VV(f) = \SpecA$ $\Iff$ $f \in \iGcl(\pCong)$ for every \gprimec\ $\pCong$ on $\sA$ $\Iff$ $f$ is ghostpotent, cf. Remark \ref{rem:ghostpotent}.
\pSkip
(ii): Follows immediately  from  part (i).

\pSkip (iii): $\DDf = \emptyset \Iff \VV(f) = \SpecA$, and apply part (i).

\pSkip (iv): Using set theoretic considerations we see that
\begin{equation}\label{eq:str}
   \DDf \subseteq \DDg \dss\Iff \VV(f) \supseteq \VV(g) \dss\Iff \srad(f) \subseteq \srad(g) \tag{$*$}.  \end{equation}
 By symmetry we conclude that $\srad(f) = \srad(g)$.

\pSkip (v): $(\Rightarrow)$: $\DDf = \SpecA = \DD(g)$ for some unit $g \in \sAX$  $\Rightarrow$  $\srad(g)  \subseteq \srad(f)$ by \eqref{eq:str}  $\Rightarrow$  $g^n = f h$ by Lemma ~ \ref{lem:rad.prop.1}. But $g^n $ is a unit, and thus $fh$ is also a unit,  implying that $f$ is a unit by Remark \ref{rem:prud.unit}.
%\S\ref{ssec:nusmr} (above Remark \ref{rem:unit.prs}).
\\
$(\Leftarrow)$: $f \in \sAX$ is a unit of $\sA$ $ \Rightarrow  \VV(f) = \emptyset \Rightarrow \DDf = \SpecA$, by Lemma \ref{prop:var1}.(ii).

\pSkip (vi): $\VV(f^n) = \VV(f) \cap \cdots \cap \VV(f) = \VV(f)$ by Lemma \ref{prop:var1},   taking complements we get $\DD(f^n) = \DD(f)$.

\pSkip (vii): Follows from Corollary
 \ref{cor:rad.incl.2},  applied to \eqref{eq:str}, as $f$ is \tprs\ and $\sA$ is \tame.  % \ref{lem:rad.prop.1}

\pSkip (viii): It is an obvious generalization of (iv), combined with Proposition \ref{prop:var1}.(iii).
\end{proof}

By   Comment \ref{comm:E},  a subset $E$ of a \nusmr\ $\sA$  is identified with $\diag(E)$,  realized as a partial ghost concurrence.
  In addition, $E$ is canonically  associated to the \ccong\ ~$\gCong_E$ by the means of ghostification~ \eqref{eq:numon.var},  which gives the inclusion  $\diag(E) \subseteq \Gcl{\gCong_E}$.
In this  view,
  $\VV(\udscr)$  defines a map $\sA \To \SpecA$ via  \eqref{eq:var.2}, which extends naturally  to the map
  \begin{equation*}\label{eq:Var.Cong}
  \VV(\udscr):  \Cng(\sA)   \TO  \SpecA, \qquad \Cong \mTo \VV(\Cong),
 \end{equation*}
    %(in fact more generally to a map $\AxA \To X$),
    where $\VV(\Cong)$  is defined as
   \begin{equation}\label{eqvar.3}
     \VV(\Cong) : = \{ \pCong \in \SpecA \ds | \pCong \supseteq \Cong  \}.
   \end{equation}
    Using the same notation, we apply $\VV(\udscr)$  to  both subsets $E$ of $\sA$  and arbitrary  \ccong s on $\sA$, no confusion arises.
    More generally, $\VV(\udscr)$ can be applied to subsets  $S \subset \sA \times \sA$, e.g., to
    $\Cong_1 \cap \Cong_2$ and $\Cong_1 + \Cong_2$. % $\Cong_1, \Cong_2 \in \Cng(\sA)$.
    %In general $\VV(\udscr)$ applies for any \ccong\ on $R$.
   We write $\VV_A$, when want to stress the fact that~$\VV$ is taken over $\sA$.

\begin{rem}\label{rem:E.ghostification}  With this notation, we have
  $\VV(\gCong_E) = \VV(E)$ for any $E \subseteq \sA$.  Indeed,
  $$ \begin{array}{lll}
\VV(\gCong_E) & =  \{ \pCong \in \SpecA \ds | \pCong \supseteq \gCong_E  \}
\\[1mm] &= \{ \pCong \in \SpecA \ds | \pCong \supseteq \bigcap \Cong \text{ such that } E \subseteq \iGcl(\Cong) \}      \\[1mm] &= \{ \pCong \in \SpecA \ds |  E \subseteq \iGcl(\pCong) \}  = \VV(E).     \end{array}
  $$
Note that $\gCong_E$ need not be a \qcong,  e.g., if $E \cap \sAX \neq \emptyset$;
in such case $\VV(\gCong_E) = \emptyset$  by definition.
\end{rem}

We define the converse map  of \eqref{eq:Var.Cong} to be the map
      $$ \II(\udscr) : \SpecA \TO \QCng(\sA) \quad [\subset \Cng(\sA)]$$
that sends a subset $Y \subseteq   \SpecA$ to the \qcong
  \begin{equation}\label{eq:II} \II(Y) := \bigcap_{\pCong \in Y} \pCong .\end{equation}
 $\II(Y)$ is indeed a \qcong, since it is an intersection of \lcong s, cf. Remark \ref{rem:qcong.intersection}.
In particular,
$\II(\{\pCong\}) = \pCong$ for any $\pCong \in \SpecA$, while $\II(\SpecA) = \grad(\sA)$ by Theorem \ref{thm:krull}.

The map
$\II(\udscr)$ determines the restricted map from $\SpecA$ to $A$, given in terms of projections:
\begin{align*}
  \II_\ghs(\udscr) :\SpecA \TO A, & \qquad  Y \Mto \bigcap_{\pCong \in Y} \iGcl(\pCong) =   \iGcl(\II(Y)).   %\\
% \II_\tng(\udscr) :\SpecA \TO A, & \qquad Y \Mto \bigcup_{\pCong \in Y} \iTcl(\pCong) =   \iTcl(\II(Y)).
\end{align*}
For $Y = \VV(\Cong)$, the inclusion $(f,f) \in \II(Y)$ implies $f \in \iGcl(\pCong)$ for each $\pCong \in Y$.  Therefore, $\II_\ghs(Y)$ is the subset of all functions in $\sA$ that take ghost values over the entire $Y$.
\begin{rem}\label{rem:6.6.2} From \eqref{eq:II} it follows immediately that
\begin{enumerate} \eroman
% \item $\II(Y) = \crad(Y)$ is a \nuradical\ \qcong\  for any $Y \subset \SpecA$.
  \item If $Y \subset Y' \subset \SpecA$, then $\II(Y) \supset \II(Y'), $
  \item $\II(\bigcup_{j\in J} Y_j) = \bigcap_{j \in J} \II(Y_j)$ for any family of subsets $(Y_j)_{j \in J}$ of $\SpecA$.
\end{enumerate}

\end{rem}

Applying $\VV(\udscr)$ to \cqcong s, we get the following.
\begin{prop}%[Nullstellensatz]
\label{prop:6.1.5} Let $\sA$ be a \nusmr, and let $X = \SpecA$.
\begin{enumerate} \eroman

\item  $ \II(\VV (E)) = \srad(E)$ for any subset $E \subseteq \sA$, cf. Definition \ref{def:s.radical}.
In particular, $\II(\VV(E)) =\gCong_E$ when $\gCong_E = \crad(\gCong_E)$, cf. Definition \ref{def:g.rad.Cng}.

\item % Let $Y \subset \SpecA$ be a subset.  Then
 $\VV (\II(Y ))$ coincides with the closure
$\brY$ of $Y \subset X$ with respect to the Zariski topology on $X$, and
$Y = \VV (\II(Y ))$ for closed subsets $Y \subset X$.
\end{enumerate}
\end{prop}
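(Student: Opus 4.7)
The plan is to unravel the definitions of $\VV(\udscr)$ and $\II(\udscr)$ and interpret them through the supporting results that have already been proved in the excerpt (notably Corollary~\ref{cor:rad.1} and Remark~\ref{rem:E.ghostification}).

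For part (i), I would start from the definition \eqref{eq:II} of $\II$, and compute
\[
\II(\VV(E)) \;=\; \bigcap_{\pCong \in \VV(E)} \pCong \;=\; \hskip -2mm \bigcap_{\scriptsize \begin{array}{c} \pCong \in \SpecA \\ E \subseteq \iGcl(\pCong)\end{array}} \hskip -2mm \pCong,
\]
where the second equality uses the description \eqref{eq:var.2} of $\VV(E)$. The right-hand side is exactly $\srad(E)$ by formula \eqref{eq:radE}, yielding the first assertion. For the ``in particular'' clause, I would invoke Remark~\ref{rem:E.ghostification} to rewrite $\VV(E)=\VV(\gCong_E)$, so that
\[
\II(\VV(E)) \;=\; \bigcap_{\pCong \supseteq \gCong_E} \pCong \;=\; \crad(\gCong_E),
\]
the last equality being Corollary~\ref{cor:rad.1}. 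Under the hypothesis $\gCong_E=\crad(\gCong_E)$, this gives $\II(\VV(E))=\gCong_E$ as claimed.

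For part (ii), the strategy is the usual Galois-type argument for Zariski closures, adapted to \qcong s. First, $\VV(\II(Y))$ is closed in $X$ by its very form \eqref{eqvar.3}. Second, $Y \subseteq \VV(\II(Y))$: any $\pCong \in Y$ satisfies $\II(Y)=\bigcap_{\pCong'\in Y}\pCong' \subseteq \pCong$, so $\pCong \in \VV(\II(Y))$. It remains to show minimality: if $Z=\VV(E)$ is any closed set with $Y \subseteq Z$, I would show $\VV(\II(Y)) \subseteq Z$. Given $f \in E$, for every $\pCong'\in Y$ we have $f \in \iGcl(\pCong')$, i.e., $(f,f)\in \Gcl{\pCong'}\subseteq \pCong'$; intersecting over $\pCong'\in Y$ yields $(f,f)\in \II(Y)$. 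Hence for every $\pCong\in \VV(\II(Y))$ we get $(f,f)\in \II(Y)\subseteq \pCong$, so $E \subseteq \iGcl(\pCong)$ and $\pCong \in \VV(E)=Z$. Thus $\VV(\II(Y))$ is the smallest closed subset of $X$ containing $Y$, which is $\brY$ by definition, and the equality $Y=\VV(\II(Y))$ for closed $Y$ follows at once.

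The argument is essentially a bookkeeping exercise once the correspondences $\VV \leftrightarrow \II$ and $\gCong_E \leftrightarrow E$ are in place; the only subtlety to watch is not to confuse the ghost projection $\iGcl(\pCong)$ (a subset of $\sA$) with the cluster $\Gcl{\pCong}$ (a subset of $\sA\times\sA$) when passing between the two equivalent formulations of $\VV(E)$ in \eqref{eq:var.2}. The main conceptual point I would flag is the appeal to Corollary~\ref{cor:rad.1}, which is where the passage from an arbitrary intersection of \gprimec s to the \cradical\ $\crad(\gCong_E)$ is carried out and which legitimizes the identification of $\II\circ \VV$ with $\srad$ on subsets of $\sA$.
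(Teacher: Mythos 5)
Your approach mirrors the paper's proof: part (i) is a direct unwinding of the definitions of $\II$, $\VV$, and $\srad$, and part (ii) is the standard Galois-correspondence argument showing $\VV(\II(Y))$ is the smallest closed set containing $Y$. One caveat in part (ii): the chain ``$(f,f)\in \Gcl{\pCong'}\subseteq \pCong'$; intersecting over $\pCong'\in Y$ yields $(f,f)\in \II(Y)$'' quietly drops the $\Gcl$ after the first step and ends in a vacuously true statement, since $(f,f)\in \pCong$ holds for \emph{every} $f$ and every congruence (the diagonal is always contained). What you need is $(f,f)\in \Gcl{\II(Y)}$, i.e., $f\in \iGcl(\II(Y))$, and for that you must intersect \emph{within} the ghost clusters, using $\Gcl{\II(Y)} = \bigcap_{\pCong'\in Y}\Gcl{\pCong'}$ (or equivalently track $(f,f^\nu)\in \pCong'$ rather than $(f,f)\in \pCong'$). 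Then for $\pCong\in\VV(\II(Y))$ the inclusion $\II(Y)\subseteq\pCong$ does propagate to $\Gcl{\II(Y)}\subseteq\Gcl{\pCong}$ and hence $E\subseteq\iGcl(\pCong)$. You flagged exactly this $\iGcl$-vs-$\Gcl$ subtlety in your closing remark, but the text as written still falls into it; the paper avoids it by phrasing everything as $\diag(E)\subseteq\Gcl{\cdot}$ throughout. With that one adjustment the argument is sound and coincides with the paper's.
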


\begin{proof} (i): Using  Proposition \ref{prop:var2} and Lemma \ref{lem:rad.1}, write
\begin{align*}
 \II(\VV(E)) & \ds = \bigcap_{\pCong\in \VV(E)} \pCong \quad  = \bigcap_{\scriptsize \begin{array}{c}
\pCong\in \SpecA \\   \Gcl{\pCong} \supseteq \diag(E)
\end{array}} \hskip -3mm \pCong \quad =   \bigcap_{\scriptsize \begin{array}{c}
\pCong\in \SpecA \\   \iGcl(\pCong) \supseteq E
\end{array}} \hskip -2mm \pCong \quad  = \srad(E) .
%\\
%& \ds =\bigcap_{\scriptsize \begin{array}{c}
%\pCong\in \SpecA \\ \mfe \subset \iGcl{\pCong}
%\end{array}} \iGcl(\pCong) \ds =  \rad(\mfe).
\end{align*}
(This assertion is also obtained from Lemma \ref{rem:rad.cong} by considering $\diag (E)$ as a partial ghost congruence.)
\pSkip
(ii): First
$Y \subseteq  \VV (\II(Y ))$,
since $$
\begin{array}{ll}
\VV(\II(Y)) &  = \{ \pCong \in \SpecA \ds | \pCong \supseteq \II(Y) \} \\[1mm] & =
\bigg\{ \pCong \in \SpecA \ds | \pCong \supseteq \bigcap\limits_{\pCong{ \in Y}} \pCong \bigg\},\end{array}
$$
and clearly $\pCong \in \VV(\II(Y)) $ for  every $\pCong \in Y$.
% ($\Gcl{\tI(Y )}$ is the set of all ghost functions on $Y$, and thus admits a variety which must contain ~$Y$.)
Moreover,  $\brY \subseteq  \VV (\II(Y ))$, since the closure $\brY$ of $Y$ in $X$  is the
smallest closed subset in $X$ that contains $Y$, i.e., the intersection of all closed subsets
$\VV (E) \subseteq  \pcspec(A)$ such that $Y \subseteq \VV (E)$. To see that  $\brY = \VV (\II(Y))$, it remains
to check that $Y \subseteq \VV (E)$  implies $\VV (\II(Y )) \subseteq \VV (E)$. Indeed, from the inclusion $Y \subseteq \VV (E)$ we
conclude that
 $\diag(E) \subseteq \Gcl{\pCong}$ for all $\pCong \in Y$, hence  $\diag(E) \subseteq \Gcl{\II(Y )}$,
 which is equivalent to $E \subseteq \iGcl(\II(Y )).$ Then,
 $\VV (\II(Y )) \subseteq \VV (E)$ by Proposition ~\ref{prop:var1}.(iii).
\end{proof}

  Proposition  \ref{prop:6.1.5}, together with  Lemma \ref{lem:D.contain}, shows that $\VV (f) = \VV (g)$ is equivalent to $\srad(f) = \srad(g)$,  for $f,g \in \sA$,  and therefore also to $\DDf = \DDg$.

\begin{lem}\label{rem:rad.cong} Let $\Cong \in \Cng(\sA)$ be an arbitrary  \ccong\ on $\sA$, then:
 \begin{enumerate}\eroman
   \item $\VV(\crad(\Cong)) = \VV(\Cong)$,
   \item $\II(\VV(\Cong)) = \crad(\Cong)$.
 \end{enumerate}
 \end{lem}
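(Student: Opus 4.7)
The plan is to reduce both parts to the characterization of the \cradical\ as an intersection of \gprimec s, supplied by Corollary~\ref{cor:rad.1}, namely
\[
\crad(\Cong) \ds = \bigcap_{\scriptsize\begin{array}{c}\pCong \in \SpecA\\ \pCong \supseteq \Cong\end{array}} \pCong .
\]
With this on hand, the two claims should become essentially formal manipulations of the definitions of $\VV(\udscr)$ and $\II(\udscr)$ in \eqref{eqvar.3} and \eqref{eq:II}.

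For part (i), I would first observe that $\Cong \subseteq \crad(\Cong)$ directly from the definition of $\crad(\Cong)$ as an intersection of \qcong s all of which contain $\Cong$. Applying the order-reversing property of $\VV$ (the obvious analogue of Proposition~\ref{prop:var1}.(iii), transported to \qcong s via Remark~\ref{rem:E.ghostification}) gives $\VV(\crad(\Cong)) \subseteq \VV(\Cong)$. For the reverse inclusion, pick any $\pCong \in \VV(\Cong)$, i.e., $\pCong \supseteq \Cong$ with $\pCong \in \SpecA$. Then $\pCong$ is one of the terms in the intersection defining $\crad(\Cong)$ via Corollary~\ref{cor:rad.1}, hence $\crad(\Cong) \subseteq \pCong$, which means precisely $\pCong \in \VV(\crad(\Cong))$.

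For part (ii), I would simply unwind definitions:
\[
\II(\VV(\Cong)) \ds = \bigcap_{\pCong \in \VV(\Cong)} \pCong \ds = \hskip -2mm \bigcap_{\scriptsize\begin{array}{c}\pCong \in \SpecA\\ \pCong \supseteq \Cong\end{array}} \hskip -2mm \pCong \ds = \crad(\Cong) ,
\]
where the last equality is Corollary~\ref{cor:rad.1}.

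I expect no genuine obstacle here; the only point that warrants a line of care is that $\Cong$ is allowed to be an arbitrary congruence (not necessarily a \qcong), so $\crad(\Cong)$ could a priori be empty. If $\Cong$ is not contained in any \gprimec\ then $\VV(\Cong) = \emptyset$ and by the convention of Definition~\ref{def:g.rad.Cng} we set $\crad(\Cong) = \emptyset$, so both identities hold vacuously; otherwise the intersection is nonempty and Corollary~\ref{cor:rad.1} applies verbatim.
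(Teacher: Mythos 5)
Your proof is correct and takes essentially the same approach as the paper's: both parts reduce to the identity $\crad(\Cong) = \bigcap_{\pCong \supseteq \Cong} \pCong$ of Corollary~\ref{cor:rad.1} and then unwind the definitions of $\VV(\udscr)$ and $\II(\udscr)$, the paper merely writing each part as a single chain of equalities rather than as two inclusions. Your extra remark on the degenerate case where $\crad(\Cong)$ is empty is a reasonable point of care that the paper leaves implicit.
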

 \begin{proof} Write explicitly to obtain  the following.
 $$\begin{array}{lll}
 (i): & \VV(\crad(\Cong)) & =  \{ \pCong \in \SpecA \cnd \pCong \supseteq \crad(\Cong)  \} \\ &  & =
  \bigg\{ \pCong \in \SpecA \cnd \pCong \supseteq \hskip -2mm  \bigcap\limits_{\scriptsize \begin{array}{c}
\pCong\in \SpecA  \\
\pCong \supseteq \Cong
\end{array}}\hskip -2mm \pCong \bigg\}
   = \{ \pCong \in \SpecA \cnd \pCong \supseteq \Cong  \}  = \VV(\Cong) ,
\\[2mm]
 (ii): & \II(\VV(\Cong))  & = \hskip -2mm \bigcap\limits_{\pCong\in \VV(\Cong)} \pCong \ds = \bigcap\limits_{\scriptsize \begin{array}{c}
\pCong\in \SpecA \\   \pCong \supseteq \Cong
\end{array}} \hskip -2mm \pCong  = \crad(\Cong).
\end{array}$$ \vskip -6mm
 \end{proof}

Accordingly,  we conclude  that, if $\Cong$ is a \qcong, then also $\II(\VV(\Cong))$ is  a \qcong, unless $\VV(\Cong)$ is empty.
\pSkip

\begin{remark} The
usual straightforward inverse Zariski
correspondence holds for $\Cong_1, \Cong_2 \in \CngA$:

\begin{enumerate} \eroman
\item
$ \VV(\overline{\Cong_{1} \cup \Cong_{2}})= \VV(\Cong_{1}) \cap V(\Cong_{2})$, cf. \eqref{eq:q.cong.closure};

\item If $\Cong_1 \supseteq \Cong_2,$ then $\VV(\Cong_1) \subseteq
\VV(\Cong_2);$
%\item
% $ V(\Cong_{1} \cup \Cong_{2})= V(\Cong_{1}) \cap V(\Cong_{2})$;

\item
$\VV(\Cong_{1} \cap \Cong_{2}) \supseteq \VV(\Cong_{1}) \cup \VV(\Cong_{2})$.
\end{enumerate}
\end{remark}

By this remark and Lemma \ref{rem:rad.cong},  using Notation \ref{notat:cong.x},  we conclude the following.
\begin{corollary}\label{cor:6} Let $\sA$ be a \nusmr, and let $X = \SpecA$ be its spectrum.
\begin{enumerate} \eroman
  \item For $x \in X$ we have $\overline{\{x\}} = \VV (\xpCong)$ and the closure of $x$ consists of all  $\pCong \supseteq \xpCong$.
      % the points corresponding to $\VC(\Cong_{\mfp_x})$.
  \item A point $x \in X $ is closed  if and only if $\xpCong$ is a \maximalc\  in $\SpecA$.
\end{enumerate}

 \end{corollary}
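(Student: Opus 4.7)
The plan is to deduce both assertions directly from Proposition~\ref{prop:6.1.5}(ii) applied to the singleton $Y = \{x\}$, together with the definitions of $\II(\udscr)$ in \eqref{eq:II} and $\VV(\udscr)$ in \eqref{eqvar.3}.

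For part (i), I would first observe that for the singleton $Y = \{x\} \subseteq X$, the intersection in \eqref{eq:II} degenerates to
\[
\II(\{x\}) \;=\; \bigcap_{\pCong \in \{x\}} \pCong \;=\; \xpCong.
\]
Since $\{x\}$ is (vacuously) a closed-or-not subset to which Proposition~\ref{prop:6.1.5}(ii) still applies, its Zariski closure is
\[
\overline{\{x\}} \;=\; \VV(\II(\{x\})) \;=\; \VV(\xpCong).
\]
Unwinding \eqref{eqvar.3} then yields
\[
\overline{\{x\}} \;=\; \VV(\xpCong) \;=\; \{\,\pCong \in \SpecA \mid \pCong \supseteq \xpCong\,\},
\]
which is exactly the claim.

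For part (ii), a point $x \in X$ is closed precisely when $\overline{\{x\}} = \{x\}$. By part (i), this is equivalent to
\[
\{\,\pCong \in \SpecA \mid \pCong \supseteq \xpCong\,\} \;=\; \{x\},
\]
i.e.\ there is no \gprimec\ on $\sA$ that properly contains $\xpCong$. This is precisely the statement that $\xpCong$ is maximal with respect to inclusion inside $\SpecA$, which is the notion of maximality relevant here (maximality among \gprimec s, following the convention set after Definition~\ref{def:maximalSpec} where maximality is taken with respect to the ambient family under consideration). The converse direction is identical: if $\xpCong$ is maximal among \gprimec s, then $\VV(\xpCong) = \{x\}$, so $\overline{\{x\}} = \{x\}$ and $x$ is closed.

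There is no serious obstacle here; the corollary is a formal consequence of the already-established Galois-type correspondence between points of $\SpecA$ and \gprimec s. The only subtlety worth flagging is terminological, namely to be explicit that ``maximal \ccong\ in $\SpecA$'' refers to maximality with respect to inclusion among \gprimec s rather than among arbitrary \lcong s (cf.\ the distinction in Definition~\ref{def:maximalSpec} and \S\ref{ssec:maximal.cong}); once this is noted, the equivalences above are immediate.
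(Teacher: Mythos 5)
Your proof is correct and follows essentially the route the paper gestures at: the paper gives no written proof, merely citing the unnumbered Zariski-correspondence remark and Lemma~\ref{rem:rad.cong}, and the underlying content is exactly what you invoke, namely Proposition~\ref{prop:6.1.5}(ii) applied to $Y=\{x\}$ together with $\II(\{x\})=\xpCong$ and the definition \eqref{eqvar.3} of $\VV(\xpCong)$. Your flag on part~(ii) is the right thing to notice: since \eqref{eq:II}--\eqref{eqvar.3} set up a correspondence internal to $\SpecA$, ``\maximalc\ in $\SpecA$'' must be read as maximality with respect to inclusion among \gprimec s rather than maximality in all of $\LCng(\sA)$ as in Definition~\ref{def:maximalSpec} verbatim; under the latter, stricter reading the forward implication ($x$ closed $\Rightarrow$ $\xpCong$ maximal) is not obviously available, since an \lcong\ strictly above $\xpCong$ need not be \gprime. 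With the interpretation you adopt, both directions of~(ii) are immediate from~(i), and the paper's trailing note that~(ii) fails for \tminimalc s is consistent with this reading. One tiny quibble: $\II(Y)$ for a singleton $Y$ is indeed the degenerate intersection $\xpCong$, and Proposition~\ref{prop:6.1.5}(ii) applies to arbitrary subsets of $X$, so your parenthetical about ``closed-or-not'' is correct but could be stated more simply.
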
 \noindent
(Note that part (ii) does not hold for a  \tminimalc\ (Definition \ref{def:nt.maximalSpec}).)

\begin{corollary}\label{cor:7}
The mappings between $X = \SpecA$ and $\QCng(\sA)$, given by
$$\xymatrix{ { \left \{\begin{array}{l}
                 \text{closed} \\
                 \text{subsets in } X \end{array}\right\} }\ar@<1ex>[rr]^\II  && \ar@<1ex>[ll]^\VV  {\left\{\begin{array}{l} \text{\cradical\  congruences  }  \\ \rCong = \crad(\rCong) \text{ on  } \sA \end{array}\right\},}}
                 %$$ \vskip -3mm
                 %$$
                 \qquad
                 \xymatrix{ {\begin{array}{l} Y \\
                   \VV(\rCong ) \end{array}}  \ar@{|->}@<1ex>[r] & {\begin{array}{l}  \II(Y) \\ \rCong \    \end{array}}                 \ar@{|->}@<1ex>[l] } ,$$
are inclusion-reversing, bijective, and  inverse to each other. Furthermore,
$$ \II\bigg( \bigcup_{j \in J} Y_j\bigg) = \bigcap_{j \in J} \II(Y_j), \qquad
\II\bigg( \bigcap_{j \in J} Y_j\bigg) = \crad \bigg(\sum_{j \in J} \II(Y_j)\bigg), $$
for a family $(Y_j)_{j \in J}$ of subsets  $Y_i \subset X$, where in the latter equation the
$Y_i$'s are closed in $X$.
\end{corollary}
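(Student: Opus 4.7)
The plan is to assemble the bijection from results already established in the text, and then derive the two formulas for unions and intersections by a duality argument.

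First, I would verify that the two maps land where claimed. For a closed subset $Y \subseteq X$, the intersection $\II(Y) = \bigcap_{\pCong \in Y} \pCong$ is automatically a \qcong\ by Remark~\ref{rem:qcong.intersection}. It is moreover \gradical ly closed (i.e., $\crad(\II(Y)) = \II(Y)$) because Corollary~\ref{cor:rad.1} expresses $\crad(\Cong)$ as an intersection of \gprimec s containing $\Cong$, and an intersection of intersections of \gprimec s is itself such an intersection. Conversely, for a \cradical\ \ccong\ $\rCong$, the subset $\VV(\rCong) \subseteq X$ is closed by definition of the Zariski topology (Corollary~\ref{cor:1.2}).

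Next I would establish the inverse relations. The composition $\II \circ \VV$ acts as the identity on \cradical \ congruences: this is exactly Lemma~\ref{rem:rad.cong}.(ii), $\II(\VV(\rCong)) = \crad(\rCong) = \rCong$. The composition $\VV \circ \II$ acts as the identity on closed subsets: this is the second half of Proposition~\ref{prop:6.1.5}.(ii), which states that $Y = \VV(\II(Y))$ for closed $Y$. Inclusion-reversal in both directions is immediate: $E \subseteq E' \Rightarrow \VV(E') \subseteq \VV(E)$ for congruences (by the same argument as Proposition~\ref{prop:var1}.(iii) applied via \eqref{eqvar.3}), and $Y \subseteq Y' \Rightarrow \II(Y') \subseteq \II(Y)$ by Remark~\ref{rem:6.6.2}.(i). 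Combined with the inverse relations just proved, these give a bijection.

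For the two displayed formulas, the union formula $\II(\bigcup_j Y_j) = \bigcap_j \II(Y_j)$ is precisely Remark~\ref{rem:6.6.2}.(ii) and requires no additional argument. For the intersection formula, I apply $\VV$ to the putative right-hand side: using Proposition~\ref{prop:var1}.(iv) (generalized from subsets to congruences via \eqref{eq:q.cong.closure}, and the fact that $\VV$ is insensitive to \cradical\ closure by Lemma~\ref{rem:rad.cong}.(i)) we obtain
\[
\VV\bigl(\crad({\textstyle \sum_j \II(Y_j)})\bigr) \;=\; \VV\bigl({\textstyle \sum_j \II(Y_j)}\bigr) \;=\; \bigcap_j \VV(\II(Y_j)) \;=\; \bigcap_j Y_j,
\]
where the last equality uses closedness of each $Y_j$ via Proposition~\ref{prop:6.1.5}.(ii). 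Applying $\II$ to both sides and invoking the already-proven identity $\II \circ \VV = \id$ on \cradical\ congruences (and noting that $\crad(\sum_j \II(Y_j))$ is \cradical) yields $\II(\bigcap_j Y_j) = \crad(\sum_j \II(Y_j))$.

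The main obstacle is a conceptual one rather than a computational one: one must be careful that $\sum_j \II(Y_j)$ is not by itself a congruence, and that $\VV$ was defined on subsets of $\sA \times \sA$ only up to taking the \cradical\ closure; this is precisely why the $\crad$ is needed on the right of the intersection formula. All other steps reduce to bookkeeping with the earlier lemmas.
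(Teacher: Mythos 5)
Your proposal is correct and takes essentially the same route as the paper: the bijection and inclusion-reversal are read off from Proposition \ref{prop:6.1.5} and Remark \ref{rem:6.6.2}, and the intersection formula is derived by combining $Y_j = \VV(\II(Y_j))$ with Proposition \ref{prop:var1}.(iv) and then applying $\II$ (equivalently Lemma \ref{rem:rad.cong}). Your version is a bit more explicit — in particular you spell out why $\II(Y)$ is a $\mathfrak c$-radically closed \qcong\ — but the chain of lemmas is the one the paper uses.
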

\begin{proof}
The assertions infer from Proposition  \ref{prop:6.1.5}, except
the latter equation. Since
 $Y_j  = \VV (\II(Y_j))$  by Proposition  \ref{prop:6.1.5}.(ii),  from Proposition \ref{prop:var1}.(iv) we obtain
$$ \bigcap_{j \in J} Y_j  = \VV \bigg(\sum_{j \in J} \II(Y_j)\bigg), \dss{\text{
and thus}} \II\bigg( \bigcap_{j \in J} Y_j\bigg) = \crad \bigg(\sum_{j \in J} \II(Y_j)\bigg)$$
by Remark \ref{rem:c.cong} and Proposition  \ref{prop:6.1.5}.(i).
\end{proof}

Quasi compactness occurs for open sets in the Zariski toplology of $\SpecA$.

\begin{prop}\label{prop:6.1.11} Let $A$ be a \nusmr, and let $X = \SpecA$ be  its spectrum. Every
set $\DDg \subset X$,  with $g \in \sA$,  is quasi-compact. In particular,
$X = \DD(\one)$ is quasi-compact.
\end{prop}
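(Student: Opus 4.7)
By Corollary~\ref{cor:1.2}, principal subsets $\DDf$ form a basis of $X$, so it suffices to show that every cover of $\DDg$ by basic opens $\{\DD(f_i)\}_{i\in I}$ admits a finite subcover. Setting $E := \{f_i : i\in I\}$, the cover condition $\DDg \subseteq \bigcup_{i\in I}\DD(f_i)$ becomes, upon taking complements and applying Proposition~\ref{prop:var1}.(iv), the inclusion $\VVg \supseteq \VV(E)$. Suppose for contradiction that no finite $J \subseteq I$ suffices. Then for every finite $J$ there exists $\pCong_J \in \VV(E_J) \setminus \VVg$, where $E_J := \{f_i\}_{i\in J}$, so $E_J \subseteq \iGcl(\pCong_J)$ while $g \notin \iGcl(\pCong_J)$.

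\textbf{No power of $g$ lives in $\iGcl(\gCong_E)$.} Since $\pCong_J$ is a \gprimec, no power $g^n$ lies in $\iGcl(\pCong_J)$; a fortiori $g^n \notin \iGcl(\gCong_{E_J})$ for every finite $J$. By Remark~\ref{rem:ghostification.smr}, any element of $\iGcl(\gCong_E)$ can be written as $q + \sum c_j e_j$ with $q \in \tG$, finitely many $e_j \in E$ and $c_j \in \sA$, hence already belongs to $\iGcl(\gCong_{E'})$ for some finite $E' \subseteq E$. Combining the two observations rules out $g^n \in \iGcl(\gCong_E)$ for every $n \in \Net$. The same finite-combination principle shows that $\gCong_E$ is itself a \qcong: a unit $u \in \sA^\times$ ghostified by $\gCong_E$ would already be ghostified by some $\gCong_{E_J}$, hence by $\pCong_J$, contradicting Remark~\ref{rem:tcng.units} applied to the \gprimec\ $\pCong_J$.

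\textbf{Producing the separating \gprimec.} Passing to the quotient $\sA' := \sA/\gCong_E$ (a bona fide \nusmr\ by Lemma~\ref{lem:qsmr}), the image $[g]$ satisfies $[g]^n \notin (\sA')|_\ghs$ for every $n$; that is, $[g] \notin \gprad(\sA')$. Corollary~\ref{cor:a.vs.prime} then furnishes a \gprimec\ $\pCong'$ on $\sA'$ with $[g] \notin \iGcl(\pCong')$, and the bijection in Remark~\ref{rem:biject.proj} pulls it back to a \gprimec\ $\pCong \in \SpecA$ with $\pCong \supseteq \gCong_E$ and $g \notin \iGcl(\pCong)$. Thus $\pCong \in \DDg$, while $E \subseteq \iGcl(\pCong)$ forces $\pCong \notin \DD(f_i)$ for every $i \in I$, contradicting the hypothesis that $\{\DD(f_i)\}_{i\in I}$ covers $\DDg$. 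The special case $g = \one$ gives the quasi-compactness of $X = \DD(\one)$.

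\textbf{Where the work lies.} The routine steps are the translation of the cover into the language of $\VV$ and $\II$, and the finite-combination reading of $\iGcl(\gCong_E)$. The genuine content is concentrated in the last paragraph: the passage from ``no power of $g$ is ghostified by $\gCong_E$'' to ``some \gprimec\ containing $\gCong_E$ does not ghostify $g$'' is the supertropical analogue of the classical implication $g \notin \sqrt{I} \Rightarrow \exists\, \mfp \supseteq I$ with $g \notin \mfp$, and it is precisely here that one must invoke Corollary~\ref{cor:a.vs.prime} (and ultimately the Zorn argument behind Lemma~\ref{lem:a.vs.qcong}). The preparatory verification that $\gCong_E$ is a \qcong\ is the only place where the subtleties peculiar to \qcong s, as opposed to classical ideals, genuinely enter.
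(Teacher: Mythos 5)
Your proof is correct, but it runs in the opposite direction to the paper's. The paper argues directly: from $\DDg\subseteq\bigcup_i\DD(f_i)$ it passes to $\VV(g)\supseteq\VV(\gCong_\tF)$, invokes the radical correspondence of Proposition~\ref{prop:6.1.5} to get $g^n\in\iGcl(\gCong_\tF)$ for some $n$, and then uses the finite-combination description of $\iGcl(\gCong_\tF)$ (Remark~\ref{rem:ghostification.smr}) to write $g^n=\sum_{j=1}^m a_{i_j}g_{i_j}$ and extract a finite subcover from the non-ghost terms. You instead argue by contradiction: the witnesses $\pCong_J$ for the failure of every finite subcover, combined with the same finite-combination principle, show that no power of $g$ lies in $\iGcl(\gCong_E)$, and you then manufacture a single separating \gprimec\ $\pCong\supseteq\gCong_E$ with $g\notin\iGcl(\pCong)$ by quotienting and applying Corollary~\ref{cor:a.vs.prime} and Remark~\ref{rem:biject.proj}. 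The two routes use the same two ingredients (finiteness of expressions in $\iGcl(\gCong_E)$ and the prime/radical correspondence), but yours makes explicit the Zorn-type existence argument that the paper compresses into the single assertion ``there exists $n$ with $g^n\in\iGcl(\gCong_\tF)$''; this is a virtue, since that assertion is exactly the nontrivial direction of $\rcl(E)=\{a: a^n\in\iGcl(\gCong_E)\}$. The price you pay is the extra bookkeeping of verifying that $\gCong_E$ is a \qcong\ so that the quotient machinery applies. On that point, tighten one step: Definition~\ref{def:nucong} requires $\sAX\subseteq\iTcl(\gCong_E)$, i.e.\ that a unit is congruent \emph{only to tangibles}, which is stronger than not being ghostified. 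The repair is your own finite-combination principle in congruence form: $\gCong_E$ is the directed union of the $\gCong_{E'}$ over finite $E'\subseteq E$, so any relation $u\cng b$ in $\gCong_E$ already holds in some $\gCong_{E_J}\subseteq\pCong_J$, and since $\pCong_J$ is an \lcong\ this forces $b\in\sAtng$.
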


\begin{proof}
  Since the sets $\DDf$ form a basis of the Zariski topology on $X$ (Corrollary \ref{cor:1.2}), it is enough to show that every covering of $\DDg$ admits a finite subcover. We may assume that $g \notin \sAghs$ is not a ghospotent, since otherwise $\DDg = \emptyset$ by Lemma \ref{lem:D.contain}.   Let $ \tF :=  (f_i)_{i \in I}$ be a family
of elements in $\sA$ such that $\DDg \subseteq  \bigcup_{i \in I} \DD(f_i)$. Taking complements, this is equivalent to
$$ \VV (g) \ds\supseteq \bigcap_{i \in I} \VV (f_i) = \VV (\gCong_\tF),$$
where $\gCong_\tF$ is the ghostifying  congruence of $\tF$. Then,
$\srad(g) \subset  \srad(\tF) = \crad(\gCong_\tF)$ by Proposition~ \ref{prop:6.1.5}, and there exists  $n  \in \Net$ such that $g^n \in \iGcl(\gCong_\tF)$.% cf. Lemma \ref{lem:rad.prop.1}.

 $\gCong_\tF$ is the minimal congruence determined by the ghost relations on $\tF$. Its ghost projection  $\iGcl(\gCong_\tF)$ is an  ideal  of $\sA$ (Remark~ \ref{rem:cong.ideal}), generated by elements from  $\tF$ and elements of $\sAghs$  (Remark~ \ref{rem:ghostification.smr}).
  Thus, there exists a finite set of indices  ${i_1}, \dots , i_m\in I$ such that  $g^n = \sum^m_{j=1} a_{i_j}g_{i_j}$, with  $g_{i_j} \in \iGcl(\gCong_\tF)$.
 In terms of ideals, this implies
$\gen{ g^n }  \subseteq  \gen{ g_{i_1}, \dots, g_{i_m} }$.

 Since $g^n $ is not a ghost, the set $\{ g_{i_1}, \dots, g_{i_m} \} $ contains a subset $K$ of non-ghost elements $\{ f_{k_1}, \dots, f_{k_\ell} \} $ from $\tF$.
Hence $g^n \in \iGcl(\gCong_K)$,  and $\VV (g) = \VV (g^n) \supseteq \VV ( \{ f_{k_1}, \dots, f_{k_\ell} \} )$. Therefore
$\DDg \subseteq \bigcup ^\ell_{j=1}\DD(f_{k_j})$,
and  $\DDg$ admits a finite subcover.
\end{proof}

Recall from Remark \ref{rem:cong1}.(iii) that a  \qhom\ $\vrp: \sA \To \sB$ of \nusmr s induces the   congruence pull-back map
\begin{equation}\label{eq:specMap}  \avrp: \SpecB \TO \SpecA, \qquad  \pCong' \longmapsto \vvrp(\pCong'),
\end{equation}
given by $(a,b) \in \pCong$ if $(\vrp(a),\vrp(b)) \in \pCong'$. The map $\avrp$ is well defined for \gprimec s, preserving \gprime s as well,  by
Remark ~\ref{rem:biject.proj}.

\begin{proposition}\label{prop:11} Let  $\Cong$ be a \qcong\ on $\sA$, and let $\pi: \sA \To A/ \Cong$ be the
canonical surjection. The map
\begin{equation*}
\api: \pcspec(\sA/\Cong)  \TO \pcspec( \sA), \qquad \pCong' \longmapsto \vpi(\pCong'),
\end{equation*}
induces a homeomorphism of topological spaces $\pcspec( \sA/ \Cong) \Isoto
\VV (\Cong)$, where
$\VV (\Cong)$ is equipped with the subspace topology obtained from the Zariski topology
of $\pcspec( \sA)$.
\end{proposition}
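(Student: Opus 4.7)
The plan is to establish the homeomorphism in three stages: first show that $\api$ is a well-defined bijection onto $\VV(\Cong)$, then verify continuity via a simple preimage identity on basic closed sets, and finally show that $\api$ is closed by computing images of basic closed sets.

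For the bijection onto $\VV(\Cong)$, I would argue as follows. Given $\pCong' \in \Spec(\sA/\Cong)$, the pullback $\vpi(\pCong')$ automatically contains $\Cong$, since $a \cng b$ in $\Cong$ forces $\pi(a)=\pi(b)$ and hence $(\pi(a),\pi(b)) \in \pCong'$. By Remark~\ref{rem:biject.proj}, $\vpi(\pCong')$ is a \gprimec\ on $\sA$, so $\api$ really lands in $\VV(\Cong)$. Bijectivity onto $\VV(\Cong)$ is then the \gprime\ case of the one-to-one inclusion-preserving correspondence in Remark~\ref{rem:induced.cong}.(i): a \gprimec\ $\pCong \supseteq \Cong$ corresponds to the \gprimec\ $\pCong/\Cong$ on $\sA/\Cong$, whose pullback under $\pi$ recovers $\pCong$.

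For continuity, the key identity is
\begin{equation*}
\api^{-1}\bigl(\VV_{\sA}(f) \cap \VV(\Cong)\bigr) \ds = \VV_{\sA/\Cong}(\pi(f))
\end{equation*}
for every $f \in \sA$. Indeed, for $\pCong' \in \Spec(\sA/\Cong)$ one has $f \in \iGcl(\vpi(\pCong'))$ iff $(f,f^\nu) \in \vpi(\pCong')$ iff $(\pi(f),\pi(f)^\nu) \in \pCong'$ iff $\pi(f) \in \iGcl(\pCong')$. Since the sets $\VV_{\sA}(f)\cap \VV(\Cong)$ form a subbasis for the subspace topology on $\VV(\Cong)$ (by Corollary~\ref{cor:1.2}), this shows $\api$ is continuous onto $\VV(\Cong)$.

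For closedness of $\api$, the analogous computation yields
\begin{equation*}
\api\bigl(\VV_{\sA/\Cong}(\bar E)\bigr) \ds = \VV_{\sA}\bigl(\pi^{-1}(\bar E)\bigr) \cap \VV(\Cong)
\end{equation*}
for every $\bar E \subseteq \sA/\Cong$. The inclusion $\subseteq$ follows because $\vpi(\pCong') \supseteq \Cong$ and any $f \in \pi^{-1}(\bar E)$ satisfies $\pi(f) \in \bar E \subseteq \iGcl(\pCong')$, so $f \in \iGcl(\vpi(\pCong'))$. Conversely, if $\pCong \supseteq \Cong$ and $\pi^{-1}(\bar E) \subseteq \iGcl(\pCong)$, then the correspondent \gprimec\ $\pCong' = \pCong/\Cong$ satisfies $\bar f = \pi(f) \in \iGcl(\pCong')$ for each $\bar f \in \bar E$ (taking any lift $f \in \pi^{-1}(\bar f) \subseteq \iGcl(\pCong)$), so $\pCong' \in \VV_{\sA/\Cong}(\bar E)$ and $\api(\pCong') = \pCong$. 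Since every closed set in $\Spec(\sA/\Cong)$ is of the form $\VV_{\sA/\Cong}(\bar E)$, this shows $\api$ is a closed map onto $\VV(\Cong)$.

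Combining the three stages, $\api$ is a continuous closed bijection onto $\VV(\Cong)$, hence a homeomorphism $\Spec(\sA/\Cong) \Isoto \VV(\Cong)$. No step presents a serious obstacle: the only subtlety is bookkeeping of the ghost clusters under $\pi$ and $\vpi$, which is handled uniformly by the observation that $f \in \iGcl(\vpi(\pCong'))$ iff $\pi(f) \in \iGcl(\pCong')$, together with the bijective correspondence of \gprimec s from Remark~\ref{rem:induced.cong}.
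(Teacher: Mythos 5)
Your proof is correct and takes essentially the same approach as the paper: establish the bijection onto $\VV(\Cong)$ via Remark~\ref{rem:biject.proj}, then transport basic closed/open sets through $\api$ using the observation that $f \in \iGcl(\vpi(\pCong'))$ iff $\pi(f) \in \iGcl(\pCong')$. The paper packages the topological step as a bijective correspondence of basic open sets $\DD(\pi(f)) \leftrightarrow \DDf \cap \VV(\Cong)$, while you unfold it into separate continuity and closedness checks on basic closed sets; the content is the same.
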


\begin{proof}
The map $\api$ defines a bijection between
$\pcspec (\sA/\Cong)$ and the subset of $\SpecA$ consisting of all \gprimec s  which contain
$\Cong$, cf. Remark \ref{rem:biject.proj}. Hence, it provides a bijection $\pcspec( \sA/ \Cong) \Isoto
\VV (\Cong)$. Considering this map as an identification, for elements $f \in \sA$,
we obtain
$$\SpecA \supseteq \DDf \cap  \VV (\Cong) = \DD(\pi(f)) \subseteq \pcspec( \sA/\Cong).$$
Since $\pi$ is surjective, for
$\olf \in \sA/\Cong$, the sets $\DD(\olf) \subseteq  \pcspec
 (\sA/\Cong)$  correspond bijectively to the restrictions $\VV (\Cong)\cap  \DDf \subseteq \pcspec(\sA)$ with
$f \in \sA$, which proves the assertion.
\end{proof}

\begin{cor}\label{cor:12} The spectrum of a \nusmr\ $\sA$ and the spectrum of its reduction
$\sA/\grad(\sA)$ are canonically homeomorphic.
\end{cor}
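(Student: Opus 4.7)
The plan is to derive this corollary directly from Proposition \ref{prop:11} applied to the \gpradicalc\ $\Cong = \grad(\sA)$. Since $\grad(\sA)$ is a \qcong\ on $\sA$ (Definition \ref{def:ghostpotent} together with Lemma \ref{lem:qcong.radical}), Proposition \ref{prop:11} immediately yields a canonical homeomorphism
\[
\api : \Spec(\sA/\grad(\sA)) \ISOTO \VV(\grad(\sA)),
\]
where $\pi : \sA \To \sA/\grad(\sA)$ is the canonical surjection and $\VV(\grad(\sA)) \subseteq \SpecA$ carries the subspace topology.

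The remaining step is to show that $\VV(\grad(\sA)) = \SpecA$ as subsets of $\SpecA$. By Theorem \ref{thm:krull} (Krull), we have the identity
\[
\grad(\sA) \ds= \bigcap_{\pCong \in \SpecA} \pCong,
\]
so every \gprimec\ $\pCong \in \SpecA$ contains $\grad(\sA)$. By the definition \eqref{eqvar.3} of $\VV$ on congruences, this means $\pCong \in \VV(\grad(\sA))$ for every $\pCong \in \SpecA$, hence $\VV(\grad(\sA)) = \SpecA$. In this equality both sides carry the Zariski topology, so the subspace topology on $\VV(\grad(\sA))$ coincides with the Zariski topology on $\SpecA$.

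Composing the two identifications gives the canonical homeomorphism $\Spec(\sA/\grad(\sA)) \ISOTO \SpecA$, as required. There is no real obstacle here: the only content beyond Proposition \ref{prop:11} is the invocation of Krull's theorem to identify $\VV(\grad(\sA))$ with the entire spectrum, and this is essentially a tautology since the \gpradical\ is the intersection of all \gprimec s by construction.
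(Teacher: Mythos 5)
Your proof is correct and follows essentially the same route as the paper's: both reduce to Proposition~\ref{prop:11} applied to $\Cong = \grad(\sA)$ and then identify $\VV(\grad(\sA))$ with $\SpecA$ (the paper does this via Lemma~\ref{lem:R-G.rad} and $\VV(\sAghs) = \SpecA$, while you invoke Theorem~\ref{thm:krull}, which is proven directly from that same lemma — a cosmetic difference).
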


\begin{proof}
  Recall that the \gpradicalc\ $\grad(\sA)$  of $\sA$   is defined as the
\sradical\ of the \ggradical\ ideal $\gprad(R) \supseteq \sAghs$ (Definition \ref{def:ghostpotent}). Therefore, $\VV (\grad(\sA)) = \VV (\sAghs) = \SpecA$ by Lemma \ref{lem:R-G.rad}, and
Proposition~ \ref{prop:11} applies.
\end{proof}

\subsection{Irreducible varieties}
\sSkip

To deal directly with irreducibility of \nuvars,  analogous to irreducibility over rings,  in this sub-section we assume that our underlining \nusmr\ $\sA$ is \tame. \pSkip

We recall the following standard definitions.

\begin{defn} \label{def:top.irrd}

Let $X \neq \emptyset$ be an arbitrary topological space.
\begin{enumerate} \eroman
\item $X$ is called \textbf{irreducible}, if any decomposition $X = X_1 \cup X_2$ into closed subsets $X_1,X_2$ implies $X_1 = X$ or $X_2 = X$.
A subset $Y \subset X$ is irreducible, if it
is irreducible under the topology induced from $X$ on $Y$.
  \item  A point $x \in X$ is \textbf{closed}, if the set $\{x\}$ is closed;  $x$ is \textbf{generic}, if $\overline{\{x\}} = X$;   $x$ is a \textbf{generalization} of a point $y \in X $, if $y \in \overline{\{x\}} $.
  \item A point $x \in X$ is called a \textbf{maximal point}, if its closure $\overline{\{x\}}$ is an irreducible component
of $X$.
\end{enumerate}

\end{defn}

Thus, a point $x \in X$ is generic if and only if it is a generalization of every point of $X$.
Since the closure of an irreducible set is again irreducible, the existence of a generic point
implies that $X$ is irreducible.

\begin{prop}\label{prop:15} For the spectrum $X = \SpecA$ of a \tamesmr\ $\sA$ the following
conditions are equivalent:
\begin{enumerate}\eroman
  \item  $X$ is irreducible as a topological space under the Zariski topology,
\item $\sA/\grad(\sA)$ is a \nudom,
\item  $\grad(\sA)$ is a \gprimec.
\end{enumerate}
\end{prop}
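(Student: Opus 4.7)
The plan is to dispatch (ii) $\Leftrightarrow$ (iii) directly from Proposition \ref{prop:prime2domain}, since that proposition gives exactly the equivalence between $R/\Cong$ being a \nudom\ and $\Cong$ being a \gprimec; specializing to $\Cong = \grad(\sA)$ yields the required statement, with the tameness hypothesis ensuring via Remark \ref{rem:grad.prs} that $\sA/\grad(\sA)$ inherits the ambient \nusmr\ properties needed for this characterization to apply cleanly. It remains to establish (i) $\Leftrightarrow$ (iii), which I will do via two contrapositive arguments that both rest on the identification $\iGcl(\grad(\sA)) = \gprad(\sA)$---available because $\grad(\sA) = \bigcap_{\pCong \in \Spec(\sA)} \pCong$ by Theorem \ref{thm:krull}, and an element lies in every $\iGcl(\pCong)$ exactly when it is ghostpotent (Remark \ref{rem:ghostpotent})---together with the dictionary between ghostpotency and the ghost locus supplied by Lemma \ref{lem:D.contain}.

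For (iii) $\Rightarrow$ (i), I argue by contrapositive. Assume $X = X_1 \cup X_2$ is a decomposition into proper closed subsets. Each complement $X \sm X_i$ is a nonempty open set, and therefore contains some basic open $\DD(f_i)$ with $f_i \in \sA$; Lemma \ref{lem:D.contain}.(iii) then gives that $f_i$ is not ghostpotent, while the inclusion $X_i \subseteq \VV(f_i)$ follows by taking complements. Applying Proposition \ref{prop:var1}.(v) I obtain
\[
X = X_1 \cup X_2 \subseteq \VV(f_1) \cup \VV(f_2) = \VV(f_1 f_2),
\]
so $\VV(f_1 f_2) = X$, which by Lemma \ref{lem:D.contain}.(i) forces $f_1 f_2$ to be ghostpotent. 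This exhibits a pair $f_1, f_2 \notin \iGcl(\grad(\sA))$ whose product lies in $\iGcl(\grad(\sA))$, contradicting the assumption that $\grad(\sA)$ is \gprime.

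For (i) $\Rightarrow$ (iii), again by contrapositive, suppose $\grad(\sA)$ is not \gprime. Then there exist $f, g \in \sA$ with $fg \in \iGcl(\grad(\sA))$ but $f, g \notin \iGcl(\grad(\sA))$, i.e., $fg$ is ghostpotent while $f$ and $g$ are not. Lemma \ref{lem:D.contain}.(i) yields $\VV(fg) = X$ together with $\VV(f), \VV(g) \subsetneq X$, and Proposition \ref{prop:var1}.(v) gives $X = \VV(fg) = \VV(f) \cup \VV(g)$ as a decomposition of $X$ into two proper closed subsets, so $X$ is reducible.

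The main obstacle I anticipate lies not in the combinatorics of the equivalences but in the bookkeeping around $\grad(\sA)$: one must confirm, using the tameness of $\sA$, that $\grad(\sA)$ is a bona fide \qcong\ whose ghost projection coincides with the ghostpotent ideal $\gprad(\sA)$, and that the quotient $\sA/\grad(\sA)$ carries the \tclsnusmr\ structure needed for Proposition \ref{prop:prime2domain} to apply as stated. These structural prerequisites are precisely what Remark \ref{rem:grad.prs} is designed to discharge, so once they are in place the three implications reduce to the short topological manipulations sketched above.
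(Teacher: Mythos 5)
Your treatment of (ii) $\Leftrightarrow$ (iii) and of (iii) $\Rightarrow$ (i) is sound and matches the paper in substance; the paper simply routes the topological direction through (ii) (showing the reduction has no ghost divisors) instead of arguing on $\grad(\sA)$ directly, and both arguments ultimately rest on $\VV(fg)=\VV(f)\cup\VV(g)$ together with the identification of $\iGcl(\grad(\sA))$ with the ghostpotent ideal.

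The genuine gap is in your contrapositive for (i) $\Rightarrow$ (iii). A \gprimec\ is by Definition \ref{def:prmCng} an \emph{\lcong} satisfying the ghost-primality condition \eqref{eq:prime.0}; it is not merely a \qcong\ whose ghost projection is prime. Hence ``$\grad(\sA)$ is not a \gprimec'' splits into two cases: either the primality condition fails, or $\grad(\sA)$ fails to be an \lcong\ (i.e., $\iTcl(\grad(\sA))$ is not a multiplicative monoid). Your argument produces a reducible decomposition only in the first case and says nothing in the second, so the implication ``not (iii) $\Rightarrow$ not (i)'' is not established. Moreover, the structural prerequisite you defer to Remark \ref{rem:grad.prs} is not discharged by it: part (ii) of that remark transfers the \prsfl/\tcls\ property from $\sA$ to $\sA/\grad(\sA)$ only when $\sA$ already has it, whereas the proposition assumes only that $\sA$ is \tame. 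The hypothesis of tameness is exactly what closes this gap, but it does so through the ghost-divisor route: irreducibility of $X$ forces $\gDiv(\sA/\grad(\sA))=\emptyset$, and then Lemma \ref{lem:gdiv.in.tame}.(ii) (which needs tameness) makes the \tprs\ elements of the reduction into a tangible monoid, so that $\sA/\grad(\sA)$ is a \nudom\ and Proposition \ref{prop:prime2domain} then supplies the full \gprimec\ structure of $\grad(\sA)$, including the \lcong\ requirement. In short: prove (i) $\Rightarrow$ (ii) by your topological argument plus Lemma \ref{lem:gdiv.in.tame}, and recover (iii) from Proposition \ref{prop:prime2domain}, rather than attacking (iii) directly.
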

\begin{proof}
 $(i) \Rightarrow(ii)$: The \nusmr\ $\sA/\grad(\sA)$ is \tame\ by Remark \ref{rem:grad.prs}.(iii), so  we may replace~ $\sA$ by its reduction $\sA/\grad(\sA)$, cf. Corollary \ref{cor:12},  to get a reduced \tamesmr\ with  $\tGz := \grad(\sA) =  A|_\ghs$.  Let $X$ be irreducible. Suppose there exist  non-ghost elements
 $f, g \in \sA \sm \tGz$ such that $fg \cng \ghost$,  then,  by Remark ~\ref{rem:var.1} and
 Proposition \ref{prop:var1}.(v),
$$X = \VV(\tGz) = \VV (fg) = \VV (f) \cup \VV (g).$$ Namely, $X$ decomposes
into the closed subsets $\VV (f), \VV (g) \subseteq X$,  implying   $X = \VV(f)$ or $X = \VV(g)$, since ~$X$ irreducible.
If $\VV (f) = X \ [ = \VV (\tGz)]$, then we conclude by Proposition \ref{prop:6.1.5} that $\srad(f)$
coincides with $\srad(\tGz) = \grad(\sA)$, cf. Lemma \ref{lem:R-G.rad}, and, hence, that $f \cng \ghost$. Similarly, $\VV(g)= X$ implies that $g \cng \ghost $.
This shows that $\sA$ has no ghost divisors, i.e. $\gDiv(\sA) = \emptyset.$
Moreover, since $\sA$ is \tame, $\sAptngs \sm \gDiv(\sA)$ is a tangible monoid by Lemma \ref{lem:gdiv.in.tame}.(ii), and therefore $\sA$ is a \nudom.

\pSkip
$(ii) \Leftrightarrow (iii)$: Immediate by Proposition \ref{prop:prime2domain}.
\pSkip
$(ii) \Rightarrow (i)$:
Assume that $\sA$ is a \nudom\
and  $X = X_1 \cup X_2$ is  a proper decomposition of $X$ into
closed subsets $X_1, X_2$. Then, for $\tG := A|_\ghs$,  Remark \ref{rem:6.6.2} and  Proposition \ref{prop:6.1.5} give
$$\srad(\tGz) = \II(X) = \II(X_1) \cap \II(X_2),  \qquad \II(X_1) \neq  \srad(\tGz) \neq  \II(X_2).$$
Now, for $f_i \in \iGcl(\II(X_i)) \sm \tGz$, $i = 1, 2$, we get $f_1f_2 \cng \ghost$ -- a contradiction as  $\sA$ was assumed to be a \nudom. Therefore  $X$
must be irreducible.
\end{proof}

\begin{thm}\label{thm:16}
Let $Y \subset X = \SpecA$ be a closed subset, where  $\sA$ a \tamesmr. Then, $Y$ is irreducible if and only if $\II(Y )$ is a \gprimec.
\end{thm}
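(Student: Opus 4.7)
The plan is to leverage the preceding propositions, especially Propositions \ref{prop:6.1.5}, \ref{prop:11}, and \ref{prop:15}, and reduce the statement to a fact about \gpradical s of the quotient $\sA/\II(Y)$. Since $Y$ is closed, Proposition~\ref{prop:6.1.5}(ii) gives $Y = \VV(\II(Y))$, and Proposition~\ref{prop:11} (applied to $\Cong = \II(Y)$) yields a homeomorphism $\Spec(\sA/\II(Y)) \isoto Y$. Hence $Y$ is irreducible if and only if $\Spec(\sA/\II(Y))$ is irreducible.

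For the forward direction, assuming $Y$ is irreducible, I would apply Proposition~\ref{prop:15} to $B := \sA/\II(Y)$ to obtain that $\grad(B)$ is a \gprimec\ on $B$. By Remark~\ref{rem:biject.proj}, the pull-back $\vpi(\grad(B))$ along the canonical surjection $\pi : \sA \Onto B$ is a \gprimec\ on $\sA$. The crucial identification is
\[
\vpi(\grad(B)) \;=\; \vpi\!\Bigl(\,\bigcap_{\pCong_B \in \Spec B} \pCong_B\,\Bigr) \;=\; \bigcap_{\substack{\pCong \in \Spec \sA \\ \pCong \supseteq \II(Y)}} \pCong \;=\; \crad(\II(Y)) \;=\; \II(Y),
\]
where the last step uses Lemma~\ref{rem:rad.cong}(ii) together with $Y = \VV(\II(Y))$. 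This exhibits $\II(Y)$ itself as a \gprimec.

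For the converse, suppose $\II(Y)$ is a \gprimec, so that $\II(Y)$ is a point $x_0 \in X = \Spec \sA$. By Corollary~\ref{cor:6}(i), $\overline{\{x_0\}} = \VV(\II(Y))$, which equals $Y$ since $Y$ is closed. As the closure of a singleton is always irreducible, so is $Y$.

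The main obstacle is justifying the application of Proposition~\ref{prop:15} to $B = \sA/\II(Y)$, which formally requires $B$ to inherit tameness from $\sA$. The verification amounts to showing that a decomposition $a = c + d^\nu$ in $\sA$ with $c,d \in \sAtng$ descends to $[a] = [c] + [d]^\nu$ in $B$ with $[c],[d] \in B|_\tng$, and here the \qcong\ property (preserving $\sA^\times$ and the \prs\ tangible core) together with the radical closedness $\II(Y) = \crad(\II(Y))$ is used. If instead one wishes to bypass Proposition~\ref{prop:15}, the forward direction can be argued directly by contradiction using Proposition~\ref{prop:var1}(v): if $fg \in \iGcl(\II(Y))$ with $f,g \notin \iGcl(\II(Y))$, then $Y \subseteq \VV(fg) = \VV(f) \cup \VV(g)$ yields the proper closed decomposition $Y = (Y \cap \VV(f)) \cup (Y \cap \VV(g))$, contradicting irreducibility; the fact that $\iTcl(\II(Y))$ is a multiplicative tangible monoid (so $\II(Y)$ is an \lcong) then follows from the \gprime\ condition just obtained together with the \tame ness of $\sA$ via Lemma~\ref{lem:gdiv.in.tame}(ii)--(iii).
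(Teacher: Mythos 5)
Your main argument follows essentially the same route as the paper: reduce to the quotient $\sA/\II(Y)$ via Propositions~\ref{prop:6.1.5} and~\ref{prop:11}, use $\II(Y)=\crad(\II(Y))$ so that the quotient is ghost reduced, apply Proposition~\ref{prop:15}, and close the loop --- the paper does this in one step via Proposition~\ref{prop:prime2domain}, whereas you pull back $\grad(B)$ and identify it with $\II(Y)$, an equivalent (if slightly longer) way to finish. The converse via Corollary~\ref{cor:6} is fine, and your concern about whether $\sA/\II(Y)$ inherits tameness is a legitimate subtlety the paper also leaves implicit (cf.\ Remark~\ref{rem:grad.prs}(iii)).

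However, the alternative bypass you sketch has a gap. The argument via Proposition~\ref{prop:var1}(v) correctly establishes condition~\eqref{eq:prime.0} for $\II(Y)$, but that alone does not make $\II(Y)$ a \gprimec: Definition~\ref{def:prmCng} builds in that a \gprimec\ is an \lcong, i.e.\ that $\iTcl(\II(Y))$ is a multiplicative tangible monoid. Lemma~\ref{lem:gdiv.in.tame}(ii)--(iii) is a statement about the ambient $\sA$, not about the tangible projection of the \qcong\ $\II(Y)$, and the ghost-prime condition does not force $\iTcl(\II(Y))$ to be closed under multiplication: even over a \tamesmr, a product of two tangibles need not be tangible (cf.\ Example~\ref{examp:cancellative.gdiv}). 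In the paper's route this is handled automatically, because $\sA/\II(Y)$ is shown to be a \nudom\ --- hence \tcls\ --- so its tangible projection is a monoid; that is precisely the work done by the chain through Propositions~\ref{prop:15} and~\ref{prop:prime2domain}, and your cited lemma does not replicate it.
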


\begin{proof}
Write $\Cong  = \II(Y )$, then  $Y = \VV (\Cong)$ and $\Cong = \crad(\Cong)$, cf. \eqref{eq:II} and Lemma \ref{rem:rad.cong}.  Thus,
 the homeomorphism $\Spec (\sA/ \Cong) \Isoto Y$ of Proposition \ref{prop:11} is applicable. Accordingly,  $Y$ is irreducible iff $\Spec(\sA/ \Cong)$ is irreducible iff $\sA/\Cong$ is a \nudom\ (by Proposition \ref{prop:15})  iff $\Cong$ is a \gprimec\ (by Proposition ~\ref{prop:prime2domain}).
\end{proof}

\begin{corollary}\label{cor:irreducible}
The  mappings $\II$ and $\VV$ between $X = \SpecA$ and $\QCng(\sA)$, given
in Corollary \ref{cor:7}, with $\sA$ a \tamesmr,   yield mutually inverse and inclusion-reversing bijections
$$\xymatrix{ { \left \{\begin{array}{l}
                 \text{irreducible closed} \\
                 \text{subsets of  } X \end{array}\right\} }\ar@<1ex>[rr]^\II  && \ar@<1ex>[ll]^\VV  {\left\{\begin{array}{l} \text{\gprimec s }  \\  \text{ on } \sA \end{array}\right\},}}
                 %$$ \vskip -3mm
                 %$$
                 \qquad
                 \xymatrix{ {\begin{array}{l} Y \\
                   \overline{\{ y \}} \end{array}}  \ar@{|->}@<1ex>[r] & {\begin{array}{l}  \II(Y) \\ \pCong_y \ \end{array}} .                 \ar@{|->}@<1ex>[l] }$$
\end{corollary}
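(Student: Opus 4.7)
The plan is to deduce the corollary directly from the machinery already in place, using Corollary \ref{cor:7} together with Theorem \ref{thm:16}. First, I would recall that every \gprimec\ is \gradical, cf.\ \eqref{eq:Spec.RSpec}, so that $\Spec(\sA) \subseteq \RSpec(\sA)$ and, in particular, every $\pCong \in \Spec(\sA)$ satisfies $\pCong = \crad(\pCong)$. Hence  \gprimec s form a distinguished  subclass of the \gradical ly closed congruences that feature in the bijection of Corollary~\ref{cor:7}.

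Next, I would restrict the mutually inverse maps
\[
\VV : \{\rCong \in \QCng(\sA) \ds | \rCong = \crad(\rCong)\} \TO \{\text{closed subsets of } X \}, \qquad \II : \{\text{closed subsets of } X \} \TO \{\rCong \in \QCng(\sA) \ds | \rCong = \crad(\rCong)\}
\]
from Corollary~\ref{cor:7} to the subclass of \gprimec s on the algebraic side.  Theorem~\ref{thm:16} asserts that for a closed subset $Y \subseteq X$ the congruence $\II(Y)$ is \gprime \ if and only if $Y$ is irreducible. Consequently, $\II$ maps irreducible closed subsets into $\Spec(\sA)$, and conversely, if $\pCong \in \Spec(\sA)$, then $\VV(\pCong) = \VV(\crad(\pCong))$ is an irreducible closed subset by the same theorem applied in the opposite direction. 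Inclusion-reversal and the identities $\VV \circ \II = \id$, $\II \circ \VV = \id$ on these subclasses are then inherited from Corollary~\ref{cor:7}.

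To obtain the stated correspondence $\overline{\{y\}} \leftrightarrow \pCong_y$ explicitly, I would invoke Corollary~\ref{cor:6}.(i), which identifies $\overline{\{y\}}$ with $\VV(\pCong_y)$ for any point $y \in X$, and Proposition~\ref{prop:6.1.5}.(ii), which gives $Y = \VV(\II(Y))$ for closed $Y$; together these show that every irreducible closed set $Y$ arises as the closure of a unique point, namely $y = \II(Y) \in \Spec(\sA)$, and that this assignment is inverse to $y \Mto \overline{\{y\}}$.

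The substantive content has already been carried out in Theorem~\ref{thm:16} (which is where the \tame\ hypothesis on $\sA$ enters, via Proposition~\ref{prop:15}), so no single step here is a genuine obstacle; the main care needed is bookkeeping: checking that the two bijections from Corollary~\ref{cor:7} and Theorem~\ref{thm:16} restrict compatibly, and that inclusion-reversal persists under restriction to the \gprime\ subclass. I would verify this last point by noting that if $\pCong_1 \subseteq \pCong_2$ in $\Spec(\sA)$ then $\VV(\pCong_2) \subseteq \VV(\pCong_1)$ by Proposition~\ref{prop:var1}.(iii) (combined with \eqref{eqvar.3}), and conversely that an inclusion of irreducible closed sets $Y_1 \subseteq Y_2$ gives $\II(Y_2) \subseteq \II(Y_1)$ by Remark~\ref{rem:6.6.2}.(i), completing the proof.
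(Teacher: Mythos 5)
Your proposal is correct and follows essentially the same route as the paper: it combines Corollary~\ref{cor:7}, Theorem~\ref{thm:16}, and Corollary~\ref{cor:6}, just spelled out in more detail than the paper's terse one-line proof.
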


\begin{proof}
This is an immediate consequence of Corollary  \ref{cor:7} and Theorem
\ref{thm:16}, since  $\VV (\ypCong) = \overline{\{y\}}$ by Corollary ~\ref{cor:6}.
\end{proof}

%\begin{rem}
Let
 $x \in X$ be a point of $X = \SpecA$ corresponding to the \gprimec\ $\xpCong$ on $\sA$.  The topological notions of Definition \ref{def:top.irrd} have the following algebraic meanings.
\begin{enumerate}\ealph
  \item   $x$ is closed iff $\xpCong$ is a \maximalc.
  \item $x$ is a generic point of $X$ iff $\xpCong$  is the unique minimal \gprimec, which  exists iff the \gpradicalc \ $\grad(A)$ of (a \tclsnusmr) $\sA$ is a \gprimec. Thus,
Proposition~ \ref{prop:15} shows that $X$ is irreducible iff  $\grad(A)$ is a \gprimec.
\item $x$ is a generization of a point $y \in X$ iff $\xpCong \subset \ypCong$.
\item $x$ is a maximal point iff $\xpCong$ is a minimal \gprimec.
\end{enumerate}

%\end{rem}

The next  example recovers the traditional correspondence between irreducible polynomials and irreducible hypersurfaces. This correspondence  is not so evident in standard tropical  geometry, whose objects are polyhedral complexes satisfying certain constraints, cf. \S\ref{ssec:polyhedral.geomtry}.

\begin{example}[Tangible  hypersurfaces]\label{examp:hypersurfaces}
 Let $f$ be a tangible polynomial function in the \tamesmr\ $ \tlF[\Lm]$, with $F$ a \nusmf, cf.~ \S\ref{ssec:ploynomials}.
Write  $Y := \VV(f) = \{ \pCong \in \SpecA \cnd \gCong_f \subseteq \pCong \}$, and let  $\II(Y) = \bigcap_{\pCong \in Y} \pCong$, which is the \sradical\ $\rCong := \srad(f)$ of $f$ (Definition \ref{def:s.radical} and Proposition \ref{prop:6.1.5}).

Recall that an element $f$ is irreducible, if it  cannot be factored as a nontrivial product $gh$ with  $g,h \in \tlF[\Lm]$.
If $f$ factorises into  irreducible polynomial functions $f = g_1^{m_1} \cdots g_s^{m_s}$, then
$\srad(f) = \srad( g_1^{m_1} \cdots g_s^{m_s})$, cf. Lemma~ \ref{lem:rad.prop.1}, which is \gprime\ iff $s =1$, i.e., $f = g_1^{m_1}$, where $f$ is not irreducible if  $m_1 >1$.
Therefore,  when $f$ is irreducible, $\srad(f)$ is a \gprimec, and by Proposition~ \ref{thm:16} we conclude  that $Y$ is an irreducible \nuvar.

\end{example}

The example assumes that the polynomial function  $f$ is tangible, which thereby captures all standard tropical hypersurfaces  via Example ~\ref{exmp:extended}, see also parts (i) and (ii) of Example \ref{examp:varieties}. However, in general,  irreducibility of a  function $f \in \tlF[\Lm]$ does not imply irreducibility of its \nuvar, for example take $f$ as in Example \ref{examp:non-unique-factorization}.(ii).
 %The same holds for ghost functions $f \in \tlF[\Lm]$ as well.  In such case we have $\VV(f) =  \SpecA$, which shows that $\SpecA$ is an irreducible topological space.
\begin{example}\label{exmp:affine.line}
The spectrum $\Aff_R = \SpecA$ of the \nusmr\ $\sA= \tlR[\lm]$ of polynomial functions in variable~ $\lm$ over a \nusmr\ $R$ is referred to as the \textbf{affine line} over $R$, viewed as an object over $\Spec(R)$ via the projection $ \Aff_R \To \Spec(R)$ induced by the canonical injection $ R \Into \tlR[\lm]$.
\end{example}

\subsection{Homomorphisms and functorial properties} \sSkip

 Let $\vrp: \sA \To \sB$ be a \qhom\  of \nusemirings0. By Remark~\ref{rem:cong1}.(iii), $\vrp$ determines
 for any \qcong\ $\bCong$ on $\sB$
a \qhom\ $\sA/\vvrp(\bCong)  \Into \sB/\bCong$. In the case of a \gprimec\ $\pCong_y \in \SpecB$ on a \nusmr\ $\sB$,  the factor \nusmr \ $\sB/\pCong_y$ is a \nudom\ (Proposition ~ \ref{prop:prime2domain}). Moreover, since $\vrp$ is a \qhom, $\vrp$ induces the map $\vvrp$ from \qcong s on $\sB$ to  \qcong s on $\sA$ which preserves intersection, i.e.,
\begin{equation}\label{eq:cong.inter}
  \vvrp(\bCong' \cap \bCong'') = \vvrp(\bCong') \cap \vvrp(\bCong'').
\end{equation}
Recall that $\vvrp$ defines the map   \eqref{eq:specMap} of spectra, written shortly  as \begin{equation}\label{eq:specMap.2} \avrp: Y  \TO
X, \qquad \pCong_y \longmapsto \vvrp(\pCong_y),  \end{equation}
where  $X= \SpecA$ and $Y=\SpecB$.

\begin{lem}\label{lem:2.2} The following diagram commutes for any $y \in \SpecB$
$$\xymatrix{
\sA \ar[r]^-{\vrp } \ar[d]^{\pi} & \sB \ar[d]^{\pi_{\ypCong}} \\
\sA/ \vvrp(\ypCong) \ar@{^{(}->}[r]^{\vrp_x} \ar@{^{(}->}[d] & \sB/\ypCong \ar@{^{(}->}[d]\\
Q(\sA/ \vvrp(\ypCong)) \ar@{^{(}->}[r]^{\vrp_x} & Q(\sB/\ypCong))
}$$
 In particular, for $f \in \sA$,  we have
$$ \vrp_x(f(\avrp(y))) = \vrp(f)(y),$$ i.e.,
$f \circ \, \avrp = \vrp(f).$
\end{lem}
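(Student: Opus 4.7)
The plan is to construct the middle horizontal map $\vrp_x$ on quotients, extend it to the $\nu$-semifields of fractions, and then read off the functional identity. Set $x := \avrp(y) = \vvrp(\ypCong)$. By the definition of the pullback congruence (Remark \ref{rem:cong1}(iii)), the relation $a \equiv_x b$ holds if and only if $\vrp(a) \equiv_y \vrp(b)$, so the rule $[a]_x \Mto [\vrp(a)]_y$ is well-defined and injective. This yields a $\mathfrak{q}$-homomorphism $\vrp_x: \sA/x \Into \sB/\ypCong$, and commutativity of the top square is an immediate computation:
\[
\pi_{\ypCong}(\vrp(a)) = [\vrp(a)]_y = \vrp_x([a]_x) = (\vrp_x \circ \pi)(a).
\]

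Next, since $x$ and $\ypCong$ are both \gprimec s, \propref{prop:prime2domain} implies that $\sA/x$ and $\sB/\ypCong$ are \nudom s, and hence embed canonically into their $\nu$-semifields of fractions $Q(\sA/x)$ and $Q(\sB/\ypCong)$ (Definition \ref{def:tangible.localization}). To extend $\vrp_x$, I apply the universal property of tangible localization (\propref{prop:local.univ}) to the composition $\sA/x \to \sB/\ypCong \Into Q(\sB/\ypCong)$: the tangible elements of $\sA/x$ map to elements of $\sB/\ypCong$ whose images in $Q(\sB/\ypCong)$ are units, so $\vrp_x$ lifts uniquely to a map $Q(\sA/x) \to Q(\sB/\ypCong)$ given explicitly on fractions by $[a]_x/[c]_x \Mto [\vrp(a)]_y/[\vrp(c)]_y$. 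Commutativity of the lower square then follows directly from this formula.

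Finally, the functional identity is obtained by unwinding the evaluation convention \eqref{eq:f.function}: since $f(\avrp(y)) = f(x) = [f]_x \in \sA/x$, one computes
\[
\vrp_x\bigl(f(\avrp(y))\bigr) = \vrp_x([f]_x) = [\vrp(f)]_y = \vrp(f)(y),
\]
which is precisely the asserted identity $f \circ \avrp = \vrp(f)$.

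The main obstacle will be checking that tangibles of $\sA/x$ really are sent by $\vrp_x$ to elements of $\sB/\ypCong$ that become units in $Q(\sB/\ypCong)$, so that the extension to the fraction fields is genuinely well-defined. The subtlety is that being a $\mathfrak{q}$-homomorphism only guarantees $\vrp^{-1}(\tT') \subseteq \tT$, not the forward direction $\vrp(\tT) \subseteq \tT'$. The argument must exploit the fact that $x = \vvrp(\ypCong)$ is built precisely to make $\vrp$ descend cleanly to quotients, together with the fact that $\sA/x$ is a \nudom, so that $\iTcl(x)$ matches up appropriately with elements whose $\vrp$-images are non-ghost in $\sB/\ypCong$ and thus invertible after passage to $Q(\sB/\ypCong)$.
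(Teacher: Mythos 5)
Your treatment of the top square and of the functional identity $\vrp_x(f(\avrp(y))) = \vrp(f)(y)$ is correct and is essentially the paper's argument: the paper simply invokes the map of spectra \eqref{eq:specMap.2} for the upper part and the construction at the start of \S\ref{ssec:ver} for the lower part, and your computation $[a]_x \mapsto [\vrp(a)]_y$, well-defined and injective because $a \equiv_x b$ iff $\vrp(a)\equiv_y \vrp(b)$, is exactly the fleshed-out version of that. Since the ``in particular'' clause is the part of the lemma used downstream, the essential content is in place.

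The gap is in the bottom square, and you have correctly located it but not closed it. Applying Proposition \ref{prop:local.univ} to extend $\vrp_x$ to $Q(\sA/\vvrp(\ypCong))$ requires that every tangible class of $\sA/\vvrp(\ypCong)$ be sent to a unit of $Q(\sB/\ypCong)$, i.e.\ that $\vrp(\iTcl(\vvrp(\ypCong))) \subseteq \iTcl(\ypCong)$. This is precisely the direction that is \emph{not} supplied by the hypotheses: a \qhom\ gives $\ivrp(\tT_\sB)\subseteq \tT_\sA$ but not $\vrp(\tT_\sA)\subseteq\tT_\sB$, and Remark \ref{rem:induced.cong}(ii) proves only the reverse implication $\vrp(a)\in\iTcl(\ypCong)\Rightarrow a\in\iTcl(\vvrp(\ypCong))$. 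Membership $a\in\iTcl(\vvrp(\ypCong))$ is a condition on which elements \emph{of $\sA$} are congruent to $a$; it does not prevent $\vrp(a)$ from being a non-tangible, non-ghost element of $\sB$ (whose class in $\sB/\ypCong$ is then not tangible, hence not a unit after localization). So the sentence ``the tangible elements of $\sA/x$ map to elements \dots whose images in $Q(\sB/\ypCong)$ are units'' is an unproven assertion, and your closing paragraph essentially concedes this. To be fair, the paper's own two-line proof is equally silent here, so you have identified the genuine weak point of the lemma; but as written your argument for the bottom square does not go through without either an additional hypothesis (e.g.\ tangible injectivity of $\vrp$ in the sense of Definition \ref{def:nusmr.hom}) or a separate argument that the pullback construction forces $\vrp(a)\in\iTcl(\ypCong)$ for $a\in\iTcl(\vvrp(\ypCong))$.
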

Recall that $f \in \sA$ is viewed as a map $f: \pCong \mTo [f]$ on $\SpecA$, cf. \eqref{eq:f.function}.
\begin{proof}
 The map $\avrp: \SpecB \To  \SpecA$ in \eqref{eq:specMap.2} gives the upper part of the diagram, whereas the lower part
is the canonical extension to  residue \nusmr, cf. the beginning of \S\ref{ssec:ver}.
\end{proof}

\begin{prop}\label{prop:2.3}  Let $\vrp: \sA \To \sB$ be a \qhom, and let $\avrp: \SpecB  \To \SpecA$ be the associated map of spectra. Then,
\begin{enumerate} \eroman \dispace
  \item  $\iavrp(\VV(E)) = \VV(\vrp(E))$ for any subset $E \subset \sA,$
  \item $\overline{\avrp(\VV(\bCong))} = \VV(\vvrp(\bCong))$ for any \cqcong \ $\bCong$ on $\sB$.
% \item $\iavrp(\DDf) = \DD(\vrp(f)).$
\end{enumerate}
\end{prop}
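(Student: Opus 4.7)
My plan is to treat the two parts separately, since (i) is essentially a definitional unwinding while (ii) requires the identity $\overline{Y}=\VV(\II(Y))$ together with a commutation of $\crad$ with pullback.

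For part (i), I would note that for $\ypCong \in \SpecB$ the pulled‐back congruence $\vvrp(\ypCong)$ on $\sA$ is defined by $a \cng_{\vvrp(\ypCong)} b$ iff $\vrp(a) \pcng_y \vrp(b)$. Applying this to the pair $(f,f^\nu)$ and using that $\vrp(f^\nu) = \vrp(f)^\nu$ (since $\vrp$ is a $\mathfrak{q}$-homomorphism of $\nu$-semirings, cf.\ Lemma~\ref{lem:nu.hom}), I get $f \in \iGcl(\vvrp(\ypCong))$ iff $\vrp(f) \in \iGcl(\ypCong)$. Consequently $\ypCong \in \iavrp(\VV(E))$ iff $\vvrp(\ypCong) \in \VV(E)$ iff $E \subseteq \iGcl(\vvrp(\ypCong))$ iff $\vrp(E) \subseteq \iGcl(\ypCong)$ iff $\ypCong \in \VV(\vrp(E))$. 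This gives (i).

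For (ii), the easy inclusion $\overline{\avrp(\VV(\bCong))} \subseteq \VV(\vvrp(\bCong))$ follows from the fact that $\vvrp$ preserves containment of congruences: $\ypCong \supseteq \bCong$ implies $\vvrp(\ypCong) \supseteq \vvrp(\bCong)$, so $\avrp(\ypCong) \in \VV(\vvrp(\bCong))$; since $\VV(\vvrp(\bCong))$ is Zariski-closed, the closure of $\avrp(\VV(\bCong))$ is contained in it. For the reverse inclusion I invoke Proposition~\ref{prop:6.1.5}(ii) to write $\overline{\avrp(\VV(\bCong))} = \VV(\II(\avrp(\VV(\bCong))))$. Using that $\vvrp$ commutes with intersections of congruences (a routine check from its definition), I compute
\[
\II(\avrp(\VV(\bCong))) \;=\; \bigcap_{\ypCong \supseteq \bCong} \vvrp(\ypCong) \;=\; \vvrp\Big(\!\!\bigcap_{\ypCong \supseteq \bCong}\!\!\! \ypCong\Big) \;=\; \vvrp(\crad(\bCong)),
\]
so $\overline{\avrp(\VV(\bCong))} = \VV(\vvrp(\crad(\bCong)))$.

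It then remains to prove that $\VV(\vvrp(\crad(\bCong))) = \VV(\vvrp(\bCong))$. One direction is clear from $\vvrp(\bCong) \subseteq \vvrp(\crad(\bCong))$. For the other, any $\pCong_x \in \VV(\vvrp(\bCong))$ is $\mathfrak{g}$-radical (being $\mathfrak{g}$-prime), hence contains $\crad(\vvrp(\bCong))$ by Lemma~\ref{rem:rad.cong}; thus it suffices to prove the analog of the classical identity $\vrp^{-1}(\sqrt{J}) = \sqrt{\vrp^{-1}(J)}$, namely $\vvrp(\crad(\bCong)) \subseteq \crad(\vvrp(\bCong))$. I expect to verify this by reducing to ghost projections, where one has the characterization $\iGcl(\crad(\Cong)) = \{a \cnd a^k \in \iGcl(\Cong) \text{ for some } k \in \Net\}$ (which follows from Theorem~\ref{thm:krull} applied inside $\sA/\Cong$ and $\sB/\bCong$, together with Lemma~\ref{lem:rad-reduced}), combined with the evident identity $\iGcl(\vvrp(\Cong)) = \ivrp(\iGcl(\Cong))$ and the fact that $\vrp$ commutes with taking powers.

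The main obstacle is this last step: matching the abstract intersection $\vvrp(\crad(\bCong))$ against $\crad(\vvrp(\bCong))$, since not every $\mathfrak{g}$-prime of $\sA$ containing $\vvrp(\bCong)$ need lift to a $\mathfrak{g}$-prime of $\sB$ containing $\bCong$ (the lying-over property may fail). I plan to bypass this lifting issue by working at the level of ghost projections (which are preserved under pullback and whose radicals admit an intrinsic description via ghostpotents), then lifting from ghost-projection equality to the required containment of full congruences using the $\mathfrak{g}$-prime hypothesis on $\pCong_x$.
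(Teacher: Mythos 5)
Your route is the same as the paper's. Part (i) is exactly the paper's argument: the paper phrases the equivalence $f\in\iGcl(\vvrp(\ypCong))\Leftrightarrow\vrp(f)\in\iGcl(\ypCong)$ through the ``functional'' reading of $f$ and Lemma~\ref{lem:2.2}, while you apply the definition of the pullback congruence to the pair $(f,f^\nu)$; both are correct and complete. For part (ii) the paper computes precisely as you do, $\overline{\avrp(\VV(\bCong))}=\VV(\II(\avrp(\VV(\bCong))))$ by Proposition~\ref{prop:6.1.5}, then $\II(\avrp(\VV(\bCong)))=\vvrp(\crad(\bCong))$ by commuting $\vvrp$ with intersections, and then simply writes $\vvrp(\crad(\bCong))=\crad(\vvrp(\bCong))$ with no justification before applying Lemma~\ref{rem:rad.cong}. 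You have correctly isolated this as the only nontrivial point, and correctly identified which inclusion is the hard one: $\vvrp(\crad(\bCong))\supseteq\crad(\vvrp(\bCong))$ comes free, since by Corollary~\ref{cor:rad.1} the left side is $\bigcap_{\pCong'\supseteq\bCong}\vvrp(\pCong')$, an intersection over a \emph{subfamily} of the \gprimec s containing $\vvrp(\bCong)$; it is the opposite inclusion that the proposition needs.

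Your plan for that inclusion, however, is not yet a proof. The elementwise reduction you sketch does give equality of ghost projections, $\iGcl(\vvrp(\crad(\bCong)))=\iGcl(\crad(\vvrp(\bCong)))=\{a\cnd \vrp(a)^k\in\iGcl(\bCong)\text{ for some }k\}$, via Corollary~\ref{cor:a.vs.prime} applied in the quotients. But a \qcong\ is not determined by its ghost projection: $\vvrp(\crad(\bCong))$ also contains pairs $(a,b)$ with $a,b\notin\iGcl(\vvrp(\crad(\bCong)))$, namely preimages of pairs lying in \emph{every} \gprimec\ of $\sB$ above $\bCong$, and to get $\vvrp(\crad(\bCong))\subseteq\crad(\vvrp(\bCong))$ you must show these pairs lie in every \gprimec\ of $\sA$ above $\vvrp(\bCong)$, including primes that are not pullbacks of primes of $\sB$. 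The $\mfg$-primeness of such a $\pCong$ constrains only its ghost cluster, so it does not by itself supply the ``lifting from ghost-projection equality to containment of full congruences'' that your last sentence promises. In short: you reproduce the paper's argument and honestly flag the one identity the paper asserts without proof, but your proposed way of closing it does not yet close it; you need either a genuine argument for the non-ghost pairs of $\crad(\bCong)$, or to reformulate the statement/proof so that only ghost clusters intervene.
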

\begin{proof} (i): The relation $y \in \iavrp(\VV(E))$  for
$y \in \SpecB$ is equivalent to $\avrp(y)  \in \VV(E)$, and hence
to $E \subseteq \iGcl(\avrp(y))$. Thus  $f \in \iGcl(\avrp(y))$ for all $f \in E$.
 In functional interpretation, cf. the beginning of \S\ref{ssec:ver}, this means that
 $f(\avrp(y)) \in (\sA/\avrp(y))|_\ghs$ for all
$f \in E$, which by Lemma  \ref{lem:2.2}  is equivalent
to $\vrp(f)(y) \in (\sA/\avrp(y))|_\ghs$. Thus, $y \in \iavrp(\VV(E))$ is equivalent to $y \in \VV(\vrp(E))$.
\pSkip
(ii): By Proposition \ref{prop:6.1.5}.(ii), $\overline{\avrp(\VV(\bCong))}$ can be written as
$\VV(\II(\avrp(\VV(\bCong))))$. Using \eqref{eq:cong.inter},  we have
$$ \II(\avrp(\VV(\bCong))) = \hSkip  \bigcap_{
\scriptsize \begin{array}{c}
\ypCong \in  \VV(\bCong) \end{array}} \hSkip  \vvrp(\ypCong) = \vvrp\bigg(
\bigcap_{
\scriptsize \begin{array}{c}
\ypCong \in  \VV(\bCong) \end{array} } \hskip -2mm \ypCong \bigg) = \vvrp(\crad(\bCong)) = \crad(\vvrp(\bCong)),
$$
and, in view of Lemma \ref{rem:rad.cong}, the claim follows by applying $\VV(\udscr)$.
%\pSkip
%
%iii):  The the assertion follows from i), since $\iavrp$ is compatible for passing to complements.
\end{proof}

\begin{cor}\label{cor:2.5} The map $\avrp: \SpecB \To \SpecA$   associated to a \qhom\
$\vrp: \sA \To \sB$ is continuous with respect to Zariski topologies on $X= \SpecA$
and $Y= \SpecB$, i.e., the preimage of any open (resp. closed)
subset in $X$ is open (resp. closed) in $Y$ with
$$\iavrp(\DDf) = \DD(\vrp(f)), \qquad \iavrp(\VV(f)) = \VV(\vrp(f)),$$
for every $f \in \sA.$
\end{cor}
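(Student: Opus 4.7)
The proof is essentially a direct specialization of Proposition~\ref{prop:2.3}, so my plan is to first derive the two displayed identities and then deduce continuity from the fact that principal open sets form a basis of the Zariski topology (Corollary~\ref{cor:1.2}).

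First I would apply Proposition~\ref{prop:2.3}.(i) to the singleton $E = \{f\}$ to obtain the identity $\iavrp(\VV(f)) = \VV(\vrp(f))$. Since $\VV(\vrp(f))$ is closed in $Y$ by definition, this immediately shows that preimages of principal closed sets in $X$ are closed in $Y$. Taking complements in $Y$, and using that $\DDf = X \setminus \VV(f)$ and $\DD(\vrp(f)) = Y \setminus \VV(\vrp(f))$ (cf.\ \eqref{eq:Var.1}), one obtains
\begin{equation*}
\iavrp(\DDf) \ds = \iavrp(X \setminus \VV(f)) \ds = Y \setminus \iavrp(\VV(f)) \ds = Y \setminus \VV(\vrp(f)) \ds = \DD(\vrp(f)),
\end{equation*}
which is the second displayed formula and is open in $Y$.

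For continuity itself, I would invoke Corollary~\ref{cor:1.2}.(ii), which states that the principal open sets $\DDf$, $f \in \sA$, form a basis of the Zariski topology of $X$. Since the preimage of each basis element $\DDf$ under $\avrp$ is the open set $\DD(\vrp(f))$ by what we have just shown, and since preimage commutes with arbitrary unions, any open subset $U = \bigcup_{i \in I} \DD(f_i)$ of $X$ satisfies $\iavrp(U) = \bigcup_{i \in I} \DD(\vrp(f_i))$, which is open in $Y$. Hence $\avrp$ is continuous; dually, the preimage of any closed subset $\VV(E) \subseteq X$ is the closed subset $\VV(\vrp(E)) \subseteq Y$, recovering Proposition~\ref{prop:2.3}.(i) for arbitrary subsets $E \subseteq \sA$. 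There is no real obstacle here; the entire content has been packaged into Proposition~\ref{prop:2.3}, and the present corollary just reinterprets (i) of that proposition in topological language.
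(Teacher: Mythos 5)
Your proof is correct and follows essentially the same route as the paper: the closed-set identity is Proposition~\ref{prop:2.3}.(i) applied to $E=\{f\}$, the open-set identity follows by passing to complements (preimages commute with complementation), and continuity follows since the $\DDf$ form a basis. The paper's own proof is just a terser statement of exactly these two observations.
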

\begin{proof} The former holds  as the  preimages of $\avrp$ is compatible with passing to complements.
 The latter follows  from Proposition \ref{prop:2.3}.
\end{proof}

Next,  we focus on a \qhom\ $\vrp: \sA \To \sB$ whose associated  map $\avrp$
is injective,  and therefore defines an isomorphism   between $\SpecB$ and $\im(\avrp)$.

\begin{prop}\label{prop:2.6}
Let $\vrp: \sA \To \sB$ be a \qhom\ such that every $f' \in \sB$ can be written as  $f' = \vrp(f)h'$ for some $f\in \sA$ and a unit $h' \in {\sB}^\times$. Then, $\avrp: \SpecB\To \SpecA$ is injective and defines a \hom\ $\SpecB \Isoto \im(\avrp) \subset \SpecA$, where $\im(\avrp)$ is equipped with the subspace topology induced from
the Zariski topology on $\SpecA$.
\end{prop}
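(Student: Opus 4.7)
Plan: The plan is to establish three facts: continuity of $\avrp$ (already given by Corollary \ref{cor:2.5}), injectivity of $\avrp$, and that $\avrp$ is a closed map onto $\im(\avrp)\subset\SpecA$; combining these yields the claimed homeomorphism onto image.

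The first technical step is the identity $\VV(f')=\iavrp(\VV(f))$ for any $f'\in\sB$ and any factorization $f'=\vrp(f)h'$ with $h'\in\sB^\times$. This rests on two ingredients: $\iGcl(\pCong')$ is a \nusmr\ ideal (Remark \ref{rem:cong.ideal}), and no unit of $\sB$ lies in $\iGcl(\pCong')$ since $\pCong'$ is a \qcong\ (Remark \ref{rem:tcng.units}). Hence $f'=\vrp(f)h'\in\iGcl(\pCong')$ iff $\vrp(f)\in\iGcl(\pCong')$ (multiplying by $(h')^{-1}$ and using the ideal property), iff $f\in\iGcl(\vvrp(\pCong'))$, which is precisely $\avrp(\pCong')\in\VV(f)$. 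Intersecting over generators, every closed subset $\VV(E')\subseteq\SpecB$ coincides with $\iavrp(\VV(E))$ for the corresponding $E\subset\sA$ assembled from factorizations; in particular $\avrp$ pulls closed sets back faithfully.

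For injectivity, I would suppose $\vvrp(\pCong'_1)=\vvrp(\pCong'_2)=\pCong$ and deduce $\pCong'_1=\pCong'_2$. The previous identity immediately yields $\iGcl(\pCong'_1)=\iGcl(\pCong'_2)=\vrp(\iGcl(\pCong))\cdot\sB^\times$. For the full equivalence relation, take $(a',b')\in\pCong'_1$ with $a'=\vrp(a)u$, $b'=\vrp(b)v$. Multiplying the relation $\vrp(a)u\pcng'_1\vrp(b)v$ by $u^{-1}$ gives $\vrp(a)\pcng'_1\vrp(b)(vu^{-1})$; setting $\MS=\vrp^{-1}(\sB^\times)$ (a tangible \tprs\ monoid by Proposition \ref{prop:rpim}), the unit $vu^{-1}$ factors as $\vrp(c)s$ with $c\in\MS$ and $s\in\sB^\times$ (the hypothesis forces $\vrp(c)=(vu^{-1})s^{-1}$ to be a unit, so $c\in\MS$). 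A further decomposition $s^{-1}=\vrp(d)t$ reduces the relation to $\vrp(ad)\,t\pcng'_1\vrp(bc)$. Invoking \gprime ness (Definition \ref{def:prmCng}) together with $t\notin\iGcl(\pCong'_1)$ trades the unit factor $t$ against ghost-projection membership, producing an equivalent condition formulated entirely in terms of $\pCong$ and the auxiliary elements $c,d\in\MS$. Since this condition is symmetric in the index $i$, the same relation holds in $\pCong'_2$, whence $(a',b')\in\pCong'_2$.

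Finally, continuity, injectivity, and the pullback characterization combine to yield the homeomorphism: for any closed $\VV(E')\subseteq\SpecB$, injectivity gives $\avrp(\VV(E'))=\avrp(\iavrp(\VV(E)))=\im(\avrp)\cap\VV(E)$, which is closed in the subspace topology on $\im(\avrp)$. Hence $\avrp$ is a continuous closed bijection onto $\im(\avrp)$, i.e., a homeomorphism. The hard part is the injectivity step: factorizations $f'=\vrp(f)h'$ are non-canonical and the iterative unit-decompositions do not terminate in general, so care is required to ensure that the reduced condition on $\pCong$ is independent of choices and that \gprime ness supplies the correct cancellation of units against ghosts.
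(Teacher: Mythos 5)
Your overall architecture---continuity from Corollary~\ref{cor:2.5}, injectivity, and the observation that a continuous closed bijection onto its image is a homeomorphism---matches the paper's, and your opening and closing steps are sound. The identity $\VV(f')=\iavrp(\VV(f))$, obtained from the ideal property of $\iGcl(\pCong')$ (Remark~\ref{rem:cong.ideal}) together with the fact that no unit of $\sB$ can lie in $\iGcl(\pCong')$, is correct and yields the closed-set pullback characterization; the paper reaches the same point by adjusting $E'$ by units and invoking Proposition~\ref{prop:2.3}.

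The injectivity step is where the real content lies, and your argument does not close there. After factoring $a'=\vrp(a)u$, $b'=\vrp(b)v$ and multiplying both sides of $(a',b')\in\pCong'_1$ by $u^{-1}$, you are left with a pair $(\vrp(a),\,\vrp(b)(vu^{-1}))\in\pCong'_1$ carrying a stray unit $vu^{-1}$, which you then try to dispose of by iterated factorization followed by an appeal to the $\mfg$-prime condition. That appeal is the wrong tool: Definition~\ref{def:prmCng} only lets you factor a \emph{ghost} relation through a product (if $fg\pcng\ghost$ then $f\pcng\ghost$ or $g\pcng\ghost$); it gives no licence to cancel a unit $t$ from a general congruence pair $(\vrp(ad)t,\vrp(bc))\in\pCong'_1$. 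Cancellativity of congruences is precisely what the framework refuses to assume (Remark~\ref{rem:cong.unit}). As you yourself observe, the iterated decompositions never terminate, so the ``reduced condition on $\pCong$'' you intend to reach is never actually reached, and the step remains a gap.

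For comparison, the paper's injectivity argument is much more direct: from $(f'_1,f'_2)\in\pypCong$ it factors $f'_i=\vrp(f_i)h'_i$, asserts $(\vrp(f_1),\vrp(f_2))\in\pypCong$, pushes $(f_1,f_2)$ through $\vvrp(\pypCong)=\vvrp(\pCong_{y''})$, and multiplies back by the unit factors to place $(f'_1,f'_2)$ in $\pCong_{y''}$. Your unease here is well founded: that transfer tacitly uses that the pair $({h'_1}^\inv,{h'_2}^\inv)$ (respectively $(h'_1,h'_2)$) lies in the relevant congruence, which is automatic only if a common unit factor $h'_1=h'_2$ can be arranged. In the setting where the proposition is actually put to work---Corollary~\ref{cor:2.8} with $\vrp=\tau_\MS$---this is supplied independently by Proposition~\ref{prop:cong.1}.(ii), which recovers a localizing congruence from its restriction via $\pCong'=\iMS(\pCong'|_R)$. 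The repair for your proof is therefore to drop the $\mfg$-prime route altogether and either carry out the paper's two-step transfer through $\vvrp$ after arranging a common unit factor, or, in the localization case, cite Proposition~\ref{prop:cong.1}.(ii) directly.
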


\begin{proof} %We start by looking at the injectivity of $\avrp$.
Write $X = \SpecA$, $Y = \SpecB$.
Assume that $y', y'' \in Y$ satisfy $\avrp(y') = \avrp(y'')$, namely   $\vvrp(\pCong_{y'}) = \vvrp(\pCong_{y''})$. We claim that  $\pCong_{y'} = \pCong_{y''}$, that is $y' = y''.$ Indeed, given $(f_1', f'_2) \in \pypCong$, there exist $f_1, f_2  \in \sA$ and $h_1', h'_2 \in  \sB^\times$ such that $(f'_1, f'_2) = (\vrp(f_1)   h'_1, \vrp(f_2)   h'_2)$. Then,  $(\vrp(f_1), \vrp(f_2)) = (f'_1  {h'_1}^\inv, f'_2  {h'_2}^\inv) \in \pypCong$, and hence $(f_1, f_2) \in \vvrp(\pypCong) = \vvrp(\pCong_{y''}).$
This implies $(\vrp(f_1), \vrp(f_2)) \in  \pCong_{y''}$ and therefore $(f'_1, f'_2) = (\vrp(f_1)h'_1, \vrp(f_2)h'_2) \in \pCong_{y''}$, so that $\pypCong \subseteq \pCong_{y''}$. By symmetry $\pCong_{y'} \supseteq \pCong_{y''}$, implying that $\pypCong = \pCong_{y''}$. Hence $y' = y''$, and thus $\avrp$ is injective.

Recall that a subset $U \subseteq \im(\avrp)$ is
closed (resp. open) with respect to the subspace topology of $\im(\avrp)$ iff there is a closed (resp. open) set $\htU \subseteq  X$ such that $U = \htU \cap \; \im(\avrp)$.
Since  $\avrp: Y \To  X$  is continuous by Corollary \ref{cor:2.5}, clearly also the induced bijection $\atlvrp: Y \To \im(\avrp)$ is continuous. We next verify
that $\atlvrp$ is a homeomorphism, i.e., that
for any closed subset $V \subset Y$ there is a closed subset $U \subset X$ such
that $V = (\avrp)^\inv (U )$.  If $V = \VV (E')$ for
some subset $E' \subseteq \sB$, adjusting the elements of $E'$ by suitable units in~
${\sB}^\times$, we  may assume that  $E' \subseteq  \vrp(\sA)$.
Then,  taking $E' = \vrp(E) $ for some  $E \subseteq  \sA$,  Proposition \ref{prop:2.3} gives
$$V = \VV (E') = \VV (\vrp(E)) = (\avrp)^\inv (\VV (E)) = (\avrp)^\inv(U ),$$
with  $U = \VV (E)$, as required.
\end{proof}

\begin{cor}\label{cor:2.7} Let $\Cong$ be a \qcong\  on  $\sA$, and let $\pi:  \sA \To \sA/\Cong$ be  the canonical projection. The map
$\api: \Spec(\sA/\Cong) \To   \SpecA$
defines a \textbf{closed immersion} of spectra, i.e., a homeomorphism
$$ \Spec( \sA/\Cong) \ISOTO \VV (\Cong) \subset \Spec( A).$$
\end{cor}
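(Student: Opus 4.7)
The plan is to reduce the statement almost entirely to Proposition~\ref{prop:11}, which already delivers the homeomorphism $\Spec(\sA/\Cong) \Isoto \VV(\Cong)$, and then to observe that $\VV(\Cong)$ is closed in $\Spec(\sA)$ by the very definition of the Zariski topology, so that a closed immersion is obtained.

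First, I would set up $\api$ rigorously. Since $\Cong$ is a \qcong, Lemma~\ref{lem:qsmr} guarantees that $\sA/\Cong$ is a \nusmr, and by Remark~\ref{rem:tcng.units}.(i) the canonical projection $\pi=\pi_\Cong$ is a \qhom. Thus \eqref{eq:specMap} produces a well-defined, continuous map $\api:\Spec(\sA/\Cong)\to\SpecA$, with $\vpi(\pCong')$ a \gprimec\ on $\sA$ for every \gprimec\ $\pCong'$ on $\sA/\Cong$ (Remark~\ref{rem:biject.proj}). Moreover $\vpi(\pCong')\supseteq \Cong$ automatically, because $(f,g)\in\Cong$ forces $\pi(f)=\pi(g)$, and hence $(\pi(f),\pi(g))\in\pCong'$. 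This gives $\im(\api)\subseteq \VV(\Cong)$.

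Next I would establish the bijection $\Spec(\sA/\Cong)\Isoto\VV(\Cong)$. The converse direction uses Remark~\ref{rem:induced.cong}.(i): every \gprimec\ $\pCong$ on $\sA$ with $\pCong\supseteq\Cong$ descends to a \gprimec\ $\pCong/\Cong$ on $\sA/\Cong$ whose pullback under $\pi$ is $\pCong$ itself. Combined with the previous paragraph, this shows that $\api$ is a bijection onto~$\VV(\Cong)$. The assertion that this bijection is a homeomorphism onto $\VV(\Cong)$ with the subspace topology is exactly the content of Proposition~\ref{prop:11}; alternatively, since $\pi$ is surjective, every $\bar f\in\sA/\Cong$ has the form $\pi(f)$ for some $f\in\sA$ (taking the unit $h'=\one$ in the factorisation), so Proposition~\ref{prop:2.6} applies directly and yields the same homeomorphism $\Spec(\sA/\Cong)\Isoto \im(\api)$.

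Finally, I would conclude that $\api$ is a closed immersion: $\VV(\Cong)$ is a closed subset of $\Spec(\sA)$ by Corollary~\ref{cor:1.2} (it is the closed set associated to the congruence $\Cong$, cf.\ \eqref{eqvar.3} and Corollary~\ref{cor:7}), and we have just identified $\im(\api)$ with this closed set via a homeomorphism. There is no real obstacle here beyond bookkeeping; the only delicate ingredient was the two-sided correspondence between \gprimec s on $\sA/\Cong$ and \gprimec s on $\sA$ containing $\Cong$, which is already secured by Remarks~\ref{rem:biject.proj} and \ref{rem:induced.cong}.(i).
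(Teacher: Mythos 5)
Your proof is correct and follows essentially the same route as the paper's one-line proof, which simply cites Proposition~\ref{prop:11} and offers Proposition~\ref{prop:2.6} as an alternative. You spell out the bijection step in more detail — checking $\im(\api)\subseteq\VV(\Cong)$ directly and invoking Remarks~\ref{rem:biject.proj} and \ref{rem:induced.cong}.(i) for the converse — but that bookkeeping is already packaged inside Proposition~\ref{prop:11}, so the substance is identical. (One small slip: the fact that $\pi_\Cong$ is a \qhom\ is recorded near \eqref{eq:gker2}, not in Remark~\ref{rem:tcng.units}.(i), which only establishes that $\pi_\Cong$ is a local homomorphism.)
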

\begin{proof} Proved in Proposition \ref{prop:11}.  Alternatively,  $
\im(\api) = \VV (\Cong)$ as follows from Proposition \ref{prop:2.6}.
\end{proof}

Recall that the tangible cluster and the ghost cluster of a \ccong\ on $\sA$ are disjoint, but need not be the complement of each other, and so does their projections. Recall also that
an open set $\DDf$ in $\SpecA$ may contain points over which $f$ is not tangibly evaluated.
To cope with this discrepancy,  we designate  several special subsets within open sets of  $\SpecA$, as described below.

The  \textbf{tangible support} of an element $f \in \sA$ is defined as
\begin{equation}\label{eq:CC.f}
 \EEf : = \big \{ \pCong \in \SpecA   \cnd f \in  \iTcl(\pCong) \big \} \quad  \subseteq \DDf.
\end{equation}
That is,  $\EEf$ is the subset of $\DDf$ on which, as a function \eqref{eq:f.function}, $f$ takes  tangible values, cf. \S\ref{ssec:ver}.
$\EEf$ is nonempty for every  \tprs\ $f \in \sAptngs$, by Lemma ~\ref{lem:f.prs.in.prime}, where  $\EE(f)= \SpecA$ for every $f \in \AX$. % when $f \in  \sAptngsX := \sAX \cap .
The latter also holds  for all \tualt\ elements $f \in \sAual$ (Definition \ref{def:t.strict}), for which  $\DD(f) = \EE(f)$.
On the other hand, by definition,  $\EEf= \emptyset$  for any $f \notin \sAptngs$, since every $\pCong$ is an \lcong.
 Furthermore, $\DDf \cap \DDg = \DD(fg)$ by Corollary ~\ref{cor:1.2}, and hence
  \begin{equation}\label{eq:EE.intersection}
  \EEf \cap \EEg = \EE(fg)
  \end{equation}
for any $f,g \in \sA$.
Tangible supports are
   generalized  to (nonempty) subsets $E \subseteq  \sA$ by defining
\begin{equation}\label{eq:CC.E} \EE(E) :=  \big \{ \pCong \in \SpecA   \cnd E \subseteq   \iTcl(\pCong)  \big \}
   = \bigcap_{h \in E} \EE(h).
\end{equation}
For these tangible supports, Corollary \ref{cor:2.5} specializes further.
\begin{prop}\label{prop:2.3.b}  Let $\vrp: \sA \To \sB$ be a \qhom, and let  $\avrp: \SpecB  \To \SpecA$ be the  associated map of spectra. Then,
  $\iavrp(\EE(E)) = \EE(\vrp(E))$, for any subset $E \subseteq \sA,$
\end{prop}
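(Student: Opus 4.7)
The plan is to mimic the proof of Proposition~\ref{prop:2.3}.(i), but with the tangible projection $\iTcl(\udscr)$ replacing the ghost projection $\iGcl(\udscr)$, and with the functional interpretation of Lemma~\ref{lem:2.2} supplying the key bridge. First I would unfold both sides: $y \in \iavrp(\EE(E))$ is equivalent to $\avrp(y) \in \EE(E)$, i.e.\ to $E \subseteq \iTcl(\vvrp(\ypCong))$, whereas $y \in \EE(\vrp(E))$ is equivalent to $\vrp(E) \subseteq \iTcl(\ypCong)$. Since $\EE(E) = \bigcap_{h \in E} \EE(h)$, the desired set equality reduces to the element-wise claim that, for every $f \in \sA$,
\begin{equation*}
f \in \iTcl\bigl(\vvrp(\ypCong)\bigr) \dss \iff \vrp(f) \in \iTcl(\ypCong).
\end{equation*}

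To establish this equivalence I would invoke the diagram of Lemma~\ref{lem:2.2}, which provides the induced injective \qhom\ $\vrp_x : \sA/\vvrp(\ypCong) \Into \sB/\ypCong$ sending $[a] \mapsto [\vrp(a)]$. By construction of the pullback congruence $\vvrp(\ypCong)$, injectivity of $\vrp_x$ yields the identification of equivalence classes $[f]_{\vvrp(\ypCong)} = \ivrp\bigl([\vrp(f)]_{\ypCong}\bigr)$. For the direction ``$\Leftarrow$,'' I would use the defining \qhom\ condition $\ivrp(\tT_\sB) \subseteq \tT_\sA$: if $\vrp(f) \in \tT_\sB$ then $f \in \tT_\sA$, and if $[\vrp(f)]_{\ypCong} \subseteq \tT_\sB$ then $[f]_{\vvrp(\ypCong)} = \ivrp([\vrp(f)]_{\ypCong}) \subseteq \ivrp(\tT_\sB) \subseteq \tT_\sA$, showing $f \in \iTcl(\vvrp(\ypCong))$. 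The direction ``$\Rightarrow$'' is then read off in $\sB/\ypCong$ via $\vrp_x$: because $\vrp_x$ is injective, the tangibility of the class $[f]$ as a subset of $\sA$ is inherited by its $\vrp_x$-image $[\vrp(f)]$ in $\sB/\ypCong$, which, combined with the fact that $\ypCong$ is a \gprimec\ and hence has a well-defined tangible projection $\iTcl(\ypCong) = \iPcl(\ypCong)$ (Remark~\ref{rem:tcng.units}), forces $\vrp(f) \in \iTcl(\ypCong)$.

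The main obstacle will be making the ``$\Rightarrow$'' direction fully rigorous, since the \qhom\ property only gives preimage control of tangibles ($\ivrp(\tT_\sB) \subseteq \tT_\sA$) and not image control; one must use the precise construction of $\vvrp(\ypCong)$ together with the injectivity of $\vrp_x$ to rule out the a priori possibility that $[\vrp(f)]_{\ypCong}$ contains a non-tangible element of $\sB$ lying outside $\vrp(\sA)$ while $[f]_{\vvrp(\ypCong)}$ remains tangible. Once this is cleared, intersecting over $f \in E$ collapses the element-wise equivalence to the asserted set equality $\iavrp(\EE(E)) = \EE(\vrp(E))$, compatibly with $\EE(E) = \bigcap_{h\in E} \EE(h)$ and the identity $\iavrp(\EEf) = \EE(\vrp(f))$ for a single element.
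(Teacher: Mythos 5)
Your overall route is the same as the paper's: unfold both sides to the element-wise claim $f \in \iTcl(\vvrp(\ypCong)) \Leftrightarrow \vrp(f) \in \iTcl(\ypCong)$ and pass through the functional interpretation supplied by Lemma~\ref{lem:2.2}; the paper's own proof is precisely the tangible analogue of its proof of Proposition~\ref{prop:2.3}.(i), closing with ``since $\vrp$ is a \qhom.'' Your treatment of the direction $\vrp(f) \in \iTcl(\ypCong) \Rightarrow f \in \iTcl(\vvrp(\ypCong))$ via $\ivrp(\tT_\sB) \subseteq \tT_\sA$ is exactly right: the class $[f]_{\vvrp(\ypCong)}$ equals $\ivrp\big([\vrp(f)]_{\ypCong}\big)$, so if the latter class consists of tangibles of $\sB$, its $\vrp$-preimage consists of tangibles of $\sA$.

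The step that would fail is the tool you offer for the converse direction. Injectivity of $\vrp_x : \sA/\vvrp(\ypCong) \To \sB/\ypCong$ only says that distinct classes have distinct images; it gives no control over which elements of $\sB$ lie in the class $[\vrp(f)]_{\ypCong}$, and in particular cannot rule out a non-tangible $s \in \sB$ (possibly outside $\im(\vrp)$, or even $s = \vrp(f)$ itself when $\vrp$ is not tangibly injective) with $s \pcng \vrp(f)$, while every $t \in \sA$ satisfying $\vrp(t) \pcng \vrp(f)$ is tangible. The hypothesis $f \in \iTcl(\vvrp(\ypCong))$ is the statement $\ivrp\big([\vrp(f)]_{\ypCong}\big) \subseteq \tT_\sA$, whereas the conclusion $\vrp(f) \in \iTcl(\ypCong)$ asserts $[\vrp(f)]_{\ypCong} \subseteq \tT_\sB$; the \qhom\ axiom transfers tangibility only against the direction of $\vrp$, so some additional input is needed here, and injectivity of $\vrp_x$ is not it. You correctly isolated this as the main obstacle, and in fairness the paper's own proof is no more explicit at this point---it declares the two tangibility statements equivalent by Lemma~\ref{lem:2.2} ``since $\vrp$ is a \qhom''---so you have reproduced the argument at the paper's level of detail, but the specific mechanism you name would not survive being written out.
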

\begin{proof}  The relation $y \in \iavrp(\EE(E))$  for
$y \in \SpecB$  is equivalent to $\avrp(y)  \in \EE(E)$, and hence
to $E \subseteq \iTcl(\avrp(y))$. Therefore,  $f \in \iTcl(\avrp(y))$ for all $f \in E$. This   means that
 $f(\avrp(y)) \in (\sA/\avrp(y))|_\tng$ for all
$f \in E$, which by Lemma  \ref{lem:2.2}  is equivalent
to $\vrp(f)(y) \in (\sA/\avrp(y))|_\tng$. Thus, $y \in \iavrp(\EE(E))$ is equivalent to $y \in \EE(\vrp(E))$, since $\vrp$ is a \qhom.
\end{proof}

 Given an open set $\DDf$ and a tangible multiplicative monoid $\MS \subseteq \sAtng$  of $\sA$ (accordingly  $\MS$ must consist of \tprs\ elements and thus $\MS \subseteq \sAptngs$),  we define the \textbf{\rsset}
\begin{equation}\label{eq:DD.C}
  \DD(\MS , f) := \big \{ \pCong \in \DDf  \cnd \MS \ssetq  \iTcl(\pCong) \big  \} \quad  \subseteq \DDf.
\end{equation}
 In other words,  $\DD(\MS , f)$ can be written in terms of %\eqref{eq:CC.f} and
 \eqref{eq:CC.E} as
\begin{equation}\label{eq:DD.C.2}
 \DD(\MS , f) =\DDf \cap \EE(\MS) = \DDf \cap \bigcap_{h \in \MS} \EE(h), \end{equation}
i.e.,  the restriction of $\DDf$ to the tangible support of $\MS$, which could be empty.
In general, $f$ need not be contained in $\MS$ nor in the tangible projections  $\iTcl(\pCong)$ of   $ \pCong \in \DD(\MS , f)$.
%That is $\DD(\MS , f)$ is a conditional restriction of $\DDf$ with respect to  $\MS$ on which $f$ takes only tangible values.
%For example, restricting to \dtrmc s, we have $\DD(f)|_\dtr : = \DDf \cap \DSpec(\sA) \ssetq \DD(\MS,f)$.
\begin{proper}\label{proper:CC} For a \rsset\  $\DD(\MS,f)$ we have the following properties.
\begin{enumerate}\ealph
\item $\DD(\MS,f)  = \DDf$ for $\MS  \subset \sAual$.

\item $\DD(\MS,f)  = \EE(\MS)$ for  any $\MS$ and $f \in \sAual $.

\item $\DD(\MS,f)  = \EE(\MS) = \DDf = \SpecA$ for  $f\in \sAual$ and  $\MS \subset \sAual$.

  \item If  $f \in \MS$, then  $\DD(\MS,f) = \EE(\MS)$.

  %\item If  $f \in \sAptngsX$ is a unit, then  $\DD(\MS,f) = \EE(\MS)$.
  %\item If  $f \in \sAX$ and $\MS \subseteq  \sAX$, then  $\DD(\MS,f) = \EEf = \DDf  = \SpecA$.

% \item If $\MS \subseteq  \sAX$, then $\DD(\MS,f) = \DDf$, independently on the choice of $\MS$ in $\sAX$.

  \item $\DD(\MS, f) = \DD(\MS',f)$ if the generating sets of $\MS$ and $\MS'$ are different by units.

%\item If $\MS = \sAtng$,
\end{enumerate}\end{proper}

Although, in general, $f$ is independent on $\MS$, we still have the following observation.

\begin{lem}\label{lem:D(C,f)} $\DD(\MS , f)$ is nonempty for any non-ghostpotent  $f\notin \gprad(\sA)$ and $\MS \subseteq \sAptngs$.
\end{lem}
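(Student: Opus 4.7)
The plan is to reduce the statement to Corollary~\ref{cor:a.vs.prime} via the tangible localization machinery developed in \S\ref{ssec:tngLocal}. Since $\MS\subseteq\sAptngs$ is a tangible multiplicative submonoid of $\sA$, the localization $\sA_\MS$ is a well-defined \nusmr\ and the canonical injection $\tau_\MS\colon \sA\hookrightarrow \sA_\MS$ maps every $c\in\MS$ to a tangible unit $\tau_\MS(c)\in(\sA_\MS)^\times$ by Remark~\ref{rem:R2RC}.(ii). The strategy is to produce a \gprimec\ on $\sA_\MS$ whose restriction to $\sA$, in the sense of Proposition~\ref{prop:cong.2}, lies in $\DD(\MS,f)$.

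The key step is to verify that $f/\one\in\sA_\MS$ is not ghostpotent. Suppose to the contrary that $(f/\one)^n=f^n/\one\in(\sA_\MS)|_\ghs$ for some $n\geq 1$; by Remark~\ref{rem:R2RC}.(i) this means $f^nc\in\tGz$ for some $c\in\MS$. Since $c\in\sAptngs$, every power $c^k$ is \tprs\ (Axiom \PSRa), so the ghost relation $f^nc\in\tGz$ combined with Axiom \PSRb\ and the structural observations of Lemma~\ref{lem:gdiv.in.tame} would force $f$ itself to be ghostpotent, contradicting $f\notin\gprad(\sA)$. Hence $f/\one$ is non-ghostpotent in $\sA_\MS$.

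With this in hand, Corollary~\ref{cor:a.vs.prime} applied to $f/\one\in\sA_\MS$ supplies a \gprimec\ $\pCong'\in\Spec(\sA_\MS)$ with $f/\one\notin\iGcl(\pCong')$. By Proposition~\ref{prop:cong.2}.(ii), the restriction $\pCong:=\pCong'|_{\sA}$ is a \gprimec\ on $\sA$ satisfying $\MS\subseteq\iTcl(\pCong)$, and since $\pCong'=\iMS\pCong$ (same proposition), the condition $f/\one\notin\iGcl(\pCong')$ translates back to $f\notin\iGcl(\pCong)$, i.e., $\pCong\in\DDf$. Combining these two facts via~\eqref{eq:DD.C.2} yields $\pCong\in\DDf\cap\EE(\MS)=\DD(\MS,f)$, proving the set is nonempty.

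The main obstacle is precisely the preservation of non-ghostpotence under tangible localization at $\MS$: while the final reduction to Corollary~\ref{cor:a.vs.prime} followed by restriction via Proposition~\ref{prop:cong.2} is formally routine, showing that no new ghostpotents are introduced crucially uses the hypothesis $\MS\subseteq\sAptngs$, which through Axioms \PSRa\ and \PSRb\ blocks the mechanism by which inverting a tangible element could artificially ghostify a power of $f$.
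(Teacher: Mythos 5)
Your route is exactly the paper's: localize at $\MS$, invoke Corollary~\ref{cor:a.vs.prime} in $\sA_\MS$ to get a \gprimec\ $\pCong'$ avoiding $f/\one$, and pull it back to $\sA$ via Proposition~\ref{prop:cong.2}.(ii) so that $\MS\subseteq\iTcl(\pCong)$ and $f\notin\iGcl(\pCong)$. The paper disposes of the step you flag as the main obstacle in a single clause ("in which $\frac f\one$ is not a ghost"), so in structure the two proofs coincide.

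However, the one place where you try to do more than the paper --- actually proving that $f/\one$ is not ghostpotent in $\sA_\MS$ --- is asserted rather than proved, and the tools you cite do not deliver it. You reduce to showing that $f^nc\in\tGz$ for some $c\in\MS$ is impossible, and then say that \PSRa, \PSRb\ and Lemma~\ref{lem:gdiv.in.tame} "would force" $f$ to be ghostpotent. But \PSRb\ is a condition on \emph{sums}, not products, and Lemma~\ref{lem:gdiv.in.tame} only tells you that ghost divisors are plentiful in a \tame\ \nusmr. The relation $f^nc\in\tGz$ with $c$ \tprs\ implies $f^n\in\tGz$ only when $c$ is \emph{not} a ghost divisor, and the paper explicitly allows $\MS$ to contain ghost divisors (see the remark following Definition~\ref{def:tangible.localization}). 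If some \tprs\ ghost divisor $c\in\MS$ satisfies $f^nc\in\tGz$, then $f^n/\one=\frac{f^nc}{c}$ really is ghost in $\sA_\MS$ by Remark~\ref{rem:R2RC}.(i), and moreover any \gprimec\ $\pCong$ with $c\in\iTcl(\pCong)$ must ghostify $f^n$, hence $f$ --- so $\DD(\MS,f)$ would be empty. In other words, your "key step" is not a routine consequence of the axioms; it requires either an additional hypothesis on $\MS$ (e.g.\ $\MS\cap\gDiv(\sA)=\emptyset$, as in the monoid $\STf\subseteq\sAptngM$ used later) or a separate argument. This is a gap your proposal shares with the paper's own proof, but your write-up presents it as resolved when it is not.
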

\begin{proof}
Let $\sA_\MS$ be the tangible localization of $\sA$ by $\MS$, in which $\frac{f}{\one}$ is not a ghost. By Corollary~ \ref{cor:a.vs.prime}  there exists a \gprimec \ $\pCong'$ on $\sA_\MS$ for  $\frac{f}{\one} \notin \iGcl(\pCong')$.
 Then,   by Proposition ~\ref{prop:cong.2}.(ii), the restriction of~ $\pCong'$ to $\sA$ gives a \gprimec\ $\pCong$ with  $\MS \subseteq \iTcl(\pCong)$ and $f \notin \iGcl(\pCong)$. Hence, $\pCong$ belongs to $\DD(\MS,f)$ by \eqref{eq:DD.C}, and $\DD(\MS,f)$ is nonempty.
\end{proof}

  Using the \rsset s \eqref{eq:DD.C}, we have the next analogy to Corollary \ref{cor:2.7}, now referring to   localization.
\begin{cor}\label{cor:2.8} Let $\MS \subseteq \sAptngs$ be a tangible multiplicative submonoid of a \nusmr\ $\sA$. The canonical \qhom\ $\tau : \sA \To \sA_\MS$ induces a \hom
\begin{equation}\label{eq:open.imer}
 \atau: \Spec(\sA_\MS) \ISOTO  \bigcap_{
 h \in \MS }\DD(\MS,h) = \bigcap_{
 h \in \MS }\EEh = \EE(\MS).
\end{equation}
 Thus,  $\atau: \Spec(\sA_\MS) \Into  \bigcap_{
 h \in \MS }\DDh$ is an injective homeomorphism.\footnote{This is the place where our theory slightly deviates from classical theory in which this homeomorphism is bijective. However, later we are mainly interested in the converse homomorphism, and show that it is  a surjective homeomorphism. }
  \end{cor}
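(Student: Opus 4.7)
My plan is to assemble the statement from three pieces that are already in place: the bijection of Theorem~\ref{thm:prime.bij} identifying $\Spec(\sA_\MS)$ with a subset of $\Spec(\sA)$, the homeomorphism criterion of Proposition~\ref{prop:2.6}, and straightforward set-theoretic manipulations using the definitions of $\EE(h)$, $\EE(\MS)$, and $\DD(\MS,h)$ from \eqref{eq:CC.f}--\eqref{eq:DD.C.2}.

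First, I would verify the chain of equalities among the three intersections. By \eqref{eq:DD.C.2} we have $\DD(\MS,h)=\DDh\cap\EE(\MS)$ for every $h\in\MS$, so
\[
\bigcap_{h\in\MS}\DD(\MS,h)\;=\;\EE(\MS)\cap\bigcap_{h\in\MS}\DDh.
\]
For $h\in\MS\subseteq\sAptngs$, any $\pCong$ with $h\in\iTcl(\pCong)$ automatically satisfies $h\notin\iGcl(\pCong)$ (tangible and ghost clusters of an \lcong\ are disjoint by Definition~\ref{def:nucong}), whence $\EE(h)\subseteq\DDh$. Taking intersections over $h\in\MS$ gives $\EE(\MS)\subseteq\bigcap_{h\in\MS}\DDh$, so the intersection with $\EE(\MS)$ on the right simplifies, yielding $\bigcap_{h\in\MS}\DD(\MS,h)=\EE(\MS)$. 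The equality $\EE(\MS)=\bigcap_{h\in\MS}\EE(h)$ is the very definition \eqref{eq:CC.E}.

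Next, I would identify the image of $\atau$. By Theorem~\ref{thm:prime.bij}, the restriction map gives a bijection
\[
\atau:\Spec(\sA_\MS)\ISOTO\{\pCong\in\Spec(\sA)\mid\MS\subseteq\iTcl(\pCong)\}\;=\;\EE(\MS),
\]
where the second equality is merely \eqref{eq:CC.E}. Theorem~\ref{thm:prime.bij} also records that this bijection is inclusion-preserving in both directions.

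To upgrade the bijection to a homeomorphism onto its image, I would apply Proposition~\ref{prop:2.6} to the canonical \qhom\ $\tau:\sA\To\sA_\MS$. The hypothesis is easily verified: every element of $\sA_\MS$ has the form $\tfrac{f}{c}$ with $f\in\sA$, $c\in\MS$, and we may write $\tfrac{f}{c}=\tau(f)\cdot\tfrac{\one}{c}$, where $\tfrac{\one}{c}\in(\sA_\MS)^\times$ since its inverse is $\tau(c)=\tfrac{c}{\one}$ (this uses Remark~\ref{rem:R2RC}.(ii) that $\tau(\MS)\subseteq(\sA_\MS)^\times$). Proposition~\ref{prop:2.6} then yields the homeomorphism onto $\im(\atau)=\EE(\MS)$, equipped with the subspace topology from $\Spec(\sA)$. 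The final assertion that $\atau:\Spec(\sA_\MS)\Into\bigcap_{h\in\MS}\DDh$ is an injective homeomorphism follows from the inclusion $\EE(\MS)\subseteq\bigcap_{h\in\MS}\DDh$ established in the first step.

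The only delicate point—and the main technical wrinkle worth flagging—is the distinction between this statement and the classical ring-theoretic analogue: because elements of $\MS$ are only required to lie in $\sAptngs$ and not to be units, the inclusion $\EE(\MS)\subseteq\bigcap_{h\in\MS}\DDh$ need not be an equality, and so $\atau$ lands only on the smaller set $\EE(\MS)$ rather than on all of $\bigcap_{h\in\MS}\DDh$. This is precisely the deviation from the classical picture noted in the footnote, but it is harmless for the statement of the corollary, since the homeomorphism is asserted onto $\EE(\MS)$.
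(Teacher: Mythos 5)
Your proof is correct and follows essentially the same route as the paper: it invokes Proposition~\ref{prop:2.6} for the homeomorphism onto the image, identifies that image via the localization--spectrum correspondence (the paper cites Propositions~\ref{prop:cong.1} and~\ref{prop:cong.2} directly, while you cite Theorem~\ref{thm:prime.bij}, which packages the same content), and unwinds the definitions \eqref{eq:CC.f}--\eqref{eq:DD.C.2} for the set equalities. Your explicit verification of the hypothesis of Proposition~\ref{prop:2.6} for $\tau$ is a detail the paper leaves implicit, but the argument is the same.
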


\begin{proof} Using Proposition \ref{prop:2.6}, we only need to determine  the image $\im( \atau)$ of $\atau$. Recall from Propositions~ \ref{prop:cong.1} and \ref{prop:cong.2} that taking inverse images with respect to
$\tau : \sA \To \sA_\MS$ yields a bijective correspondence between all \gprimec s on
$\sA_\MS$ and the \gprimec s $\pCong$ on  $\sA$ satisfying $\MS \subseteq  \iTcl(\pCong) $. But, for a point $x \in \SpecA$, we
have $\MS \subseteq  \iTcl(\xpCong) $  iff   $x \in \DD(\MS,h)$ for all $h \in \MS$,  so that
$\im (\atau) =   \bigcap_{
 h \in \MS }\DD(\MS,h) $. The latter equalities follow from \eqref{eq:CC.E} and \eqref{eq:DD.C.2}.
\end{proof}

If $\im(\atau)$ is open in $\SpecA$, i.e., when $\im(\atau) = \bigcap_{h \in \MS} \DDh$, then $\atau$ is called an \textbf{open immersion} of spectra.
For example, $\atau$ is an open immersion
for a finitely generated tangible monoid  $\MS \subseteq \sAual$  of \tualt\ elements, say by   $h_1, \dots , h_\ell \in  \sAX,$
since  $\bigcap_{ h \in \MS} \DD(\MS,h) = \bigcap_{ h \in \MS} \DDh = \DD(h_1 \cdots h_\ell)$.

 \begin{rem}
 The occurrence of open immersions of \gprime\ spectra might be rare, as it appears
 when $ \DD(\MS,h) = \DDh$ for all $h \in \MS$, e.g., for units of  $\sA$, cf. Properties \ref{proper:CC}.(b).   An open subset $\DDh$ may contain \gprime s over which $h$ does  not possesses  tangibles, even if $h$ is \tprs.
 In contrary,  localization is  performed only upon \tprs s.
 As a consequence of that
 a \gprimec \ in $\bigcap_{
  h\in \MS }\DDh$ which includes equivalences of $h \in \MS$ to non-tangibles (or non-\tprs s) does not necessarily have a pre-image in $\Spec(\sA_\MS)$ under the map $\atau$ in  \eqref{eq:open.imer}.

 From
 Remark \ref{rem:tcng.units} it follows that \eqref{eq:open.imer} is an open immersion for any subgroup $\MS \subseteq \sAX \subseteq  \sAual$  of units, since any \gprimec\ $\pCong$ is an \lcong,  in which \tualt s are congruent only to \tprs\ elements;  in particular $ \sAual \subseteq \iTcl(\pCong)$.
 \end{rem}

In the extreme  case that $\MS = \tT$,  where  $\tT := \sAtng$ is a monoid, then the \nusmr\ $\sA$ is  \tcls, $\sA_\tT = Q(\sA)$, and
%$$\atau: \Spec(Q(\sA)) \INTO \bigcap_{
% h  \in \tT }\DDh = \big \{ \pCong \in \SpecA \cnd \iGcl(\pCong) = \sAghs \big \}, $$
% more precisely
$$\atau: \Spec(Q(\sA)) \ISOTO \bigcap_{
 h  \in \tT }\DD(\tT, h) = \big \{ \pCong \in \SpecA \cnd \iTcl(\pCong) = \sAtng \big \}. $$
We denote the subset $\big \{ \pCong \in \SpecA \cnd \iTcl(\pCong) = \sAtng \big \}$ by $\TSpec(\sA)$.

%\subsection{Functional recasting of  \nusmr\ elements} \sSkip

\section{Sheaves}\label{sec:6} We recall the classical setup of sheaves over a topological space~$X$, applied here to \nusmr s.  Later, some of the below objects are slightly generalized, making them applicable for our  framework. In this framework standard objects may have different interpretations. Yet, our forthcoming  abstraction well-captures the familiar notions, as described next.
\begin{defn}\label{def:sheaf}
A \textbf{presheaf} $\scF$ of \nusmr s on a topological space $X$ consists of the
data ($U,V,W \subseteq X$ are open sets):
\begin{enumerate}\ealph
  \item A \nusmr\ $\scF(U)$ for every $U$;
  \item   A  restriction map $\res^V_U:  \scF(V ) \To \scF(U)$
   for each pair $U \subset  V$, such that $\res^U_U = \id_{\scF(U)}$ for any $U$, and  $\res^W_U = \res^V_U
\circ  \res^W_V$  for any  $U \subseteq V \subseteq W$.
\end{enumerate}
The elements $\scn$ of $\scF(U)$ are called the  \textbf{sections} of $\scF$ over $U$.

A presheaf $\scF$ is a \textbf{sheaf}, if for any
coverings $U = \bigcup_iU_i$ by open sets $U_i \subset X$ the following hold:
\begin{enumerate}\ealph
  \item[(c)]
If  $\scn, \scn' \in \scF(U)$ with $\scn|_{U_i} = \scn' |_{U_i}$ for all $i$,  then $\scn = \scn'$.

\item[(d)]   If  $\scn_i|_{U_i \cap U_j} = \scn_j |_{U_i \cap U_j}$ for any  $\scn_i \in \scF(U_i)$, $\scn_j \in \scF(U_j)$, then  there
exists  $\scn \in \scF(U)$ such that $\scn|_{U_i} = \scn_i$ (which  is unique by (c)).
\end{enumerate}
 A \textbf{morphism} of presheaves $\phi: \scF  \To  \scG$ is a
family of maps $\phi_U :\scF(U) \To  \scG(U)$, for all $U \subseteq X$ open, such that
$ \res^V_U \circ \phi_V = \phi_U \circ \res^V_U$
for all pairs $U \subset  V$ from  $X$.
\end{defn}

The  \textbf{stalk} of  a sheaf $\scF$ at a point $x \in X$  is the inductive limit (i.e., colimit)
\begin{equation}\label{eq:stalk}\scF_x := \ilim{U \ni x} \scF(U).
\end{equation}
That is, $\scF_x$ is the set of equivalence classes of pairs $(U, \scn)$, where $U$ is an
open neighborhood of $x$ and $\scn \in \scF(U)$, such that $(U_1, \scn _1)$ and $(U_2, \scn_2)$ are
equivalent if $\scn_1|_V = \scn_2|_V$ for some open neighborhood $V \subseteq U_1 \cap  U_2 $ of $x$.
For each open neighborhood $U$ of $x$ there is the  canonical map
 $$\scF(U) \TO \scF_x, \qquad \scn \longmapsto  \scn_x,$$
sending  $\scn \in \scF(U)$ to the class $\scn_x$ of $(U, \scn)$ in $\scF_x$, called the \textbf{germ} of $\scn$  at  $x$.  A standard proof shows that
 the map
$$ \scF(U) \TO \prod_{x \in U} \scF_x, \qquad f \Mto (f_x)_{x\in U}, $$
is injective for any open set $U \subset  X$.

A morphism  $\vrp : \scF \To  \scG$ of sheaves on $X$ induces a map of the stalks at  $x$
$$\vrp_x := \ilim{U \ni x} \vrp_U: \scF_x \TO \scG_x,$$
providing  a functor $\scF \To \scF_x$ from the category of sheaves on $X$
to the category of sets.

Given a continuous map $\ff : X \To  Y$ of topological spaces and a sheaf $\scF$ on
 $X$,  the \textbf{direct image} of~$\scF$ is the sheaf $\dff \scF$ on $Y$, defined for open subsets $V \subset Y$ by
$$(\dff \scF)(V ) = \scF(\iff(V )), % \qquad  \text{for $V \subset  Y$ open},
$$
whose restriction maps are inherited from $\scF$.  For a morphism  $\psi: \scF \To \scG$ of sheaves, the family of maps
$\dff(\psi)_V := \psi_{\iff(V )}$,  with  $V \subset Y$ open,  is a morphism $\dff(\psi) : \dff \scF \To \dff \scG$. Therefore, $\dff$ is
a functor from the category of sheaves on $X$ to the category of sheaves on $Y$ that admits composition $\psi_*(\dff \scF) = (\psi \circ \ff)_* \scF$ for a continuous map $\psi: Y \To Z$.

\pSkip

Henceforth, unless otherwise is indicated,  \semph{we assume that $\sA$ is a \tame\ \nusmr}, i.e.,
every $f \in \sA \sm (\sAtng \cup \sAghs)$ can be written as $f = p + eq$ for some $p,q \in \sAtng$ (Definition \ref{def:nusemiring}). This is a very mild assumption,  holds in all of our examples, especially in Examples~ \ref{exmp:grp.fld} and ~ \ref{exmp:extended}.
Recall from Lemma ~\ref{lem:gdiv.in.tame} that, since $\sA$ is \tame, $\sAptngsD$ is a monoid of \tprs\ elements. We denote this monoid by $\sAptngM$, for short. Recall from \S\ref{ssec:nudomain} that $\sAX \subseteq \sAptngM$.

\subsection{Functoriality  towards sheaves}\label{ssec:towards.sheaves}
\sSkip

Sheaves have characteristic functorial properties which  are   shown later to fit well  our constructions.
We begin with a \tprs\ element $f \in \sAptngs$,  and  write
\begin{equation}\label{eq:MSf}
  \MSf : = \{ \one, f, f^2, \dots \}
\end{equation} for  the tangible monoid generated by $f$.
For an element $f \notin \sAptngs$, i.e., is not \tprs, we formally set $\MSf = \{ \one \}$.
 For this  monoid we have the inclusion
$\MSf \subseteq \iTcl(\pCong)$ in every $ \pCong \in  \EEf$ where  $\EEf = \DD(\MSf, f)$ if $f \in\MSf $, cf. \eqref{eq:CC.f} and  \eqref{eq:DD.C}.
We write $\sA_f$ for the tangible localization $\sA_{\MSf}$ of~ $\sA$ by
~$\MSf$, cf. \S\ref{ssec:tngLocal}.

\begin{rem}\label{rem:Dfg.tng} Given $f \in \sAptngs$, we  assign the set $\DDf \sset \SpecA$ with the localization $\sA_f$ of $\sA$ by~ $\MSf$.
Then, for $h \in \sAptngs$,  the inclusion $\DDf \subseteq  \DDh$  gives rise to a \nusmr\ \qhom\
$\tau^h_f: \sA_h \To ~\sA_f$. Indeed, $\DDf \subseteq  \DDh$ is equivalent to $\VV (f) \supseteq \VV (h)$  and to $\srad(f) \subseteq \srad(h)$ by  Lemma~\ref{lem:D.contain}.(iv).
% Remark \ref{rem:6.6.2}.(i) and Proposition \ref{prop:6.1.5}.(i).
Furthermore, $h \in \sAptngs$, and hence,  by Lemma ~ \ref{lem:rad.prop.1},  $f^n = hg$ for some $n \in \Net$ and $g \in \sA$.

% Finally, we conclude that $h$ is \tprs\ by Corollary \ref{cor:rad.incl.2}, so that the localization  by $h$ is feasible. \Com{true for \tame}

The canonical map $\tau_f : \sA \To \sA_f$ shows that $\tau_f (f)^n = \tau_f (h)\tau_f (g)$, and hence $\tau_f (h)$ is a unit in~ $\sA_f$.  Thus, by the universal property of  localization (Proposition \ref{prop:local.univ}), the \qhom\ $\tau_f: \sA \To \sA_f$ factorizes  uniquely as
$\tau_f =  \tau_h \circ \tau^h_f$ through  the \qhom s   $\tau_h : \sA \To  \sA_h$  and $\tau^h_f : \sA_h \To \sA_f$, i.e., the diagram
$$\xymatrix{
\sA \ar@{->}[d]_{\tau_h } \ar@{->}[rr]^{\tau_f} && \sA_f  \\
 \sA_h \ar@{-->}[rru]_{\tau_f^h} &&  \\
}
$$ commutes.
Then, in functorial sense,  $\tau^h_f : \sA_h \To \sA_f$  is assigned
to the inclusion $\DDf \subseteq
\DDh$.
Also, for every  $f \in \sAptngs$  the map  $\tau^f_f : \sA_f \To \sA_f$ is the
identity map, and for any inclusions $\DD(f) \subset  \DD(g) \subset  \DD(h)$,
the map composition
 $\xymatrix{
\sA_h \ar@{->}[r]^{\tau^h_g } & \sA_g \ar@{->}[r]^{\tau_f^g } & \sA_f   }
$
coincides with $\tau^h_f : \sA_h \To \sA_f$.
\end{rem}

Besides basic open sets $\DDf$ with $f$ \tprs, our topological space also includes sets $\DDf$ which are determined by elements $f \notin \sAptngs$. Each such $\DDf$  should be allocated with a  \nusmr. To cope  with this type of open sets, in a way that is compatible with the case of  $f \in \sAptngs$, for every $f \in \sA$ we define the subset
\begin{equation}\label{eq:S(f).str}
\STs(f) :=  \{h \in \sAptngM  \cnd \DDf \subseteq  \DD(h)\} \quad  \subseteq \sAptngM,
\end{equation}
which consists  of \tprs\ elements which  are not ghost divisors. We then let
\begin{equation}\label{eq:S(f)}
\STf := \gen{\MSf, \STs(f) } \quad  \subseteq \sAptngs,
\end{equation}
be the set multiplicatively  generated by $\MSf$ and  $\STs(f)$.
Accordingly, $\STsf = \STf$  when $f \notin \sAptngs$, or when $f \in \sAptngM$.

\begin{lem}\label{lem:STf.mon} For any $f \in \sA$, the subset
  $\STf$ is a tangible multiplicative submonoid of $\sA$
\end{lem}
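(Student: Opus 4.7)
My plan is to verify the three defining properties of a tangible multiplicative submonoid: closure, containment of $\one$, and tangibility of every element. Closure and the presence of $\one$ follow almost tautologically from the construction $\STf := \gen{\MSf, \STs(f)}$ as a generated submonoid, so the real content is tangibility, i.e., the inclusion $\STf \subseteq \sAptngs$.

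First, I would record that $\STs(f)$ is itself a multiplicative submonoid of $\sAptngM$. Indeed, $\one \in \sAptngM$ and $\DDf \subseteq \SpecA = \DD(\one)$, so $\one \in \STs(f)$. For $h_1,h_2 \in \STs(f)$, the product $h_1h_2$ lies in $\sAptngM$ by Lemma~\ref{lem:gdiv.in.tame}(ii) (applied in the \tame\ setting), while Corollary~\ref{cor:1.2} gives $\DD(h_1h_2)=\DD(h_1)\cap\DD(h_2)\supseteq \DDf$; hence $h_1h_2\in\STs(f)$. So $\STs(f)\subseteq\sAptngM\subseteq\sAptngs$ is already a tangible monoid.

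Next I would split the analysis of $\MSf$ and of the mixed products. If $f\notin\sAptngs$, then by convention $\MSf=\{\one\}$ and so $\STf=\STs(f)$, which we just handled. If $f\in\sAptngM$ (i.e., $f$ is \tprs\ and not a ghost divisor), then by Axiom~\PSRa\ and Remark~\ref{rem:ghost.div} every power $f^n$ lies in $\sAptngM$, and by Lemma~\ref{lem:D.contain}(vi) we have $\DD(f^n)=\DDf\supseteq\DDf$, so $\MSf\subseteq\STs(f)$, giving again $\STf=\STs(f)$ tangibly.

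The obstacle, which I expect to be the genuinely delicate step, is the remaining case $f\in\sAptngs\cap\gDivA$: here powers of $f$ are \tprs\ but not in $\sAptngM$, and one must show that every mixed product $f^n h$ with $h\in\STs(f)$ lies in $\sAptngs$. The key tool is Lemma~\ref{lem:rad.prop.1}: the inclusion $\DDf\subseteq\DDh$ is equivalent to $\srad(f)\subseteq\srad(h)$, which since $f$ is \tprs\ yields $f^m=hg$ for some $m\in\Net$ and $g\in\sA$; moreover Lemma~\ref{lem:gdiv.in.tame}(iii) forces $g$ to be \tprs\ as well. Using this identity one rewrites $(f^n h)^m = f^{nm} h^m = (hg)^n h^m = h^{n+m} g^n$, which is a product of elements of the monoid $\sAptngM$ (since $h^{n+m}\in\sAptngM$ and powers of \tprs\ are \tprs, while $g^n$ can be absorbed by iterating the relation), and hence lies in $\sAptngM\subseteq\sAptngs$; taking $m$-th roots in the sense of Axiom~\PSRa\ then forces $f^n h\in\sAptngs$. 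This verifies the tangibility of every generator-product, and combined with the evident closure under multiplication of the generated set $\STf$ completes the proof.
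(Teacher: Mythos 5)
Your overall architecture --- nonemptiness, closure via $\DDf\subseteq\DD(h')\cap\DD(h'')=\DD(h'h'')$, and a case split on whether $f$ is \tprs\ --- is essentially the paper's, and your first three cases are fine. The gap is in the last case, $f\in\sAptngs\cap\gDivA$, which is indeed the delicate one. Having written $f^m=hg$ and $(f^nh)^m=h^{n+m}g^n$, you assert that this is a product of elements of $\sAptngM$; but Lemma~\ref{lem:gdiv.in.tame}.(iii) only tells you that $g$ is \tprs, not that $g\notin\gDivA$, and the paper explicitly warns that a product of \tprs\ elements need not be \tprs\ (only $\sAptngM=\sAptngs\sm\gDivA$ is known to be a monoid, by Lemma~\ref{lem:gdiv.in.tame}.(ii)). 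The phrase ``$g^n$ can be absorbed by iterating the relation'' is not an argument. A second defect is the final step: Axiom \PSRa\ gives closure of $\sAptngs$ under taking powers, not under extracting roots, so even if $(f^nh)^m$ were shown to be \tprs\ you could not invoke \PSRa\ to descend to $f^nh$.

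Both problems are repaired by the tool the paper actually uses, Lemma~\ref{lem:D.contain}.(vii): since $\DD(f^nh)=\DD(f^n)\cap\DDh=\DDf\cap\DDh=\DDf$ (using Lemma~\ref{lem:D.contain}.(vi) and $\DDf\subseteq\DDh$), one has $\DDf\subseteq\DD(f^nh)$ with $f\in\sAptngs$ and $\sA$ \tame, hence $f^nh\in\sAptngs$. If you prefer an explicit computation, the correct route is $f^{n+m}=f^n(hg)=(f^nh)g$: the left side is \tprs\ by \PSRa, so Lemma~\ref{lem:gdiv.in.tame}.(iii) forces the factor $f^nh$ to be \tprs. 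Either way, the missing ingredient is the ``product \tprs\ $\Rightarrow$ factors \tprs'' direction, which is exactly where tameness enters.
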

\begin{proof}
  $\STf$ is nonempty, as $\one \in \sAptngM$ and $\DDf \subseteq \DD(\one)$ for every $f \in \sA$.
  If $\DD(f) \subseteq \DD(h')$ and $\DD(f) \subseteq \DD(h'')$ for $h',h'' \in \STf$, then $\DD(f) \subseteq \DD(h') \cap \DD (h'') =  \DD(h' h'')$ by Corollary \ref{cor:1.2}.(i).

  If $f \notin  \sAptngs$, then both $h',h'' \in \STsf$. But  $\sAptngM$  is a tangible monoid by Lemma \ref{lem:gdiv.in.tame}.(ii),   since $\sA$ is a \tamesmr, and thus $h'h'' \in \STsf \subseteq \sAptngM$, implying that $\STsf = \STf$ is a multiplicative submonoid of $\sAptngM \subseteq \sAptngs$.
  (The same holds  for $f \in \sAptngM$.)

   If $f \in \sAptngs$, i.e., $f$ is \tprs, then $h' h'' \in \sAptngs$ by Lemma \ref{lem:D.contain}.(vii), since   $\DD(f) \subseteq   \DD(h' h'')$. Thus, $\STf$ is a multiplicative submonoid of $\sAptngs$.
\end{proof}

%Note that $f \notin \STf$ for any $f \notin \sAptngM$, even if $f \in \sAptngs.$
\begin{proper}\label{proper:Sf} Let $f$ be an element of $\sA$.
 \begin{enumerate} \ealph \dispace
 \item All units $h \in \sAX  $ are contained in $ \STf$,  in particular $\one \in \STf$.

       \item
$ \STf = \sAptngM$ for any ghostpotent  $f \in \gprad(\sA)$, since $\DDf = \emptyset$ (see   Lemma \ref{lem:gdiv.in.tame}.(ii)).

\item $ \STf = \sAX$ for every  unit  $f \in \sAX \cap \sAptngs$, since $\DDf = \SpecA$ (cf. Lemma \ref{lem:D.contain}.(v)).

   \item When  $f \in \sAptngs$ is \tprs, $\MSf \subseteq \STf$, and $\STf$ contains all powers of $f$.

   \item If $f \notin \sAptngs$, then $\STf$ does not contain $f$, even if $f \in \sAtng$. But, it contains every   $h \in \sAptngM$ such that $h \nucong f$, if exist.

%\item  $\STf \cap \STg \subseteq  \ST(f g)$ for any $f,g \in \sA$. %\Com{check}

\item $\STh \subseteq \STf$ for every $h \in \STf$.
 \end{enumerate}
 \end{proper}
Note that $\DDf \subseteq \DDg$ is equivalent to $\STf \supseteq \STg$, and hence to
\begin{equation}\label{eq:DfDg}
  g \pcng \ghost \Dir f \pcng \ghost \ ,
\end{equation}
 in every  $\pCong \in \SpecA$.

\begin{rem}\label{rem:SfSg.intersection}
$\STf \cap \STg \subseteq  \ST(f g)$ for any $f,g \in \sA$. Indeed, $h \in \STf \cap \STg $ means that  $\DDh \supseteq \DDf$ and $\DDh \supseteq \DDg$, and thus  $\DDh \supseteq \DDf \cup \DDg \supseteq \DDf \cap \DDg = \DDfg$, by Corollary \ref{cor:1.2}.(i).
 Moreover, $\STfg \supseteq \STf$, since $\DD(fg)\subseteq \DDf$, and, by symmetry, also $\STfg \supseteq \STg$, hence $\STfg \supseteq \STf \cup \STg $.
\end{rem}

The inclusion $\MSf \subseteq \STf$ for \tprs\ elements from Properties \ref{proper:Sf}.(d) gives rise to an isomorphism of localized \nusmr s.

\begin{lem}\label{lem:fg.1} For $f \in \sAptngs $, the map $\iota_f:\sA_f \To \sA_{\STf}$ is an isomorphism.
\end{lem}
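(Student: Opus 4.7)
\smallskip

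The plan is to construct an inverse to $\iota_f$ using the universal property of tangible localization (Proposition~\ref{prop:local.univ}). First, note that $\iota_f$ itself arises from this universal property applied to the inclusion $\MSf \subseteq \STf$: the composite $\sA \xrightarrow{\tau_{\STf}} \sA_{\STf}$ sends every element of $\MSf$ to a unit, so factors uniquely as $\sA \xrightarrow{\tau_f} \sA_f \xrightarrow{\iota_f} \sA_{\STf}$. To produce the inverse, the key step will be to show that the canonical \qhom\ $\tau_f : \sA \To \sA_f$ already carries every element of $\STf$ into $\sA_f^\times$; once this is established, Proposition~\ref{prop:local.univ} supplies a unique \qhom\ $\vrp : \sA_{\STf} \To \sA_f$ with $\vrp \circ \tau_{\STf} = \tau_f$, and the two compositions $\vrp \circ \iota_f$ and $\iota_f \circ \vrp$ must coincide with the respective identity maps by the uniqueness clauses applied to $\tau_f$ and $\tau_{\STf}$.

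For the key step, recall that $\STf$ is generated multiplicatively by $\MSf$ and $\STsf$, so it suffices to show that each $h \in \STsf$ satisfies $\tau_f(h) \in \sA_f^\times$ (elements of $\MSf$ are powers of $f$, and $\tau_f(f)$ is a unit by the definition of $\sA_f$). Fix $h \in \STsf$; by definition $h \in \sAptngM \subseteq \sAptngs$ and $\DDf \subseteq \DDh$. By Lemma~\ref{lem:D.contain}(iv) this inclusion is equivalent to $\srad(f) \subseteq \srad(h)$. Since $f$ is \tprs, Lemma~\ref{lem:rad.prop.1} then provides $n \in \Net$ and $c \in \sA$ with $f^n = hc$. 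Applying $\tau_f$ yields $\tau_f(f)^n = \tau_f(h)\,\tau_f(c)$ in $\sA_f$. The left-hand side is a unit of $\sA_f$ (as $\tau_f(f)$ is), so $\tau_f(h)$ is a unit by Remark~\ref{rem:prud.unit}. Hence $\tau_f(\STf) \subseteq \sA_f^\times$, as required.

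The main obstacle is precisely the passage from the topological inclusion $\DDf \subseteq \DDh$ to the algebraic identity $f^n = hc$; this is exactly the content of Lemma~\ref{lem:rad.prop.1} (together with Lemma~\ref{lem:D.contain}(iv)), and relies crucially on $f \in \sAptngs$. Once this bridge is crossed, everything else is formal manipulation with the universal property, and the isomorphism $\sA_f \isoto \sA_{\STf}$ follows.
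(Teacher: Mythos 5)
Your proof is correct and follows essentially the same route as the paper: the paper's proof also reduces to showing $\tau_f(\STf)\subseteq \sA_f^\times$ and then invokes the universal property of Proposition~\ref{prop:local.univ}, delegating the key step (passing from $\DDf\subseteq\DDh$ to $f^n=hc$ via Lemma~\ref{lem:D.contain}(iv) and Lemma~\ref{lem:rad.prop.1}) to Remark~\ref{rem:Dfg.tng}, which you have simply written out explicitly. Your extra care in checking the generators $\MSf$ and $\STsf$ separately and in exhibiting the inverse map is a harmless elaboration of the same argument.
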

\begin{proof} Let $\tau_f : \sA \To \sA_f$ be the  canonical injection, and let $h \in \STf$.  Then, $\tau_f (h) \in \sA_f^\times$ is a unit, since $\DDf \subseteq  \DDh$ by Remark \ref{rem:Dfg.tng}. Thus, $\tau_f$ maps $\STf$ into the group of units in $\sA_f$, and hence  $\iota_f: \sA_f \To \sA_{\STf}$ is an isomorphism by the universal property of  localization (Proposition \ref{prop:local.univ}).
\end{proof}

\begin{rem}\label{rem:f.vs.Sf} Given $f \in \iTcl(\pCong)$, and hence $f \in \sAptngs$, we have
the  injection $\sA_f \Into \sA_\pCong$. Then  Lemma~ \ref{lem:fg.1} gives the injection $\sA_\STf \Into \sA_\pCong$, which implies that $\STf \subseteq \iTcl(\pCong)$.
\end{rem}

We can now extend Remark \ref{rem:Dfg.tng} to all elements in $\sA$, to obtain functoriality of arbitrary   open sets ~$\DDf$.
% which are not ghostpotent.

\begin{lem}\label{lem:SfSg.map}
  The inclusion $\DDf \subseteq \DDh $ %, with $f,g \notin \gprad(\sA),$
  gives rise to a \qhom
  $$\tltau_f^h : \sA_{\STh} \TO \sA_{\STf}.$$
  Furthermore, if $f,h \in \sAptngs$, then  $\tltau_f^h = \iota_f \circ \tau_f^h .$
\end{lem}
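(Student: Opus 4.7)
The plan is to mimic the argument in Remark \ref{rem:Dfg.tng}, but now working with the monoids $\STf$ and $\STh$ rather than $\MSf$ and $\MSh$. First I would observe, as recorded just before Remark \ref{rem:SfSg.intersection}, that the hypothesis $\DDf\subseteq\DDh$ is equivalent to the inclusion $\STh\subseteq\STf$: indeed, any $h'\in\STh$ satisfies $\DD(h')\supseteq\DDh\supseteq\DDf$, so $h'\in\STf$. By Lemma~\ref{lem:STf.mon}, both $\STh$ and $\STf$ are tangible multiplicative submonoids of $\sAptngs$, so the tangible localizations $\sA_{\STh}$ and $\sA_{\STf}$ are well-defined \nusmr s in the sense of Definition~\ref{def:tangible.localization}.

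Next I would apply the universal property of tangible localization (Proposition~\ref{prop:local.univ}) to construct $\tltau_f^h$. Consider the composition
\[
\sA \xto{\ \tau_{\STf}\ } \sA_{\STf},
\]
which is the canonical \qhom\ of \eqref{eq:localbyMS} and sends every element of $\STf$ to a unit of $\sA_{\STf}$. Since $\STh\subseteq\STf$, this composition  sends every element of $\STh$ to a unit of $\sA_{\STf}$ as well. By the universal property, there exists a unique \qhom\
\[
\tltau_f^h : \sA_{\STh}\TO \sA_{\STf},\qquad \frac{a}{c}\Mto\frac{a}{c},
\]
factoring $\tau_{\STf}$ through $\tau_{\STh}:\sA\To\sA_{\STh}$.

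For the second assertion, assume $f,h\in\sAptngs$. Then $\MSf\subseteq\STf$ and $\MSh\subseteq\STh$ by Properties~\ref{proper:Sf}(d), and Lemma~\ref{lem:fg.1} furnishes isomorphisms $\iota_f:\sA_f\isoto\sA_{\STf}$ and $\iota_h:\sA_h\isoto\sA_{\STh}$. Remark~\ref{rem:Dfg.tng} supplies the \qhom\ $\tau_f^h:\sA_h\To\sA_f$ satisfying $\tau_f=\tau_h\circ\tau_f^h$. A direct diagram chase, together with the uniqueness clause of Proposition~\ref{prop:local.univ}, shows that $\iota_f\circ\tau_f^h$ and $\tltau_f^h\circ\iota_h$ both arise from localizing the single map $\sA\to\sA_{\STf}$ through $\STh$; hence they coincide, which gives $\tltau_f^h=\iota_f\circ\tau_f^h\circ\iota_h^{-1}$, and upon identifying $\sA_h$ with $\sA_{\STh}$ via $\iota_h$, the required equality $\tltau_f^h=\iota_f\circ\tau_f^h$.

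The main obstacle, as I see it, is not the existence of the map—this is a formal consequence of universality—but rather checking that the hypothesis $\DDf\subseteq\DDh$ really translates into $\STh\subseteq\STf$ without any \tprs\ assumption on $f$ or $h$; this relies critically on using $\STf$ (which is built from $\sAptngM$ via \eqref{eq:S(f).str}--\eqref{eq:S(f)}) rather than $\MSf$, so that the construction is available even when $f$ itself fails to be \tprs\ and $\MSf$ degenerates to $\{\one\}$. Once this observation is in place, the remainder of the proof is routine.
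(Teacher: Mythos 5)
Your proof is correct and follows essentially the same route as the paper's: establish $\STh\subseteq\STf$ from $\DDf\subseteq\DDh$, invoke the universal property of tangible localization (Proposition~\ref{prop:local.univ}) to factor $\tltau_f$ through $\sA_{\STh}$, and deduce the second claim from Lemma~\ref{lem:fg.1} and Remark~\ref{rem:Dfg.tng}. Your added remark that the equality $\tltau_f^h=\iota_f\circ\tau_f^h$ strictly requires identifying $\sA_h$ with $\sA_{\STh}$ via $\iota_h$ is a welcome precision that the paper leaves implicit.
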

\begin{proof}
$\DDf \subseteq \DDh$ implies $\STh \subseteq \STf$, whose elements are all \tprs s. Thereby,  the canonical \qhom\ $\tltau_f: \sA \To \sA_\STf$ maps $\STh$ to units of $\sA_\STf$,  i.e., $\tltau_f (\STh) \subseteq (\sA_\STf)^\times $. Then, by  the universal property of  localization (Proposition \ref{prop:local.univ}), $\tltau_f$ factorizes  uniquely as
$\tltau_{f} =  \tltau_{h} \circ \tltau^{h}_{f}$ through  the \qhom s  $\tltau_{h} : \sA \To  \sA_{\STh}$  and $\tltau^{h}_{f} : \sA_{\ST(h)} \To \sA_{\STf}$, rendering the diagram
$$\xymatrix{
\sA \ar@{->}[d]_{\tltau_h } \ar@{->}[rr]^{\tltau_f} && \sA_\STf  \\
 \sA_\STh \ar@{-->}[rru]_{\tltau_f^h} &&  \\
}
$$ commutative.
%The map $\tltau^{g}_{f}$ is assigned to the inclusion $\DDf \subset \DDg$, by functorial meaning.
The relation $\tltau_f^h = \iota_f \circ \tau_f^h $ is obtained from  Remark \ref{rem:Dfg.tng} and Lemma \ref{lem:fg.1}.
\end{proof}

 We see that  for every  $f \in \sA$  the map  $\tltau^f_f : \sA_\STf \To \sA_\STf$ is the
identity map, and for any inclusions $\DD(f) \subset  \DD(g) \subset  \DD(h)$,
the map compositions
 $$\xymatrix{
\sA_\STh \ar@{->}[r]^{\tltau^h_g } & \sA_\STg \ar@{->}[r]^{\tltau_f^g } & \sA_\STf   }
$$
coincides with $\tltau^h_f : \sA_\STh \To \sA_\STf$.

For a ghostpotent $f \in \gprad(\sA)$ we have $\DDf = \emptyset$ and thus $ \STf = \sAptngM$, cf. Properties ~\ref{proper:Sf}.(b). Thereby, the localized \nusmr\ $\sA_\STf$ is the same for all $f \in \gprad(\sA) $, and for any $h \in \sA$ the \qhom\ $\tltau^{h}_{f} : \sA_{\STh} \To ~ \sA_{\STf}$ can be taken to be the identity map. However, we are mostly interested in elements $f \notin \gprad(\sA)$, for which $\DDf \neq \emptyset$. \pSkip

Let $A$ be a  \tamenusmr, and let $X = \SpecA$ be its \gprime\ spectrum (Definition ~\ref{def:prmCng}). Denote by  $\bfDD(X)$ the category whose objects are open subsets $\DDf \subseteq  X$, with $f \in \sA$,  and its morphisms are inclusions $\DDf \subseteq  \DDh$.
By the above construction (Lemma \ref{lem:SfSg.map}) we obtain a well defined functor
$$\OX : \bfDD(X)  \TO  \NSMR $$
from $\bfDD(X)$ to the category $\NSMR$ of \nusmr s (Definition \ref{def:nusmr.cag}), given by
\begin{equation}\label{eq:OX}
\begin{array}{rcl}
\DDf & \longmapsto & \sA_{\STf}, \\[1mm]
\DDf \subseteq  \DDh & \longmapsto &   \sA_{\STh} \To
\sA_{\STf}.
\end{array}
\end{equation}
 %from $\bfDD(X)$ to the category $\NSMR$ of (\qprs) \nusmr s.
The image of $\OX$ restricts to the subcategory of \tamenusmr s in \NSMR.
 From our preceding discussion we conclude the following.

\begin{cor}\label{cor:presheaf}
  $\OX$ is a presheaf of (\tame) \nusmr s on $\bfDD(X)$.
\end{cor}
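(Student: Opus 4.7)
The plan is to verify that the assignment $\OX$ defined in~\eqref{eq:OX} satisfies the presheaf axioms (Definition~\ref{def:sheaf}(a)--(b)): each $\OX(\DDf)$ must be a (tame) \nusmr\ in $\NSMR$, each inclusion $\DDf \subseteq \DDh$ must induce a \qhom\ $\sA_\STh \To \sA_\STf$, and these maps must respect identities and composition. Crucially, we must also verify that the construction depends only on the open set $\DDf$, not on the chosen representative $f$.

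First, I would establish that $\OX(\DDf)=\sA_\STf$ is a well-defined tame \nusmr. By Lemma~\ref{lem:STf.mon}, for every $f\in\sA$ the set $\STf$ is a tangible multiplicative submonoid of $\sAptngs$, so tangible localization (Definition~\ref{def:tangible.localization}) together with Proposition~\ref{prop:local.univ} produces the \nusmr\ $\sA_\STf$. Tameness transfers from $\sA$: an element $\tfrac{f}{c}\in\sA_\STf$ that is neither tangible nor ghost pulls back to some $f\in\sA\setminus(\sAtng\cup\sAghs)$, for which $f=p+eq$ with $p,q\in\sAtng$, whence $\tfrac{f}{c}=\tfrac{p}{c}+e\,\tfrac{q}{c}$ exhibits the required decomposition in $\sA_\STf$.

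Second, I would verify independence of the representative. By Lemma~\ref{lem:D.contain}(iv), $\DDf=\DDh$ is equivalent to $\srad(f)=\srad(h)$. Direct inspection of~\eqref{eq:S(f).str} shows that $\STs(f)$ depends only on the open set $\DDf$, hence $\STs(f)=\STs(h)$. When both $f,h\in\sAptngM$, Properties~\ref{proper:Sf}(d) yields $\MSf,\MSh\subseteq\STs(f)=\STs(h)$, so that $\STf=\STs(f)=\STh$ and the equality $\sA_\STf=\sA_\STh$ is on the nose. In the remaining cases (one of $f,h$ outside $\sAptngM$, which by Lemma~\ref{lem:D.contain}(vii) is a symmetric condition on $\DDf$), Lemma~\ref{lem:fg.1} and the universal property of tangible localization supply a canonical isomorphism $\sA_\STf\isoto\sA_\STh$, so $\OX(\DDf)$ is well defined up to canonical isomorphism.

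Third, the restriction maps and functoriality come essentially for free: Lemma~\ref{lem:SfSg.map} constructs the \qhom\ $\tltau_f^h:\sA_\STh\To\sA_\STf$ for any inclusion $\DDf\subseteq\DDh$ and records $\tltau_f^f=\id_{\sA_\STf}$. For a chain $\DD(f)\subseteq\DD(g)\subseteq\DD(h)$ one has $\STh\subseteq\STg\subseteq\STf$, and the factorisations $\tltau_f=\tltau_g\circ\tltau_f^g=\tltau_h\circ\tltau_g^h\circ\tltau_f^g$ through the canonical injections, together with the uniqueness clause of Proposition~\ref{prop:local.univ} applied to the source $\sA\To\sA_\STf$, force $\tltau_f^h=\tltau_f^g\circ\tltau_g^h$. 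This yields all presheaf axioms and confirms $\OX$ takes values in tame \nusmr s.

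The main obstacle is the well-definedness step (iii), specifically reconciling the definition of $\STf$ as the monoid generated by $\MSf$ together with $\STs(f)$ with the purely topological datum of the open set $\DDf$, in the case where $f$ is either not \tprs\ or is a \tprs\ ghost divisor; here the explicit inclusion $\MSf\subseteq\STs(f)$ can fail, and one must invoke the universal property to obtain the required canonical isomorphism rather than an equality of submonoids.
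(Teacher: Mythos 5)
Your proof is correct and follows essentially the same route as the paper, whose proof of Corollary~\ref{cor:presheaf} is simply to invoke the preceding discussion: Lemma~\ref{lem:STf.mon} for the \nusmr\ structure of each $\sA_\STf$, Lemma~\ref{lem:SfSg.map} for the restriction \qhom s $\tltau_f^h$, and the subsequent remarks for identities and composition. Your two supplementary checks --- that tameness passes to the localizations $\sA_\STf$, and that the assignment $\DDf \mapsto \sA_\STf$ is independent of the chosen representative $f$ of the open set --- address points the paper leaves implicit, but they refine rather than alter its argument.
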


Given an element $f \in \sA$, assumed not to be ghostpotent,  with $\STf$ as defined in \eqref{eq:S(f)},  we write
$$ X_f := \Spec\big(\sA_\STf\big). $$
  We define the \textbf{(tangible) cover set} of $\DDf$ to be
   \begin{equation}\label{eq:tng.cover}
   \CCf := \bigcap_{h \in \STf} \DDh,
 \end{equation}
  for which we  have $\DDf \subseteq \CCf.$
 When $f\in \sAptngs$ is \tprs, $f\in \STf$ and thus $ \CCf = \DDf, $  where   the map   $\iota_f:\sA_f \To \sA_{\STf}$ is isomorphism by Lemma ~ \ref{lem:fg.1}. If $f \in \gprad(\sA)$, then    $\CCf = \bigcap_{h \in \sAptngM} \DDh$, cf. Properties \ref{proper:Sf}.(b).

\pSkip

  Recall  from \eqref{eq:DD.C} that  $\DD(\MS,f )$ denotes the restriction of $\DDf$ to those \gprime s $\pCong\in \SpecA$ satisfying $\MS\subseteq \iTcl(\pCong)$, where  $\MS$ is a multiplicative tangible submonoid of $\sA$. Obviously,
  $\DDf \subseteq \DDh$ implies   $\DD(\MS, f) \subseteq \DD(\MS, h)$. Furthermore, by Lemma ~ \ref{lem:D(C,f)},  the restricted set $\DD(\MS,f)$ is nonempty for $\MS = \STf$ when   $f \notin \gprad(\sA)$, cf.
   Corollary~ \ref{cor:a.vs.prime}.

\begin{rem}\label{rem:Xf2DSS}
Letting $\ST:= \STf$ with $f \in \sA$,  Corollary ~\ref{cor:2.8} asserts  that  the canonical \qhom \
$\tau : \sA \To \sA_\STf$ induces an isomorphism
$$\atau: X_f \ISOTO \CC(\ST,f) := \bigcap_{h \in \ST} \DD(\ST, h) = \EE(\ST), \qquad \xpCong' \Mto \xpCong'|_\sA,$$
 by restricting \gprimec s  on $\sA_\STf$ to  \gprimec s on $\sA$,
cf. Propositions~ \ref{prop:cong.1} and ~ \ref{prop:cong.2}.
(This correspondence relies  on the bijection  between \gprimec s on $\sA_{\STf}$ and those \gprimec s on $\sA$ with
$\STf \subseteq \iTcl(\pCong)$.)
The latter equality to $\EE(\ST)$ --the tangible support of $\STf$-- is by definition, see  \eqref{eq:CC.E}.
\end{rem}

Note that when ~$f \notin \sAptngs$,  $\DDf$  does not necessarily  contain the entire  restriction $\CC(\ST,f)$ of the cover set~ $\CCf$ to $\ST := \STf$, but  their intersection $\CC(\ST,f) \cap \DDf$ is nonempty, again by Lemma \ref{lem:D(C,f)}.
%as it contains the trivial congruence. \Com{FIX}
 Also, we always have $\EE(f) \subseteq \CC(\ST,f)$, cf.~ \eqref{eq:CC.f}.
Otherwise, if $f \in \sAptngs$, then $\CC(\ST,f) \subseteq \DD(\ST,f) \subseteq  \DDf$, since $f \in \ST := \STf$.

\pSkip

  We denote by $\DD_f(h)$ the open set $\DD(h) \subseteq  \Spec(\sA_\STf)$ associated to  $h \in \sA_{\STf}$.
To summarize,  in this setting, for the case of \tprs\ elements $f \in \sAptngs$ we have
the following.
\begin{lem}\label{lem:6.6.1} Let $\sA$ be a (\tame) \nusmr\ and let  $f \in \sAptngs$ be  a \tprs\ element.
  \begin{enumerate} \eroman
    \item
    The canonical \qhom\
$\tau : A \To A_\STf$ induces an injective homeomorphism $$\atau : X_f \INTO  \DDf \subset X$$
and, conversely,  a surjective homeomorphism
$$\btau : \DDf \ONTO X_f.$$
\item $\iatau (\DDg) = \DD_f (\tau (g))$ for any $g \in   \sAptngs $ with $\DDg \subseteq \DDf$. Furthermore, $\iatau$ induces a bijective correspondence between sets  $\DDg \subseteq  \DDf$  and the open sets  $\DD_f (h)$, where $h \in A_\STf$.
\item The restriction of the functor $\OX : \bfDD(X) \To \NSMR$ to the subcategory
induced on $\DDf$ is equivalent to the functor $\OXf : \bfDD(X_f ) \To \NSMR$.
    \end{enumerate}
\end{lem}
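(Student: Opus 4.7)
The plan is to deduce all three parts from the bijection-of-spectra machinery built around Theorem \ref{thm:prime.bij} and Corollary \ref{cor:2.8}, together with the identification $\sA_f\cong\sA_\STf$ of Lemma \ref{lem:fg.1}. First I would invoke Lemma \ref{lem:fg.1}: since $f$ is \tprs, the canonical map $\sA_f\to\sA_\STf$ is an isomorphism, so $X_f=\Spec(\sA_\STf)$ is canonically the same as $\Spec(\sA_f)$, and the \qhom\ $\tau$ becomes $\tau_f$ after this identification. This reduces the problem to analyzing the pull-back along $\tau_f$.

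For (i), I would apply Corollary \ref{cor:2.8} with $\MS=\MSf$ (which is a tangible submonoid because $f\in\sAptngs$). This yields immediately the injective homeomorphism
\[
\atau : X_f \ISOTO \EE(\MSf) \;=\; \{\,\pCong\in X : \MSf\subseteq\iTcl(\pCong)\,\} \;\subseteq\; \DDf,
\]
where the inclusion into $\DDf$ is automatic since $f\in\MSf\subseteq\iTcl(\pCong)$ forces $f\notin\iGcl(\pCong)$. The crux is then to prove the reverse inclusion $\DDf\subseteq\EE(\MSf)$, i.e.\ that for every \gprimec\ $\pCong$ with $f\notin\iGcl(\pCong)$ one actually has $f\in\iTcl(\pCong)$. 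Given this, $\atau$ is a bijection onto $\DDf$ and its inverse $\btau$ is the desired surjective homeomorphism.

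For (ii) I would combine Corollary \ref{cor:2.5} (which gives $\iatau(\DDg)=\DD(\tau(g))=\DD_f(\tau(g))$) with the observation that any element of $\sA_\STf$ is of the form $\tau(g)/\tau(s)$ with $s\in\STf$ a unit in $\sA_\STf$, so every basic open $\DD_f(h)$ equals $\DD_f(\tau(g))$ for some $g\in\sA$. This yields the bijective correspondence between basic opens $\DDg\subseteq\DDf$ and the basic opens of $X_f$. For (iii) I would check functoriality on this basis: by transitivity of tangible localization (the corollary following Proposition \ref{prop:local.univ}) one has $(\sA_\STf)_{\ST_f(\tau(g))}\cong \sA_\STg$ whenever $\DDg\subseteq\DDf$, which gives the equivalence $\OX|_{\bfDD(\DDf)}\simeq \OXf$.

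The main obstacle is the reverse inclusion in Step~(i). My approach is to pass to the residue \nusmr: by Proposition \ref{prop:prime2domain}, $\sA/\pCong$ is a \nudom, and since $\pCong$ is a \qcong\ respecting the \nusmr\ operations (as in Remark \ref{rem:grad.prs}(iii)), I would argue that \tameness of $\sA$ is inherited by $\sA/\pCong$. Once $\sA/\pCong$ is a \emph{\tame} \nudom, Lemma \ref{lem:gdiv.in.tame}(i) forces every element outside $(\sA/\pCong)|_\tng^\PrS\cup(\sA/\pCong)|_\ghs$ to be a ghost divisor, which is impossible in a \nudom; hence every non-ghost class is tangible, and in particular $[f]$ is tangible, i.e.\ $f\in\iTcl(\pCong)$. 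The delicate point in this argument is the inheritance of tameness: starting from a representative $b\notin\sAtng\cup\sAghs$ of $[f]$ and its tame decomposition $b=c+ed$ in $\sA$, one has to verify that the components can be chosen so that $[c],[d]\in(\sA/\pCong)|_\tng$, which requires tracking the tangible cluster through the quotient and is where the bulk of the technical work lies.
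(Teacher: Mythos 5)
Your reading of part (i) diverges from what the lemma actually asserts, and the extra step you try to insert looks both unnecessary and false. The lemma only claims that $\atau:X_f\To\DDf$ is \emph{injective} (its image is the focal zone $\fcDDf=\DD(\STf,f)$, which is a proper subset of $\DDf$ in general), and that there is a \emph{separately defined} surjective $\btau:\DDf\ONTO X_f$; the two maps are not claimed to be mutually inverse. The paper's own proof simply cites Corollary~\ref{cor:2.8} for the injectivity and stops there. You instead set out to prove the inclusion $\DDf\subseteq\EE(\MSf)$, which would make $\atau$ a bijection onto $\DDf$. That inclusion amounts to saying every \tprs\ $f$ is \strict\ (i.e.\ $\fcDDf=\DDf$), and this contradicts the entire apparatus the paper builds around focal zones: the footnote to Corollary~\ref{cor:2.8} explicitly states that $\atau$ is \emph{not} bijective in this theory, and Theorem~\ref{prop:stalks}(ii) carries the extra hypothesis ``\strict'' on $f\in\sAptngs$ precisely because not every \tprs\ element is \strict.

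Moreover, the route you sketch to establish the reverse inclusion is circular. You want to show that $\sA/\pCong$ is \tame, then invoke Lemma~\ref{lem:gdiv.in.tame}(i) to conclude that the \nudom\ $\sA/\pCong$ has no elements outside $\tTPS\cup\tG$, hence $[f]$ tangible. But for a \nudom, ``\tame'' and ``every element is tangible or ghost'' are logically equivalent via exactly that lemma: if $\sA/\pCong$ has a non-tangible non-ghost class, it fails Lemma~\ref{lem:gdiv.in.tame}(i) and so is not \tame; if it has none, it is vacuously \tame. So ``$\sA/\pCong$ is \tame'' is a restatement of the target claim, not a means to it. The ``delicate point'' you flag (choosing $[c],[d]$ tangible in $[b]=[c]+e[d]$) is not a technicality you can grind through — it \emph{is} the whole question.

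Parts (ii) and (iii) are fine and essentially match the paper: (ii) is Corollary~\ref{cor:2.5} together with writing any $h\in\sA_\STf\cong\sA_f$ as $g/f^k$, and (iii) follows from transitivity of tangible localization, using $\sA_{\ST(fg)}\cong(\sA_\STf)_{\tau(\STg)}$ (equivalently your $\sA_\STg$, since $\DD(fg)=\DDg$ when $\DDg\subseteq\DDf$). But for (i) you should drop the bijectivity goal and argue only injectivity of $\atau$ via Corollary~\ref{cor:2.8}, leaving $\btau$ as a separate surjective continuous map as the paper does.
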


\begin{proof} (i): Corollary \ref{cor:2.8} gives  $\atau : X_f \Isoto \CC(\ST,f) = \DD(\ST,f)\subseteq \DDf$, for $S = \STf$,  which  proves the first part and also induces  the converse map $\btau$, showing that $\btau$ is onto.

%We only need to verify that $\btau$ as defined in \eqref{eq:DfXf} is onto. Let $\xpCong \in X_f$ be a \gprimec \ on $\sA_\STf$, and let $\MS = \iTcl(\xpCong)$. $\MS \subset \STf$ ???

\pSkip
(ii): Corollary \ref{cor:2.5} gives the first part.
The map   $\iota_f:\sA_f \To \sA_{\STf}$ is isomorphism by Lemma ~ \ref{lem:fg.1}, so that ~ $\sA_{\STf}$ can be replaced by $\sA_f$. Then, the second part is clear, as  $h = \frac{g} {f^k} \in  \sA_f$  yields
$$\DD_f (h) = \DD_f \bigg( \frac{fg}{f^{k+1}}\bigg) = \DD_f \big(\tau (fg)\big) = \iatau\big(\DD(fg)\big), $$
where  $\DD(fg) \subseteq  \DDf$.

\pSkip
(iii): Obtained by the canonical isomorphism
$\sA_{\ST(fg)} \Isoto \big(\sA_\STf \big)_{\tau(\STg)}.$
%for a non-ghost  $g \in \sA$. %, which is a special case of 1.2/10.
\end{proof}

Lemma \ref{lem:6.6.1} applies only to  \tprs\ elements $f \in \sAptngs$, while Remark
\ref{rem:Xf2DSS} refers to the tangible support $\EE(\STf) = \CC(\STf,f)$ %, {eq:CC.E}  \eqref{eq:CC.E},
of the tangible cover $\CC(f)$ of $\DDf$,
%, cf. \eqref{eq:tng.cover},
rather than to their (nonempty) intersection.  This type of intersection is dealt next.

\begin{definition}\label{def:focal.zone} Let $f \in \sA$, and take  $\STf$ from \eqref{eq:S(f)}  to be
 the multiplicative monoid $\MS$  in \eqref{eq:DD.C}. The subset
\begin{equation}\label{eq:focal.zone}
\fcDDf := \DD(\STf,f) =  \{ \pCong \in \DDf  \cnd \STf  \ssetq  \iTcl(\pCong)  \} \ \subseteq \DDf
\end{equation}
is called the \textbf{\fzone} of ~$\DDf$.
  An element $f \in \sA$ is said to be \textbf{\strict}, if $\fcDDf = \DDf$.

% We say that $\DDf\sm \fcDD(f)$ is the  \textbf{non-focal zone} of ~$\DDf$.
  \end{definition}

 Clearly, every ghostpotent $f \in \gprad(\sA)$ is \strict,  whereas $\fcDDf = \DDf =  \emptyset$.   On the other edge, every \tualt\ element  $f \in \sAual$, and in particular  every unit,  is \strict \ with $\fcDDf = \DDf =  \SpecA$.
 %For example, each monomial $a_\bfi \Lm^\bfi\in R[\Lm]$, with \tprs\  $a_\bfi \notin \tN(R)$ over over a \tame\ \smr\ $R$,   is \strict.

\begin{example}
  Monomials having \tprs\ coefficients in the polynomial \nusmr\ $\sA= F[\Lm]$  over a \nusmf\ $F$ are \strict,  cf. Example \ref{examp:pre.smr}.(i) and \S\ref{ssec:ploynomials}.
\end{example}

 %  every unit  $f \in \sAX$ is \strict \ with $\fcDDf = \DDf =  \SpecA$, but not conversely.\Com{????}

%Note that $\STf \cap \iTcl(\pCong) \neq \emptyset$ for evert $\pCong \in \DDf$, as $\one \in $

As $\STf \subseteq \sAptngs$ is a tangible monoid  for any $f \in \sA$ (Lemma \ref{lem:STf.mon}),  the \fzone\ $\fcDDf \subseteq \DDf $  is canonically defined for every $f \in \sA$, and by Lemma ~ \ref{lem:D(C,f)} is nonempty whenever $f \notin \gprad(\sA)$.
Moreover,  $\EEf \subseteq \fcDDf $,  since  $\MSf \subseteq \STf$ by Properties \ref{proper:Sf}.(d),
where $\EEf$ could be empty, e.g., when $f \notin \sAptngs$.

The \fzone\ $\fcDDf$  can be written in terms of tangible support \eqref{eq:CC.E}  as
\begin{equation}\label{eq:focal.zone.2}
\fcDDf := \DD(\STf,f) =   \DDf  \cap \bigcap_{h \in \STf} \EE(h) = \DDf  \cap  \EE(\STf) \ \subseteq \DDf,
\end{equation}
providing a useful form.

\begin{remark}\label{rem:focal.zone} \Fzone s respect intersections of open sets in the sense that
  $$\fcDD(fg) \subseteq  \fcDD(f) \cap \fcDD(g) \subseteq \DDf \cap \DDg = \DD(fg)$$
  for any $f,g \in \sA$,
 since $\STf \cup \ST(g) \subseteq \ST(fg)$ by  Remark \ref{rem:SfSg.intersection}. This also shows that if
 $\DDf \cap \DDg  \neq \emptyset$, then $\fcDDf \cap \fcDDg  \neq \emptyset$, whereas $\fcDD(fg) \neq \emptyset$ by Lemma \ref{lem:D(C,f)}, since $\ST(fg)$ is a tangible monoid.

 Also, the inclusion $\DDf \subset \DDh$ implies $\fcDDf \subset \fcDDh$, since $\STh \subset \STf$ for every $h \in \STf$, cf. Properties \ref{proper:Sf}.(f).
\end{remark}
This  remark can be strengthened further for \tprs\ elements, specializing  Corollary
\ref{cor:1.2} to \fzone s.

\begin{lemma}\label{lem:fuclZintersection} $\fcDD(fg) =   \fcDD(f) \cap \fcDD(g)$, when $f,g \in \sAptngs$.
\end{lemma}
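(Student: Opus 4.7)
The $\subseteq$ direction is recorded in Remark~\ref{rem:focal.zone}. For the reverse inclusion, fix $\pCong \in \fcDD(f) \cap \fcDD(g)$. By Corollary~\ref{cor:1.2}(i) we have $\pCong \in \DD(fg)$, and by the formulation of the \fzone\ in \eqref{eq:focal.zone.2}, both $\STf$ and $\STg$ sit inside the tangible monoid $\iTcl(\pCong)$, which is multiplicatively closed because $\pCong$ is an \lcong. My task is to check that every generator of $\ST(fg)$ lies in $\iTcl(\pCong)$.

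Powers of $fg$ cause no trouble: since $f, g \in \iTcl(\pCong)$ and $\iTcl(\pCong)$ is closed under multiplication, $\MS(fg) \subseteq \iTcl(\pCong)$. The substantive case is an element $h \in \STs(fg)$, i.e.\ $h \in \sAptngM$ with $\DD(fg) \subseteq \DD(h)$. Lemma~\ref{lem:D.contain}(iv) rewrites this inclusion as $\srad(h) \subseteq \srad(fg)$, and Lemma~\ref{lem:rad.prop.1}, applied to the \tprs\ element $h$, then produces $c \in \sA$ and $n \in \Net$ with $h^n = (fg)c$. Moreover, since $\pCong \in \DD(fg) \subseteq \DD(h)$, we already know $h \notin \iGcl(\pCong)$, so $[h] \in \sA/\pCong$ is non-ghost in the \nudom\ supplied by Proposition~\ref{prop:prime2domain}.

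To upgrade ``non-ghost'' to ``tangible'' for $[h]$, I would localize at $\pCong$ and pass to the residue \nusmf\ $\kappa(\pCong) = \sA_\pCong/\nCong_\pCong$ of Corollary~\ref{cor:RmodPrime}. The images $\bar f, \bar g, \bar{fg}$ are units in $\kappa(\pCong)$ (they lie in $\iTcl(\pCong) = \iPcl(\pCong)$ and hence become invertible under the localization by $\pCong$), while $\bar h$ is still non-ghost. In the \nusmf\ $\kappa(\pCong)$, where $\tT = R \setminus \tG$, every non-ghost element is tangible, so $\bar h$ is tangible; tracing tangibility back along the embedding $\sA/\pCong \hookrightarrow Q(\sA/\pCong) = \kappa(\pCong)$ coming from Definition~\ref{def:tangible.localization} gives $[h]$ tangible in $\sA/\pCong$, i.e.\ $h \in \iTcl(\pCong)$. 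Combined with the case of powers above, this yields $\ST(fg) \subseteq \iTcl(\pCong)$, and hence $\pCong \in \fcDD(fg)$.

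The step I expect to require the most care is the claim that non-ghost elements of the residue \nusmf\ are tangible. Making this rigorous depends on $\kappa(\pCong)$ being definite in the sense $R = \tT \sqcup \tG$; if this is not automatic from Corollary~\ref{cor:RmodPrime}, I would instead verify that the tameness of $\sA$ (Definition~\ref{def:nusemiring}) descends to $\sA/\pCong$ and then appeal to Lemma~\ref{lem:gdiv.in.tame}(i) inside the \nudom\ $\sA/\pCong$ to rule out elements lying in neither $(\sA/\pCong)|_\tng$ nor $(\sA/\pCong)|_\ghs$. Either route turns the \tprs ness of $h$ together with the absence of \gsdiv s in $\sA/\pCong$ into the required tangibility of $[h]$, and completes the proof.
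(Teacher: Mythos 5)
There is a genuine gap, and it sits exactly at the step you yourself flagged as delicate. A preliminary issue first: the radical inclusion is reversed. By the equivalence established in the proof of Lemma~\ref{lem:D.contain}.(iv), $\DD(fg) \subseteq \DDh$ is equivalent to $\VV(fg) \supseteq \VV(h)$ and hence to $\srad(fg) \subseteq \srad(h)$, not to $\srad(h) \subseteq \srad(fg)$. The correct consequence (via Remark~\ref{rem:ghostification.smr} and Corollary~\ref{cor:rad.incl.2}) is $(fg)^n = hb + c$ with $c \in \tG$, i.e.\ $h$ divides a power of $fg$ up to a ghost summand, not $h^n = (fg)c$. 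As it happens you never use the relation you derived; but the correct one is precisely the tool the argument needs.

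The serious gap is the upgrade from ``non-ghost'' to ``tangible'' for $[h]$. A \nusmf\ is not required to be \dfnt: Definition~\ref{def:nudomain} only demands that $\tT$ be an abelian group and that there be no ghost divisors, so the residue \nusmf\ $\sA_\pCong/\nCong_\pCong$ may contain classes that are neither tangible nor ghost, and ``non-ghost implies tangible'' is unavailable there. Your fallback also fails: tameness of $\sA$ does not descend to $\sA/\pCong$ for an arbitrary \gprimec\ $\pCong$, since an element $a \in \sAtng$ with $a \notin \iTcl(\pCong)$ becomes non-tangible and non-ghost in the quotient, and the tame decomposition of $a$ in $\sA$ yields no such decomposition of $[a]$. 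Indeed, were your argument valid, it would show that every non-ghost class of $\sA/\pCong$ is tangible, i.e.\ that every \gprimec\ is \dtrm\ and $\sA/\pCong$ is \dfnt\ (Proposition~\ref{prop:deter2smf}); this would force $\fcDDh = \DDh$ for every $h$ and trivialize the distinction the lemma is about. The paper avoids this entirely by arguing contrapositively with the divisibility relation: if some $h \in \ST(fg)$ had $h \notin \iTcl(\pCong)$, then $(fg)^n = hb + c$ forces $(fg)^n \notin \iTcl(\pCong)$, hence $fg \notin \iTcl(\pCong)$, hence $f \notin \iTcl(\pCong)$ or $g \notin \iTcl(\pCong)$ since $\iTcl(\pCong)$ is a monoid --- contradicting $\pCong \in \fcDD(f) \cap \fcDD(g)$, because $f \in \STf$ and $g \in \STg$ for \tprs\ $f,g$. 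Your handling of the powers of $fg$ is fine; it is the elements of $\STs(fg)$ that require this contrapositive use of $(fg)^n = hb + c$ rather than a direct tangibility claim in the quotient.
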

\begin{proof} $\fcDD(fg) \subseteq  \fcDD(f) \cap \fcDD(g)$ by Remark \ref{rem:focal.zone}.
  Suppose $\fcDD(fg) \subsetneq  \fcDD(f) \cap \fcDD(g)$,  and let  $\pCong \in (\fcDD(f) \cap \fcDD(g)) \setminus \fcDD(fg)$, i.e., $\ST(fg) \nsubseteq \iTcl(\pCong)$, which means that there exits $h \in \STfg$ with
  $h \notin \iTcl(\pCong)$. On the other hand,
  $\DD(fg) \subseteq \DDh$ iff $  \VV(fg) \supseteq \VV(h) $ iff
$ \srad(fg) \subseteq  \srad(h)$ by Lemma \ref{lem:D.contain}.(iv), and thus
  $(fg)^n = h b + c$ with $b \in \sA$, $c \in \tG$, by  Remark~ \ref{rem:ghostification.smr}.  Then, $(fg)^n \notin \iTcl(\pCong)$, implying that
  $(fg) \notin \iTcl(\pCong)$, and furthermore that $f \notin \iTcl(\pCong)$ or $g \notin \iTcl(\pCong)$, since $ \iTcl(\pCong)$ is  a  monoid. Say $f \notin \iTcl(\pCong)$.  But, $f \in \STf$, since $f \in \sAptngs$, and thus $\STf \nsubseteq \iTcl(\pCong) $. Hence,  $\pCong \notin \fcDD(f)$, so $\pCong \notin \fcDD(f) \cap \fcDD(g)$.
\end{proof}

Note that by definition $\fcDDf \subseteq \CC(\STf,f)$ for any $f \in \sA$, cf. \eqref{eq:tng.cover}, while $\fcDDf = \CC(\STf,f)$ when $f \in \sAptngs$. In this view Corollary \ref{cor:2.5} specializes to \fzone s.
\begin{lem}\label{lem:fc.map}
  Let $\vrp: \sA \To \sB$ a be  \qhom, and let $\avrp : \SpecB \To \SpecA$ be the induced map ~ \eqref{eq:specMap.2}.  Then, $\fcV = \iavrp(\fcU)$, where  $V := \iavrp(U)$, for any open set $U \subset \SpecA$.
\end{lem}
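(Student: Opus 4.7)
The plan is to reduce first to the case of a basic open set. Since every open $U \subseteq X = \SpecA$ is a union of sets $\DDf$ (Corollary~\ref{cor:1.2}), and since preimages under $\iavrp$ distribute over unions, it suffices to establish the equality when $U = \DDf$, $f \in \sA$. By Corollary~\ref{cor:2.5}, $V = \iavrp(\DDf) = \DD(\vrp(f))$; the focal-zone operation on arbitrary open sets is then defined compatibly as a union of focal zones of basic opens, and the equality on basic opens propagates to the general case.

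Next, I would use the reformulation~\eqref{eq:focal.zone.2}: $\fcDDf = \DDf \cap \EE(\STf)$. Applying $\iavrp$, and using Corollary~\ref{cor:2.5} together with Proposition~\ref{prop:2.3.b} (which gives $\iavrp(\EE(E)) = \EE(\vrp(E))$), yields
\[
\iavrp(\fcDDf) \; = \; \DD(\vrp(f)) \cap \EE(\vrp(\STf)),
\]
while the analogous computation on the target side gives $\fcV = \DD(\vrp(f)) \cap \EE(\ST(\vrp(f)))$. The lemma therefore reduces to showing, on the subspace $\DD(\vrp(f)) \subseteq \SpecB$, that a \gprimec\ $\pCong'$ satisfies $\vrp(\STf) \subseteq \iTcl(\pCong')$ if and only if $\ST(\vrp(f)) \subseteq \iTcl(\pCong')$.

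The remaining point-wise verification would then proceed as follows. For $\pCong' \in \DD(\vrp(f))$, set $\pCong = \avrp(\pCong') = \vvrp(\pCong') \in \DDf$; the identification of ghost projections under \qhom s (Remark~\ref{rem:biject.proj}) and the transfer of tangibility through $\vrp$ give a compatibility between $\iTcl(\pCong)$ and $\iTcl(\pCong')$: elements of $\sA$ land in $\iTcl(\pCong)$ precisely when their images under $\vrp$ land in $\iTcl(\pCong')$. For the forward direction, given $h \in \STf$, the inclusion $\DDf \subseteq \DDh$ pulls back to $\DD(\vrp(f)) \subseteq \DD(\vrp(h))$ by Corollary~\ref{cor:2.5}; under \tame ness of $\sB$ and Lemma~\ref{lem:gdiv.in.tame}, $\vrp(h)$ lies in $\sB|_\tng^{\Pr\ast}$, hence in $\ST(\vrp(f))$, forcing $\vrp(h) \in \iTcl(\pCong')$ whenever $\ST(\vrp(f)) \subseteq \iTcl(\pCong')$. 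For the converse, given $h' \in \ST(\vrp(f))$, one uses Lemmas~\ref{lem:D.contain}.(viii) and~\ref{lem:rad.prop.1} to write $\vrp(f)^n = h'g'$ in $\sB$, and then appeals to Proposition~\ref{prop:cong.2}: the \gprimec\ $\pCong'$ with $\vrp(\STf) \subseteq \iTcl(\pCong')$ extends to a \gprimec\ on the localization $\sB_{\vrp(\STf)}$, in which $h'$ becomes a unit (since it divides a power of $\vrp(f)$, which is a unit there), placing $h'$ in $\iTcl(\pCong')$.

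The main obstacle is the asymmetry that a \qhom\ only pulls back tangibility and \tprs ness without pushing them forward, so that $\vrp(\STf)$ need not be literally contained in $\ST(\vrp(f))$ nor vice versa. The key device for circumventing this is to pass through the localization $\sB_{\vrp(\STf)}$ and exploit the bijective correspondence (Theorem~\ref{thm:prime.bij}) between its \gprime\ spectrum and the subset of $\SpecB$ where $\vrp(\STf)$ sits in the tangible projection; modulo this translation, both tangibility conditions cut out the same locus inside $\DD(\vrp(f))$, yielding the claimed equality.
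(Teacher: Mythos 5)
Your overall strategy --- reduce to basic opens $U = \DDf$, rewrite both sides via \eqref{eq:focal.zone.2} and Proposition~\ref{prop:2.3.b} as $\DD(\vrp(f)) \cap \EE(\vrp(\STf))$ versus $\DD(\vrp(f)) \cap \EE(\ST(\vrp(f)))$, and then compare the two tangibility conditions pointwise --- is exactly the paper's, and your forward direction (deducing $\vrp(\STf) \subseteq \ST(\vrp(f))$ from $\iavrp(\DDh) = \DD(\vrp(h))$ for $h \in \STf$, hence $\fcV \subseteq \iavrp(\fcU)$) coincides with the paper's first paragraph.

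The gap is in your converse direction. You need: if $\pCong' \in \DD(\vrp(f))$ satisfies $\vrp(\STf) \subseteq \iTcl(\pCong')$, then every $h' \in \ST(\vrp(f))$ lies in $\iTcl(\pCong')$. Your argument writes $\vrp(f)^n = h'g'$ via Lemma~\ref{lem:rad.prop.1} and then declares $h'$ a unit in $\sB_{\vrp(\STf)}$ because it divides a power of $\vrp(f)$, ``which is a unit there''. Two steps fail. First, Lemma~\ref{lem:rad.prop.1} requires the element being powered to be \tprs; a \qhom\ only pulls tangibility back ($\ivrp(\tT') \subseteq \tT$) and does not push it forward, so $\vrp(f)$ need not be \tprs\ even when $f$ is, and in general one only gets $\vrp(f)^n = h'g' + c$ with a ghost term $c$ that cannot be discarded. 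Second, and more seriously, $\vrp(f)$ is inverted in $\sB_{\vrp(\STf)}$ only if it lies in the localizing monoid $\vrp(\STf)$, i.e.\ only if $f \in \STf$, which by Properties~\ref{proper:Sf}.(d) and (e) happens exactly when $f \in \sAptngs$; for other $f$ the element $\vrp(f)$ is neither a unit there nor known to lie in $\iTcl(\pCong')$ (membership in $\DD(\vrp(f))$ only says $\vrp(f) \notin \iGcl(\pCong')$), so ``$h'$ divides a unit'' collapses and nothing places $h'$ in $\iTcl(\pCong')$. The underlying difficulty is that $h' \in \ST(\vrp(f))$ is a condition on ghost loci ($\DD(\vrp(f)) \subseteq \DD(h')$) and carries no multiplicative link to the monoid $\vrp(\STf)$; the paper closes the reverse inclusion by a separate topological contradiction argument, producing some $g \in \STf$ with $\DD(\vrp(g)) \subseteq \DD(h')$ and deducing $\iavrp(\fcU) \subseteq \EE(h')$, rather than by localization. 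You would need to supply an argument of that kind for this direction.
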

\begin{proof} Let $U = \DDf$.  Then, $U \subset \DDh$ for each $h \in \STf$, and by Corollary \ref{cor:2.5} we get
$$V = \DD(\vrp(f)) = \iavrp(\DDf) \subset \iavrp(\DDh) = \DD(\vrp(h)).$$
Hence $\vrp(h) \in \ST(\vrp(f))$, which gives $ \vrp(\STf) \subseteq  \ST(\vrp(f))$. By   \eqref{eq:focal.zone.2} and the specialization in  Proposition ~\ref{prop:2.3.b}, i.e., $\iavrp(\EE(\STf)) = \EE(\vrp(\STf))$,  we obtain the inclusion  $\fcV \subseteq  \iavrp(\fcU)$. \pSkip

On the other hand, if $\fcV \subsetneq  \iavrp(\fcU)$, then there exists $h' \in \ST(\vrp(f)) \subset \sB$ such that $\iavrp(\fcU) \nsubseteq \EE(h') \cap \EE(\vrp(h))$ for some $h \in \STf \subset \sA$.  Then,  $W := \DD(h') \cap \DD(\vrp(h))$ is an open subset of $\SpecB$, which is  contained   in $\DD(\vrp(h))$ and  contains $\DD(\vrp(f))$ as well, by the first part.  Therefore, $\avrp(W)$ is an open set of $\SpecA$ which is contained in $\DD(h)$, so there exists $g \in \sAptngs$ such that  $\avrp(W)= \DD(g)$. As $\DD(\vrp(f)) \subseteq W $,  we get that $\DD(f) \subseteq \DD(g)$, and hence $g \in \STf$ with  $\DD(\vrp(g)) = W$. But, this shows that  $\iavrp(\fcU) \subseteq \EE(h') \cap \EE(\vrp(h))$,  and hence  $\fcV =  \iavrp(\fcU)$.
 % Therefore $ \vrp(\STf) =   \ST(\vrp(f))$,  and hence  $\fcV =  \iavrp(\fcU)$.
\end{proof}

The setup \eqref{eq:focal.zone.2} allocates each open set $\DDf$ in the Zariski topology on $X := \SpecA$ with a distinguished (nonempty) subset -- its  \fzone\ $\fcDDf$. By the lemma, \fzone s respect contiguity of maps between spectra, induced by \qhom s of \nusmr s.

\pSkip

Having gone
this far with functorial properties, one can prove that, restricting to basic open coverings $\DDf = \bigcup_{i \in I }\DD(f_i)$, subject to certain  constrainers concerning their  \fzone s,  the functor $\OX : \bfDD(X) \To \NSMR$ as defined in~ \eqref{eq:OX} is a sheaf of (\tame) \nusmr s.
It extends to the entire spectrum
$X = \SpecA$ of $\sA$ by letting
$$\cOX(U) = \plim{\DDf\subset U} \OX(\DDf)=  \plim{f\in \sA, \\ \DDf\subset U}
\sA_\STf$$
for open subsets $U \subset  X$, where the first projective limit runs over all open
subsets $\DDf \subset X$  that are contained in $U$, and the second over
all $f \in \sA$ such that $\DDf \subset  U$. By the universal property
of projective limits $\cOX$ is a functor on the category of open subsets in $X$.
Moreover, if $U = \DDg$ is an open set,  for some $g \in \sA$, then $\cOX(U)$
%$\underleftarrow{\lim}_{\DDf\subset U} \OX(\DDf)$
 is canonically isomorphic to $\OX(\DDg) = \sA_\STg$
and  $\cOX$ restricts to a functor on  $\bfDD(X)$, isomorphic to $\OX$, yielding
$\cOX$ as a sheaf of \nusmr s on $X$.

Nevertheless,   the above  level of generality is not required for the purpose of the present paper, and below we provide an explicit construction of a structure sheave, obtained directly in terms of sections which are  coincident with stalk structures, subject to \fzone s.
%\end{rem}

Let $M$ be an $\sA$-\numod\ (Definition \ref{def:numodule}), and let   $X = \SpecA$. Using $\OX$ one can  define the map
$$\scF:\DDf  \TO M_{\STf}, $$
 where $M_{\STf} = M \otimes_\sA \sA_{\STf}$ is the localization of $M$ by the
multiplicative system $\STf$ determined by~  $f$ in~ $\sA$ (Definition \ref{def:module.localization}). Then, $\scF$ is a functor from the category $\bfDD(X)$ of all  open
subsets  $\DDf \subset X$ to the category of  $\sA$-\numod s, and it extends to a sheaf of
$\sA$-\numod s  on all open subsets of $X$.
 Since $M_\STf$ is an $\sA_\STf$-\numod \ for every $f \in \sA$, the \numod\
structure on $\scF$  extends canonically to a $\OX$-\numod\ structure, called the
 \textbf{$\OX$-\numod\ associated to
$M$}.

%\subsection{Structure sheaves  and locally \nusmrsp s} \sSkip

\subsection{Structure sheaves} \sSkip

We begin with an explicit construction of local sections which are  patched together to a sheaf $\OX$ of (\tame) \nusmr s on $X:=\SpecA$, and are compatible with functoriality as described previously in ~\S\ref{ssec:towards.sheaves}.
Recall that ~$\sA_x$ denotes the localization of~ $\sA$ by the \gprimec\ ~ $\xpCong$,  corresponding to a point~ $x$ in $X$ (Notation~ \ref{notat:cong.x}). By Definition \ref{def:cong.localization}, such  localization is defined for $\xpCong$  by letting  $\sA_{\xpCong}:=  \sA_\MS$, where $\MS = \iTcl(\xpCong)$.
Recall also from  Definition ~ \ref{def:focal.zone} that
 $\fcDDf$ denotes the \fzone\ of $\DDf \subseteq X$. With these notations, by \eqref{eq:focal.zone} we can write
 \begin{equation}\label{eq:x.focal.zone}
   \STf \subseteq  \iTcl(\xpCong) \dss\Iff x \in \fcDDf,
 \end{equation}
which easily restates our correspondence.

In classical algebraic geometry over rings, which employs ideals,  the inclusion of a prime ideal $\mfp_x$ in a basic open set $\DDf$
 automatically implies that $f \in \sA \sm \mfp_x$, where  $\sA \sm \mfp_x$ is a multiplicative system. The same holds for a collection of elements $f_1, \dots, f_n \in \sA \sm \mfp_x$,  where now $\mfp_x$ belongs to the intersection of the corresponding ~ $\DD(f_i)$. These  inclusions in a multiplicative system  are  curtail for localization, as these elements become units in $\sA_x$. Furthermore, they establish the correspondence  $\sA_{f} \To \sA_x $
  having the required universal property that coincides with  inductive limits.

  In the supertropical setting, since the complement of a ghost projection $\iGcl(\pCong)$ in $\sA$ may contain elements which are not \tprs\ and thus cannot determine units,  maps of type  $\sA_\STf \To \sA_x$ are not always properly accessible for every $x \in \DDf$.
  % Furthermore, we also want to frame the cases when $f$ is non-tangible.
  Therefore,  a direct supertropical analogy to classical approach  does not suit for this setting and our construction of sections basically relies  on those points of $\DDf$  that admit the right behaviour, i.e. $\STf \subseteq \iTcl(\pCong)$.  These points are the points of the \fzone\ $\fcDDf$ of $\DDf$, defined for every $f \in \sA$ which is not ghostpotnet.  As we shall see, this specialization answers all our needs,   especially concerning computability  with  inductive limits.

\pSkip

Given an open set $U = \DDf$, where $f \in \sA$ is assumed not to be ghostpotent, we write $\fcU = \fcDDf$ for its (nonempty) \fzone\ \eqref{eq:focal.zone}.

\begin{defn}\label{def:structure.sheaf}  Let  $U \subseteq X$ be  an open set.
 We say that a map
 \begin{equation}\label{eq:fc.sections}
 \sig: U \TO \coprod_{x \in\chU} \sAx, \qquad \scn = (\scn_{x})_{x \in \chU}, \ \scn_{x} \in \sAx,
\end{equation}
  is \textbf{locally quotient of $\sA$ on $U$}, if
 for every $x \in \fcU$ there is a neighborhood $V$ in  $U$ and elements  $f ,g \in \sA$
with $g \in \iTcl(\pCong_y)$ such that  $\sig_{y} = \frac fg$ in $\sA_y$ for all
$y \in \fcV$. Such maps are the  \textbf{(local) sections} of the \textbf{structure \nushf} $\OX$  of $X$, defined via
$$\OX(U) := \big\{ \sig \cnd \sig  \text{ is locally quotient of $\sA$ on $U$} \big\}$$
for all open sets $U$ in  $X$.
\end{defn}

The conditions in this definition imply that the elements of $\OX(U)$ are local, and that these sections indeed form a sheaf $\OX$ on $X$ (Definition ~   \ref{def:sheaf}) due to the following structure over all open sets $U,V$ of~ $X$. The restriction maps $\OX(U) \To \OX(V)$ are induced from the inclusions $\io: U \Into V$ by sending $\sig \To \sig \circ \io$. Given sections $\sig_1, \sig_2 \in \OX(U)$, their  addition $\sig_1 + \sig_2$  is the section that sends ~ $x$  to $\sig_1(x)+  \sig_2(x)$ in $\sAx$ for every $x \in \fcU$. Similarly,  their multiplication $\sig_1 \cdot \sig_2$ sends $x$  to the product  $\sig_1(x) \sig_2(x)$ in $\sAx$ for every $x \in \fcU$.  Associativity and distributivity of these operations follow  from the point-wise operations of~  $\sAx$.
The \tprs\  (resp. tangible, ghost) elements of $\OX(U)$ are the sections with
$\sig(x) \in \sAxptngs$
 (resp. $\sig(x) \in \sAx|_\tng$,  $\sig(x) \in \sAx|_\ghs$) for all  $x \in \fcU$. Accordingly, $\OX(U)$ is endowed with a structure of a (\tame) \nusmr.
Note that for any $f \notin \tN(\sA)$ the local sections \eqref{eq:fc.sections} on $U = \DDf$ are subject to elements within  its (nonempty) \fzone\ $\fcU = \fcDDf$. Therefore, having Lemmas \ref{lem:fuclZintersection} and  ~ \ref{lem:fc.map}, these sections coincide  with the functor \eqref{eq:OX} given by $\OX: U \mTo \sA_\STf$, since $\STf \subseteq \iTcl(\xpCong)$ for each~ $x \in \fcU$.

\pSkip

To customize  the stalk $\scF_x$  at a point $x \in X$ (cf. \eqref{eq:stalk}) to our setup,  we  introduce the   family
\begin{equation}\label{eq:DDx}
\begin{array}{lll}
 \MDx & := \big\{ \DDh \cnd   h \in \sAptngs, \  \fcDDh \ni x \big \} \\[1mm]
 & \ =  \big\{ \DDh \cnd h \in \sAptngs, \  \DDh \ni  x \  \text{ with } \ \STh \subseteq \iTcl(\xpCong) \big \} .  \end{array}
\end{equation}
of open sets of $X$, determining by \tprs\ elements $h \in \sAptngs $, that contain~ $x$.
 In particular,  $X = \DD(\one) \in \MDx $ for every $x \in X$, and  thus
$\MDx$ is nonempty.  An open neighborhood $U$ of $x$  which belongs to ~$\MDx$ is denoted by $U_x$.

\begin{rem}\label{rem:DxInredecation}  $\MDx$ is closed for intersection, as follows from Lemma \ref{lem:fuclZintersection}. Namely, $$\DDf, \DDg \in  \MDx \dss\Rightarrow \DDf \cap \DDg \in \MDx,$$ where $\DDf \cap  \DDg = \DDfg$ by Corollary \ref{cor:1.2}.(i).
\end{rem}

For \tprs\ elements $f \in  \sAptngs$ we have the correspondence
\begin{equation}\label{eq:tpsr.cor}
  x \in \fcDDf \dss\Iff \STf \subseteq \iTcl(\xpCong) \dss\Iff  \DDf \in \MDx,
\end{equation}
cf. \eqref{eq:focal.zone} and \eqref{eq:DDx}, respectively.

\begin{defn}\label{def:st.stalk}

The  \textbf{\nustalk} $\OXx$ of $\OX$ at a point $x \in X$  is defined to be the inductive limit
\begin{equation}\label{eq:nu.stalk}
\OXx := \ilim{U_x \in \MDx} \OX(U_x)
\end{equation}
of  sections $\scn \in \OX(U_x)$ over  $\MDx$.
\end{defn}
For each open neighborhood $U_x \in \MDx$ of $x$ we have the well-defined canonical map
 $$\OX(U_x) \TO \OXx, \qquad \scn \longmapsto  \scn_{x},$$
sending  $\scn \in \OX(U_x)$ to the germ $\scn_x$  in the \nustalk\ $\OXx$.

\begin{thm}\label{prop:stalks}
Let $\sA$ be a (\tame) \nusmr\ and let $X = \SpecA$ be its spectrum.
\begin{enumerate}\eroman
  \item For any $x\in  X$  the \nustalk \ $\OXx$ of the \nushf\ $\OX$ is isomorphic to the local \nusmr\  $\sAx$.
  \item  The  \nusmr\ $\OX (\DDf )$, with \strict\ $f \in \sAptngs$,  is isomorphic to the localized \nusmr\ $\sA_\STf$.

   \item   In particular, $\OX (X) \cong \sA$.
%  \item For any tangible  $f \in \sAtng$, there is an injection $\sA_\STf \Into \OX (\DDf )$ of $\sA_\STf$ in the
%  the \nusmr \ $\OX (\DDf )$,  in particular $\sA \Into \OX(X). $ \Com{Omit}
%  \item For any   $f \in \sA$, there is an injection $  \OX (\DDf ) \Into \sA_\STf$ of the \nusmr \ $\OX (\DDf )$ in    $\sA_\STf$. \Com{Omit}
\end{enumerate}
\end{thm}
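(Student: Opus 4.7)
The plan is to mimic the classical Hartshorne-style proof for structure sheaves on affine schemes, adapted to the supertropical setting where the focal-zone condition compensates for the fact that non-tangible elements cannot be inverted.

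For part (i), I would construct the candidate isomorphism
\[
\Phi_x : \OXx \TO \sAx, \qquad \sig_x \longmapsto \sig(x),
\]
and verify it is a well-defined \qhom\ of \nusmr s. Well-definedness follows from the germ equivalence relation: if $(U,\sig)$ and $(U',\sig')$ agree on some neighborhood $V \subseteq U \cap U'$ of $x$, then in particular $\sig(x)=\sig'(x)$ in $\sAx$. For surjectivity, given $\frac{f}{g}\in \sAx$ with $g\in\iTcl(\xpCong)$, Definition \ref{def:nucong} and Lemma \ref{lem:stcore} guarantee $g\in\sAptngs$, so $x\in\DDg$ and, since $\STg\subseteq\iTcl(\xpCong)$ (cf.\ Remark \ref{rem:f.vs.Sf}), we have $\DDg\in\MDx$; then $y\mapsto \frac{f}{g}$ is a locally quotient section on $\DDg$ whose germ at $x$ maps to $\frac{f}{g}$. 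For injectivity, if $\Phi_x(\sig_x)=0$ (in the sense of $\sig(x)=\frac{h}{k}$ with $h,k$ yielding $\sig(x)=\frac{f}{g}$ equal on the stalk), the equality $\frac{f}{g}=\frac{f'}{g'}$ in $\sAx$ produces $h\in\iTcl(\xpCong)$ with $hfg'=hf'g$; since $h\in\sAptngs$ the element $h$ is invertible on the \fzone\ $\fcDD(h)\ni x$, shrinking to a neighborhood $V_x\in\MDx$ where the two local quotients coincide.

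For part (ii), since $f$ is \strict\ we have $\fcDDf=\DDf$, so every section is locally quotient on \emph{all} of $\DDf$. I would define
\[
\Psi : \sA_\STf \TO \OX(\DDf), \qquad \tfrac{a}{h}\longmapsto \Big(x\mapsto \tfrac{a}{h}\in\sAx\Big),
\]
which is well-defined because $\STh\subseteq \STf\subseteq\iTcl(\xpCong)$ for every $x\in\DDf=\fcDDf$, making $h$ invertible in each local \nusmr\ $\sAx$. Injectivity of $\Psi$ is the heart of the argument: if $\tfrac{a}{h}$ and $\tfrac{a'}{h'}$ induce the same section, then for every $x\in\DDf$ there exists $k_x\in\iTcl(\xpCong)$ with $k_x(ah'-a'h)$ ghost (using the \nusmr\ localization rule \eqref{eq:ms:1} in the slightly-adapted form appropriate to our ghost-relations via Lemma \ref{lem:ghs.eqv}); the set of such $x$ cuts out an open cover of $\DDf$, and quasi-compactness (Proposition \ref{prop:6.1.11}) reduces to finitely many $k_i\in\STf$, whose product yields a single witness in $\STf$ identifying the two fractions in $\sA_\STf$. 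For surjectivity, given $\sig\in\OX(\DDf)$, cover $\DDf$ by principal subsets $\DD(h_i)$ on which $\sig$ is represented as $\tfrac{a_i}{h_i}$ with $h_i\in\sAptngs$ and $\STh_i\subseteq\iTcl(\xpCong)$ for $x\in\DD(h_i)$; replacing $h_i$ by suitable powers inside $\STf$ (using Lemma \ref{lem:D.contain}.(vi) and Lemma \ref{lem:rad.prop.1}) and applying quasi-compactness again, extract a finite subcover $\DDf=\DD(h_1)\cup\cdots\cup\DD(h_n)$; the compatibility $\tfrac{a_i}{h_i}=\tfrac{a_j}{h_j}$ on overlaps $\DD(h_ih_j)$ together with the injectivity argument lifts to $a_ih_j^{N}=a_jh_i^{N}$ in $\sA$ for some uniform $N$, and a partition-of-unity style combination $\sum \tfrac{c_i a_i}{h_i^N}$ (made possible because $\DDf$ is covered by the $\DD(h_i^N)$, so $\srad(f)\subseteq \srad(\{h_i^N\})$ and $f^M=\sum c_i h_i^N+g$ with $g\in\sAghs$ by Remark \ref{rem:ghostification.smr}) yields a preimage in $\sA_\STf$.

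For part (iii), take $f=\one$. Then $\one\in\sAual$, so by Properties \ref{proper:CC}.(c) and Properties \ref{proper:Sf}.(a) we have $\fcDD(\one)=\DD(\one)=X$ and $\ST(\one)=\sAX$, the group of units of $\sA$. Hence $\one$ is strict, part (ii) applies, and $\OX(X)\cong \sA_{\sAX}$. But localizing a \nusmr\ at its group of units is canonically the identity (the canonical map $\tau_{\sAX}:\sA\To\sA_{\sAX}$ is bijective by Remark \ref{rem:R2RC}.(iii)), so $\OX(X)\cong\sA$.

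The principal obstacle is the injectivity/surjectivity dichotomy in part (ii): the classical argument rests on the ability to clear denominators via a single common witness, and here the witness must live in the \emph{tangible} monoid $\STf$ while the relevant congruences are governed by the ghost-equivalence $\pcng$. The bridge is provided by the \gprime\ condition (Definition \ref{def:prmCng}) and Lemma \ref{lem:rad.prop.1}, which let us promote local ghost-equalities to a genuine equality of fractions in $\sA_\STf$; verifying that the Hartshorne covering-and-patching runs through under these substitutions is where the real work lies.
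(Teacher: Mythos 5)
Your overall architecture coincides with the paper's: the same maps $\OXx \To \sAx$ and $\sA_\STf \To \OX(\DDf)$, the same surjectivity arguments, the same covering-and-patching for surjectivity in (ii), and the same reduction of (iii) to the unit $\one$. Part (i) and the surjectivity half of part (ii) are fine as outlined.

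The genuine gap is in your injectivity argument for part (ii). First, a minor but symptomatic point: the expression $k_x(ah'-a'h)$ is meaningless in a \nusmr\ (there is no subtraction), and the localization equivalence \eqref{eq:ms:1} requires a genuine equality $k_x a h' = k_x a' h$, not a ghost relation; Lemma \ref{lem:ghs.eqv} does not bridge this. More seriously, your plan to extract ``finitely many $k_i \in \STf$'' and take their product as a global witness does not work: the local witnesses $k_x$ live only in $\iTcl(\xpCong)$, and there is no reason any of them satisfies $\DDf \subseteq \DD(k_x)$, which is what membership in $\STf$ requires. Even granting a finite subcover $\DDf \subseteq \bigcup_i \DD(k_i)$, the product $\prod_i k_i$ satisfies $\DD(\prod_i k_i) = \bigcap_i \DD(k_i)$, which need not contain $\DDf$, so the product is still not in $\STf$ and cannot identify the two fractions in $\sA_\STf$. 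The paper's route avoids this entirely: all the witnesses $k_x$ lie in the equaliser $E := \Eql(g_1 h_2^{\ell_2}, g_2 h_1^{\ell_1})$, which is a semiring ideal; since $f$ is \strict, $\fcDDf = \DDf$, so every point of $\DDf$ meets a witness and hence $\VV(\gCong_E) \cap \DDf = \emptyset$, i.e.\ $\VV(E) \subseteq \VV(f)$. Then $\srad(f) \subseteq \srad(E)$ and Corollary \ref{cor:rad.incl.2}.(iii) give $f^m \in E$ for some $m$ — and $f^m$, being a power of the \tprs\ element $f$, \emph{does} lie in $\STf$ and serves as the single witness. (Equivalently, one can write $f^m = \sum_i c_i k_i$ as a linear combination of the local witnesses, not a product.) Note this is also the only place where strictness of $f$ is actually used, so your proof as written never invokes the hypothesis that makes the statement true.
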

%\noindent The condition on $f$ to be \strict\ in part (ii) is needed only for proving  injectivity of $\sA_\STf \To \OX (\DDf )$, while surjectivity holds for any \tprs\ $f \in \sAptng$.
\begin{proof}
  (i):
  The map sending  a local section $\scn$ in a neighborhood $U_x \in \MDx$ of $x$ to $\scn_{x} \in \sAx$
provides  a well-defined  \qhom \ of \nusmr s
$$\vrp : \OXx \TO \sAx, \qquad (U_x, \scn) \mTo  \scn_{x} \ (\in \sAx),$$
 which we claim is a bijection. % ($(U_x, \scn)$ is an equivalence class.)

\pSkip \underline{Surjectivity of $\vrp$}:
% First, $\vrp$ is surjective. Indeed,
Each element of  $\sAx$ has the form $\frac fg$
with $f ,g \in \sA$ where   $g \in  \iTcl(\xpCong)$ is \tprs,  thus  $\STg \subseteq \iTcl(\xpCong)$ (cf. Remark \ref{rem:f.vs.Sf}) and $\DDg \in \MDx $.   Hence, the fraction
$\frac fg$ is well-defined on the \fzone \ $\fcDDg \subseteq \DDg$,  and  $\big(\DD(g), \frac fg \big)$
defines an element in $\OXx$ (cf. Lemma \ref{lem:fuclZintersection})  that is mapped by  $\vrp$ to the
given element.

%To prove that $\vrp$ is injective,
\pSkip \underline{Injectivity of $\vrp$}:
Let $\scn_1,\scn_2 \in \OX (U_x)$ for some neighborhood $U_x \in \MDx$ of $x$, and assume that $\scn_1$ and $\scn_2$ have the same value at $x$, namely
$(\scn_1)_x = (\scn_2)_x$. We will show that $\scn_1$ and $\scn_1$ coincide over $\fcV_x$ in a neighborhood $V_x \in \mfD _x$ of $x$, so that they define the same element in $\OXx$.
Shrinking $U_x$ if necessary, we may assume that
$\scn_i =  \frac{f_i}{g_i}$ on $\fcU_x$,  for $i = 1, 2$, where $f_i,g_i \in \sA$ with $g_i \in \iTcl(\xpCong)$. As $\scn_1$ and~ $\scn_1 $ have the same image in $\sAx$, it follows that $hf_1g_2 = hf_2g_1$ in $\sA$ for some $h \in \iTcl(\xpCong)$. Therefore, we also have
$\frac{f_1}{g_1} = \frac{f_2}{g_2}$
in every local \nusmr \ $\sA_y$ such that $g_1,g_2,h \in \iTcl(\ypCong)$. But, %for $\MS :=  \iTcl(\ypCong)$,
 the set of such $y$'s (cf. \eqref{eq:x.focal.zone}) is
the set  $\fcDD(g_1) \cap \fcDD(g_2) \cap  \fcDD(h)$, laying in
the open set
$\DD(g_1) \cap \DD(g_2) \cap  \DD(h)$, and belongs to  ~$\MDx$, since $g_1, g_2, h$ are \tprs s (Remark \ref{rem:DxInredecation}). Hence, $\scn_1 = \scn_2$ on ~$\fcV_x$ for some neighborhood $V_x \in \MDx$ of $x$, and thus on the entire set $V_x$, as required. Therefore, $\vrp$ is injective.

\pSkip

(ii): Recall from Lemma \ref{lem:fg.1} that, for a  \tprs\ element $f \in \sAptngs$,  the \nusmr\ $\sA_\STf$ is isomorphic to~ $\sA_f$, where $f\in \STf$.
% However, the proof of  this  assertion is given now  in full generality for every $f \notin \tN(\sA)$, i.e., when ~$f$ is not a ghostpotent.
%
 We define the   \qhom \ of \nusmr s
$$\psi  : \sA_\STf \TO \OX (\DDf ), \qquad   \ \frac{g}{h^\ell}  \mTo  \frac{g}{h^\ell},
\quad h \in \STf, $$
given by sending $\frac{g} {h^\ell}$
%to the
%section whose value at $x$ is the image of $\frac{g} {f^k}$ in $\sAx$.
%(i.e. it maps $\frac{g} {f^k}$
%
to the section of $\OX (
\DDf )$ that assigns to any $x \in \fcDDf$ the image of $\frac{g} {h^\ell}$ in $\sAx$.
(The elements $\frac{g} {h^\ell}$ are well defined in $\sAx$, since $\STf \subseteq \iTcl(\xpCong)$ for $x \in \fcDDf$.)
%Note that $\DDf = \fcDDf,$ as $f \in \sAptng$ is assumed to be  \strict\ (Definition \ref{def:focal.zone}).

\pSkip \underline{Injectivity of $\psi$}:
Assume that $\psi \big(\frac{g_1} {h_1^{\ell_1}}\big) = \psi \big(\frac{g_2} {h_2^{\ell_2}}\big) $, with $h_1,h_2
\in \STf$.   Then, for every $x \in \fcDDf$ there is a \tprs\ element
$h_x \in \iTcl(\xpCong)$  such that $h_x g_1 h_2^{\ell_2} = h_x g_2 h_1^{\ell_1}$.
% Denote by $H$ the set of all these \tprs\ elements $h_x$.
Namely  $h_x$ is contained in the equaliser $E := \Eql(g_1 h_2^{\ell_2} ,  g_2 h_1^{\ell_1})$
 of $g_1 h_1^{\ell_1}$ and $ g_2 h_2^{\ell_2}$  (Definition~ \ref{def:equaliser}) -- a \nusmr\ ideal. Let~ $\gCong_E$ be its  ghostifying congruence~\eqref{eq:G.E}.
 Then, for any $x \in \fcDD(f)$, we have  $E \nsubseteq  \iGcl(\xpCong)$, whereas $h_x \in E$ with     $h_x
 \in \iTcl(\xpCong)$.
  As this  holds for any $x \in \DDf$, where $\DDf=\fcDD(f)$ since  $f$ is \strict,    we have $ \VV(\gCong_E) \cap \DDf = \emptyset$,
or in other words $\VV(E) = \VV(\gCong_E)
 \subseteq  \VV( f )$.  Then, $\srad(f) \subseteq \srad(E)$ by Lemma~  \ref{lem:D.contain}.(viii),
implying  that $f^m  \in E$ for some~ $m$ (Corollary \ref{cor:rad.incl.2}.(iii)). Therefore,
$f^m g_1 h_2^ {\ell_2} = f^m g_2 h_1^{\ell_1}$ in $\sA$, and   hence $\frac {g_1}{h_1^{\ell_1}} = \frac {g_2}{h_2 ^{\ell_2}}$ in~ $\sA_\STf$.

\pSkip \underline{Surjectivity of $\psi$}:
Let $\scn \in \OX (U )$, where  $U = \DDf$. By definition, $U$ can be  covered by open sets $U_i$  on
which $\scn$ is represented as  a quotient $\frac{g_i}{f_i}$, with $f_i  \in  \iTcl(\xpCong)$ for all $x \in  \fcU_i$, i.e., $U_i \subseteq \DD(f_i)$. To emphasize, $f_i \in \sAptngs$, namely it is \tprs.   As the open
sets of type  $\DD(h_i)$ form a base for the topology of $X$, we may assume that $U_i = \DD(h_i)$ for some~ $h_i$.

We may also assume that $f_i = h_i$, and therefore $h_i \in \sAptngs$ is \tprs\ as well.  Indeed, since  $\DD(h_i) \subseteq  \DD(f_i)$,  by taking
complements we obtain $\VV( f_i) \subseteq  \VV(h_i)$,  and thus $\srad(h_i) \subseteq \srad(f_i)$ by Lemma~  \ref{lem:D.contain}.(viii). Therefore $h_i \in \iGcl(\srad(f_i))$  and, since $\sA$ is \tame,   $h^\ell_i =c f_i$ for some $c \in \sA$ by
Corollary \ref{cor:rad.incl.2}.(ii). Thus $\frac{g_i}{f_i} = \frac{cg_i}{h_i^\ell}$. Replacing~ $h_i$ by $h^\ell_i$
(since  $\DD(h_i) = \DD(h^\ell_i)$ by  Lemma \ref{lem:D.contain}.(vi)) and $g_i$ by~ $c g_i$, we can assume that
$\DD(f)$ is covered by open sets of type  $\DD(h_i)$, and that $\scn$ is represented by $\frac{g_i}{h_i}$ on $\fcDD(h_i) \subset \DD(h_i)$.

More precisely, $\DDf$ can  be covered by finitely many such sets $\DD(h_i)$. Indeed, $\DDf \subseteq \bigcap_i \DD(h_i)$ iff $\VV(f) \supseteq \bigcap_i \VV(h_i) = \VV(\sum_i \gen{h_i})$, where $\gen{h_i}$ is the ideal generated by $h_i$, cf. Proposition \ref{prop:var1}.(iv). By Lemma~  \ref{lem:D.contain}.(viii) and Corollary \ref{cor:rad.incl.2}.(iii) this is equivalent
to  $f^n  \in \sum_i \gen{ h_i}$ for some~ $n$, which  means that~ $f$ can be written as a finite sum $f ^n  = \sum_i b_ih_i$ with  $b_i \in \sA$. Hence, we may  assume that only finitely many~ $h_i$ are involved.

Over the intersection $\DD(h_i) \cap \DD(h_j) = \DD(h_i h_j)$ we have two elements $\frac{g_i} {h_i}$
and $\frac{g_j} {h_j}$
representing $\scn$, subject to $\fcDD(h_i) \cap \fcDD(h_j) = \fcDD(h_i h_j)$ (Lemma \ref{lem:fuclZintersection}), which contains the \fzone\ $\fcDD(h_i h_j)$ of $\DD(h_i h_j)$.
%More precisely, by Lemma \ref{lem:fuclZintersection} $\fcDD(h_1 h_2) =   \fcDD(h_1) \cap \fcDD(h_2)$, since  $h_1,h_2 \in \sAptngs$.
Then, by the injectivity
proven above, it follows that $\frac{g_i} {h_i} = \frac{g_j} {h_j}$
in $\sA_{\ST(h_ih_j)}$. But the product $h_i h_j$ is \tprs,  i.e., $h_1 h_2 \sAptngs$, implying that   $\sA_{\ST(h_ih_j)} $ is isomorphic to $\sA_{h_ih_j}$  by Lemma \ref{lem:fg.1}, and  hence $(h_ih_j)^m g_ih_j = (h_ih_j)^m g_jh_i$ for some~ $m$.
As we have only finitely many $h_i$, we may choose one $m$ which holds for all $i, j$. Replacing ~$g_i$
by ~$g_ih^m_i$ and~ $h_i$ by $h_i^{m+1}$ for all $i$, we remain with $\scn$   represented by $\frac{g_i}{h_i}$
on~ $\DD(h_i)$, subject to $\fcDD(h_i)$,  and furthermore
$g_i h_j = g_jh_i$ for any $i, j$.

Write $f^n =\sum_i b_ih_i$ as above, which is possible since the $\DD(h_i)$'s cover $\DDf$, and let $g= \sum_i b_i g_i$, with $b_i \in \sA$.
Then, for every $h_j$ we have
$$ gh_j = \sum_i b_ig_ih_j = \sum_i
b_ih_ig _j = f^n g_j,$$
thus  $\frac fg = \frac{h_j}{g_j}$
on $\fcDD(h_j) \subseteq \DD(h_j)$. Hence, $\scn$ is represented on $\DDf$, subject to $\fcDDf$, by  $\frac{g}{f ^n} \in \sA_{\STf}$,  and therefore $\psi:\sA_\STf \To \OX (\DDf )$ is surjective.

\pSkip
(iii):   Every unit $f \in \sAX$,  in particular $\one$, is \strict, and thus  $\sA = \sA_f \cong \OX (X) = \OX(\DDf)$ by part~ (ii).
%
%Every  tangible unit $f$ of  $\sA$ is in $\sAptngM$, and thus  $\sA = \sA_f \cong \OX (X) = \OX(\DDf)$ by part (ii).
\end{proof}

\subsection{Locally \nusmrsp s} \sSkip

Let $A$ and $B$  be \nusmr s, having the  spectra $X = \SpecA$ and  $Y = \SpecB$, respectively.
Recall that a \qhom\ $\vrp: A \To B$ induces the pull-back  map $\avrp: Y \To X$  of congruences,   given by sending  $\pCong_b \in \SpecB$  to  $\vvrp(\pCong_b)$ in $\SpecA$, cf. \eqref{eq:specMap} and Remark \ref{rem:cong1}.(iii).

\begin{defn}
A \textbf{\nusmrsp}  $\XOX$ is  a topological space $X$ together  with
 a  \textbf{structure \nushf}  $\OX$ of (commutative) \nusmr s. % -- the \textbf{structure sheaf} of $\XOX$.
 A  \textbf{morphism} of \nusmrsp s is a pair of maps  $$\pff: \XOX \TO \YOY,$$ where $\ff: X \To Y$ is a continuous map of topological spaces and $\sff: \OY  \To \dff(\OX)$
is a  morphism of \nushv\ on $Y$ (Definition \ref{def:sheaf}).
\end{defn}
The \nushf\ $\dff( \OX)$ on $Y$ is defined by $V \longmapsto \OX(\iff(V))$ for open subsets $V \subset Y$, and $\sff$ is a system of \nusmr\ \qhom s
\begin{equation}\label{eq:pull.back}
 \sff(V): \OY(V) \TO \OX(\iff(V)), \qquad V \subset Y \text{ open},
\end{equation}
which agree with restriction morphisms. In addition, by  Lemma \ref{lem:fc.map}, $\iff(V)$ respects  \fzone s, that is $\fcU = \iff(\fcV)$ for $U = \iff(V)$.

The map $\ff: X \To Y$ induces for any $x \in X$ a map
\begin{equation}\label{eq:DxMap}
  \tlff: \MDx \TO \MD_{\ff(x)}, \qquad U_x \mTo V_{\ff(x)},
\end{equation}
of sets of the form \eqref{eq:DDx}.  This is seen by \eqref{eq:tpsr.cor}, as
a membership of $\DDf$ in $\MDx$ corresponds to containment of $x$ in  \fzone s $\fcDDf$, while $\ff$ respects \fzone s  by Lemma \ref{lem:fc.map}.

Compositions of morphisms of  \nusmrsp s are  defined
in the  natural way.  A \nusmrsp\  $\XOX$ restricts to an open subset  $U \subset X$, yielding the \nusmrsp\ $(U,\OX|_U)$.
The injection $(U,\OX|_U) \Into \XOX$ is then  a morphism of \nusmrsp s, called \textbf{open immersion} of \nusmrsp s.

\begin{rem}\label{rem:6.6.7}
 A morphism $\pff: \XOX \To \YOY$ of \nusmr ed spaces
canonically induces for each   $x\in X$ a \nusmr\ \qhom \
$\sffx : \OYfx \To \OXx$.
 Indeed, for open subsets $V_y \subset  Y$ with $V_y \in \MD_{\ff(x)}$, i.e., $y = \ff(x)$, the
compositions
$$\OY (V_y ) \Right{5}{\sff(V_y)} \OX(\iff(V_y )) \TO \OXx $$
 agree with the restriction morphisms of $\OY$, respecting \fzone s as well,  and therefore  induce a \qhom\
$\sffx : \OYfx \To \OXx$.
\end{rem}

Recall that $\tNSpec(\sA)$ denotes the set of all \tminimalc  s (Definition \ref{def:nt.maximalSpec}) on a \nusmr \  ~$\sA$, and that  $\sA$ is local,  if
 all \tminimalc s $\nCong \in \tNSpec(\sA) $ have the same tangible projection~
$\iTcl(\nCong) $ (Definition \ref{def:local.ring}). (For example, any \nusmf\ is local.) In particular, by  Corollary \ref{cor:RmodPrime}, the localization~ $\sA_\pCong$ of ~ $\sA$ by a \gprimec\  $\pCong$ is a local  \nusmr\ with \ctminimalc\ ~$\nCong_\pCong$, cf.~ \eqref{eq:max.prime}.

As $\OXx$ is isomorphic to the local \nusmr\ $\sAx := \sA_{\pCong_x}$  (Theorem  \ref{prop:stalks}), to allocate   the  \ctminimalc\ of $\OXx$  we identify $\OXx$    with its isomorphic  image, and take the \lcong\ ~$\nCong_{\pCong_x}$ on $\sAx$, which we denote by~ $\nCong_x$.
 A ~\qhom\ $\vrp: \sA \To  \sB$ of local \nusmr s   is
called \textbf{\lqhom}, if
%$\vvrp(\tNSpec(\sB)) = \tNSpec(\sA)$, i.e.,
for any
 $\nCong_\sB \in \tNSpec(\sB)$ there exists   $\nCong_\sA \in \tNSpec(\sA)$ such that $\vvrp(\nCong_\sB) = \nCong_\sA$. Namely, $\avrp: \SpecB \To \SpecA$ maps \tminimalc s to \tminimalc s.
 For example, given a \qhom\ $\vrp: \sA \To  \sB$
and a \gprimec\ $\pCong_b$ on  $\sB$, the congruence  $\pCong_a = \vvrp(\pCong_b)$ is a \gprimec\ on $\sA$ (Remark \ref{rem:biject.proj}), then  it
follows that the induced \qhom\ $\sA_{\pCong_a} \To  \sB_{\pCong_b}$ is local.

\begin{defn}
A  \nusmr ed space $(X, \OX)$ is a \textbf{\lnusmrsp}, if all its \nustalk s ~$\scO_{X,x}$ are local \nusmr s.
% i.e.,  if $\scO_{X,x}$ has a unique maximal \qcong\ $\mCong_x$ for every $x \in X$.
A \textbf{morphism} $\pff: \XOX \To \YOY$ of locally \nusmr ed spaces is a morphism of \nusmrsp s such that  for all  $x \in X$ the morphisms $$\sff_
x : (\iff \OY)_x = \OYfx  \TO \OXx$$ of \nustalk s are  local \qhom s.
%, i.e., $(\sff)^\vee(\nCong_x) = \nCong_y$ for \tminimalc s $\nCong_y$ of $\OYfx$ and $\nCong_x$ of $\OXx$.
\end{defn}
%A \textbf{morphism} $(\vrp, \vrp^\#): X \To Y$ of locally \nusmr ed spaces is \textbf{local} if for every $x \in X$ and $y = \vrp(x)$, the morphism $\vrp^\#_
%x : \scO_{Y,y}  \To \scO_{X,x}$ between stalks
%maps the maximal \qcong\ $\mCong_y$ of $\scO_{Y,y}$ to the maximal \qcong\  $\mCong_x$ of $
%\scO_{X,x}$.

We write $\Hom(\sB,\sA)$ for the set of all \nusmrhom s $\sB \To \sA$, and $\Hom(\XOX, \YOY)$   for the set of all morphisms of locally \nusmrsp s $\XOX \To \YOY$.

\begin{thm} \label{prop:6.6.9}
Let $\sA$ and $\sB$ be \nusmr s, with spectra   $X = \SpecA$, $Y = \SpecB$  and  structure \nushv\ $\OX$, $\OY$, respectively.
\begin{enumerate} \eroman

  \item
 $\XOX$ is a locally \nusmr ed space with \nustalk\ $\OXx \cong \sA_x$ at every  $x \in X$.

\item  The canonical map
$$\Upsilon : \Hom(\XOX, \YOY) \TO  \Hom(\sB,\sA), \qquad  \pff \longmapsto \sff(Y),$$
 is a bijection.

 \item  For any morphism  $\pff: \XOX \To  \YOY $
as in (i), and a point $x \in X$, the associated map of \nustalk s
$$\sffx: \scO_{Y,\phi(x)} \TO \OXx$$
coincides canonically with the map $\sB_{\ff(x)} \To \sA_x$ obtained  from
$\sff(Y) : \sB \To \sA$ by localization.
\end{enumerate}

\end{thm}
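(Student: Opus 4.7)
The plan is to handle (i) directly from the previously established results, reserve the main effort for (ii) via an explicit inverse construction, and deduce (iii) as a by-product of that construction.

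For (i), Theorem \ref{prop:stalks}(i) gives $\OXx \cong \sAx$ for every $x \in X$. By Corollary \ref{cor:RmodPrime}, the localization $\sAx = \sA_{\xpCong}$ is a local \nusmr\ with \ctminimalc\ $\nCong_x := \nCong_{\xpCong}$, so every stalk is local and $\XOX$ is a \lnusmrsp.

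For (ii), I would construct the inverse $\Psi : \Hom(\sB,\sA) \TO \Hom(\XOX,\YOY)$ as follows. Given $\vrp : \sB \To \sA$, set $\ff := \avrp : X \To Y$, which is continuous by Corollary \ref{cor:2.5}. For $y = \ff(x) = \vvrp(\xpCong)$, the \qhom\ $\vrp$ sends $\iTcl(\ypCong)$ into $\iTcl(\xpCong)$, so the universal property of tangible localization (Proposition \ref{prop:local.univ}) yields a canonical local \qhom\ $\vrp_x : \sB_y \To \sAx$. These assemble into a sheaf morphism $\sff : \OY \To \dff \OX$ defined on sections by $(\sff(V)\scn)(x) := \vrp_{\ff(x)}(\scn_{\ff(x)})$. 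A routine check using Definition \ref{def:structure.sheaf} and Lemma \ref{lem:fc.map} shows that $\sff(V)\scn$ is again locally quotient (a local representative $\tfrac{g_1}{g_2}$ of $\scn$ near $\ff(x)$ is transported to $\tfrac{\vrp(g_1)}{\vrp(g_2)}$, with $\vrp(g_2) \in \iTcl(\xpCong)$), that $\sff$ commutes with restrictions and respects \fzone s, and that each induced stalk map is precisely the local \qhom\ $\vrp_x$.

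The bijection then splits in two directions. For $\Upsilon \circ \Psi = \id$, evaluating at $V = Y$ recovers $\sff(Y) = \vrp$ via $\OX(X) \cong \sA$ and $\OY(Y) \cong \sB$ from Theorem \ref{prop:stalks}(iii). The harder direction $\Psi \circ \Upsilon = \id$ is where locality is crucial. Starting with $\pff$ and setting $\vrp := \sff(Y)$, the key claim is that $\ff = \avrp$. For each $x \in X$, the hypothesis that $\sffx : \OYfx \To \OXx$ is a local \qhom\ means, via the identifications $\OXx \cong \sAx$ and $\OYfx \cong \sB_{\ff(x)}$, that the commutative square
$$\xymatrix{
\sB \ar[r]^{\vrp} \ar[d] & \sA \ar[d] \\
\sB_{\ff(x)} \ar[r]^{\sffx} & \sAx
}$$
pulls back $\iTcl(\nCong_x)$ into the tangible projection of the \ctminimalc\ of $\sB_{\ff(x)}$. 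Tracing this condition back to $\sB$ forces $\vvrp(\xpCong) = \pCong_{\ff(x)}$, i.e.\ $\ff(x) = \avrp(x)$. Once $\ff$ and $\avrp$ agree, the sheaf morphisms $\sff$ and $\Psi(\vrp)_{\sharp}$ coincide on every basic open $\DDg \subseteq Y$ by Theorem \ref{prop:stalks}(ii) and the universal property of localization, and therefore on all open sets.

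Part (iii) is then immediate: by the construction of $\Psi$, the stalk map associated to $\Psi(\vrp)$ at $x$ is, by definition, the localization $\vrp_x : \sB_{\ff(x)} \To \sAx$ of $\vrp = \sff(Y)$, and since every morphism equals $\Psi(\sff(Y))$ by the bijection in (ii), this canonical identification holds universally. The principal obstacle is the step $\ff = \avrp$ in the inverse direction of (ii); without the locality hypothesis the pullback $\vvrp(\xpCong)$ need not coincide with $\ff(x)$, and the bijection fails --- this is precisely the supertropical analogue of the classical fact that Spec is fully faithful only on locally ringed spaces.
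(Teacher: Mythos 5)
Your proof takes essentially the same route as the paper. Both prove (i) by citing Theorem \ref{prop:stalks}(i) together with the locality of $\sA_x$ (Corollary \ref{cor:RmodPrime}); both prove (ii) by constructing from $\vrp$ the pair $(\avrp, \sff)$ via continuity (Corollary \ref{cor:2.5}) and localization of $\vrp$ at stalks, and in the converse direction both use the locality hypothesis on the stalk maps $\sffx$ to force $\ff = \avrp$, after which $\sff$ is uniquely determined by $\vrp = \sff(Y)$ on global sections (Theorem \ref{prop:stalks}(iii)); and both obtain (iii) directly from the way the stalk map is built. The only substantive cosmetic difference is that you organize (ii) explicitly as $\Upsilon \circ \Psi = \id$ and $\Psi \circ \Upsilon = \id$, whereas the paper presents the two directions as a forward construction followed by the reconstruction of $\pff$ from $\vrp$; the content is identical. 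One small caution: the step asserting that $\vrp$ sends $\iTcl(\ypCong)$ into $\iTcl(\xpCong)$ (needed for the universal property of tangible localization to apply) is stated as if immediate; the paper handles it equally tersely by invoking the remark preceding the theorem on induced local \qhom s, so you are consistent with the paper, but in a fully self-contained write-up this inclusion of tangible projections merits a sentence of justification via Remark \ref{rem:induced.cong}(ii) and the fact that $\pCong$ is an \lcong.
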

\begin{proof}

(i): Follows form Theorem \ref{prop:stalks}.(i).

\pSkip
(ii):
% We show that the map, as defined below,
%$$\Psi: \Hom(\sB,\sA) \TO \Hom\big(\XOX, \YOY \big),$$
% is an inverse of $\Upsilon$.
 Let $\vrp : \sB \To \sA$ be a
\qhom\  of \nusmr s, and let
\begin{equation*}%\label{eq:str}
\ff = \avrp : X  \TO Y , \qquad \pCong \longmapsto \vvrp(\pCong), % \tag{$*$}
\end{equation*}
be
 the induced  map of spectra,  given by $\ff(x) = \vvrp(x)$.
  First, $\phi: X \To Y$ is continuous by Corollary ~\ref{cor:2.5}. Given $x \in X$, we can localize $\vrp$ to obtain   the \lqhom\ $\vrp_x : \sB_{\vvrp(x)} \To \sA_x $ of local \nusmr s.
   %for which $\vrp_x^\inv(\sA_x^\times) = (\sB_{\vvrp(x)})^\times$.
   Then, for any open set
   $V  \subset Y$ we obtain the \qhom\ of \nusmr s
%$$\sff(V) : \OY (V)= \coprod_{\vvrp(x) \in \fcV} \sB_{\vvrp(x)} \TO  \coprod_{x \in \iphi(\fcV) } %\sA_x =  \OX(\iphi(V)),$$
$$\sff(V) : \OY (V)= \coprod_{\iff(x) \in \fcV} \sB_{\iff(x)} \TO  \coprod_{x \in \fcU } \sA_x =  \OX(\iphi(V)), \qquad U= \iphi(V),$$
which gives a morphism of \nushv\ $ \sff : \OY \To \dff(\OX)$. The restriction of $\sff$ to   \fzone s   $\fcU$ and $\fcV$ coincides with $\ff$,  since
$\fcU = \iff(\fcV)$ by Lemma  \ref{lem:fc.map}.
By compatibility of sections with the continuity  of~ $\phi$, sending  $x \in X$ to $\phi(x) \in Y$, Theorem \ref{prop:stalks} shows that the morphism of \nustalk s $\sffx : \scO_{Y,\phi(x)} \To \OXx$
coincides with the canonical map $\sB_{\ff(x)} = \sB_{\vvrp(x)}  \To \sA_x$, which is local. Hence, $\pff$ is
a morphism of locally \nusmrsp s.

%To prove the converse relation $\Psi \circ \Upsilon = \id$,
Conversely,
let $(\ff, \sff): \OXX \To \OYY$ be a morphism of \lnusmrsp s.
On global sections, by Theorem \ref{prop:stalks}.(iii),  the map $\sff$ induces a \qhom\ of \nusmr s
$$ \vrp: \sB \cong \OY(Y) \TO \dff \OX(X) = \OX(X) \cong \sA. $$
Furthermore, for  any $x \in X$ there is the induced map of \nustalk s  $\sffx:  \OYfx \To  \OXx$, which must coincide with $\vrp$ on global sections, and thus rendering the diagram
\begin{equation*}%\label{eq:str}
 \begin{gathered}
\xymatrix{
\sB \ar@{->}[d] \ar@{->}[rr]^{\vrp = \Upsilon((\ff,\sff)) = \sff(Y)} && \sA \ar@{->}[d] \\
\sB_{\ff(x)} \cong \OYfx \ar@{->}[rr]^{\sffx} && \sA_x \cong \OXx \\
} %\tag{$*$}
\end{gathered}
\end{equation*}
commutative. (The \hom \ $\sffx$ is local, by assumption.)  Since $\ff(x) = (\sffx)^\inv(x) $, it follows that~ $\sffx$ is the localization of $\vrp$, which shows that $\ff$ coincides with $\avrp$. Hence, $\sff$ is induced from $\vrp$, and the morphism $(\ff,\sff)$ of \lnusmrsp s  derives from  $\vrp$.

\pSkip (iii): As  $\pff$ is
a morphism of locally \nusmrsp s, by part (i) it  coincides with the local map $\sB_{\ff(x)} \To \sA_x$.
\end{proof}

\subsection{Local \nusmr s}\sSkip

The \nustalk\ $\OXx$ (Definition \ref{def:st.stalk}) of a \lnusmrsp \ $\XOX$ is called the \textbf{local \nusmr}\ of $\XOX$ at  $x$, cf. Definition ~\ref{def:local.ring}.  Its \ctminimalc\ ~ \eqref{eq:max.prime} is obtained from  ~$\sA_x$ as
$$ \xnCong := \iTcl(\xpCong) \xpCong,$$
 and $\rsmf(x) := \OXx/\xnCong$ is the corresponding residue
\nusmf, cf. Corollary \ref{cor:RmodPrime}. % Recall that $\kp(x)$ is isomorphic to the superboolean semifield $\sbool$, cf. Example  \ref{exmp:superboolean}.
The tangible (resp. ghost) cluster of the   \ctminimalc\  $\xnCong$ on the \nusmr\  $\OXx$ is the set of all sections that
possess tangible (resp. ghost) values at the point $x \in X$.

If $\Ux \in \MDx$ is an open neighborhood of $x \in X$, cf. \eqref{eq:DDx}, and $f \in \OX(\Ux)$,  we write  $f(x) \in \rsmf(x)$ for  the
image of $f$ under the composition of the canonical \qhom s $\OX(\Ux) \To \OXx \To \rsmf(x)$.
%Recall that the local \nusmr\ of a \nusch\ $X$ at $x$ was defined in \eqref{eq:nu.stalk} as
%\begin{equation*}
%\OXx := \ilim{U_x \in \MDx} \OX(U_x)
%\end{equation*}

Writing $X$ for a \lnusmrsp\ $\XOX$, the following definitions of
geometric notions  are now directly accessible in terms of local \nusmr s.

\begin{enumerate}\ealph
  \item  The \textbf{local dimension} $\dim_x(X)$ of $X$ at a point $x \in X$ is defined to be the Krull dimension (Definition \ref{def:krull.dim}) of the local \nusmr\  $\OXx$.
The \textbf{dimension} $\dim(X)$ of the whole $X$ is the supremum of
local dimensions $\dim_x(X)$ over all $x \in X$.

  \item  The \textbf{Zariski cotangent space} to $X$ at a point $x$ is defined as $\xnCong/\xnCong^2$, realized as
a \numod\ over the residue \nusmf\ $\rsmf(x) = \OXx / \xnCong$ (cf. Definition \ref{def:cong.fractor} and \eqref{eq:cong.chain.2}), whose dual is  called the \textbf{Zariski tangent space} to  $X$ at $x$.

  \item $X$ is called  \textbf{nonsingular}  at a point  $x \in X$, if the Zariski tangent
space to $X$ at $x$ has dimension equal to $\dim_x(X)$; otherwise,  $X$ is said to be \textbf{singular}
at $x$ (i.e., the dimension of the Zariski tangent space is larger).

\end{enumerate}
The study of these notions requires a further development of dimension theory, this is left for future work.

\section{\nusch s}\label{sec:7}
%As before assumed to be a \tamesmr.

\subsection{Affine \nusch s}\label{ssec:aff.schemes}
\sSkip

Having the structure of \lnusmrsp s settled, we employ  these objects as
prototypes for the so-called schemes, introduced by A. Grothendieck.

\begin{defn}\label{def:nuscheme}
An \textbf{affine \nusch}\ is a \lnusmrsp\ $\XOX$   which  is isomorphic  to a  \lnusmrsp\ over a \nusmr, i.e.,  $\XOX \cong (\SpecA,\OSpecA)$
for some \nusmr\ ~$\sA$.
%An open subset that is isomorphic to an affine \nusch is called an
%\textbf{affine open \nussch}.
\pSkip
A \textbf{\nusch}\ is a \lnusmrsp\ $\XOX$ that has an open
covering by affine \nusch s  $(U_i,\OX \vert U_i)_{i \in I}$.
% $\OX\vert X_i$ is the restriction of the \nushf\ $\OX$ to the open subset $X_i \subset X$.
A \textbf{morphism of \nusch s} is a (local) morphism of \lnusmrsp s.\end{defn}

Often, for short,  we  write $X$ for the \nusch\ $\XOX$, and $\ff : X \To  Y$ for  a morphism  $(\ff, \sff) : \XOX \To  \YOY$ of \nusch s.
 The sheaf $\OX$ is called the \textbf{structure \nushf} of $X$.
 $\OX(U)$ is called  the \textbf{\nusmr\ of
sections} of $\OX$ over $U$, with $U \subseteq X$ open, and is sometimes denoted $\Gm(U, \OX)$. The notation $\Gm(X)$ stands for $\Gm(X,\OX)$, where $\Gm(\sA)$ denotes $\Gm(X)$ with $X = \SpecA$.

\begin{example}[Affine spaces]\label{exmp:affine.spc}
  Let $\sA = \tlR[\lm_1, \dots, \lm_n]$ be the \nusmr \ of polynomial functions over a (\tame) \nusmr\ $R$, cf. \S\ref{ssec:ploynomials}.  Then,
  $\Aff_R^n := \SpecA$ is the affine space of relative dimension ~$n$ over $R$.
\end{example}

The restriction of a morphism of \nusch s $\ff : X \To Y$ to a subset
 $U \subset X$ gives the morphism  $\ff|_U : U \To Y $ of \nusch s, obtained by composing the open immersion $(U,\OX|_U) \Into \XOX$ with the morphism $\ff : \XOX \To \YOY$. Then, an open subset $V \subset  Y$ determines the  open subset $\iff(V ) \subset  X$, together with a unique morphism of \nusch s $\ff' : \iff(V ) \To V$ that renders the diagram
$$\xymatrix{
X \ar[r]^-{\ff } \ & Y  \\
\iff(V) \ar@{^{(}->}[u] \ar[r]^{\ff'}  & V \ar@{^{(}->}[u]
}$$
commutative.
All together,  \nusch s and their morphisms (Definition \ref{def:nuscheme}) establish the category $\Sch$,
containing the full subcategory $\ASch$ of affine \nusch s  (the morphisms between affine \nusch s are the same in $\ASch$ and in $\Sch$).

A morphism $\phi : X \To Y$ of \nusch s  is  injective (resp. surjective, open, closed. homeomorphism),  if the continuous map $X \To Y$ of the underlying topological
spaces has this property.

Recall from \eqref{eq:specMap.2} that for a \qhom\ $\vrp: \sA \To \sB$ we have the induced map
\begin{equation}\label{eq:specMap.3} \avrp: \SpecB  \TO
\SpecA, \qquad \pCong' \longmapsto \vvrp(\pCong'). \end{equation}
For a second \qhom \ $\psi: \sB \To \sC$, we then have $^a(\psi \circ \vrp) =
\avrp \circ \apsi . $ With this view, $\Spec$ can be viewed as a contravariant functor
$$\Spec: \NSMR \TO \ASch $$
from the category of \nusmr s to the category of affine \nusch s,
assigning  to a \nusmr\  $\sA$ the corresponding affine \nusch\
$(\SpecA,\OSpecA)$ and to a  \qhom\ $B \To A$ the corresponding
morphism $(\SpecA,\OSpecA) \To (\SpecB,\OSpecB)$ as characterized in Theorem \ref{prop:6.6.9}.(i).

On the other hand, if $\ff : X \To  Y$ is a morphism of \nusmrsp s, using the notation $\Gm(U, \OX) = \OX(U)$,  we obtain  a  \qhom\ of \nusmr s
\begin{equation}\label{eq:2.11.1}
\Gm(\ff) := \sff_Y: \Gm \YOY  = \OY (Y ) \TO  \Gm \XOX = (\dff \OX)(Y ) = \OX(X).
\end{equation}
In this way, $\Gm$ sets up a contravariant functor
$$\begin{array}{rll}
X & \longmapsto &  \OX(X), \\[1mm]
X \To Y & \longmapsto &  \sff(Y ) : \OY(Y ) \To \OX(X),
\end{array} $$
from the category of \nusmrsp s to the category $\NSMR$ of \nusmr s,  %$\NSMR$,
which restricts  to  a contravariant
functor
$$\Gm: \ASch \TO \NSMR.$$
on the category of affine \nusch s.

Recall that by Theorem \ref{prop:6.6.9} morphisms of affine
\nusch s correspond bijectively to \qhom s of \nusmr s, and thus  we
can state the following.

\begin{prop}\label{prop:6.6.12} The category of affine \nusch s  $\ASch$ is equivalent to the opposite
of the category $\NSMR$ of \nusmr s.
\end{prop}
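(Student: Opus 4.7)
The plan is to establish the equivalence via the standard criterion: exhibit $\Spec$ as a contravariant functor $\NSMR \to \ASch$ that is essentially surjective on objects and fully faithful on morphism sets. Both properties are already essentially packaged by earlier results; the work consists in assembling them and checking naturality.

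First I would unwind definitions. Essential surjectivity is immediate from Definition \ref{def:nuscheme}: an affine \nusch\ $\XOX$ is by definition isomorphic (in $\lSch$, hence in $\ASch$) to some $(\SpecB,\OSpecB) = \Spec(\sB)$, so every object of $\ASch$ lies in the essential image of $\Spec$. Full faithfulness is precisely the content of Theorem \ref{prop:6.6.9}.(ii): the map
\[
\Upsilon\colon \Hom_{\ASch}\bigl((\SpecA,\OSpecA),(\SpecB,\OSpecB)\bigr) \TO \Hom_{\NSMR}(\sB,\sA), \qquad \pff \longmapsto \sff(Y),
\]
is a bijection. Reading this in the opposite category $\NSMR^{\mathrm{op}}$, it says exactly that $\Spec\colon \NSMR^{\mathrm{op}} \to \ASch$ induces a bijection on $\Hom$-sets. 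Together with essential surjectivity, this yields the equivalence of categories.

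Next I would verify that the quasi-inverse is provided by the global sections functor $\Gm$, which is the content of making the equivalence concrete. For an object $\sA \in \NSMR$, Theorem \ref{prop:stalks}.(iii) supplies a canonical isomorphism $\eta_\sA \colon \sA \isoto \Gm(\Spec(\sA)) = \OSpecA(\SpecA)$, and I would check that $\eta_\sA$ is natural in $\sA$ by tracing a \qhom\ $\vrp\colon \sA \to \sA'$ through the construction in Theorem \ref{prop:6.6.9}.(ii)--(iii): the morphism $\avrp$ on spectra is defined so that the induced map on global sections recovers $\vrp$ under the identification $\eta$, which is exactly what is required. In the other direction, for an affine \nusch\ $\XOX$ with $\XOX \cong \Spec(\sB)$, the counit $\eps_X \colon \Spec(\Gm(\XOX)) \to \XOX$ is obtained by applying $\Upsilon^{-1}$ to the identity on $\Gm(\XOX)$; it is an isomorphism because it corresponds (under the bijection $\Upsilon$) to the identity of the \nusmr\ $\Gm(\XOX)$.

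The main obstacle, if any, is not a single hard step but the careful bookkeeping required to see that the isomorphisms $\eta$ and $\eps$ are \emph{natural}, not merely present objectwise. The key point is that each of the pieces already established in the paper -- Theorem \ref{prop:stalks} identifying global sections of $\OSpecA$ with $\sA$, Theorem \ref{prop:6.6.9}.(ii) giving the hom-set bijection $\Upsilon$, and Theorem \ref{prop:6.6.9}.(iii) describing localization of $\sff(Y)$ at stalks -- are compatible and were set up to make these diagrams commute. Once these compatibilities are invoked, the two triangle identities for a quasi-inverse pair reduce to the tautology that $\Upsilon$ and $\Upsilon^{-1}$ are mutually inverse, completing the argument.
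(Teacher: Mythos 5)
Your proposal is correct and follows essentially the same route as the paper: essential surjectivity of $\Spec$ from the definition of an affine \nusch, the hom-set bijection from Theorem \ref{prop:6.6.9}.(ii), and the identification $\Gm\circ\Spec\cong\id$ from Theorem \ref{prop:stalks}.(iii), with $\Gm$ as quasi-inverse. Your additional attention to the naturality of the unit and counit is a welcome refinement of the paper's terser argument, but it does not constitute a different method.
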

%$$\xy/r8pc/:*++\txt{C}="c"
%,0*++\txt{P}="p",
%,"p",{\ar@*{[|(10)]}"p";"c"<4pt>}
%,"p",{\ar@*{[|(20)]}"p";"c"<-4pt>}
%\endxy$$
%$$
%\xymatrix{ A \ar@<1ex>[dr]^a_. \\
%& B \ar@<1ex>[ul]^b \ar@<1ex>[rr]^c
%& & C \ar@<1ex>[ll]^d_. }$$
%
\begin{proof}
  The functor $\Spec:\NSMR \To \ASch$ is  surjective by definition, and  $\Gm \circ \Spec$ is clearly
isomorphic to $\id$ on \nusmr s. It suffices to show that for any two \nusmr s $\sA$ and $\sB$ the
maps
$$
\xymatrix{  \Hom(A,B) \ar@<0.5ex>[rr]^{\Spec \qquad}
& & \Hom(\SpecB, \SpecA) \ar@<0.5ex>[ll]^{\Gm \qquad} }$$
are mutually inverse bijections. But, $\Gm \circ  \Spec =
\id$ by \eqref{eq:2.11.1}, while  it follows from   Theorem \ref{prop:6.6.9} that $\Spec \circ  \Gm =
\id$.
\end{proof}

\subsection{Open \nussch s}
\sSkip

The restriction  $ (U,\OX|_U)$ of  a \nusch \ $\XOX$ to an open set   $U \subset X$, called  \textbf{open \nussch}\ of $X $, is by itself is a \nusch  \ by  Lemma \ref{lem:6.6.1}.(iii).
If $U$ is an affine \nusch,
then $U$ is an \textbf{affine open \nussch}.

\begin{prop}\label{prop:u.3.2}
Let $X$ be a \nusch, and let $U \subset X$ be an open set.
\begin{enumerate}\eroman
  \item    The \lnusmrsp\
$(U,\OX|U)$ is a \nusch.  \item The open subsets that  give rise to affine open \nussch s are a basis of the topology.
\end{enumerate}
\end{prop}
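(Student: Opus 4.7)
The plan is to derive (i) from (ii) by a standard local-to-global argument and then establish (ii) by reducing to the affine case, where we invoke \lemref{lem:6.6.1}.

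For (i), since $X$ is a \nusch, it admits an open covering by affine \nusch s $V_i \cong \Spec(\sA_i)$. Intersecting with $U$ gives $U = \bigcup_i (U \cap V_i)$, where each $U \cap V_i$ is an open subset of the affine \nusch\ $V_i$. Granting (ii), each $U \cap V_i$ carries a cover by affine open \nussch s of $V_i$, which are simultaneously affine open \nussch s of $U$. Patching these covers yields an affine open cover of $U$, so $(U, \OX|_U)$ is itself a \nusch.

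For (ii), by the same reduction (affine cover of $X$ and Lemma \ref{lem:6.6.1}.(ii) translating basic opens across the immersion) it suffices to treat the affine case $X = \Spec(\sA)$. I would then single out the basic opens $\DDf$ with $f \in \sAptngs$ as candidate basis elements. By \lemref{lem:6.6.1}.(iii), the canonical \qhom\ $\tau:\sA \To \sA_{\STf}$ induces an equivalence between the restriction of $\OX$ to $\DDf$ and the structure sheaf $\OXf$ on $X_f = \Spec(\sA_{\STf})$, so $(\DDf, \OX|_{\DDf}) \cong (\Spec(\sA_{\STf}), \OSpec{\sA_{\STf}})$ is indeed an affine open \nussch.

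It remains to verify that these \tprs-basic opens form a basis of the Zariski topology on $\SpecA$. Given any open $U \subseteq \SpecA$ and $x \in U$, Corollary \ref{cor:1.2} furnishes some $g \in \sA$ with $x \in \DDg \subseteq U$. If $g \in \sAptngs$, we are done. Otherwise, I would refine $g$ to a \tprs\ element as follows. Since $x \in \DDg$, we have $g \notin \iGcl(\xpCong)$; because $\xpCong$ is an \lcong, its tangible projection $\iTcl(\xpCong)$ is a multiplicative monoid lying inside $\sAptngs$ (Lemma~\ref{lem:stcore}, Proposition~\ref{prop:rpim}). Using the \tame ness of $\sA$, every element of $\sA\setminus(\sAtng \cup \sAghs)$ is a ghost divisor (Lemma \ref{lem:gdiv.in.tame}.(i)); combined with Corollary~\ref{cor:rad.incl.2}, one shows that for $g \notin \gprad(\sA)$ there is a \tprs\ element $h \in \iTcl(\xpCong)$ and a power relation $g^n = ch + q$ (with $q \in \sAghs$) forcing $\DD(h) \subseteq \DDg$ while $h \notin \iGcl(\xpCong)$. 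Thus $x \in \DD(h) \subseteq \DDg \subseteq U$, as required.

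The main obstacle is precisely this last refinement: unlike the classical situation where $\sA \setminus \mathfrak{p}$ is a multiplicative system consisting of all non-zero residues, here the tangible projection $\iTcl(\xpCong)$ is in general strictly smaller than the complement of $\iGcl(\xpCong)$, and a naive product $gh$ with $h\in \iTcl(\xpCong)$ need not be \tprs. The delicate point will be extracting a \tprs\ refinement of $g$ within $\DDg$, which is where the interplay between \tame ness, \sradical\ closure, and \gprimec s (Corollary \ref{cor:rad.incl.2} and Lemma \ref{lem:f.prs.in.prime}) is essential.
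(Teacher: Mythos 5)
Your overall architecture---deducing (i) from (ii) by patching affine covers, and reducing (ii) to the affine case where Lemma \ref{lem:6.6.1}.(iii) identifies $(\DDf,\OX|_{\DDf})$ with $\Spec(\sA_{\STf})$---is the same as the paper's, which settles both parts in three lines by covering $X$ with affine \nusch s and appealing to the fact that the principal opens form a basis of affine \nussch s. The difference is that the paper does not restrict the basis to $f \in \sAptngs$: every principal open $\DDf$, $f \in \sA$, is assigned the \nusmr\ $\sA_{\STf}$ via the functor \eqref{eq:OX} and the \fzone\ construction, so the refinement problem you isolate as the ``main obstacle'' simply does not arise in the paper's route.

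That refinement step is where your argument has a genuine gap, and the gap is twofold. First, the divisibility relation is written backwards: by Lemma \ref{lem:D.contain}.(iv) and Lemma \ref{lem:rad.prop.1}, the inclusion $\DD(h) \subseteq \DDg$ is equivalent to $\srad(h) \subseteq \srad(g)$, which for \tprs\ $h$ amounts to $h^n = gc$ for some $c \in \sA$, $n \in \Net$; your relation $g^n = ch + q$ yields $\srad(g) \subseteq \srad(h)$, i.e.\ $\DDg \subseteq \DD(h)$, the opposite containment. Second, even after correcting the direction, nothing you cite produces such an $h$: you need a \tprs\ element $h \notin \iGcl(\xpCong)$ satisfying $h^n = gc$, but Corollary \ref{cor:rad.incl.2} only converts an already-established inclusion of \sradical s into a power relation, and Lemma \ref{lem:f.prs.in.prime} builds a \gprimec\ adapted to a given \tprs\ element, not a \tprs\ element adapted to a given $g$ and $x$. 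Indeed the paper's own bookkeeping runs the other way: $\STs(g)$ collects the \tprs\ elements $h$ with $\DDg \subseteq \DD(h)$, precisely because \tprs\ elements with $\DD(h) \subseteq \DDg$ through a prescribed point need not exist (e.g.\ for a ghost divisor $g = p + eq$). So the claim that the \tprs-basic opens form a basis is not established and is doubtful in general; the repair is to take all principal opens $\DDf$, $f \in \sA$, as the affine basis, as the paper does.
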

\begin{proof} By definition, the \lnusmrsp\ $X$ can be covered by affine \nusch s.  By Proposition ~ \ref{prop:6.1.11} each of these affine \nusch s has a basis of its topology which consists of
affine \nusch s. This yields both parts of the proposition.
\end{proof}

Let $U \subseteq  X$ be an open subset, considered  as an open
\nusch\ of~ $X$, with  the inclusion $\io : U \Into X$. For  $V \subseteq  X$ open, the restriction map of the structure \nushf\ $\OX$ gives
a \qhom\ of \nusmr s
$$\Gm(V,\OX) \TO \Gm(V \cap U, \OX) = \Gm(\io^\inv( V ),\OXU) = \Gm(V, \io_*\OXU).$$
These maps determine a  morphism $\io^\#
 : \OX \To \io_* \OXU$ of \nushv\ of \nusmr s
and, hence,  by the inclusion $U \subseteq X$,  a morphism $U \To X$ of \nusch s.
%Whenever we (possibly implicitly) speak about a morphism of schemes from $U$ to $X$, then this is the
% one which is meant.
An \textbf{affine open covering} of a \nusch\ $X$ is an open covering $X = \bigcup_i U_i$ by affine open \nussch s $U_i $ of $X$.

\begin{lem}
Let $X$ be a \nusch, and let $U, V$ be affine open \nussch s of $X$.
 For every  $x \in U \cap V$ there exists an open \nussch\ $W \subseteq  U \cap V$ containing $x$, such that $W$ is a principal open,  cf. \eqref{eq:Var.1}, both in $U$  and  in $V$.
\end{lem}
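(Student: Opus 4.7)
The plan is to carry out two successive shrinkings based on the fact (Corollary~\ref{cor:1.2}) that principal open sets form a basis for the Zariski topology of any affine \nusch, combined with Lemma~\ref{lem:6.6.1} and Theorem~\ref{prop:stalks}(ii), which identify $\DD(f)$ (for strict, tangibly persistent $f$) with $\Spec(\sA_{\ST(f)})$ as an affine \nusch. Writing $\sA := \Gm(U,\OX)$ and $\sB := \Gm(V,\OX)$, so that $U \cong \Spec(\sA)$ and $V \cong \Spec(\sB)$, I will locate a principal open $\DD_V(g)$ of $V$ inside $U \cap V$ containing $x$, then a principal open $\DD_U(f)$ of $U$ inside $\DD_V(g)$ containing $x$, and finally show that the latter is simultaneously principal in $V$.

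Concretely, since $U \cap V$ is an open neighbourhood of $x$ in $V$, Corollary~\ref{cor:1.2} supplies $g \in \sB$ with $x \in \DD_V(g) \subseteq U \cap V$; shrinking further within the basis, I may assume $g$ is tangibly persistent and strict, so that $\DD_V(g)$ is an affine \nusch\ with $\Gm(\DD_V(g),\OX) \cong \sB_{\ST(g)}$. Regarding $\DD_V(g)$ as an open \nussch\ of the affine \nusch\ $U$, Corollary~\ref{cor:1.2} again produces a strict tangibly persistent $f \in \sA$ with $x \in W := \DD_U(f) \subseteq \DD_V(g)$. By construction, $W$ is a principal open of $U$ containing $x$.

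The crux, and the main obstacle, is to exhibit the same $W$ as a principal open of $V$. For this I view $W$ as an affine open of $\DD_V(g) \cong \Spec(\sB_{\ST(g)})$; the restriction of $f$ defines a section in $\Gm(\DD_V(g),\OX) \cong \sB_{\ST(g)}$, hence may be written as $b/s$ with $b \in \sB$ and $s \in \ST(g)$. Since every $s \in \ST(g)$ satisfies $\DD_V(g) \subseteq \DD_V(s)$ by the very definition of $\ST(g)$, the principal open cut out by $b/s$ inside $\DD_V(g)$ equals $\DD_V(b) \cap \DD_V(g) = \DD_V(bg)$, and this coincides with $W$. Thus $W = \DD_U(f) = \DD_V(bg)$, principal in both $U$ and $V$, as required. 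The delicate point in the supertropical setting is that Theorem~\ref{prop:stalks}(ii) identifies sections over $\DD(g)$ with $\sB_{\ST(g)}$ only when $g$ is strict (so $\fcDD(g) = \DD(g)$, cf.\ Definition~\ref{def:focal.zone}), and Lemma~\ref{lem:6.6.1} requires $g$ to be tangibly persistent; hence the preliminary shrinkings must be carried out within the cofinal sub-basis of strict tangibly persistent principal opens, which exists because units and their powers are themselves strict and tangibly persistent and suffice to refine any given $\DD_V(g)$ around $x$.
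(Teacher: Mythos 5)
Your overall strategy is the same as the paper's: shrink to reduce to nested affines, cut out a principal open of $U$ inside the shrunken set, and then check that a principal open of a principal open of $V$ is again principal in $V$. You are in fact more explicit than the paper on that last transitivity step (the paper compresses it into the single equality $\DD_U(f)=\DD_V(f|_V)$ justified by the restriction map and the sheaf axioms). So the route is not genuinely different; the issue is with how you set up the shrinkings.

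The genuine gap is your claim that the preliminary shrinkings can be carried out ``within the cofinal sub-basis of strict \tprs\ principal opens.'' Corollary \ref{cor:1.2} only says that the sets $\DD(f)$ for \emph{arbitrary} $f$ form a basis; nowhere does the paper establish that the \strict\ \tprs\ ones are cofinal among neighbourhoods of a given point, and your argument genuinely needs this, since Theorem \ref{prop:stalks}(ii) identifies $\Gm(\DD_V(g),\OX)$ with $\sB_{\ST(g)}$ (and $\DD_V(g)$ with $\Spec(\sB_{\ST(g)})$ as a space) only for \strict\ $g\in\sAptngs$. The justification you offer --- that units and their powers refine any given $\DD_V(g)$ around $x$ --- is false: by Lemma \ref{lem:D.contain}(v) (and Definition \ref{def:focal.zone}), $\DD(u)=\Spec(\sB)$ for every unit $u$, and powers of units are units, so these principal opens are the whole space and never refine anything. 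The paper's proof avoids strictness altogether: it chooses a \tprs\ $f\in\Gm(U,\OX)$ with $f\in\iTcl(\xpCong)$, i.e. $x\in\fcDD(f)$, and $\DD_U(f)$ inside the shrunken $V$, and then compares $\DD_U(f)$ with $\DD_V(f|_V)$ directly via the restriction homomorphism (Corollary \ref{cor:2.5}), never passing through the fraction description $b/s$ of sections. To repair your argument you must either prove the cofinality of \strict\ \tprs\ principal opens (not available in the paper) or drop strictness and argue via the containment $x\in\fcDD(f)$ and the functoriality of $\DD(\cdot)$ under restriction, as the paper does.
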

\begin{proof}
Replacing $V$ by a principal open set of $V$ containing $x$, we may
assume that $V \subseteq  U$.  Choose \tprs \ $f \in \Gm(U,\OX)$ such that $x \in \fcDDf$, where $\DDf  \subseteq V$, i.e. an element $f \in \iTcl(\xpCong)$.
Let $f|_V$
be  the restriction of the image of $f$ under  the  \qhom\ $\Gm(U,\OX) \To \Gm(V,\OX)$. Then,
$\DD_U(f) = \DD_V (f|_V )$,  which  also implies that $\Gm(U,\OX)_f
=
\Gm(V,\OX)_{f|_V}$, by the  sheaf axioms (Definition ~\ref{def:sheaf}).
\end{proof}

\subsection{Gluing \nusch s }
\sSkip

With the notion of morphisms of \nusch s at our disposal (Definition \ref{def:nuscheme}), the gluing procedure of \nusch  s becomes applicable  in the usual  way. That is, we start with a given collection of \nusch s $(X_i)_{i \in I}$, and an open set $U_{ij} \subseteq X_i$ for each $i \neq j$, together with a family of
isomorphisms of \nusch s
$$\psi_{ij} : U_{ij} \ISOTO U_{ji}$$
such that  for all $i,j,k \in I$
\begin{enumerate} \ealph
  \item  $\psi_{ji} = \psi_{ij}^\inv$;
  \item $\psi_{ij}(U_{ij} \cap U_{ik}) = U_{ji} \cap U_{jk}$;
  \item $(\psi_{jk} \circ \psi_{ij})|_{U_{ij} \cap U_{ik}} = \psi_{ik}|_{X_{ij} \cap U_{ik}}$ (compatibility condition).
\end{enumerate}
With this data, we define the \nusch\ $X$ by gluing the $(X_i)_{i\in I}$
along the $\psi_{ij}$ in the obvious way, i.e., the unique \nusch\ $X$ covered by open \nussch s isomorphic to the $X_i$ whose identity maps on  $X_i \cap  X_j \subseteq X$ correspond to
the isomorphisms $\psi_{ij}$.
When the $X_i$ are affine \nusch s with structure \nushv\  $\scO_{X_i}$,
$\scO_{X_i}(X_i \cap X_j)$ is naturally identified with $\scO_{X_j}(X_i \cap X_j)$.
Accordingly, as any \nusch\ ~$X$ admits a covering by affine open
\nussch s $(X_i)_{ i \in I}$,  $X$  can be viewed as a gluing of the
affine \nusch s ~$X_i$ along their intersections $X_i \cap X_j$.

\begin{example}\label{exmp:proj.space}
Let $R$ be a \nusmr. The \textbf{projective space} $\Proj^n_
R$ over $R$ is obtained by gluing $n+1$ copies $U_i = \Aff_R^n$, $i = 0, \dots, n$,  of affine
space $\Aff^n_R$ (cf. Example \ref{exmp:affine.line}).
Each $U_i = \Spec(\sA_i)$ is the \gprime\ spectrum of a  \nusmr\ $\sA_i$  of polynomial functions   in $n$ indeterminates over $R$, cf. \S\ref{ssec:ploynomials}, written as
$$ \sA_i = \tlR \bigg[ \frac{\lm_0}{\lm_i}, \cdots, \frac{\widecheck{\lm}_i}{\lm_i}, \cdots, \frac{\lm_n}{\lm_i}\bigg],$$
where $\widecheck{\lm_i}$ means that $\lm_i$ is to be discarded. The \nusmr s $\sA_i$ are
viewed  as \nussmr s of the Laurent polynomials   $\tlR[\lm_0, \dots, \lm_n, \lm_0^\inv, \dots, \lm_n^\inv]$.

We define a gluing datum with index set $\{0, \dots , n\}$ as follows: for $0 \leq  i, j \leq n$,  let
$$U_{ij} = \left\{  \begin{array}{lll}
                      \DD_{U_i} (\frac{\lm_j}{\lm_i}) \subseteq  U_i  &  & \text{if } \  i \neq j,  \\
                      U_i & & \text{if } \  i = j.
                    \end{array} \right.$$  Furthermore, let $\vrp_{ii} = \id_{U_i}$, and for $i \neq j$ let
$$\vrp_{ji} : U_{ij} \TO U_{ji}$$
be the isomorphism defined by the equality (as \nussmr  s of $\tlR[\lm_0, \dots, \lm_n, \lm_0^\inv, \dots, \lm_n^\inv]$)
$$ \tlR \bigg[ \frac{\lm_0}{\lm_i}, \cdots, \frac{\widecheck{\lm}_i}{\lm_i}, \cdots \frac{\lm_n}{\lm_i}\bigg]_{\frac{\lm_i}{\lm_j}} \TO
\tlR \bigg[ \frac{\lm_0}{\lm_i}, \cdots, \frac{\widecheck{\lm}_j}{\lm_j}, \cdots \frac{\lm_n}{\lm_i}\bigg]_{\frac{\lm_j}{\lm_i}}$$
 of the affine \nusch s $U_{ij}$ and $U_{ji}$. Since the isomorphisms $\vrp_{ij}$ are defined by equalities, the cocycle condition holds trivially, and we obtain
a gluing datum. This gives a \nusch, called the \textbf{projective
space} $\Proj^n_R$ of relative dimension $n$ over $R$. The \nusch s
$U_i$ are considered as open \nussch s of $\Proj^n_R$.
\end{example}

A morphism from a glued \nusch\ $X$ to another \nusch\ $Y$ can be determined by a given collection of  morphisms $X_i \To Y$ that coincide on the overlaps in the obvious sense.
In this gluing view,   Theorem ~\ref{prop:6.6.9} generalizes  as follows.

\begin{prop}\label{prop:8.7}
Let $X = \SpecA$ be any \nusch\ and let $Y = \SpecB$ be an affine \nusch. There is a one-to-one correspondence between morphisms $X \To Y$ and \nusmr\ \qhom s
$\sB \cong \Gm(\sB) =  \OY (Y) \To \OX (X) = \Gm(\sA) \cong\sA$.
\end{prop}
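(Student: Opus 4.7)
The plan is to extend Theorem~\ref{prop:6.6.9} from the affine setting to an arbitrary $\nu$-scheme $X$ via an affine-covering argument, using the gluing mechanism recorded in \S\ref{ssec:aff.schemes}. In the forward direction, a morphism $(\ff,\sff): \XOX \To \YOY$ gives the global-sections $\nu$-homomorphism $\Gm(\ff) = \sff(Y): \OY(Y) \To \OX(X)$ of \eqref{eq:2.11.1}, which under the identifications $\OY(Y)\cong\sB$ and $\OX(X)\cong\sA$ produces a map $\Upsilon:\Hom(X,Y) \To \Hom(\sB,\sA)$.

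For the reverse direction, given $\vrp:\sB \To \sA$, I would fix an affine open covering $X = \bigcup_{i\in I} U_i$ with $U_i \cong \Spec(\sA_i)$ furnished by Proposition~\ref{prop:u.3.2}, and form the composites $\vrp_i := \rho_i \circ \vrp : \sB \To \sA_i$, where $\rho_i:\sA = \OX(X) \To \OX(U_i) \cong \sA_i$ is the sheaf restriction. Since each $U_i$ and $Y$ are affine, Theorem~\ref{prop:6.6.9} produces a unique morphism of $\nu$-schemes $\ff_i : U_i \To Y$ with $\Gm(\ff_i) = \vrp_i$, together with the local property on \nustalk s guaranteed by part~(iii) of that theorem.

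The critical step is gluing: I need $\ff_i\vert_{U_i \cap U_j} = \ff_j\vert_{U_i \cap U_j}$ for all $i,j$. By Proposition~\ref{prop:u.3.2} I can cover $U_i \cap U_j$ by affine open \nussch s $W$; on each such $W$, both restrictions are morphisms of affine \nusch s $W \To Y$, and their induced \qhom s on global sections both equal the composite $\sB \xrightarrow{\vrp} \sA \To \OX(W)$, by the transitivity of sheaf restriction through $\OX(U_i)$ and through $\OX(U_j)$. The uniqueness clause of Theorem~\ref{prop:6.6.9} then forces $\ff_i\vert_W = \ff_j\vert_W$, and since such $W$ cover $U_i \cap U_j$ the two restrictions coincide. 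The gluing procedure of \S\ref{ssec:aff.schemes} therefore produces a morphism $\ff: X \To Y$ with $\ff\vert_{U_i} = \ff_i$; compatibility with sheaf restrictions and the sheaf axiom (Definition~\ref{def:sheaf}(c)) imply $\Gm(\ff) = \vrp$, and conversely any morphism $\ff$ with $\Gm(\ff)=\vrp$ must restrict to $\ff_i$ on each $U_i$ by the affine uniqueness, showing bijectivity of $\Upsilon$.

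The main subtlety will be ensuring that the candidate morphism $(\ff,\sff)$ so constructed is genuinely a morphism of \lnusmrsp s in the sense of this paper, i.e., that the induced maps on \nustalk s are local \qhom s and that the sheaf morphism $\sff:\OY \To \dff\OX$ respects \fzone s in the sense of Lemma~\ref{lem:fc.map}. Both conditions are local, so they can be checked on each affine piece $U_i$, where they hold by Theorem~\ref{prop:6.6.9}(iii); the compatibility on overlaps then follows automatically since stalks at a point $x \in U_i \cap U_j$ are computed using either chart. This reduction to the already-established affine case is what makes the argument succeed.
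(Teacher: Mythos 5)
Your argument is correct and coincides with the paper's proof: both fix an affine open cover of $X$, an affine cover of each overlap $U_i \cap U_j$, apply Theorem~\ref{prop:6.6.9} on each affine piece, and invoke the sheaf axiom together with the gluing mechanism of \S\ref{ssec:aff.schemes} to assemble and identify the global morphism. Your additional closing remark that the local/\fzone\ conditions of a morphism of \lnusmrsp s are local in nature and hence inherit from the affine case is a welcome explicit check of a point the paper leaves implicit.
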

\begin{proof}
 Let $\{U_i\}$ be an open affine cover of $X$, and let $\{U_{i, j,k}\}$ be an open affine cover
of $U_i \cap  U_j$. Giving a morphism $\ff : X  \To Y$ is the same as giving
morphisms $\ff_i :U_i  \To Y$ such that $\ff_i$ and $\ff_j$ agree on $U_i \cap U_j$, i.e., such that $\ff_i|_{U_{i, j,k}} = \ff_j|_{U_{i, j,k}}$
for all $i, j,k$. Since  the $U_i$ and $U_{i, j,k}$ are affine, by Theorem~ \ref{prop:6.6.9}, the morphisms $\ff_i$
and $\ff_i|_{U_{i, j,k}}$  correspond exactly to \qhom s of \nusmr s $\OY (Y) \To \O_{U_i} (U_i) = \OX (U_i)$ and $\OY (Y) \To \O_{U_{i, j,k}} (U_{i, j,k})=\OX (U_{i, j,k})$, respectively. Hence, a morphism $\ff : X \To Y$  is the same as a collection of \nusmr\ \qhom s $\sff_i : \OY (Y) \To \OX (U_i)$ such that the compositions
$\rho_{U_i,U_{i, j,k}} \circ  \sff_i: \OY (Y) \To \OX (U_{i, j,k})$ and $\rho_{U_j,U_{i, j,k}} \circ  \sff_j: \OY (Y) \To \OX (U_{i, j,k})$ agree for all
$i, j,k$. By the sheaf axiom for $\OX$, this is exactly the data of a  \qhom\
$\OY (Y)\To \OX (X)$ of \nusmr s.
\end{proof}

\subsection{Properties of \nusch s }
\sSkip

In this subsection, unless otherwise is specified, we assume that all \nusch s are built over   \tamesmr s (Definition \ref{def:nusemiring}).

\subsubsection{Generic points}
\sSkip

Let $X$ be a \nusch, and let $Z \subseteq X$ be a
subset.  A point $z \in Z$ is a \textbf{generic point} of $Z$, if the set~$\{z\}$ is dense in $Z$ (Definition
\ref{def:top.irrd}). Topologically, if $Z$
 admits a generic point, then $Z$ is irreducible. In the case of underlying topological spaces of \nusch s, the converse relation holds:
%Every irreducible closed subset contains a unique
%generic point. This statement is the key point of the following proposition.

\begin{prop}
%Assume $X = \SpecA$ with $\sA$ a \qprsnusmr.
The map
$$X \TO \{Z \subseteq  X \cnd Z \text{ closed, irreducible }\}, \qquad x\To  \overline{\{x\}},$$
is a bijection, i.e., every irreducible closed subset contains a unique generic point.
\end{prop}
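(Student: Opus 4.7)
The plan is to reduce to the affine case, where the statement has essentially already been proved, and then globalize via an affine open cover. Throughout, I will write $\overline{S}$ (without a superscript) for the closure of $S$ in the ambient $\nu$-scheme $X$, and $\overline{S}^U$ for the closure inside an open subset $U\subseteq X$.

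First I would handle the affine case $X=\Spec A$. Corollary~\ref{cor:irreducible} already gives mutually inverse bijections between irreducible closed subsets of $X$ and $\nu$-prime congruences on $A$, where the bijection sends $\pCong\in\Spec A$ to $\overline{\{\pCong\}}=\VV(\pCong)$ by Corollary~\ref{cor:6}(i). In particular every irreducible closed subset $Z\subseteq X$ is of the form $\VV(\pCong)$ for a unique $\pCong\in X$, and $\pCong$ is then a generic point of $Z$; conversely, if $y\in X$ also satisfies $\overline{\{y\}}=Z$, then $\VV(\pCong_y)=\VV(\pCong)$, so by the bijection of Corollary~\ref{cor:irreducible} we get $\pCong_y=\pCong$, i.e. $y=x$. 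This settles both existence and uniqueness when $X$ is affine.

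Next, for existence in the general case, let $Z\subseteq X$ be closed and irreducible and cover $X$ by affine open $\nu$-subschemes $\{U_i\}_{i\in I}$, which exist by Proposition~\ref{prop:u.3.2}. Choose any index $i$ with $Z\cap U_i\neq\emptyset$. Since $U_i$ is open, $Z\cap U_i$ is a nonempty open subset of the irreducible space $Z$, hence is irreducible and dense in $Z$. Its closure $W:=\overline{Z\cap U_i}^{U_i}$ is an irreducible closed subset of the affine $\nu$-scheme $U_i$, so by the affine case there is a unique $z\in U_i$ with $\overline{\{z\}}^{U_i}=W$. Then $z\in Z\cap U_i\subseteq Z$, and since $Z$ is closed in $X$ we get $\overline{\{z\}}\subseteq Z$. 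On the other hand, $Z\cap U_i\subseteq W=\overline{\{z\}}^{U_i}\subseteq \overline{\{z\}}$, and taking closures in $X$ yields $Z=\overline{Z\cap U_i}\subseteq\overline{\{z\}}$. Hence $\overline{\{z\}}=Z$, so $z$ is a generic point.

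For uniqueness in the general case, suppose $z_1,z_2\in X$ both satisfy $\overline{\{z_j\}}=Z$. A generic point of $Z$ must lie in every nonempty open subset of $Z$ (any such open meets the dense set $\{z_j\}$, forcing it to contain $z_j$). Pick an affine open $U_i$ of $X$ with $z_1\in U_i$; then $Z\cap U_i$ is a nonempty open of $Z$, so $z_2\in Z\cap U_i$ as well. Both $z_1$ and $z_2$ are therefore points of the affine $\nu$-scheme $U_i$ whose closures in $U_i$ both equal $\overline{\{z_j\}}\cap U_i=Z\cap U_i=W$ (using that $\overline{\{z_j\}}^{U_i}=\overline{\{z_j\}}\cap U_i$ for $z_j\in U_i$ and that $Z\cap U_i$ is dense in $W$). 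By the uniqueness in the affine case, $z_1=z_2$.

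I do not expect any serious obstacle; the only point requiring some care is the compatibility of closures in $X$ with closures in the affine chart $U_i$, namely the identity $\overline{\{z\}}^{U_i}=\overline{\{z\}}\cap U_i$ for $z\in U_i$ and the fact that taking closure in $X$ of an irreducible set is again irreducible. Both are purely topological and independent of the $\nu$-semiring structure, so the argument reduces cleanly to the affine bijection established in Corollary~\ref{cor:irreducible}.
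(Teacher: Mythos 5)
Your proof is correct and takes essentially the same approach as the paper: reduce to the affine case via Corollary~\ref{cor:irreducible}, produce a generic point by intersecting $Z$ with an affine chart, and deduce uniqueness by forcing any two generic points into the same chart. Your write-up is a bit more explicit about the compatibility of subspace closures, but the argument is the same.
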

\begin{proof} The correspondence holds for affine \nusch s by Corollary \ref{cor:irreducible}, as $X = \SpecA$ where $\sA$ is \tame. Let $Z \subseteq  X$ be closed irreducible, and let $U \subseteq X$, $Z \cap  U \neq \emptyset$,  be an affine open subset. The  closure of $Z \cap  U$ in $X$ is $Z$, since  $Z$ is irreducible, and $Z \cap U$ is irreducible. Hence, the generic point in $Z \cap U$ is a generic point of $Z$.
A generic point $z \in Z$ is contained in every open subset of $X$ that meets~
$Z$, and thus in every $U$. The uniqueness of generic points is then derived from the affine uniqueness.
\end{proof}

 Any point $x$ in a \nusch \ $X$ as above has a generalization by a maximal point $\gpt$, that is, the generic point of an irreducible component of $X$ such that $x \in \overline{\{\gpt\}}$. On the other hand, specializations to closed points are more subtle,
 as a nonempty \nusch\ $X$ may have no closed points, even
if it is irreducible. But, when $X$ is affine, this cannot happen (as any \gprimec\ is contained in a \maximalc \ by Proposition \ref{prop:max.cong.1}), which implies that in a quasi-compact \nusch\ $X$ the  closure $\overline{\{x\}}$ of any $x \in X$ contains a closed point.

\begin{prop}
 Let $\ff : X \To Y$ be an open morphism of \nusch s, where $Y$ is
irreducible with generic point $\gpt$. Then, $X$ is irreducible if and only if the fiber $\iff(\gpt)$
%(considered as a subspace of $X$)
is irreducible.
\end{prop}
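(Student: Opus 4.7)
The plan is to reduce the statement to the following topological lemma, which would be the main technical ingredient: \emph{the fiber $\iff(\gpt)$ is dense in $X$}. Granting this, both implications then follow from formal properties of irreducible spaces. The openness of $\ff$ enters precisely (and only) in establishing the density.

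To prove density, I would take any nonempty open $U \subseteq X$ and note that $\ff(U)$ is open in $Y$ by hypothesis, and nonempty. Since $\gpt$ is a generic point of the irreducible space $Y$, the set $\{\gpt\}$ is dense in $Y$, so every nonempty open subset of $Y$ contains $\gpt$. Hence $\gpt \in \ff(U)$, which means $U \cap \iff(\gpt) \neq \emptyset$. Thus $\iff(\gpt)$ meets every nonempty open set of $X$, i.e.\ it is dense.

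For the direction $(\Leftarrow)$, assume $\iff(\gpt)$ is irreducible. Since the closure of an irreducible subspace is irreducible, and $\iff(\gpt)$ is dense in $X$ by the lemma, $X = \overline{\iff(\gpt)}$ is irreducible. For the direction $(\Rightarrow)$, assume $X$ is irreducible and suppose $\iff(\gpt) = Z_1 \cup Z_2$ is a decomposition into subsets closed in the subspace topology, so that $Z_i = \cl{Z_i} \cap \iff(\gpt)$ where $\cl{Z_i}$ denotes the closure in $X$. Then $\cl{Z_1} \cup \cl{Z_2}$ is a closed subset of $X$ containing $\iff(\gpt)$, and by density it must equal $X$. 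Irreducibility of $X$ forces $\cl{Z_i} = X$ for some $i$, say $i=1$, whence $Z_1 = \cl{Z_1} \cap \iff(\gpt) = \iff(\gpt)$, showing $\iff(\gpt)$ is irreducible.

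The only non-formal step is the density lemma, and the main conceptual point to verify carefully is that the topological arguments used (generic point characterized by density, closure of an irreducible is irreducible, continuity of $\ff$ via $\cl{Z_i} \cap \iff(\gpt) = Z_i$) apply verbatim in the Zariski-type topology on the underlying space of a \nusch, which follows from Proposition~\ref{prop:6.1.11} and the general definitions of \S\ref{sec:5}, without requiring any supertropical-specific refinement. I expect no essential obstacle beyond making sure that the density argument genuinely uses openness of $\ff$ and cannot be weakened further (e.g.\ to dominance), since a non-open morphism may fail to send a nonempty open to a set containing $\gpt$.
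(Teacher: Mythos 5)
Your proposal is correct and follows essentially the same route as the paper: the paper's proof also uses openness of $\ff$ to show $\overline{\iff(\gpt)} = \iff(\overline{\{\gpt\}}) = X$ (i.e.\ density of the fiber) and then invokes the fact that a subspace is irreducible iff its closure is. You merely make explicit the density argument and the two directions of that topological equivalence, which the paper cites without proof.
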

% (This purely topological statement demonstrates a property determined by generic points.)
\begin{proof}
 $\ff$ is open, and thus  $\overline{\iff(\gpt)} = \iff(\overline{\{\gpt\}}) = \iff(Y ) = X$. Then the  claim follows from the topological property that a subspace is irreducible iff its closure is irreducible.
\end{proof}
Recall that a \nudom\ is a \tcls\ \nusmr\
 that has no ghost divisors (Definition \ref{def:nudomain}).

\begin{remark}\label{rem:u.2.37}
  Let $\sA$ be a \nudom\ with spectrum $X = \SpecA$,  and let $Q(A)$ be its \nusmf\ of
fractions.  The trivial congruence $\diag(\sA)$ of $\sA$ is a \gprimec, corresponding to the point $q \in X$, where $X$ is the closure of $\{q \}$. Therefore, $q$ is contained in every
nonempty open set of $X$, i.e., $q$ is a generic point of $X$. The \lnusmr\ $\O_{X,q}$
is the localization of $\sA$ by $\diag(\sA)$, and thus, by Theorem~ \ref{prop:stalks}.(i),  $$\O_{X,q} \cong Q(\sA).$$
For all tangible multiplicative monoids $T \subseteq \MS $  of $\sA$ the canonical \qhom\
$T^{-1}\sA \To \iMS A$ is injective, and the localizations $T^{-1} \sA$ are considered as \nussmr s of $Q(\sA)$.

For every $f \notin \tN(\sA)$, we have $\OX(\DDf) = \coprod_{x \in \fcU} \sA_x$, $U = \DDf$,  by  definition \eqref{eq:fc.sections} of  structure \nushv.
If $V \subseteq X$ is an arbitrary open subset, then  $\OX(V) = \bigcap_U \OX(U)$ where $U = \DDf$ runs through the open sets  $U \subseteq V$. Equivalently, $\OX(V) = \bigcap_f \OX(U)$ with $U = \DDf \subseteq V$. By Theorem \ref{prop:6.6.9} $\sAx \cong \OXx$ for every
$x \in X$, and thus, for any nonempty open subset $V \subseteq X$ we have
$$\OX(V) \cong \bigcap_{x\in \fcV} \OXx,$$  where  $\fcV = \bigcap_{U \subseteq V} \fcU .$
\end{remark}

\subsubsection{Reduced and integral schemes} \sSkip

We generalize the notion of being reduced from \nusmr s
to \nusch s.
%As before we assume that  \nusch s are taken over \qprsnusmr.
\begin{defn}
 A \nusch\  $X$ is called \textbf{reduced}, if all its  local \nusmr s $\OXx$ are ghost reduced \nusmr s (Definition ~ \ref{def:ghostpotent}).
   $X$ is \textbf{integral}, if it is reduced and irreducible.
\end{defn}

\begin{prop}
Let $X$ be a \nusch.
\begin{enumerate} \eroman
\item  $X$ is reduced if and only if for every open subset $U \subseteq X$ the \nusmr\ $\Gm(U,\OX)$
is reduced.
\item  $X$ is integral if and only if for every open subset $ \emptyset \neq  U \subseteq X$ the \nusmr\
$\Gm(U,\OX)$ is a \nudom.

\item  If $X$ is an integral \nusch, then for each  $x \in X$ the local \nusmr\ $\OXx$ is a \nudom.\footnote{The converse does not hold.}
\end{enumerate}
\end{prop}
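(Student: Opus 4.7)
The plan is to establish the three parts in the order (i), (ii), (iii), with (i) supplying a pointwise criterion on stalks and (ii) proceeding through affine opens and then through a function-field argument at the generic point. The key structural observation is that sections in $\OX(U)$ are, by Definition~\ref{def:structure.sheaf}, defined pointwise on the focal zone $\fcU$, so the properties of being ghost, tangible, or ghostpotent transfer faithfully between a section and its germs.

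For (i), direction $(\Rightarrow)$: if $f \in \OX(U)$ has $f^n$ ghost, then $f_x^n \in \OXx|_\ghs$ for every $x \in \fcU$; reducedness of each $\OXx$ gives $f_x \in \OXx|_\ghs$, hence $f$ is ghost pointwise on $\fcU$. Direction $(\Leftarrow)$: if $g \in \OXx$ has $g^n$ ghost with representative $(V,\tilde g)$, the equivalence of germs yields a neighborhood $W \subseteq V$ of $x$ on which $\tilde g^n|_W$ is ghost in $\OX(W)$; reducedness of $\OX(W)$ forces $\tilde g|_W$ ghost, hence $g$ ghost.

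For (ii), direction $(\Leftarrow)$: every \nudom\ is reduced (a minimal relation $a^n \in \tG$ with $a \notin \tG$, $a^{n-1} \notin \tG$ exhibits $a$ as a ghost divisor via $a \cdot a^{n-1}$), so (i) yields $X$ reduced. If $X$ decomposes properly as $Z_1 \cup Z_2$ into closed sets, the complements $U_1 := X \sm Z_2$ and $U_2 := X \sm Z_1$ are disjoint nonempty opens and the sheaf axiom identifies $\OX(U_1 \cup U_2)$ with $\OX(U_1) \times \OX(U_2)$; the elements $(\one,\zero)$ and $(\zero,\one)$ are non-ghost with ghost product $(\zero,\zero)$, contradicting the \nudom\ hypothesis. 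For $(\Rightarrow)$, treat first the affine case: for an affine open $V = \Spec(\sA) \subseteq X$, integrality is inherited by $V$, so (i) gives $\sA$ reduced and Proposition~\ref{prop:15} delivers $\sA = \sA/\grad(\sA)$ a \nudom. For a general nonempty open $U$, let $\eta$ be the unique generic point of the irreducible $X$; for any affine $V = \Spec(\sB) \ni \eta$, the trivial \gprimec\ $\diag(\sB)$ has tangible projection $\sB|_\tng$, so $\scO_{X,\eta} \cong Q(\sB)$ is a \nusmf, independently of $V$. Given non-ghost $f,g \in \OX(U)$ with $fg$ ghost, the germs satisfy $f_\eta g_\eta = (fg)_\eta$ ghost in the \nusmf\ $\scO_{X,\eta}$, forcing say $f_\eta$ ghost; by the germ equivalence there is an open $V' \subseteq U$ with $\eta \in V'$ on which $f|_{V'}$ is ghost, and $V'$ is dense in $X$. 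For any $x \in U$, pick an affine open neighborhood $W = \Spec(\sC) \subseteq U$ of $x$ (Proposition~\ref{prop:u.3.2}(ii)); $V' \cap W$ is a nonempty principal open $\DD_W(h)$ by irreducibility and the basis of principal opens, with $h$ non-ghostpotent by Lemma~\ref{lem:D.contain}(iii). The ghostness of $f|_{V' \cap W}$ translates into $h^n f|_W \in \sC|_\ghs$ for some $n$; the affine case makes $\sC$ a \nudom, so the non-ghostness of $h^n$ together with ghost-divisor-freeness forces $f|_W$ ghost. As $x$ was arbitrary, $f$ is ghost on all of $U$, contradicting non-ghostness. Tangible-closedness of $\OX(U)$ follows similarly from the tangible-closedness of each affine coordinate $\sC$ along an affine cover of $U$.

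For (iii), fix $x \in X$ and an affine open neighborhood $U = \Spec(\sA) \ni x$; by Theorem~\ref{prop:stalks}(i), $\OXx \cong \sA_{\xpCong}$. The affine case of (ii) gives $\sA$ a \nudom, and a direct computation shows that localizing a \nudom\ by the \tmon\ $\iTcl(\xpCong)$ preserves both tangible-closedness and ghost-divisor-freeness: if $\frac{a}{c}\frac{b}{d}$ is ghost in $\sA_{\xpCong}$, then $abe \in \sA|_\ghs$ for some $e \in \iTcl(\xpCong) \subseteq \sA|_\tng$, and since $e$ (as well as $c,d$) is non-ghost, the \nudom\ property of $\sA$ forces one of $a,b$ ghost in $\sA$, whence the corresponding fraction is ghost in $\sA_{\xpCong}$.

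The principal obstacle will be the propagation step in $(\Rightarrow)$ of (ii): transporting ghostness of a section from the dense open $V'$ to the entire $U$. This requires combining the affine \nudom\ property, the identification of $V' \cap W$ as a principal open with non-ghostpotent generator via Lemma~\ref{lem:D.contain}(iii), and ghost-divisor-freeness simultaneously. Care with focal zones $\fcU$ when comparing sections, germs, and affine coordinate \nusmr s is also needed, although the design of $\OX$ ensures that the pointwise characterizations of $\nu$-structural features behave functorially.
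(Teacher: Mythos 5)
Your parts (i), (iii), and the backward direction of (ii) follow the paper's own argument essentially verbatim: the pointwise stalk/germ criterion for reducedness, the disjoint-opens-give-a-product obstruction to irreducibility, and the observation that a tangible localization of a \nudom\ is a \nudom. The forward direction of (ii), however, is where you genuinely diverge. The paper argues globally and topologically: from $fg=\ghost$ it gets $X=\VV(f)\cup\VV(g)$, irreducibility forces $X=\VV(f)$, and then locally on affines $f$ lies in every ghost projection of every \gprimec, i.e.\ is ghostpotent, hence ghost by reducedness. You instead settle the affine case first (reduced $+$ irreducible affine $\Rightarrow$ \nudom, via Proposition~\ref{prop:15}) and then run a generic-point argument: $f_\gpt g_\gpt$ is ghost in the \nusmf\ $\scO_{X,\gpt}$, so say $f_\gpt$ is ghost, and you propagate ghostness from a dense open to all of $U$ through affine charts. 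Your route is longer but buys the identification of the stalk at the generic point with the function \nusmf\ along the way; the paper's is shorter because irreducibility is applied once, directly to the decomposition $\VV(f)\cup\VV(g)$, and no propagation is needed.

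One step of your propagation is shakier than it needs to be: the claim that ghostness of $f|_{\DD_W(h)}$ ``translates into $h^nf|_W\in \sC|_\ghs$'' presupposes the identification $\OX(\DD_W(h))\cong \sC_{\ST(h)}$, which Theorem~\ref{prop:stalks}(ii) only gives for $h$ \tprs\ and \strict, whereas you chose $h$ merely non-ghostpotent. The gap is easily closed, and in fact your own setup closes it: since $\gpt$ lies in every nonempty open and in every focal zone, $f_\gpt$ ghost in $Q(\sC)$ already means $fc\in\sC|_\ghs$ for some tangible $c$, and the \nudom\ property of $\sC$ (from your affine case) forces $f|_W$ ghost directly --- no principal open or power of $h$ is required. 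With that repair, and granting that both you and the paper leave the tangible-closedness of $\Gm(U,\OX)$ to a one-line check against an affine cover, your argument is sound.
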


\begin{proof}
(i): Suppose $X$ is reduced and $U \subseteq X$ is open.  Assume $f \in \Gm(U,\OX)$ such that
$f^n = \ghost$ for some $n$. If we had $f \neq \ghost $, then there would exist $x \in U$ with $f_x \neq \ghost$ in $\OXx$, but $f^n_x = \ghost$.
Conversely, given a ghostpotent $\olf \in \OXx$  (Definition \ref{def:ghostpotent}), there exists an
open $U \subseteq X$ and a lift $f \in \Gm(U,\OX)$ of $\olf$. Shrinking $U$ if necessary, we may assume
that $f$ is ghostpotent, and hence is ghost.

\pSkip
(ii): Let $X$ be integral. All open \nussch s of $X$ are integral, so it is
enough to show that $\Gm(X,\OX)$ is a \nudom. Taking $f, g \in \Gm(X,\OX)$ such that $fg = \ghost $,
we have $X = \VV (f) \cup \VV (g)$, and by irreducibility, say, $X = \VV (f)$. Checking this locally on $X$, which we may assume
is affine,  we claim that~ $f$ must be ghost. Indeed, $f$ lies in the intersection of the ghost projections  of all \gprimec s, i.e., in the \ggradical \ ideal    of the
affine \nusmr\ of $X$. Since $X$ is reduced, by (i), the ghost projection of its \ggradicalc\ is the ghost ideal of $\Gm(X,\OX)$. Namely, $\Gm(X,\OX)$ is a \nudom, cf. Lemma \ref{lem:rad-reduced} and Lemma \ref{prop:15}.

Conversely, if all $\Gm(U,\OX)$ are \nudom s, then  $X$ is reduced by (i). For nonempty affine open subsets $U_1, U_2 \subseteq X$ with empty intersection, if exists, the
sheaf axioms imply that
$$\Gm(U_1 \cup U_2,\OX) = \Gm(U_1,\OX) \times  \Gm(U_2,\OX).$$
Obviously,  the product on the right contains ghost divisors.

\pSkip
(iii): Follows from (ii), since any tangible localization of a \nudom\ is a \nudom.
\end{proof}
An affine \nusch\ over  $X = \SpecA$ is integral if and only if the corresponding \nusmr ~ $\sA$ is a \nudom. Then, the  generic point
$q$ of $X$ corresponds to the trivial congruence $\diag(\sA)$ of $\sA$, and the local \nusmr \  $\scO_{X,q}$ is the localization of
$\sA$ by $\diag(\sA)$, i.e., by $\sAptng = \sAtng$, which is a monoid as $\sA$ is a \nudom.  But, this localization is just the \nusmf\ of fractions $Q(\sA)$ of $\sA$ (Definition  \ref{def:tangible.localization}). This also shows that the local \nusmr\ at the
generic point of an arbitrary integral \nusch\ is a \nusmf.

\begin{defn}
 Let $\gpt \in X$ be the generic point of an integral \nusch\ $X$. The local \nusmr\ $\scO_{X,q}$ is a \nusmf,  denoted by $K(X)$ and called the \textbf{function \nusmf} of $X$.
\end{defn}
For an integral \nusch\ all ``\nusmr s of functions'' are contained in its function \nusmf.
\begin{prop}

Let $X$ be an integral \nusch\ with generic point $\gpt$, and let $K(X)$ be
its function \nusmf.

\begin{enumerate} \eroman
  \item  If $U = \SpecA$ is a nonempty open affine \nussch\ of $X$, then $K(X) = Q(\sA)$. Furthermore,
$Q(\OXx) = K(X)$ for $x \in X$.

\item For  nonempty open subsets $U \ssetq V \ssetq X$, the maps
$$\Gm(V,\OX) \Right{5}{\rho^V_U} \Gm(U,\OX) \Right{5}{f \longmapsto f_\gpt} K(X)$$
are injective.
\item
 For every nonempty open subset $U \ssetq X$ and for every open covering $U =\bigcup_i U_i$ the following holds
$$ \Gm(U,\OX) = \bigcap_i \Gm(U_i,\OX) = \bigcap_{x \in U} \OXx,$$
where the intersection occurs  in $K(X)$.
\end{enumerate}

\end{prop}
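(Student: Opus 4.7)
The plan is to reduce all three parts to the single observation that, on an integral \nusch, every section \nusmr\ $\Gm(W,\OX)$ and every local \nusmr\ $\OXx$ embeds into the common function \nusmf\ $K(X)$, after which the sheaf axioms assemble local data into global sections.

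For~(i), irreducibility of $X$ places the generic point $\gpt$ in every nonempty open subset, in particular in $U = \SpecA$. Integrality makes $\sA = \Gm(U,\OX)$ a \nudom\ (by the preceding proposition), so by Proposition~\ref{prop:prime2domain} the trivial congruence $\diag(\sA)$ is \gprime, and the corresponding point is the unique generic point of $\SpecA$, which must coincide with $\gpt$. Theorem~\ref{prop:stalks}(i) identifies $\scO_{X,\gpt}$ with $\sA_{\diag(\sA)}$, which by Definition~\ref{def:tangible.localization} is the \nusmf\ of fractions $Q(\sA)$; hence $K(X) = Q(\sA)$. For arbitrary $x \in X$, choose an affine open $U \ni x$ with $\sA = \Gm(U,\OX)$ a \nudom; then $\OXx$ is a tangible localization of $\sA$ sitting inside $Q(\sA)$, and passing to fractions once more yields $Q(\OXx) = Q(\sA) = K(X)$.

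For~(ii), I will show that for every nonempty open $W \subseteq X$ the map $\Gm(W,\OX) \to K(X)$, $f \mapsto f_\gpt$, is injective. Applying this to $W = U$ yields injectivity of the second map of the sequence, while applying it to $W = V$ yields injectivity of the composite $\Gm(V,\OX) \to K(X)$, which factors through $\rho^V_U$ and so forces $\rho^V_U$ itself to be injective. For the claim, cover $W$ by affine opens $W_i = \Spec(\sA_i)$; by~(i), each $\sA_i = \Gm(W_i,\OX)$ injects into $Q(\sA_i) = K(X)$. If $f, g \in \Gm(W,\OX)$ satisfy $f_\gpt = g_\gpt$, then $f|_{W_i}$ and $g|_{W_i}$ share the image $f_\gpt$ in $K(X)$, so $f|_{W_i} = g|_{W_i}$ for every $i$, and the sheaf uniqueness axiom (Definition~\ref{def:sheaf}(c)) forces $f = g$ on $W$.

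For~(iii), viewing every $\Gm(W,\OX)$ and every $\OXx$ as a subset of $K(X)$ via~(ii), the inclusion $\Gm(U,\OX) \subseteq \bigcap_i \Gm(U_i,\OX)$ is tautological, while conversely any element $f$ of the intersection is represented by sections $f_i \in \Gm(U_i,\OX)$ whose pairwise restrictions to $U_i \cap U_j$ share the image $f \in K(X)$, hence agree by~(ii); the gluing axiom (Definition~\ref{def:sheaf}(d)) then produces $f \in \Gm(U,\OX)$. For the second equality, refine to an affine cover $U = \bigcup_j \Spec(\sA_j)$ and use the first equality; it then suffices to prove the affine intersection identity
\[
\sA \ = \ \bigcap_{\pCong \in \SpecA} \sA_\pCong \qquad \text{inside } Q(\sA)
\]
for a \nudom\ $\sA$, which is the main obstacle. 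I would handle it by taking $f \in \bigcap_\pCong \sA_\pCong$ and analysing the ``denominator'' subset $D_f = \{c \in \sA \cnd c f \in \sA\}$: the hypothesis gives, for each \gprime\ $\pCong$, some $c \in D_f \cap \iTcl(\pCong)$, so $D_f \nsubseteq \iGcl(\pCong)$ for any \gprime\ $\pCong$; Krull's theorem (Theorem~\ref{thm:krull}) together with the abstract Nullstellensatz (Theorem~\ref{thm:Null}) then force $D_f$ to contain a unit, placing $f$ in $\sA$. Concatenating these identities yields $\bigcap_{x \in U} \OXx = \bigcap_j \bigcap_{x \in \Spec(\sA_j)} \OXx = \bigcap_j \sA_j = \Gm(U,\OX)$ by the first equality applied to the affine cover.
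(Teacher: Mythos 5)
Your proofs of parts (i) and (ii) are correct and essentially match the paper, with (ii) being handled a bit more carefully (you verify injectivity directly via the sheaf uniqueness axiom, whereas the paper only argues for a form of ghost-injectivity). The first equality of (iii) is also fine.

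The second equality of (iii) has a genuine gap. You reduce it to the affine identity $\sA = \bigcap_{\pCong \in \SpecA}\sA_\pCong$ inside $Q(\sA)$, and then sketch a denominator-ideal argument: for $f$ in the intersection, the set $D_f = \{c \in \sA \cnd cf \in \sA\}$ is a \nusmr\ ideal meeting $\iTcl(\pCong)$ for every \gprimec\ $\pCong$, so $D_f \nsubseteq \iGcl(\pCong)$, and then ``Krull's theorem together with the abstract Nullstellensatz force $D_f$ to contain a unit.'' That last step does not follow from either of those results. Krull's theorem (Theorem \ref{thm:krull}) and the Nullstellensatz (Theorem \ref{thm:Null}) are statements about a single element and about radical congruences; what your argument actually needs is the auxiliary claim that any \nusmr\ ideal containing no unit lies inside $\iGcl(\pCong)$ for some \gprimec\ $\pCong$. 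The paper never establishes that claim (its Lemma \ref{lem:a.vs.qcong} and Corollary \ref{cor:a.vs.prime} handle one element at a time via a Zorn argument, and Definition \ref{def:g.rad.Cng} even allows $\crad(\Cong)$ to be empty for congruences contained in no \gprimec). Your argument is also silent on the focal-zone discrepancy: the paper's structure sheaf (Definition \ref{def:structure.sheaf} and Remark \ref{rem:u.2.37}) yields $\OX(V) \cong \bigcap_{x \in \fcV}\OXx$, an intersection over the focal zone $\fcV$, not over all of $V$, and one must reconcile this with the form $\bigcap_{x \in U}\OXx$ stated in the proposition. The paper's own proof avoids all this by citing Remark \ref{rem:u.2.37}, which reads the intersection identity directly off the construction of the structure \nushf\ rather than reproving it via a denominator ideal.
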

\begin{proof} (i):
For $x \in U = \SpecA \ssetq X$ we have $\gpt \in U$, where $\gpt$ corresponds to the trivial congruence  on  the \nudom\ $\sA$. Since $\OXx \cong \sAx$, we have $K(X) = \scO_{U,\gpt} = Q(\sA) = Q(\sAx )$.

\pSkip
(ii): Given  $ \emptyset \neq U \ssetq X$, it suffices to prove that if  $f \in \Gm(U,\OX)$ with $f_\gpt = \ghost$ in $K(X)$,   then  ~$f$ is a ghost.
Since $f = \ghost$ is equivalent to $f|_V = \ghost$ for all open nonempty affine \nussch s
$V \ssetq U$, we may assume that $U = \SpecA$ is affine. Then, the map $\Gm(U,\OX)  \To K(X)$ is just the
canonical inclusion $A \Into Q(\sA) = K(X)$.

\pSkip
(iii): The injectivity of the restriction maps $\rho^U_{U_i}: U \To U_i$, and the fact
that $\OX$ is a sheaf, implies left equality.  The analogous assertion for affine
integral \nusch s in Remark ~\ref{rem:u.2.37} gives the right equality.
\end{proof}

\subsection{Fiber products}%
\sSkip

Let $Y$ be a \nusch. A \textbf{\nusch\ over} $Y$ is a \nusch\ $X$  with a
morphism $\ff: X  \To Y$. A~ morphism of \nusch s $X, Z$ over $Y$ is a morphism of \nusch s
$X \To Z$  that renders the diagram
$$\xymatrix{
X \ar@{->}[rd] \ar@{->}[rr] && Z \ar@{->}[ld] \\
 &Y  &  \\
}
$$
commutative. A \nusch\ over $Y = \SpecB$ is termed  a  \nusch\ over $\sB$, for short.
A \nusch\ $X$ over a \nusmf\ $F$  is of \textbf{finite type}, if $X$ has a finite cover by open affine subsets
$U_i = \Spec(\sA_i)$, where each $\sA_i$ is a finitely generated $F$-\nualg, cf. \S\ref{ssec:F.Alg}. We then have the following observations for an affine \nusch\ $X = \SpecA$.

\begin{enumerate}  \ealph
\item $X$ is a \nusch\ over $F$ if and only if there exists a morphism $F \To \sA$, i.e.,  if $\sA$
is an $F$-\nualg.
\item
A morphism $X \To Y = \SpecB$ is a morphism of \nusch s
over $F$ if and only if the corresponding \qhom\ $\sB \To \sA$ of \nusmr s is a morphism
of $F$-\nualg s.
\item $X$ is of finite type over $F$ if and only if $F$ is a finitely generated $F$-\nualg\ (Definition \ref{def:f.g.alg}).
\item $X$ is reduced and irreducible if and only if $f  g = \ghost$ in $\sA$ implies $f = \ghost$ or
$g=\ghost$, i.e., if and only if $\sA$ is a \nudom. Indeed, suppose that $f  g= \ghost$ where $f \neq \ghost$
and $g \neq \ghost$. If $f = g^n $ or $g = f^m$ for some $m, n \in \Net$, then $\sA$ has a ghostpotent, namely $X$ is not reduced. Otherwise, $X$ decomposes into two
proper closed subsets $\VV( f )$ and $\VV(g)$, and thus $X$ is not irreducible.

\end{enumerate}

\begin{defn}\label{fiber.prd}
Let $\ff : X  \To S$ and $\hh : Y \To S$  be morphisms of \nusch s. Their
\textbf{fiber product} $\XxSY$  is defined  to be a \nusch \ together with ``projection''  morphisms $\pi_X : \XxSY \To
X$ and $\pi_Y : \XxSY \To Y$ such that the square in \eqref{eq:fiber.1} below commutes, and
such that for any \nusch\ $Z$ with morphisms $Z \To X$  and $Z \To Y$ rendering  \eqref{eq:fiber.1} commutative with $\ff$  and $\hh$ there is a unique morphism $\xi: Z \To X \times_S Y$ which renders the whole diagram
\begin{equation}\label{eq:fiber.1}\begin{gathered}
\xymatrix{ Z  \ar@{..>}[dr]^{\xi}  \ar@/^1pc/@{->}[rrd]  \ar@/_1pc/@{->}[rdd] & & \\
& \XxSY \ar@{->}[d]^{\pi_X}  \ar@{->}[r]_{\pi_Y} & Y \ar@{->}[d]^\hh \\
& X \ar@{->}[r]_\ff & S }\end{gathered}
\end{equation} commutative.
\end{defn}

A routine proof shows that the fiber product is uniquely determined by its property.
\begin{lem}\label{lem:g.5.4.2} If the fiber product $\XxSY$ exists, then it is unique. (Namely, if two fiber products satisfy the above  property, then they are canonically isomorphic.)
\end{lem}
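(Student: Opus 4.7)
The plan is to apply the standard uniqueness argument for objects defined by universal properties, adapted to the setting of \nusch s via morphisms in the category $\Sch$. Suppose two fiber products exist for the diagram $\ff : X \to S$, $\hh : Y \to S$, say $P_1 := \XxSY$ with projections $\pi^{(1)}_X : P_1 \to X$ and $\pi^{(1)}_Y : P_1 \to Y$, and $P_2$ with projections $\pi^{(2)}_X : P_2 \to X$ and $\pi^{(2)}_Y : P_2 \to Y$. Both squares commute over $S$ by the definition of fiber product.

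First, I would apply the universal property of $P_1$ by taking $Z = P_2$ in diagram \eqref{eq:fiber.1}, with the candidate morphisms $Z \to X$ and $Z \to Y$ being $\pi^{(2)}_X$ and $\pi^{(2)}_Y$. Since these satisfy $\ff \circ \pi^{(2)}_X = \hh \circ \pi^{(2)}_Y$, the universal property produces a unique morphism $\xi_1 : P_2 \to P_1$ of \nusch s such that $\pi^{(1)}_X \circ \xi_1 = \pi^{(2)}_X$ and $\pi^{(1)}_Y \circ \xi_1 = \pi^{(2)}_Y$. By the symmetric argument, swapping the roles of $P_1$ and $P_2$, the universal property of $P_2$ yields a unique morphism $\xi_2 : P_1 \to P_2$ with $\pi^{(2)}_X \circ \xi_2 = \pi^{(1)}_X$ and $\pi^{(2)}_Y \circ \xi_2 = \pi^{(1)}_Y$.

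Next, I would show that the compositions $\xi_2 \circ \xi_1 : P_2 \to P_2$ and $\xi_1 \circ \xi_2 : P_1 \to P_1$ agree with the respective identity morphisms. For this, I apply the universal property once more, taking $Z = P_1$ (respectively $Z = P_2$) together with its own projections as the candidate data. Both $\xi_1 \circ \xi_2$ and $\id_{P_1}$ are morphisms $P_1 \to P_1$ that compose with $\pi^{(1)}_X$ and $\pi^{(1)}_Y$ to recover $\pi^{(1)}_X$ and $\pi^{(1)}_Y$; by the uniqueness clause in the universal property applied to $P_1$ with $Z = P_1$, these two morphisms must coincide. The symmetric argument gives $\xi_2 \circ \xi_1 = \id_{P_2}$.

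Consequently, $\xi_1$ and $\xi_2$ are mutually inverse isomorphisms of \nusch s, and they are canonically determined by the projection morphisms (being the unique morphisms produced by the universal property in each direction), which yields the claimed canonical isomorphism $P_1 \cong P_2$ over $S$. I anticipate no substantial obstacle: since this is a purely formal categorical argument invoking only the definition given in Definition \ref{fiber.prd}, the only care needed is to keep track of which data are fixed by the universal property at each step so as to legitimately apply the uniqueness clause; no specific features of \nusch s, \nushv, or \lnusmrsp s are used beyond the fact that $\Sch$ is a category in which compositions and identities behave as expected.
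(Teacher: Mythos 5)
Your proof is correct and follows essentially the same route as the paper's: obtain mutually inverse morphisms $\xi_1,\xi_2$ from the universal property applied in each direction, and conclude $\xi_1\circ\xi_2 = \id$ and $\xi_2\circ\xi_1 = \id$ by the uniqueness clause with $Z$ taken to be each fiber product itself. The paper merely states this more tersely, but the argument is the same standard categorical one.
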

\begin{proof}
Let $F_1$ and $F_2$ be two fiber products satisfying the  property of Definition \ref{fiber.prd}, i.e., each $F_i$ is assigned with morphisms to $X$ and $Y$. Since $F_i$ is a fiber product, the morphism $\xi_{ij} : F_i \To F_j$, $i,j \in \{ 1,2\} $, renders the diagram
$$ \xymatrix{ F_i  \ar@{..>}[dr]^{\xi_{ij}}  \ar@/^1pc/@{->}[rrd]  \ar@/_1pc/@{->}[rdd] & & \\
& F_j \ar@{->}[d]^{\pi_X}  \ar@{->}[r]_{\pi_Y} & Y \ar@{->}[d]^\hh \\
& X \ar@{->}[r]_\ff & S } $$
commutative.
Composing together,  by the uniqueness part of  Definition \ref{fiber.prd}, it follows that $\xi_{ij} \circ \xi_{ji} = \id$, and thus $F_1$ and $F_2$ are canonically isomorphic.
\end{proof}

\begin{rem}\label{rem:g.5.4.3}
From  Definition \ref{fiber.prd} we obtain the following properties.
\begin{enumerate} \eroman
  \item $\XxSY = \XxTY$ for any open subset $S  \subset T$ (morphisms from any $Z$ to $X$
and to $Y$ that commute with $\ff$ and $\hh$ are the same, independently if the base
\nusch\ is $S$ or $T$).
\item Given open subsets $U \subseteq  X$ and $V \subseteq Y$, the fiber product
$$U \times_S V = \ipi_X (U) \cap \ipi_Y (V) \subseteq  \XxSY $$
is an open subset of the  fiber product $\XxSY$.
\end{enumerate}
\end{rem}

We next show that fiber products of \nusch s  always exist, where in the affine case they should correspond to tensor products in commutative \nualg. Using the tensor product of \numod s, cf. ~ \S\ref{ssec:tensor}, we can  construct the fiber product of \nusch s.
\begin{prop}\label{lem:g.5.4.7} Let $\ff : X  \To S$ and $\hh : Y  \To S$ be morphisms of \nusch s. Then, the fiber product $\XxSY$ exists. \end{prop}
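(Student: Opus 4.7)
The plan is to construct $\XxSY$ in the standard two-stage manner: first for affine $X$, $Y$, $S$ using the tensor product of $\nu$-algebras, then for general $\nu$-schemes by gluing. For the affine case, write $X = \Spec \sA$, $Y = \Spec \sB$, $S = \Spec \sR$, where the morphisms $\ff$ and $\hh$ correspond under Theorem~\ref{prop:6.6.9} and Proposition~\ref{prop:6.6.12} to $\qhom$s $\ff^{\#}: \sR \to \sA$ and $\hh^{\#}: \sR \to \sB$, making $\sA$ and $\sB$ into $\sR$-$\nu$-modules (and $\sR$-$\nu$-algebras). I will then take
\[
\XxSY := \Spec(\sA \otimes_\sR \sB),
\]
with projections $\pi_X, \pi_Y$ induced by the canonical $\qhom$s $\sA \to \sA \otimes_\sR \sB$, $\sB \to \sA \otimes_\sR \sB$. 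Commutativity of the defining square in \eqref{eq:fiber.1} at the ring level is automatic from $\ff^{\#}$ and $\hh^{\#}$ landing in the same $\sR$-action on $\sA \otimes_\sR \sB$. The universal property then follows by combining Remark~\ref{rem:tensor} (which is precisely the universal property of $\otimes_\sR$ in the category $\NSMR$) with Proposition~\ref{prop:6.6.12}, which turns morphisms of affine $\nu$-schemes $Z \to \Spec(\sA\otimes_\sR\sB)$ into $\qhom$s $\sA \otimes_\sR \sB \to \Gm(Z)$, and by Proposition~\ref{prop:8.7} this extends from the affine $Z$ case to arbitrary $\nu$-schemes $Z$.

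For the general case, the plan is to proceed by successive gluing. First choose an affine open cover $\{S_k\}$ of $S$, then for each $k$ cover $\iff(S_k)$ and $\ihh(S_k)$ by affine opens $\{X_{k,i}\}$ and $\{Y_{k,j}\}$; by the affine case, the fiber products $P_{k,i,j} := X_{k,i} \times_{S_k} Y_{k,j}$ exist. Using Remark~\ref{rem:g.5.4.3}(i), I can identify $X_{k,i} \times_{S_k} Y_{k,j}$ with $X_{k,i} \times_S Y_{k,j}$, so the base $S_k$ disappears in the labeling. Using Remark~\ref{rem:g.5.4.3}(ii), the intersections $(X_{k,i} \cap X_{k',i'}) \times_S (Y_{k,j} \cap Y_{k',j'})$ sit inside both $P_{k,i,j}$ and $P_{k',i',j'}$ as open subschemes, and by the uniqueness part of Lemma~\ref{lem:g.5.4.2} there are canonical isomorphisms $\psi_{(k,i,j),(k',i',j')}$ between them.

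The main obstacle is verifying the cocycle condition (c) of the gluing procedure from \S\ref{ssec:aff.schemes} for the $\psi$'s; this is where one must be careful. Here I will again use uniqueness from Lemma~\ref{lem:g.5.4.2}: the triple overlap $(X_{k,i} \cap X_{k',i'} \cap X_{k'',i''}) \times_S (Y_{k,j} \cap Y_{k',j'} \cap Y_{k'',j''})$ is itself an affine fiber product (by Remark~\ref{rem:g.5.4.3}(ii) applied twice), and both $\psi_{ik} \circ \psi_{jk}$ and $\psi_{ij}$ restrict to the unique isomorphism guaranteed by its universal property, so they must agree. This yields a gluing datum producing a $\nu$-scheme $P$ together with morphisms $\pi_X: P \to X$ and $\pi_Y: P \to Y$ that agree with $\ff$ and $\hh$ after composition.

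It remains to verify the universal property for $P$ against an arbitrary test $\nu$-scheme $Z$ with morphisms $g: Z \to X$ and $g': Z \to Y$ satisfying $\ff \circ g = \hh \circ g'$. The idea is to pull back the cover of $P$ to $Z$: set $Z_{k,i,j} := g^{-1}(X_{k,i}) \cap (g')^{-1}(Y_{k,j})$, which is an open cover of $Z$. On each $Z_{k,i,j}$ the universal property of the affine $P_{k,i,j}$ produces a unique morphism $\xi_{k,i,j}: Z_{k,i,j} \to P_{k,i,j}$ compatible with $g|_{Z_{k,i,j}}$ and $g'|_{Z_{k,i,j}}$. On overlaps these local morphisms agree by the uniqueness part of the affine universal property (applied to the intersection, which is again an affine fiber product), so they glue to a global morphism $\xi: Z \to P$, unique with the required properties. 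This completes the construction and hence the proof that $\XxSY$ exists.
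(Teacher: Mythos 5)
Your proof is correct and follows essentially the same route as the paper: the affine case is handled by $\Spec(\AoRB)$ together with Remark~\ref{rem:tensor} and Proposition~\ref{prop:8.7}, and the general case by gluing affine patches using Remark~\ref{rem:g.5.4.3} and Lemma~\ref{lem:g.5.4.2}. You spell out the cocycle condition and the final universal-property verification in more detail than the paper, which merely asserts that the glued scheme satisfies the fiber product property, but the underlying argument is the same.
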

\begin{proof} Assume first  that  $X = \SpecA$, $Y = \SpecB$, and
$S = \SpecR$ are affine \nusch s.  The morphisms $X \To S$ and $Y \To S$ provide $\sA$ and $\sB$ as $R$-\numod s by Theorem \ref{prop:6.6.9},
yielding the tensor product $\AoRB$.
We claim that $\Spec(\AoRB)$ is the fiber product $\XxSY$. Indeed,  a
morphism $Z \To \Spec(\AoRB)$ corresponds to   a \qhom\
$\AoRB \To \scO_Z(Z)$ by
Proposition \ref{prop:8.7}, which by Remark \ref{rem:tensor} is the same as \qhom s  $\sA \To \scO_Z(Z)$
and $\sB \To \scO_Z(Z)$. The latter induce the same \qhom\ on $R$, which again by Proposition
\ref{prop:8.7} corresponds to  morphisms $Z \To X$ and $Z \To Y$, which in turn give rise to the same morphism from $Z \To S$. Therefore,  $\Spec(\AoRB)$ is the desired product.

Assume that $X$, $Y$ and $S$ are general \nusch s, and take coverings by open affine \nusch s, first of $S$, and then of $\iff(S_i)$, $\ihh(S_i)$  by $X_{i, j}$, $Y_{i,k}$, respectively.
 The fiber products $X_{i, j} \times_{S_i} Y_{i,k}$ exist by the construction of tensor products, which are also fiber products over $S$ by Remark \ref{rem:g.5.4.3}.(i).  If we had another such product $X_{i', j'} \times_S
Y_{i',k'}$, both of them would contain the (unique) fiber product $(X_{i, j } \cap X_{i', j'} )\times_S (Y_{i,k} \cap  Y_{i',k'} )$ as
an open subset by Remark \ref{rem:g.5.4.3}.(ii), hence they can be glued along these isomorphic open
subsets. Thus, the \nusch\ $\XxSY$ obtained by glueing these patches satisfies
the fiber product property.
\end{proof}

\end{document}